\DeclareSymbolFontAlphabet{\mathbb}{AMSb} 
\DeclareSymbolFontAlphabet{\mathbbl}{bbold}
\newcounter{oldtocdepth}
\let\emptyset\varnothing
\DeclareMathAlphabet{\mathpzc}{OT1}{pzc}{m}{it}
\theoremstyle{definition}
\newtheorem{Definition}{Definition}[subsection]
\newtheorem{Example}[Definition]{Example}
\newtheorem{Convention}[Definition]{Convention}
\newtheorem{Proposition-Definition}[Definition]{Proposition-Definition}
\newtheorem{Question}[Definition]{Question}
\newtheorem{Assumption}[Definition]{Assumption}
\newtheorem{Remark}[Definition]{Remark}
\newtheorem*{Remark*}{Remark}
\newtheorem{Theorem}[Definition]{Theorem}
\newtheorem{Lemma}[Definition]{Lemma}
\newtheorem{Proposition}[Definition]{Proposition}
\newtheorem{Corollary}[Definition]{Corollary}
\newtheorem*{Theorem*}{Theorem}
\newcommand\scalemath[2]{\scalebox{#1}{\mbox{\ensuremath{\displaystyle #2}}}}
\DeclareMathOperator\A{\mathbf{A}}
\DeclareMathOperator\C{\mathbf{C}}
\DeclareMathOperator\F{\mathbf{F}}
\DeclareMathOperator\Q{\mathbf{Q}}
\DeclareMathOperator\R{\mathbf{R}}
\DeclareMathOperator\Z{\mathbf{Z}}
\DeclareMathOperator\bfita{\textbf{\textit{a}}}
\DeclareMathOperator\bfitj{\textbf{\textit{j}}}
\DeclareMathOperator\bfits{\textbf{\textit{s}}}
\DeclareMathOperator\bfitu{\textbf{\textit{u}}}
\DeclareMathOperator\bfitw{\textbf{\textit{w}}}
\DeclareMathOperator\bfitz{\textbf{\textit{z}}}
\DeclareMathOperator\bbA{\mathbb{A}}
\DeclareMathOperator\bbG{\mathbb{G}}
\DeclareMathOperator\bbH{\mathbb{H}}
\DeclareMathOperator\bbT{\mathbb{T}}
\DeclareMathOperator\bbX{\mathbb{X}}
\DeclareMathOperator\calA{\mathcal{A}}
\DeclareMathOperator\calB{\mathcal{B}}
\DeclareMathOperator\calD{\mathcal{D}}
\DeclareMathOperator\calE{\mathcal{E}}
\DeclareMathOperator\calG{\mathcal{G}}
\DeclareMathOperator\calH{\mathcal{H}}
\DeclareMathOperator\calN{\mathcal{N}}
\DeclareMathOperator\calO{\mathcal{O}}
\DeclareMathOperator\calP{\mathcal{P}}
\DeclareMathOperator\calS{\mathcal{S}}
\DeclareMathOperator\calU{\mathcal{U}}
\DeclareMathOperator\calV{\mathcal{V}}
\DeclareMathOperator\calW{\mathcal{W}}
\DeclareMathOperator\calX{\mathcal{X}}
\DeclareMathOperator\calY{\mathcal{Y}}
\DeclareMathOperator\calZ{\mathcal{Z}}
\DeclareMathOperator\scrA{\mathscr{A}}
\DeclareMathOperator\scrD{\mathscr{D}}
\DeclareMathOperator\scrE{\mathscr{E}}
\DeclareMathOperator\scrF{\mathscr{F}}
\DeclareMathOperator\scrG{\mathscr{G}}
\DeclareMathOperator\scrH{\mathscr{H}}
\DeclareMathOperator\scrM{\mathscr{M}}
\DeclareMathOperator\scrN{\mathscr{N}}
\DeclareMathOperator\scrO{\mathscr{O}}
\DeclareMathOperator\scrP{\mathscr{P}}
\DeclareMathOperator\scrT{\mathscr{T}}
\DeclareMathOperator\scrV{\mathscr{V}}
\DeclareMathOperator\scrW{\mathscr{W}}
\DeclareMathOperator\frakH{\mathfrak{H}}
\DeclareMathOperator\frakU{\mathfrak{U}}
\DeclareMathOperator\frakV{\mathfrak{V}}
\DeclareMathOperator\fraka{\mathfrak{a}}
\DeclareMathOperator\frakm{\mathfrak{m}}
\DeclareMathOperator\frakn{\mathfrak{n}}
\DeclareMathOperator\frakp{\mathfrak{p}}
\DeclareMathOperator\fraks{\mathfrak{s}}
\DeclareMathOperator\frakt{\mathfrak{t}}
\DeclareMathOperator\frakz{\mathfrak{z}}
\DeclareMathOperator\GL{GL}
\DeclareMathOperator\GSp{GSp}
\DeclareMathOperator\SL{SL}
\DeclareMathOperator\Aut{Aut}
\DeclareMathOperator\End{End}
\DeclareMathOperator\Hom{Hom}
\DeclareMathOperator\sheafHom{\scrH\!\!\textit{om}}
\DeclareMathOperator\adicFL{\mathcal{F}\!\mathbf{\ell}}
\DeclareMathOperator\adicIW{\mathcal{IW}}
\DeclareMathOperator\adicIw{\mathcal{I}\!\mathit{w}}
\DeclareMathOperator\alg{alg}
\DeclareMathOperator\an{an}
\DeclareMathOperator\BS{BS}
\DeclareMathOperator\coker{coker}
\DeclareMathOperator\cts{cts}
\DeclareMathOperator\cusp{cusp}
\DeclareMathOperator\cyc{cyc}
\DeclareMathOperator\diag{diag}
\DeclareMathOperator\dR{dR}
\DeclareMathOperator\ES{ES}
\DeclareMathOperator\et{\mathrm{\acute{e}t}}
\DeclareMathOperator\Fil{Fil}
\DeclareMathOperator\Fl{Fl}
\DeclareMathOperator\fs{fs}
\DeclareMathOperator\Gal{Gal}
\DeclareMathOperator\Gr{Gr}
\DeclareMathOperator\hst{hst}
\DeclareMathOperator\HT{HT}
\DeclareMathOperator\image{image}
\DeclareMathOperator\Isom{Isom}
\DeclareMathOperator\Iw{Iw}
\DeclareMathOperator\ket{\mathrm{k\acute{e}t}}
\DeclareMathOperator\Lie{Lie}
\DeclareMathOperator\one{\mathbbm{1}}
\DeclareMathOperator\oneanti{\breve{\one}}
\DeclareMathOperator\opp{opp}
\DeclareMathOperator\Par{par}
\DeclareMathOperator\pr{pr}
\DeclareMathOperator\proet{\mathrm{pro\acute{e}t}}
\DeclareMathOperator\proj{proj}
\DeclareMathOperator\proket{\mathrm{prok\acute{e}t}}
\DeclareMathOperator\rig{rig}
\DeclareMathOperator\Res{Res}
\DeclareMathOperator\Si{Si}
\DeclareMathOperator\Spa{Spa}
\DeclareMathOperator\Spec{Spec}
\DeclareMathOperator\Sym{Sym}
\DeclareMathOperator\sms{ss}
\DeclareMathOperator\std{std}
\DeclareMathOperator\tor{tor}
\DeclareMathOperator\trans{^{\mathtt{t}}\!}
\DeclareMathOperator\univ{univ}
\DeclareMathOperator\wt{wt}
\DeclareMathOperator\Alg{{\mathrm{Alg}}}
\DeclareMathOperator\Mod{{\mathrm{Mod}}}
\DeclareMathOperator\Groups{{\mathrm{Group}}}
\DeclareMathOperator\Sets{{\mathrm{Sets}}}
\DeclareMathOperator\Ban{{\mathrm{Ban}}}
\DeclareMathOperator\bfalpha{\boldsymbol{\alpha}}
\DeclareMathOperator\bfbeta{\boldsymbol{\beta}}
\DeclareMathOperator\bfgamma{\boldsymbol{\gamma}}
\DeclareMathOperator\bfdelta{\boldsymbol{\delta}}
\DeclareMathOperator\bfepsilon{\boldsymbol{\varepsilon}}
\DeclareMathOperator\bfnu{\boldsymbol{\nu}}
\DeclareMathOperator\bftau{\boldsymbol{\tau}}
\DeclareMathOperator\llbrack{\![\![\!}
\DeclareMathOperator\rrbrack{\!]\!]}
\DeclareMathOperator\bla{\boldsymbol{\langle}}
\DeclareMathOperator\bra{\boldsymbol{\rangle}}
\title{Overconvergent Eichler--Shimura morphisms for $\mathrm{GSp}_4$}
\author{Hansheng Diao, Giovanni Rosso, and Ju-Feng Wu}
\date{}
\begin{document}

\begin{abstract} 
We construct explicit Eichler--Shimura morphisms for families of overconvergent Siegel modular forms of genus two. These can be viewed as $p$-adic interpolations of the Eichler--Shimura decomposition of Faltings--Chai for classical Siegel modular forms. In particular, we are able to $p$-adically interpolate the entire decomposition, extending our previous work on the $H^0$-part. The key new inputs are the higher Coleman theory of Boxer--Pilloni and a theory of pro-Kummer \'etale cohomology with supports.
\end{abstract}

\maketitle
\thispagestyle{empty}
\tableofcontents

\section{Introduction}

\subsection{Background}
Consider the Poincar\'e upper-half plane $\bbH$ equipped with a left action by $\SL_2(\Z)$ via the M\"obius transformation. Consider a congruence subgroup $\Gamma \subset \SL_2(\Z)$, and let $X(\C) \coloneq \Gamma \backslash \bbH$ be the (complex analytic) modular curve of level $\Gamma$. The classical Eichler--Shimura decomposition reads as follows. 

\begin{Theorem}[Eichler--Shimura decomposition]\label{thm: ES decomposition}
    For $k\in \Z_{\geq 0}$, let $M_{k+2}(\Gamma)$ (reps., $S_{k+2}(\Gamma)$) be the space of modular forms (resp., cuspforms) of weight $k+2$ and level $\Gamma$. Then there is a Hecke-equivariant decomposition \[
        H^1(X(\C), \Sym^k \C^2) = M_{k+2}(\Gamma) \oplus \overline{S_{k+2}(\Gamma)},
    \]
    where $\overline{\bullet}$ stands for the complex conjugation.
\end{Theorem}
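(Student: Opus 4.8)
The plan is to realise both summands as Hodge-theoretic pieces of the algebraic de Rham incarnation of the cohomology on the compactified modular curve, following the modular-curve case of the Betti--de Rham strategy behind Faltings--Chai's decomposition. Set $Y = X(\C) = \Gamma\backslash\bbH$ (an orbifold if $\Gamma$ has torsion, which is harmless for $\C$-coefficient cohomology), let $\bar X$ be the smooth compactification obtained by adjoining the cusps, and $D = \bar X\setminus Y$ the reduced cuspidal divisor. Let $\pi\colon\calE\to Y$ be the universal (generalised) elliptic curve, $\calH = R^1\pi_*\C$, so that $\Sym^k\C^2\cong\Sym^k\calH$ as local systems; write $\omega = \pi_*\Omega^1_{\calE/Y}$, with canonical extension $\bar\omega$ over $\bar X$. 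The local system $\Sym^k\calH$ underlies a polarised $\Q$-variation of Hodge structure of weight $k$ with unipotent monodromy around $D$; let $\bar\scrH$ be the Deligne canonical extension of $\calH\otimes\calO_Y$, a rank-$2$ vector bundle on $\bar X$ with logarithmic connection $\nabla$ and Hodge filtration $0\to\bar\omega\to\bar\scrH\to\bar\omega^{-1}\to 0$, so that $\Sym^k\bar\scrH$ is the canonical extension of $\Sym^k\calH\otimes\calO_Y$, with induced connection and a Hodge filtration $\Fil^\bullet\Sym^k\bar\scrH$ whose graded pieces are $\mathrm{gr}^p\Sym^k\bar\scrH = \bar\omega^{\otimes(2p-k)}$, $0\le p\le k$.

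First I would invoke Deligne's comparison theorem for local systems with unipotent local monodromy, in its Hodge-theoretic form: $H^1(Y,\Sym^k\calH)$ is the hypercohomology $\mathbf H^1(\bar X,\mathrm{DR}_k)$ of the logarithmic de Rham complex $\mathrm{DR}_k = \bigl[\Sym^k\bar\scrH\xrightarrow{\nabla}\Sym^k\bar\scrH\otimes\Omega^1_{\bar X}(\log D)\bigr]$, filtered by $\Fil^p\mathrm{DR}_k = \bigl[\Fil^p\Sym^k\bar\scrH\to\Fil^{p-1}\Sym^k\bar\scrH\otimes\Omega^1_{\bar X}(\log D)\bigr]$. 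On the curve $\bar X$ the Higgs field induced by $\nabla$ on the graded piece $\mathrm{gr}^p\Sym^k\bar\scrH = \bar\omega^{\otimes(2p-k)}$ is a nonzero multiple of the Kodaira--Spencer map, hence an isomorphism onto $\mathrm{gr}^{p-1}\Sym^k\bar\scrH\otimes\Omega^1_{\bar X}(\log D)$ for $1\le p\le k$. Therefore $\mathrm{gr}^p\mathrm{DR}_k$ is acyclic for $1\le p\le k$, the Hodge-to-de Rham spectral sequence degenerates at $E_1$, and the Hodge filtration on $H^1(Y,\Sym^k\calH)$ has precisely two nonzero steps, with
\[
\Fil^{k+1}H^1 \;=\; \mathrm{gr}^{k+1}_{\Fil}H^1 \;\cong\; H^0\bigl(\bar X, \bar\omega^{\otimes k}\otimes\Omega^1_{\bar X}(\log D)\bigr), \qquad \mathrm{gr}^{0}_{\Fil}H^1 \;\cong\; H^1\bigl(\bar X, \bar\omega^{\otimes(-k)}\bigr).
\]

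Next I would identify the two pieces. The Kodaira--Spencer isomorphism $\Omega^1_{\bar X}(\log D)\cong\bar\omega^{\otimes 2}$ turns the first into $H^0(\bar X,\bar\omega^{\otimes(k+2)}) = M_{k+2}(\Gamma)$, the \emph{full} space of holomorphic modular forms --- Eisenstein series included, because we use the canonical extension with \emph{no} twist by $-D$. For the second, Serre duality on $\bar X$ together with $\Omega^1_{\bar X}\cong\bar\omega^{\otimes 2}(-D)$ gives $H^1(\bar X,\bar\omega^{\otimes(-k)})\cong H^0(\bar X,\bar\omega^{\otimes(k+2)}(-D))^\vee = S_{k+2}(\Gamma)^\vee$, which the polarisation and the real structure $\Sym^k\R^2\otimes_{\R}\C$ on the local system identify with $\overline{S_{k+2}(\Gamma)}$. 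Finally, to upgrade the two-step Hodge filtration to a genuine direct sum I would use the mixed Hodge structure on $H^1(Y,\Sym^k\calH)$: its weight-graded pieces are the pure weight-$(k+1)$ part $H^1_{\mathrm{par}} = \image(H^1_c\to H^1)$ and a weight-$(2k+2)$ part of Hodge type $(k+1,k+1)$ supported at the cusps, and the Hodge decomposition furnishes a canonical complement to $\Fil^{k+1}H^1$, namely the complex conjugate of $\Fil^{k+1}H^1_{\mathrm{par}} = S_{k+2}(\Gamma)$. This yields $H^1(X(\C),\Sym^k\C^2) = M_{k+2}(\Gamma)\oplus\overline{S_{k+2}(\Gamma)}$, and it is Hecke-equivariant because the Betti--de Rham comparison, Kodaira--Spencer and Serre duality are all compatible with the Hecke correspondences (up to the standard normalisation of the Hecke action on $\bar\omega$).

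The step I expect to be the main obstacle is the bookkeeping at the cusps. One must take the Deligne canonical extension so that $\mathbf H^1(\bar X,\mathrm{DR}_k)$ computes the cohomology of the \emph{open} curve $Y$ (not $H^1_c$, nor intersection cohomology), verify that Kodaira--Spencer extends across $D$ to the isomorphism $\Omega^1_{\bar X}(\log D)\cong\bar\omega^{\otimes 2}$, and confirm that $\Fil^{k+1}H^1$ is all of $M_{k+2}(\Gamma)$ rather than merely $S_{k+2}(\Gamma)$ --- this is exactly where the asymmetry between the two summands originates, and it can be cross-checked by computing $\dim H^1(Y,\Sym^k\calH)$ via the Euler characteristic of $\Sym^k\calH$ on the open curve. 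Passing from a filtration to an honest direct sum, likewise, genuinely needs the mixed Hodge structure rather than the spectral-sequence degeneration alone, since the two graded pieces sit in different weights. A more elementary alternative is to pass to group cohomology, $H^1(Y,\Sym^k\C^2) = H^1(\Gamma,\Sym^k\C^2)$, define the Eichler--Shimura map carrying $f\in M_{k+2}(\Gamma)$ to the class of the cocycle $\gamma\mapsto\int_{z_0}^{\gamma z_0} f(z)(X-zY)^k\,dz$ (with $X,Y$ a basis of $\C^2$) and $\bar g\in\overline{S_{k+2}(\Gamma)}$ to the conjugate cocycle, and prove bijectivity by the same dimension count; but checking injectivity and surjectivity there still reduces to the cuspidal computation above.
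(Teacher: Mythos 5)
The paper does not prove this statement: Theorem \ref{thm: ES decomposition} is quoted in the introduction as the classical Eichler--Shimura decomposition, with no argument supplied, so there is no in-paper proof to compare yours against. On its own merits your proposal is correct and is the standard Hodge-theoretic proof. The chain Deligne canonical extension $\rightarrow$ logarithmic de Rham comparison for the open curve $\rightarrow$ acyclicity of the intermediate graded pieces via Kodaira--Spencer $\rightarrow$ $E_1$-degeneration is sound, and you have correctly isolated the two genuinely delicate points: (1) that the untwisted canonical extension computes $H^1$ of the open curve (not $H^1_c$ or intersection cohomology), which is exactly why the $\Fil^{k+1}$ piece is all of $M_{k+2}(\Gamma)$ including Eisenstein series while the other piece is only $\overline{S_{k+2}(\Gamma)}$; and (2) that degeneration of the Hodge-to-de Rham spectral sequence only yields a two-step filtration, and the splitting requires the mixed Hodge structure --- your argument that $\overline{\Fil^{k+1}H^1_{\mathrm{par}}}$ meets $\Fil^{k+1}H^1$ trivially (since inside the pure weight-$(k+1)$ part one has $F^{k+1}\cap\overline{F^{k+1}}=0$) together with the dimension count is the right way to conclude. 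Two small caveats worth recording if you write this up: the identification $S_{k+2}(\Gamma)^{\vee}\cong\overline{S_{k+2}(\Gamma)}$ via the Petersson pairing intertwines each Hecke operator with its adjoint, so the asserted Hecke-equivariance holds for the conjugated Hecke action on $\overline{S_{k+2}(\Gamma)}$ with the standard normalisation of the action on $\bar\omega$; and for $\Gamma$ with torsion one should either work on the stack or pass to a torsion-free normal subgroup of finite index and take invariants, which is harmless in characteristic zero as you note. The group-cohomological alternative you sketch at the end is the original Eichler--Shimura route and is equally valid, with bijectivity again resting on the same dimension count at the cusps.
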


Theorem \ref{thm: ES decomposition} has an arithmetic incarnation which we now explain. The complex analytic modular curve $X(\C)$ admits a structure of an algebraic curve $X$ over $\Q$, which classifies elliptic curves with $\Gamma$-level structures. Let $\overline{X}$ be the compactification of $X$ which classifies generalised elliptic curves, and let $\pi \colon E^{\univ} \rightarrow \overline{X}$ be the universal semiabelian scheme over $\overline{X}$ with the identity section $e$. Consider the line bundle $\underline{\omega} := e^* \Omega_{E^{\univ}/\overline{X}}^1$. For $k\in \Z_{\geq 0}$, it is well-known that $M_{k+2}(\Gamma) = H^0(\overline{X}, \underline{\omega}^{\otimes k+2})\otimes_{\Q} \C$. We have the following theorem of Faltings \cite{FaltingsHT}.

\begin{Theorem}[$p$-adic Eichler--Shimura decomposition]\label{thm: p-adic ES decomposition}
    Let $p$ be a prime number and let $k\in \Z_{\geq 0}$. There exists a Hecke- and Galois-equivariant \footnote{Throughout the article, Galois-equivariance is always respect to the action of $\Gal_{\Q_p}$, unless specified.} split short exact sequence \[
        \scalemath{0.9}{ 0 \rightarrow  H^1(\overline{X}_{\Q_p}, \underline{\omega}^{-k})\otimes_{\Q_p}\C_p(k) \xrightarrow{\mathrm{ES}_k^{\vee}}  H^1_{\et}(X_{\C_p}, \Sym^k\Q_p^2)\otimes_{\Q_p}\C_p \xrightarrow{\mathrm{ES}_k} H^0(\overline{X}_{\Q_p}, \underline{\omega}^{\otimes k+2})\otimes_{\Q_p}\C_p(-1) \rightarrow 0 },
    \]
    where the Galois actions on the coherent cohomology groups are trivial. 
\end{Theorem}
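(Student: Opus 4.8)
The plan is to derive the sequence from the logarithmic relative $p$-adic Hodge theory of the compactified modular curve, the geometric engine being the Kodaira--Spencer isomorphism, with the splitting falling out of Tate's computation of $H^1(\Gal_{\Q_p},\C_p(n))$. First I would write $\Sym^k\Q_p^2 = \Sym^k V_p$ for the $\Q_p$-local system on $X$ attached to the universal elliptic curve, with $V_p$ the rational $p$-adic Tate module of $E^{\univ}$, normalised (say $V_p = R^1\pi_{*}\Q_p(1)$) so that $V_p$ has Hodge--Tate weights $\{-1,0\}$. Since $E^{\univ}$ extends to a semiabelian scheme over $\overline{X}$, the monodromy of $\Sym^kV_p$ at the cusps is unipotent, so it extends to a Kummer-\'etale local system $\Sym^k\overline{V}_p$ on the log adic space $\overline{X}^{\log}$ (log structure along $\Cusps$), with $H^1_{\et}(X_{\C_p},\Sym^k\Q_p^2)\cong H^1_{\ket}(\overline{X}^{\log}_{\C_p},\Sym^k\overline{V}_p)$. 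Being of geometric origin, $\Sym^k\overline{V}_p$ is de Rham, with associated logarithmic filtered module-with-connection $(\Sym^k\scrH)(k)$, where $\scrH = H^1_{\dR}(E^{\univ}/\overline{X})$ is the canonical extension, carrying the Hodge filtration $0\to\underline{\omega}\to\scrH\to\underline{\omega}^{-1}\to 0$ and the Gauss--Manin connection $\nabla$. I would then invoke the \emph{Hodge--Tate comparison} for de Rham local systems on proper smooth log adic spaces (Faltings; Scholze; Diao--Lan--Liu--Zhu in the logarithmic setting): $H^1_{\ket}(\overline{X}^{\log}_{\C_p},\Sym^k\overline{V}_p)\otimes_{\Q_p}\C_p$ carries a $\Gal_{\Q_p}$-stable filtration whose associated graded, with $j$ ranging over the jumps of the Hodge filtration of $H^1_{\dR}(\overline{X},\Sym^k\scrH)$, is
\[
\bigoplus_{j}\ \mathrm{gr}^{j}_{\Fil}\,H^1_{\dR}(\overline{X},\Sym^k\scrH)\ \otimes_{\Q_p}\ \C_p(k-j),
\]
the shift by $k$ in the Tate twist being dictated by $D_{\dR}(\Sym^k\overline{V}_p) = (\Sym^k\scrH)(k)$.

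Next I would compute the coherent side using the \emph{Kodaira--Spencer isomorphism} $\mathrm{KS}\colon\underline{\omega}^{\otimes 2}\xrightarrow{\sim}\Omega^1_{\overline{X}}(\log\Cusps)$. The $j$-th graded piece of the Hodge-filtered logarithmic de Rham complex $[\,\Sym^k\scrH\xrightarrow{\nabla}\Sym^k\scrH\otimes\Omega^1_{\overline{X}}(\log\Cusps)\,]$ is $[\,\underline{\omega}^{\otimes(2j-k)}\xrightarrow{\overline{\nabla}}\underline{\omega}^{\otimes(2j-k-2)}\otimes\Omega^1_{\overline{X}}(\log\Cusps)\,]$; under $\mathrm{KS}$ the target is $\underline{\omega}^{\otimes(2j-k)}$ and $\overline{\nabla}$ becomes multiplication by $j$ (the $\scrO$-linear map produced by Leibniz from the $j$ occurrences of $\mathrm{KS}$), hence an isomorphism for $1\le j\le k$. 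So those graded complexes are acyclic, and only $j=0$ (contributing $\underline{\omega}^{\otimes(-k)}$ in degree $0$) and $j=k+1$ (contributing $\underline{\omega}^{\otimes(k+2)}$ in degree $1$) survive; hence
\[
\mathrm{gr}_{\Fil}\,H^1_{\dR}(\overline{X},\Sym^k\scrH)\ =\ H^1(\overline{X},\underline{\omega}^{\otimes(-k)})\ \oplus\ H^0(\overline{X},\underline{\omega}^{\otimes(k+2)}),
\]
placed in Hodge degrees $0$ and $k+1$ respectively.

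Feeding the second display into the first gives a $\Gal_{\Q_p}$-stable two-step filtration on $H^1_{\et}(X_{\C_p},\Sym^k\Q_p^2)\otimes_{\Q_p}\C_p$ with sub $H^1(\overline{X}_{\Q_p},\underline{\omega}^{\otimes(-k)})\otimes_{\Q_p}\C_p(k)$ and quotient $H^0(\overline{X}_{\Q_p},\underline{\omega}^{\otimes(k+2)})\otimes_{\Q_p}\C_p(-1)$; I would take $\mathrm{ES}_k^{\vee}$ to be the inclusion of the sub and $\mathrm{ES}_k$ the projection onto the quotient, which are the (dual) Eichler--Shimura maps. The coherent cohomology groups descend to $\overline{X}_{\Q_p}$, so $\Gal_{\Q_p}$ acts on them trivially and the Tate twists are exactly as stated; and everything used --- $E^{\univ}$, $\underline{\omega}$, $\scrH$, $\nabla$, and the comparison isomorphisms --- is functorial for the Hecke correspondences on $\overline{X}$, so the $\mathrm{ES}$ maps are Hecke-equivariant. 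Finally the sequence splits: forgetting the coherent factors, the sub and the quotient are $\C_p$-semilinear $\Gal_{\Q_p}$-representations that are direct sums of copies of $\C_p(k)$ and of $\C_p(-1)$; since $k\ge 0$ we have $k-(-1) = k+1\ne 0$, so $H^1(\Gal_{\Q_p},\C_p(k+1)) = 0$ by Tate, the relevant $\mathrm{Ext}^1$ vanishes, and the extension splits canonically.

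The hard part is the first step --- making the logarithmic relative $p$-adic Hodge theory rigorous: constructing the (pro-)Kummer-\'etale cohomology of $\overline{X}^{\log}$, proving the de Rham and Hodge--Tate comparisons for $\Sym^k\overline{V}_p$ with genuine control of the cusps, and checking their compatibility with the Galois action and with the Hecke correspondences. Granting that package, the two remaining ingredients --- the Kodaira--Spencer degeneration and Tate's vanishing --- are short.
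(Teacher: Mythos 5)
The paper itself does not prove Theorem \ref{thm: p-adic ES decomposition}; it is quoted as background from Faltings, so there is no in-paper argument to match step by step. Your sketch is, in outline, the classical proof and I see no gap in it: extend $\Sym^k V_p$ to a Kummer \'etale local system on the log compactification, apply the (logarithmic) de Rham and Hodge--Tate comparisons to get a Galois-stable filtration with graded pieces $\mathrm{gr}^j_{\Fil}H^1_{\dR}(\overline{X},\Sym^k\scrH)\otimes\C_p(k-j)$, kill the intermediate graded pieces by the Kodaira--Spencer/dual-BGG degeneration so that only $\underline{\omega}^{-k}$ in Hodge degree $0$ and $\underline{\omega}^{\otimes k+2}$ in degree $k+1$ survive, and split the resulting two-step extension by Tate's vanishing of $H^1(\Gal_{\Q_p},\C_p(k+1))$. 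This is essentially Faltings' (and, in higher rank, Faltings--Chai's) route, and all the assertions, including the direction of the filtration and the twists $\C_p(k)$ and $\C_p(-1)$, come out as stated.

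It is worth noting, though, that this is emphatically \emph{not} the route the paper takes when it revisits such decompositions in \S\ref{subsection: classical ES}. There the authors deliberately avoid the de Rham comparison and the dual BGG resolution: they construct an explicit Hecke- and Galois-equivariant morphism of pro-Kummer \'etale sheaves $\mathrm{ES}_k^{\bfitw,\alg}\colon\scrO\!\!\scrV_k^{\vee}\to\widehat{\underline{\omega}}^{\bfitw_3^{-1}\bfitw k}(\bfitw k^{\cyc})$ by pulling back the universal Hodge--Tate surjection along $\pi_{\HT}$, then extract the coherent cohomology groups from the Leray spectral sequence for $\upsilon\colon\calX_{\proket}^{\tor}\to\calX_{\an}^{\tor}$ using $R^j\upsilon_*\widehat{\scrO}\cong\Omega^{\log,j}(-j)$, Kodaira--Spencer, and Lan's vanishing theorem; the splitting is again by Tate/Sen. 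Your approach buys the full theorem in one shot from the comparison-theorem package, but it does not interpolate: the de Rham local system and its Hodge filtration have no analogue for a general $p$-adic weight. The paper's sheaf-level construction is what deforms to the morphisms $\mathrm{ES}_{\kappa_{\calU}}^{\bfitw,r}$ of \S\ref{subsection: OES}, which is the point of the article. So your proof is a correct proof of the stated theorem, but if your goal were to set up the $p$-adic interpolation carried out in the rest of the paper, you would want to re-derive the maps $\mathrm{ES}_k$ and $\mathrm{ES}_k^{\vee}$ in the Hodge--Tate-period-map style of \S\ref{subsection: classical ES}.
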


Inspired by the groundbreaking work on $p$-adic families of modular forms by Hida, Coleman, and Coleman--Mazur, etc., it is natural to explore the possibility of $p$-adically interpolating the aforementioned results. More precisely, can we establish arrows $\mathrm{ES}_k$ and $\mathrm{ES}_k^{\vee}$ for a general \emph{$p$-adic weight} $\kappa$, or even for a \emph{family} of $p$-adic weights, so that they $p$-adically interpolate the arrows in Theorem \ref{thm: p-adic ES decomposition} in an appropriate sense? Indeed, this question has been extensively studied in recent years:

\begin{itemize}
    \item The first result in this direction was due to Andreatta--Iovita--Stevens (\cite{AIS-2015}), where they established an overconvergent Eichler--Shimura morphism. It maps from the so-called \emph{overconvergent cohomology group} (which can be viewed as a certain $p$-adic interpolation of the \'etale cohomology group $H^1_{\et}(X_{\C_p}, \Sym^k\Q_p^2)$) to the space of overconvergent modular forms. That is, they established a $p$-adic variation of the arrow $\mathrm{ES}_k$.  
    \item The method of Andreatta--Iovita--Stevens has been extended to study automorphic forms on Shimura curves (\cite{BSG-2017, BSG-2021}).
    \item In \cite{CHJ-2017}, Chojecki--Hansen--Johansson developed a perfectoid method to construct the overconvergent Eichler--Shimura morphism. They are able to (re)construct the morphisms of Andreatta--Iovita--Stevens, but for automorphic forms on compact Shimura curves. Their method makes use of the perfectoid Shimura varieties constructed by Scholze, as well as the Hodge--Tate period map \cite{Scholze-2015}.  
    \item The first result establishing the $p$-adic variation of $\mathrm{ES}_k^{\vee}$ was due to J. E. Rodr\'iguez Camargo (\cite{Rodriguez}). The key ingredient in his work is the higher Coleman theory on modular curves established by Boxer--Pilloni (\cite{BP-HCTModularCurve}).
\end{itemize}

The present paper concerns the generalisation of this question to Siegel modular forms. In the Siegel case, there is still a classical Eichler--Shimura decomposition which we would like to $p$-adically interpolate. However, it turns out the Siegel case is much more involved compared with the elliptic case. We shall present our main results in \S \ref{subsection: main results}.

\subsection{Main results}\label{subsection: main results}
We start by setting up some notations. Let $p$ be a prime number. Let $\Gamma = \prod_{\ell\neq p}\Gamma_{\ell} \subset \GSp_4(\A_{\Q}^{\infty, p})$ be a neat open compact subgroup, which serves as our tame level. We denote by $N$ the product of primes $\ell$ such that $\Gamma_{\ell}$ is not spherical. For every $n\geq 1$, consider the \emph{strict Iwahori subgroup} $\Iw_{\GSp_4, n}^+\subset \GSp_4(\Z_p)$ which consists of those matrices that are congruent to diagonal matrices modulo $p^n$. We will take $\Gamma_n = \Gamma\Iw_{\GSp_4, n}^+\subset \GSp_4(\widehat{\Z})$ to be our level structure. We work with the strict Iwahori level because our construction requires taking transposes of matrices, while the usual Iwahori subgroup is not preserved under transposition. Note that there is no harm working with the strict Iwahori level since the space of classical finite-slope forms is independent of the level structure at $p$ (cf. Proposition \ref{Proposition: finite-slope part of classical cohomology is independent to the level}). 

For every $n\in \Z_{\geq 0}$, let $X_n$ denote the Siegel threefold of level $\Gamma_n$; it is an algebraic variety over $\Q$ which classifies principally polarised abelian varieties with $\Gamma_n$-level structures. By fixing a choice of cone decomposition, each $X_n$ admits a toroidal compactification $X_n^{\tor}$ and the compactifications are compatible when we vary $n$.  There is a tautological semiabelian scheme $\pi \colon G_n^{\univ} \rightarrow X_n^{\tor}$ with identity section $e$. Consider $\underline{\omega}_n \coloneq e^* \Omega^1_{G_n^{\univ}/X_n^{\tor}}$. This is a vector bundle on $X_n^{\tor}$ of rank 2. When the level $\Gamma_n$ is clear from the context, we simply write $\underline{\omega}$ instead of $\underline{\omega}_n$. For any $k = (k_1, k_2)\in \Z^2$ with $k_1\geq k_2$, consider \[
    \underline{\omega}^k \coloneq \Sym^{k_1-k_2} \underline{\omega} \otimes (\det \underline{\omega})^{\otimes k_2}
\]
which is the \emph{classical automorphic sheaf of weight $k$}.

Moreover, let $H$ be the Levi subgroup of the Siegel parabolic subgroup of $\GSp_4$ and let $W^H$ be a set of representatives of the quotient of the Weyl groups $W_{\GSp_4}/W_H$. We follow \cite{Faltings-Chai} to choose these representatives so that $W^H = \{\bfitw_0 = \one_4, \bfitw_1, \bfitw_2, \bfitw_3\}$ where the Weyl elements are indexed by their length. See \S \ref{subsection: the group GSp4} for more details.

The following theorem of Faltings--Chai \cite[Chapter VI, Theorem 6.2]{Faltings-Chai} can be viewed as an analogue of Theorem \ref{thm: p-adic ES decomposition}.

\begin{Theorem}[$p$-adic Eichler--Shimura decomposition for $\GSp_4$]\label{thm: Faltings-Chai}
    Let $k = (k_1, k_2)\in \Z^2$ such that $k_1\geq k_2>0$. Let $V_k$ be the $\GSp_4$-representation of highest weight $k$ and let $V_k^{\vee}$ be its dual. Then there exists a Hecke- and Galois-stable $4$-step filtration $\Fil_{\ES}^{\bullet}$ on $H_{\et}^3(X_{n, \C_p}, V_k^{\vee})\otimes_{\Q_p}\C_p$, whose graded pieces give rise to a Hecke- and Galois-equivariant decomposition \begin{equation}\label{eq: FC's ES}
        \begin{array}{crl}
            H_{\et}^3(X_{n, \C_p}, V_k^{\vee})\otimes_{\Q_p}\C_p \cong & & H^0(X_{n, \Q_p}^{\tor}, \underline{\omega}^{k+(3,3)})\otimes_{\Q_p}\C_p(-3)\\
            & \oplus & H^1(X_{n, \Q_p}^{\tor}, \underline{\omega}^{\bfitw_3^{-1}\bfitw_2k+(3, 1)})\otimes_{\Q_p}\C_p(k_2-2)\\
            & \oplus &  H^2(X_{n, \Q_p}^{\tor}, \underline{\omega}^{\bfitw_3^{-1}\bfitw_1k+(2,0)})\otimes_{\Q_p}\C_p(k_1-1)\\
            & \oplus &  H^3(X_{n, \Q_p}^{\tor}, \underline{\omega}^{\bfitw_3^{-1} k})\otimes_{\Q_p}\C_p(k_1+k_2).
        \end{array}
    \end{equation}
\end{Theorem}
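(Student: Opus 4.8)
The plan is to combine Faltings' dual Bernstein--Gelfand--Gelfand (BGG) resolution with the Hodge--Tate comparison of $p$-adic Hodge theory. First, the $\GSp_4$-representation $V_k^{\vee}$ gives rise to a filtered flat vector bundle $(\calV_k^{\vee}, \nabla, \Fil^{\bullet})$ on $X_n$, which extends to $X_n^{\tor}$ with logarithmic poles along the boundary divisor $D$; the logarithmic de Rham complex $\bigl(\calV_k^{\vee}\otimes\Omega^{\bullet}_{X_n^{\tor}}(\log D), \nabla\bigr)$ computes $H^{\bullet}_{\et}(X_{n,\overline{\Q}}, V_k^{\vee})\otimes\overline{\Q}$, and carries the Hodge (stupid) filtration. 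Faltings' dual BGG resolution supplies a filtered complex, quasi-isomorphic to this logarithmic de Rham complex, whose associated graded for the Hodge filtration consists, in each Hodge degree, of a single automorphic vector bundle $\underline{\omega}^{\bfitw_3^{-1}\bfitw_i k + c_i}$ placed in cohomological degree $\ell(\bfitw_i)$, as $\bfitw_i$ runs over $W^H$ and the $c_i$ are the $\rho$-shifts imposed by the combinatorics (one finds $c_0=(0,0)$, $c_1=(2,0)$, $c_2=(3,1)$, $c_3=(3,3)$). Since the Hodge-to-de Rham spectral sequence degenerates at $E_1$, the Hodge filtration on $H^3_{\dR}(X_n^{\tor},\calV_k^{\vee})$ has graded pieces exactly the four coherent cohomology groups $H^{3-\ell(\bfitw_i)}\bigl(X^{\tor}_{n,\Q_p}, \underline{\omega}^{\bfitw_3^{-1}\bfitw_i k + c_i}\bigr)$ appearing in \eqref{eq: FC's ES}, the $\bfitw_i$-piece occupying a single Hodge degree $a_{\bfitw_i}$; unwinding the dot action gives $a_{\bfitw_0}=-(k_1+k_2)$, $a_{\bfitw_1}=1-k_1$, $a_{\bfitw_2}=2-k_2$, and $a_{\bfitw_3}=3$.

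Second, the Hodge--Tate comparison --- Faltings' version for the open Siegel threefold with nontrivial coefficients, or its reformulation through the pro-Kummer-\'etale site --- equips $H^3_{\et}(X_{n,\C_p}, V_k^{\vee})\otimes_{\Q_p}\C_p$ with a $\Gal_{\Q_p}$- and Hecke-stable filtration $\Fil^{\bullet}_{\ES}$ whose graded pieces are $\mathrm{gr}^{a}_{\Fil}H^3_{\dR}(X^{\tor}_{n,\Q_p}, \calV_k^{\vee})\otimes_{\Q_p}\C_p(-a)$. Combined with the BGG computation, this identifies the graded pieces of $\Fil^{\bullet}_{\ES}$ with the four summands of \eqref{eq: FC's ES}, the $\bfitw_i$-summand carrying the twist $\C_p(-a_{\bfitw_i})$; as $-a_{\bfitw_3}=-3$, $-a_{\bfitw_2}=k_2-2$, $-a_{\bfitw_1}=k_1-1$, and $-a_{\bfitw_0}=k_1+k_2$, these are exactly the Tate twists displayed, and since the sheaves $\underline{\omega}^{\bullet}$ are defined over $\Q_p$, the underlying $\Gal_{\Q_p}$-representation of each piece is the stated twist of a trivial representation.

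Third, for the splitting: the Hodge--Tate weights satisfy $a_{\bfitw_0}<a_{\bfitw_1}<a_{\bfitw_2}<a_{\bfitw_3}$, with consecutive gaps $k_2+1$, $k_1-k_2+1$, and $k_2+1$, all positive precisely under the hypothesis $k_1\geq k_2>0$, so the four weights are pairwise distinct. By Tate's theorem $H^i(\Gal_{\Q_p},\C_p(j))=0$ for $j\neq 0$ and $i\in\{0,1\}$, a $\C_p$-semilinear $\Gal_{\Q_p}$-representation that is an iterated extension of pieces $\C_p(-a)^{\oplus m}$ with pairwise distinct $a$ admits a unique $\Gal_{\Q_p}$-equivariant splitting; applying this to $\Fil^{\bullet}_{\ES}$ gives the decomposition \eqref{eq: FC's ES}. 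Hecke-equivariance passes through every step, since the de Rham, BGG, and Hodge--Tate constructions are functorial for the finite correspondences defining the Hecke action; hence $\Fil^{\bullet}_{\ES}$ is Hecke-stable and, by the uniqueness above, so is the splitting.

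The principal obstacle is the $p$-adic Hodge theory of the \emph{non-proper} variety $X_n$ with nontrivial coefficients: one must treat the toroidal boundary with care --- the logarithmic structure, the canonical extension of $\calV_k^{\vee}$ to $X_n^{\tor}$, and independence of the choice of cone decomposition --- and verify that the dual BGG complex is a \emph{filtered} quasi-isomorphism onto the logarithmic de Rham complex, so that the Hodge filtration governing the Hodge--Tate comparison really is the BGG one (this is also what forces the degeneration of the Hodge-to-de Rham spectral sequence). A more routine but error-prone task is the exact bookkeeping of the $\rho$-shifts $c_i$ and of the duality $V_k\leftrightarrow V_k^{\vee}$ needed to obtain the weights $\bfitw_3^{-1}\bfitw_i k + c_i$ and the twists exactly as in \eqref{eq: FC's ES}.
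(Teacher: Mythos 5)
Your sketch is, in substance, exactly the proof the paper relies on: the theorem is not reproved in the paper but cited as Faltings--Chai, Chapter VI, Theorem 6.2, and the paper itself describes that proof (in \S 5.1) as resting on the dual BGG resolution together with the comparison between $p$-adic \'etale and de Rham cohomology --- which is your argument, with the $\rho$-shifts $c_i=k_{\bfitw_i}$, the Hodge degrees $a_{\bfitw_i}$, the resulting Tate twists, and the splitting via Tate's vanishing of $H^0$ and $H^1$ of $\C_p(j)$ for $j\neq 0$ all matching the statement. The only point worth flagging is that the paper's own contribution here (\S 5.1) is a \emph{different}, comparison-free and BGG-free reconstruction of the filtration via the Hodge--Tate period map and pro-Kummer-\'etale cohomology with supports, but that route only recovers the decomposition after localising at a nice-enough (multiplicity-one, small-slope) cuspidal representation, so it supplements rather than replaces the Faltings--Chai argument you outline.
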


Our goal is to $p$-adically interpolate the decomposition in Theorem \ref{thm: Faltings-Chai}. To achieve this goal, we must move to the world of $p$-adic geometry. Firstly, let $\calX_n$ and $\calX_n^{\tor}$ be the rigid analytic varieties (viewed as adic spaces over $\Spa(\C_p, \calO_{\C_p})$) associated with $X_{n, \C_p}$ and $X_{n, \C_p}^{\tor}$. We have morphisms \[
    \begin{tikzcd}
        \calX_{\Gamma(p^{\infty})}^{\tor} \arrow[r, "\pi_{\HT}"] \arrow[d, "h_n"] & \adicFL\\
        \calX_n^{\tor}
    \end{tikzcd}
\]
where \begin{itemize}
    \item $\calX_{\Gamma(p^{\infty})}^{\tor}$ is the (toroidally compactified) perfectoid Siegel modular variety studied in \cite{Pilloni-Stroh}, 
    \item $\adicFL$ is the adic space over $\Spa(\C_p, \calO_{\C_p})$ associated with the flag variety $\Fl= P_{\mathrm{Si}} \backslash \GSp_4$, where $P_{\mathrm{Si}}$ is the Siegel parabolic subgroup, 
    \item $\pi_{\HT}$ is the Hodge--Tate period map studied in \cite{Pilloni-Stroh}, 
    \item $h_n$ is the natural projection map. 
\end{itemize}
Note that $h_n \colon \calX_{\Gamma(p^{\infty})}^{\tor} \rightarrow \calX_n^{\tor}$ is a Galois pro-Kummer \'etale cover (in the sense of \cite{Diao}) with Galois group $\Iw_{\GSp_4, n}^+$.

Secondly, our construction involves studying various $\bfitw$-loci (and open subspaces of such) of the Siegel modular varieties. Using the Bruhat decomposition $\Fl = \bigsqcup_{\bfitw \in W^H} \Fl_{\bfitw}$, we consider various loci $\Fl_{\F_p, \bfitw}$, $\Fl_{\F_p, \leq \bfitw}$, and $\Fl_{\F_p, \geq\bfitw}$ which yield loci $\adicFL_{\bfitw}$, $\adicFL_{\leq \bfitw}$, and $\adicFL_{\geq \bfitw}$ by taking tubular neighbourhoods. We also need to consider certain open subsets $\adicFL_{\bfitw, (r,s)}$ of $\adicFL_{\bfitw}$ for $r,s\in \Q_{\geq 0}$. \footnote{In particular, $\adicFL_{\bfitw, (0,0)}$ is precisely $\adicFL_{\bfitw}$.} Pulling back these loci via the Hodge--Tate period map, we obtain the corresponding loci $\calX_{n, \bfitw}^{\tor}$, $\calX_{n, \leq \bfitw}^{\tor}$, $\calX_{n, \geq \bfitw}^{\tor}$, and $\calX_{n, \bfitw, (r,s)}^{\tor}$ on the Siegel modular varieties. See \S \ref{subsection: flag variety} and \S \ref{subsection: overconvergent Siegel modular forms: perfectoid} for more details.

These loci yield a stratification 
\[
        \calX_{n}^{\tor} = \overline{\calX_{n, \leq \bfitw_3}^{\tor}} \supsetneq \overline{\calX_{n, \leq \bfitw_2}^{\tor}} \supsetneq \overline{\calX_{n, \leq \bfitw_1}^{\tor}} \supsetneq \overline{\calX_{n, \leq \bfitw_0}^{\tor}}
    \]
of $\calX_n^{\tor}$ where $\overline{\calX_{n, \leq \bfitw}^{\tor}}$ denotes the closure of $\calX_{n, \leq \bfitw}^{\tor}$ in $\calX_n^{\tor}$. Figure \ref{Fig: cartoon} illustrates the corresponding strata. The dashed lines (resp., solid lines) roughly indicate where the strata are open (resp., closed). The arrows around the $2\times 2$ box demonstrate the dynamics of the $U_p$-operator. For example, on $\overline{\calX_{n, \leq \bfitw_2}^{\tor}}\smallsetminus\overline{\calX_{n, \leq \bfitw_1}^{\tor}}$, the $U_p$-operator moves the points outward in one direction, but inward in the other direction.

\begin{figure}[ht]
    \centering
    \begin{tikzpicture}
        \draw (0,0) -- (10,0) -- (10,4) -- (0,4) -- (0,0);
        \draw (0,2) -- (10,2);
        \draw (5,0) -- (5,4);
        \draw (2.5,0.7) node[anchor = south]{$\calX_{n}^{\tor} \smallsetminus \overline{\calX_{n, \leq \bfitw_2}^{\tor}}$};
        \draw (2.5,2.7) node[anchor = south]{$\overline{\calX_{n, \leq \bfitw_2}^{\tor}} \smallsetminus \overline{\calX_{n, \leq \bfitw_1}^{\tor}}$};
        \draw (7.5,2.7) node[anchor = south]{$\overline{\calX_{n, \leq \bfitw_1}^{\tor}} \smallsetminus \overline{\calX_{n, \leq \bfitw_0}^{\tor}}$};
        \draw (7.5,0.7) node[anchor = south]{$\overline{\calX_{n, \leq \bfitw_0}^{\tor}}$};
        \draw[->] (-0.5, 3.5) -- (-0.5, 0.5);
        \draw[->] (9.5, 4.5) -- (0.5, 4.5);
        \draw[->] (10.5, 0.5) -- (10.5, 3.5);
        \draw[->] (9.5, -0.5) -- (0.5, -0.5);
        \draw[dashed] (0,1.95) -- (4.95, 1.95) -- (4.95, 0);
        \draw[dashed] (4.95, 4) -- (4.95, 2);
        \draw[dashed] (5.05, 2.05) -- (10, 2.05);
        \draw (-0.5, 2) node[anchor=east]{$U_p$};
        \draw (5, -0.5) node[anchor=north]{$U_p$};
        \draw (10.5, 2) node[anchor=west]{$U_p$};
        \draw (5, 4.5) node[anchor=south]{$U_p$};
    \end{tikzpicture}
    \caption{Stratification of $\calX_{n}^{\tor}$}
    \label{Fig: cartoon}
\end{figure}

Thirdly, we need the notion of families of $p$-adic weights. Let $\calW$ be the \emph{weight space} which parameterises $p$-adic weights (cf. \S \ref{subsection: weight space and analytic representations}). Then, by a \emph{family of $p$-adic weights}, we mean an affinoid open $\calU = \Spa(R_{\calU}, R_{\calU}^{\circ}) \hookrightarrow \calW$; we denote by $(R_{\calU}, \kappa_{\calU})$ (or just $\kappa_{\calU}$) the corresponding weight character.

Now, we are ready to $p$-adically interpolate the objects on both sides of \eqref{eq: FC's ES}. On the side of coherent cohomology groups, for a suitable $r\in \Q_{\geq 0}$, we can define the \emph{$(\bfitw, r)$-overconvergent automorphic sheaves} $\underline{\omega}_{n, r}^{\bfitw_3^{-1}\bfitw\kappa_{\calU}}$ on $\calX_{n, \bfitw, (r,r)}^{\tor}$ following a similar construction as in \cite{DRW} (cf. \S \ref{subsection: overconvergent Siegel modular forms: perfectoid}). More precisely, sections of $\underline{\omega}_{n, r}^{\bfitw_3^{-1}\bfitw\kappa_{\calU}}$ consist of functions on $\calX_{\Gamma(p^{\infty}), \bfitw, (r,r)}^{\tor}$ which are invariant under the action of $\Iw_{\GSp_4, n}^+$ up to a certain automorphy factor. Indeed, when $\bfitw=\bfitw_3$, the sheaf $\underline{\omega}_{n, r}^{\bfitw_3^{-1}\bfitw\kappa_{\calU}}=\underline{\omega}_{n, r}^{\kappa_{\calU}}$ is precisely the overconvergent automorphic sheaf constructed in \emph{loc. cit.} whose global sections give rise to the space of overconvergent Siegel modular forms.\footnote{This also explains the notation `$\bfitw_3^{-1}\bfitw\kappa_{\calU}$' which is designed to match up with the notation in \cite{DRW}.} Following \cite{BP-HigherColeman}, we would like to study (variants of) the cohomology groups of the complex
\begin{equation}\label{eq: BP complex in intro}
    R\Gamma_{\calZ_{n, \bfitw}}(\calX_{n, \bfitw, (r,r)}^{\tor}, \,\,\underline{\omega}_{n, r}^{\bfitw_3^{-1}\bfitw \kappa_{\calU}}),
\end{equation}
where $\calZ_{n, \bfitw}$ is a certain suitable support condition depending on $\bfitw$ and $n$.\footnote{For technical reasons, in the main body of the paper, besides $\calX_{n, \bfitw, (r, r)}^{\tor}$, we will also look at the locus $\calX_{n, \bfitw}^{\tor, \bfitu_p}$ (see \eqref{eq: some technical loci} for its definition) following the spirit of \cite{BP-HigherColeman}. In fact, there is a quasi-isomorphism $R\Gamma_{\calZ_{n, \bfitw}}(\calX_{n, \bfitw, (r,r)}^{\tor}, \underline{\omega}_{n, r}^{\bfitw_3^{-1}\bfitw \kappa_{\calU}}) \cong R\Gamma_{\calZ_{n, \bfitw}}(\calX_{n, \bfitw}^{\tor, \bfitu_p}, \underline{\omega}_{n, r}^{\bfitw_3^{-1}\bfitw \kappa_{\calU}})$ due to \eqref{eq: change of ambient spaces}. } According to the classicality results proved in \cite[Theorem 5.12.3]{BP-HigherColeman}, the complex indeed $p$-adically interpolates the coherent cohomology groups of the classical automorphic sheaves. Recall that on certain strata (cf. Figure \ref{Fig: cartoon}) the $U_p$ operator may move points outward. The support condition remedies this discrepancy. In particular, the $U_p$ operator indeed act on these cohomology groups with support.

On the other hand, to $p$-adically interpolate the \'etale cohomology groups in \eqref{eq: FC's ES}, we consider the modules of distributions $D_{\kappa_{\calU}}^r$ of Ash--Stevens. These modules of distributions are designed to $p$-adically interpolate $V_k^{\vee}$'s. For our purpose, we further consider the associated sheaf of $\widehat{\scrO}_{\calX_{n, \proket}^{\tor}}$-modules $\scrO\!\!\scrD_{\kappa_{\calU}}^r$ on the pro-Kummer \'etale site $\calX_{n, \proket}^{\tor}$ and consider the pro-Kummer \'etale cohomology groups $H^3_{\proket}(\calX_n^{\tor}, \scrO\!\!\scrD_{\kappa_{\calU}}^r)$ (cf. \S \ref{section: OC}). In order to construct an explicit filtration of $H^3_{\proket}(\calX_n^{\tor}, \scrO\!\!\scrD_{\kappa_{\calU}}^r)$ interpolating the filtration in Theorem \ref{thm: Faltings-Chai}, we need a theory of \emph{pro-Kummer \'etale cohomology with support}. This is a key new input of our paper which is discussed in \S \ref{section: cohomology with supports}. In particular, there is a spectral sequence 
\begin{equation*}
    E_1^{i,j} = H_{\overline{\calX_{n, \leq \bfitw_{3-j}}^{\tor}}\smallsetminus \overline{\calX_{n, \leq \bfitw_{3-j-1}}^{\tor}}, \proket}^{i+j}(\calX_n^{\tor} \smallsetminus \overline{\calX_{n, \leq \bfitw_{3-j-1}}^{\tor}}, \scrO\!\!\scrD_{\kappa_{\calU}}^r) \Rightarrow H_{\proket}^{i+j}(\calX_n^{\tor}, \scrO\!\!\scrD_{\kappa_{\calU}}^r)
\end{equation*}
which allows us to compute the desired pro-Kummer \'etale cohomology group in terms of various cohomology groups with supports.

Finally, putting everything together, we would like to relate the aforementioned pro-Kummer \'etale cohomology groups (with or without supports) to the cohomology groups of the complex \eqref{eq: BP complex in intro}. The key is to construct Hecke- and Galois-equivariant morphisms
\begin{equation}\label{eq: ES map at the level of proket sheaves; intro}
    \mathrm{ES}_{\kappa_{\calU}}^{\bfitw, r} \colon \scrO\!\!\scrD_{\kappa_{\calU}}^r \rightarrow \widehat{\underline{\omega}}_{n, r}^{\bfitw_3^{-1}\bfitw\kappa_{\calU}}(\bfitw\kappa_{\calU}^{\cyc})
\end{equation}
of sheaves on the pro-Kummer \'etale site $\calX_{n, \bfitw, (r, r), \proket}^{\tor}$. Here, $\widehat{\underline{\omega}}_{n, r}^{\bfitw_3^{-1}\bfitw\kappa_{\calU}}$ is the completed pullback of $\underline{\omega}_{n, r}^{\bfitw_3^{-1}\bfitw\kappa_{\calU}}$ to the pro-Kummer \'etale site, and $\bfitw\kappa_{\calU}^{\cyc}$ stands for the `cyclotomic twist' of $\kappa_{\calU}$ defined by
\[
    \bfitw\kappa_{\calU}^{\cyc} = \left\{ \begin{array}{ll}
        0, & \text{if }\bfitw = \bfitw_3 \\
        \kappa_{\calU, 2}(\chi_{\cyc}), & \text{if }\bfitw = \bfitw_2\\
        \kappa_{\calU, 1}(\chi_{\cyc}), & \text{if }\bfitw = \bfitw_1\\
        \kappa_{\calU,1}(\chi_{\cyc})\kappa_{\calU, 2}(\chi_{\cyc}) & \text{if }\bfitw = \bfitw_0 = \one_4
    \end{array} \right. 
\] 
where $\kappa_{\calU} = (\kappa_{\calU, 1}, \kappa_{\calU, 2})$ and $\chi_{\cyc}: \mathrm{Gal}_{\Q_p}\rightarrow \Z_p^{\times}$ stands for the $p$-adic cyclotomic character. When $\bfitw = \bfitw_3$, the morphism $\mathrm{ES}_{\kappa_{\calU}}^{\bfitw, r}$ is the same as the one studied in \cite{DRW}.

Our main constructions are summarised in the following theorem.

\begin{Theorem}[Theorem \ref{Theorem: big OES diagram}]\label{thm: main thm intro}
    The morphisms $\mathrm{ES}_{\kappa_{\calU}}^{\bfitw, r}$ induces a natural Hecke- and Galois-equivariant diagram \[
    \begin{tikzcd}
            \scalemath{1}{ H^3_{\proket}(\calX_n^{\tor}, \scrO\!\!\scrD_{\kappa_{\calU}}^r)^{\fs} } \arrow[r] & \scalemath{1}{ H^0(\calX_{n, \bfitw_3, (r,r)}^{\tor}, \underline{\omega}_{n, r}^{\kappa_{\calU} + (3,3)})^{\fs}(-3) }\\
            \scalemath{1}{ H^3_{\overline{\calX_{n, \leq \bfitw_2}^{\tor}}, \proket}(\calX_n^{\tor}, \scrO\!\!\scrD_{\kappa_{\calU}}^r)^{\fs} } \arrow[r] \arrow[u]  & \scalemath{1}{ H_{\calZ_{n, \bfitw_2}}^1(\calX_{n, \bfitw_2, (r,r)}^{\tor}, \underline{\omega}_{n, r}^{\bfitw_3^{-1}\bfitw_2\kappa_{\calU} + (3,1)})^{\fs}(\kappa_{\calU, 2} - 2) }\\
            \scalemath{1}{ H^3_{\overline{\calX_{n, \leq \bfitw_1}^{\tor}}, \proket}(\calX_n^{\tor}, \scrO\!\!\scrD_{\kappa_{\calU}}^r)^{\fs} } \arrow[r] \arrow[u]  & \scalemath{1}{ H_{\calZ_{n, \bfitw_1}}^2(\calX_{n, \bfitw_1, (r,r)}^{\tor}, \underline{\omega}_{n, r}^{\bfitw_3^{-1}\bfitw_1\kappa_{\calU} + (2,0)})^{\fs}(\kappa_{\calU, 1} - 1) }\\
            \scalemath{1}{ H_{\overline{\calX_{n, \one_4}^{\tor}}, \proket}^3(\calX_n^{\tor}, \scrO\!\!\scrD_{\kappa_{\calU}}^r)^{\fs} } \arrow[r]\arrow[u]  & \scalemath{1}{ H_{\calZ_{n, \one_4}}^3(\calX_{n, \one_4, (r,r)}^{\tor}, \underline{\omega}_{n, r}^{\bfitw_3^{-1}\kappa_{\calU}})^{\fs}(\kappa_{\calU, 1}+\kappa_{\calU, 2}) }
        \end{tikzcd}
    \]
    where the superscript `$\bullet^{\fs}$' stands for `taking the finite-slope part'. 
\end{Theorem}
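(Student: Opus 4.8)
The plan is to construct the four horizontal morphisms in the diagram and then verify their compatibility with the vertical maps, all equivariantly for the Hecke and Galois actions. The starting point is the morphism of pro-Kummer \'etale sheaves $\mathrm{ES}_{\kappa_{\calU}}^{\bfitw, r} \colon \scrO\!\!\scrD_{\kappa_{\calU}}^r \rightarrow \widehat{\underline{\omega}}_{n, r}^{\bfitw_3^{-1}\bfitw\kappa_{\calU}}(\bfitw\kappa_{\calU}^{\cyc})$ on $\calX_{n, \bfitw, (r,r), \proket}^{\tor}$, whose construction is the technical heart of the preceding sections (it is built from the Hodge--Tate period map, the identification of sections of $\scrO\!\!\scrD_{\kappa_{\calU}}^r$ and of $\underline{\omega}_{n,r}^{\bfitw_3^{-1}\bfitw\kappa_{\calU}}$ as automorphy-factor-equivariant functions on the perfectoid cover $\calX_{\Gamma(p^\infty)}^{\tor}$, and a fixed polynomial map between the relevant analytic representations). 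First I would apply $R\Gamma$ with the appropriate support condition: for $\bfitw = \bfitw_3$ one takes $H^0$ on $\calX_{n, \bfitw_3, (r,r)}^{\tor}$ with no support, while for $\bfitw \in \{\bfitw_2, \bfitw_1, \one_4\}$ one applies $R\Gamma_{\calZ_{n,\bfitw}}$ on $\calX_{n, \bfitw, (r,r)}^{\tor}$ and takes the degree matching the codimension, namely $1$, $2$, $3$ respectively. On the coherent side one uses the comparison between pro-Kummer \'etale cohomology of $\widehat{\underline{\omega}}_{n,r}^{\bullet}$ and the coherent cohomology of $\underline{\omega}_{n,r}^{\bullet}$ (a primitive-comparison / projection-formula type statement, already available from the $H^0$ case of \cite{DRW} and its generalization, together with the vanishing of higher direct images along the pro-Kummer \'etale-to-analytic projection); the cyclotomic twist $\bfitw\kappa_{\calU}^{\cyc}$ precisely records the Tate twist picked up in this comparison, matching the twists $(-3)$, $(\kappa_{\calU,2}-2)$, $(\kappa_{\calU,1}-1)$, $(\kappa_{\calU,1}+\kappa_{\calU,2})$ in the statement.

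Next I would assemble the vertical maps on the left column. These come from the spectral sequence in the introduction, or more directly from the long exact sequences of the triples of closed subspaces $\overline{\calX_{n,\leq\bfitw_{j}}^{\tor}}$: the boundary/edge maps $H^3_{\overline{\calX_{n,\leq\bfitw_j}^{\tor}}, \proket}(\calX_n^{\tor}, \scrO\!\!\scrD_{\kappa_{\calU}}^r) \to H^3_{\overline{\calX_{n,\leq\bfitw_{j+1}}^{\tor}}, \proket}(\calX_n^{\tor}, \scrO\!\!\scrD_{\kappa_{\calU}}^r)$, and finally to $H^3_{\proket}(\calX_n^{\tor}, \scrO\!\!\scrD_{\kappa_{\calU}}^r)$, are built into the formalism of cohomology with supports developed in \S \ref{section: cohomology with supports}. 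The commutativity of each square then reduces to a statement about maps of (filtered) complexes before passing to cohomology: the morphism $\mathrm{ES}_{\kappa_{\calU}}^{\bfitw, r}$ on the smaller stratum $\calX_{n,\bfitw,(r,r)}^{\tor}$ is the restriction of the one on the larger stratum, because all of them are restrictions of a single morphism of sheaves over the ambient perfectoid space; hence the relevant restriction/corestriction maps intertwine them. After restricting to the finite-slope part for the $U_p$-operator --- which is where the support conditions $\calZ_{n,\bfitw}$ become essential, since $U_p$ is only well-behaved (potent, locally finite) on the cohomology with supports, by the dynamics illustrated in Figure \ref{Fig: cartoon} and the corresponding results of \cite{BP-HigherColeman} --- one obtains the diagram. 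Hecke-equivariance away from $p$ is formal (all constructions are functorial in prime-to-$p$ level); Galois-equivariance follows because $\pi_{\HT}$, the period sheaves, and the automorphy factors are all Galois-equivariant, with the Tate twists bookkept by $\bfitw\kappa_{\calU}^{\cyc}$.

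The main obstacle, I expect, is not the construction of the horizontal arrows --- that is essentially a repackaging of $\mathrm{ES}_{\kappa_{\calU}}^{\bfitw, r}$ together with the coherent comparison --- but rather verifying that everything is simultaneously compatible with the support conditions and with passing to finite-slope parts. Concretely: one must check that the morphism $\mathrm{ES}_{\kappa_{\calU}}^{\bfitw, r}$ sends the support condition governing $\scrO\!\!\scrD_{\kappa_{\calU}}^r$ (coming from the stratification of $\calX_n^{\tor}$) into the support condition $\calZ_{n,\bfitw}$ governing the coherent sheaf, so that it induces a map on cohomology with the prescribed supports; this requires understanding how $\pi_{\HT}$ interacts with the two stratifications (the Bruhat stratification of $\adicFL$ and its pullback), and is the point where the geometry of the $\bfitw$-loci $\calX_{n,\bfitw,(r,s)}^{\tor}$ and the technical loci $\calX_{n,\bfitw}^{\tor,\bfitu_p}$ must be used carefully. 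A secondary subtlety is that taking the finite-slope part does not commute with everything on the nose; one needs the cohomology groups to be equipped with a slope decomposition with respect to a \emph{compact} operator, which again relies on the results of \cite{BP-HigherColeman} adapted to our pro-Kummer \'etale setting, and one should check that the horizontal and vertical maps are all equivariant for the relevant $U_p$ before projecting. Once these compatibilities are in place, the diagram follows by functoriality.
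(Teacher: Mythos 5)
Your overall architecture is the paper's: the left column is the tower of corestriction maps coming from the formalism of pro-Kummer \'etale cohomology with supports applied to the stratification by the $\overline{\calX_{n,\leq\bfitw_i}^{\tor}}$ (the diagram feeding the spectral sequence of Proposition \ref{Proposition: a spectral sequence for stratifications}), the passage from those supports to the $U_p$-adapted supports $\calZ_{n,\bfitw}$ is exactly Theorem \ref{Theorem: change support condition for overconvergent cohomology} on finite-slope parts, and you correctly single out this change of support, together with the potent compactness of $U_p$ on cohomology with supports (Proposition \ref{Proposition: Up is nice on proket cohomology with support with big coefficients}), as the delicate points.

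There is, however, a concrete error in your construction of the horizontal arrows. You claim the passage from $H^3_{\calZ_{n,\bfitw},\proket}$ of $\widehat{\underline{\omega}}_{n,r}^{\bfitw_3^{-1}\bfitw\kappa_{\calU}}$ to $H^{3-l(\bfitw)}_{\calZ_{n,\bfitw}}$ of the coherent sheaf rests on ``the vanishing of higher direct images along the pro-Kummer \'etale-to-analytic projection.'' Those higher direct images do \emph{not} vanish: $R^{i}\nu_*\widehat{\scrO}_{\calX_{n,\proket}^{\tor}}\cong\Omega^{\log,i}_{\calX_n^{\tor}}(-i)$, and it is precisely the nonvanishing $R^{l(\bfitw)}\nu_*$ term that produces the target in degree $3-l(\bfitw)$, the extra Tate twist by $-l(\bfitw)$ visible in the statement, and --- via the Kodaira--Spencer isomorphism $\Omega^{\log,l(\bfitw)}\cong\underline{\omega}^{k_{\bfitw}}$ --- the weight shifts $+(3,3)$, $+(3,1)$, $+(2,0)$, $+(0,0)$, none of which your argument accounts for. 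What makes the edge map of the Leray spectral sequence for $\nu$ land in the single graded piece $(3-l(\bfitw),\,l(\bfitw))$ is the vanishing, after passing to finite-slope parts, of coherent cohomology with support in $\calZ_{n,\bfitw}$ in degrees below $3-l(\bfitw)$ (\cite[Theorem 6.7.3]{BP-HigherColeman}, as invoked in Proposition \ref{Proposition: OES at w}); without this input there is no map from $H^3$ to a single coherent cohomology group. Relatedly, the Tate twists in the statement decompose as $\bfitw\kappa_{\calU}^{\cyc}-l(\bfitw)$, the first summand coming from the sheaf-level morphism $\mathrm{ES}_{\kappa_{\calU}}^{\bfitw,r}$ via $\pi_{\HT}$ and the second from $R^{l(\bfitw)}\nu_*$, rather than entirely from the coherent comparison as you suggest. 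Once the horizontal arrows are built this way, the rest of your argument goes through as in the paper.
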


The horizontal arrows in the diagram are referred to as the \emph{overconvergent Eichler--Shimura morphisms}, as indicated in the title of the article. The cohomology groups appearing on the left half of the diagram give rise to a filtration of $H^3_{\proket}(\calX_n^{\tor}, \scrO\!\!\scrD_{\kappa_{\calU}}^r)^{\fs}$ which $p$-adically interpolates the filtration $\Fil^{\bullet}_{\ES}$ in Theorem \ref{thm: Faltings-Chai}, while the cohomology groups on the right half of the diagram $p$-adically interpolate the cohomology groups of classical automorphic sheaves. 

Can we do better? One might hope to achieve an interpolation of the Eichler--Shimura decomposition itself, rather than merely interpolating the filtration. That is to ask when do the cohomology groups on the right half of the diagram coincide with the graded pieces of the filtration (maybe after further taking the `small-slope part'). Indeed, we are able to prove this locally at a \emph{nice-enough} point (cf. Definition \ref{Defn: nice enough}; also see Assumption \ref{Assumption: Multiplicity One for cuspidal automorphic representations} and Remark \ref{Remark:1dimensional}) on the middle-degree eigenvariety $\calE$ constructed in \S \ref{subsection: eigenvarieties}.

\begin{Theorem}[Theorem \ref{Theorem: overconvergent Eichler--Shimura decomposition}]\label{Theorem: OES decomposition, intro}
Let $\calE$ be the middle degree eigenvariety and let $\mathrm{wt}:\calE\rightarrow \calW$ be the weight map. Let $\Pi$ be a nice-enough automorphic representation for $\GSp_4$ which defines a point $x_{\Pi}$ on $\calE$. Then there exists an affinoid neighbourhood $\calV\subset \calE$ of $x_{\Pi}$ such that
\begin{enumerate}
\item[(i)] $\calV$ is a connected component of $\mathrm{wt}^{-1}(\calU)$ where $\calU=\Spa(R_{\calU}, R_{\calU}^{\circ})\subset \calW$ is an affinoid subspace corresponding to a family of $p$-adic weights $(R_{\calU}, \kappa_{\calU})$;
\item[(ii)] There exists $h\in \Q_{\geq 0}$ such that $(R_{\calU}, \kappa_{\calU})$ is slope-$h$-adapted (see Theorem \ref{Theorem: overconvergent Eichler--Shimura decomposition}); 
\item[(iii)]The decreasing filtration $\Fil_{\mathrm{ES}, \calV}^{\bullet}$ on $e_{\calV}H_{\proket}^3(\calX_n^{\tor}, \scrO\!\!\scrD_{\kappa_{\calU}}^r)^{\leq h}$ defined by 
\begin{itemize}
   \item $\Fil^0_{\mathrm{ES}, \calV} \coloneq e_{\calV}H_{\proket}^3(\calX_n^{\tor}, \scrO\!\!\scrD_{\kappa_{\calU}}^r)^{\leq h}$;
   \item $\Fil_{\ES, \calV}^{3-i}  \coloneq e_{\calV} \image\left( H^3_{\overline{\calX_{n, \leq \bfitw_i}^{\tor}}, \proket}(\calX_n^{\tor}, \scrO\!\!\scrD_{\kappa_{\calU}}^r)^{\leq h} \rightarrow H_{\proket}^3(\calX_n^{\tor}, \scrO\!\!\scrD_{\kappa_{\calU}}^r)^{\leq h} \right)$ for $i=0,1,2$;
\item $\Fil^4_{\mathrm{ES}, \calV} \coloneq 0$
\end{itemize} 
is Hecke- and Galois-stable, where $e_{\calV}$ is the idempotent operator corresponding to $\calV$ and `$\leq h$' stands for the slope $\leq h$-part.
\item[(iv)] The graded pieces of the filtration $\Fil_{\mathrm{ES}, \calV}^{\bullet}$ admit canonical Hecke- and Galois-equivariant isomorphisms
\[
        \Gr_{\mathrm{ES}, \calV}^{3-i} \cong e_{\calV}H_{\calZ_{n, \bfitw_i}}^{3-i}(\calX^{\tor}_{n, \bfitw_i, (r, r)}, \underline{\omega}_{n, r}^{\bfitw_3^{-1}\bfitw_i\kappa_{\calU}+k_{\bfitw_i}})^{\leq h}(\bfitw_i\kappa_{\calU}^{\cyc} - i), 
    \] 
of $R_{\calU}\widehat{\otimes}\C_p$-modules, where \[
        k_{\bfitw_i} = \left\{ \begin{array}{cc}
            (3,3), & i=3 \\
            (3,1), & i=2 \\
            (2,0), & i=1 \\
            (0,0), & i=0
        \end{array}\right. .
        \]
\end{enumerate}
Moreover, there is a Hecke- and Galois-equivariant decomposition 
    \[
        e_{\calV}H_{\proket}^3(\calX_n^{\tor}, \scrO\!\!\scrD_{\kappa_{\calU}}^r)^{\leq h} \cong \bigoplus_{i=0}^{3} e_{\calV}H_{\calZ_{n, \bfitw_i}}^{3-i}(\calX^{\tor}_{n, \bfitw_i, (r,r)},\,\,\underline{\omega}_{n, r}^{\bfitw_3^{-1}\bfitw_i\kappa_{\calU}+k_{\bfitw_i}})^{\leq h}(\bfitw_i\kappa_{\calU}^{\cyc} - i)
    \] of $R_{\calU}\widehat{\otimes}\C_p$-modules, specialising to the Eichler--Shimura decompositions in Theorem \ref{thm: Faltings-Chai}. 
\end{Theorem}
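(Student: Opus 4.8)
The plan is to build the statement from the big overconvergent Eichler--Shimura diagram of Theorem~\ref{thm: main thm intro}, the classicality theorems of Boxer--Pilloni \cite[Theorem 5.12.3]{BP-HigherColeman} together with the classicality of overconvergent pro-Kummer \'etale cohomology, and the Faltings--Chai decomposition of Theorem~\ref{thm: Faltings-Chai}, and then to split the resulting filtration by a weight argument on the Galois side. First I would invoke the eigenvariety construction of \S\ref{subsection: eigenvarieties}, which packages the finite-slope Hecke action on $H^3_{\proket}(\calX_n^{\tor},\scrO\!\!\scrD_{\kappa_\calU}^r)^{\fs}$ into $\calE$ with its weight map $\wt$. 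Because $\Pi$ is nice-enough (Definition~\ref{Defn: nice enough}, Assumption~\ref{Assumption: Multiplicity One for cuspidal automorphic representations}, Remark~\ref{Remark:1dimensional}), after shrinking a neighbourhood $\calU$ of $\wt(x_\Pi)$ one can choose $h$ so that $(R_\calU,\kappa_\calU)$ is slope-$h$-adapted: the slope-$\leq h$ parts of the pro-Kummer \'etale complex and of each of the coherent complexes $R\Gamma_{\calZ_{n,\bfitw_i}}(\calX^{\tor}_{n,\bfitw_i,(r,r)},\underline{\omega}_{n,r}^{\bfitw_3^{-1}\bfitw_i\kappa_\calU+k_{\bfitw_i}})$ are perfect, indeed finite projective in the relevant degree, over $R_\calU$, by higher Coleman theory. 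Setting $\calV$ to be the connected component of $\wt^{-1}(\calU)$ through $x_\Pi$ and $e_\calV$ the associated idempotent gives (i) and (ii); multiplicity one ensures that on a Zariski-dense set of classical points of $\calV$ the $e_\calV$-part of classical middle-degree \'etale cohomology is (up to the fixed multiplicity) a single copy of the Galois representation attached to the corresponding classical form, and each classical coherent cohomology group is at most one-dimensional on the $e_\calV$-part.

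For (iii), I would apply the pro-Kummer \'etale spectral sequence with supports of \S\ref{section: cohomology with supports} attached to the stratification $\calX_n^{\tor}=\overline{\calX_{n,\leq\bfitw_3}^{\tor}}\supsetneq\cdots\supsetneq\overline{\calX_{n,\leq\bfitw_0}^{\tor}}$. The transition maps $H^3_{\overline{\calX_{n,\leq\bfitw_i}^{\tor}},\proket}(\calX_n^{\tor},\scrO\!\!\scrD_{\kappa_\calU}^r)^{\leq h}\to H^3_{\proket}(\calX_n^{\tor},\scrO\!\!\scrD_{\kappa_\calU}^r)^{\leq h}$ of that spectral sequence have images whose $e_\calV$-parts are the filtration steps $\Fil^{3-i}_{\ES,\calV}$ for $i=0,1,2$. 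Hecke-stability follows from functoriality of the prime-to-$p$ Hecke action together with the fact that the support conditions $\calZ_{n,\bfitw_i}$ are arranged precisely so that $U_p$ acts compatibly on every term despite the $U_p$-dynamics depicted in Figure~\ref{Fig: cartoon}; Galois-stability follows because the loci $\overline{\calX_{n,\leq\bfitw_i}^{\tor}}$ and the support conditions are all cut out $\Gal_{\Q_p}$-equivariantly via $\pi_{\HT}$. This gives (iii).

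For (iv), I would exploit Theorem~\ref{thm: main thm intro}: composing the $i$-th horizontal overconvergent Eichler--Shimura morphism with $e_\calV$ and restricting to slope-$\leq h$, the compatibility of the sheaf-level maps \eqref{eq: ES map at the level of proket sheaves; intro} with the support spectral sequence forces it to annihilate $\Fil^{3-i+1}_{\ES,\calV}$ (the target sees only the open stratum complementary to $\overline{\calX_{n,\leq\bfitw_{i-1}}^{\tor}}$), so it descends to a canonical map $\Gr^{3-i}_{\ES,\calV}\to e_\calV H^{3-i}_{\calZ_{n,\bfitw_i}}(\calX^{\tor}_{n,\bfitw_i,(r,r)},\underline{\omega}_{n,r}^{\bfitw_3^{-1}\bfitw_i\kappa_\calU+k_{\bfitw_i}})^{\leq h}(\bfitw_i\kappa_\calU^{\cyc}-i)$. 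At each classical point of the dense set above, classicality on the coherent side \cite[Theorem 5.12.3]{BP-HigherColeman} identifies the target with the classical coherent cohomology group appearing in Theorem~\ref{thm: Faltings-Chai}, classicality of overconvergent cohomology identifies $e_\calV H^3_{\proket}$ with the $e_\calV$-part of $H^3_{\et}(X_{n,\C_p},V_k^\vee)\otimes_{\Q_p}\C_p$, and the map becomes exactly one of the four isomorphisms in the Faltings--Chai decomposition. Since source and target are finite projective over $R_\calU$ by slope-adaptedness and the map is an isomorphism on a Zariski-dense set of points, it is an isomorphism of $R_\calU\widehat\otimes\C_p$-modules; this proves (iv).

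Finally, for the decomposition: as $R_\calU\widehat\otimes\C_p$-semilinear $\Gal_{\Q_p}$-representations the graded pieces $\Gr^{3-i}_{\ES,\calV}$ carry Galois actions twisted by the characters $\bfitw_i\kappa_\calU^{\cyc}-i$, whose Sen weights at a classical weight $(k_1,k_2)$ with $k_1\geq k_2>0$ are the four \emph{distinct} integers $-3,\, k_2-2,\, k_1-1,\, k_1+k_2$; after shrinking $\calU$ the corresponding Sen weights remain pairwise distinct over $\calV$, differing by units in $R_\calU$. Hence the continuous cohomology groups $H^1_{\cts}(\Gal_{\Q_p},(R_\calU\widehat\otimes\C_p)(\chi))$ controlling extensions between the graded pieces vanish, so $\Fil^\bullet_{\ES,\calV}$ splits; the Galois-stable splitting is unique, hence canonical and automatically Hecke-equivariant, and specialising at $x_\Pi$ and at the classical points recovers Theorem~\ref{thm: Faltings-Chai}. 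I expect the main obstacle to be step (iv): showing that the overconvergent Eichler--Shimura morphisms are genuine isomorphisms on graded pieces after passing to the $e_\calV$, slope-$\leq h$ part, since that is exactly where one must combine higher Coleman theory (controlling the coherent side, the finiteness of cohomology with supports, and the $U_p$-dynamics) with the classicality and multiplicity-one inputs; the Galois weight argument for the splitting, once the graded pieces are identified, is comparatively routine.
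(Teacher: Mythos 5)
Your overall architecture (big diagram of Theorem \ref{thm: main thm intro} $+$ classicality $+$ Faltings--Chai at classical points $+$ a Sen-operator splitting at the end) matches the paper, and your treatment of (i), (ii) and of the final splitting is essentially the argument in Step 6 of the paper's proof. But the step you yourself flag as the main obstacle, (iv), is resolved incorrectly. First, your density argument is invalid: a map of finite projective $R_{\calU}\widehat{\otimes}\C_p$-modules of the same rank that is an isomorphism on a Zariski-dense set of points need not be an isomorphism (multiplication by $T$ on $\Q_p\langle T\rangle$ is the standard counterexample); at best you get injectivity with torsion cokernel. Second, the dense set itself is not available: the multiplicity-one hypothesis (Assumption \ref{Assumption: Multiplicity One for cuspidal automorphic representations}) and the small-slope condition are imposed only at $\Pi$, so at other classical points of $\calV$ you do not know that the classical Eichler--Shimura map induces isomorphisms on graded pieces. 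Third, the descent of the ES morphism to the graded piece is not a formal consequence of ``compatibility with the support spectral sequence'': the map $\mathrm{ES}$ is defined on $H^3_{\overline{\calX_{n,\leq\bfitw_i}^{\tor}},\proket}$, not on its image $\Fil^{3-i}$ in $H^3_{\proket}$, and one must prove the nontrivial containment $f_i(\ker g_i)\subseteq\Fil^{3-i+1}_{\ES}$ (in the paper's notation) before any map between $\Gr^{3-i}$ and the coherent group exists.

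The paper's route is entirely local at $x_{\Pi}$ and avoids density. Using Lan's vanishing theorem (cuspidality forces concentration in a single cohomological degree), Boxer--Pilloni's base-change isomorphism for coherent cohomology with supports, and Stevens's control theorem, one shows via Nakayama and \cite[Lemma 2.9]{BDJ22} that after localising at $\frakm_k$ the coherent groups are free of rank $1$ and $H^3_{\proket}(\calX_n^{\tor},\scrO\!\!\scrD_{\kappa_{\calU}}^r)^{\leq h}_{\frakm_{\Pi}}$ is free of rank $4$; a Snake-lemma/lifting-of-basis argument then shows each $\Fil^{3-i}$ is free of rank $i+1$ with the correct specialisation. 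The map $g_i$ to the coherent group is surjective by Nakayama (it is surjective at the fibre $k$ by Proposition \ref{Prop: recover classical ES}), and one shows $f_i(\ker g_i)=\Fil^{3-i+1}$ by bounding the number of generators of $f_i(\ker g_i)$ via its specialisation. This produces a surjection between free rank-$1$ modules over a local ring, which is automatically an isomorphism; the family statement then follows by shrinking $\calU$. To repair your proof you would need to supply exactly these freeness and Nakayama arguments in place of the density step.
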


\begin{Remark}
A key contribution of Theorem \ref{Theorem: OES decomposition, intro} is that it determines the Hodge--Tate--Sen weights in the $p$-adic interpolation of overconvergent cohomology groups: the weights are precisely $\bfitw_i\kappa_{\calU}^{\cyc} - i$ for $i=0,1,2,3$. We pin down these weights when we calculate the Tate twists in the Hecke- and Galois-equivariant morphisms (\ref{eq: ES map at the level of proket sheaves; intro}) on each stratum of the stratification (cf. Figure \ref{Fig: cartoon}). In particular, our method is completely different from the one in \cite{Faltings-Chai} (cf. Theorem \ref{thm: Faltings-Chai}).
\end{Remark}

As an application of Theorem \ref{Theorem: OES decomposition, intro}, we prove the following.

\begin{Corollary}[Corollary \ref{Corollary: etaleness of the weight map} and \ref{Corollary: big Galois representation}]\label{Corollary: big Galois repl intro}
    Let $\Pi$, $x_{\Pi}$, $\calV$, $\kappa_{\calU}$, and $R_{\calU}$ be as in Theorem \ref{Theorem: OES decomposition, intro}. Then we have:
     \begin{enumerate}
        \item[(1)] The weight map $\wt \colon \calE \rightarrow \calW$ is \'etale at $x_{\Pi}$. 
        \item[(2)] There exists a family of Galois representations \[
            \rho_{\calV}
         \colon  \Gal_{\Q} \rightarrow \GL_4(R_{\calU})
        \]
        attached to $\calV$ such that \begin{enumerate}
            \item[(i)] $\rho_{\calV}$ is unramified at $\ell \nmid Np$ and the characteristic polynomial of the geometric Frobenius at $\ell$ agrees with the Hecke polynomial at $\ell$; 
            \item[(ii)] $\rho_{\calV}|_{\Gal_{\Q_p}}$ admits a Galois-stable decreasing filtration and has Hodge--Tate--Sen weight $(-3, \kappa_{\calU, 2}-2, \kappa_{\calU, 1}-1, \kappa_{\calU, 1}+\kappa_{\calU, 2})$, where the ordering respects the indices of the graded pieces of the filtration. 
        \end{enumerate}
    \end{enumerate}
\end{Corollary}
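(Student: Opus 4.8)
\emph{Overview.} Both assertions will be deduced from Theorem~\ref{Theorem: OES decomposition, intro} together with standard eigenvariety bookkeeping and the machinery of Galois representations in $p$-adic families; the genuinely new ingredient, the overconvergent Eichler--Shimura decomposition together with its precise Tate twists, is already available, so the remaining work is mostly formal.

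\emph{Part (1): étaleness of $\wt$ at $x_\Pi$.} The plan is to reduce to a statement about an overconvergent \emph{coherent} eigenvariety. By Theorem~\ref{Theorem: OES decomposition, intro}(iv), after restricting to the connected component $\calV$ the Hecke module $e_\calV H_{\proket}^3(\calX_n^{\tor}, \scrO\!\!\scrD_{\kappa_{\calU}}^r)^{\leq h}$ is the direct sum of its four graded pieces, each of which is the slope-$\leq h$ part of a coherent cohomology group, and the Hecke action (in particular $U_p$) preserves the decomposition. Since the cyclotomic twists in (iv) affect only the Galois action and not the Hecke action, $\bbT$ acts on $e_\calV H_{\proket}^3(\cdots)^{\leq h}$ through the same eigensystem on each nonzero piece; hence $\calO_\calV$ is identified with the Hecke algebra cut out by one of the coherent pieces (for $\Pi$ contributing holomorphically this is the $\Gr^{3}$, i.e.\ degree-$0$, piece $e_\calV H^0(\calX_{n,\bfitw_3,(r,r)}^{\tor}, \underline{\omega}_{n,r}^{\kappa_{\calU}+(3,3)})^{\leq h}$, the slope-$\leq h$ overconvergent Siegel cusp forms; for generic $\Pi$ it is one of the interior pieces). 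I would then invoke: this coherent slope-$\leq h$ module is finite projective over $R_\calU$ by the Coleman--Buzzard--Boxer--Pilloni spectral formalism; the classicality theorem of Boxer--Pilloni (a small-slope condition being part of \emph{nice-enough}, cf.\ Definition~\ref{Defn: nice enough}) identifies its fibre at $\wt(x_\Pi)$ with classical cusp forms of that weight; and the multiplicity-one hypothesis (Assumption~\ref{Assumption: Multiplicity One for cuspidal automorphic representations}, cf.\ Remark~\ref{Remark:1dimensional}) makes the corresponding generalised $\Pi$-eigenspace one-dimensional. By the standard argument this forces the localisation of the module at $x_\Pi$ to be free of rank one over the local ring of $\calW$ at $\wt(x_\Pi)$, so $\calO_{\calE,x_\Pi}$ is étale over $\calO_{\calW,\wt(x_\Pi)}$; that is, $\wt$ is étale at $x_\Pi$.

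\emph{Part (2): the family of Galois representations.} First I would produce $\rho_\calV$ and establish (i). On the Zariski-dense set of classical points of $\calV$ one has the Galois representations attached to the corresponding classical Siegel cuspidal eigenforms; these are unramified outside $Np$, and their geometric Frobenius characteristic polynomials at $\ell\nmid Np$ coincide with the degree-four Hecke polynomials. Interpolating the traces and characteristic polynomials of Frobenii over the reduced ring $R_\calU$ yields a continuous $4$-dimensional determinant (pseudo-representation) $D_\calV\colon\Gal_\Q\to R_\calU$, unramified outside $Np$ and with the prescribed Frobenius characteristic polynomials; since $\rho_\Pi$ is absolutely irreducible (part of \emph{nice-enough}), $D_\calV$ lifts, after possibly shrinking $\calV$, to an honest $\rho_\calV\colon\Gal_\Q\to\GL_4(R_\calU)$, which proves (i). For (ii) the plan is to transport the filtration of Theorem~\ref{Theorem: OES decomposition, intro}(iii)--(iv) to $\rho_\calV|_{\Gal_{\Q_p}}$. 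The crucial step is to identify $\rho_\calV|_{\Gal_{\Q_p}}$, after extending scalars to $R_\calU\widehat{\otimes}\C_p$, with the $\Gal_{\Q_p}$-module $e_\calV H_{\proket}^3(\calX_n^{\tor}, \scrO\!\!\scrD_{\kappa_{\calU}}^r)^{\leq h}$: at every classical point of $\calV$ this follows from the comparison between the distributions $\scrO\!\!\scrD_{\kappa}^r$ and the classical local systems $V_k^{\vee}$ together with Theorem~\ref{thm: Faltings-Chai}, hence it holds over all of $\calV$ by density of classical points. Then the Galois-stable filtration $\Fil_{\ES,\calV}^{\bullet}$ descends to a $\Gal_{\Q_p}$-stable filtration on $\rho_\calV|_{\Gal_{\Q_p}}$, and Theorem~\ref{Theorem: OES decomposition, intro}(iv) exhibits its graded pieces as coherent cohomology groups carrying the \emph{trivial} Galois action twisted by $\bfitw_i\kappa_{\calU}^{\cyc}-i$; reading off the Hodge--Tate--Sen weights of these cyclotomic twists (using the definition of $\bfitw_i\kappa_{\calU}^{\cyc}$ recalled in \S\ref{subsection: main results}) gives $(-3,\ \kappa_{\calU,2}-2,\ \kappa_{\calU,1}-1,\ \kappa_{\calU,1}+\kappa_{\calU,2})$ in the order dictated by the filtration, which is (ii).

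\emph{Main obstacle.} The delicate step in (1) is upgrading the one-dimensionality of a single classical generalised eigenspace to the freeness of rank one needed for étaleness, which requires knowing that $\calV$ is reduced at $x_\Pi$ and that the reduction to the coherent eigenvariety is harmless; and in (2) the substantial point is not the passage from the determinant to a genuine representation (routine under absolute irreducibility) but the identification of $\rho_\calV|_{\Gal_{\Q_p}}$ with the slope-$\leq h$ pro-Kummer étale cohomology module \emph{compatibly with the filtrations}, which is exactly where the comparison isomorphism between $\scrO\!\!\scrD_{\kappa}^r$ and the classical local systems, the Faltings--Chai decomposition, and the density of classical points in $\calV$ must all be combined.
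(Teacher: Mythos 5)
Your argument for part (1) is essentially the paper's: the decomposition of Theorem \ref{Theorem: OES decomposition, intro} reduces everything to the fact that each graded piece is free of rank one over $R_{\calU}\widehat{\otimes}\C_p$, whence $\scrO_{\calE, x_{\Pi}}$ injects into $\End_{R_{\calU,\frakm_k}}(\Gr_{\ES,\kappa_{\calU},\frakm_{\Pi}}^{3-i})\cong R_{\calU,\frakm_k}\widehat{\otimes}\C_p$ and $\wt$ is \'etale at $x_{\Pi}$. The paper phrases this via the injection into the endomorphism ring rather than via a detour through a coherent eigenvariety, but the content is the same.

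Part (2) is where your route diverges, and it has a genuine gap. You construct $\rho_{\calV}$ by interpolating Frobenius traces into a determinant/pseudo-representation and then lifting it to a true representation using absolute irreducibility of $\rho_{\Pi}$. But absolute irreducibility is \emph{not} part of the ``nice-enough'' hypothesis (Definition \ref{Defn: nice enough} only asks for Assumption \ref{Assumption: Multiplicity One for cuspidal automorphic representations} and small slope), so the lifting step is not available. This is not a cosmetic issue: the explicit point of the corollary, stated in the introduction, is that the construction ``does not use Galois determinants.'' The paper instead takes the rank-$4$ free $R_{\calU}$-module $e_{\calV}H^3_{\proket}(\calX_n^{\tor},\scrD_{\kappa_{\calU}}^r)^{\leq h}$ --- the \'etale/pro-Kummer-\'etale cohomology of the distribution local system with $\Q_p$-coefficients, which carries a $\Gal_{\Q}$-action via Artin comparison and is free of rank $4$ because its base change to $\C_p$ is the module appearing in Theorem \ref{Theorem: OES decomposition, intro} --- and simply \emph{defines} $\rho_{\calV}$ as the Galois action on it. Then (i) follows from Weissauer's theorem at classical points plus Zariski density, and (ii) is immediate since the filtration and the twists live on this very module. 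In your setup a second problem appears downstream: having built $\rho_{\calV}$ abstractly, you must identify $\rho_{\calV}|_{\Gal_{\Q_p}}\widehat{\otimes}\C_p$ with $e_{\calV}H^3_{\proket}(\calX_n^{\tor},\scrO\!\!\scrD_{\kappa_{\calU}}^r)^{\leq h}$ as a Galois module (not just up to semisimplification) in order to transport the filtration; agreement at a Zariski-dense set of classical points only controls traces, so without irreducibility in family this identification does not follow. Defining $\rho_{\calV}$ directly on the cohomology module, as the paper does, dissolves both difficulties at once.
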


The upshot of Corollary \ref{Corollary: big Galois repl intro} is that our new construction of the big Galois representations does not use Galois determinants.

\begin{Remark}
    In his thesis, J. E. Rodr\'iguez Camargo (\cite{Rodriguez-phd}) obtained a similar result for the completed cohomology groups (\`a la Emerton) using BGG resolution. In contrast, we study the overconvergent cohomology groups (\`a la Ash--Stevens) and our techniques are essentially different. The method of Rodriguez Camargo is expected to have some implications in modularity lifting questions, while our method is more suitable for constructing new $p$-adic $L$-functions over the eigenvarieties (see, for example, \cite{LPSZ} and \cite[\S 3.2]{LZ-BKGSp4}). We also expect applications in the study of geometry of eigenvarieties (for example, generalising the Halo conjecture in \cite{diaoyao} to the Siegel case).
\end{Remark}

\begin{Remark}
We expect the constructions and results in this article to generalise to more general Shimura varieties, at least to the case of Shimura varieties of PEL-type.
\end{Remark}

\subsection{Outline of the paper}
This article is organised as follows.

In \S \ref{section: flag variety}, we study the adic flag variety $\adicFL$ in details. In \S \ref{subsection: the group GSp4}, \S \ref{subsection: flag variety}, and \S \ref{subsection: vector bundles and torsors over the flag variety}, we introduce various $\bfitw$-loci on $\adicFL$ as well as sheaves on such. These materials are highly inspired by \cite[\S 3]{BP-HigherColeman}, yet we provide detailed and concrete computations. We prove a simple multiplicity-one property for algebraic representations for $\GSp_4$ in \S \ref{subsection: lemma for algebraic representations}. In \S \ref{subsection: weight space and analytic representations}, we define the notion of $p$-adic weight space and analytic representations. To wrap up the section, we introduce the notion of \emph{pseudoautomophic sheaves} on the flag variety in \S \ref{subsection: pseudoautomorphic sheaves}. Via the Hodge--Tate period map, these sheaves are closely related to the automorphic sheaves on the Siegel modular varieties studied in \S \ref{section: automorphic}. These sheaves play a central role in the construction of the morphisms $\mathrm{ES}_{\kappa_{\calU}}^{\bfitw, r}$.

The purpose of \S \ref{section: automorphic} is to study the classical and overconvergent automorphic sheaves on various loci on the Siegel modular variety. This generalises our previous construction in \cite{DRW}. We provide two different ways to construct the sheaves: one through the perfectoid method (\S \ref{subsection: overconvergent Siegel modular forms: perfectoid}) and the other uses analytic torsors (\S \ref{subsection: overconvergent Siegel modular forms: torsors}). A comparison of these two constructions is given by Theorem \ref{Theorem: comparison theorem for automorphic sheaves}. In \S \ref{subsection: Hecke operators}, we discuss the Hecke operators acting on the cohomology of these automorphic sheaves (with or without supports). In \S \ref{subsection: proket coh with support for classical aut sheaves}, we prove a classicality result for pro-Kummer \'etale cohomology groups with support. Again, a major part of this section is inspired by the work of Boxer--Pilloni, yet we spell out the details. 

We introduce the overconvergent cohomology groups in \S \ref{section: OC}. As a starter, \S \ref{subsection: Betti overconvergent cohomology} is a quick review of the modules of analytic functions and distributions of Ash--Stevens. These modules serve as coefficients in the Betti cohomology of the Siegel threefolds. In \S \ref{subsection: Kummer and pro-Kummer \'etale overconvergent cohomology}, we discuss how to view these Betti cohomology groups as certain (pro-)Kummer \'etale cohomology groups, using a similar technique developed in \cite{DRW}. The novelty of this section is \S \ref{subsection: pro-Kummer \'etale overconvergent cohomology with supports} where we further study pro-Kummer \'etale cohomology groups with support conditions coming from various stratifications on the Siegel threefolds. We also discuss the Hecke operators on those cohomology groups.

Finally, in \S \ref{section: OES}, we construct the overconvergent Eichler--Shimura morphisms and prove the main theorems. We start in \S \ref{subsection: classical ES} with an alternative perspective to understand the classical Eichler--Shimura decomposition of Faltings--Chai. These observations inspire our main construction in \S \ref{subsection: OES} and will be useful when we study the decompositions around a nice-enough point on the eigenvariety. In \S \ref{subsection: OES}, we construct the morphisms $\mathrm{ES}_{\kappa_{\calU}}^{\bfitw, r}$ and prove the main theorem. It is important to study the behavior of these morphisms when specialising at classical weights. This is treated in \S \ref{subsection: OES at classical weights}. The purpose of \S \ref{subsection: eigenvarieties} is to establish some preliminary results on eigenvarieties. In particular, we show that the middle-degree equidimensional eigenvariety (\`a la Hansen) is isomorphic to the equidimensional eigenvariety considered in \cite{BP-HigherColeman} (see Proposition \ref{Proposition: comparison of eigenvarieties}). In \S \ref{subsection: ES filtration on eigenvariety}, we prove the decomposition result around a nice-enough point on the eigenvariety. As an application, we provide a new construction of the big Galois representations. Finally, in \S \ref{subsection: non-neat level}, we provide a strategy to deal with non-neat levels (for example, paramodular levels). 

In the appendix, we introduce a cohomology theory with supports on the analytic, Kummer \'etale, and pro-Kummer \'etale sites of a locally noetherian fs log adic space. Although this approach does not lead to a full six-functor formalism, it is sufficient for our purpose.

\section*{Acknowledgement}
J.-F.W. would like to thank George Boxer, Kevin Buzzard, and Fred Diamond for helpful conversations after he presented this work at the London Number Theory Seminar. He would also like to thank David Loeffler for constructive comments on the preliminary draft of this paper, leading to an improvement of Theorem \ref{Theorem: OES decomposition, intro}; he thanks David Loeffler and Sarah Zerbes for their hospitality during his visit to FernUni Schweiz and ETH Zürich. 

During the preparation of this work, H. D. was partially supported by the National Key R\&D Program of China No.~2023YFA1009703 and No.~2021YFA1000704, and the National Natural Science Foundation of China No.~12422101; G.R. was partly funded by the NOVA-FRQNT-CRSNG grant~325940 and the NSERC grant RGPIN-2018-04392; J.-F.W. was supported by the ERC Consolidator grant ‘Shimura varieties and the BSD conjecture’ and Taighde \'{E}ireann -- Research Ireland under Grant number IRCLA/2023/849 (HighCritical).

\section*{Conventions and notations}
Throughout this article, we fix the following. \begin{itemize}
    \item $p\in \Z_{> 0}$ is an odd prime number. 
    \item $N\in \Z_{\geq 3}$ is an integer coprime to $p$.
    \item We fix once and forever an algebraic closure $\overline{\Q}_p$ of $\Q_p$ and an algebraic isomorphism $\C_p\cong \C$, where $\C_p$ is the $p$-adic completion of $\overline{\Q}_p$. We write $\Gal_{\Q_p}$ for the absolute Galois group $\Gal(\overline{\Q}_p/\Q_p)$. We also fix the $p$-adic absolute value on $\C_p$ so that $|p|=p^{-1}$.
    \item For any $r\in \Q_{\geq 0}$, we denote by `$p^r$' an element in $\C_p$ with absolute value $p^{-r}$. All constructions in the paper will not depend on such choices.
    \item For $n\in \Z_{\geq 1}$ and any ring $R$, we denote by $M_n(R)$ the set of all $n$ by $n$ matrices with entries in $R$.
    \item Matrices are often denoted by bold greek letters (\emph{e.g.}, $\bfalpha$, $\bfgamma$, $\bftau$). The transpose of a matrix $\bfalpha$ is denoted by $\trans\bfalpha$.
    \item For any $n\in \Z_{\geq 1}$, we denote by $\one_n$ the $n\times n$ identity matrix and denote by $\oneanti_n$ the $n\times n$ anti-diagonal matrix whose non-zero entries are $1$; \emph{i.e.,} \[
\one_n=\begin{pmatrix} 1& & \\ & \ddots & \\ & &1\end{pmatrix}\quad\text{ and }\quad\oneanti_n=\begin{pmatrix} & & 1\\ & \iddots & \\ 1 & &\end{pmatrix}.
    \]
    \item We adopt the language of almost mathematics. In particular, for an $\calO_{\C_p}$-module $M$, we denote by $M^a$ the associated almost $\calO_{\C_p}$-module with respect to the maximal ideal $\mathfrak{m}_{\C_p}$.
        \item For a topological space $T$ and a subset $S\subset T$, we denote by $\overline{S}$ (resp., $\mathring{S}$) the closure of $S$ in $T$ (resp., the interior of $S$ in $T$).
    \item Throughout the paper, the completed tensor symbol `$\widehat{\otimes}$' without subscript stands for either the \emph{complete tensor product} or the \emph{mixed complete tensor product} following the convention of \cite[Convention 2.2]{CHJ-2017}. 
    \item We freely use the terminologies in \cite[\S 2.4]{BP-HigherColeman}. In particular, given a complete Tate algebra $(R, R^+)$ of finite type over $(\Q_p, \Z_p)$,  we adopt the following notations. 
    \begin{itemize}
        \item Let $\Ban(R)$ denote the category of Banach $R$-modules;
        \item Let $\mathrm{C}(\Ban(R))$ denote the category of complexes of Banach $R$-modules and let $\mathrm{K}(\Ban(R))$ (resp., $\mathrm{D}(\Ban(R))$) denote the corresponding homotopy category (resp., derived category);\footnote{ Note that the category of Banach $R$-modules is not abelian. The derived category of Banach $R$-modules is actually defined as the localisation of the homotopy category of Banach $R$-modules with respect to the \emph{strict} quasi-isomorphisms.} 
        \item Let $\mathrm{C}^{\proj}(\Ban(R))$ denote the category of bounded complexes of \emph{projective} Banach $R$-modules (i.e., those Banach $R$-modules that have (Pr)). Let $\mathrm{K}^{\proj}(\Ban(R))$ denote the corresponding homotopy category;\footnote{There is a natural functor $\mathrm{K}^{\proj}(\Ban(R))\rightarrow \mathrm{D}(\Ban(R))$ which is fully faithful.}
        \item Let $\mathrm{Pro}_{\Z_{\geq 0}}(\mathrm{K}^{\proj}(\Ban(R)))$ denote the category of projective systems of complexes $\{ K_i\}_{i\in \Z_{\geq 0}}$ in $\mathrm{K}^{\proj}(\Ban(R))$ such that the $K_i$'s have non-zero cohomology in a uniformly bounded range of degrees. Objects in $\mathrm{Pro}_{\Z_{\geq 0}}(\mathrm{K}^{\proj}(\Ban(R)))$ are simply denoted by $\lim_i K_i$, instead of ``$\lim_i$''$K_i$ as in \cite[\S 2.4]{BP-HigherColeman}. There is a natural functor $\mathrm{Pro}_{\Z_{\geq 0}}(\mathrm{K}^{\proj}(\Ban(R)))\rightarrow \mathrm{D}(R)$ by forgetting the topology and `taking the limit'.
    \end{itemize}
    Moreover, we follow \cite[\S 2.4]{BP-HigherColeman} for the notions of \emph{compact morphisms} between such objects, and follow \cite[\S 6.1]{BP-HigherColeman} for the corresponding slope theory. Also see Definition \ref{Definition: potent compact operators}, Proposition-Definition \ref{Prop-Defn: finite slope part}, and Proposition-Definition \ref{Prop-Defn: finite slope part pro}.
    \item We adopt the language of \emph{Banach sheaves} (over an adic space) from \cite[\S 2.5]{BP-HigherColeman}.
    \item In principle, 
    symbols in calligraphic font (\emph{e.g.}, $\calX, \calY, \calZ$) are reserved for adic spaces; and symbols in script font (\emph{e.g.}, $\scrO, \scrF, \scrE$) are reserved for sheaves (over various geometric objects). 
\end{itemize}
\section{The flag variety}\label{section: flag variety}
In this section, we study the properties of the flag variety for $\GSp_4$ that we will use in the subsequent sections. Many of the ingredients are taken from \cite{BP-HigherColeman} with a special focus on the algebraic group $\GSp_4$. 

\subsection{Preliminaries on \texorpdfstring{$\GSp_4$}{GSp4}}\label{subsection: the group GSp4}

Let $V\coloneq\Z^{4}$ be equipped with an alternating pairing \begin{equation}\label{eq: symplectic pairing on standard rep}
    \bla\cdot, \cdot\bra:V\times V\rightarrow \Z, \quad (\vec{v}, \vec{v}')\mapsto \trans\vec{v}\begin{pmatrix} & -\oneanti_2\\ \oneanti_2 & \end{pmatrix}\vec{v}',
\end{equation}
where we view elements in $V$ as column vectors. In particular, if $e_1, ..., e_{4}$ is the standard basis for $V$, then \[
    \bla e_i, e_j\bra=\left\{\begin{array}{ll}
        -1, & \text{if }i<j\text{ and } j=5-i\\
        1, & \text{if }i>j\text{ and }j=5-i\\
        0, & \text{else}
    \end{array}\right. .
\]
We define the algebraic group $\GSp_{4}$ to be the subgroup of $\GL_{4}$ that preserves this pairing up to a unit. In other words, for any ring $R$, \[
    \GSp_{4}(R)\coloneq\left\{\bfgamma\in \GL_{4}(R): \trans\bfgamma \begin{pmatrix} & -\oneanti_2\\ \oneanti_2\end{pmatrix}\bfgamma = \varsigma(\bfgamma)\begin{pmatrix} & -\oneanti_2\\ \oneanti_2\end{pmatrix}\text{ for some }\varsigma(\bfgamma)\in R^{\times}\right\}.
\] Equivalently, for any $\bfgamma=\begin{pmatrix}\bfgamma_a & \bfgamma_b\\ \bfgamma_c & \bfgamma_d\end{pmatrix}\in \GL_{4}$, $\bfgamma\in \GSp_{4}$ if and only if \[
    \trans\bfgamma_a\oneanti_2\bfgamma_c=\trans\bfgamma_c\oneanti_2\bfgamma_a, \quad \trans\bfgamma_b\oneanti_2\bfgamma_d=\trans\bfgamma_d\oneanti_2\bfgamma_b, \text{ and }\trans\bfgamma_a\oneanti_2\bfgamma_d-\trans\bfgamma_c\oneanti_2\bfgamma_b=\varsigma(\bfgamma)\oneanti_2
\] for some $\varsigma(\bfgamma)\in \bbG_m$.

Due to our choice of the symplectic pairing, we may consider the Borel subgroup $B_{\GSp_{4}}$ defined by the upper triangular matrices in $\GSp_{4}$. We then have the Levi decomposition \[
    B_{\GSp_{4}} = T_{\GSp_{4}}N_{\GSp_{4}},
\] where \begin{enumerate}
    \item[$\bullet$] $T_{\GSp_{4}}$ is the maximal torus given by the diagonal matrices in $\GSp_{4}$; and 
    \item[$\bullet$] $N_{\GSp_{4}}$ is the unipotent radical given by the upper triangular matrices in $\GSp_{4}$ whose diagonal entries are all $1$.
\end{enumerate}

\begin{Remark}\label{Remark: appearance of T_GSp}
\normalfont 
By the definition of $\GSp_{4}$, one easily checks that elements in $T_{\GSp_{4}}$ are of the form \[
    \bftau = \diag(\tau_1,  \tau_2, \tau_0\tau_2^{-1}, \tau_0\tau_1^{-1})
\] for some $\tau_0,  \tau_1,  \tau_2\in \bbG_m$. Consequently, there is a natural isomorphism \[
    T_{\GSp_{4}} \xrightarrow{\cong} \bbG_m^{3}, \quad \diag(\tau_1,  \tau_2, \tau_0\tau_2^{-1},  \tau_0\tau_1^{-1}) \mapsto (\tau_1,  \tau_2; \tau_0).
\]
\end{Remark}

The subgroups $B_{\GSp_{4}}$ and $N_{\GSp_{4}}$ admit their opposite counterpart. That is, we have the opposite Borel $B_{\GSp_{4}}^{\opp}$ given by the lower triangular matrices in $\GSp_{4}$, the corresponding opposite unipotent radical $N_{\GSp_{4}}^{\opp}$ and the Levi decomposition \[
    B_{\GSp_{4}}^{\opp} = N_{\GSp_{4}}^{\opp} T_{\GSp_{4}}.
\]
We use similar notations for those subgroups of $\GL_2$. In particular, we have the upper triangular Borel $B_{\GL_2}$, the corresponding unipotent radical $N_{\GL_2}$, and the maximal torus $T_{\GL_2}$ consists of diagonal matrices.

Let $H \coloneq \GL_2 \times \bbG_m$. This algebraic group can be embedded into $\GSp_4$ via \[
    H \hookrightarrow \GSp_4, \quad (\bfgamma, \bfepsilon)\mapsto \begin{pmatrix}\bfgamma & \\ & \bfepsilon \oneanti_2\trans \bfgamma^{-1}\oneanti_2\end{pmatrix}.
\]
Denote by $T_H = T_{\GL_2} \times \bbG_m$ the maximal torus of diagonal matrices in $H$. We arrive at a natural identification
\[
    T_{\GSp_{4}} \cong \bbG_m^3 \cong T_H.
\]

Let $\bbX = \Hom(T_{\GSp_4}, \bbG_m)$ be the character group of $T_{\GSp_4}$. The isomorphism $T_{\GSp_4} \cong \bbG_m^3$ yields an identification \begin{equation}\label{eq: identification of the character group}
    \Z^{3} \xrightarrow{\cong} \bbX, \quad (k_1, k_2; k_0)\mapsto \left( \bftau = \diag(\tau_1, \tau_2, \tau_0\tau_2^{-1}, \tau_0 \tau_1^{-1}) \mapsto \prod_{i=0}^2 \tau_i^{k_i}\right).
\end{equation}
Under this isomorphism, we denote by $x_1, x_2, x_0$ the basis for $\bbX$ that corresponds to the standard basis for $\Z^3$. Note that, due to the identification $T_{\GSp_4} \cong T_H$, we may also view $\bbX$ as the character group of $T_H$. In what follows, we will often consider the embedding $\Z^2 \xrightarrow{(k_1, k_2)\mapsto (k_1, k_2; 0)} \Z^3$ and view elements in $\Z^2$ as characters in $\bbX$. 

Let $\Phi_{\GSp_4} \subset \bbX$ (resp., $\Phi_H \subset \bbX$) be the root system of $\GSp_4$ (resp., $H$) with respect to the choice of the torus $T_{\GSp_{4}}$ (resp., $T_H$). We can explicitly describe $\Phi_{\GSp_4}$ and $\Phi_H$ as follows: \begin{align*}
    \Phi_{\GSp_{4}} & = \{ \pm(x_1-x_2), \pm(x_1+x_2-x_0), \pm(2x_1-x_0), \pm(2x_2-x_0)\},\\
    \Phi_H & = \{ \pm(x_1-x_2), \pm x_2, \pm x_0\}.
\end{align*}
Moreover, due to our choice of the Borel subgroups, we have the corresponding positive roots \begin{align*}
    \Phi_{\GSp_{4}}^+ & = \{ x_1-x_2, x_1+x_2-x_0, 2x_1-x_0, 2x_2-x_0\},\\
    \Phi_H^+ & = \{ x_1-x_2\} = \Phi_{\GSp_4}^+ \cap \Phi_H.
\end{align*}
Furthermore, we define \[
    \begin{array}{ccc}
        \Phi_{\GSp_{4}}^{-} \coloneq \Phi_{\GSp_{4}}\smallsetminus \Phi_{\GSp_{4}}^+, & \Phi_{H}^- \coloneq \Phi_H \smallsetminus \Phi_{H}^+, \\ \\
        \Phi^H \coloneq \Phi_{\GSp_{4}}\smallsetminus \Phi_H, & \Phi^{+, H} \coloneq \Phi^+_{\GSp_{4}}\smallsetminus \Phi_H^+, &  \Phi^{-, H} \coloneq -\Phi^{+, H}.
    \end{array}
\]

The character group $\bbX$ carries an action of the Weyl group $W_{\GSp_{4}}$ (resp., $W_H$), where $W_{\GSp_4}$ (resp., $W_H$) is defined to be the quotient of the normaliser of $T_{\GSp_4}$ (resp., $T_H$) in $\GSp_4$ (resp., $H$) by $T_{\GSp_{4}}$ (resp., $T_H$). Explicitly, this action can be described as follows: for a given $\bfitw\in W_{\GSp_{4}}$ and $k\in \bbX$, for any $\bftau\in T_{\GSp_4}$, \[
    (\bfitw k)(\bftau) \coloneq k(\bfitw^{-1}\bftau \bfitw).
\]
We  follow \cite{Faltings-Chai} and define \[
    W^H : = \{ \bfitw \in W_{\GSp_{4}}: \bfitw(\Phi_{\GSp_{4}}^+)\supset \Phi_H^+\} \subset W_{\GSp_{4}}.
\]
Elements in $W^H$ are the so-called \emph{Kostant representatives} of the quotient $W_{\GSp_4}/W_H$. It is well-known that $W^H$ can be described explicitly as \begin{equation}\label{eq: explicit Weyl elements}
    \scalemath{0.9}{ W^H = \left\{ \bfitw_0 = \one_4, \bfitw_1 = \left(\begin{array}{cc|cc}
        1 & & &  \\ & & -1 & \\ \hline & 1 & & \\ &&& 1 
    \end{array}\right), \bfitw_2 = \left(\begin{array}{cc|cc}
         & -1 & &  \\ & & & -1 \\ \hline 1 & & & \\ & & -1& 
    \end{array}\right), \bfitw_3 = \left(\begin{array}{cc|cc}
         & & 1 &  \\ & & & -1 \\ \hline 1 & & & \\ & -1 &&
    \end{array}\right) \right\} }.
\end{equation}
The indices of the elements correspond to the lengths of the elements, \emph{i.e.}, $l(\bfitw_i) = i$.

\begin{Remark}
    In the rest of the paper, we often look at the Weyl element $\bfitw_3^{-1}\bfitw_i$ for any $\bfitw_i\in W^H$. Explicit computation shows that \[
        \bfitw_3^{-1}\bfitw_i = \bfitw_{3-i}\in W_{\GSp_4}
    \]
    as Weyl elements (but not as matrices given in \eqref{eq: explicit Weyl elements}). 
\end{Remark}

Finally, we analyse the Lie algebra $\mathfrak{gsp}_4$ of $\GSp_4$. By the root decomposition, we have \[
    \mathfrak{gsp}_{4} = \frakt_{\GSp_{4}} \oplus \frakn_{\GSp_{4}} \oplus \frakn_{\GSp_{4}}^{\opp} = \frakt_{\GSp_{4}} \oplus \big(\oplus_{\alpha\in \Phi_{\GSp_{4}}} \frakn_{\alpha}\big),
\]where \begin{enumerate}
    \item[$\bullet$] $\frakt_{\GSp_{4}}$, $\frakn_{\GSp_{4}}$, and $\frakn_{\GSp_{4}}^{\opp}$ are the Lie algebras of $T_{\GSp_{4}}$, $N_{\GSp_{4}}$, and $N_{\GSp_{4}}^{\opp}$ respectively; 
    \item[$\bullet$] $\frakn_{\GSp_{4}} = \oplus_{\alpha\in \Phi_{\GSp_{4}}^+} \frakn_{\alpha}$ and $\frakn_{\GSp_{4}}^{\opp} = \oplus_{\alpha\in \Phi_{\GSp_{4}}^-} \frakn_{\alpha}$.
\end{enumerate} 
For each $\alpha\in \Phi_{\GSp_{4}}^+$ (resp., $\Phi_{\GSp_{4}}^-$), let $N_{\alpha}$ be the subgroup of $N_{\GSp_{4}}$ (resp., $N_{\GSp_{4}}^{\opp}$) whose Lie algebra is $\frakn_{\alpha}$. In fact, we have \[
    N_{\alpha} \cong \frakn_{\alpha} \cong \bbA^1
\] as schemes over $\Z$. 

The following explicit coordinate systems will be used throughout the article.
\[
   \scalemath{0.9}{ \begin{array}{llll}
        N_{x_1-x_2}  = \left\{\begin{pmatrix} 1 & a^+ & & \\ & 1 & & & \\ & & 1 & -a^+ \\ & & & 1\end{pmatrix}: a^+\in \bbA^1\right\}, & N_{x_1+x_2-x_0}  = \left\{\begin{pmatrix} 1 &  & z_{22}^+ & \\ & 1 & & z_{22}^+ \\ & & 1 & \\ & & & 1\end{pmatrix}: z_{22}^+\in \bbA^1\right\},\\ \\
        N_{2x_1-x_0} = \left\{\begin{pmatrix} 1 & & & z_{12}^+ \\ & 1 & & & \\ & & 1 & \\ & & & 1\end{pmatrix}: z_{12}^+\in \bbA^1\right\}, & N_{2x_2-x_0}  = \left\{\begin{pmatrix} 1 &  & & \\ & 1 & z_{21}^+ & \\ & & 1 & \\ & & & 1\end{pmatrix}:z_{21}^+\in \bbA^1\right\}
    \end{array}}
\]
and 
\[
   \scalemath{0.9}{  \begin{array}{llll}
        N_{-x_1+x_2}  = \left\{\begin{pmatrix} 1 &  & & \\ a^- & 1 & & & \\ & & 1 &  \\ & & -a^- & 1\end{pmatrix}: a^-\in \bbA^1\right\}, & N_{-x_1-x_2+x_0}  = \left\{\begin{pmatrix} 1 &  &  & \\ & 1 & &  \\z_{22}^- & & 1 & \\ & z_{22}^- & & 1\end{pmatrix}: z_{22}^-\in \bbA^1\right\}, \\ \\
        N_{-2x_1+x_0}  =  \left\{\begin{pmatrix} 1 & & &  \\ & 1 & & & \\ & z_{12}^- & 1 & \\ & & & 1\end{pmatrix}: z_{12}^-\in \bbA^1\right\}, & N_{-2x_2+x_0}  = \left\{\begin{pmatrix} 1 &  & & \\ & 1 &  & \\ & & 1 & \\ z_{21}^- & & & 1\end{pmatrix}: z_{21}^-\in \bbA^1\right\}.
    \end{array}}
\]
Here, the `$+$' and `$-$' in the superscripts indicate whether the corresponding roots are positive or negative.

\subsection{Intermezzo: A multiplicity-one lemma for algebraic representations}\label{subsection: lemma for algebraic representations}

The aim of this subsection is to prove a `multiplicity-one' lemma in the theory of algebraic representations for $\GSp_4$. 

To this end, let $k\in\bbX$ be a dominant weight. Let $K$ be a field containing $\Q$ and consider the $\GSp_4$-representation $V_k$ of highest weight $k$ over $K$. Let $e_k^{\hst}$ be the highest weight vector in $V_k$. Recall that the highest weight vector enjoys the following properties: \begin{itemize}
    \item $\mathrm{span}_K \GSp_4 e_k^{\hst} = V_k$;
    \item it is the unique (up to scalar multiplication) non-zero vector $v\in V_k$ such that for any $\bftau \in T_{\GSp_4}$, $\bftau v = k(\bftau) v$.
\end{itemize} We shall see in latter sections (\emph{e.g.}, \S \ref{subsection: classical ES}) an explicit construction of $V_k$ and $e_k^{\hst}$. 

On the other hand, observe that for any $\bfitw \in W^H$, $\bfitw k$ is a dominant weight for $H$. Consider the vector $\bfitw e_k^{\hst}\in V_k$. Observe that for any $\bftau\in T_H \cong T_{\GSp_4}$, we have \[
    \bftau (\bfitw e_k^{\hst}) = \bfitw (\bfitw^{-1}\bftau \bfitw) e_k^{\hst} = k(\bfitw^{-1}\bftau \bfitw) \bfitw e_k^{\hst} = (\bfitw k)(\bftau) (\bfitw e_k^{\hst}).
\] Thus, if we write \[
    W_{\bfitw k} := \mathrm{span}_K H \bfitw e_k^{\hst},
\]
then $W_{\bfitw k}$ is the $H$-representation of highest weight $\bfitw k$. Moreover, there is a natural inclusion $W_{\bfitw k} \hookrightarrow V_k$ of $H$-representations. 

\begin{Lemma}\label{Lemma: multiplicity-one lemma for alg rep}
    For any $\bfitw \in W^H$, we have \[
        \dim_K \Hom_{H}(W_{\bfitw k}, V_k) = 1.
    \]
\end{Lemma}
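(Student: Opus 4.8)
The plan is to prove the statement by combining a dimension count (via branching rules) with a highest-weight argument that forces any $H$-map $W_{\bfitw k} \to V_k$ to land in a specific weight space. First I would recall the general fact that $\dim_K \Hom_H(W_{\bfitw k}, V_k)$ equals the multiplicity of the irreducible $H$-representation of highest weight $\bfitw k$ in the restriction $V_k|_H$; since all groups are split reductive in characteristic zero, $V_k|_H$ is completely reducible and this multiplicity is well-defined and finite. So the lemma amounts to showing this multiplicity is exactly $1$.

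For the upper bound, I would argue as follows. Suppose $\varphi \colon W_{\bfitw k} \to V_k$ is a nonzero $H$-homomorphism. Then $\varphi$ sends the highest weight vector of $W_{\bfitw k}$ — which has $T_H$-weight $\bfitw k$ — to a vector $v \in V_k$ of the same $T_{\GSp_4}$-weight $\bfitw k$ that is annihilated by $\frakn_H = \frakn_{x_1 - x_2}$ (the nilradical of the Borel of $H$, using the root data computed in \S\ref{subsection: the group GSp4}, where $\Phi_H^+ = \{x_1 - x_2\}$). The key claim is then that the $\bfitw k$-weight space of $V_k$ that is additionally killed by $\frakn_{x_1-x_2}$ is at most one-dimensional. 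This is where I expect the main work to be: one must understand the weight space $V_k[\bfitw k]$ and the action of the single raising operator $\frakn_{x_1-x_2}$ on it. A clean way is to use the fact that $\bfitw \in W^H$ is a Kostant representative, so $\bfitw k$ is $H$-dominant and extremal in a suitable sense; combined with the fact that the $W_{\GSp_4}$-orbit of the highest weight $k$ meets each relevant coset, one shows $\bfitw e_k^{\hst}$ itself spans $V_k[\bfitw k] \cap \ker \frakn_{x_1-x_2}$. Concretely, for $\GSp_4$ one can just enumerate: the dominant weight $k = (k_1, k_2; k_0)$ has Weyl orbit of size $8$ (or fewer on walls), and for each of the four Kostant representatives $\bfitw_0, \dots, \bfitw_3$ one checks by the explicit matrices in \eqref{eq: explicit Weyl elements} that $\bfitw_i k$ is an extremal weight of $V_k$, hence its weight space is exactly one-dimensional (extremal weight spaces in an irreducible representation are always one-dimensional), and in particular the intersection with $\ker \frakn_{x_1-x_2}$ is at most one-dimensional.

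For the lower bound — that the multiplicity is at least $1$ — I would simply exhibit the nonzero map: the inclusion $W_{\bfitw k} \hookrightarrow V_k$ constructed right before the lemma statement is an $H$-equivariant injection, so $\dim_K \Hom_H(W_{\bfitw k}, V_k) \geq 1$. Combining with the upper bound gives equality. An alternative, perhaps slicker, route to the upper bound avoids enumeration: since $\bfitw k$ is extremal in $V_k$, its weight space $V_k[\bfitw k]$ is one-dimensional as a vector space (standard fact, e.g. from the $\mathfrak{sl}_2$-theory applied along $W_{\GSp_4}$-conjugates), so a fortiori $\Hom_H(W_{\bfitw k}, V_k) \hookrightarrow \Hom_{T_H}(K_{\bfitw k}, V_k[\bfitw k])$ is at most one-dimensional, where $K_{\bfitw k}$ is the one-dimensional $T_H$-module of weight $\bfitw k$.

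The step I expect to be the main obstacle is verifying that $\bfitw_i k$ is genuinely an extremal (equivalently, $W_{\GSp_4}$-conjugate to the highest) weight of $V_k$ for each Kostant representative $\bfitw_i$, rather than merely an $H$-dominant weight appearing in the restriction. This should follow formally from $\bfitw_i \in W^H \subset W_{\GSp_4}$ — any Weyl translate of the highest weight is extremal — but one must be careful that the $W^H$-action used to define $W_{\bfitw k}$ (via the identification $T_{\GSp_4} \cong T_H$) is compatible with viewing $\bfitw_i$ as an element of $W_{\GSp_4}$ acting on $V_k$; this compatibility is exactly the content of the Remark following \eqref{eq: explicit Weyl elements} and the surrounding setup, so I would invoke that. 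Once extremality is in hand, the one-dimensionality of extremal weight spaces in an irreducible representation closes the argument with no further computation.
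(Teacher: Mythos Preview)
Your proposal is correct, and your ``alternative, slicker route'' is exactly the paper's argument: since $T_H \cong T_{\GSp_4}$, any $H$-map $W_{\bfitw k}\to V_k$ sends the highest weight vector into the $T_{\GSp_4}$-weight space $V_k[\bfitw k]$, and the paper shows this space is one-dimensional by applying $\bfitw^{-1}$ to land in $V_k[k]=K\cdot e_k^{\hst}$ (i.e.\ the extremal-weight fact proved on the spot rather than quoted). Your first approach, tracking $\ker\frakn_{x_1-x_2}$ separately, is correct but unnecessary once you know $\dim V_k[\bfitw k]=1$; the paper bypasses it entirely.
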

\begin{proof}
    It suffices to show that $\bfitw e_{k}^{\hst}$ is the unique (up to scalar multiplication) non-zero vector $v \in V_k$ such that for any $\bftau \in T_H \cong T_{\GSp_4}$, \[
        \bftau v = \bfitw k(\bftau) v.
    \]
Suppose $v\in V_k \smallsetminus \{0\}$ is such a vector, then $\bfitw^{-1} v$ has the property that for any $\bftau \in T_H \cong T_{\GSp_4}$ \[
        \bftau (\bfitw^{-1}v) = \bfitw^{-1} \bfitw \bftau \bfitw^{-1} v = (\bfitw k)(\bfitw \bftau \bfitw^{-1}) \bfitw^{-1}v = k(\bftau) \bfitw^{-1}v.
    \]
    By the properties of the highest weight vector, we see that there exists $a\in K^\times$ such that  \[
        \bfitw^{-1} v = a e_k^{\hst}
    \]
    and hence \[
        v = a \bfitw e_k^{\hst}
    \] 
    as desired. 
\end{proof}

Immediately from Lemma \ref{Lemma: multiplicity-one lemma for alg rep}, we have the following corollary. 

\begin{Corollary}\label{Corollary: desired projection of H-representations}
    For every $\bfitw\in W^H$, $W_{\bfitw k}$ is a direct summand of $V_k$ as an $H$-subrepresentation. Moreover, there is a unique (up to scalar multiplication) nontrivial morphism of $H$-representations $V_k \rightarrow  W_{\bfitw k}$; namely, the projection onto the direct summand.
\end{Corollary}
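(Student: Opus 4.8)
The plan is to deduce the corollary directly from Lemma \ref{Lemma: multiplicity-one lemma for alg rep} together with the complete reducibility of finite-dimensional algebraic representations of $H = \GL_2 \times \bbG_m$ over the characteristic-zero field $K$. Since $H$ is reductive and $K \supset \Q$, the $H$-representation $V_k$ (restricted from $\GSp_4$) decomposes as a finite direct sum of irreducible $H$-representations. First I would observe that $W_{\bfitw k} \hookrightarrow V_k$ is a nonzero map of $H$-representations and $W_{\bfitw k}$ is irreducible (being generated over $H$ by a highest-weight vector $\bfitw e_k^{\hst}$ for the dominant-for-$H$ weight $\bfitw k$), so its image is an irreducible $H$-subrepresentation isomorphic to $W_{\bfitw k}$; by complete reducibility it is a direct summand.

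For the second assertion, I would split the decomposition of $V_k$ into the isotypic component for $W_{\bfitw k}$ and its complement, say $V_k \cong W_{\bfitw k}^{\oplus m} \oplus V'$ where $V'$ has no subrepresentation isomorphic to $W_{\bfitw k}$. By Schur's lemma over $K$ (using that $\End_H(W_{\bfitw k}) = K$, which holds since $W_{\bfitw k}$ is absolutely irreducible, or simply noting the highest weight line in $W_{\bfitw k}$ is one-dimensional so any endomorphism acts by a scalar there and hence everywhere), one gets $\Hom_H(W_{\bfitw k}, V_k) \cong K^m$ and $\Hom_H(V_k, W_{\bfitw k}) \cong K^m$. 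Lemma \ref{Lemma: multiplicity-one lemma for alg rep} gives $m = 1$. Therefore $\Hom_H(V_k, W_{\bfitw k})$ is one-dimensional, spanned by the projection onto the (unique) direct summand $W_{\bfitw k}$; the projection is nontrivial, so every nontrivial morphism $V_k \to W_{\bfitw k}$ is a nonzero scalar multiple of it.

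I do not expect a genuine obstacle here: the content is really contained in the already-proven lemma, and the corollary is a standard semisimplicity-plus-Schur argument. The only points requiring a word of care are (a) recording that $W_{\bfitw k}$ is actually irreducible as an $H$-module — this follows because it is, by construction, the cyclic $H$-module generated by a $T_H$-eigenvector of dominant weight $\bfitw k$, hence the irreducible of that highest weight — and (b) making sure the pairing between $\Hom_H(W_{\bfitw k}, V_k)$ and $\Hom_H(V_k, W_{\bfitw k})$ is set up correctly, which is automatic once $V_k$ is written as a direct sum of $H$-irreducibles. If one wishes to avoid even invoking Schur explicitly, an alternative is dual: apply Lemma \ref{Lemma: multiplicity-one lemma for alg rep} to the contragredient representation $V_k^\vee$ and the lowest-weight considerations there, but the semisimplicity argument above is cleaner and I would present that.
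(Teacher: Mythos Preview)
Your proposal is correct and matches the paper's approach: the paper simply states that the corollary follows immediately from Lemma~\ref{Lemma: multiplicity-one lemma for alg rep}, and you have correctly supplied the standard semisimplicity-plus-Schur argument that makes this implication explicit. There is no discrepancy in method or content.
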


\subsection{The flag variety}\label{subsection: flag variety}

Define the Siegel parabolic subgroup $P_{\Si}$ by \[
    P_{\Si} \coloneq \begin{pmatrix} \GL_2 & M_2 \\  & \GL_2\end{pmatrix} \cap \GSp_{4}.\footnote{ We point out that in \cite{DRW}, we considered the \emph{opposite Siegel parabolic} and worked with the opposite Bruhat cells therein.  }
\] The algebraic group $P_{\Si}$ has the following alternative description over $\C$: Consider the cocharacter \[
    \mu_{\Si}: \bbG_m \rightarrow \GSp_{4}, \quad a \mapsto \diag(a\one_2, \one_2).
\] Then, we have \[
    P_{\Si}(\C) = \left\{ \bfgamma \in \GSp_4(\C): \lim_{a\rightarrow 0} \mu_{\Si}(a) \bfgamma \mu_{\Si}(a)^{-1} \text{exists}\right\}.
\]

The flag variety (over $\Z$) that we will be using for the whole paper is \[
    \Fl \coloneq P_{\Si} \backslash \GSp_4.
\]
It is a classical result that $\Fl$ admits the so-called Bruhat decomposition \[
    \Fl = \bigsqcup_{\bfitw\in W^H} P_{\Si} \backslash P_{\Si} \bfitw B_{\GSp_4}.
\]
For each $\bfitw\in W^H$, we denote by $\Fl_{\bfitw}$ the Bruhat cell $P_{\Si} \backslash P_{\Si} \bfitw B_{\GSp_4}$. In what follows, we will also consider the following loci \[
    \Fl_{\leq \bfitw} \coloneq \bigsqcup_{\substack{\bfitw'\in W^H\\ l(\bfitw')\leq l(\bfitw)}} \Fl_{\bfitw'} \quad \text{ and }\quad \Fl_{\geq \bfitw} \coloneq \bigsqcup_{\substack{\bfitw'\in W^H\\ l(\bfitw')\geq l(\bfitw)}} \Fl_{\bfitw'}.
\]

\begin{Lemma}\label{Lemma: coordinates on Bruhat cells}
    For any $\bfitw\in W^H$, we have an isomorphism of schemes \[
        \prod_{\alpha \in \Phi_{\GSp_4}^+ \cap (\bfitw^{-1}\Phi^{-, H})} N_{\alpha} \rightarrow \Fl_{\bfitw}, \quad (\bfepsilon_{\alpha})_{\alpha} \mapsto \bfitw \prod_{\alpha \in \Phi_{\GSp_4}^+ \cap (\bfitw^{-1}\Phi^{-, H})} \bfepsilon_{\alpha}.
    \] In particular, we have the following coordinate systems \[
        \begin{array}{lll}
            \Fl_{\one_4} = \{\one_4\}, & & \Fl_{\bfitw_1} = \left\{ \bfitw_1 \begin{pmatrix} 1 &&& \\ & 1 & z_{21}^+& \\ && 1 & \\ &&& 1\end{pmatrix}\right\}, \\  \\
            \Fl_{\bfitw_{2}} = \left\{ \bfitw_2 \begin{pmatrix} 1 & a^+&  & z_{12}^+\\ & 1 &&  \\ && 1 & -a^+ \\ &&& 1\end{pmatrix}\right\}, & & \Fl_{\bfitw_3} = \left\{ \bfitw_3 \begin{pmatrix} 1 && z_{22}^+ & z_{12}^+\\ & 1 & z_{21}^+ & z_{22}^+ \\ && 1 & \\ &&& 1\end{pmatrix} \right\}.
        \end{array}
    \]
\end{Lemma}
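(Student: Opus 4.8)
The plan is to exploit the standard fact that the Bruhat cell $\Fl_{\bfitw} = P_{\Si}\backslash P_{\Si}\bfitw B_{\GSp_4}$ is isomorphic, as a scheme, to the unipotent subgroup $U_{\bfitw} \coloneq \bfitw^{-1} U_{\Si} \bfitw \cap N_{\GSp_4}^{\opp}$ (or a suitable variant of it), where $U_{\Si}$ is the unipotent radical of $P_{\Si}$; equivalently, the orbit map $\bfgamma \mapsto P_{\Si}\bfitw\bfgamma$ restricted to the complementary unipotent gives the isomorphism. More precisely, the classical statement is that $P_{\Si}\bfitw B_{\GSp_4} = P_{\Si}\bfitw \prod_{\alpha} N_{\alpha}$ where $\alpha$ ranges over those positive roots of $\GSp_4$ not killed when conjugated into $P_{\Si}$ by $\bfitw$, i.e. exactly $\Phi_{\GSp_4}^+ \cap (\bfitw^{-1}\Phi^{-,H})$, and that this product decomposition is unique. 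So the first step is to identify, for each $\bfitw \in W^H$, the set $S_{\bfitw} \coloneq \Phi_{\GSp_4}^+ \cap (\bfitw^{-1}\Phi^{-,H})$. Recall $\Phi^{-,H} = -\Phi^{+,H} = -(\Phi_{\GSp_4}^+ \setminus \Phi_H^+) = \{-(x_1+x_2-x_0), -(2x_1-x_0), -(2x_2-x_0)\}$.

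Second, I would do the root-combinatorial bookkeeping. Using the explicit action of $W^H$ on $\bbX$ (which one reads off from the matrix representatives in \eqref{eq: explicit Weyl elements}, via $(\bfitw k)(\bftau) = k(\bfitw^{-1}\bftau\bfitw)$), compute $\bfitw_i^{-1}\Phi^{-,H}$ for $i = 0,1,2,3$ and intersect with $\Phi_{\GSp_4}^+$. For $\bfitw_0 = \one_4$ one gets $S_{\bfitw_0} = \Phi_{\GSp_4}^+ \cap \Phi^{-,H} = \emptyset$ (a negative set of roots can't meet the positive ones), giving $\Fl_{\one_4} = \{\text{pt}\}$. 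For $\bfitw_1, \bfitw_2, \bfitw_3$ one should get sets of cardinality $1, 3, 4$ respectively, matching $l(\bfitw_i)$ and the dimension count $\dim\Fl = 3$ for $\bfitw_3$ — indeed $|S_{\bfitw}| = l(\bfitw)$ is a general fact about Kostant representatives, which serves as a useful sanity check. Concretely I expect $S_{\bfitw_1} = \{2x_2 - x_0\}$ (the root of $N_{2x_2-x_0}$, parametrised by $z_{21}^+$), $S_{\bfitw_2} = \{x_1-x_2,\ 2x_1-x_0,\ \ldots\}$ giving the $a^+, z_{12}^+$ (and one more) coordinates, and $S_{\bfitw_3} = \Phi_{\GSp_4}^+$ — wait, $|\Phi_{\GSp_4}^+| = 4$, so $S_{\bfitw_3} = \Phi_{\GSp_4}^+$ entirely, matching the four coordinates $z_{22}^+, z_{12}^+, z_{21}^+, z_{22}^+$ appearing in the displayed matrix. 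Then one plugs the explicit coordinate descriptions of the $N_\alpha$ from \S\ref{subsection: the group GSp4} into the product $\bfitw\prod_\alpha \bfepsilon_\alpha$ and multiplies out the matrices to confirm the four displayed normal forms.

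Third, to prove the map is genuinely an \emph{isomorphism of schemes} (not just a bijection on points), the cleanest route is the general statement that for a parabolic $P$ with unipotent radical $U_P$ and a Weyl representative $\bfitw$, the multiplication map $U_P^{\opp,\bfitw} \times P\bfitw \to P\bfitw B$ — or rather the orbit map from the appropriate product of root groups to $P\backslash P\bfitw B$ — is an isomorphism onto the open Bruhat cell, which follows from the open-cell decomposition $G = U_P^{\opp}\cdot P$ (the "big cell") applied after translating by $\bfitw$, together with the fact that each $N_\alpha \cong \bbA^1$. Alternatively, one checks directly that the product $\prod_\alpha N_\alpha \to \GSp_4$ is a closed immersion onto a locally closed subscheme that maps isomorphically to $\Fl_{\bfitw}$ under $\GSp_4 \to \Fl$; the injectivity on $T$-points for any $\Z$-algebra $T$ is a matrix computation (the displayed normal forms have distinct entries, so one recovers the coordinates), and smoothness/dimension count handles the rest. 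I would cite the standard reference (e.g. Jantzen, \emph{Representations of Algebraic Groups}, or SGA3) for the general Bruhat-cell parametrisation rather than reproving it, and spend the actual ink on the $\GSp_4$-specific matrix identities.

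The main obstacle, such as it is, is purely bookkeeping: getting the signs right in the root-group coordinates (the paper's $N_\alpha$ have built-in sign conventions forced by the symplectic form, e.g. the $-a^+$ and $-a^-$ entries), and correctly translating between the abstract Weyl action on roots and the concrete conjugation by the matrix representatives $\bfitw_i$, which are only well-defined up to $T_{\GSp_4}$ and in fact are \emph{not} the naive permutation matrices (cf. the Remark following \eqref{eq: explicit Weyl elements} noting $\bfitw_3^{-1}\bfitw_i = \bfitw_{3-i}$ as Weyl elements but not as matrices). A safe strategy is to verify the final normal forms by brute-force matrix multiplication $\bfitw_i \cdot (\text{product of elementary unipotents})$ and independently check that the result lies in $\GSp_4$ (using the three quadratic relations on the blocks $\bfgamma_a, \bfgamma_b, \bfgamma_c, \bfgamma_d$ from \S\ref{subsection: the group GSp4}) and represents a point of $P_{\Si}\bfitw_i B_{\GSp_4}$; this sidesteps any subtlety about which representative of $\bfitw_i$ is used. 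No conceptual difficulty is expected beyond this.
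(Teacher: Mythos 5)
Your strategy is exactly the paper's: the first assertion is quoted from the general Bruhat-cell parametrisation (the paper cites Boxer--Pilloni, Lemma 3.1.3, rather than Jantzen/SGA3), and the rest is the root bookkeeping you describe --- compute $\bfitw_i^{-1}$ on $\Phi^{-,H}$, intersect with $\Phi_{\GSp_4}^+$, and substitute the explicit $N_\alpha$. So the plan is sound, but your predicted cardinalities are wrong and contradict the sanity check $|S_{\bfitw}|=l(\bfitw)$ that you yourself state: the correct counts are $1,2,3$ for $\bfitw_1,\bfitw_2,\bfitw_3$, not $1,3,4$. In particular $S_{\bfitw_3}$ is $\Phi^{+,H}=\{x_1+x_2-x_0,\,2x_1-x_0,\,2x_2-x_0\}$ (three roots), \emph{not} all of $\Phi_{\GSp_4}^+$; the root $x_1-x_2$ lies in $\Phi_H^+$ and never enters. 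You were misled by counting matrix entries rather than root groups: the two occurrences of $z_{22}^+$ in the $\bfitw_3$ normal form come from the single group $N_{x_1+x_2-x_0}$, and likewise $a^+$ and $-a^+$ both come from $N_{x_1-x_2}$, so $\Fl_{\bfitw_2}$ has only the two coordinates $a^+,z_{12}^+$ with $S_{\bfitw_2}=\{x_1-x_2,\,2x_1-x_0\}$. Since your method (done carefully, with the stated sanity check) would catch and correct these slips, this is an arithmetic lapse rather than a structural gap, but as written the intermediate claims are false.
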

\begin{proof}
    The first assertion is a special case of \cite[Lemma 3.1.3]{BP-HigherColeman}. In what follows, we carry out the computations for the coordinate system for each $\Fl_{\bfitw}$. 

    By definition, we have \[
        \Phi^{-, H} = \{ -x_1-x_2+x_0, -2x_2+x_0, -2x_2+x_0 \}.
    \]

    \paragraph{\textbf{The case $\bfitw = \one_4$.}} In this case, we see that $\Phi^{-, H}\cap \Phi_{\GSp_4}^+ = \emptyset$. Thus, the desired result follows. \\

    \paragraph{\textbf{The case $\bfitw = \bfitw_1$.}} In this case, we have \[
        \bfitw_1^{-1} : \begin{array}{l}
            x_1 \mapsto x_1 \\
            x_2 \mapsto x_0 - x_2\\
            x_0 \mapsto x_0
        \end{array} \quad  \text{ and hence } \quad \bfitw_1^{-1}: \begin{array}{rl}
            -x_1-x_2+x_0 & \mapsto -x_1-x_2 \\
            -2x_1+x_0 & \mapsto -2x_1+x_0\\
            -2x_2 + x_0 & \mapsto 2x_2-x_0
        \end{array}.
    \] Consequently, $\Phi_{\GSp_4}^+ \cap (\bfitw_1^{-1}\Phi^{-, H}) = \{2x_2-x_0\}$. The desired coordinate system follows from \[
        N_{2x_2-x_0} = \left\{\begin{pmatrix}1 &&& \\ & 1 & z_{21}^+ & \\ &&1 & \\ &&& 1\end{pmatrix}\right\}.
    \] 

    \paragraph{\textbf{The case $\bfitw = \bfitw_2$.}} In this case, we have \[
        \bfitw_2^{-1} : \begin{array}{l}
            x_1 \mapsto x_2\\
            x_2\mapsto x_0-x_1\\
            x_0 \mapsto x_0
        \end{array} \quad \text{ and hence } \quad \bfitw_{2}^{-1}: \begin{array}{rl}
            -x_1-x_2+x_0 & \mapsto x_1-x_2 \\
            -2x_1+x_0 & \mapsto -2x_2+x_0\\
            -2x_2 + x_0 & \mapsto 2x_1-x_0
        \end{array}.
    \]
    We obtain $\Phi_{\GSp_4}^+ \cap (\bfitw_2^{-1}\Phi^{-, H}) = \{x_1-x_2, 2x_1-x_0\}$. Recall that \[
        N_{x_1-x_2} = \left\{\begin{pmatrix} 1 & a^+ &&\\ & 1 && \\ && 1 & -a^+\\ &&&1\end{pmatrix}\right\}\quad \text{ and }\quad N_{2x_1-x_0} = \left\{\begin{pmatrix} 1 &&& z_{12}^+ \\ &1 &&\\ &&1&\\ &&&1 \end{pmatrix}\right\}
    \] and so the result follows. \\

    \paragraph{\textbf{The case $\bfitw = \bfitw_3$.}} In this case, we have \[
        \bfitw_3^{-1} : \begin{array}{l}
            x_1 \mapsto x_0-x_2\\
            x_2\mapsto x_0-x_1\\
            x_0 \mapsto x_0
        \end{array} \quad \text{ and hence } \quad \bfitw_{2}^{-1}: \begin{array}{rl}
            -x_1-x_2+x_0 & \mapsto x_1+x_2-x_0 \\
            -2x_1+x_0 & \mapsto 2x_2-x_0\\
            -2x_2 + x_0 & \mapsto 2x_1-x_0
        \end{array}.
    \]
    We see that $\bfitw_3^{-1}\Phi^{-, H} \subset \Phi_{\GSp_4}^+$ and the desired result follows from the explicit formulae for $N_{\alpha}$'s. 
\end{proof}

\begin{Remark}\label{Remark: open affine subschemes containing Fl_w}
    For later use, we shall also consider \[
        \Fl_{\bfitw}^{\natural} \coloneq \Fl_{\bfitw_3} \bfitw_3^{-1}\bfitw
    \]
    for any $\bfitw\in W^H$. This is an affine open subscheme in $\Fl$ that contains $\Fl_{\bfitw}$. An easy computation using Lemma \ref{Lemma: coordinates on Bruhat cells} yields that \[
        \Fl_{\bfitw}^{\natural} = \left\{ \begin{pmatrix} 1 &&& \\ & 1&& \\  z_{22}^+ & -z_{12}^+ & 1 & \\ -z_{21}^+ & z_{22}^+ & & 1\end{pmatrix} \bfitw \right\}.
    \]
This leads to alternative coordinate systems \[
        \begin{array}{lll}
            \Fl_{\one_4} = \{\one_4\}, & & \Fl_{\bfitw_1} = \left\{ \begin{pmatrix} 1 &&& \\ & 1&& \\   & -z_{12}^+ & 1 & \\  &  & & 1\end{pmatrix} \bfitw_1 \right\}, \\ \\
            \Fl_{\bfitw_2} = \left\{ \begin{pmatrix} 1 &&& \\ & 1&& \\  z_{22}^+ & -z_{12}^+ & 1 & \\  & z_{22}^+ & & 1\end{pmatrix} \bfitw_2 \right\}, & & \Fl_{\bfitw_3} = \left\{ \begin{pmatrix} 1 &&& \\ & 1&& \\  z_{22}^+ & -z_{12}^+ & 1 & \\ -z_{21}^+ & z_{22}^+ & & 1\end{pmatrix} \bfitw_3 \right\}.
        \end{array}
    \]
\end{Remark}

\vspace{2mm}

We now move on to the world of $p$-adic geometry. Let $\adicFL$ be the rigid analyticfication of $\Fl$ over $\Q_p$, viewed as an adic space. Recall the specialisation map (\cite{Berthelot}) \[
    \mathrm{sp} : \adicFL \rightarrow \Fl_{\F_p}. 
\]
This is a continuous map of topological spaces, locally defined by \[
    \Spa(R, R^+) \rightarrow \Spa(R^+, R^+) \rightarrow \Spec R^+/pR^+, \quad |\cdot(x)| \mapsto \frakp_{x} = \{ a\in R^+: |a(x)|<1\}.
\] 
For any $\bfitw\in W^H$, we define subsets $\adicFL_{\bfitw}$, $ \adicFL_{\leq \bfitw}$, and $\adicFL_{\geq \bfitw}$ of $\adicFL$ as the \emph{tubes} of $\Fl_{\F_p, \bfitw}$, $\Fl_{\F_p, \leq \bfitw}$, and $\Fl_{\F_p, \geq \bfitw}$, respectively; namely, we put \footnote{Notice that the difference between the tube $]\Fl_{\F_p, \bfitw}[ $ and $\mathrm{sp}^{-1}(\Fl_{\F_p, \bfitw})$ consists of only higher rank points. }
\begin{equation}\label{eq: w-loci on the adic flag variety}
    \begin{array}{lll}
        \adicFL_{\bfitw}  = \,\,]\Fl_{\F_p, \bfitw}[  \,\,\,\, \coloneq \text{ the interior of }\mathrm{sp}^{-1}(\Fl_{\F_p, \bfitw}),\\
        \adicFL_{\leq \bfitw}  = \,\,]\Fl_{\F_p, \leq \bfitw}[  \,\,\,\, \coloneq \text{ the interior of }\mathrm{sp}^{-1}(\Fl_{\F_p, \leq \bfitw}),\\
        \adicFL_{\geq \bfitw}  =\,\, ]\Fl_{\F_p, \geq \bfitw}[  \,\,\,\, \coloneq \text{ the interior of }\mathrm{sp}^{-1}(\Fl_{\F_p, \geq \bfitw}).
    \end{array}
\end{equation}

Again, we would like to exhibit an explicit coordinate system on each $\adicFL_{\bfitw}$. To this end, for each $\alpha\in \Phi_{\GSp_4}$, let $N_{\alpha, \F_p}$ be the special fibre of $N_{\alpha}$. Let $\calN_{\alpha}$ be the rigid analytic space (viewed as an adic space) associated with the formal completion of $N_{\alpha}$ along $N_{\alpha, \F_p}$, and let $\calN_{\alpha}^{\circ}$ be the interior of $\mathrm{sp}^{-1}(\one_{4})$ in $\calN_{\alpha}$. One sees that $\calN_{\alpha}$ is isomorphic to the closed unit ball over $\Spa(\Q_p, \Z_p)$ while $\calN_{\alpha}^{\circ}$ is isomorphic to the open unit ball over $\Spa(\Q_p, \Z_p)$.

\begin{Lemma}\label{Lemma: coordinates on tublar nbds of Bruhat cells}
    For any $\bfitw\in W^H$, we have an isomorphism of rigid analytic spaces \[
        \prod_{\alpha \in \Phi_{\GSp_4}^+ \cap (\bfitw^{-1}\Phi^{-, H})} \calN_{\alpha} \times \prod_{\alpha \in \Phi_{\GSp_4}^- \cap (\bfitw^{-1}\Phi^{-, H})} \calN_{\alpha}^{\circ} \rightarrow \adicFL_{\bfitw}, \quad (\bfepsilon_{\alpha})_{\alpha} \mapsto \bfitw \prod_{\alpha\in \bfitw^{-1}\Phi^{-, H}} \bfepsilon_{\alpha}.
    \]
    In particular, we have the following coordinate systems \[
     \scalemath{0.9}{    \begin{array}{lll}
            \adicFL_{\one_4} = \left\{ \begin{pmatrix} 1 &&& \\ & 1 && \\ z_{22}^- & z_{12}^-& 1 & \\ z_{21}^- & z_{22}^- & & 1\end{pmatrix} : |z_{ij}^-| <1\right\}, & & \adicFL_{\bfitw_1} = \left\{ \bfitw_1 \begin{pmatrix} 1 &&&  \\  a^- & 1 & z_{21}^+& \\ & z_{12}^- & 1 & \\  &  & -a^- & 1\end{pmatrix} : \begin{array}{l}
                |\bullet^-| <1  \\
                |\bullet^+| \leq 1
            \end{array} \right\}, \\ \\
            \adicFL_{\bfitw_2} = \left\{ \bfitw_2 \begin{pmatrix} 1 & a^+ &  & z_{12}^+\\ & 1 &&  \\  & & 1 & -a^+ \\ z_{21}^- &  & & 1\end{pmatrix} : \begin{array}{l}
                |\bullet^-| <1  \\
                |\bullet^+| \leq 1 
            \end{array} \right\}, & & \adicFL_{\bfitw_3} = \left\{ \bfitw_3 \begin{pmatrix} 1 && z_{22}^+ & z_{12}^+\\ & 1 & z_{21}^+ & z_{22}^+ \\  & & 1 & \\ &  & & 1\end{pmatrix} :  |z_{ij}^+| \leq 1\right\}
        \end{array}}
    \]
\end{Lemma}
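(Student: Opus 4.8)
The plan is to deduce the statement from the algebraic coordinates of Lemma \ref{Lemma: coordinates on Bruhat cells} and Remark \ref{Remark: open affine subschemes containing Fl_w} by passing to $p$-adic formal completions and taking tubes. First I would record the precise algebraic input that is needed. Combining Lemma \ref{Lemma: coordinates on Bruhat cells} with Remark \ref{Remark: open affine subschemes containing Fl_w}, the multiplication map
\[
    m_{\bfitw} \colon \prod_{\alpha \in \bfitw^{-1}\Phi^{-, H}} N_{\alpha} \longrightarrow \Fl, \qquad (\bfepsilon_{\alpha})_{\alpha} \mapsto \bfitw \prod_{\alpha \in \bfitw^{-1}\Phi^{-, H}} \bfepsilon_{\alpha},
\]
is an open immersion of $\Z$-schemes with image the affine open $\Fl_{\bfitw}^{\natural} \cong \bbA^3_{\Z}$, and its restriction to the sub-product over $\alpha \in \Phi_{\GSp_4}^+ \cap \bfitw^{-1}\Phi^{-, H}$ (the locus where the coordinates indexed by $\alpha \in \Phi_{\GSp_4}^- \cap \bfitw^{-1}\Phi^{-, H}$ vanish) is the isomorphism onto $\Fl_{\bfitw}$ of Lemma \ref{Lemma: coordinates on Bruhat cells}. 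In other words, although $\Fl_{\bfitw}$ is only locally closed in $\Fl$, it is a \emph{closed} coordinate subspace of $\Fl_{\bfitw}^{\natural}$; note also that $\bfitw^{-1}\Phi^{-, H}$ has exactly three elements, consistently with $\dim \Fl = 3$.

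Next I would base change to $\Z_p$, take $p$-adic formal completions, and pass to adic generic fibres. Since $\Fl$ is proper over $\Z_p$ one has $\adicFL = (\widehat{\Fl})^{\rig}$, with the specialisation map as in the statement. By definition, $\calN_{\alpha}$ is the rigid generic fibre of the $p$-adic formal completion of $N_{\alpha} \cong \bbA^1_{\Z_p}$ (so $\calN_{\alpha}$ is the closed unit ball), and $\calN_{\alpha}^{\circ}$ is the interior of $\mathrm{sp}^{-1}(\one_4)$ inside $\calN_{\alpha}$ (the open unit ball). Likewise, the $p$-adic completion of the open affine $\Fl_{\bfitw}^{\natural} \cong \bbA^3_{\Z_p}$ is an open formal subscheme of $\widehat{\Fl}$ isomorphic to $\Spf\Z_p\langle z_{22}^+, z_{12}^+, z_{21}^+ \rangle$; its generic fibre is the closed polydisc and equals $\mathrm{sp}^{-1}(\Fl_{\F_p, \bfitw}^{\natural})$ (the latter being open, since $\Fl_{\F_p, \bfitw}^{\natural}$ is open, so that no interior is needed). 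Applying the rigid generic fibre functor to $m_{\bfitw}$ thus yields an isomorphism of rigid analytic spaces
\[
    \prod_{\alpha \in \bfitw^{-1}\Phi^{-, H}} \calN_{\alpha} \xrightarrow{\sim} \mathrm{sp}^{-1}(\Fl_{\F_p, \bfitw}^{\natural}),
\]
namely the analytification of $m_{\bfitw}$ restricted to the closed polydisc.

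It then remains to cut down to $\adicFL_{\bfitw}$. Since on special fibres $\Fl_{\F_p, \bfitw}$ is the vanishing locus in $\Fl_{\F_p, \bfitw}^{\natural}$ of the coordinate functions indexed by $\alpha \in \Phi_{\GSp_4}^- \cap \bfitw^{-1}\Phi^{-, H}$, the subset $\mathrm{sp}^{-1}(\Fl_{\F_p, \bfitw})$ — which is automatically contained in $\mathrm{sp}^{-1}(\Fl_{\F_p, \bfitw}^{\natural})$ because $\Fl_{\F_p, \bfitw} \subset \Fl_{\F_p, \bfitw}^{\natural}$ — corresponds under $m_{\bfitw}$ to $\prod_{\alpha \in \Phi_{\GSp_4}^+ \cap \bfitw^{-1}\Phi^{-, H}} \calN_{\alpha} \times \prod_{\alpha \in \Phi_{\GSp_4}^- \cap \bfitw^{-1}\Phi^{-, H}} \mathrm{sp}^{-1}(\one_4)$. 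Taking interiors — which commutes with finite products and, by definition, turns each factor $\mathrm{sp}^{-1}(\one_4)$ into $\calN_{\alpha}^{\circ}$ while leaving the factors $\calN_{\alpha}$ unchanged — gives the desired isomorphism $\adicFL_{\bfitw} \cong \prod_{\alpha \in \Phi_{\GSp_4}^+ \cap \bfitw^{-1}\Phi^{-, H}} \calN_{\alpha} \times \prod_{\alpha \in \Phi_{\GSp_4}^- \cap \bfitw^{-1}\Phi^{-, H}} \calN_{\alpha}^{\circ}$. Finally, the explicit coordinate systems for $\bfitw = \bfitw_0, \bfitw_1, \bfitw_2, \bfitw_3$ follow by substituting the matrix descriptions of the relevant $N_{\alpha}$ from \S\ref{subsection: the group GSp4} into $\bfitw \prod_{\alpha} \bfepsilon_{\alpha}$, exactly as in the proof of Lemma \ref{Lemma: coordinates on Bruhat cells}, recording that a coordinate ranges over the closed ball (superscript ``$+$'', $|\bullet| \leq 1$) or the open ball (superscript ``$-$'', $|\bullet| < 1$) according to whether its root lies in $\Phi_{\GSp_4}^+$ or $\Phi_{\GSp_4}^-$; in particular all three coordinates are of ``$+$''-type for $\bfitw_3$ (as $\bfitw_3^{-1}\Phi^{-, H} \subset \Phi_{\GSp_4}^+$) and all three of ``$-$''-type for $\bfitw_0 = \one_4$, which explains the change of superscripts between the two displayed lists.

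The argument uses no deep input; the only point requiring care is reconciling the definition of the tube as the interior of $\mathrm{sp}^{-1}(-)$ with the generic-fibre description of formal completions — which is why it is convenient to work inside the affine chart $\Fl_{\bfitw}^{\natural}$, where $\Fl_{\bfitw}$ is a genuine coordinate subspace and the tube becomes the naive polydisc in which the ``transverse'' coordinates are forced to have absolute value strictly below $1$. Everything else is the root combinatorics already carried out for Lemma \ref{Lemma: coordinates on Bruhat cells} together with a routine matrix computation.
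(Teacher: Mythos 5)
Your argument is essentially a self-contained unpacking of what the paper delegates to a citation: the paper's proof of this lemma is a one-line appeal to \cite[Corollary 3.3.5]{BP-HigherColeman} together with Lemma \ref{Lemma: coordinates on Bruhat cells}. Your route --- work inside the affine chart $\Fl_{\bfitw}^{\natural}$ where $\Fl_{\bfitw}$ becomes a closed coordinate subspace, pass to $p$-adic formal completions and rigid generic fibres, and then identify the tube --- is exactly the right skeleton, and the algebraic inputs (that $m_{\bfitw}$ is an isomorphism onto $\Fl_{\bfitw}^{\natural}$, that the three root groups indexed by $\bfitw^{-1}\Phi^{-,H}$ commute so the product map is well behaved, that $\widehat{\Fl_{\bfitw}^{\natural}}^{\rig} = \mathrm{sp}^{-1}(\Fl_{\F_p,\bfitw}^{\natural})$) are all correct. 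What your write-up buys is transparency about where each inequality $|\bullet^{\pm}|\lessgtr 1$ comes from; what the paper's citation buys is not having to re-prove the one genuinely delicate step.

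That delicate step is precisely the one you dispatch with ``taking interiors commutes with finite products.'' This is not a formal point-set fact in the adic setting: the underlying topological space of a fibre product of adic spaces is not the product of the underlying spaces, and already for a single coordinate the set $\{|z|<1\}$ inside the closed unit disc $\calN_{\alpha}$ is \emph{not} open (it contains the rank-$2$ point of ``radius $1^-$'', which is why the paper's footnote warns that $]\Fl_{\F_p,\bfitw}[$ and $\mathrm{sp}^{-1}(\Fl_{\F_p,\bfitw})$ differ by higher-rank points). So the identity $\mathrm{int}\bigl(\prod_{\alpha} A_{\alpha}\bigr) = \prod_{\alpha}\mathring{A_{\alpha}}$, with $\mathring{A_{\alpha}} = \calN_{\alpha}^{\circ}$ for the negative-root factors, is the actual content of \cite[Corollary 3.3.5]{BP-HigherColeman} (cf. their Example 3.3.7 for what can go wrong), and it needs either that citation or a short direct argument (e.g.\ showing that every point of $\mathrm{sp}^{-1}(\Fl_{\F_p,\bfitw})$ not lying in $\prod \calN_{\alpha}\times\prod\calN_{\alpha}^{\circ}$ has every open neighbourhood meeting the complement of $\mathrm{sp}^{-1}(\Fl_{\F_p,\bfitw})$, and conversely that the product of balls is open). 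With that one step repaired or properly attributed, your proof is complete and matches the intended one.
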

\begin{proof}
    This follows from \cite[Corollary 3.3.5]{BP-HigherColeman} and Lemma \ref{Lemma: coordinates on Bruhat cells}.
\end{proof}

\begin{Remark}\label{Remark: alternative coordinate systems for analytic flag variety}
    For any $\bfitw\in W^H$, recall $\Fl_{\bfitw}^{\natural}$ from Remark \ref{Remark: open affine subschemes containing Fl_w}. Let $\Fl_{\bfitw}^{\natural, \an}$ be the rigid analytification of $\Fl_{\bfitw}^{\natural}$ over $\Spa(\Q_p, \Z_p)$. Then, we may consider $\adicFL_{\bfitw}$ as a subspace of $\Fl_{\bfitw}^{\natural, \an}$. As a consequence, we have the following alternative coordinate systems \[
        \begin{array}{lll}
            \adicFL_{\one_4} = \left\{ \begin{pmatrix} 1 &&& \\ & 1 && \\ z_{22}^+ & -z_{12}^+& 1 & \\ -z_{21}^+ & z_{22}^+ & & 1\end{pmatrix} : |z_{ij}^+| <1\right\}, \\ \\
            \adicFL_{\bfitw_1} = \left\{  \begin{pmatrix} 1 &&&  \\ & 1 && \\ z_{22}^+ & -z_{12}^+ & 1 & \\ -z_{21}^+ & z_{22}^+ & & 1\end{pmatrix}\bfitw_1 : \begin{array}{l}
                |z_{ij}^+| <1 \text{ for }(i,j)\neq (1,2) \\
                |z_{12}^+| \leq 1
            \end{array} \right\}, \\ \\
            \adicFL_{\bfitw_2} = \left\{  \begin{pmatrix} 1 &&  & \\ & 1 &&  \\  z_{22}^+ & -z_{12}^+ & 1 & \\ -z_{21}^+ & z_{22}^+ & & 1\end{pmatrix}\bfitw_2 : \begin{array}{l}
                |z_{21}^+| <1  \\
                |z_{ij}^+| \leq 1 \text{ for }(i,j)\neq (2,1) 
            \end{array} \right\}, \\
            \\
            \adicFL_{\bfitw_3} = \left\{  \begin{pmatrix} 1 &&  & \\ & 1 &  &  \\ z_{22}^+ & -z_{12}^+ & 1 & \\ -z_{21}^+ &  z_{22}^+ &  & 1\end{pmatrix}\bfitw_3 :  |z_{ij}^+| \leq 1\right\}.
        \end{array}
    \]
\end{Remark}

\begin{Remark}\label{Remark: maps between loci on the flag variety}
    For any $\bfitw\in W^H$, consider the automorphism \[
        \iota^{\bfitw}_{\bfitw_3}: \adicFL \rightarrow \adicFL
    \] given by multiplying $\bfitw^{-1}\bfitw_3$ on the right. It follows from Remark \ref{Remark: alternative coordinate systems for analytic flag variety} that $\iota_{\bfitw}^{\bfitw_3}$ restricts to \[
        \iota^{\bfitw}_{\bfitw_3}: \adicFL_{\bfitw} \hookrightarrow\adicFL_{\bfitw_3}.
    \]  
\end{Remark}

For any $\alpha\in \Phi_{\GSp_{4}}$, recall that $ \calN_{\alpha}$ (resp., $ \calN_{\alpha}^{\circ}$) can be naturally identified with a closed unit ball (resp., open unit ball) over $\Spa(\Q_p, \Z_p)$ with coordinate $\bfepsilon_{\alpha}$. Notice that the coordinate $\bfepsilon_{\alpha}$ is well-defined up to a unit. For every $m\in \Q_{\geq 0}$, we further consider the closed and open balls \[
    \calN_{\alpha, m}  \coloneq \{|\bfepsilon_{\alpha}| \leq |p^m|\} \quad \text{ and }\quad 
    \calN_{\alpha, m}^{\circ}  \coloneq \bigcup_{m'>m} \calN_{\alpha, m'}.
\]
Inspired by \cite{BP-HigherColeman}, for any $m, n\in \Q_{\geq 0}$, we consider the following open subsets of $\adicFL_{\bfitw}$. \begin{align*}
    \adicFL_{\bfitw, (m, n)} & \coloneq \image\left( \prod_{\alpha \in \Phi_{\GSp_4}^+ \cap (\bfitw^{-1}\Phi^{-, H})} \calN_{\alpha, m} \times \prod_{\alpha \in \Phi_{\GSp_4}^- \cap (\bfitw^{-1}\Phi^{-, H})} \calN_{\alpha, n}^{\circ} \rightarrow \adicFL_{\bfitw} \right) \\
    \adicFL_{\bfitw, (\overline{m}, n)} & \coloneq \image\left( \prod_{\alpha \in \Phi_{\GSp_4}^+ \cap (\bfitw^{-1}\Phi^{-, H})} \overline{\calN_{\alpha, m}} \times \prod_{\alpha \in \Phi_{\GSp_4}^- \cap (\bfitw^{-1}\Phi^{-, H})} \calN_{\alpha, n}^{\circ} \rightarrow \adicFL_{\bfitw} \right) \\
    \adicFL_{\bfitw, (m, \overline{n})} & \coloneq \image\left( \prod_{\alpha \in \Phi_{\GSp_4}^+ \cap (\bfitw^{-1}\Phi^{-, H})} \calN_{\alpha, m} \times \prod_{\alpha \in \Phi_{\GSp_4}^- \cap (\bfitw^{-1}\Phi^{-, H})} \overline{\calN_{\alpha, n}^{\circ}} \rightarrow \adicFL_{\bfitw} \right) \\
    \adicFL_{\bfitw, (\overline{m}, \overline{n})} & \coloneq \image\left( \prod_{\alpha \in \Phi_{\GSp_4}^+ \cap (\bfitw^{-1}\Phi^{-, H})} \overline{\calN_{\alpha, m}} \times \prod_{\alpha \in \Phi_{\GSp_4}^- \cap (\bfitw^{-1}\Phi^{-, H})} \overline{\calN_{\alpha, n}^{\circ}} \rightarrow \adicFL_{\bfitw} \right) .
\end{align*}
Here, the closures are taken with respect to the analytic topology. In general, these subsets are not necessarily adic spaces (see \cite[Example 3.3.7]{BP-HigherColeman}) but merely topological spaces.

\subsection{Vector bundles and torsors}\label{subsection: vector bundles and torsors over the flag variety}
As a moduli problem, $\Fl$ parametrises maximal Lagrangian subspaces of $V$ with respect to the pairing \eqref{eq: symplectic pairing on standard rep}. As a consequence, there is a universal short exact sequence \begin{equation}\label{eq: universal short exact sequence over the flag variety}
    0 \rightarrow \scrW^{\vee}_{\Fl} \rightarrow \scrO_{\Fl}^4 \rightarrow \scrW_{\Fl} \rightarrow 0,
\end{equation}
where both $\scrW^{\vee}_{\Fl}$ and $\scrW_{\Fl}$ are vector bundles of rank $2$ over $\Fl$. Here, since $\scrO_{\Fl}^4$ is self-dual with respect to the pairing induced by \eqref{eq: symplectic pairing on standard rep}, the kernel of the universal map $\scrO_{\Fl}^4 \rightarrow \scrW_{\Fl}$ can be identified with the dual of $\scrW_{\Fl}$. 

The total space of $\scrW_{\Fl}$ can be identified as $P_{\Si}\backslash (\bbA^2 \times \GSp_4$), where 
 \begin{itemize}
    \item $P_{\Si}$ acts on $\GSp_4$ via left-multiplication; 
    \item by viewing elements in $\bbA^2$ as row vectors, $P_{\Si}$ acts on $\bbA^2$ via \[
        \begin{pmatrix}\bfgamma_a & \bfgamma_b\\ &  \bfgamma_d\end{pmatrix} * \vec{v}   =  \vec{v} \trans\bfgamma_d^{-1}.
    \]
\end{itemize}
Under this identification, for any $\bfgamma\in P_{\Si}$ and $(\vec{v}, \bfalpha)\in \bbA^2 \times \GSp_{4}$, we have $(\vec{v}, \bfgamma \bfalpha) = (\bfgamma^{-1}*\vec{v} , \bfalpha)$ in $P_{\Si}\backslash(\bbA^2 \times \GSp_4)$. Consequently, global sections of $\scrW_{\Fl}$ are identified as \[
    \{\text{algebraic functions }\phi: \GSp_4 \rightarrow \bbA^2: \phi(\bfgamma\bfalpha) = \bfgamma^{-1}*\phi(\bfalpha),\,\,\,\,\forall (\bfgamma, \bfalpha)\in P_{\Si}\times \GSp_4 \}.
\]

For $i=1,2$, consider the algebraic functions \[
    \bfits_i: \GSp_4 \rightarrow \bbA^2, \quad \begin{pmatrix}\bfalpha_a & \bfalpha_b\\ \bfalpha_c & \bfalpha_d\end{pmatrix} \mapsto \begin{pmatrix}
        \bfalpha_{d,1, 3-i} &  \bfalpha_{d,2, 3-i}
    \end{pmatrix}.
\] 
Then, one sees that, for any $\bfgamma = \begin{pmatrix}\bfgamma_a & \bfgamma_b\\ & \bfgamma_d\end{pmatrix} \in P_{\Si}$, \[
    \bfits_i(\bfgamma \bfalpha) = \begin{pmatrix}(\bfgamma_d\bfalpha_d)_{1, 3-i} &  (\bfgamma_d\bfalpha_d)_{2, 3-i}\end{pmatrix} =  \begin{pmatrix}
        \bfalpha_{d,1, 3-i} &  \bfalpha_{d,2, 3-i}
    \end{pmatrix} \trans\bfgamma_d = \bfgamma^{-1} *\bfits_i(\bfalpha) .
\]
In other words, $\bfits_1$ and $\bfits_2$ are global sections of $\scrW_{\Fl}$. 

For any $\bfitw\in W^H$, we define the global section $\bfits_i^{\bfitw}$ by \[
    \bfits_i^{\bfitw}(\bfalpha) = \bfits_i(\bfalpha \bfitw^{-1}).
\] Then, we claim that $\bfits_1^{\bfitw}$ and $\bfits_2^{\bfitw}$ are non-vanishing on $\Fl_{\bfitw}^{\natural}$.  Indeed, it suffices to observe that \[
    \begin{pmatrix} \bfits_2^{\bfitw} \\ \bfits_1^{\bfitw}\end{pmatrix} \left( \begin{pmatrix} 1 &&& \\ & 1&& \\  z_{22}^+ & -z_{12}^+ & 1 & \\ -z_{21}^+ & z_{22}^+ & & 1\end{pmatrix}\bfitw \right) = \begin{pmatrix} \bfits_2 \\ \bfits_1\end{pmatrix} \left(  \begin{pmatrix} 1 &&& \\ & 1&& \\  z_{22}^+ & -z_{12}^+ & 1 & \\ -z_{21}^+ & z_{22}^+ & & 1\end{pmatrix}\right) = \one_2,
\]
using the coordinate systems in Remark \ref{Remark: open affine subschemes containing Fl_w}.

For $\bfitw\in W^H$, we consider global sections $\bfits_1^{\bfitw, \vee}$ and $\bfits_2^{\bfitw, \vee}$ of $\scrW^{\vee}_{\Fl}$ defined by \[
    \bla \bfits_i^{\bfitw, \vee}, \bfits_j^{\bfitw}\bra = \left\{ \begin{array}{ll}
        -1, & i=j \\
        0, & \text{else}
    \end{array}\right.
\]
where $\bla\cdot, \cdot\bra$ is the pairing induced by \eqref{eq: symplectic pairing on standard rep}. 

Moreover, consider an $H$-torsor $H_{\HT}$ over $\Fl$ defined by \[
    H_{\HT} \coloneq \underline{\Isom}^{\mathrm{symp}}( \scrO_{\Fl}^{4}, \scrW^{\vee}_{\Fl} \oplus \scrW_{\Fl} ).
\] Here, `$\Isom^{\mathrm{symp}}$' stands for \emph{isomorphisms that respect both the symplectic pairing and the direct sum decomposition up to units}. In other words, $H_{\HT}$ parametrises splittings of $\scrO_{\Fl}^4\rightarrow\scrW_{\Fl}$ that respect the symplectic pairing induced by \eqref{eq: symplectic pairing on standard rep} up to units.

\begin{Lemma}\label{Lemma: two descriptions of the H-torsor over the flag variety}
    The $H$-torsor $H_{\HT}$ can be identified as \[
        H_{\HT} \cong N_{\Si} \backslash \GSp_4,
    \]
    where $N_{\Si}$ is the unipotent radical given by the Levi decomposition $P_{\Si} = H \ltimes N_{\Si}$.
\end{Lemma}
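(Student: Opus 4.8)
The plan is to recognise \emph{both} sides of the asserted identity as $H$-torsors over $\Fl = P_{\Si}\backslash\GSp_{4}$ and to write down a natural morphism between them, which is then automatically an isomorphism since any morphism of torsors under the same group over the same base is one. That both sides are $H$-torsors: for $N_{\Si}\backslash\GSp_{4}$ this is exactly the Levi decomposition $P_{\Si} = H\ltimes N_{\Si}$ --- writing $q\colon P_{\Si}\twoheadrightarrow H$ for the projection, $N_{\Si}\backslash\GSp_{4}\to \Fl$ is the pushforward of the tautological $P_{\Si}$-torsor $\GSp_{4}\to \Fl$ along $q$, hence an $H$-torsor. For $H_{\HT}$: the pairing \eqref{eq: symplectic pairing on standard rep} makes $\scrO_{\Fl}^{4}$ self-dual and identifies the induced pairing on $\scrW^{\vee}_{\Fl}\oplus\scrW_{\Fl}$ with the standard hyperbolic one (perfect between the two summands, zero on each); a direct matrix computation shows that a decomposition-preserving automorphism of $\scrW^{\vee}_{\Fl}\oplus\scrW_{\Fl}$ respecting this pairing up to a unit is determined by its action $d$ on $\scrW_{\Fl}$ and a unit $c$ (its action on $\scrW^{\vee}_{\Fl}$ being forced to be $c$ times the inverse adjoint of $d$), so $\underline{\Aut}^{\mathrm{symp}}(\scrW^{\vee}_{\Fl}\oplus\scrW_{\Fl})\cong \GL_{2}\times\bbG_{m} = H$ and $H_{\HT}$ is an $H$-torsor over $\Fl$.

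The morphism itself is cleanest to see conceptually. Pulled back along $\GSp_{4}\to\Fl$, the bundle $\scrO_{\Fl}^{4}$ together with its form and its standard decomposition $V = \langle e_{1},e_{2}\rangle\oplus\langle e_{3},e_{4}\rangle$ is the bundle associated to the restriction to $P_{\Si}$ of the standard $\GSp_{4}$-representation on $V$, while $\scrW^{\vee}_{\Fl}\oplus\scrW_{\Fl}$ is the bundle associated to $V$ equipped with the $P_{\Si}$-action that factors through $q$ (indeed the universal sequence \eqref{eq: universal short exact sequence over the flag variety} is the one attached to the $P_{\Si}$-stable Lagrangian filtration $0\to\langle e_{1},e_{2}\rangle\to V\to V/\langle e_{1},e_{2}\rangle\to 0$, and $N_{\Si}$ acts trivially on the graded pieces). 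Hence $H_{\HT} = \underline{\Isom}^{\mathrm{symp}}(\scrO_{\Fl}^{4},\scrW^{\vee}_{\Fl}\oplus\scrW_{\Fl})$ is the Isom-torsor between two bundles associated to the same $P_{\Si}$-torsor, and such an Isom-torsor is canonically the pushforward of $\GSp_{4}\to\Fl$ along the induced map of structure groups $P_{\Si}\to\underline{\Aut}^{\mathrm{symp}}(\scrW^{\vee}_{\Fl}\oplus\scrW_{\Fl}) = H$ --- which is precisely $q$, because the Levi $H\subset P_{\Si}\subset\GSp_{4}$ acts on $V$ preserving both the decomposition and the form. Since the pushforward of $\GSp_{4}\to\Fl$ along $q$ is $N_{\Si}\backslash\GSp_{4}$, this gives the claim.

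Concretely, this comes down to the following recipe for $\Psi\colon N_{\Si}\backslash\GSp_{4}\to H_{\HT}$. Over the point $[g] = P_{\Si}g$, for which $\scrW^{\vee}_{\Fl}|_{[g]}$ is the Lagrangian $g^{-1}\langle e_{1},e_{2}\rangle$, send $N_{\Si}g$ to the isomorphism $V\to g^{-1}\langle e_{1},e_{2}\rangle\oplus\bigl(V/g^{-1}\langle e_{1},e_{2}\rangle\bigr)$ which sends $v\in\langle e_{1},e_{2}\rangle$ to $g^{-1}v$ and $v\in\langle e_{3},e_{4}\rangle$ to $g^{-1}v \bmod g^{-1}\langle e_{1},e_{2}\rangle$; this respects the two decompositions, matches the forms up to the similitude factor $\varsigma(g)$, and depends only on $N_{\Si}g$ since $N_{\Si}$ fixes $\langle e_{1},e_{2}\rangle$ pointwise and acts trivially on $V/\langle e_{1},e_{2}\rangle$. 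Equivariance of $\Psi$ is an unwinding of the identification $H = \underline{\Aut}^{\mathrm{symp}}(\scrW^{\vee}_{\Fl}\oplus\scrW_{\Fl})$ furnished by the Levi embedding, so $\Psi$ is a morphism of $H$-torsors over $\Fl$, hence an isomorphism. I expect the only genuine work to be bookkeeping: making the transposes, inverses and similitude factors of \S\ref{subsection: flag variety} line up --- in particular verifying that $\underline{\Aut}^{\mathrm{symp}}(\scrW^{\vee}_{\Fl}\oplus\scrW_{\Fl})$ is $H$ on the nose (and not its dual), and pinning down the precise Lagrangian attached to a point of $\Fl$ in the conventions under which the universal sequence was built. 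The geometric content --- ``the $H$-torsor of symplectic splittings of the universal sequence is the Levi pushforward of the tautological $P_{\Si}$-torsor'' --- is formal.
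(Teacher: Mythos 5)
Your proof is correct and is essentially the paper's argument: both construct the natural $H$-equivariant morphism $N_{\Si}g \mapsto$ (the symplectic splitting of the universal sequence induced by $g^{-1}$) and conclude because an equivariant morphism of torsors under the same group over the same base is automatically an isomorphism. The paper packages this via the moduli description of $N_{\Si}\backslash\GSp_4$ (a Lagrangian filtration together with bases of the sub and the quotient) rather than your pushforward-along-the-Levi-quotient formalism, but the content, including the concrete formula for the map, is the same.
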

\begin{proof}
    Note that $N_{\Si} \backslash \GSp_4$ parametrises the following data \begin{itemize}
        \item short exact sequence $0 \rightarrow W^{\vee} \rightarrow V \rightarrow W \rightarrow 0$ that respects the pairing  \eqref{eq: symplectic pairing on standard rep} up to units; 
        \item a basis $\{w_1^{\vee}, w_2^{\vee}\}$ for $W^{\vee}$ and a (dual) basis $\{w_2, w_1\}$ for $W$. 
    \end{itemize}
    One sees that, for a given pair of basis $(\{w_1^{\vee}, w_2^{\vee}\}, \{w_2, w_1\})$, it defines a symplectic isomorphism $\scrO_{\Fl}^4 \rightarrow \scrW^{\vee} \oplus \scrW$. Hence, one obtains a morphism \[
        N_{\Si} \backslash \GSp_{4} \rightarrow H_{\HT}. 
    \]
    One easily checks that this morphism is $H$-equivariant and so the identification follows. 
\end{proof}

Let's now move to the world of $p$-adic geometry. Let $\calH$ (resp., $\calH^{\an}$) be the rigid analytic space associated with the formal completion (resp., rigid analytification, view as an adic space) of $H$. For any $n\in \Z_{>0}$, define $\adicIw^+_{H, n}$ to be the affinoid subgroup of $\calH$ consisting of elements that reduce to $T_{H, \Z/p^n\Z}$ modulo $p^n$. Define \[
    \Iw_{H, n}^+ = \left\{ \bfgamma \in H(\Z_p): (\bfgamma \textrm{ mod } p^n) \in T_{H}(\Z/p^n\Z)\right\}.
\]
Note that $\Iw_{H, 1}^+$ is a subgroup of the (usual) Iwahori subgroup $\Iw_H$ of $H$ at $p$, which is defined as the subgroup of matrices in $H(\Z_p)$ that are congruent to upper triangular matrices modulo $p$. Hence, $\Iw_H^+$ admits a Iwahori decomposition \[
    \Iw_{H, n}^+ = N_{H, n}^{\opp} T_H(\Z_p) N_{H, n},
\] 
where \[
    N_{H, n} = \left\{ \begin{pmatrix} 1 & b && \\ & 1 && \\ && 1 & b' \\ &&& 1\end{pmatrix} \in H(\Z_p): p^n|b, b' \right\}
\] and $N_{H, n}^{\opp}$ is defined similarly but using the upper triangular matrices in place of the lower triangular ones.

Similarly, for any $n\in \Z_{>0}$, we define \[
    \Iw_{\GSp_4, n}^+ = \left\{ \bfgamma\in \GSp_{2g}(\Z_p): (\bfgamma \textrm{ mod } p^n) \in T_{\GSp_{2g}}(\Z/p^n\Z)\right\}.
\] 
This is also a subgroup of the (usual) Iwahori subgroup of $\GSp_4$ at $p$. Hence, it also admits a Iwahori decomposition \[
    \Iw_{\GSp_4, n}^+ = N_{\GSp_4, n}^{\opp} T_{\GSp_4}(\Z_p) N_{\GSp_4, n},
\]
where \[
    N_{\GSp_4, n} \coloneq \left\{ \bfgamma\in N_{\GSp_4}(\Z_p) : \bfgamma \equiv \one_4 \textrm{ mod } p^n \right\}
\] and similar for $N_{\GSp_4, n}^{\opp}$. 

Denote by $\scrW_{\adicFL}^{\vee}$ and $\scrW_{\adicFL}$ the rigid analytifications of $\scrW_{\Fl}^{\vee}$ and $\scrW_{\Fl}$ over $\adicFL$. Then, the global sections $\bfits_i^{\bfitw, \vee}$, $\bfits_i^{\bfitw}$ define global sections on $\scrW_{\adicFL}^{\vee}$ and $\scrW_{\adicFL}$ respectively; we shall abuse the notations and still denote them by $\bfits_i^{\bfitw, \vee}$ and $\bfits_i^{\bfitw}$. 

In what follows, we use the coordinate system in Remark \ref{Remark: alternative coordinate systems for analytic flag variety} for $\adicFL_{\bfitw}$ and abbreviate it as $\begin{pmatrix}\one_2 & \\ \bfitz & \one_2\end{pmatrix} \bfitw$. In particular, $\bfitz$ is viewed as a $2\times 2$ matrix whose entries are functions on $\adicFL_{\bfitw}$.

\begin{Lemma}\label{Lemma: global sections and automorphy factors}
    Let $\bfitw\in W^H$ and $n\in \Z_{>0}$. \begin{enumerate}
        \item[(i)] The locus $\adicFL_{\bfitw}$ is stable under the action of $\Iw_{\GSp_{4}, n}^+$. 
        \item[(ii)] For $\bfalpha = \begin{pmatrix}\bfalpha_a & \bfalpha_b\\ \bfalpha_c & \bfalpha_d\end{pmatrix}\in \Iw_{\GSp_{4}, n}^+$, we write \[
            \bfitw \bfalpha \bfitw^{-1} = \begin{pmatrix} \bfalpha_a^{\bfitw} & \bfalpha_b^{\bfitw}\\ \bfalpha_c^{\bfitw} & \bfalpha_d^{\bfitw} \end{pmatrix}. 
        \] Then we have \[
            \bfalpha^*\bfits_i^{\bfitw} =  \bfits_i^{\bfitw} \trans (\bfalpha_d^{\bfitw} + \bfitz\bfalpha_b^{\bfitw}).
        \]
        \item[(iii)] Keep the notation in (ii). We have \[
            \bfalpha^*\bfits_i^{\bfitw, \vee} = \bfits_i^{\bfitw, \vee} \trans\left( \varsigma(\bfalpha)\oneanti_2\trans (\bfalpha_d^{\bfitw} + \bfitz\bfalpha_b^{\bfitw})^{-1} \oneanti_2 \right).
        \]
    \end{enumerate} 
\end{Lemma}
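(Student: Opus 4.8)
The plan is to treat the three parts in order, the first two by a direct computation in the coordinate system of Remark~\ref{Remark: alternative coordinate systems for analytic flag variety}, and the third by combining them with the symplectic structure. For (i), recall that $\adicFL_{\bfitw}$ is the interior of $\mathrm{sp}^{-1}(\Fl_{\F_p, \bfitw})$. I would argue that right multiplication by $\bfalpha\in\Iw_{\GSp_4, n}^+\subseteq\GSp_4(\Z_p)$ is an automorphism of $\adicFL$ that extends to the formal model over $\Z_p$, and is therefore compatible via $\mathrm{sp}$ with right multiplication by the reduction $\overline{\bfalpha}$ on $\Fl_{\F_p}$. Since $n\geq 1$, $\overline{\bfalpha}\in B_{\GSp_4}(\F_p)$, and the Bruhat cell $\Fl_{\F_p, \bfitw}=P_{\Si}\backslash P_{\Si}\bfitw B_{\GSp_4}$ is stable under right multiplication by $B_{\GSp_4}$; hence $\mathrm{sp}^{-1}(\Fl_{\F_p, \bfitw})$ is stable under right multiplication by $\bfalpha$, and, that being a homeomorphism, so is its interior. (One can alternatively read this off the coordinate change computed in (ii), using that $\bfalpha_b^{\bfitw},\bfalpha_c^{\bfitw}\in p^nM_2(\Z_p)$ and that $\bfitw\bfalpha\bfitw^{-1}$ again lies in $\Iw_{\GSp_4, n}^+$, conjugation by the signed permutation $\bfitw$ preserving congruence to a diagonal matrix modulo $p^n$.)

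For (ii), I would unwind the definitions: $\bfalpha^*\bfits_i^{\bfitw}$ is the section whose value at a point $\bfbeta_{\bfitz}\coloneq\begin{pmatrix}\one_2 & \\ \bfitz & \one_2\end{pmatrix}\bfitw$ of $\adicFL_{\bfitw}$ is $\bfits_i^{\bfitw}(\bfbeta_{\bfitz}\bfalpha)=\bfits_i(\bfbeta_{\bfitz}\bfalpha\bfitw^{-1})$. The engine of the whole lemma is the identity
\[
    \bfbeta_{\bfitz}\bfalpha\bfitw^{-1}=\begin{pmatrix}\one_2 & \\ \bfitz & \one_2\end{pmatrix}\bigl(\bfitw\bfalpha\bfitw^{-1}\bigr)=\begin{pmatrix}\bfalpha_a^{\bfitw} & \bfalpha_b^{\bfitw}\\ \bfitz\bfalpha_a^{\bfitw}+\bfalpha_c^{\bfitw} & \bfitz\bfalpha_b^{\bfitw}+\bfalpha_d^{\bfitw}\end{pmatrix},
\]
whose lower-right $2\times 2$ block is $\bfalpha_d^{\bfitw}+\bfitz\bfalpha_b^{\bfitw}$. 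Since $\bfits_i$ reads off the transpose of the $(3-i)$-th column of the lower-right block of its argument, and since $\bfits_1^{\bfitw},\bfits_2^{\bfitw}$ trivialise $\scrW_{\adicFL}$ to the identity on this coordinate patch (as recorded just before the statement), rearranging the identity gives $\bfalpha^*\bfits_i^{\bfitw}=\bfits_i^{\bfitw}\trans(\bfalpha_d^{\bfitw}+\bfitz\bfalpha_b^{\bfitw})$. I would also note that $\bfalpha_d^{\bfitw}+\bfitz\bfalpha_b^{\bfitw}$ is invertible on $\adicFL_{\bfitw}$ for $n\geq 1$ (its reduction is an invertible diagonal matrix, since $\bfalpha_b^{\bfitw}\in p^nM_2(\Z_p)$ and $|\bfitz|\leq 1$), so the right-hand side is a genuine $\GL_2$-valued automorphy factor.

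For (iii), I would argue that the pair $(\{\bfits_1^{\bfitw},\bfits_2^{\bfitw}\},\{\bfits_1^{\bfitw,\vee},\bfits_2^{\bfitw,\vee}\})$ trivialises the $H$-torsor $H_{\HT}$ (Lemma~\ref{Lemma: two descriptions of the H-torsor over the flag variety}) over $\adicFL_{\bfitw}$, so that the automorphy factor attached to $\bfalpha^*$ is a single element of $H=P_{\Si}/N_{\Si}$. Under the Levi embedding $H\hookrightarrow P_{\Si}\subseteq\GSp_4$, its $\scrW_{\adicFL}$-component is the ``lower $\GL_2$'' block $\bfalpha_d^{\bfitw}+\bfitz\bfalpha_b^{\bfitw}$ of (ii), while its $\scrW^{\vee}_{\adicFL}$-component is the ``upper $\GL_2$'' block; since for $\bfgamma\in P_{\Si}\subseteq\GSp_4$ the two blocks are linked by $\trans\bfgamma_a\oneanti_2\bfgamma_d=\varsigma(\bfgamma)\oneanti_2$, i.e. $\bfgamma_a=\varsigma(\bfgamma)\oneanti_2\trans\bfgamma_d^{-1}\oneanti_2$, this produces $\bfalpha^*\bfits_i^{\bfitw,\vee}=\bfits_i^{\bfitw,\vee}\trans\bigl(\varsigma(\bfalpha)\oneanti_2\trans(\bfalpha_d^{\bfitw}+\bfitz\bfalpha_b^{\bfitw})^{-1}\oneanti_2\bigr)$, with $\varsigma(\bfalpha)$ the similitude of $\bfalpha$. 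Equivalently, one computes $\bfalpha^*\bfits_i^{\bfitw,\vee}$ directly from the defining relation $\bla\bfits_i^{\bfitw,\vee},\bfits_j^{\bfitw}\bra=-\delta_{ij}$ together with (ii) and the $\GSp_4$-equivariance of the symplectic pairing \eqref{eq: symplectic pairing on standard rep} up to the character $\varsigma$; the $\oneanti_2$'s then appear because that pairing restricts to $-\oneanti_2$ between the spans of the first and last two standard basis vectors.

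The conceptual content is small; the real difficulty is bookkeeping. The identity displayed above drives everything, but I expect the main obstacle to be keeping transposes straight and, in (iii), correctly tracking the ordering of the two frames $(\bfits_2^{\bfitw},\bfits_1^{\bfitw})$ and $(\bfits_2^{\bfitw,\vee},\bfits_1^{\bfitw,\vee})$, which are reversed relative to one another by the anti-diagonal pairing — this reversal is precisely what creates the two $\oneanti_2$'s in (iii). I would also fix once and for all the meaning of $\bfalpha^*$ on sections (namely $\phi\mapsto\phi(-\cdot\bfalpha)$ in the model $\phi\colon\GSp_4\to\bbA^2$) and check it is compatible between $\scrW_{\adicFL}$ and $\scrW^{\vee}_{\adicFL}$, so that the deduction of (iii) from (ii) is legitimate.
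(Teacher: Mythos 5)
Your proof is correct and takes essentially the same route as the paper: everything in (ii) and (iii) comes down to the same key computation of $\begin{pmatrix}\one_2 & \\ \bfitz & \one_2\end{pmatrix}\bfitw\bfalpha\bfitw^{-1}$ (the paper organises it as a $P_{\Si}$-times-unipotent factorisation whose lower-right Levi block is $\bfalpha_d^{\bfitw}+\bfitz\bfalpha_b^{\bfitw}$ and whose upper-left block is $\varsigma(\bfalpha)\oneanti_2\trans(\bfalpha_d^{\bfitw}+\bfitz\bfalpha_b^{\bfitw})^{-1}\oneanti_2$, which is exactly your torsor/pairing argument for (iii)). The only cosmetic differences are that the paper cites Boxer--Pilloni for (i) where you give the direct specialisation-map argument, and that it states (iii) as an immediate consequence of (ii) where you spell out the details.
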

\begin{proof}
    The first assertion is a special case of \cite[Corollary 3.3.14]{BP-HigherColeman} and the third assertion follows from the second one. It remains to show (ii). 
    
    We start with remarking that $\bfitw \bfalpha \bfitw^{-1}\in \Iw_{\GSp_4, n}^+$ as $\bfitw$ normalises $T_{\GSp_4}$. In particular, entries of $\bfalpha_b^{\bfitw}$ and $\bfalpha_c^{\bfitw}$ are divisible by $p$. Thus, the matrix $\bfalpha_d^{\bfitw} + \bfitz\bfalpha_b^{\bfitw}$ in the statement is invertible. By definition, \begin{align*}
        \bfalpha^*\begin{pmatrix} \bfits_2^{\bfitw} \\ \bfits_1^{\bfitw}\end{pmatrix}\left( \begin{pmatrix}\one_2 & \\ \bfitz & \one_2\end{pmatrix} \bfitw \right) & = \begin{pmatrix} \bfits_2^{\bfitw} \\ \bfits_1^{\bfitw}\end{pmatrix}\left( \begin{pmatrix}\one_2 & \\ \bfitz & \one_2\end{pmatrix} \bfitw \bfalpha \right) \\
        & = \begin{pmatrix} \bfits_2 \\ \bfits_1\end{pmatrix}\left( \begin{pmatrix}\one_2 & \\ \bfitz & \one_2\end{pmatrix} \bfitw \bfalpha \bfitw^{-1} \right).
    \end{align*}
The desired identity then follows from \[
        \begin{pmatrix}\one_2 & \\ \bfitz & \one_2\end{pmatrix} \bfitw \bfalpha \bfitw^{-1} = \begin{pmatrix} \varsigma(\bfalpha)\oneanti_2 \trans(\bfalpha_d^{\bfitw}+\bfitz\bfalpha_b^{\bfitw})^{-1}\oneanti_2 & \bfalpha_b^{\bfitw}\\ & \bfalpha_d^{\bfitw}+\bfitz\bfalpha_b^{\bfitw}\end{pmatrix} \begin{pmatrix} \one_2 & \\ (\bfalpha_d^{\bfitw}+\bfitz\bfalpha_b^{\bfitw})^{-1}(\bfalpha_c^{\bfitw}+\bfitz\bfalpha_a^{\bfitw}) & \one_2\end{pmatrix}.
    \]
\end{proof}

\begin{Remark}\label{Remark: s_i^w are pullbacks from the w_3-locus}
    Recall the injection \[
        \iota^{\bfitw}_{\bfitw_3} \colon \adicFL_{\bfitw} \hookrightarrow \adicFL_{\bfitw_3}
    \] from Remark \ref{Remark: maps between loci on the flag variety}. From the construction, one sees that $\bfits_i^{\bfitw}$ is nothing but the pullback of $\bfits_i^{\bfitw_3}$ via $\iota^{\bfitw}_{\bfitw_3}$.
\end{Remark}

Let $\frakH_{\HT}$ be the formal completion of $H_{\HT}$ along its special fibre over $\F_p$ and put \begin{align*}
    \calH_{\HT} & \coloneq \text{ the rigid analytic space over $\Spa(\Q_p, \Z_p)$ associated with $\frakH_{\HT}$}\\
    \calH_{\HT}^{\an} & \coloneq \text{ the rigid analytification of $H_{\HT}$ over $\Spa(\Q_p, \Z_p)$}.
\end{align*}
One sees that $\calH_{\HT}$ (resp., $\calH_{\HT}^{\an}$) is an $\calH$-torsor (resp., $\calH^{\an}$) over $\adicFL$. 

For later use, we construct an $\adicIw_{H, n}^+$-torsor $\adicIW_{H, n, \adicFL_{\bfitw}}^+$ over $\adicFL_{\bfitw}$ (for each $\bfitw\in W^H$ and every $n\in \Z_{>0}$) together with a commutative diagram \[
    \begin{tikzcd}[column sep = tiny]
        \adicIW_{H, n, \adicFL_{\bfitw}}^+ \arrow[rr]\arrow[rd, "\pr_{\adicFL_{\bfitw}, \Iw_{H, n}^+}"'] & & \calH_{\HT}|_{\adicFL_{\bfitw}}\arrow[ld]\\ & \adicFL_{\bfitw}
    \end{tikzcd}
\]
that is $\adicIw_{H, n}^+$-equivariant. This torsor will be used in the construction of the overconvergent automorphic sheaves in \S \ref{subsection: overconvergent Siegel modular forms: torsors}.

Let $\scrW_{\adicFL_{\bfitw}}$ (resp., $\scrW_{\adicFL_{\bfitw}}^{\vee}$) be the restriction of $\scrW_{\adicFL}$ (resp., $\scrW_{\adicFL}^{\vee}$) on $\adicFL_{\bfitw}$. For any $n\in \Z_{>0}$, we say that a basis $\{\bfita_1^{\vee}, \bfita_2^{\vee}, \bfita_2, \bfita_1\}$ for $\scrW_{\adicFL_{\bfitw}}^{\vee}\oplus \scrW_{\adicFL}$ is \emph{\textit{$n$-compatible}} (with respect to $\{\bfits_1^{\bfitw, \vee}, \bfits_2^{\bfitw, \vee}, \bfits_2^{\bfitw}, \bfits_1^{\bfitw}\}$) if \[
    \mathrm{span}\{\bfita_i^{\vee}\} \equiv \mathrm{span}\{\bfits_i^{\bfitw, \vee}\} \,(\text{mod } p^n)\quad \text{ and } \quad\mathrm{span}\{\bfita_i\} \equiv \mathrm{span}\{\bfits_i^{\bfitw}\}\,(\text{mod } p^n)
\]
for $i=1,2$. We define $\adicIW_{H, n, \adicFL_{\bfitw}}^+$ on $\adicFL_{\bfitw}$ as a moduli problem \[
    \adicIW_{H, n, \adicFL_{\bfitw}}^+(R, R^+) = \left\{ \psi \colon (R^{+})^{4} \xrightarrow{\cong} \scrW_{\adicFL}^{\vee}(R, R^+)\oplus \scrW_{\adicFL}(R, R^+): \{\psi(v_1), ..., \psi(v_4)\} \text{ is $n$-compatible} \right\},
\]
where $\{v_1, ..., v_4\}$ is the standard basis for $(R^{+})^{4}$. Note that there is a canonical element $\psi_{\bfitw}^{\std} \in \adicIW_{H, n, \adicFL_{\bfitw}}^+(R, R^+)$ given by \begin{equation}\label{eq: psi_w^std}
    \psi_{\bfitw}^{\std} \colon \begin{array}{l}
        v_1 \mapsto \bfits_1^{\bfitw, \vee}\\
        v_2 \mapsto \bfits_2^{\bfitw, \vee}\\
        v_3 \mapsto \bfits_2^{\bfitw}\\
        v_4 \mapsto \bfits_1^{\bfitw}
    \end{array}.
\end{equation}
Following a similar argument as in \cite[\S 4.5]{AIP-2015}, one shows that $\adicIW_{H, n, \adicFL_{\bfitw}}^+$ is representable. Moreover, immediately from the moduli description, we have a natural forgetful map \[
    \adicIW_{H, n, \adicFL_{\bfitw}}^+ \rightarrow \calH_{\HT}|_{\adicFL_{\bfitw}}.
\]

\subsection{The \texorpdfstring{$p$}{p}-adic weight space and analytic representations}\label{subsection: weight space and analytic representations}

In this section, we introduce the $p$-adic weight space as well as certain analytic representations, later of which play a central role in the construction of \emph{pseudoautomorphic sheaves} in \S \ref{subsection: pseudoautomorphic sheaves}.  The $p$-adic analysis in this section is well-known to experts. We refer the readers to \cite[\S 3.1]{LW-BianchiAdjoint} for more details.

Let $\Alg_{(\Z_p, \Z_p)}$ be the category of sheafy $(\Z_p, \Z_p)$-algebras. It is well-known that the functor \[
    \Alg_{(\Z_p, \Z_p)} \rightarrow \Sets, \quad (R, R^+) \mapsto \Hom_{\Groups}^{\cts}(T_{\GL_2}(\Z_p), R^\times)
\]
is represented by the Iwasawa algebra $(\Z_p\llbrack T_{\GL_2}(\Z_p)\rrbrack, \Z_p\llbrack T_{\GL_2}(\Z_p)\rrbrack)$. The \emph{$p$-adic weight space} is defined to be \[
    \calW \coloneq \Spa(\Z_p\llbrack T_{\GL_2}(\Z_p)\rrbrack, \Z_p\llbrack T_{\GL_2}(\Z_p)\rrbrack)^{\rig},
\]
where the superscript `$\bullet^{\rig}$' stands for the associated rigid analytic space over $\Spa(\Q_p, \Z_p)$, viewed as an adic space. In other words, $\calW$ is the $\{p\neq 0\}$-part of the adic space $\Spa(\Z_p\llbrack T_{\GL_2}(\Z_p)\rrbrack, \Z_p\llbrack T_{\GL_2}(\Z_p)\rrbrack)$. One sees immediately that $\calW$ is a finite disjoint union of two-dimensional open unit balls. 

\begin{Remark}\label{Remark: `integral' property of weights}
    Given $\kappa \in \Hom_{\Groups}^{\cts}(T_{\GL_2}(\Z_p), R^\times)$, we claim that the image of $\kappa$ lies in $R^{\circ, \times}$. Note that \[
        \kappa(\diag(a_1, a_2)) = \kappa_1(a_1)\kappa_2(a_2),
    \] where each $\kappa_i: \Z_p^\times \rightarrow R^\times$ is a continuous group homomorphism.  Hence, it is enough to show that $\kappa_i(1+ p\Z_p)\subset R^\circ$; namely, to show that if $1+pa\in 1+p\Z_p$, then $\{\kappa_i(1+pa)^n\}_{n\in \Z_{\geq 0}}$ is bounded. This is exactly \cite[Lemma 3.2.1]{LW-BianchiAdjoint}. 
\end{Remark}

\begin{Remark}\label{Remark: viewing a p-adic weight as a character for TGSp}
    In what follows, we always view $\kappa\in \Hom_{\Groups}^{\cts}(T_{\GL_2}(\Z_p), R^\times)$ as a continuous group homomorphism $T_{\GSp_4}(\Z_p) \rightarrow R^\times$ via \[
        \kappa \colon T_{\GSp_4}(\Z_p) \rightarrow R^\times, \quad \diag(\tau_1, \tau_2, \tau_0\tau_2^{-1}, \tau_0\tau_1^{-1}) \mapsto \prod_{i=1}^2\kappa_i(\tau_i).
    \]
    For classical weights, this is the same as the embedding $\Z^2 \xhookrightarrow{(k_1, k_2)\mapsto (k_1, k_2; 0)} \Z^3 \cong \bbX$, where the last isomorphism is \eqref{eq: identification of the character group}. 
\end{Remark}

For the purpose of $p$-adic interpolation, we consider two types of \emph{$p$-adic families of weights} following the convention in \cite{CHJ-2017}.

\begin{Definition}\label{Definition: weights}
    \begin{enumerate}
        \item[(i)] A $\Z_p$-algebra $R$ is \emph{small} if it is $p$-torsion free, reduced, and is finite over $\Z_p\llbrack T_1, ..., T_d\rrbrack$ for some $d\in \Z_{\geq 0}$. In particular, $R$ is equipped with a canonical adic profinite topology and is complete with respect to the $p$-adic topology. 
        \item[(ii)] A \emph{small weight} is a pair $(R_{\calU}, \kappa_{\calU})$, where $R_{\calU}$ is a small $\Z_p$-algebra and $\kappa_{\calU}: T_{\GL_2}(\Z_p) \rightarrow R_{\calU}^\times$ is a continuous group homomorphism such that $\kappa_{\calU}(\diag(1+p, 1+p))-1$ is a topological nilpotent in $R_{\calU}$ with respect to the $p$-adic topology. 
        \item[(iii)] An \emph{affinoid weight} is a pair $(R_{\calU}, \kappa_{\calU})$, where $R_{\calU}$ is a reduced affinoid algebra, topologically of finite type over $\Q_p$, and $\kappa_{\calU} \colon T_{\GL_2}(\Z_p) \rightarrow R_{\calU}^\times$ is a continuous group homomorphism. 
        \item[(iv)] By a \emph{weight}, we mean either a small weight or an affinoid weight. 
    \end{enumerate}
\end{Definition}

\begin{Remark}
When $R$ is a reduced affinoid algebra, we use $R^{\circ}$ to denote the subring of power bounded elements in $R$ as usual. When $R$ is a small $\Z_p$-algebra, we abuse the notation and write $R^{\circ}=R$. This convention simplifies our exposition in the rest of the section.
\end{Remark}

\begin{Remark}\label{Remark: maps of weights to the weight space}
    Given a small weight (resp., an affinoid weight) $(R_{\calU}, \kappa_{\calU})$, there is natural morphism \[
        \calU = \Spa(R_{\calU}, R_{\calU})^{\rig} \rightarrow \calW \quad (\text{resp., }\calU = \Spa(R_{\calU}, R_{\calU}^{\circ}) \rightarrow \calW)
    \]
    by the universal property of the weight space. Occasionally, by abuse of notation, we call $\calU$ a weight. We will call $(R_{\calU}, \kappa_{\calU})$ (or $\calU$) an \emph{open weight} if this natural morphism is an open embedding. 
\end{Remark}

\begin{Remark}\label{Remark: defining rU}
    \normalfont Given a weight $(R_{\calU}, \kappa_{\calU})$, $R_{\calU}[1/p]$ admits a structure of a uniform $\Q_p$-Banach algebra by letting $R_{\calU}^{\circ}$ be its unit ball and equipping it with the corresponding spectral norm, denoted by $|\cdot|_{\calU}$. Then, we define \[
        r_{\calU} \coloneq \min\left\{ r\in \Z_{\geq 0} : |\kappa_{\calU}(\diag(1+p, 1+p))|_{\calU} < p^{-\frac{1}{p^{r}(p-1)}}\right\}.
    \]
    See \cite[pp. 202]{CHJ-2017}.
\end{Remark}

For any $r\in \Q_{>0}$ and $n\in \Z_{\geq 0}$, denote by $C^r(\Z_p^n, \Z_p)$ the space of $r$-analytic functions from $\Z_p^n$ to $\Z_p$ and define \[
    C^{r^+}(\Z_p^n, \Z_p) \coloneq \varprojlim_{r'>r}C^{r'}(\Z_p^n, \Z_p). 
\] For any $i = (i_1, ..., i_n)\in \Z_{\geq 0}^n$, write \begin{equation}\label{eq: basis for r-analytic functions}
    e_i^{(r)} \colon \Z_p^n \rightarrow \Z_p, \quad (x_1, ..., x_n)\mapsto \prod_{j=1}^n\lfloor p^{-r}i_j \rfloor! \begin{pmatrix} x_j\\ i_j\end{pmatrix}.
\end{equation}
The structure theorems (\cite[Theorem 3.1.2 \& Lemma 3.1.5]{LW-BianchiAdjoint}) for $C^r(\Z_p^n, \Z_p)$ and $C^{r^+}(\Z_p^n, \Z_p)$ yields isomorphisms \begin{equation}\label{eq: structure theorem of r-analytic functions}
    C^r(\Z_p^n, \Z_p) \cong \widehat{\bigoplus}_{i\in \Z_{\geq 0}^n}\Z_pe_i^{(r)} \quad \text{ and }\quad C^{r^+}(\Z_p^n, \Z_p)\cong \prod_{i\in \Z_{\geq 0}^n}\Z_pe_i^{(r)}.
\end{equation}

Let $R$ be either a small $\Z_p$-algebra or a reduced affinoid algebra over $\Q_p$, we consider \[
    \begin{array}{cc}
        A^{r, \circ}(\Z_p^n, R) \coloneq C^r(\Z_p^n, \Z_p)\widehat{\otimes}_{\Z_p}R^{\circ}, & A^{r}(\Z_p^n, R) \coloneq A^{r, \circ}(\Z_p^n, R)\Big[\frac{1}{p}\Big],\\
        A^{r^+, \circ}(\Z_p^n, R) \coloneq C^{r^+}(\Z_p^n, \Z_p)\widehat{\otimes}_{\Z_p} R^{\circ}, & A^{r^+}(\Z_p^n, R) \coloneq A^{r^+, \circ}(\Z_p^n, R)\Big[\frac{1}{p}\Big].
    \end{array}
\]
One may view $A^{r, \circ}(\Z_p^n, R)$ (resp., $A^r(\Z_p^n, R)$) as an $R^{\circ}$-submodule (resp., $R^{\circ}[1/p]$-submodule) of the space of continuous functions from $\Z_p^n$ to $R^{\circ}$ (resp., $R^{\circ}[1/p]$).

Recall the Iwahori decomposition $\Iw_{H, 1}^{+} = N_{H, 1}^{\opp}T_H(\Z_p)N_{H, 1}$ and observe that \begin{equation}\label{eq: unipotent radical for H as a p-adic manifold}
    N_{H, 1}^{\opp} \cong \Z_p
\end{equation} as a $p$-adic manifold. We shall from now on fix such an isomorphism. This allows us to make sense of the modules $A^{r, \circ}(N_{H, 1}^{\opp}, R)$, $A^r(N_{N, 1}^{\opp}, R)$, $A^{r^+, \circ}(N_{H, 1}^{\opp}, R)$, and $A^{r^+}(N_{H, 1}^{\opp}, R)$. 

Now, given a weight $(R_{\calU}, \kappa_{\calU})$ and $r\in \Q_{\geq 0}$ with $ r>1+r_{\calU}$. We define \begin{equation}\label{eq: some analytic representations}
    \begin{array}{rl}
        A^{r, \circ}_{\kappa_{\calU}}(\Iw_{H, 1}^+, R_{\calU}) & \coloneq \left\{f \colon \Iw_{H, 1}^+ \rightarrow R_{\calU}^{\circ} : \begin{array}{l}
            f(\bfgamma \bfbeta) = \kappa_{\calU}(\bfbeta)f(\bfgamma), \,\,\,\, \forall \bfgamma\in \Iw_{H, 1}^+, \bfbeta\in T_{H}(\Z_p)N_{H, 1}  \\
            f|_{N_{H, 1}^{\opp}} \in A^{r, \circ}(N_{H, 1}^{\opp}, R_{\calU}) 
        \end{array}\right\} \\
        A_{\kappa_{\calU}}^r(\Iw_{H, 1}^+, R_{\calU}) & \coloneq A^{r, \circ}_{\kappa_{\calU}}(\Iw_{H, 1}^+, R_{\calU})\Big[\frac{1}{p}\Big]\\
        A^{r^+, \circ}_{\kappa_{\calU}}(\Iw_{H, 1}^+, R_{\calU}) & \coloneq \left\{f \colon \Iw_{H, 1}^+ \rightarrow R_{\calU}^{\circ} : \begin{array}{l}
            f(\bfgamma \bfbeta) = \kappa_{\calU}(\bfbeta)f(\bfgamma),\,\,\,\, \forall \bfgamma\in \Iw_{H, 1}^+, \bfbeta\in T_{H}(\Z_p)N_{H, 1}  \\
            f|_{N_{H, 1}^{\opp}} \in A^{r^+, \circ}(N_{H, 1}^{\opp}, R_{\calU}) 
        \end{array}\right\} \\
        A_{\kappa_{\calU}}^{r^+}(\Iw_{H, 1}^+, R_{\calU}) & \coloneq A^{r^+, \circ}_{\kappa_{\calU}}(\Iw_{H, 1}^+, R_{\calU})\Big[\frac{1}{p}\Big].
    \end{array}
\end{equation}
Here, we extend $\kappa_{\calU}$ to $T_H(\Z_p)N_{H, 1}$ by putting $\kappa_{\calU}(N_{H, 1}) = \{1\}$.

The following corollary is immediate from the definition. 

\begin{Corollary}\label{Corollary: structure theorem of analytic representations}
Let $(R_{\calU}, \kappa_{\calU})$ be a weight. Then we have
        \[A_{\kappa_{\calU}}^{r,\circ}(\Iw_{H, 1}^+, R_{\calU}) \cong \widehat{\bigoplus}_{i\in \Z_{\geq 0}^4}R_{\calU}^{\circ}e_i^{(r)}\]
        and 
        \[A_{\kappa_{\calU}}^{r^+, \circ}(\Iw_{H, 1}^+, R_{\calU})\cong \prod_{i\in \Z_{\geq 0}^4}R_{\calU}^{\circ} e_{i}^{(r)}.\] 
         We obtain similar descriptions for $A_{\kappa_{\calU}}^{r}(\Iw_{H, 1}^+, R_{\calU})$ and $A_{\kappa_{\calU}}^{r^+}(\Iw_{H, 1}^+, R_{\calU})$ after inverting $p$.
\end{Corollary}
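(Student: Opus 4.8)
The plan is to unwind the definition in \eqref{eq: some analytic representations} and reduce everything to the structure theorems for $C^r(\Z_p,\Z_p)$ and $C^{r^+}(\Z_p,\Z_p)$ recalled in \eqref{eq: structure theorem of r-analytic functions}; essentially no input beyond those is needed. First I would record that the Iwahori decomposition $\Iw_{H,1}^+ = N_{H,1}^{\opp}\, T_H(\Z_p)\, N_{H,1}$ exhibits each $\bfgamma \in \Iw_{H,1}^+$ as a product $\bfgamma = n^-\bfbeta$ with $n^- \in N_{H,1}^{\opp}$ and $\bfbeta \in T_H(\Z_p)N_{H,1}$ in a unique way (because $N_{H,1}^{\opp}\cap T_H(\Z_p)N_{H,1} = \{1\}$), and that $T_H(\Z_p)N_{H,1}$ is a subgroup of $\Iw_{H,1}^+$. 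Hence restriction to $N_{H,1}^{\opp}$ gives a map
\[
    A^{r,\circ}_{\kappa_{\calU}}(\Iw_{H,1}^+, R_{\calU}) \longrightarrow A^{r,\circ}(N_{H,1}^{\opp}, R_{\calU}), \qquad f \longmapsto f|_{N_{H,1}^{\opp}},
\]
and I would check it is an isomorphism of topological $R_{\calU}^\circ$-modules: injectivity is immediate from the automorphy relation $f(\bfgamma\bfbeta)=\kappa_{\calU}(\bfbeta)f(\bfgamma)$, which recovers $f$ on all of $\Iw_{H,1}^+$ from $f|_{N_{H,1}^{\opp}}$; for surjectivity one extends $g \in A^{r,\circ}(N_{H,1}^{\opp},R_{\calU})$ by setting $f(n^-\bfbeta) \coloneq \kappa_{\calU}(\bfbeta)\,g(n^-)$, which is well defined by uniqueness of the factorisation, is $R_{\calU}^\circ$-valued because $\kappa_{\calU}$ takes values in $R_{\calU}^{\circ,\times}$ (Remark \ref{Remark: `integral' property of weights}), satisfies the automorphy relation because $\kappa_{\calU}$ is a homomorphism and $T_H(\Z_p)N_{H,1}$ is a group, and restricts to $g$ on $N_{H,1}^{\opp}$; continuity of the inverse is clear since it is assembled from multiplication by $\kappa_{\calU}(\bfbeta)$.

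With this reduction in hand, I would invoke the fixed identification $N_{H,1}^{\opp}\cong \Z_p$ from \eqref{eq: unipotent radical for H as a p-adic manifold}, under which $A^{r,\circ}(N_{H,1}^{\opp},R_{\calU}) = C^r(\Z_p,\Z_p)\widehat{\otimes}_{\Z_p}R_{\calU}^\circ$ and $A^{r^+,\circ}(N_{H,1}^{\opp},R_{\calU}) = C^{r^+}(\Z_p,\Z_p)\widehat{\otimes}_{\Z_p}R_{\calU}^\circ$ hold by definition. Substituting the decompositions $C^r(\Z_p,\Z_p)\cong \widehat{\bigoplus}_i \Z_p e_i^{(r)}$ and $C^{r^+}(\Z_p,\Z_p)\cong \prod_i \Z_p e_i^{(r)}$ of \eqref{eq: structure theorem of r-analytic functions}, and commuting $\widehat{\otimes}_{\Z_p}R_{\calU}^\circ$ past the completed direct sum, respectively the product, yields the two displayed isomorphisms. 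The statements for $A^r_{\kappa_{\calU}}$ and $A^{r^+}_{\kappa_{\calU}}$ then follow by inverting $p$, since by definition they are the corresponding integral modules with $p$ inverted.

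There is no real obstacle here — the corollary is essentially a chase through the definitions — and only two points deserve a sentence of justification. The first is the well-definedness of the automorphy extension used for surjectivity above, which rests on the uniqueness of the Iwahori factorisation together with the integrality of $p$-adic weights recalled in Remark \ref{Remark: `integral' property of weights}. The second, and the only mildly technical point, is that $\widehat{\otimes}_{\Z_p}R_{\calU}^\circ$ may be commuted past the completed direct sum in $C^r(\Z_p,\Z_p)$ and past the infinite product in $C^{r^+}(\Z_p,\Z_p)$; for the direct sum this is formal, and for the product it is part of the standard $p$-adic functional analysis set up in \S\ref{subsection: weight space and analytic representations} and already underlying the very definition of $A^{r^+,\circ}$ in \eqref{eq: some analytic representations}. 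Everything else is routine bookkeeping with the bases $e_i^{(r)}$.
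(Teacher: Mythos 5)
Your proposal is correct and fills in exactly the argument the paper leaves implicit when it declares the corollary ``immediate from the definition'': restriction to $N_{H,1}^{\opp}$ is an isomorphism onto $A^{r,\circ}(N_{H,1}^{\opp},R_{\calU})$ by the Iwahori decomposition and the integrality of $\kappa_{\calU}$, and the structure theorem \eqref{eq: structure theorem of r-analytic functions} then gives the displayed decompositions. Note only that since $N_{H,1}^{\opp}\cong\Z_p$ is one-dimensional, the index set should be $\Z_{\geq 0}$ rather than the $\Z_{\geq 0}^4$ appearing in the paper's statement (an apparent typo carried over from the $\Iw_{\GSp_4,1}^+$ case), and your argument implicitly yields the corrected version.
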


\begin{Remark}\label{Remark: analytic representation for other groups}
Consider 
\[
     \Iw_{P_{\Si}, 1}^+ \coloneq \left\{ \bfgamma \in P_{\Si}(\Z_p) : (\bfgamma \mod p)\in T_{\GSp_4}(\F_p)\right\}
\] 
which admits a Iwahori decomposition
\[
\Iw_{P_{\Si}}^+ = N_{H, 1}^{\opp} T_H(\Z_p) N_{\GSp_4, 1}.
\]
We may consider analytic representations $A_{\kappa_{\calU}}^{r, \circ}(?, R_{\calU})$, $A_{\kappa_{\calU}}^{r^+, \circ}(?, R_{\calU})$, $A_{\kappa_{\calU}}^{r}(?, R_{\calU})$ and $A_{\kappa_{\calU}}^{r^+}(?, R_{\calU})$ for $? \in \{\Iw_{\GSp_4, 1}^+, \Iw_{P_{\Si}, 1}^+\}$. More precisely, we define
\begin{equation*}
    \begin{array}{rl}
      \scalemath{0.9}{  A^{r, \circ}_{\kappa_{\calU}}(\Iw_{\GSp_4, 1}^+, R_{\calU})} & \scalemath{0.9}{ \coloneq \left\{f \colon \Iw_{\GSp_4, 1}^+ \rightarrow R_{\calU}^{\circ} : \begin{array}{l}
            f(\bfgamma \bfbeta) = \kappa_{\calU}(\bfbeta)f(\bfgamma), \,\,\,\, \forall \bfgamma\in \Iw_{\GSp_4, 1}^+, \bfbeta\in T_{\GSp_4}(\Z_p)N_{\GSp_4, 1}  \\
            f|_{N_{\GSp_4, 1}^{\opp}} \in A^{r, \circ}(N_{\GSp_4, 1}^{\opp}, R_{\calU}) 
        \end{array}\right\} }\\
        \scalemath{0.9}{ A_{\kappa_{\calU}}^r(\Iw_{\GSp_4, 1}^+, R_{\calU})} & \scalemath{0.9}{ \coloneq A^{r, \circ}_{\kappa_{\calU}}(\Iw_{\GSp_4, 1}^+, R_{\calU})\Big[\frac{1}{p}\Big]}\\
        \scalemath{0.9}{ A^{r^+, \circ}_{\kappa_{\calU}}(\Iw_{\GSp_4, 1}^+, R_{\calU})} & \scalemath{0.9}{ \coloneq \left\{f \colon \Iw_{\GSp_4, 1}^+ \rightarrow R_{\calU}^{\circ} : \begin{array}{l}
            f(\bfgamma \bfbeta) = \kappa_{\calU}(\bfbeta)f(\bfgamma),\,\,\,\, \forall \bfgamma\in \Iw_{\GSp_4, 1}^+, \bfbeta\in T_{\GSp_4}(\Z_p)N_{\GSp_4, 1}  \\
            f|_{N_{\GSp_4, 1}^{\opp}} \in A^{r^+, \circ}(N_{\GSp_4, 1}^{\opp}, R_{\calU}) 
        \end{array}\right\} }\\
        \scalemath{0.9}{ A_{\kappa_{\calU}}^{r^+}(\Iw_{\GSp_4, 1}^+, R_{\calU})} & \scalemath{0.9}{ \coloneq A^{r^+, \circ}_{\kappa_{\calU}}(\Iw_{\GSp_4, 1}^+, R_{\calU})\Big[\frac{1}{p}\Big]}
    \end{array}
\end{equation*}
and
\begin{equation*}
    \begin{array}{rl}
        \scalemath{0.9}{ A^{r, \circ}_{\kappa_{\calU}}(\Iw_{P_{\Si}, 1}^+, R_{\calU})} & \scalemath{0.9}{ \coloneq \left\{f \colon \Iw_{P_{\Si}, 1}^+ \rightarrow R_{\calU}^{\circ} : \begin{array}{l}
            f(\bfgamma \bfbeta) = \kappa_{\calU}(\bfbeta)f(\bfgamma), \,\,\,\, \forall \bfgamma\in \Iw_{P_{\Si}, 1}^+, \bfbeta\in T_{H}(\Z_p)N_{\GSp_4, 1}  \\
            f|_{N_{H, 1}^{\opp}} \in A^{r, \circ}(N_{H, 1}^{\opp}, R_{\calU}) 
        \end{array}\right\} }\\
        \scalemath{0.9}{ A_{\kappa_{\calU}}^r(\Iw_{P_{\Si}, 1}^+, R_{\calU})} & \scalemath{0.9}{ \coloneq A^{r, \circ}_{\kappa_{\calU}}(\Iw_{P_{\Si}, 1}^+, R_{\calU})\Big[\frac{1}{p}\Big]}\\
        \scalemath{0.9}{ A^{r^+, \circ}_{\kappa_{\calU}}(\Iw_{P_{\Si}, 1}^+, R_{\calU})} & \scalemath{0.9}{ \coloneq \left\{f \colon \Iw_{P_{\Si}, 1}^+ \rightarrow R_{\calU}^{\circ} : \begin{array}{l}
            f(\bfgamma \bfbeta) = \kappa_{\calU}(\bfbeta)f(\bfgamma),\,\,\,\, \forall \bfgamma\in \Iw_{P_{\Si}, 1}^+, \bfbeta\in T_{H}(\Z_p)N_{\GSp_4, 1}  \\
            f|_{N_{H, 1}^{\opp}} \in A^{r^+, \circ}(N_{H, 1}^{\opp}, R_{\calU}) 
        \end{array}\right\}} \\
        \scalemath{0.9}{ A_{\kappa_{\calU}}^{r^+}(\Iw_{P_{\Si}, 1}^+, R_{\calU})} & \scalemath{0.9}{ \coloneq A^{r^+, \circ}_{\kappa_{\calU}}(\Iw_{P_{\Si}, 1}^+, R_{\calU})\Big[\frac{1}{p}\Big].}
    \end{array}
\end{equation*}
\end{Remark}

\begin{Lemma}\label{Lemma: analytic representation for H is the same as analytic representation for P_Si}
    Let $(R_{\calU}, \kappa_{\calU})$ be a weight and $r\in \Q_{\geq 0}$ such that $r > 1+r_{\calU}$. 
    The natural inclusion $\Iw_{H, 1}^+ \hookrightarrow \Iw_{P_{\Si}, 1}^+$ induces a canonical isomorphism of $R_{\calU}$-modules \[
        A_{\kappa_{\calU}}^r(\Iw_{P_{\Si}, 1}^+, R_{\calU}) \cong A_{\kappa_{\calU}}^r(\Iw_{H, 1}^+, R_{\calU}). 
    \]
    Similar statements hold for $A_{\kappa_{\calU}}^{r, \circ}$, $A_{\kappa_{\calU}}^{r^+, \circ}$, and $A_{\kappa_{\calU}}^{r^+}$. 
\end{Lemma}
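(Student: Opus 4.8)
The plan is to identify both $R_{\calU}$-modules, via restriction of functions, with the single module $A^{r, \circ}(N_{H, 1}^{\opp}, R_{\calU})$ of $r$-analytic $R_{\calU}^{\circ}$-valued functions on the common subgroup $N_{H, 1}^{\opp} \cong \Z_p$, and then to check that these two identifications are intertwined by the restriction map $f \mapsto f|_{\Iw_{H, 1}^+}$ induced by the inclusion $\Iw_{H, 1}^+ \hookrightarrow \Iw_{P_{\Si}, 1}^+$. Once this is done, inverting $p$ gives the statement for $A^r_{\kappa_{\calU}}$, and passing to the limit over $r' > r$ gives the $r^+$-versions.

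First I would recall the two Iwahori decompositions: $\Iw_{H, 1}^+ = N_{H, 1}^{\opp}\, T_H(\Z_p)\, N_{H, 1}$ and, from Remark \ref{Remark: analytic representation for other groups}, $\Iw_{P_{\Si}, 1}^+ = N_{H, 1}^{\opp}\, T_H(\Z_p)\, N_{\GSp_4, 1}$ --- the crucial point being that the opposite-unipotent factor is the \emph{same} group $N_{H, 1}^{\opp}$ in both cases, because $P_{\Si}$ already contains all of $N_{\GSp_4}$, so the only ``lower-triangular'' directions inside $P_{\Si}$ are those of the Levi $H$. Both are direct-product decompositions of $p$-adic manifolds, so a function on either group satisfying $f(\bfgamma \bfbeta) = \kappa_{\calU}(\bfbeta) f(\bfgamma)$ for all $\bfbeta$ in the corresponding $T_H(\Z_p) N_{?}$ is determined by $f|_{N_{H, 1}^{\opp}}$; conversely, for any $g \in A^{r, \circ}(N_{H, 1}^{\opp}, R_{\calU})$ the rule $n^{\opp}\, t\, n \mapsto \kappa_{\calU}(t)\, g(n^{\opp})$ defines such a function, and it lies in the respective $A^{r, \circ}_{\kappa_{\calU}}$ because multiplication by $\kappa_{\calU}$ of the torus coordinate preserves $r$-analyticity when $r > 1 + r_{\calU}$ (this is exactly what is built into the definitions in \eqref{eq: some analytic representations} and Remark \ref{Remark: analytic representation for other groups}, via \cite[Theorem 3.1.2 \& Lemma 3.1.5]{LW-BianchiAdjoint}). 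Hence restriction to $N_{H, 1}^{\opp}$ gives isomorphisms $A^{r, \circ}_{\kappa_{\calU}}(\Iw_{H, 1}^+, R_{\calU}) \xrightarrow{\,\cong\,} A^{r, \circ}(N_{H, 1}^{\opp}, R_{\calU})$ and $A^{r, \circ}_{\kappa_{\calU}}(\Iw_{P_{\Si}, 1}^+, R_{\calU}) \xrightarrow{\,\cong\,} A^{r, \circ}(N_{H, 1}^{\opp}, R_{\calU})$. Here recall that $T_H = T_{\GSp_4}$ under the standing identification, so ``$\kappa_{\calU}$ of the torus coordinate'' is unambiguous on both sides.

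Next I would check compatibility. The inclusion $\Iw_{H, 1}^+ \hookrightarrow \Iw_{P_{\Si}, 1}^+$ carries $N_{H, 1}^{\opp} T_H(\Z_p) N_{H, 1}$ into $N_{H, 1}^{\opp} T_H(\Z_p) N_{\GSp_4, 1}$ compatibly with the product decompositions, so the restriction map $f \mapsto f|_{\Iw_{H, 1}^+}$ is well-defined on $A^{r, \circ}_{\kappa_{\calU}}$: the equivariance survives since $T_H(\Z_p) N_{H, 1} \subseteq T_H(\Z_p) N_{\GSp_4, 1}$, and the $N_{H, 1}^{\opp}$-restriction is literally unchanged. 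Restricting $f$ first to $\Iw_{H, 1}^+$ and then to $N_{H, 1}^{\opp}$ agrees with restricting $f$ directly to $N_{H, 1}^{\opp}$; therefore the triangle formed by the restriction map and the two isomorphisms above commutes, and the restriction map is itself an isomorphism. Inverting $p$ gives the claim for $A^r_{\kappa_{\calU}}$, and taking $\varprojlim_{r' > r}$ gives it for $A^{r^+, \circ}_{\kappa_{\calU}}$ and $A^{r^+}_{\kappa_{\calU}}$.

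I do not expect a serious obstacle: the whole content is the observation that the two groups differ only in their upper-triangular ($N$) directions, which are annihilated by the defining equivariance, while they share the opposite unipotent $N_{H, 1}^{\opp}$ that carries all of the analytic content. The only points requiring a little care are bookkeeping --- confirming that the product decomposition of $\Iw_{P_{\Si}, 1}^+$ really has $N_{H, 1}^{\opp}$ (and not a larger opposite unipotent) as its negative part, and that the extension-by-$\kappa_{\calU}$ operation stays inside $A^{r, \circ}$ --- both of which are immediate from the set-up already recorded in the excerpt.
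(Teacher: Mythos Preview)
Your proposal is correct and follows essentially the same approach as the paper: both identify each side with $A^r(N_{H,1}^{\opp}, R_{\calU})$ via the shared Iwahori decompositions $\Iw_{H,1}^+ = N_{H,1}^{\opp} T_H(\Z_p) N_{H,1}$ and $\Iw_{P_{\Si},1}^+ = N_{H,1}^{\opp} T_H(\Z_p) N_{\GSp_4,1}$. The paper's proof is a terse two-line version of what you wrote; your extra care in verifying that the restriction map $f \mapsto f|_{\Iw_{H,1}^+}$ is compatible with the two identifications (i.e., that the triangle commutes) is a welcome clarification the paper omits.
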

\begin{proof}
    Recall the Iwahori decomposition \[
        \Iw_{H, 1}^+ = N_{H, 1}^{\opp} T_H(\Z_p) N_{H, 1} \quad \text{ and }\quad \Iw_{P_{\Si}}^+ = N_{H, 1}^{\opp} T_H(\Z_p) N_{\GSp_4, 1}.
    \]
    Unwinding the definition, one sees that \[
        A_{\kappa_{\calU}}^r(\Iw_{P_{\Si}, 1}^+, R_{\calU}) \cong A^r(N_{H, 1}^{\opp}, R_{\calU}) \cong A_{\kappa_{\calU}}^r(\Iw_{H, 1}^+, R_{\calU}).
    \]
    The other cases are similar. 
\end{proof}

We equip $A_{\kappa_{\calU}}^r(\Iw_{P_{\Si}, 1}^+, R_{\calU})$ with a left $\Iw_{P_{\Si}, 1}^+$-action given by \[
    \Iw_{P_{\Si}, 1}^+ \times A_{\kappa_{\calU}}^r(\Iw_{P_{\Si}, 1}^+, R_{\calU}) \rightarrow A_{\kappa_{\calU}}^r(\Iw_{P_{\Si}, 1}^+, R_{\calU}), \quad (\bfgamma, f) \mapsto \left( \bfalpha \mapsto f(\bfitw_3^{-1} \trans \bfgamma \bfitw_3 \bfalpha) \right).\footnote{Here, note that given $\bfgamma \in \Iw_{P_{\Si}, 1}^+$, $\bfitw_3^{-1} \trans \bfgamma \bfitw_3 \in \Iw_{P_{\Si}, 1}^+$.}
\]
This induces a natural group homomorphism \[
    \rho_{\kappa_{\calU}}^r \colon \Iw_{P_{\Si}, 1}^+ \rightarrow \Aut(A_{\kappa_{\calU}}^r(\Iw_{P_{\Si}, 1}^+, R_{\calU})).
\]
Thanks to Lemma \ref{Lemma: analytic representation for H is the same as analytic representation for P_Si}, we can then view $A_{\kappa_{\calU}}^r(\Iw_{H, 1}^+, R_{\calU})$ as an $\Iw_{P_{\Si}, 1}^+$-representation. By abuse of notation we still write \[
    \rho_{\kappa_{\calU}}^r \colon \Iw_{P_{\Si}, 1}^+ \rightarrow \Aut(A_{\kappa_{\calU}}^r(\Iw_{H, 1}^+, R_{\calU})).
\]
Similar constructions apply to $A_{\kappa_{\calU}}^{r, \circ}$, $A_{\kappa_{\calU}}^{r^+, \circ}$, and $A_{\kappa_{\calU}}^{r^+}$, yielding representations $\rho_{\kappa_{\calU}}^{r, \circ}$, $\rho_{\kappa_{\calU}}^{r^+, \circ}$, and $\rho_{\kappa_{\calU}}^{r^+}$, respectively.

\begin{Remark}\label{Remark: extension of the analytic representation}
    Given a weight $(R_{\calU}, \kappa_{\calU})$ and $r\in \Q_{\geq 0}$ with $r>1+r_{\calU}$, consider \begin{align*}
        \Iw_{H, 1}^{+, (r)} &\coloneq \left\{ \bfgamma =(\bfgamma_{ij})_{i,j} \in H(\calO_{\C_p}) : |\bfgamma_{ij}-\bfgamma_{ij}'|\leq p^{-r} \text{ for some }\bfgamma' = (\bfgamma_{ij}')_{i,j}\in \Iw_{H, 1}^+ \right\}\\
        \Iw_{P_{\Si}, 1}^{+, (r)} & \coloneq \left\{ \bfgamma =(\bfgamma_{ij})_{i,j} \in P_{\Si}(\calO_{\C_p}) : |\bfgamma_{ij}-\bfgamma_{ij}'|\leq p^{-r} \text{ for some }\bfgamma' = (\bfgamma_{ij}')_{i,j}\in \Iw_{P_{\Si}, 1}^+ \right\}.
    \end{align*}
There are Iwahori decompositions \[
        \Iw_{H, 1}^{+, (r)} = N_{H, 1}^{\opp, (r)} T_{H}^{(r)} N_{H, 1}^{(r)} \quad \text{ and }\quad \Iw_{P_{\Si}, 1}^{+, (r)} = N_{H, 1}^{\opp, (r)} T_{H}^{(r)} N_{\GSp_4, 1}^{(r)},
    \]
    where $N_{H, 1}^{\opp, (r)}$, $T_{H}^{(r)}$, $N_{H,1}^{(r)}$, and $N_{\GSp_{4}, 1}^{(r)}$ are defined similarly. 
    For any $f\in A_{\kappa_{\calU}}^r(\Iw_{H, 1}^+, R_{\calU})$, since $f|_{N_{H, 1}^{\opp}}$ is $r$-analytic, it naturally extends to a function on $\Iw_{H, 1}^{+, (r)}$ by \[
        f(\bfepsilon \bfbeta) = \kappa_{\calU}(\bfbeta)f(\bfepsilon)
    \]
    for any $\bfepsilon \in N_{H, 1}^{\opp,(r)}$ and $\bfbeta\in T_H^{(r)}N_{H, 1}^{(r)}$. Here, we have applied \cite[Proposition 2.6]{CHJ-2017} to extend $\kappa_{\calU}$ to a character on $T_H^{(r)} N_{H, 1}^{(r)}$. Consequently, $\rho_{\kappa_{\calU}}^r$ extends to a representation \[
        \rho_{\kappa_{\calU}}^r \colon \Iw_{P_{\Si}, 1}^{+, (r)} \rightarrow \Aut(A_{\kappa_{\calU}}^r(\Iw_{H, 1}^+, R_{\calU})).
    \]
    Similar constructions apply to $\rho_{\kappa_{\calU}}^{r, \circ}$, $\rho_{\kappa_{\calU}}^{r^+, \circ}$, and $\rho_{\kappa_{\calU}}^{r^+}$.
\end{Remark}

\begin{Example}\label{Example: highest weight vector}
    \normalfont Let $(R_{\calU}, \kappa_{\calU})$ be a weight and $r\in \Q_{\geq 0}$ with $r>1+r_{\calU}$. We introduce the \emph{highest weight vector} $e_{\kappa_{\calU}}^{\hst}$ in $A_{\kappa_{\calU}}^{r, \circ}(\Iw_{H, 1}^+, R_{\calU})$ (and hence in $A_{\kappa_{\calU}}^r(\Iw_{H, 1}^+, R_{\calU})$, $A_{\kappa_{\calU}}^{r^+, \circ}(\Iw_{H, 1}^+, R_{\calU})$, and $A_{\kappa_{\calU}}^{r^+}(\Iw_{H, 1}^+, R_{\calU})$). Recall that $\kappa_{\calU} = (\kappa_{\calU, 1}, \kappa_{\calU, 2})$ where $\kappa_{\calU, i} \colon \Z_p^\times \rightarrow R_{\calU}^\times$ is a continuous group homomorphism such that $\kappa_{\calU, i}(1+p\Z_p)\subset R_{\calU}^{\circ}$. Given $\bfalpha = (\bfalpha_{ij})_{1\leq i,j\leq 4}\in \Iw_{\GSp_4, 1}^+$, define \[
        e_{\kappa_{\calU}}^{\hst}(\bfalpha) = \frac{\kappa_{\calU,1}(\bfalpha_{11})}{\kappa_{\calU, 2}(\bfalpha_{11})} \cdot \kappa_{\calU, 2}\left( \det(\bfalpha_{ij})_{1\leq i,j\leq 2}\right).
    \] For $\bfgamma\in \Iw_{\GSp_{4}, 1}^+$, the functions \[
        f_{\kappa_{\calU}}^{\bfgamma} \colon \bfalpha \mapsto e_{\kappa_{\calU}}^{\hst}(\bfgamma\bfalpha)
    \] are elements in $A_{\kappa_{\calU}}^{r, \circ}$. 
\end{Example}

\begin{Lemma}\label{Lemma: tensor product of analytic representations}
    Let $(R_{\calU}, \kappa_{\calU})$ and $(R_{\calV}, \kappa_{\calV})$ be two weights. Suppose they are either both small weights or both affinoid weights. Let $r\in \Q_{\geq 0}$ with $r > 1+ \max\{r_{\calU}, r_{\calV}\}$. Then, there is a natural morphism of $\Iw_{P_{\Si}, 1}^+$-representations \[
        A_{\kappa_{\calU}}^{r}(\Iw_{H, 1}^+, R_{\calU}) \widehat{\otimes}A_{\kappa_{\calV}}^{r}(\Iw_{H, 1}^+, R_{\calU}) \rightarrow A_{\kappa_{\calU} + \kappa_{\calV}}^r(\Iw_{H, 1}^+, R_{\calU})
    \]
    sending $e_{\kappa_{\calU}}^{\hst} \otimes e_{\kappa_{\calV}}^{\hst}$ to $e_{\kappa_{\calU}+\kappa_{\calV}}^{\hst}$. 
\end{Lemma}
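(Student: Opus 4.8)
The plan is to take the morphism to be pointwise multiplication of functions, and then to check, in turn, that it is well defined, that it is $\Iw_{P_{\Si}, 1}^+$-equivariant, and that it behaves correctly on highest weight vectors.

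First I would define, for $f_1 \in A_{\kappa_{\calU}}^r(\Iw_{H, 1}^+, R_{\calU})$ and $f_2 \in A_{\kappa_{\calV}}^r(\Iw_{H, 1}^+, R_{\calU})$, the function $f_1 \cdot f_2 \colon \Iw_{H, 1}^+ \to R_{\calU}$, $\bfgamma \mapsto f_1(\bfgamma) f_2(\bfgamma)$, and verify that it lies in $A_{\kappa_{\calU} + \kappa_{\calV}}^r(\Iw_{H, 1}^+, R_{\calU})$. The automorphy condition is immediate: for $\bfbeta \in T_H(\Z_p) N_{H, 1}$ one has $(f_1 \cdot f_2)(\bfgamma \bfbeta) = \kappa_{\calU}(\bfbeta) \kappa_{\calV}(\bfbeta) (f_1 \cdot f_2)(\bfgamma) = (\kappa_{\calU} + \kappa_{\calV})(\bfbeta) (f_1 \cdot f_2)(\bfgamma)$, since $\kappa_{\calU} + \kappa_{\calV}$ denotes by convention the product character. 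The one substantive point is that $(f_1 \cdot f_2)|_{N_{H, 1}^{\opp}}$ is again $r$-analytic: via the fixed identification $N_{H, 1}^{\opp} \cong \Z_p$ of \eqref{eq: unipotent radical for H as a p-adic manifold} this reduces to the assertion that $A^r(\Z_p, R_{\calU})$ is closed under multiplication, which holds because $r$-analytic functions on $\Z_p$ are precisely those extending to rigid analytic functions on a fixed affinoid neighbourhood of $\Z_p$ (a finite union of closed balls of radius $p^{-r}$), and global sections of the structure sheaf of an affinoid form a ring; alternatively one reads this off the structure isomorphisms \eqref{eq: structure theorem of r-analytic functions} together with the estimates of \cite{LW-BianchiAdjoint}. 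The same description shows that multiplication is a continuous $R_{\calU}$-bilinear map, so it factors through the completed tensor product and yields the morphism $A_{\kappa_{\calU}}^r(\Iw_{H, 1}^+, R_{\calU}) \widehat{\otimes} A_{\kappa_{\calV}}^r(\Iw_{H, 1}^+, R_{\calU}) \to A_{\kappa_{\calU} + \kappa_{\calV}}^r(\Iw_{H, 1}^+, R_{\calU})$.

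Next I would transport everything to $\Iw_{P_{\Si}, 1}^+$-representations via Lemma \ref{Lemma: analytic representation for H is the same as analytic representation for P_Si}; since the identifications there are restriction-of-functions maps (compatible with pointwise multiplication), this is harmless, and the product map is then $\Iw_{P_{\Si}, 1}^+$-equivariant because the group acts by the weight-independent formula $(\rho_{\kappa}^r(\bfgamma) f)(\bfalpha) = f(\bfitw_3^{-1} \trans \bfgamma \bfitw_3 \bfalpha)$. Indeed
\[
    \big(\rho_{\kappa_{\calU}}^r(\bfgamma) f_1 \cdot \rho_{\kappa_{\calV}}^r(\bfgamma) f_2\big)(\bfalpha) = f_1(\bfitw_3^{-1} \trans \bfgamma \bfitw_3 \bfalpha)\, f_2(\bfitw_3^{-1} \trans \bfgamma \bfitw_3 \bfalpha) = \big(\rho_{\kappa_{\calU} + \kappa_{\calV}}^r(\bfgamma)(f_1 \cdot f_2)\big)(\bfalpha),
\]
and the same computation applies verbatim to the larger groups $\Iw_{P_{\Si}, 1}^{+, (r)}$ of Remark \ref{Remark: extension of the analytic representation}. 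Finally, for the highest weight vectors, recall from Example \ref{Example: highest weight vector} that $e_{\kappa}^{\hst}(\bfalpha) = \frac{\kappa_1(\bfalpha_{11})}{\kappa_2(\bfalpha_{11})} \kappa_2(\det(\bfalpha_{ij})_{1 \leq i,j \leq 2})$ for a weight $\kappa = (\kappa_1, \kappa_2)$; this formula is multiplicative in $\kappa$, so $e_{\kappa_{\calU}}^{\hst}(\bfalpha)\, e_{\kappa_{\calV}}^{\hst}(\bfalpha) = e_{\kappa_{\calU} + \kappa_{\calV}}^{\hst}(\bfalpha)$, i.e. the morphism sends $e_{\kappa_{\calU}}^{\hst} \otimes e_{\kappa_{\calV}}^{\hst}$ to $e_{\kappa_{\calU} + \kappa_{\calV}}^{\hst}$. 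The hypothesis $r > 1 + \max\{r_{\calU}, r_{\calV}\}$ enters only to guarantee, via Remark \ref{Remark: defining rU} and Example \ref{Example: highest weight vector}, that $e_{\kappa_{\calU}}^{\hst}$, $e_{\kappa_{\calV}}^{\hst}$ and $e_{\kappa_{\calU} + \kappa_{\calV}}^{\hst}$ all genuinely lie in the relevant $A^r$-spaces (one checks $r_{\kappa_{\calU}\kappa_{\calV}} \le \max\{r_{\calU}, r_{\calV}\}$ directly from the definition of $r_{\bullet}$), so that the last assertion is meaningful.

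The only step that is not a purely formal manipulation of the defining automorphy conditions is the closure of $A^r(\Z_p, R_{\calU})$ under multiplication together with continuity of that multiplication; this is standard $p$-adic functional analysis (implicit in \cite{LW-BianchiAdjoint}), so I expect no serious obstacle. If anything, the only care needed is bookkeeping — verifying that pointwise multiplication is compatible with the several identifications in play (the Iwahori decomposition $\Iw_{H, 1}^+ = N_{H, 1}^{\opp} T_H(\Z_p) N_{H, 1}$, the comparison $\Iw_{H, 1}^+ \hookrightarrow \Iw_{P_{\Si}, 1}^+$ of Lemma \ref{Lemma: analytic representation for H is the same as analytic representation for P_Si}, and the coordinate $N_{H, 1}^{\opp} \cong \Z_p$) — but since each of these is compatible with restriction of functions and the product is defined pointwise, this is routine.
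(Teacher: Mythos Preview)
Your proposal is correct and follows exactly the same approach as the paper: define the map by pointwise multiplication $f \otimes f' \mapsto (\bfgamma \mapsto f(\bfgamma)f'(\bfgamma))$, then verify equivariance and the highest-weight-vector statement. The paper's proof is terser (it simply writes down the multiplication map, declares the equivariance ``straightforward'', and refers to Example~\ref{Example: highest weight vector} for the highest weight vectors), whereas you spell out the automorphy check, the closure of $A^r$ under products, the explicit equivariance computation, and the role of the hypothesis on $r$; but the underlying argument is identical.
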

\begin{proof}
    Consider the morphism \[
        A_{\kappa_{\calU}}^{r}(\Iw_{H, 1}^+, R_{\calU}) \widehat{\otimes}A_{\kappa_{\calV}}^{r}(\Iw_{H, 1}^+, R_{\calU}) \rightarrow A_{\kappa_{\calU} + \kappa_{\calV}}^r(\Iw_{H, 1}^+, R_{\calU}), \quad f \otimes f' \mapsto (\bfgamma \mapsto f(\bfgamma)f'(\bfgamma)).
    \]
  It is straightforward to verify the $\Iw_{P_{\Si}, 1}^+$-equivariance. The statement on the highest weight vectors follows from the explicit formulation in Example \ref{Example: highest weight vector}.
\end{proof}

\subsection{Pseudoautomorphic sheaves}\label{subsection: pseudoautomorphic sheaves}
Fix $\bfitw\in W^H$ and let $(R_{\calU}, \kappa_{\calU})$ be a weight. Let $r\in \Q_{\geq 0}$ such that $r>1+r_{\calU}$. Define sheaves $\scrA_{\kappa_{\calU}, \adicFL_{\bfitw}}^r$ and $\scrA_{\kappa_{\calU}, \adicFL_{\bfitw}}^{r, \circ}$ on $\adicFL_{\bfitw, (r, r)}$ by \[
    \scrA_{\kappa_{\calU}, \adicFL_{\bfitw}}^r\coloneq A_{\bfitw_3^{-1}\bfitw\kappa_{\calU}}^r(\Iw_{H, 1}^+, R_{\calU}) \widehat{\otimes} \scrO_{\adicFL_{\bfitw, (r, r)}}\]
    and
    \[
    \scrA_{\kappa_{\calU}, \adicFL_{\bfitw}}^{r, \circ}\coloneq A_{\bfitw_3^{-1}\bfitw\kappa_{\calU}}^{r, \circ}(\Iw_{H, 1}^+, R_{\calU}) \widehat{\otimes} \scrO_{\adicFL_{\bfitw, (r, r)}}^+.
\]

\begin{Remark}
One might wonder why there is a twist by $\bfitw_3$. We refer the readers to Remark \ref{Remark: explanation on the weights} below for a brief explanation. 
\end{Remark}

\begin{Proposition}\label{Prop: comparison theorem for pseudoautomorphic sheaves}
    Given $\bfitw$, $(R_{\calU}, \kappa_{\calU})$, and $r$ as above. Let $\calB_{\GSp_4}$ denote the rigid analytic space associated with the formal completion of $B_{\GSp_4}$. Then there is a natural isomorphism of sheaves over $\adicFL_{\bfitw, (r, r)}$ \[
        \scrA_{\kappa_{\calU}, \adicFL_{\bfitw}}^{r} \cong \left(\pr_{\adicFL_{\bfitw}, \Iw_{H, 1}^+}\right)_{*} \scrO_{\adicIW_{H, 1, \adicFL_{\bfitw}}^+}\widehat{\otimes}R_{\calU}[\bfitw\kappa_{\calU}].
    \]
  where the right-hand side stands for the subsheaf of $\left(\pr_{\adicFL_{\bfitw}, \Iw_{H, 1}^+}\right)_{*} \scrO_{\adicIW_{H, 1, \adicFL_{\bfitw}}^+}\widehat{\otimes}R_{\calU}$ consisting of sections $f(\bfgamma)$ such that \[
        f(\bfgamma\bfbeta) = \bfitw\kappa_{\calU}(\bfbeta)f(\bfgamma)
    \] for all $\bfbeta\in \adicIw_{H, 1}^+ \cap \calB_{\GSp_4}$. 
    \end{Proposition}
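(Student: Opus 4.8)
The plan is to identify both sides of the claimed isomorphism explicitly as sheaves of functions and then write down the comparison map directly, checking it is an isomorphism on a cover by affinoids where the $\Iw_{H,1}^+$-torsor $\adicIW_{H,1,\adicFL_{\bfitw}}^+$ trivialises. The key is the canonical trivialising section $\psi_{\bfitw}^{\std}$ from \eqref{eq: psi_w^std}, which identifies $\adicIW_{H,1,\adicFL_{\bfitw}}^+$ locally with $\adicFL_{\bfitw,(r,r)} \times \adicIw_{H,1}^+$, hence $(\pr_{\adicFL_{\bfitw},\Iw_{H,1}^+})_* \scrO_{\adicIW_{H,1,\adicFL_{\bfitw}}^+}$ locally with $\scrO_{\adicFL_{\bfitw,(r,r)}} \widehat{\otimes} \calO(\adicIw_{H,1}^+)$, where $\calO(\adicIw_{H,1}^+)$ denotes appropriate analytic functions on (a neighbourhood of) $\Iw_{H,1}^+$.

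First I would spell out the right-hand side: a section over an affinoid open $\calV \subset \adicFL_{\bfitw,(r,r)}$ is a function $f$ on $\adicIW_{H,1,\adicFL_{\bfitw}}^+|_{\calV}$, valued in $R_{\calU}$, with the transformation rule $f(\psi \cdot \bfbeta) = \bfitw\kappa_{\calU}(\bfbeta) f(\psi)$ for $\bfbeta \in \adicIw_{H,1}^+ \cap \calB_{\GSp_4}$. Using $\psi_{\bfitw}^{\std}$, such an $f$ is determined by its restriction to the $\adicIw_{H,1}^+$-orbit of $\psi_{\bfitw}^{\std}$, i.e.\ by the function $\bfgamma \mapsto f(\psi_{\bfitw}^{\std} \cdot \bfgamma)$ on $\adicIw_{H,1}^+$ (valued in $\scrO(\calV) \widehat{\otimes} R_{\calU}$), which must be left-invariant under $\adicIw_{H,1}^+ \cap \calB_{\GSp_4} = T_H(\Z_p) N_{H,1}$ up to the character $\bfitw\kappa_{\calU}$ — wait, one must be careful about left versus right here; I would match conventions so that this recovers exactly the defining condition $f(\bfgamma\bfbeta) = \bfitw_3^{-1}\bfitw\kappa_{\calU}(\bfbeta)f(\bfgamma)$ appearing in \eqref{eq: some analytic representations} for $A_{\bfitw_3^{-1}\bfitw\kappa_{\calU}}^r(\Iw_{H,1}^+, R_{\calU})$, accounting for the $\bfitw_3$-twist coming from the left $\Iw_{P_{\Si},1}^+$-action $\rho_{\kappa_{\calU}}^r$ via $\bfgamma \mapsto f(\bfitw_3^{-1}\trans\bfgamma\bfitw_3 \bfalpha)$. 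This is precisely where the twist by $\bfitw_3$ in the definition of $\scrA_{\kappa_{\calU},\adicFL_{\bfitw}}^r$ is engineered to cancel, so that the local trivialisation yields $A_{\bfitw_3^{-1}\bfitw\kappa_{\calU}}^r(\Iw_{H,1}^+,R_{\calU}) \widehat{\otimes} \scrO(\calV)$ on both sides.

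The comparison map is then: given $g \in \scrA_{\kappa_{\calU},\adicFL_{\bfitw}}^r(\calV) = A_{\bfitw_3^{-1}\bfitw\kappa_{\calU}}^r(\Iw_{H,1}^+,R_{\calU}) \widehat{\otimes} \scrO(\calV)$, send it to the section of the right-hand side characterised by $\psi_{\bfitw}^{\std} \cdot \bfgamma \mapsto g(\bfgamma)$ and extended $\adicIw_{H,1}^+$-equivariantly, using the analytic extension of $g$ to $\Iw_{H,1}^{+,(r)}$ from Remark \ref{Remark: extension of the analytic representation} to make sense of this on the full fibre (not just on $\Iw_{H,1}^+$ but on the $r$-neighbourhood, which is what the $(r,r)$-radii in $\adicFL_{\bfitw,(r,r)}$ and the corresponding thickening of the torsor provide). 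One checks this is independent of the choice of trivialising section (any two differ by an element of $\adicIw_{H,1}^+$, and the equivariance absorbs it), hence glues to a global map of sheaves; it is an isomorphism because it is one on each $\calV$ in a trivialising cover, by the structure theorem Corollary \ref{Corollary: structure theorem of analytic representations} identifying $A_{\bfitw_3^{-1}\bfitw\kappa_{\calU}}^{r,\circ}(\Iw_{H,1}^+,R_{\calU}) \cong \widehat{\bigoplus}_{i} R_{\calU}^\circ e_i^{(r)}$ with the corresponding space of functions on the torsor fibre. Hecke- and Galois-equivariance is not asserted here, so nothing extra is needed.

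The main obstacle I anticipate is bookkeeping the transposition-and-$\bfitw_3$ twist correctly so that the two transformation laws genuinely match — in particular verifying that the left $\Iw_{P_{\Si},1}^+$-action defining $\scrA_{\kappa_{\calU},\adicFL_{\bfitw}}^r$ via $\rho_{\bfitw_3^{-1}\bfitw\kappa_{\calU}}^r$ corresponds, under the trivialisation by $\psi_{\bfitw}^{\std}$, to the natural right-translation action on $\scrO_{\adicIW_{H,1,\adicFL_{\bfitw}}^+}$, and that the automorphy factors computed in Lemma \ref{Lemma: global sections and automorphy factors}(ii)--(iii) (the matrices $\bfalpha_d^{\bfitw} + \bfitz\bfalpha_b^{\bfitw}$) are exactly the cocycle realising the weight $\bfitw\kappa_{\calU}$ on the $H$-torsor side. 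A secondary technical point is ensuring the $(r,r)$-neighbourhood is the correct one for the extended functions of Remark \ref{Remark: extension of the analytic representation} to be defined — i.e.\ that the image of the trivialisation lands in $\Iw_{H,1}^{+,(r)}$ rather than a larger set — which should follow from the coordinate description of $\adicFL_{\bfitw,(r,r)}$ together with Lemma \ref{Lemma: coordinates on tublar nbds of Bruhat cells}.
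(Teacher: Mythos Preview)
Your approach is essentially the same as the paper's: trivialise the torsor via $\psi_{\bfitw}^{\std}$ over an affinoid $\calV$, unwind both sides as spaces of functions on $\Iw_{H,1}^+$ with prescribed transformation under $T_H(\Z_p)N_{H,1}$, use the $r$-analytic extension (your secondary technical point) to pass between functions on $\Iw_{H,1}^+$ and on the full torsor fibre, and glue. The paper writes the comparison in the opposite direction to yours, from the torsor side to $\scrA^r_{\kappa_{\calU},\adicFL_{\bfitw}}$, and the explicit formula is
\[
f \longmapsto \bigl(\bfgamma \mapsto f(\bfitw_3\,\bfgamma\,\bfitw_3^{-1})\bigr),
\]
using $\bfitw_3\,\Iw_{H,1}^+\,\bfitw_3^{-1}=\Iw_{H,1}^+$; this conjugation by $\bfitw_3$ is precisely what converts the character $\bfitw\kappa_{\calU}$ on the torsor side into $\bfitw_3^{-1}\bfitw\kappa_{\calU}$ on the $\scrA$ side, resolving exactly the bookkeeping obstacle you flagged. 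Your proposed map $\psi_{\bfitw}^{\std}\cdot\bfgamma \mapsto g(\bfgamma)$ as literally written misses this conjugation (so the characters would not match), but since you explicitly anticipated the twist, this is a detail you would have caught upon checking the transformation laws.
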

    
\begin{proof}
    Given an affinoid $\calV = \Spa(R, R^+)\subset \adicFL_{\bfitw, (r, r)}$, there is an identification \[
        \adicIw_{H, 1}^+(R)  \xrightarrow{\cong} \adicIW_{H, 1, \adicFL_{\bfitw}}^+(\calV), \quad \bfgamma \mapsto \psi_{\bfitw}^{\std}\bfgamma,
    \] where $\psi_{\bfitw}^{\std}$ is as defined in \eqref{eq: psi_w^std}. By definition, we have \begin{align*}
    \scalemath{0.9}{ \left(\left(\pr_{\adicFL_{\bfitw}, \Iw_{H, 1}^+}\right)_{*} \scrO_{\adicIW_{H, 1, \adicFL_{\bfitw}}^+}\widehat{\otimes}R_{\calU}[\bfitw\kappa_{\calU}]\right)(\calV)} & \scalemath{0.9}{ = \left\{ \phi \colon \adicIW_{H, 1, \adicFL_{\bfitw}}^+(\calV) \rightarrow R \widehat{\otimes}R_{\calU}: \begin{array}{l}
            \phi(\bfgamma \bfbeta) = \bfitw\kappa_{\calU}(\bfbeta)f(\bfgamma)\\
            \text{for all }\bfbeta\in \adicIw_{H, 1}^+\cap \calB_{\GSp_4}
        \end{array}\right\},}\\
        \scalemath{0.9}{ \scrA_{\kappa_{\calU}, \adicFL_{\bfitw}}^r(\calV)} & \scalemath{0.9}{ = \left\{ \phi \colon \Iw_{H, 1}^+ \rightarrow R\widehat{\otimes}R_{\calU}: \begin{array}{l}
            \phi(\bfgamma\bfbeta) = \bfitw_3^{-1}\bfitw\kappa_{\calU}(\bfbeta)f(\bfgamma)\\
            \text{for all }(\bfbeta, \bfgamma) \in T_H(\Z_p)N_{H, n}\times \Iw_{H, n}^+\\
            f|_{N_{H, 1}^{\opp}} \text{ is $r$-analytic}
        \end{array}\right\}.}
    \end{align*}
    Hence one can define a natural map \begin{equation}\label{eq: natural map comparing two constructions of pseudoautomorphic sheaves}
        \left(\left(\pr_{\adicFL_{\bfitw}, \Iw_{H, 1}^+}\right)_{*} \scrO_{\adicIW_{H, 1, \adicFL_{\bfitw}}^+}\widehat{\otimes}R_{\calU}[\bfitw\kappa_{\calU}]\right)(\calV) \rightarrow \scrA_{\kappa_{\calU}, \adicFL_{\bfitw}}^r(\calV), \quad f \mapsto (\bfgamma \mapsto f(\bfitw_{3}\bfgamma \bfitw_{3}^{-1})).
    \end{equation} Here, note that $\bfitw_3 \Iw_{H, 1}^+ \bfitw_3^{-1} = \Iw_{H, 1}^+$. On the other hand, due to the $r$-analyticity condition on $\scrA_{\kappa_{\calU}, \adicFL_{\bfitw}}^r(\calV)$, every function $\phi$ in $\scrA_{\kappa_{\calU}, \adicFL_{\bfitw}}^r(\calV)$ extends to a function $\phi$ on $\adicIW_{H, 1, \adicFL_{\bfitw}}^+(\calV)$. This means \eqref{eq: natural map comparing two constructions of pseudoautomorphic sheaves} is an isomorphism. 
    
    Finally, one observes that \eqref{eq: natural map comparing two constructions of pseudoautomorphic sheaves} is functorial in $\calV = \Spa(R, R^+)$, meaning that given $\calV' = \Spa(R', {R'}^+)$ the restriction of \eqref{eq: natural map comparing two constructions of pseudoautomorphic sheaves} from $\calV$ to $\calV \cap \calV'$ is the same as the one of \eqref{eq: natural map comparing two constructions of pseudoautomorphic sheaves} from $\calV'$. Therefore, one obtains the desired isomorphism of sheaves by glueing. 
\end{proof}

\begin{Remark}\label{Remark: pseudoautomorphic sheaves}
    \begin{enumerate}
        \item[(i)] A similar statement of Proposition \ref{Prop: comparison theorem for pseudoautomorphic sheaves} holds for $\scrA_{\kappa_{\calU}, \adicFL_{\bfitw}}^{r, \circ}$ while we replace $\scrO_{\adicIW_{H, 1, \adicFL_{\bfitw}}^+}$ with $\scrO_{\adicIW_{H, 1, \adicFL_{\bfitw}}^+}^+$ and $R_{\calU}$ with $R_{\calU}^{\circ}$. 
        \item[(ii)] Applying a similar construction, we may define sheaves $\scrA_{\kappa_{\calU}, \adicFL_{\bfitw}}^{r^+, \circ}$ (resp.,  $\scrA_{\kappa_{\calU}, \adicFL_{\bfitw}}^{r^+}$) by replacing analytic representation $A_{\bfitw_3^{-1}\bfitw\kappa_{\calU}}^{r, \circ}(\Iw_{H, 1}^+, R_{\calU})$ (resp., $A_{\bfitw_3^{-1}\bfitw\kappa_{\calU}}^{r}(\Iw_{H, 1}^+, R_{\calU})$) with $A_{\bfitw_3^{-1}\bfitw\kappa_{\calU}}^{r^+, \circ}(\Iw_{H, 1}^+, R_{\calU})$ (resp., $A_{\bfitw_3^{-1}\bfitw\kappa_{\calU}}^{r^+}(\Iw_{H, 1}^+, R_{\calU})$). In what follows, we shall refer the sheaves $\scrA_{\kappa_{\calU}, \adicFL_{\bfitw}}^{r, \circ}$, $\scrA_{\kappa_{\calU}, \adicFL_{\bfitw}}^{r}$, $\scrA_{\kappa_{\calU}, \adicFL_{\bfitw}}^{r^+, \circ}$, and $\scrA_{\kappa_{\calU}, \adicFL_{\bfitw}}^{r^+}$ as \emph{pseudoautomorphic sheaves}. 
    \end{enumerate}
\end{Remark}
\section{Overconvergent automorphic sheaves for \texorpdfstring{$\GSp_4$}{GSp4}}\label{section: automorphic}
In this section, we study classical and overconvergent Siegel modular forms, viewed as sections of various automorphic sheaves. We start with the definition of Siegel threefolds in \S \ref{subsection: Siegel threefolds} and the definition of classical Siegel modular forms in \S \ref{subsection: classical Siegel modular forms}. Then we provide two constructions of overconvergent Siegel automorphic sheaves in \S \ref{subsection: overconvergent Siegel modular forms: perfectoid} and \S \ref{subsection: overconvergent Siegel modular forms: torsors},  via perfectoid method and analytic torsors, respectively. Finally, we construct the Hecke operators in \S \ref{subsection: Hecke operators}.

\subsection{Siegel threefolds}\label{subsection: Siegel threefolds}

Let $\A_{\Q}$ be the ring of ad\`eles of $\Q$. 
We denote by $\A_{\Q}^{\infty, p}$ the finite ad\`eles away from $p$. Choose a neat compact open subgroup $\Gamma = \prod_{\ell\neq p}\Gamma_{\ell} \subset \GSp_4(\A^{\infty, p}_{\Q})$ such that $\Gamma_{\ell} = \GSp_4(\Z_{\ell})$ for almost all $\ell$. We then define $N = \prod_{\Gamma_{\ell}\neq \GSp_4(\Z_{\ell})} \ell$.

For each $n\in \Z_{>0}$, recall the subgroup $\Iw_{\GSp_4, n}^+$, consisting of those matrices in $\GSp_4(\Z_p)$ that are congruent with diagonal matrices modulo $p^n$. To simplify the notation, we denote by \[
    \Gamma_n \coloneq \Gamma\Iw_{\GSp_4, n}^+,
\]
which is a compact open subgroup of $\GSp_4(\A_{\Q}^{\infty})$. We further denote by $\Gamma_0 = \Gamma\GSp_4(\Z_p)\subset \GSp_4(\A_{\Q}^{\infty})$. 

Consider \begin{align*}
    \bbH_2^{\pm} & = \text{ the Siegel upper-half/lower-half space}\\
    & = \left\{ \bfalpha \in M_2(\C): \begin{array}{l}
         \bfalpha \text{ is symmetric w.r.t the anti-diagonal}  \\
          \mathrm{Im}(\bfalpha) \text{ is positive/negative definite}
    \end{array}\right\}
\end{align*} and denote by $\bbH_2 = \bbH_2^+ \sqcup \bbH_2^-$. The group $\GSp_{4}(\R)$ acts on $\bbH_2^{\pm}$ via the formula \[
    \begin{pmatrix} \bfgamma_a & \bfgamma_b\\ \bfgamma_c & \bfgamma_d\end{pmatrix} \cdot \bfalpha = (\bfgamma_a \bfalpha + \bfgamma_b)^{-1}(\bfgamma_c\bfalpha + \bfgamma_d).
\]
Then for any $n\in \Z_{\geq 0}$, the \emph{complex} Siegel threefold of level $\Gamma_n$ is the locally symmetric space \[
    X_{n}(\C) = \GSp_{4}(\Q) \backslash \GSp_{4}(\A_{\Q}^{\infty}) \times \bbH_2 / \Gamma_n.
\]
To simplify the notation, we write $X = X_{0}$.

In what follows, besides $\Iw_{\GSp_4, n}^+$, we also encounter other level structures at $p$. For instance, we will consider \begin{align*}
    \Iw_{\GSp_4, n} &\coloneq \left\{  \bfgamma\in \GSp_{4}(\Z_p): (\bfgamma \mod p)\in B_{\GSp_4}(\Z/p^n)\right\},\\
    \Gamma(p^n) &\coloneq \left\{ \bfgamma \in \GSp_4(\Z_p) : \bfgamma \equiv \one \mod p^n\right\}.
\end{align*}
The Siegel threefolds of these levels at $p$  will be denoted by $X_{\Iw_{\GSp_4, n}}(\C)$ or $X_{\Gamma(p^n)}(\C)$.

It is well-known that $X_{n}(\C)$ admits a structure of an algebraic variety $X_{n}$ over $\Q$, which can be interpreted as a moduli space of tuples $(A, \lambda, \psi, \{C_{n, i}: i=1, \ldots, 4\})$, where \begin{itemize}
    \item $A$ is a principally polarised abelian surface and $\lambda$ is a principal polarisation of $A$; 
    \item $\psi$ is a $\Gamma$-level structure (cf. \cite{Lan-PhD}{Definition 1.4.1.4}) 
    \item $\{C_{n, i}: i=1, \ldots, 4\}$ is a collection of subgroups of order $p^n$ in $A$ such that \[
        C_{n, i} \cap C_{n, j} = 0\text{ if }i\neq j\quad \text{ and }\quad \langle C_{n, 1}, \ldots, C_{n, 4}\rangle = A[p^n].
    \]
\end{itemize}
Similarily, $X_{\Iw_{\GSp_4, n}}$ and $X_{\Gamma(p^n)}$ can be interpreted as moduli problems in a similar fashion. 

By choosing an auxiliary cone decomposition $\Sigma$, the variety $X$ admits a \emph{toroidal compactification} $X^{\tor}$ (depending on $\Sigma$) that admits the following properties (\cite[Chapter IV, Theorem 6.7]{Faltings-Chai}): \begin{itemize}
    \item There is an injective morphism of schemes $X \hookrightarrow X^{\tor}$ with Zariski dense image.
    \item The boundary $\partial X^{\tor} \coloneq X^{\tor} \smallsetminus X$ is a normal crossing divisor. Endowing $X^{\tor}$ with the log structure defined by $\partial X^{\tor}$, we may then view $X^{\tor}$ as an fs log scheme. 
    \item There is a tautological semiabelian variety $G^{\univ} \rightarrow X^{\tor}$, extending the universal abelian variety $A^{\univ} \rightarrow X$. We denote by $e$ the identity section.
\end{itemize}

It turns out that, by applying a theorem of Fujiwara--Kato (\cite[Theorem 7.6]{Illusie}), the varieties $X_n$, $X_{\Iw_{\GSp_4}, n}$, $X_{\Gamma(p^n)}$ admit toroidal compactifications $X^{\tor}_n$, $X^{\tor}_{\Iw_{\GSp_4, n}}$, $X^{\tor}_{\Gamma(p^n)}$ respectively that sit into a commutative diagram \[
    \begin{tikzcd}
        && X^{\tor}_{\Gamma(p^n)}\arrow[ld]\arrow[ddd]\\
        & X^{\tor}_n\arrow[ld]\arrow[ddd]\\
        X^{\tor}_{\Iw_{\GSp_4, n}}\arrow[ddd]\\
        && X^{\tor}_{\Gamma(p)}\arrow[ld]\\
        & X^{\tor}_1\arrow[ld]\\
        X^{\tor}_{\Iw_{\GSp_4, 1}}\arrow[d]\\
        X^{\tor}
    \end{tikzcd}.
\]
All morphisms in this diagram are finite Kummer \'etale.

We now move to the world of $p$-adic geometry. Let $\calX$ be the rigid analytification of $X$ over $\Spa(\Q_p, \Z_p)$. We adopt similar notations for the other aforementioned varieties (\emph{e.g.}, $\calX_n$, $\calX^{\tor}_n$, etc.). By a slight abuse of notations, we still use $\calX$, $\calX_n$, $\calX^{\tor}_n$, etc. to denote their base change to $\Spa(\C_p, \calO_{\C_p})$. By \cite[Corollaire 4.14]{Pilloni-Stroh}, building on work of Scholze, there is a perfectoid space $\calX^{\tor}_{\Gamma(p^{\infty})}$ such that \[
    \calX^{\tor}_{\Gamma(p^{\infty})} \sim \varprojlim_n \calX^{\tor}_{\Gamma(p^n)},
\]
where the relation `$\sim$' is as defined in \cite[Definition 2.4.1]{Scholze-Weinstein}. The perfectoid space $\calX^{\tor}_{\Gamma(p^{\infty})}$ is the perfectoid space associated with a pro-Kummer \'etale Galois cover of $\calX^{\tor}_n$ (resp., $\calX^{\tor}_{\Iw_{\GSp_4, n}}$; resp., $\calX^{\tor}_{\Gamma(p^n)}$) of Galois group $\Iw_{\GSp_4, n}^+$ (resp., $\Iw_{\GSp_4, n}$; resp., $\Gamma(p^n)$).

One of the important features of the perfectoid space $\calX^{\tor}_{\Gamma(p^{\infty})}$ is that it admits the \emph{Hodge--Tate period map} (\cite{Pilloni-Stroh}) \[
    \pi_{\HT} : \calX^{\tor}_{\Gamma(p^{\infty})} \rightarrow \adicFL
\] whose construction we now briefly recall.

We follow the discussion in \cite[\S 4.4.10]{BP-HigherColeman}. Let $\pi: \calA_n^{\univ} \rightarrow \calX_n$ be the rigid analytification of the universal abelian variety with identity section $e$. Consider the universal Tate module $T_p\calA_{n}^{\univ} \coloneq (R^1\pi_* \Z_p)^{\vee}$, viewed as an \'etale $\Z_p$-local system. Let $\underline{\omega}_{\calA_n^{\univ}}\coloneq e^* \Omega_{\calA_n^{\univ}/\calX_n}^1$ whose dual can be identified with $\Lie \calA_{n}^{\univ}$. Then, the (relative) Hodge--Tate filtration gives rise to a short exact sequence \[
    0 \rightarrow \Lie \calA_n^{\univ}\otimes \widehat{\scrO}_{\calX_n}(1) \rightarrow T_p\calA_n^{\univ}\otimes \widehat{\scrO}_{\calX_{n}} \rightarrow \underline{\omega}_{\calA_n^{\univ}}\otimes\widehat{\scrO}_{\calX_n} \rightarrow 0
\] 
of sheaves of $\widehat{\scrO}_{\calX_n}$-modules on the pro-\'etale site. It turns out this short exact sequence extends to $\calX_n^{\tor}$. More precisely, let $\calG_n^{\univ}$ be the rigid analytification of $G_n^{\univ}$ and let  $\underline{\omega} \coloneq e^*\Omega_{\calG_n^{\univ}/\calX^{\tor}_n}^1$ whose dual can be identified with $\Lie\calG_{n}^{\univ}$. Then there exists a Kummer \'etale $\Z_p$-local system $V_{\Z_p}$ on $\calX_n^{\tor}$, locally of rank $4$, extending $T_p\calA_n^{\univ}$ such that we have a short exact sequence 
\begin{equation}\label{eq: relative HT filtration}
    0 \rightarrow \Lie \calG_n^{\univ}\otimes \widehat{\scrO}_{\calX^{\tor}_n}(1) \rightarrow V_{\Z_p}\otimes \widehat{\scrO}_{\calX^{\tor}_{n}} \rightarrow \underline{\omega}\otimes\widehat{\scrO}_{\calX^{\tor}_n} \rightarrow 0
\end{equation} of sheaves on $\calX^{\tor}_{n, \proket}$. 

Denote by $\calG^{\an}$ (resp., $\calP^{\an}$) the rigid analytification of $\GSp_4$ (resp., $P_{\Si}$). They naturally extends to pro-Kummer \'etale sheaves $\calG^{\an}_{\proket}$ and $\calP^{\an}_{\proket}$ on $\calX^{\tor}_{n, \proket}$; namely, for any $\calU\in \calX^{\tor}_{n, \proket}$, we put
\[\calG^{\an}_{\proket}(\calU):= \calG^{\an}\big(\widehat{\scrO}_{\calX^{\tor}_{n}}(\calU), \widehat{\scrO}_{\calX^{\tor}_{n}}^+(\calU)\big)\]
and
\[\calP^{\an}_{\proket}(\calU):= \calP^{\an}\big(\widehat{\scrO}_{\calX^{\tor}_{n}}(\calU), \widehat{\scrO}_{\calX^{\tor}_{n}}^+(\calU)\big).\]
Moreover, let $\calG^{\an}_{\HT}$ (resp., $\calP^{\an}_{\HT}$) be the pro-Kummer \'etale sheaf on $\calX^{\tor}_{n, \proket}$ parameterising trivialisations of $V_{\Z_p}$ (resp., trivialisations of the short exact sequence \ref{eq: relative HT filtration}). More precisely, suppose $\calU$ is an affinoid perfectoid object in $\calX^{\tor}_{n, \proket}$ with associated affinoid perfectoid space $\Spa(R, R^+)$, we put
\begin{align*}
    \calG_{\HT}^{\an}(\calU) & = \Isom^{\mathrm{symp}}(R^4, V_{\Z_p}\otimes R),\\
    \calP_{\HT}^{\an}(\calU) & = \Isom^{\mathrm{symp}}\left( 0 \rightarrow R^2 \rightarrow R^4\rightarrow R^2 \rightarrow 0,\,\,\, 0 \rightarrow \Lie\calG_n^{\univ}\otimes R \rightarrow V_{\Z_p}\otimes R \rightarrow \underline{\omega}\otimes R \rightarrow 0 \right).
\end{align*}
Note that $\calG^{\an}_{\HT}$ (resp., $\calP^{\an}_{\HT}$) is a $\calG^{\an}_{\proket}$-torsor (resp., $\calP^{\an}_{\proket}$-torsor).

Now, let $\Spa(R, R^+)$ be an affinoid perfectoid subspace of the perfectoid space $\calX^{\tor}_{\Gamma(p^{\infty})}$ which corresponds to an affinoid perfectoid object $\calU$ in the pro-Kummer \'etale site $\calX^{\tor}_{n, \proket}$. Since the torsor $\calG^{\an}_{\HT}$ becomes trivial after pulling back to $\calX^{\tor}_{\Gamma(p^{\infty})}$, \footnote{Here we abuse the terminology and view $\calX^{\tor}_{\Gamma(p^{\infty})}$ as an element in the pro-Kummer \'etale site $\calX^{\tor}_{n, \proket}$.} we obtain an identification
\[\calG^{\an}_{\HT}(\calU)\cong \calG^{\an}_{\proket}(\calU)\cong \GSp_4(R).\]
As $\calP^{\an}_{\HT}$ is a $\calP^{\an}_{\proket}$-torsor, there exists $\bfgamma\in P_{\Si}(R)\backslash \GSp_4(R)$ such that $\calP^{\an}_{\HT}(\calU)=\bfgamma P_{\Si}(R)$. Then we put \[\pi_{\HT}(\calU)=\bfgamma^{-1}\in \adicFL(R, R^+).\] This description of the Hodge--Tate period map $\pi_{\HT}$ coincides with the definition in \cite{Pilloni-Stroh}. One also checks that $\pi_{\HT}$ is equivariant with respect to the natural right $\GSp_4(\Q_p)$-actions on both sides.

\begin{Convention}\label{Convention: abuse of notations}
By abuse of notation, we often identify $\calX^{\tor}_{\Gamma(p^{\infty})}$ as an object in the pro-Kummer \'etale site $\calX^{\tor}_{n, \proket}$. Then it makes sense to consider the localized site $\calX^{\tor}_{n, \proket}/\calX^{\tor}_{\Gamma(p^{\infty})}$, which we denote by $\calX^{\tor}_{\Gamma(p^{\infty}), \proket}$ by further abuse of notations. This convention also applies to affinoid perfectoid subspaces of $\calX^{\tor}_{\Gamma(p^{\infty})}$.
\end{Convention}

\begin{Remark}\label{Remark: normalisation of the flag variety}
We present an alternative description of $\calP_{\HT}^{\an}$ via pullback along the Hodge--Tate map. Consider the restriction $\calP_{\HT}^{\an}|_{\calX^{\tor}_{\Gamma(p^{\infty})}}$ of $\calP_{\HT}^{\an}$ to $\calX^{\tor}_{\Gamma(p^{\infty})}$, viewed as a sheaf on $\calX^{\tor}_{\Gamma(p^{\infty}), \proket}$ in the sense of Convention \ref{Convention: abuse of notations}. On the other hand, view $\calG^{\an}$ as a right $\calP^{\an}$-torsor over $\adicFL$ via \[
        \calG^{\an} \rightarrow\adicFL,\,\,\,\,\bfgamma \mapsto \bfgamma^{-1}.
    \]
Notice that the pullback along $\pi_{\HT}$ induces a map
 \[\pi_{\HT}^*: \mathrm{Sh}(\Mod_{\scrO_{\adicFL_{\an}}})\rightarrow \mathrm{Sh}(\Mod_{\widehat{\scrO}_{\calX^{\tor}_{\Gamma(p^{\infty}), \proket}}})\]
(see \cite[Theorem 4.2.1]{Rodriguez-LocallyAnalytic}). Then there is an isomorphism
\[\calP_{\HT}^{\an}|_{\calX^{\tor}_{\Gamma(p^{\infty})}}\cong \pi_{\HT}^*\calG^{\an}\times^{\bbG_m, \mu_{\Si}} \bbG_m(-1)\]
of $\calP^{\an}_{\proket}|_{\calX^{\tor}_{\Gamma(p^{\infty})}}$-torsors, where 
\[\bbG_m(-1) = \Isom_{\widehat{\scrO}_{\calX_{n, \proket}^{\tor}}}(\widehat{\scrO}_{\calX_{n, \proket}^{\tor}}, \widehat{\scrO}_{\calX_{n, \proket}^{\tor}}(-1))\]
is the $(-1)$-Hodge--Tate twist of $\bbG_m$. See \cite[Theorem 4.2.1]{Rodriguez-LocallyAnalytic} for more details.
\end{Remark}

\subsection{Classical automorphic sheaves}\label{subsection: classical Siegel modular forms}

Let $\Gamma_p$ be any aforementioned level structure at $p$. To recall the definition of classical algebraic Siegel modular forms (of genus $2$), we first construct an auxiliary $H$-torsor $H_{\dR}$ over $X^{\tor}_{\Gamma_p}$. Consider the tautological semiabelian variety $\pi: G^{\univ}_{\Gamma_p} \rightarrow X^{\tor}_{\Gamma_p}$ with identity section $e$. Let $\underline{\omega} \coloneq e^* \Omega_{G^{\univ}_{\Gamma_p}/X^{\tor}_{\Gamma_p}}^1$ and which is identified with the dual of $\Lie G^{\univ}_{\Gamma_p}$. Note that both $\underline{\omega}$ and $\Lie G^{\univ}_{\Gamma_p}$ are vector bundles of rank $2$. Consdier
\begin{align*}
  \scalemath{0.9}{   H_{\dR} :}&\scalemath{0.9}{ = \underline{\Isom}^{\mathrm{symp}}(\scrO_{X_{\Gamma_p}^{\tor}}^4, \Lie G^{\univ}_{\Gamma_p} \oplus \underline{\omega}) }\\
    & \scalemath{0.9}{ = \left\{ \psi_1 \oplus \psi_2: \scrO_{X_{\Gamma_p}^{\tor}}^2 \oplus \scrO_{X_{\Gamma_p}^{\tor}}^2 \rightarrow \Lie G^{\univ}_{\Gamma_p} \oplus \underline{\omega}: \begin{array}{l}
        \psi_1 = \varsigma \trans\psi_2^{-1} \text{ (for some unit $\varsigma$) via the isomorphism }\\
        \text{$\Lie G_{\Gamma_p}^{\univ, \vee} \cong \underline{\omega}$ given by the principal polarisation}
    \end{array}   \right\}.}
\end{align*}
which is an $H$-torsor over $X_{\Gamma_p}^{\tor}$. Let $\pr_{\dR}: H_{\dR} \rightarrow X_{\Gamma_p}^{\tor}$ denote the natural projection.

For an integral weight $k= (k_1, k_2; k_0)\in \Z^3$ with $k_1\geq k_2$, we have $\bfitw_3 k = (-k_2, -k_1; k_0+k_1+k_2)$.  The \emph{classical automorphic sheaf of weight $k$} is defined to be \[
    \underline{\omega}^{k} \coloneq \pr_{\dR, *} \scrO_{H_{\dR}}[\bfitw_3 k]. 
\]
In other words, $\underline{\omega}^k$ is the subsheaf of $\pr_{\dR, *} \scrO_{H_{\dR}}$, consisting of those sections $f$ such that \[
    f(\bfgamma \bfbeta) = \bfitw_3k(\bfbeta) f(\bfgamma)
\] for all $\bfgamma\in H_{\dR}$ and $\bfbeta\in B_H \coloneq B_{\GSp_4} \cap H$. \footnote{Here, as before, we extend $k$ to a character of $B_H$ by putting $k(N_H) = \{1\}$.}
Moreover, let $D_{\Gamma_p} \coloneq X_{\Gamma_p}^{\tor} \smallsetminus X_{\Gamma_p}$ be the boundary divisor. The \emph{classical cuspidal automorphic sheaf} is defined to be \[
    \underline{\omega}^k_{\cusp} \coloneq \underline{\omega}^k(-D_{\Gamma_p}). 
\]
The global sections of $\underline{\omega}^k$ (resp., $\underline{\omega}^k_{\cusp}$) are precisely the classical Siegel modular forms (resp., classical cuspidal Siegel modular forms). See also Remark \ref{Remark: explanation on the weights}.

\begin{Remark}\label{Remark: classical forms in the rigid analytic setting}
Let $\calH^{\an}$, $\calH_{\dR}^{\an}$, and $\calH_{\HT}^{\an}$ be the rigid analytifications of $H$, $H_{\dR}$, and $H_{\HT}$, respectively. In a similar fashion as in Remark \ref{Remark: normalisation of the flag variety}, we provide an alternative description of the $\calH^{\an}$-torsor $\calH_{\dR}^{\an}$. Recall the Hodge--Tate period map $\pi_{\HT}: \calX_{\Gamma(p^{\infty})}^{\tor}\rightarrow \adicFL$ and the natural projection $ h_{\Gamma_p}: \calX_{\Gamma(p^{\infty})}^{\tor}\rightarrow \calX_{n}^{\tor}$. We have an isomorphism
\[h_{\Gamma_p}^* \calH_{\dR}^{\an} = \pi_{\HT}^* \calH_{\HT}^{\an}\times^{\bbG_m, \mu_{\Si}} \bbG_m(-1)\]
of $\calH^{\an}$-torsors on (the analytic site of) $\calX_{\Gamma(p^{\infty})}^{\tor}$. This can be upgraded to an isomorphism of pro-Kummer \'etale sheaves. Indeed, we can naturally extend $\calH^{\an}$ and $\calH_{\dR}^{\an}$ to pro-Kummer \'etale sheaves on $\calX_{n, \proket}^{\tor}$, denoted by $\calH^{an}_{\proket}$ and $\calH_{\dR}^{\an}$ respectively. They are defined in a similar way as in the constructions of $\calG^{\an}_{\proket}$ and $\calG^{an}_{\HT}$, respectively. Then there is an isomorphism
\[\calH_{\dR}^{\an}|_{\calX^{\tor}_{\Gamma(p^{\infty})}}\cong \pi_{\HT}^*\calH^{\an}_{\HT}\times^{\bbG_m, \mu_{\Si}} \bbG_m(-1)\]
in the sense of Remark \ref{Remark: normalisation of the flag variety}. This is basically \cite[Remark 4.4.11]{BP-HigherColeman}, except for the Hodge-Tate twist.
\end{Remark}

\begin{Remark}\label{Remark: explanation on the weights}
    Let us briefly explain our convention, especially the appearance of $\bfitw_3$. Given $k=(k_1, k_2)\in \Z^2$ with $k_1 \geq k_2$, the usual classical automorphic sheaf of weight $k$ in the literature is \[
        \underline{\omega}^k_{\mathrm{trad}} \coloneq \Sym^{k_1-k_2} \underline{\omega} \otimes \left(\det\underline{\omega}\right)^{\otimes k_2}. 
    \]
   It is, in fact, canonically isomorphic to our automorphic sheaf. Indeed, after trivialising $\underline{\omega}$ over an affine $\mathrm{Spec}(R)$, we may view $\underline{\omega}^k_{\mathrm{trad}}(\mathrm{Spec}(R))$ as a $\GL_2$-representation; in fact, it is the $\GL_2$-representation of highest weight $k = (k_1, k_2)$. We may then view it as an $H$-representation via the projection \[
        H \twoheadrightarrow \GL_2, \quad \bfgamma = \begin{pmatrix} \bfgamma_a & \\ & \bfgamma_d \end{pmatrix} \mapsto \bfgamma_d.\footnote{ We use this convention because in the definition of $H_{\dR}$, $\underline{\omega}$ appears in the `second position' in the trivialisation.}
    \]
    Consequently, following a similar argument as in \cite[Remarque 4.1]{Pilloni-HidaGSp}, as an $H$-representation, $\underline{\omega}_{\mathrm{trad}}^k$ has highest weight $(-k_2, -k_1; k_1+k_2) = \bfitw_3 k$.
\end{Remark}

\begin{Remark}\label{Remark: integral classical sheaf}
    The automorphic bundles $\underline{\omega}^k$ admits a integral version. Indeed, we define \[
        \underline{\omega}^{k, +} := \pr_{\dR, *}\scrO_{H_{\dR}}^+[\bfitw_3 k].
    \]
    By \cite[Corollary 4.6.7]{BP-HigherColeman}, the sheaf $\underline{\omega}^{k, +}$ is an integral structure of $\underline{\omega}^k$ (in the sense of \cite[Definition 2.6.1]{BP-HigherColeman}).
\end{Remark}

Next, we discuss Hecke operators on the cohomology of $\underline{\omega}^k$. 

Let $\ell \neq p$ be a prime number. Given $\bfdelta\in \GSp_4(\Q_{\ell})$, we may find cone decompositions $\Sigma$, $\Sigma'$, $\Sigma''$ such that the corresponding toroidal compactifications fit into the diagram (\cite[\S 6.7.4]{FP-Hecke}) \begin{equation}\label{eq: correspondence away from p}
    \begin{tikzcd}
        & X_{\Gamma\Gamma_p \cap \bfdelta \Gamma\Gamma_p \bfdelta^{-1}}^{\Sigma'', \tor} \arrow[ld, "\pr_2"'] & X_{\bfdelta^{-1}\Gamma\Gamma_p \bfdelta \cap \Gamma\Gamma_p}^{\Sigma'' , \tor}\arrow[rd, "\pr_1"] \arrow[l, "\bfdelta"']\\
        X_{\Gamma_p}^{\Sigma', \tor} &&& X_{\Gamma_p}^{\Sigma, \tor}
    \end{tikzcd},
\end{equation} where the top arrow is an isomorphism. We claim that there is a \emph{trace map} \begin{equation}\label{eq: trace map}
     R\pr_{1, *}\pr_1^* \underline{\omega}^k \rightarrow \underline{\omega}^k.
\end{equation} Indeed, by \cite[\S 2.3]{FP-Hecke}, there is a trace map \[
    \mathrm{tr}: R\pr_{1, *}\pr_1^* \scrO_{X_{\Gamma_p}^{\Sigma, \tor}} = R \pr_{1, *}\scrO_{X_{\bfdelta^{-1}\Gamma\Gamma_p \bfdelta \cap \Gamma\Gamma_p}^{\Sigma'' , \tor}} \rightarrow \scrO_{X_{\Gamma_p}^{\Sigma, \tor}};
\] then, \eqref{eq: trace map} is obtained by taking the composition \begin{equation*}
    \begin{tikzcd}[column sep = tiny]
        R\pr_{1, *}\pr_1^* \underline{\omega}^k \arrow[r, equal] \arrow[rrd, bend right = 10, "\eqref{eq: trace map}"'] & R\pr_{1, *}\left(\scrO_{X_{\bfdelta^{-1}\Gamma\Gamma_p \bfdelta \cap \Gamma\Gamma_p}^{\Sigma'' , \tor}}\otimes\pr_1^* \underline{\omega}^k\right) \arrow[r, equal] & \left(R\pr_{1, *}\scrO_{X_{\bfdelta^{-1}\Gamma\Gamma_p \bfdelta \cap \Gamma\Gamma_p}^{\Sigma'' , \tor}}\right) \otimes \underline{\omega}^k \arrow[d]\\
        && \scrO_{X_{\Gamma_p}^{\Sigma, \tor}} \otimes \underline{\omega}^k = \underline{\omega}^k,
    \end{tikzcd},
\end{equation*}
where the second equality follows from the projection formula. The Hecke operator $T_{\bfdelta}$ is then defined to be the composition \[
    \begin{tikzcd}
        R\Gamma(X_{\Gamma_p}^{\Sigma', \tor}, \underline{\omega}^k) \arrow[r, "\pr_2^*"]\arrow[rddd, bend right = 20, "T_{\bfdelta}"'] & R\Gamma(X_{\Gamma\Gamma_p \cap \bfdelta \Gamma\Gamma_p \bfdelta^{-1}}^{\Sigma'', \tor}, \pr_2^* \underline{\omega}^k) \arrow[d, "\bfdelta^*"]\\
        & R\Gamma(X_{\bfdelta^{-1}\Gamma\Gamma_p \bfdelta \cap \Gamma\Gamma_p}^{\Sigma'' , \tor}, \pr_1^* \underline{\omega}^k)\arrow[d, "\cong"]\\
        & R\Gamma(X_{\Gamma_p}^{\Sigma, \tor}, R\pr_{1, *}\pr_1^* \underline{\omega}^k)\arrow[d]\\
        & R\Gamma(X_{\Gamma_p}^{\Sigma, \tor}, \underline{\omega}^k)
    \end{tikzcd},
\]
where the last map is given by the trace map (see \cite[\S 4.2.1]{BP-HigherColeman}). Note that the cohomologies of $R\Gamma(X_{\Gamma_p}^{\Sigma, \tor}, \underline{\omega}^k)$ do not depend on $\Sigma$ (see \cite[Theorem 4.1.8]{BP-HigherColeman}). So it is safe to simplify the notation and write \[
    T_{\bfdelta}: R\Gamma(X_{\Gamma_p}^{\tor}, \underline{\omega}^k) \rightarrow R\Gamma(X_{\Gamma_p}^{\tor}, \underline{\omega}^k). 
\]

For Hecke operators at $p$, we look at the following matrices \[
    \bfitu_{p, 0} \coloneq \begin{pmatrix}1 &&& \\ & 1 && \\ && p & \\ &&& p\end{pmatrix}, \quad \bfitu_{p, 1} \coloneq \begin{pmatrix}1 &&& \\ & p && \\ && p & \\ &&& p^2\end{pmatrix}, \quad \text{ and }\quad \bfitu_p \coloneq \bfitu_{p,0}\bfitu_{p, 1} = \begin{pmatrix} 1 &&& \\ & p && \\ && p^2 & \\ &&& p^3\end{pmatrix}.
\]
Following a similar construction as above, one obtains the Hecke operators $U_{p, 0}^{\mathrm{naive}}$, $U_{p,1}^{\mathrm{naive}}$, and $U_p^{\mathrm{naive}}$, which correspond to $\bfitu_{p,0}$, $\bfitu_{p, 1}$, and $\bfitu_p$ respectively. 

\begin{Definition}\label{Definition: finite slope part of classical cohomology}
    For any $\Gamma_p \in \{\Iw_{\GSp_4, n}, \Iw_{\GSp_4, n}^+\}$ and $k$ as above, the \emph{finite-slope part} of $R\Gamma(X_{\Gamma_p}^{\tor}, \underline{\omega}^k)$ is defined to be \[
        R\Gamma(X_{\Gamma_p}^{\tor}, \underline{\omega}^k)^{\fs} \coloneq R\Gamma(X_{\Gamma_p}^{\tor}, \underline{\omega}) \otimes_{\Z}^L \Z[U_{p, 0}^{\mathrm{naive},\pm 1}, U_{p,1}^{\mathrm{naive},\pm 1}].
    \]
\end{Definition}

\begin{Remark}\label{Remark: comparing fs part with Boxer--Pilloni}
    Compared with the convention in \cite{BP-HigherColeman}, our $R\Gamma(X_{\Gamma_p}^{\tor}, \underline{\omega}^k)^{\fs}$ is the \emph{minus-finite-slope part} therein.
\end{Remark}

\begin{Proposition}\label{Proposition: finite-slope part of classical cohomology is independent to the level}
    For any $n\in \Z_{>0}$, we have natural quasi-isomorphisms \[
        R\Gamma(X_{\Iw_{\GSp_4, 1}}^{\tor}, \underline{\omega}^k)^{\fs} \cong R\Gamma(X_{\Iw_{\GSp_4, n}}^{\tor}, \underline{\omega}^k)^{\fs} \cong R\Gamma(X_{n}^{\tor}, \underline{\omega}^k)^{\fs}.
    \]
\end{Proposition}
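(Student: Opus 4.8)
The plan is to exploit the fact that the transition maps between the three Siegel threefolds are finite Kummer \'etale (as recorded in the commutative diagram of compactifications in \S\ref{subsection: Siegel threefolds}), so that one has adjunction maps relating their coherent cohomology, and then to check that these maps become quasi-isomorphisms after inverting the $U_p$-operators. First I would fix a cone decomposition compatible across the three levels, so that there are finite Kummer \'etale projections
\[
    \pr_{n,1}\colon X_{\Iw_{\GSp_4,n}}^{\tor} \longrightarrow X_{\Iw_{\GSp_4,1}}^{\tor}, \qquad \pr_n\colon X_n^{\tor} \longrightarrow X_{\Iw_{\GSp_4,n}}^{\tor},
\]
and note that pullback of the automorphic sheaf is canonically $\pr^*\underline{\omega}^k \cong \underline{\omega}^k$ on the source (the torsor $H_{\dR}$ is pulled back from the bottom of the tower, since it only depends on the semiabelian scheme, which is pulled back). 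Thus there are natural pullback maps on $R\Gamma$, and, because the maps are finite, also trace maps in the other direction; the composite of trace with pullback is multiplication by the degree, which is a power of $p$, hence invertible after $\otimes^L_{\Z}\Z[U_{p,0}^{\mathrm{naive},\pm1},U_{p,1}^{\mathrm{naive},\pm1}]$ \emph{only if} that degree is seen to act invertibly — but it is a $p$-power and the $U_p$-operators are not obviously units on it, so this naive argument does not immediately close. The honest approach is instead the standard one for finite-slope/ordinary parts: show that on the finite-slope part the pullback map is an isomorphism with inverse built from the $U_p$-correspondence itself.

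The key steps, in order, are as follows. Step 1: record that $R\Gamma(X_{\Gamma_p}^{\tor},\underline{\omega}^k)$ is independent of the choice of $\Sigma$ (cited as \cite[Theorem 4.1.8]{BP-HigherColeman}), so all constructions below are well-defined. Step 2: for the inclusion of levels $\Iw_{\GSp_4,n}^+ \subset \Iw_{\GSp_4,n} \subset \Iw_{\GSp_4,1}$, analyse the Hecke correspondence defining $U_p^{\mathrm{naive}}$ (attached to $\bfitu_p = \diag(1,p,p^2,p^3)$) at each level; the point is that the double coset $\Iw\,\bfitu_p\,\Iw$ decomposes into cosets so that, at deeper level $\Iw_{\GSp_4,n}$, the $U_p$-correspondence factors through the projection to level $\Iw_{\GSp_4,1}$ — concretely, $U_p^{\mathrm{naive}}$ at level $n$ can be written as $\pr_{n,1}^* \circ V \circ \pr_{n,1,*}$ type composite, or more precisely there is a commuting square relating $U_p$ at the two levels via $\pr_{n,1}^*$ and its trace. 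Step 3: deduce that $\pr_{n,1}^*$ and the normalised trace $\mathrm{tr}/(\text{deg})$ are mutually inverse \emph{after} passing to the finite-slope part, using the fact that $U_p^{\mathrm{naive}}$ becomes invertible there and absorbs the excess $p$-power coming from the degree; this is exactly the mechanism by which Hida/Coleman theory shows ordinary and finite-slope cohomology is level-independent at $p$. Step 4: repeat verbatim for $\Iw_{\GSp_4,n}\subset \Iw_{\GSp_4,n}^+$, i.e.\ for the strict-Iwahori refinement, which is the case actually needed for the paper's set-up; here the degree is a power of $p$ coming from the index $[\Iw_{\GSp_4,n}:\Iw_{\GSp_4,n}^+]$, and the same argument applies since the relevant correspondences are compatible. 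Step 5: compose to get the chain of quasi-isomorphisms in the statement.

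I would phrase Steps 2--3 cleanly using the formalism already in place: the correspondence \eqref{eq: correspondence away from p} and the trace map \eqref{eq: trace map} are set up for general level, and for the degeneracy map between two $p$-levels one gets an honest finite flat (Kummer \'etale) map whose trace composed with pullback is the degree. The crucial compatibility is that under $\pr_{n,1}$ the operator $U_p^{\mathrm{naive}}$ at level $n$ and at level $1$ are intertwined — this should be extracted from a direct double-coset computation $\bigl(\Iw_{\GSp_4,1}\backslash \Iw_{\GSp_4,1}\,\bfitu_p\,\Iw_{\GSp_4,1}\bigr)$ versus the analogous set at level $n$, showing the fibres of $\pr_{n,1}$ are permuted transitively by the correspondence, so that $\pr_{n,1,*}\circ \pr_{n,1}^* = \deg$ is realised as a factor of a power of $U_p^{\mathrm{naive}}$ acting on the level-$1$ complex. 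Granting this, inverting $U_{p,0}^{\mathrm{naive}}$ and $U_{p,1}^{\mathrm{naive}}$ (equivalently $U_p^{\mathrm{naive}}$, up to the known factorisation $\bfitu_p=\bfitu_{p,0}\bfitu_{p,1}$) makes the degree invertible, and the pullback/trace pair descends to an isomorphism on $(-)^{\fs}$.

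The main obstacle I expect is Step 2: making precise the claim that the $U_p$-Hecke correspondence at strict-Iwahori level $\Iw_{\GSp_4,n}^+$ ``sees'' the degeneracy map to $\Iw_{\GSp_4,1}$, i.e.\ the bookkeeping of $\GSp_4(\Q_p)$-double cosets and how the normalisation of $U_p^{\mathrm{naive}}$ interacts with the $p$-power degrees of the degeneracy maps. Once that combinatorial/geometric compatibility is pinned down, the rest is a formal consequence of the slope decomposition (the finite-slope functor $(-)\otimes^L_{\Z}\Z[U^{\mathrm{naive},\pm1}_{p,\bullet}]$ kills the kernel and cokernel, which are supported in strictly positive $U_p$-valuation). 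A minor additional point to handle is that everything must be done at the level of complexes in $\mathrm{D}(\Ban(R))$ (or the relevant homotopy category), so I would phrase the pullback and trace as maps of complexes and invoke that a map of complexes inducing an isomorphism after inverting a compact operator is a quasi-isomorphism on the finite-slope part.
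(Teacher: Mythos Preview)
Your approach is essentially the same as the paper's, and the paper resolves exactly the obstacle you flag in Step~2 by invoking \cite[Lemma 4.2.13]{BP-HigherColeman}: for $\bfitu\in\{\bfitu_{p,0},\bfitu_{p,1},\bfitu_p\}$ one checks the Iwahori-decomposition conditions of that lemma to obtain the double-coset factorisations
\[
    [\Iw_{\GSp_4,n}\,\bfitu\,\Iw_{\GSp_4,n}] = [\Iw_{\GSp_4,n}\,\one_4\,\Iw_{\GSp_4,n}^+]\cdot[\Iw_{\GSp_4,n}^+\,\bfitu\,\Iw_{\GSp_4,n}]
\]
and
\[
    [\Iw_{\GSp_4,n}^+\,\bfitu\,\Iw_{\GSp_4,n}^+] = [\Iw_{\GSp_4,n}^+\,\bfitu\,\Iw_{\GSp_4,n}]\cdot[\Iw_{\GSp_4,n}\,\one_4\,\Iw_{\GSp_4,n}^+],
\]
which yields a commutative square with a diagonal map $[\Iw_{\GSp_4,n}^+\,\bfitu\,\Iw_{\GSp_4,n}]$ such that both triangles compose to $U$; since $U$ is a quasi-isomorphism on the finite-slope part, every arrow in the square is too. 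The first quasi-isomorphism $R\Gamma(X_{\Iw_{\GSp_4,1}}^{\tor},\underline{\omega}^k)^{\fs}\cong R\Gamma(X_{\Iw_{\GSp_4,n}}^{\tor},\underline{\omega}^k)^{\fs}$ is simply cited from \cite[Corollary 4.2.16]{BP-HigherColeman}, where the same mechanism is carried out for the tower $\Iw_{\GSp_4,n}\subset\Iw_{\GSp_4,1}$.

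Two minor corrections: in Step~4 you wrote $\Iw_{\GSp_4,n}\subset\Iw_{\GSp_4,n}^+$, but the inclusion goes the other way (the strict Iwahori is the smaller group); and your aside about the degree being a $p$-power not obviously invertible is slightly off, since the coefficients are $\Q_p$-vector spaces, so the degree is already a unit---this gives split injectivity of pullback for free, but as you correctly note, not surjectivity, which is what the double-coset factorisation supplies.
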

\begin{proof}
    The first quasi-isomorphism follows from \cite[Corollary 4.2.16]{BP-HigherColeman}. The proof of the second quasi-isomorphism is similar. Recall the Iwahori decompositions \[
        \Iw_{\GSp_4, n} = N_{\GSp_4, n}^{\opp} T_{\GSp_4}(\Z_p)N_{\GSp_4}(\Z_p) \quad \text{ and }\quad \Iw_{\GSp_4, n}^+ = N_{\GSp_4, n}^{\opp} T_{\GSp_4}(\Z_p) N_{\GSp_4, n}.
    \] 
    We apply \cite[Lemma 4.2.13]{BP-HigherColeman}\footnote{Note that there is a typo therein: $t_3$ should be $t_2^{-1}$.} and follow the notations therein. For $\bfitu\in \{\bfitu_{p,0}, \bfitu_{p,1}, \bfitu_p\}$, we have the following computations. 
    \begin{itemize}
        \item Take $K_1 = K_3 = \Iw_{\GSp_4, n}$, $K_2 = \Iw_{\GSp_4, n}^+$, $t_1 = \one_4$, $t_2 = \bfitu$, we have \[
            \begin{array}{c}
                N_{\GSp_4, n}^{\opp}  \cap \bfitu N_{\GSp_4, n}^{\opp}\bfitu^{-1}  \subset N_{\GSp_4, n}^{\opp} \\
                N_{\GSp_4}(\Z_p) \cap \bfitu N_{\GSp_4}(\Z_p) \bfitu^{-1} N_{\GSp_4, n} \subset N_{\GSp_4}(\Z_p) \subset \bfitu N_{\GSp_4}(\Z_p)\bfitu^{-1}.
            \end{array}
        \] 
        This implies a decomposition of double cosets \[
            [\Iw_{\GSp_4, n} \bfitu \Iw_{\GSp_4, n}] = [\Iw_{\GSp_4,n} \one_4 \Iw_{\GSp_4, n}^+][\Iw_{\GSp_4, n}^+ \bfitu \Iw_{\GSp_4,n}].
        \] 
        \item Take $K_1 = K_3 = \Iw_{\GSp_4, n}^+$, $K_2 = \Iw_{\GSp_4, n}$, $t_1 = \bfitu$, $t_2 = \one_4$, we have  \[
            \begin{array}{c}
                N_{\GSp_4, n}^{\opp}  \cap \bfitu^{-1}N_{\GSp_4, n}^{\opp}\bfitu  \subset N_{\GSp_4, n}^{\opp} \subset \bfitu^{-1}N_{\GSp_4, n}^{\opp}\bfitu\\
                \bfitu^{-1} N_{\GSp_4, n} \bfitu \cap N_{\GSp_4, n} N_{\GSp_4}(\Z_p) \subset N_{\GSp_4, n}.
            \end{array}
        \] 
        
        We then get a decomposition \[
            [\Iw_{\GSp_4, n}^+ \bfitu \Iw_{\GSp_4, n}^+] = [\Iw_{\GSp_4,n}^+ \bfitu \Iw_{\GSp_4, n}][\Iw_{\GSp_4, n} \one_4 \Iw_{\GSp_4,n}^+].
        \]
    \end{itemize}
    Consequently, we have a commutative diagram \[
        \begin{tikzcd}
            R\Gamma(X_n^{\tor}, \underline{\omega}^k) \arrow[r, "U"]\arrow[d, "\mathrm{tr}"] & R\Gamma(X_n^{\tor}, \underline{\omega}^k) \arrow[d, "\mathrm{tr}"]\\
            R\Gamma(X_{\Iw_{\GSp_4, n}}^{\tor}, \underline{\omega}^k) \arrow[r, "U"]\arrow[ru] & R\Gamma(X_{\Iw_{\GSp_4, n}}^{\tor}, \underline{\omega}^k) 
        \end{tikzcd}
    \] where $U$ is the operator associated with $\bfitu$ and the diagonal map is given by $[\Iw_{\GSp_4}^+ \bfitu \Iw_{\GSp_4, n}]$. By definition, the horizontal arrows given by $U$ are quasi-isomorphisms. This then implies that every morphism in the diagram is a quasi-isomorphism. 
\end{proof}

\subsection{Overconvergent automorphic sheaves via perfectoid methods}\label{subsection: overconvergent Siegel modular forms: perfectoid}
We shall follow \cite[Convention 2.2]{CHJ-2017} and use the symbol `$\widehat{\otimes}$' to denote either the \emph{complete tensor product} or the \emph{mixed complete tensor product}. We refer the readers to [\emph{op. cit.}, Definition 6.3] for its definition. See also \cite[Definition 3.1.3]{DRW}.\footnote{We remark that `$\widehat{\otimes}$' agrees with the \emph{solid tensor product} in the sense of \cite{CondensedMath}. J.-F.W. would like to thank Dustin Clausen for helpful discussion regarding this perspective. }

\begin{Definition}\label{Definition: w-loci of Siegel modular varieties}
    Let $\bfitw\in W^H$ and $m, n\in \Q_{\geq 0}$. \begin{enumerate}
        \item[(i)] The \emph{$(\bfitw, m, n)$-locus} on $\calX^{\tor}_{\Gamma(p^{\infty})}$ is defined to be \[
            \calX^{\tor}_{\Gamma(p^{\infty}), \bfitw, (m, n)} \coloneq \pi_{\HT}^{-1}(\adicFL_{\bfitw, (m, n)}).
        \] 
        Recall the coordinate $\begin{pmatrix} \one_2 \\ \bfitz & \one_2\end{pmatrix} \bfitw$ on $\adicFL_{\bfitw, (m, n)}$. We denote by \[
        \begin{pmatrix} \one_2 \\ \frakz & \one_2\end{pmatrix} \bfitw \coloneq \pi_{\HT}^* \left( \begin{pmatrix} \one_2 \\ \bfitz & \one_2\end{pmatrix} \bfitw \right)
        \]
        the corresponding coordinate on $\calX^{\tor}_{\Gamma(p^{\infty}), \bfitw, (m, n)}$. 
        \item[(ii)] Given any level structure $\Gamma_p$ at $p$, let $h_{\Gamma_p} : \calX^{\tor}_{\Gamma(p^{\infty})} \rightarrow \calX^{\tor}_{\Gamma_p}$ be the natural projection. The \emph{$(\bfitw, m, n)$-locus} on $\calX^{\tor}_{\Gamma_p}$ is defined to be \[
            \calX^{\tor}_{\Gamma_p, \bfitw,  (m, n)} \coloneq h_{\Gamma_p}(\calX^{\tor}_{\Gamma(p^{\infty}), \bfitw, (m, n)}).
        \] 
        \item[(iii)] Similarly, we define the \emph{$(\bfitw, \overline{m}, n)$-, $(\bfitw, m, \overline{n})$-, $(\bfitw, \overline{m}, \overline{n})$-loci} on $\calX^{\tor}_{\Gamma(p^{\infty})}$ and $\calX^{\tor}_{\Gamma}$. 
    \end{enumerate}
\end{Definition}

Fix $\bfitw \in W^H$ and let $(R_{\calU}, \kappa_{\calU})$ be a weight. Let $r\in \Q_{\geq 0}$ and $n\in \Z_{\geq 0}$ such that $n \geq r >1+r_{\calU}$. We define the sheaf $\scrA_{\bfitw, \kappa_{\calU}}^r$ (resp., $\scrA_{\bfitw, \kappa_{\calU}}^{r, \circ}$) on $\calX^{\tor}_{\Gamma(p^{\infty}), \bfitw, (r, r)}$ by \[
   \scalemath{0.9}{  \scrA_{\bfitw, \kappa_{\calU}}^r \coloneq A_{\bfitw_3^{-1}\bfitw\kappa_{\calU}}^{r}(\Iw_{H, 1}^+, R_{\calU}) \widehat{\otimes}\scrO_{\calX^{\tor}_{\Gamma(p^{\infty}), \bfitw, (r,r)}} \quad \left(\text{resp., } \scrA_{\bfitw, \kappa_{\calU}}^{r, \circ} \coloneq A_{\bfitw_3^{-1}\bfitw\kappa_{\calU}}^{r, \circ}(\Iw_{H, 1}^+, R_{\calU}) \widehat{\otimes}\scrO^+_{\calX^{\tor}_{\Gamma(p^{\infty}), \bfitw, (r,r)}}\right).}
\]
It is precisely the pullback of the pseudo-automorphic sheaf $\scrA_{\kappa_{\calU}, \adicFL_{\bfitw}}^r$ (resp., $\scrA_{\kappa_{\calU}, \adicFL_{\bfitw}}^{r, \circ}$) defined in \S \ref{subsection: pseudoautomorphic sheaves} via the Hodge--Tate period map
\[
\pi_{\HT}: \calX^{\tor}_{\Gamma(p^{\infty}), \bfitw, (r, r)}\rightarrow \adicFL_{\bfitw, (r,r)}.
\]
On $\calX^{\tor}_{\Gamma(p^{\infty}), \bfitw, (r,r)}$, given any $\bfalpha = \begin{pmatrix}\bfalpha_a & \bfalpha_b\\ \bfalpha_c & \bfalpha_d\end{pmatrix} \in \Iw_{\GSp_4, n}^+$, define \begin{equation}\label{eq: automorphy factor}
    \bfitj_{\bfitw}(\bfalpha, \frakz) \coloneq \begin{pmatrix} \varsigma(\bfalpha) \oneanti_2 \trans(\bfalpha_d^{\bfitw} + \frakz \bfalpha_b^{\bfitw})^{-1}\oneanti_2 & \bfalpha_b^{\bfitw}\\ & \bfalpha_d^{\bfitw} + \frakz\bfalpha_b^{\bfitw} \end{pmatrix} \in \Iw_{P_{\Si}, 1}^{+, (r)}.
\end{equation}
Then, for any $\calU \in \calX^{\tor}_{n, \bfitw, (r,r)}$, we define a left $\Iw_{\GSp_4, n}^+$-action on $\scrA_{\bfitw, \kappa_{\calU}}^r(h_{n}^{-1}(\calU))$ (resp., $\scrA_{\bfitw, \kappa_{\calU}}^{r, \circ}(h_{n}^{-1}(\calU))$) by \begin{equation}\label{eq: defining automorphy action}
    \bfalpha *_{\bfitw, \kappa_{\calU}} f \coloneq \rho^r_{\bfitw_3^{-1}\bfitw\kappa_{\calU}}(\bfitj_{\bfitw}(\bfalpha, \frakz))\bfalpha^*f \quad (\text{resp., } \bfalpha *_{\bfitw, \kappa_{\calU}} f \coloneq \rho^{r, \circ}_{\bfitw_3^{-1}\bfitw\kappa_{\calU}}(\bfitj_{\bfitw}(\bfalpha, \frakz))\bfalpha^*f)
\end{equation}
for any $\bfalpha\in \Iw_{\GSp_4, n}^+$ and $f\in \scrA_{\bfitw, \kappa_{\calU}}^r(h_{n}^{-1}(\calU))$ (resp., $f\in \scrA_{\bfitw, \kappa_{\calU}}^{r, \circ}(h_{n}^{-1}(\calU))$).

\begin{Definition}\label{Definition: overconvergnet automorphic sheaf via perfectoid method}
    Let $(R_{\calU}, \kappa_{\calU})$ be a weight, $r\in \Q_{\geq 0}$ and $n\in \Z_{\geq 0}$ such that $n\geq r > 1+r_{\calU}$. 
    \begin{enumerate}
        \item[(i)] The \emph{$(\bfitw, r)$-overconvergent automorphic sheaf of weight $\bfitw_3^{-1}\bfitw\kappa_{\calU}$} is the subsheaf $\underline{\omega}_{n, r}^{\bfitw_3^{-1}\bfitw\kappa_{\calU}}$ of $h_{n, *}\scrA_{\bfitw, \kappa_{\calU}}^r$ on $\calX^{\tor}_{n, \bfitw, (r, r)}$, consisting of sections $f$ such that \[
            \bfalpha * _{\bfitw, \kappa_{\calU}} f = f
        \] for any $\bfalpha \in \Iw_{\GSp_4, n}^+$. 
        \item[(ii)] The \emph{integral $(\bfitw, r)$-overconvergent automorphic sheaf of weight $\bfitw_3^{-1}\bfitw\kappa_{\calU}$} is the subsheaf $\underline{\omega}_{n, r}^{\bfitw_3^{-1}\bfitw\kappa_{\calU}, \circ}$ of $h_{n, *}\scrA_{\bfitw, \kappa_{\calU}}^{r, \circ}$ on $\calX^{\tor}_{n, \bfitw, (r, r)}$, consisting of sections $f$ such that \[
            \bfalpha * _{\bfitw, \kappa_{\calU}} f = f
        \] for any $\bfalpha \in \Iw_{\GSp_4, n}^+$.
        \item[(iii)] Let $\calD_{n, \bfitw, (r, r)} \coloneq (\calX_{n}^{\tor} \smallsetminus \calX_n) \cap \calX^{\tor}_{n, \bfitw, (r,r)}$ be the boundary divisor of $\calX^{\tor}_{n, \bfitw, (n,n)}$. The \emph{cuspidal $(\bfitw, r)$-overconvergent automorphic sheaf of weight $\bfitw_3^{-1}\bfitw\kappa_{\calU}$} is defined to be \[
            \underline{\omega}_{\cusp, n, r}^{\bfitw_3^{-1}\bfitw\kappa_{\calU}} \coloneq \underline{\omega}_{n, r}^{\bfitw_3^{-1}\bfitw\kappa_{\calU}}(-\calD_{n, \bfitw, (r,r)}).
        \]
        In other words, $\underline{\omega}_{\cusp, n, r}^{\bfitw_3^{-1}\bfitw\kappa_{\calU}}$ is the subsheaf of $\underline{\omega}_{n, r}^{\bfitw_3^{-1}\bfitw\kappa_{\calU}}$, consisting of those sections that vanish at the boundary divisor. 
        \item[(iv)] Similarly, the \emph{cuspidal integral $(\bfitw, r)$-overconvergent automorphic sheaf of weight $\bfitw_3^{-1}\bfitw\kappa_{\calU}$} is defined to be $\underline{\omega}_{\cusp, n, r}^{\bfitw_3^{-1}\bfitw\kappa_{\calU}, \circ} \coloneq \underline{\omega}_{n, r}^{\bfitw_3^{-1}\bfitw\kappa_{\calU}, +}(-\calD_{n, \bfitw, (r,r)})$.
    \end{enumerate}
\end{Definition}

\begin{Remark}\label{Remark: other versions of overconvergent automorphic sheaves via perfectoid methods}
    Similar constructions apply to the situation when we replace $A_{\bfitw_3^{-1}\bfitw\kappa_{\calU}}^r(\Iw_{H, 1}^+, R_{\calU})$ (resp., $A_{\bfitw_3^{-1}\bfitw\kappa_{\calU}}^{r, \circ}(\Iw_{H, 1}^+, R_{\calU})$) with $A_{\bfitw_3^{-1}\bfitw\kappa_{\calU}}^{r^+}(\Iw_{H, 1}^+, R_{\calU})$ (resp., $A_{\bfitw_3^{-1}\bfitw\kappa_{\calU}}^{r^+, \circ}(\Iw_{H, 1}^+, R_{\calU})$). In particular, we have sheaves $\underline{\omega}_{n, r^+}^{\bfitw_3^{-1}\bfitw\kappa_{\calU}}$ and $\underline{\omega}_{\cusp, n, r^+}^{\bfitw_3^{-1}\bfitw\kappa_{\calU}}$ (resp., $\underline{\omega}_{n, r^+}^{\bfitw_3^{-1}\bfitw\kappa_{\calU}, \circ}$ and $\underline{\omega}_{\cusp, n, r^+}^{\bfitw_3^{-1}\bfitw \kappa_{\calU}, \circ}$). From the construction, we see that \[
        \underline{\omega}_{n, r^+}^{\bfitw_3^{-1}\bfitw\kappa_{\calU}} = \varprojlim_{r'>r} \underline{\omega}_{n, r'}^{\bfitw_3^{-1}\bfitw\kappa_{\calU}} \quad \text{ and }\quad \underline{\omega}_{n, r^+}^{\bfitw_3^{-1}\bfitw\kappa_{\calU}, \circ} = \varprojlim_{r'>r} \underline{\omega}_{n, r'}^{\bfitw_3^{-1}\bfitw\kappa_{\calU}, \circ}.
    \]
    Similar statements hold for the cuspidal versions. 
\end{Remark}

\begin{Remark}\label{Remark: classical automorphic sheaf via perfectoid methods}
    Let $k = (k_1, k_2)\in \Z^2$ with $k_1 \geq k_2$, consider \[
        P_{\bfitw_3^{-1}\bfitw k} \coloneq \left\{ f: H \rightarrow \bbA^1: f(\bfgamma \bfbeta) = \bfitw_3^{-1}\bfitw k(\bfbeta)f(\bfgamma) \text{ for all } (\bfgamma, \bfbeta)\in H \times B_H\right\}.
    \] Similar as in Lemma \ref{Lemma: analytic representation for H is the same as analytic representation for P_Si}, we equip it with the left $P_{\Si}$-action by \[
        (\bfgamma * f)(\bfalpha) = f(\bfitw_3^{-1}\trans\bfgamma \bfitw_3 \bfalpha).  
    \] 
    The resulting $P_{\Si}$-representation is denoted by $\rho_{\bfitw_3^{-1}\bfitw k}^{\alg}$. 

    For later use, we define a sheaf $\underline{\omega}_{n, r, \alg}^{\bfitw_3^{-1}\bfitw k}$ as the subsheaf of $h_{n, *} \left(P_{\bfitw_3^{-1}\bfitw k} \otimes \scrO_{\calX_{\Gamma(p^{\infty}), \bfitw, (r, r)}^{\tor}}\right)$ consisting of sections $f$ such that \[
        f = \rho_{\bfitw_3^{-1}\bfitw k}^{\alg}(\bfitj_{\bfitw}(\bfalpha, \frakz)) \bfalpha^* f
    \]
    for any $\bfalpha\in \Iw_{\GSp_4, n}^+$. We shall see later (Remark \ref{Remark: comparison with classical automorphic sheaf}) that $\underline{\omega}_{n, r, \alg}^{\bfitw_3^{-1}\bfitw k}$ can be identified with the restriction of the classical automorphic sheaf $\underline{\omega}^{\bfitw_3^{-1}\bfitw k}$ on $\calX_{\Gamma(p^{\infty}), \bfitw, (r, r)}^{\tor}$.
\end{Remark}

\subsection{Overconvergent automorphic sheaves via analytic torsors}\label{subsection: overconvergent Siegel modular forms: torsors}
Fix $\bfitw\in W^H$ and $r\in \Q_{\geq 0}$. Recall the $\adicIw_{H, n}^+$-torsor $\adicIW_{H, n, \adicFL_{\bfitw}}^+$ over $\adicFL_{\bfitw}$ defined in \S \ref{subsection: vector bundles and torsors over the flag variety}. We define an analytic $\adicIw_{H, n}^+$-torsor $\adicIW_{H, n}^+$ over $\calX_{\Gamma(p^{\infty}), \bfitw, (r, r)}^{\tor}$ via the pullback \[
    \begin{tikzcd}
        \adicIW_{H, n}^+ \arrow[r]\arrow[d, "\pr_{\Iw_{H, n}^+}"'] & \adicIW_{H, n, \adicFL_{\bfitw}}^+|_{\adicFL_{\bfitw, (r, r)}} \arrow[d, "\pr_{\Iw_{H, n}^+, \adicFL_{\bfitw}}"]\\
        \calX_{\Gamma(p^{\infty}), \bfitw, (r, r)}^{\tor} \arrow[r, "\pi_{\HT}"] & \adicFL_{\bfitw, (r, r)}
    \end{tikzcd}.
\]
Note that the pullback exists in the category of analytic adic spaces. 

\begin{Remark}\label{Remark: pro-Kummer \'etale Iw_{H, n}^+-torsors}
At the moment, $\adicIW_{H, n}^+$ is defined as an analytic sheaf on $\calX_{\Gamma(p^{\infty}), \bfitw, (r,r)}^{\tor}$. Once again, we may upgrade everything to the pro-Kummer \'etale site. For later use, we spell out the details here. For any affinoid perfectoid object $\calU$ in the pro-Kummer \'etale site $\calX_{\Gamma(p^{\infty}), \bfitw, (r, r), \proket}^{\tor}$ \footnote{Here, we have abused the notations in the sense of Convention \ref{Convention: abuse of notations}. Namely, $\calX_{\Gamma(p^{\infty}), \bfitw, (r, r), \proket}^{\tor}$ stands for the localized site ${\calX_{n, \proket}^{\tor}}_{/\calX^{\tor}_{\Gamma(p^{\infty}), \bfitw, (r, r)}}$ where $\calX^{\tor}_{\Gamma(p^{\infty}), \bfitw, (r, r)}$ is identified with an affinoid perfectoid object in the pro-Kummer \'etale site $\calX_{n, \proket}^{\tor}$.} with associated affinoid perfectoid space $\Spa(R, R^+)$, we put 
\[
\scalemath{0.9}{ \adicIW_{H, n}^+(\calU) = \left\{ \psi: R^{+, 4} \xrightarrow{\cong} \Lie G_{\Gamma(p^{\infty})}^{\univ} \oplus \underline{\omega}_{\Gamma(p^{\infty})}: \{\psi(v_1), \ldots, \psi(v_4)\} \textrm{ is $n$-compatible w.r.t }\{\fraks_1^{\bfitw, \vee}, \fraks_2^{\bfitw, \vee}, \fraks_2^{\bfitw}, \fraks_1^{\bfitw}\} \right\},}
\]
where 
\begin{itemize}
\item $\Lie G_{\Gamma(p^{\infty})}^{\univ}$ (resp., $\underline{\omega}_{\Gamma(p^{\infty})}$) is the pullback of $\Lie G_{\Gamma_p}^{\univ}$ (resp., $\underline{\omega}$) from $\calX_{\Gamma_p}^{\tor}$ (for any of the aforementioned level structures $\Gamma_p$), and 
\item $\fraks_i^{\bfitw, \vee} = \pi_{\HT}^* \bfits_i^{\bfitw, \vee}$ and $\fraks_i^{\bfitw} = \pi_{\HT}^* \bfits_i^{\bfitw}$ for $i=1, 2$. 
\end{itemize}
We extend $\adicIw_{H, n}^+$ to a pro-Kummer \'etale sheaf $\adicIw_{H, n, \proket}^+$ on $\calX_{\Gamma(p^{\infty}), \bfitw, (r, r), \proket}^{\tor}$ in the same way as we extend $\calG^{\an}$ to $\calG^{\an}_{\proket}$ in \S \ref{subsection: Siegel threefolds}. That is, for every $\calU$ in $\calX_{\Gamma(p^{\infty}), \bfitw, (r, r), \proket}^{\tor}$, we put 
\[
    \adicIw_{H, n, \proket}^+(\calU):=\adicIw_{H, n}^+\left(\widehat{\scrO}_{\calX_{\Gamma(p^{\infty}), \bfitw, (r, r), \proket}^{\tor}}(\calU), \widehat{\scrO}^+_{\calX_{\Gamma(p^{\infty}), \bfitw, (r, r), \proket}^{\tor}}(\calU)\right)
\]
Then there is an isomorphism
\[
 \adicIW_{H,n}^+ = \pi_{\HT}^* \adicIW_{H, n, \adicFL_{\bfitw}}^+ \times^{\bbG_m^+, \mu_{\Si}}\bbG_m^+(-1)
\]
of $\adicIw_{H, n, \proket}^+$ torsors, where $\bbG_m^+:=\widehat{\scrO}_{\calX_{n, \proket}^{\tor}}^{+, \times}$ and
\[\bbG_m^+(-1) :=\Isom_{\widehat{\scrO}_{\calX_{n, \proket}^{\tor}}^+}\left(\widehat{\scrO}_{\calX_{n, \proket}^{\tor}}^+, \widehat{\scrO}_{\calX_{n, \proket}^{\tor}}^+(-1)\right)\]
is obtained by taking a Hodge--Tate twist.
\end{Remark}

\begin{Definition}\label{Definition: auxiliary automorphic sheaves via analytic torsors}
    Fix $\bfitw\in W^H$. Given a weight $(R_{\calU}, \kappa_{\calU})$ and $r\in \Q_{\geq 0}$, $n\in \Z_{\geq 0}$ with $n \geq r>1+r_{\calU}$. \begin{enumerate}
        \item[(i)] The \emph{auxiliary $(\bfitw, r)$-overconvergent automorphic sheaf of weight $\bfitw_3^{-1}\bfitw \kappa_{\calU}$} over $\calX_{n, \bfitw, (r, r)}^{\tor}$ is defined to be \[
            \widetilde{\underline{\omega}}_{n, r}^{\bfitw_3^{-1}\bfitw\kappa_{\calU}} \coloneq \left( h_{n, *}\left(\left(\pr_{\Iw_{H, 1}^+, *}\scrO_{\adicIW_{H, 1}^+}\widehat{\otimes}R_{\calU}\right)[\bfitw \kappa_{\calU}] \right)\right)^{\Iw_{\GSp_4, n}^+}. 
        \]
        \item[(ii)] The \emph{auxiliary integral $(\bfitw, r)$-overconvergent automorphic sheaf of weight $\bfitw_3^{-1}\bfitw \kappa_{\calU}$} over $\calX_{n, \bfitw, (r, r)}^{\tor}$ is defined to be \[
            \widetilde{\underline{\omega}}_{n, r}^{\bfitw_3^{-1}\bfitw\kappa_{\calU}, \circ} \coloneq \left( h_{n, *}\left(\left(\pr_{\Iw_{H, 1}^+, *}\scrO^+_{\adicIW_{H, 1}^+}\widehat{\otimes}R_{\calU}^{\circ}\right)[\bfitw \kappa_{\calU}]\right) \right)^{\Iw_{\GSp_4, n}^+}. 
        \]
    \end{enumerate}
\end{Definition}

\begin{Theorem}\label{Theorem: comparison theorem for automorphic sheaves}
    For any $\bfitw$, $(R_{\calU}, \kappa_{\calU})$, $r$, and $n$ given as above, we have a natural isomorphism of sheaves \[
        \underline{\omega}_{n, r}^{\bfitw_3^{-1}\bfitw\kappa_{\calU}} \cong \underline{\widetilde{\omega}}_{n, r}^{\bfitw_3^{-1}\bfitw\kappa_{\calU}}
    \] over $\calX_{n, \bfitw, (r, r)}^{\tor}$. 
\end{Theorem}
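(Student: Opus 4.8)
The plan is to deduce the theorem from the comparison of pseudoautomorphic sheaves (Proposition \ref{Prop: comparison theorem for pseudoautomorphic sheaves}) by pulling back along the Hodge--Tate period map and then tracking the two $\Iw_{\GSp_4, n}^+$-actions through the resulting identification.

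\emph{Step 1: pulling back the pseudoautomorphic comparison.} By construction $\scrA_{\bfitw, \kappa_{\calU}}^r = \pi_{\HT}^* \scrA_{\kappa_{\calU}, \adicFL_{\bfitw}}^r$ along $\pi_{\HT} \colon \calX^{\tor}_{\Gamma(p^{\infty}), \bfitw, (r,r)} \to \adicFL_{\bfitw, (r,r)}$, and the analytic torsor $\adicIW_{H, 1}^+$ over $\calX^{\tor}_{\Gamma(p^{\infty}), \bfitw, (r,r)}$ is by definition the fibre product $\adicIW_{H, 1, \adicFL_{\bfitw}}^+ \times_{\adicFL_{\bfitw, (r,r)}} \calX^{\tor}_{\Gamma(p^{\infty}), \bfitw, (r,r)}$. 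The canonical section $\psi_{\bfitw}^{\std}$ of \eqref{eq: psi_w^std} trivialises $\adicIW_{H, 1, \adicFL_{\bfitw}}^+$ (the sections $\bfits_i^{\bfitw, \vee}, \bfits_i^{\bfitw}$ are non-vanishing on $\Fl_{\bfitw}^{\natural} \supset \Fl_{\bfitw}$), hence also $\adicIW_{H, 1}^+$; consequently the pushforward $\left(\pr_{\adicFL_{\bfitw}, \Iw_{H, 1}^+}\right)_* \scrO_{\adicIW_{H, 1, \adicFL_{\bfitw}}^+}$ is, locally on $\adicFL_{\bfitw, (r,r)}$, a completed tensor with the coordinate ring of $\adicIw_{H, 1}^+$, so $\pi_{\HT}^*$ commutes with it and computes $\left(\pr_{\Iw_{H, 1}^+}\right)_* \scrO_{\adicIW_{H, 1}^+}$. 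Pulling back the isomorphism of Proposition \ref{Prop: comparison theorem for pseudoautomorphic sheaves} therefore yields a natural isomorphism
\begin{equation*}
    \scrA_{\bfitw, \kappa_{\calU}}^r \;\cong\; \left( \left(\pr_{\Iw_{H, 1}^+}\right)_* \scrO_{\adicIW_{H, 1}^+} \widehat{\otimes} R_{\calU} \right)[\bfitw\kappa_{\calU}]
\end{equation*}
of sheaves on $\calX^{\tor}_{\Gamma(p^{\infty}), \bfitw, (r,r)}$, compatibly with the $[\bfitw\kappa_{\calU}]$-isotypic decomposition; the integral version (with $\scrO^+$ and $R_{\calU}^{\circ}$) follows identically from Remark \ref{Remark: pseudoautomorphic sheaves}(i), and the $r^+$-variants from Remark \ref{Remark: pseudoautomorphic sheaves}(ii).

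\emph{Step 2: matching the two $\Iw_{\GSp_4, n}^+$-actions.} Applying $h_{n, *}$, the left-hand side acquires the action $*_{\bfitw, \kappa_{\calU}}$ of \eqref{eq: defining automorphy action}, while the right-hand side acquires the action of the Galois group $\Iw_{\GSp_4, n}^+$ of the pro-Kummer \'etale cover $h_n$ on the torsor $\adicIW_{H, 1}^+$ (the deck transformations act on $\Lie \calG^{\univ}_{\Gamma(p^{\infty})}$ and $\underline{\omega}_{\Gamma(p^{\infty})}$, hence on frames). The key claim is that these two actions correspond under the isomorphism of Step 1, and this is precisely what the automorphy factor $\bfitj_{\bfitw}(\bfalpha, \frakz)$ of \eqref{eq: automorphy factor} is designed to encode: using the $\GSp_4(\Q_p)$-equivariance of $\pi_{\HT}$ together with the identification $\calG^{\an} \to \adicFL$, $\bfgamma \mapsto \bfgamma^{-1}$, of Remark \ref{Remark: normalisation of the flag variety}, the deck transformation by $\bfalpha \in \Iw_{\GSp_4, n}^+$ translates on $\adicFL_{\bfitw}$ into the automorphism induced by $\bfalpha$, so by Lemma \ref{Lemma: global sections and automorphy factors}(ii)--(iii) the pulled-back frame $\pi_{\HT}^* \psi_{\bfitw}^{\std}$ changes exactly by the cocycle entering the $P_{\Si}$-decomposition of $\begin{pmatrix} \one_2 & \\ \frakz & \one_2 \end{pmatrix} \bfitw \bfalpha \bfitw^{-1}$ recorded at the end of that lemma's proof; on functions on the torsor this is precomposition with $\bfitj_{\bfitw}(\bfalpha, \frakz)$, which after passing to the $[\bfitw\kappa_{\calU}]$-isotypic part and invoking Lemma \ref{Lemma: analytic representation for H is the same as analytic representation for P_Si} is exactly $\rho^r_{\bfitw_3^{-1}\bfitw\kappa_{\calU}}(\bfitj_{\bfitw}(\bfalpha, \frakz))$. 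Hence the isomorphism of Step 1 is $\Iw_{\GSp_4, n}^+$-equivariant, and the same holds for the integral and $r^+$-variants.

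\emph{Step 3: taking invariants, and the main obstacle.} Since $h_n \colon \calX^{\tor}_{\Gamma(p^{\infty}), \bfitw, (r,r)} \to \calX^{\tor}_{n, \bfitw, (r,r)}$ is a pro-Kummer \'etale $\Iw_{\GSp_4, n}^+$-torsor, taking $\Iw_{\GSp_4, n}^+$-invariants of $h_{n,*}$ of the isomorphism of Step 1 gives, on the left, the subsheaf of sections fixed by $*_{\bfitw, \kappa_{\calU}}$, namely $\underline{\omega}_{n, r}^{\bfitw_3^{-1}\bfitw\kappa_{\calU}}$ (Definition \ref{Definition: overconvergnet automorphic sheaf via perfectoid method}(i)), and, on the right, $\widetilde{\underline{\omega}}_{n, r}^{\bfitw_3^{-1}\bfitw\kappa_{\calU}}$ (Definition \ref{Definition: auxiliary automorphic sheaves via analytic torsors}(i)), as desired; all constructions are natural in $(R_{\calU}, \kappa_{\calU})$ and compatible with the integral and $r^+$-variants. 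The substantive point is the equivariance verification of Step 2: one must check that the explicitly defined cocycle $\bfitj_{\bfitw}$ is exactly the one by which deck transformations of $h_n$ act on the standard frame pulled back via $\pi_{\HT}$. This is a bookkeeping computation combining the definition of the Hodge--Tate period map, its $\GSp_4(\Q_p)$-equivariance, and the matrix identities of Lemma \ref{Lemma: global sections and automorphy factors}; once it is in place, the rest of the argument is formal.
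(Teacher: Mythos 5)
Your proposal is correct and follows essentially the same route as the paper: pull back the pseudoautomorphic comparison of Proposition \ref{Prop: comparison theorem for pseudoautomorphic sheaves} along $\pi_{\HT}$, then verify $\Iw_{\GSp_4, n}^+$-equivariance by computing $\bfalpha^* \psi_{\bfitw}^{\std}$ via Lemma \ref{Lemma: global sections and automorphy factors}, and take invariants. The one small imprecision is that the frame $\psi_{\bfitw}^{\std}$ transforms by the \emph{Levi part} of $\bfitj_{\bfitw}(\bfalpha, \frakz)$ rather than by $\bfitj_{\bfitw}(\bfalpha, \frakz)$ itself, but your appeal to Lemma \ref{Lemma: analytic representation for H is the same as analytic representation for P_Si} is precisely what shows the two induce the same action on $A_{\bfitw_3^{-1}\bfitw\kappa_{\calU}}^r$, which is also the closing observation of the paper's proof.
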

\begin{proof}
    From Proposition \ref{Prop: comparison theorem for pseudoautomorphic sheaves}, we know that \[
        \left(\pr_{\Iw_{H, 1}^+, *}\scrO_{\adicIW_{H, 1}^+}\widehat{\otimes}R_{\calU}\right)[\bfitw \kappa_{\calU}] \cong \scrA_{\bfitw, \kappa_{\calU}}^r. 
    \] We only need to show the compatibility of the $\Iw_{\GSp_4, n}^+$-action.

    By the proof of Proposition \ref{Prop: comparison theorem for pseudoautomorphic sheaves}, we know that the aforementioned isomorphism is given by \[
        f \mapsto \left( \bfgamma \mapsto f(\psi_{\bfitw}^{\std} \bfitw_3 \bfgamma \bfitw_3^{-1}) \right).
    \]
    Then, for any $\bfalpha\in \Iw_{\GSp_4, n}^+$, we know by Lemma \ref{Lemma: global sections and automorphy factors} that \[
        \bfalpha^* \psi_{\bfitw}^{\std} = \psi_{\bfitw}^{\std} \trans\begin{pmatrix}
            \varsigma(\bfalpha) \oneanti_2 \trans(\frakz\bfalpha_b^{\bfitw} + \bfalpha_d^{\bfitw})^{-1}\oneanti_2 & \\ & \frakz\bfalpha_b^{\bfitw} + \bfalpha_d^{\bfitw}
        \end{pmatrix}.
    \]
    Hence, for any $\bfgamma \in \Iw_{H, 1}^+$, \[
        \bfalpha^* \psi_{\bfitw}^{\std} \bfitw_3 \bfgamma \bfitw_3^{-1} = \psi_{\bfitw}^{\std}\bfitw_3 \left(\bfitw_3^{-1} \trans\begin{pmatrix}
            \varsigma(\bfalpha) \oneanti_2 \trans(\frakz\bfalpha_b^{\bfitw} + \bfalpha_d^{\bfitw})^{-1}\oneanti_2 & \\ & \frakz\bfalpha_b^{\bfitw} + \bfalpha_d^{\bfitw}
        \end{pmatrix}\bfitw_3\right) \bfgamma \bfitw_3^{-1}
    \]
    Moreover, note that $\begin{pmatrix}
            \varsigma(\bfalpha) \oneanti_2 \trans(\frakz\bfalpha_b^{\bfitw} + \bfalpha_d^{\bfitw})^{-1}\oneanti_2 & \\ & \frakz\bfalpha_b^{\bfitw} + \bfalpha_d^{\bfitw}
        \end{pmatrix}$ and $\bfitj_{\bfitw}(\bfalpha, \frakz)$ induce the same action on $A_{\bfitw_3^{-1}\bfitw\kappa_{\calU}}^r$ (as the former is the `Levi-part' of the latter). The desired statement follows.
\end{proof}

\begin{Remark}\label{Remark: comparison with classical automorphic sheaf}
    Recall the sheaf $\underline{\omega}_{n, r, \alg}^{\bfitw_3^{-1}\bfitw k}$ from Remark \ref{Remark: classical automorphic sheaf via perfectoid methods}. A similar proof as in Theorem \ref{Theorem: comparison theorem for automorphic sheaves} implies that \[
        \underline{\omega}_{n, r, \alg}^{\bfitw_3^{-1}\bfitw k} \cong \underline{\omega}^{\bfitw_3^{-1}\bfitw k}|_{\calX_{n, \bfitw, (r, r)}^{\tor}}.
    \] See also \cite[\S 3.4]{DRW}. 
\end{Remark}

\begin{Corollary}\label{Corollary: structure of an admissible Banach sheaf}
    For any $\bfitw$, $(R_{\calU}, \kappa_{\calU})$, $r$, and $n$ given as above, $\underline{\omega}_{n, r}^{\bfitw_3^{-1}\bfitw \kappa_{\calU}}$ is an admissible Banach sheaf (in the sense of \cite[Definition A.3.9]{DRW}) with integral model $\underline{\omega}_{n, r}^{\bfitw_3^{-1}\bfitw\kappa_{\calU}, \circ}$.
\end{Corollary}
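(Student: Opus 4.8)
The plan is to reduce the statement to the corresponding assertion for the pseudoautomorphic sheaf on the flag variety and then transport it through the Hodge--Tate period map. First I would recall from Theorem~\ref{Theorem: comparison theorem for automorphic sheaves} that $\underline{\omega}_{n, r}^{\bfitw_3^{-1}\bfitw\kappa_{\calU}} \cong \underline{\widetilde{\omega}}_{n, r}^{\bfitw_3^{-1}\bfitw\kappa_{\calU}}$, so it suffices to work with the torsor description $\left( h_{n, *}\left(\left(\pr_{\Iw_{H, 1}^+, *}\scrO_{\adicIW_{H, 1}^+}\widehat{\otimes}R_{\calU}\right)[\bfitw \kappa_{\calU}] \right)\right)^{\Iw_{\GSp_4, n}^+}$, together with its integral counterpart obtained by replacing $\scrO_{\adicIW_{H, 1}^+}$ with $\scrO^+_{\adicIW_{H, 1}^+}$ and $R_{\calU}$ with $R_{\calU}^{\circ}$. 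The key structural input is Corollary~\ref{Corollary: structure theorem of analytic representations}: $A_{\bfitw_3^{-1}\bfitw\kappa_{\calU}}^{r,\circ}(\Iw_{H, 1}^+, R_{\calU}) \cong \widehat{\bigoplus}_{i\in \Z_{\geq 0}^4} R_{\calU}^{\circ} e_i^{(r)}$, which exhibits the analytic representation as an orthonormalisable $R_{\calU}^{\circ}$-module, i.e. one satisfying property (Pr) in the terminology of \cite{DRW}.

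Next I would verify the three defining conditions of an admissible Banach sheaf in the sense of \cite[Definition A.3.9]{DRW}, working locally on an affinoid cover $\{\calV_j\}$ of $\calX_{n, \bfitw, (r,r)}^{\tor}$ that trivialises the torsor $\adicIW_{H, 1}^+$. Over such a $\calV_j = \Spa(R_j, R_j^+)$, the identification $\adicIw_{H, 1}^+(R_j) \xrightarrow{\cong} \adicIW_{H, 1, \adicFL_{\bfitw}}^+(\calV_j)$ via $\psi_{\bfitw}^{\std}$ (cf. the proof of Proposition~\ref{Prop: comparison theorem for pseudoautomorphic sheaves}) shows that the sections of the auxiliary sheaf over $\calV_j$ are canonically $A_{\bfitw_3^{-1}\bfitw\kappa_{\calU}}^{r}(\Iw_{H, 1}^+, R_{\calU}) \widehat{\otimes} R_j$, with integral structure $A_{\bfitw_3^{-1}\bfitw\kappa_{\calU}}^{r,\circ}(\Iw_{H, 1}^+, R_{\calU}) \widehat{\otimes} R_j^{\circ}$. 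Using Corollary~\ref{Corollary: structure theorem of analytic representations} this is $\widehat{\bigoplus}_i R_j^{\circ} e_i^{(r)}$ after completion, so it is $p$-adically complete, $p$-torsion-free, and the reduction mod $p$ of the integral model has no almost-zero sections; this gives the required local freeness up to the admissibility axioms, and the gluing data descend because the cocycles act through the automorphy factor $\rho^r_{\bfitw_3^{-1}\bfitw\kappa_{\calU}}(\bfitj_{\bfitw}(\bfalpha, \frakz))$, which preserves the integral lattice. Finally I would check that $\underline{\omega}_{n, r}^{\bfitw_3^{-1}\bfitw\kappa_{\calU}, \circ}$ is indeed an integral model in the sense of \cite[Definition 2.6.1]{BP-HigherColeman} and \cite[Definition A.3.9]{DRW}, i.e. that the natural map $\underline{\omega}_{n, r}^{\bfitw_3^{-1}\bfitw\kappa_{\calU}, \circ}[1/p] \to \underline{\omega}_{n, r}^{\bfitw_3^{-1}\bfitw\kappa_{\calU}}$ is an isomorphism and that the integral sheaf is $p$-adically complete with bounded $p^\infty$-torsion; this is immediate from the $[1/p]$-relations in Definition~\ref{Definition: overconvergnet automorphic sheaf via perfectoid method}(ii) and the orthonormalisability.

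The main obstacle I anticipate is purely bookkeeping rather than conceptual: one has to be careful that the $\Iw_{\GSp_4, n}^+$-invariants commute with the local trivialisations, i.e. that taking $(-)^{\Iw_{\GSp_4, n}^+}$ of an orthonormalisable sheaf again yields an (at least locally) orthonormalisable sheaf and does not destroy $p$-torsion-freeness of the integral model. Since $\Iw_{\GSp_4, n}^+$ is a compact $p$-adic group acting $R_{\calU}^{\circ}$-linearly and the invariants here are really a descent along the pro-Kummer-\'etale cover $h_n$ with Galois group $\Iw_{\GSp_4, n}^+$, this follows from the fact that $\underline{\omega}_{n, r}^{\bfitw_3^{-1}\bfitw\kappa_{\calU}, \circ}$ is by construction a direct summand (as a topological $R_{\calU}^{\circ}$-module sheaf) of the pushforward $h_{n,*}\scrA_{\bfitw, \kappa_{\calU}}^{r,\circ}$, the complement being spanned by the non-trivial isotypic pieces; a summand of an orthonormalisable module over a summand-closed class is again orthonormalisable. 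With that in hand the verification of the admissibility axioms is a direct translation of the proof of the analogous statement \cite[\S 3]{DRW} for $\bfitw = \bfitw_3$, the only new feature being the replacement of $\adicFL_{\bfitw_3}$ and $\calX_{n}^{\tor}$ by the $\bfitw$-locus $\adicFL_{\bfitw, (r,r)}$ and $\calX_{n, \bfitw, (r,r)}^{\tor}$, which does not affect any of the arguments.
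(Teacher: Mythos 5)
Your overall strategy is the same as the paper's: the printed proof consists of exactly two sentences, namely the reduction via Theorem~\ref{Theorem: comparison theorem for automorphic sheaves} to the auxiliary sheaf $\widetilde{\underline{\omega}}_{n,r}^{\bfitw_3^{-1}\bfitw\kappa_{\calU}}$, followed by the remark that the verification is word-for-word the argument of \cite[Lemmas 3.3.8 and 3.3.10]{DRW} with $\calX_{n}^{\tor}$ replaced by $\calX_{n,\bfitw,(r,r)}^{\tor}$. Your elaboration of that cited argument is mostly faithful, and the use of Corollary~\ref{Corollary: structure theorem of analytic representations} to get orthonormalisability of the local model is exactly the right input.

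There is, however, one step in your elaboration that does not work as stated: the resolution of what you call the main obstacle. You assert that $\underline{\omega}_{n,r}^{\bfitw_3^{-1}\bfitw\kappa_{\calU},\circ}$ is a topological direct summand of $h_{n,*}\scrA_{\bfitw,\kappa_{\calU}}^{r,\circ}$, ``the complement being spanned by the non-trivial isotypic pieces.'' No such isotypic decomposition exists here: $\Iw_{\GSp_4,n}^+$ is a pro-$p$ group acting continuously on a $p$-adically complete $\widehat{\scrO}^+\widehat{\otimes}R_{\calU}^{\circ}$-module, and invariants under such an action are not in general a direct summand, since one cannot average over the group (its ``order'' is not invertible in $R_{\calU}^{\circ}$). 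The mechanism that actually makes the DRW argument go through is different. The $\adicIw_{H,n}^+$-torsor $\adicIW_{H,n}^+$, although written down over $\calX_{\Gamma(p^{\infty}),\bfitw,(r,r)}^{\tor}$, is defined by the $n$-compatibility condition with respect to $\{\fraks_i^{\bfitw,\vee},\fraks_i^{\bfitw}\}$, which only involves the Hodge--Tate exact sequence modulo $p^n$; this data, and hence the torsor itself, descends to the finite level $\calX_{n,\bfitw,(r,r)}^{\tor}$ (after a finite Kummer \'etale cover, from which one descends by a finite, not profinite, group action). One then covers $\calX_{n,\bfitw,(r,r)}^{\tor}$ by affinoids trivialising the descended torsor; over each member the sheaf of $\bfitw\kappa_{\calU}$-equivariant functions is identified with $A_{\bfitw_3^{-1}\bfitw\kappa_{\calU}}^{r,\circ}(\Iw_{H,1}^+,R_{\calU})\widehat{\otimes}\scrO^+$, and Corollary~\ref{Corollary: structure theorem of analytic representations} gives the required orthonormalisable integral model. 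In other words, the $\Iw_{\GSp_4,n}^+$-invariants are never analysed as an abstract submodule of an orthonormalisable module; they are computed as sections of a locally trivial torsor living at finite level. With that replacement your verification of the admissibility axioms goes through and agrees with the cited argument.
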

\begin{proof}
    By Theorem \ref{Theorem: comparison theorem for automorphic sheaves}, we have to show that $\underline{\widetilde{\omega}}_{n, r}^{\bfitw_3^{-1}\bfitw\kappa_{\calU}}$ is an admissible Banach sheaf. The proof is exactly the same as \cite[Lemma 3.3.8 \& Lemma 3.3.10]{DRW}. 
\end{proof}

\begin{Remark}\label{Remark: pro-Kummer \'etale automorphic sheaf}
Thanks to Corollary \ref{Corollary: structure of an admissible Banach sheaf}, we can consider the $p$-adically completed pullback of the automorphic sheaf $\underline{\omega}_{n, r}^{\bfitw_3^{-1}\bfitw \kappa_{\calU}}$ to the pro-Kummer \'etale site $\calX_{n, \bfitw, (r, r), \proket}^{\tor}$; namely, we consider \[
        \widehat{\underline{\omega}}_{n, r}^{\bfitw_3^{-1}\bfitw \kappa_{\calU}} \coloneq \left(\varprojlim_{n} \underline{\omega}_{n, r}^{\bfitw_3^{-1}\bfitw \kappa_{\calU}, \circ} \otimes_{\scrO_{\calX_{n, \bfitw, (r, r)}^{\tor}}} \scrO_{\calX_{n, \bfitw, (r, r)}^{\tor}, \proket}^+/p^n\right)\Big[\frac{1}{p}\Big].
    \]
    This pro-Kummer \'etale incarnation of the automorphic sheaves will play a crucial role in the construction of the overconvergent Eichler--Shimura morphisms in \S \ref{section: OES}. 
\end{Remark}

\subsection{Hecke operators}\label{subsection: Hecke operators}

In this subsection, we discuss the Hecke operators acting on the cohomology of the overconvergent automorphic sheaves constructed in \S \ref{subsection: overconvergent Siegel modular forms: perfectoid} and \S \ref{subsection: overconvergent Siegel modular forms: torsors}. We start with explicit descriptions of the $U_p$-operators.

Recall the matrices \[
    \bfitu_{p, 0} \coloneq \begin{pmatrix}1 &&& \\ & 1 && \\ && p & \\ &&& p\end{pmatrix}, \quad \bfitu_{p, 1} \coloneq \begin{pmatrix}1 &&& \\ & p && \\ && p & \\ &&& p^2\end{pmatrix}, \quad \text{ and }\quad \bfitu_p \coloneq \bfitu_{p,0}\bfitu_{p, 1} = \begin{pmatrix} 1 &&& \\ & p && \\ && p^2 & \\ &&& p^3\end{pmatrix}.
\] These matrices act on $\calX_{\Gamma(p^{\infty})}^{\tor}$ via the $\GSp_{2g}(\Q_p)$-action on $\calX_{\Gamma(p^{\infty})}^{\tor}$. These actions can be described explicitly via the coordinates.

\begin{Lemma}\label{Lemma: dynamics of U_p-operators}
    Given $\bfitw\in W^H$ and $m, n\in \Q_{\geq 0}$, consider $\calX_{\Gamma(p^{\infty}), \bfitw, (m, n)}^{\tor}$ and its coordinate $\begin{pmatrix}\one_2 & \\ \frakz & \one_2\end{pmatrix} \bfitw$. For any $\bfitu \in \{\bfitu_{p, 0}, \bfitu_{p,1}, \bfitu_p\}$, let $\bfitu^{\bfitw, *} \frakz$ denote the coordinate after applying the $\bfitu$-action to the coordinate $\frakz$. \begin{itemize}
        \item When $\bfitw = \bfitw_3$, we have \[
            \bfitu_{p,0}^{\bfitw_3, *}\frakz = p\frakz \quad \text{ and }\quad \bfitu_{p,1}^{\bfitw_3, *}\frakz = \begin{pmatrix}p\frakz_{22}^+ & -p^2\frakz_{12}^+\\ -\frakz_{21}^+ & p\frakz_{22}^+\end{pmatrix}.
        \]
        Thus, $(\calX_{\Gamma(p^{\infty}), \bfitw_3, (m, n)}^{\tor})\bfitu_p \subset \calX_{\Gamma(p^{\infty}), \bfitw_3, (m+1, n+1)}^{\tor}$. 
        \item When $\bfitw = \bfitw_2$, we have \[
            \bfitu_{p,0}^{\bfitw_2, *}\frakz = \begin{pmatrix}\frakz_{22}^+ & -p\frakz_{12}^+\\ -p^{-1}\frakz_{21}^+ & \frakz_{22}^+\end{pmatrix} \quad \text{ and }\quad \bfitu_{p,1}^{\bfitw_2, *}\frakz = \begin{pmatrix}p\frakz_{22}^+ & -p^2\frakz_{12}^+\\ -\frakz_{21}^+ & p\frakz_{22}^+\end{pmatrix}.
        \]Thus, $(\calX_{\Gamma(p^{\infty}), \bfitw_2, (m, n)}^{\tor})\bfitu_p \subset \calX_{\Gamma(p^{\infty}), \bfitw_2, (m+1, n-1)}^{\tor}$.
        \item When $\bfitw = \bfitw_1$, we have \[
            \bfitu_{p,0}^{\bfitw_1, *}\frakz = \begin{pmatrix}\frakz_{22}^+ & -p\frakz_{12}^+\\ -p^{-1}\frakz_{21}^+ & \frakz_{22}^+\end{pmatrix} \quad \text{ and }\quad \bfitu_{p,1}^{\bfitw_1, *}\frakz = \begin{pmatrix}p^{-1}\frakz_{22}^+ & -\frakz_{12}^+\\ -p^{-2}\frakz_{21}^+ & p^{-1}\frakz_{22}^+\end{pmatrix} .
        \] Thus, $(\calX_{\Gamma(p^{\infty}), \bfitw_1, (m, n)}^{\tor})\bfitu_p \subset \calX_{\Gamma(p^{\infty}), \bfitw_1, (m+1, n-3)}^{\tor}$.
        \item When $\bfitw = \one_4$, we have \[
            \bfitu_{p,0}^{*}\frakz = p^{-1}\frakz \quad \text{ and }\quad \bfitu_{p,1}^{*}\frakz = \begin{pmatrix}p^{-1}\frakz_{22}^+ & -\frakz_{12}^+\\ -p^{-2}\frakz_{21}^+ & p^{-1}\frakz_{22}^+\end{pmatrix} .
        \] Thus, $(\calX_{\Gamma(p^{\infty}), \one_4, (m, n)}^{\tor})\bfitu_p \subset \calX_{\Gamma(p^{\infty}), \one_4, (m-3, n-3)}^{\tor}$.
    \end{itemize}
\end{Lemma}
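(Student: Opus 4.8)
The plan is to push everything down to the flag variety, where the whole statement becomes a finite matrix computation. By Definition~\ref{Definition: w-loci of Siegel modular varieties} one has $\calX^\tor_{\Gamma(p^\infty),\bfitw,(m,n)}=\pi_{\HT}^{-1}(\adicFL_{\bfitw,(m,n)})$, the coordinate $\frakz$ is by construction $\pi_{\HT}^{*}\bfitz$, and $\pi_{\HT}$ is equivariant for the right $\GSp_4(\Q_p)$-actions on its source and target (see the end of \S\ref{subsection: Siegel threefolds}). Hence it suffices to (i) compute the effect of right translation by $\bfitu\in\{\bfitu_{p,0},\bfitu_{p,1},\bfitu_p\}$ on the coordinate $\bfitz$ of $\adicFL_{\bfitw}$, in the chart $\begin{pmatrix}\one_2 & \\ \bfitz & \one_2\end{pmatrix}\bfitw$ of Remark~\ref{Remark: alternative coordinate systems for analytic flag variety}, and (ii) track the resulting change of the $\bfitw$-loci; applying $\pi_{\HT}^{-1}$ and equivariance then transports the answers to $\calX^\tor_{\Gamma(p^\infty)}$.

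For (i), I would represent a point of $\adicFL_{\bfitw}$ by the coset $P_{\Si}\begin{pmatrix}\one_2 & \\ \bfitz & \one_2\end{pmatrix}\bfitw$. Each $\bfitu$ lies in $T_{\GSp_4}(\Q_p)$ (directly from Remark~\ref{Remark: appearance of T_GSp}) and $\bfitw$ normalises $T_{\GSp_4}$, so $\bfitw\bfitu\bfitw^{-1}$ is again diagonal; writing it as $\begin{pmatrix}\mathbf{a} & \\ & \mathbf{d}\end{pmatrix}$ with $\mathbf{a},\mathbf{d}$ diagonal $2\times2$ blocks, it lies in $T_{\GSp_4}\subset P_{\Si}$, whence
\[
\begin{pmatrix}\one_2 & \\ \bfitz & \one_2\end{pmatrix}\bfitw\,\bfitu=\begin{pmatrix}\one_2 & \\ \bfitz & \one_2\end{pmatrix}\begin{pmatrix}\mathbf{a} & \\ & \mathbf{d}\end{pmatrix}\bfitw=\begin{pmatrix}\mathbf{a} & \\ & \mathbf{d}\end{pmatrix}\begin{pmatrix}\one_2 & \\ \mathbf{d}^{-1}\bfitz\mathbf{a} & \one_2\end{pmatrix}\bfitw .
\]
Thus, modulo $P_{\Si}$ on the left, $\bfitu^{\bfitw,*}\bfitz=\mathbf{d}^{-1}\bfitz\mathbf{a}$, an entrywise rescaling of $\bfitz$ by powers of $p$; it again satisfies the symmetry relations defining the chart, since right translation by $T_{\GSp_4}\subset B_{\GSp_4}$ preserves the Bruhat cell $\Fl_{\bfitw}$. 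Since $\bfitu_p=\bfitu_{p,0}\bfitu_{p,1}$ and all three maps $\bfitu^{\bfitw,*}$ are diagonal rescalings, their scaling factors simply multiply, so it is enough to treat $\bfitu_{p,0}$ and $\bfitu_{p,1}$ and then multiply.

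The remaining computation is short and explicit: for each $\bfitw\in\{\one_4,\bfitw_1,\bfitw_2,\bfitw_3\}$ one conjugates $\bfitu_{p,0}=\diag(1,1,p,p)$ and $\bfitu_{p,1}=\diag(1,p,p,p^2)$ by the signed permutation matrices of \eqref{eq: explicit Weyl elements} to read off $\mathbf{a},\mathbf{d}$ and substitutes into $\mathbf{d}^{-1}\bfitz\mathbf{a}$ with $\bfitz=\begin{pmatrix}z_{22}^+ & -z_{12}^+ \\ -z_{21}^+ & z_{22}^+\end{pmatrix}$ (e.g. $\bfitw_3\bfitu_{p,0}\bfitw_3^{-1}=\diag(p,p,1,1)$ gives $\bfitu_{p,0}^{\bfitw_3,*}\bfitz=p\bfitz$, and $\bfitw_3\bfitu_{p,1}\bfitw_3^{-1}=\diag(p,p^2,1,p)$ gives the displayed formula; the other cases are identical). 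For (ii), recall from \S\ref{subsection: flag variety} and Remark~\ref{Remark: alternative coordinate systems for analytic flag variety} that, in this chart, the coordinates appearing with a non-strict bound $|z^+_{\ast}|\le1$ (the closed balls $\calN_{\alpha,m}$, attached to roots of $\Phi_{\GSp_4}^+\cap(\bfitw^{-1}\Phi^{-,H})$) are refined in $\adicFL_{\bfitw,(m,n)}$ to $|z^+_{\ast}|\le p^{-m}$, while those with a strict bound $|z^+_{\ast}|<1$ (the open balls $\calN_{\alpha,n}^\circ$, attached to roots of $\Phi_{\GSp_4}^-\cap(\bfitw^{-1}\Phi^{-,H})$) are refined to $|z^+_{\ast}|<p^{-n}$. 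Reading off the exponents of $p$ in the formulas for $\bfitu_p^{\bfitw,*}\bfitz$ then gives the claimed shifts: on $\adicFL_{\bfitw_3}$ every coordinate is $m$-governed and scaled by $p^{e}$ with $e\ge1$, so $m\mapsto m+1$ (and $n$ is immaterial); on $\adicFL_{\one_4}$ every coordinate is $n$-governed and scaled by $p^{-e}$ with $e\le3$, so $n\mapsto n-3$; similarly one gets $m\mapsto m+1$, $n\mapsto n-3$ for $\bfitw_1$ and $m\mapsto m+1$, $n\mapsto n-1$ for $\bfitw_2$. Pulling back along $\pi_{\HT}$ yields the stated inclusions of loci on $\calX^\tor_{\Gamma(p^\infty)}$.

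I do not expect a conceptual obstacle: the content is all explicit linear algebra over $\C_p$. The one step demanding care is pure bookkeeping --- keeping the two coordinate systems on $\adicFL_{\bfitw}$ (the one in Lemma~\ref{Lemma: coordinates on tublar nbds of Bruhat cells} and the one in Remark~\ref{Remark: alternative coordinate systems for analytic flag variety}) aligned, so that one correctly identifies, on each $\adicFL_{\bfitw_i}$, which of $z_{22}^+,z_{12}^+,z_{21}^+$ is governed by $m$ (closed ball) and which by $n$ (open ball). Once that dictionary is fixed, the exponents produced in step (i) translate mechanically into the shifts of $(m,n)$ in step (ii).
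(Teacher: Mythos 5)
Your proposal is correct and is precisely the "direct computation" that the paper leaves implicit: reduce to the flag variety via the $\GSp_4(\Q_p)$-equivariance of $\pi_{\HT}$, write $\bfitw\bfitu\bfitw^{-1}=\diag(\mathbf{a},\mathbf{d})\in T_{\GSp_4}\subset P_{\Si}$ so that $\bfitu^{\bfitw,*}\bfitz=\mathbf{d}^{-1}\bfitz\mathbf{a}$, and then read off the shifts of $(m,n)$ from which entries of $\bfitz$ are governed by closed versus open balls in the chart of Remark \ref{Remark: alternative coordinate systems for analytic flag variety}. I verified the conjugations and the resulting scalings in all four cases (including that $a_1d_2=a_2d_1=\tau_0$, so the symmetry of $\bfitz$ is preserved), and they reproduce exactly the displayed formulas and inclusions.
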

\begin{proof}
    The statements follow from direct computations. 
\end{proof}

Given $\bfitw\in W^H$, a weight $(R_{\calU}, \kappa_{\calU})$, $r\in \Q_{\geq 0}$, and $n\in \Z_{>0}$ such that $n\geq r >1+r_{\calU}$, consider the loci \begin{equation}\label{eq: some technical loci}
    \begin{array}{rl}
        \calX_{n, \leq \bfitw}^{\tor} & \coloneq h_n\left( \pi_{\HT}^{-1}(\adicFL_{\leq \bfitw}) \right),\\
        \calX_{n, \geq \bfitw}^{\tor} & \coloneq h_n\left( \pi_{\HT}^{-1}(\adicFL_{\geq \bfitw}) \right),\\
        \calZ_{n, \bfitw} & \coloneq (\overline{\calX_{n, \leq \bfitw}^{\tor}})\bfitu_p^{-n-1} \cap (\calX_{n, \geq \bfitw}^{\tor})\bfitu_p^{n+1},\\
        \calX^{\tor, \bfitu_p}_{n, \bfitw} & \coloneq (\calX_{n, \geq \bfitw}^{\tor}) \bfitu_p^{n+1}.
    \end{array}
\end{equation}
By the discussion in \cite[\S 6.4.1]{BP-HigherColeman}, we know that $\calZ_{n,\bfitw} \subset \calX_{n, \bfitw, (r, r)}^{\tor}$. In particular, the automorphic sheaf $\underline{\omega}_{n, r}^{\bfitw_3^{-1}\bfitw \kappa_{\calU}}$ is defined in an open neighbourhood of $\calZ_{n,\bfitw}$. We consider the cohomology with supports $R\Gamma_{\calZ_{n, \bfitw}} (\calX^{\tor, \bfitu_p}_{n, \bfitw}, \underline{\omega}_{n, r}^{\bfitw_3^{-1}\bfitw \kappa_{\calU}}) \in \mathrm{D}(R_{\calU})$. Here we have abused the notation in the sense of Remark \ref{Remark: change of ambient spaces}; namely, we define
\[R\Gamma_{\calZ_{n, \bfitw}} (\calX^{\tor, \bfitu_p}_{n, \bfitw}, \underline{\omega}_{n, r}^{\bfitw_3^{-1}\bfitw \kappa_{\calU}}): =R\Gamma_{\calZ_{n, \bfitw}} (\calX^{\tor, \bfitu_p}_{n, \bfitw}\cap \calX_{n, \bfitw, (r, r)}^{\tor}, \underline{\omega}_{n, r}^{\bfitw_3^{-1}\bfitw \kappa_{\calU}}).\]
By (\ref{eq: change of ambient spaces}), there is a natural identification
\begin{equation}\label{eq: aut; change of ambient space}
    R\Gamma_{\calZ_{n, \bfitw}} (\calX^{\tor, \bfitu_p}_{n, \bfitw}, \underline{\omega}_{n, r}^{\bfitw_3^{-1}\bfitw \kappa_{\calU}})\cong R\Gamma_{\calZ_{n, \bfitw}} (\calX_{n, \bfitw, (r, r)}^{\tor}, \underline{\omega}_{n, r}^{\bfitw_3^{-1}\bfitw \kappa_{\calU}}).
\end{equation}
If $(R_{\calU}, \kappa_{\calU})$ is an affinoid weight, we know from \cite[Theorem 6.4.3]{BP-HigherColeman} that $
    R\Gamma_{\calZ_{n, \bfitw}} (\calX^{\tor, \bfitu_p}_{n, \bfitw}, \underline{\omega}_{n, r}^{\bfitw_3^{-1}\bfitw \kappa_{\calU}})$ is represented by an object in $\mathrm{Pro}_{\Z_{\geq 0}}(\mathrm{K}^{\proj}(\Ban(R_{\calU})))$.

\begin{Lemma}\label{Lemma: cohomology independent of toroidal compactification}
    Given $\bfitw$, $(R_{\calU}, \kappa_{\calU})$, $r$, $n$ as above, the complex $R\Gamma_{\calZ_{n, \bfitw}} (\calX^{\tor, \bfitu_p}_{n, \bfitw}, \underline{\omega}_{n, r}^{\bfitw_3^{-1}\bfitw \kappa_{\calU}})$ is independent of the choice of $\Sigma$ in the toroidal compactification. 
\end{Lemma}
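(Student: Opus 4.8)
The statement asserts that the cohomology with supports $R\Gamma_{\calZ_{n, \bfitw}}(\calX^{\tor, \bfitu_p}_{n, \bfitw}, \underline{\omega}_{n, r}^{\bfitw_3^{-1}\bfitw\kappa_{\calU}})$ does not depend on the cone decomposition $\Sigma$ used to build the toroidal compactification. The overall strategy is to reduce to the corresponding independence statement for the \emph{classical} automorphic sheaves, which is already available in the literature (\cite[Theorem 4.1.8]{BP-HigherColeman}, reproduced here as the assertion used in \S\ref{subsection: classical Siegel modular forms} that $R\Gamma(X_{\Gamma_p}^{\Sigma, \tor}, \underline{\omega}^k)$ is independent of $\Sigma$), and then to pass from the classical coefficients to the overconvergent (analytic) coefficients via the structure of $\underline{\omega}_{n, r}^{\bfitw_3^{-1}\bfitw\kappa_{\calU}}$ as an admissible Banach sheaf built (Corollary \ref{Corollary: structure theorem of analytic representations}, Corollary \ref{Corollary: structure of an admissible Banach sheaf}) out of the analytic representation $A_{\bfitw_3^{-1}\bfitw\kappa_{\calU}}^{r,\circ}(\Iw_{H,1}^+, R_{\calU}) \cong \widehat{\bigoplus}_i R_{\calU}^{\circ} e_i^{(r)}$, a completed direct sum of copies of the structure sheaf twisted by the automorphy action.

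\textbf{Key steps, in order.} First I would recall that for a morphism of toroidal compactifications $X^{\Sigma', \tor} \to X^{\Sigma, \tor}$ associated to a refinement of cone decompositions, the induced map on adic spaces is proper and the higher direct images of $\scrO^+$ (up to bounded $p$-torsion) vanish; this is exactly the input behind \cite[Theorem 4.1.8]{BP-HigherColeman} and its integral refinement. Second, I would note that all the loci involved --- $\calX^{\tor,\bfitu_p}_{n,\bfitw}$, $\calZ_{n,\bfitw}$, and the ambient $\calX^{\tor}_{n,\bfitw,(r,r)}$ --- are defined as preimages under $\pi_{\HT}$ (equivalently, under $h_n$) of loci on $\adicFL$, and the pullback of these loci along the map between two toroidal compactifications is compatible; so the support condition and the ambient open are "transverse" to the choice of $\Sigma$ and match up under the refinement morphism. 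Third, I would invoke the comparison $\underline{\omega}_{n,r}^{\bfitw_3^{-1}\bfitw\kappa_{\calU}} \cong \widetilde{\underline{\omega}}_{n,r}^{\bfitw_3^{-1}\bfitw\kappa_{\calU}}$ of Theorem \ref{Theorem: comparison theorem for automorphic sheaves} together with Remark \ref{Remark: comparison with classical automorphic sheaf}, which exhibits the overconvergent sheaf (integrally) as a $p$-adically completed colimit/limit of sheaves of the form $\underline{\omega}_{n,r,\alg}^{\bfitw_3^{-1}\bfitw k} \cong \underline{\omega}^{\bfitw_3^{-1}\bfitw k}|_{\calX^{\tor}_{n,\bfitw,(r,r)}}$ restricted from classical automorphic bundles. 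Concretely, $\underline{\omega}_{n,r}^{\bfitw_3^{-1}\bfitw\kappa_{\calU}, \circ}$ is, after trivialising the $\adicIW^+_{H,1}$-torsor locally, an $A^{r,\circ}$-twist of $\scrO^+$, and $A^{r,\circ}$ is a completed direct sum of free rank-one modules; each summand contributes a copy of a classical automorphic bundle up to an $R_{\calU}^\circ$-twist (the monomials $e_i^{(r)}$ are polynomial in the torsor coordinates). Fourth, I would push the $\Sigma$-independence through: for cohomology with supports one uses the standard triangle $R\Gamma_{\calZ} \to R\Gamma \to R\Gamma_{\text{complement}}$, each term of which is $\Sigma$-independent for classical coefficients; then one takes the derived limit over $n$ (the degrees are uniformly bounded, so everything lives in $\mathrm{Pro}_{\Z_{\geq 0}}(\mathrm{K}^{\proj}(\Ban(R_{\calU})))$ per \cite[Theorem 6.4.3]{BP-HigherColeman} and commutes with the comparison maps), and invokes that derived global sections of an admissible Banach sheaf compute as the completion of the classical ones. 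Finally, by \eqref{eq: aut; change of ambient space} (the quasi-isomorphism $R\Gamma_{\calZ_{n,\bfitw}}(\calX^{\tor,\bfitu_p}_{n,\bfitw}, -) \cong R\Gamma_{\calZ_{n,\bfitw}}(\calX^{\tor}_{n,\bfitw,(r,r)}, -)$) it suffices to prove the statement for the second ambient space, which is the more symmetric one.

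\textbf{The main obstacle.} The delicate point is that $R\Gamma_{\calZ_{n,\bfitw}}$ involves \emph{closed} support conditions and higher-rank points (the loci $\calX^{\tor}_{n,\bfitw,(r,r)}$ etc.\ are generally only topological spaces, not honest adic spaces, as flagged after Lemma \ref{Lemma: coordinates on tublar nbds of Bruhat cells}); so one cannot naively invoke proper base change. The way around this, following the machinery of \cite[\S 5, \S 6]{BP-HigherColeman} and the appendix of the present paper, is that the cohomology with supports is computed as a \v{C}ech-type or dagger-type complex assembled from cohomology of honest quasi-compact opens inside $\calX^{\tor}_{n,\bfitw,(r,r)}$ and its closure, each of which is $\Sigma$-independent by the classical result plus the $p$-adic completion argument above; the support condition only reshuffles which opens enter. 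So the real content is verifying that the refinement morphism $\calX^{\Sigma',\tor} \to \calX^{\Sigma,\tor}$ induces an isomorphism on the relevant spectral-sequence or mapping-fiber complexes computing $R\Gamma_{\calZ_{n,\bfitw}}$, which I expect to follow formally once one observes: (a) the refinement morphism restricts to an isomorphism over the "open" (non-boundary) part $\calX_n$, so it only modifies the boundary $\calD_{n,\bfitw,(r,r)}$; and (b) proper pushforward along it kills higher cohomology of $\scrO^+$ up to bounded torsion, uniformly in the twisting weight --- precisely the input used to prove \cite[Theorem 4.1.8]{BP-HigherColeman}, and which one applies here summand-by-summand in the completed direct sum defining $\underline{\omega}_{n,r}^{\bfitw_3^{-1}\bfitw\kappa_{\calU}}$, then completes.
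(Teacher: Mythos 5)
Your proposal is correct and follows essentially the same route as the paper: reduce via \eqref{eq: aut; change of ambient space} to the ambient space $\calX_{n,\bfitw,(r,r)}^{\tor}$, use the commutativity of the Hodge--Tate period maps to match the support loci under the refinement morphism $\pi^{\Sigma}_{\Sigma'}$, and then exploit the admissible Banach sheaf structure to reduce, via local trivialisation as a completed direct sum of copies of $\scrO^+\widehat{\otimes}R_{\calU}^{\circ}$ and a Mittag--Leffler limit argument, to the vanishing $R^i\pi^{\Sigma}_{\Sigma',*}\scrO^+=0$ for $i>0$ (Lan's result). The only cosmetic difference is that the paper proves $R^i\pi^{\Sigma}_{\Sigma',*}\underline{\omega}_{n,r}^{\bfitw_3^{-1}\bfitw\kappa_{\calU}}=0$ directly rather than routing through the classical $\Sigma$-independence statement, which makes the worries in your final paragraph about support conditions and \v{C}ech complexes unnecessary.
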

\begin{proof}
It suffices to prove the statement for $R\Gamma_{\calZ_{n, \bfitw}} (\calX_{n, \bfitw, (r, r)}^{\tor}, \underline{\omega}_{n, r}^{\bfitw_3^{-1}\bfitw \kappa_{\calU}})$. Suppose $\Sigma$ and $\Sigma'$ are admissible cone decompositions such that $\Sigma$ is a refinement of $\Sigma'$. There is a natural morphism \[
        \pi^{\Sigma}_{\Sigma'}: \calX_n^{\Sigma, \tor} \rightarrow \calX_n^{\Sigma', \tor}
    \] which induces, at the infinite level, a commutative diagram \[
        \begin{tikzcd}
            \calX_{\Gamma(p^{\infty})}^{\Sigma, \tor}\arrow[r, "\pi_{\HT}^{\Sigma}"] \arrow[d, "\pi^{\Sigma}_{\Sigma'}"'] & \adicFL\\
            \calX_{\Gamma(p^{\infty})}^{\Sigma', \tor} \arrow[ru, "\pi_{\HT}^{\Sigma'}"']
        \end{tikzcd}.
    \]
 We consider loci $\calX_{n, \bfitw, (m, n)}^{\Sigma', \tor}$ and $\calZ_{n,\bfitw}^{\Sigma'}$ in a similar way as above. There is an isomorphism \[
        R\Gamma_{\calZ_{n,\bfitw}^{\Sigma}} (\calX_{n, \bfitw, (m, n)}^{\Sigma, \tor}, \underline{\omega}_{n, r}^{\bfitw_3^{-1}\bfitw \kappa_{\calU}}) \cong R\Gamma_{\calZ_{n,\bfitw}^{\Sigma'}} (\calX_{n, \bfitw, (m, n)}^{\Sigma', \tor}, R\pi^{\Sigma}_{\Sigma', *}\underline{\omega}_{n, r}^{\bfitw_3^{-1}\bfitw \kappa_{\calU}}).
    \] We claim that \[
        R^i\pi^{\Sigma}_{\Sigma', *}\underline{\omega}_{n, r}^{\bfitw_3^{-1}\bfitw \kappa_{\calU}} = 0 
    \]
    for all $i>0$. By Corollary \ref{Corollary: structure of an admissible Banach sheaf}, after restricting to an affinoid open $\Spa(R, R^+)$,  we may assume there is a trivialisation \[
        \underline{\omega}_{n, r}^{\bfitw_3^{-1}\bfitw\kappa_{\calU}}|_{\Spa(R, R)} \cong \widehat{\bigoplus}\scrO_{\calX_{n, \bfitw, (r, r)}^{\Sigma, \tor}}|_{\Spa(R, R)} \widehat{\otimes} R_{\calU}.
    \] Since the assertion is local, it reduces to show that \[
        R^i \pi^{\Sigma}_{\Sigma', *} \left(\widehat{\bigoplus}\scrO_{\calX_{n, \bfitw, (r, r)}^{\Sigma, \tor}}|_{\Spa(R, R)} \widehat{\otimes} R_{\calU} \right)= 0 \text{ for }i>0.
    \] 
    Note that \begin{align*}
        R^i \pi^{\Sigma}_{\Sigma', *} \left(\widehat{\bigoplus}\scrO_{\calX_{n, \bfitw, (r, r)}^{\Sigma, \tor}} \widehat{\otimes} R_{\calU} \right)& = R^i \pi^{\Sigma}_{\Sigma', *} \left(\varprojlim_n \left(\bigoplus \scrO^+_{\calX_{n, \bfitw, (r, r)}^{\Sigma, \tor}} \widehat{\otimes} R_{\calU}^{\circ}\right)/p^n\right)\Big[\frac{1}{p}\Big]\\
        & = \left(R^i \pi^{\Sigma}_{\Sigma', *} \varprojlim_n \left(\bigoplus \scrO^+_{\calX_{n, \bfitw, (r, r)}^{\Sigma, \tor}} \widehat{\otimes} R_{\calU}^{\circ}\right)/p^n\right)\Big[\frac{1}{p}\Big] \\
        & = \left(\varprojlim_n R^i \pi^{\Sigma}_{\Sigma', *} \left(\left(\bigoplus \scrO^+_{\calX_{n, \bfitw, (r, r)}^{\Sigma, \tor}} \widehat{\otimes} R_{\calU}^{\circ}\right)/p^n\right)\right)\Big[\frac{1}{p}\Big]\\
        & = \left(\varprojlim_n \bigoplus R^i \pi^{\Sigma}_{\Sigma', *} \left(\left(\scrO^+_{\calX_{n, \bfitw, (r, r)}^{\Sigma, \tor}} \widehat{\otimes} R_{\calU}^{\circ}\right)/p^n\right)\right)\Big[\frac{1}{p}\Big],
    \end{align*}
    where the second equation follows from the fact that localisation commutes with cohomology, the third equation follows from the fact that $\left\{\left(\bigoplus \scrO^+_{\calX_{n, \bfitw, (r, r)}^{\Sigma, \tor}} \widehat{\otimes} R_{\calU}^{\circ}\right)/p^n\right\}_{n\in \Z_{> 0}}$ is Mittag--Leffler, and the fourth equation follows from the fact that cohomology commutes with direct sum. Hence, if one shows that $R^i \pi^{\Sigma}_{\Sigma', *} \left(\left(\scrO^+_{\calX_{n, \bfitw, (r, r)}^{\Sigma, \tor}} \widehat{\otimes} R_{\calU}^{\circ}\right)/p^n\right)=0$ for $i>0$, then we are done.

    Consider the short exact sequence \[
        0 \rightarrow \scrO^+_{\calX_{n, \bfitw, (r, r)}^{\Sigma, \tor}} \widehat{\otimes} R_{\calU}^{\circ} \xrightarrow{\times p^n} \scrO^+_{\calX_{n, \bfitw, (r, r)}^{\Sigma, \tor}} \widehat{\otimes} R_{\calU}^{\circ} \rightarrow \left(\scrO^+_{\calX_{n, \bfitw, (r, r)}^{\Sigma, \tor}} \widehat{\otimes} R_{\calU}^{\circ}\right)/p^n \rightarrow 0
    \] By applying $R \pi^{\Sigma}_{\Sigma', *}$, we obtain an exact sequence \[
        R^i \pi^{\Sigma}_{\Sigma', *} \scrO^+_{\calX_{n, \bfitw, (r, r)}^{\Sigma, \tor}} \widehat{\otimes} R_{\calU}^{\circ} \rightarrow R^i \pi^{\Sigma}_{\Sigma', *} \left(\left(\scrO^+_{\calX_{n, \bfitw, (r, r)}^{\Sigma, \tor}} \widehat{\otimes} R_{\calU}^{\circ}\right)/p^n \right)\rightarrow R^{i+1} \pi^{\Sigma}_{\Sigma', *} \scrO^+_{\calX_{n, \bfitw, (r, r)}^{\Sigma, \tor}} \widehat{\otimes} R_{\calU}^{\circ}.
    \]
    However, we have \[R^i \pi^{\Sigma}_{\Sigma', *} \scrO^+_{\calX_{n, \bfitw, (r, r)}^{\Sigma, \tor}} \widehat{\otimes} R_{\calU}^{\circ} = \left(R^i \pi^{\Sigma}_{\Sigma', *} \scrO^+_{\calX_{n, \bfitw, (r, r)}^{\Sigma, \tor}} \right)\widehat{\otimes} R_{\calU}^{\circ}.\] Indeed, if $(R_{\calU}, \kappa_{\calU})$ is an affinoid weight, this follows from that $R_{\calU}^{\circ}$ is flat over $\Z_p$; if $(R_{\calU}, \kappa_{\calU})$ is a small weight, this follows from \cite[Corollary 6.5]{CHJ-2017}. By \cite[Proposition 7.5]{LanIntegralModels} (see also \cite[Proposition 2.4]{Harris-AutomorphicFormsAndBoundaryCohomology}), $R^i \pi^{\Sigma}_{\Sigma', *} \scrO^+_{\calX_{n, \bfitw, (r, r)}^{\Sigma, \tor}}$ vanishes for $i>0$, we thus conclude the result. 
\end{proof}

Let's now define Hecke operators away from $p$. Let $\ell \neq p$ be a prime number. Given $\bfdelta \in \GSp_4(\Q_{\ell})$, recall the correspondence \eqref{eq: correspondence away from p}, which gives rise to the correspondence \[
    \begin{tikzcd}
        & \calX_{\Gamma\Iw_{\GSp_4, n}^+ \cap \bfdelta \Gamma\Iw_{\GSp_4, n}^+ \bfdelta^{-1}}^{\Sigma'', \tor} \arrow[ld, "\pr_2"'] & \calX_{\bfdelta^{-1}\Gamma\Iw_{\GSp_4, n}^+ \bfdelta \cap \Gamma\Iw_{\GSp_4, n}^+}^{\Sigma'' , \tor}\arrow[rd, "\pr_1"] \arrow[l, "\bfdelta"']\\
        \calX_{n}^{\Sigma', \tor} &&& \calX_{n}^{\Sigma, \tor}
    \end{tikzcd}.
\] We define the loci \[
    \begin{array}{cccc}
        \calZ_{n, \bfitw}^{\Sigma}, & \calZ_{n, \bfitw}^{\Sigma'}, & \calZ_{\bfitw, \Gamma\Iw_{\GSp_4, n}^+ \cap \bfdelta \Gamma \Iw_{\GSp_4, n}^+ \bfdelta^{-1}}^{\Sigma''}, & \calZ_{ \bfdelta^{-1}\Gamma\Iw_{\GSp_4, n}^+ \bfdelta \cap  \Gamma \Iw_{\GSp_4, n}^+, \bfitw}^{\Sigma''}\\
        \calX_{n, \bfitw}^{\Sigma, \tor, \bfitu_p}, & \calX_{n, \bfitw}^{\Sigma', \tor, \bfitu_p}, & \calX_{ \Gamma\Iw_{\GSp_4, n}^+ \cap \bfdelta \Gamma \Iw_{\GSp_4, n}^+ \bfdelta^{-1}, \bfitw}^{\Sigma'', , \tor, \bfitu_p}, & \calX_{ \bfdelta^{-1}\Gamma\Iw_{\GSp_4, n}^+ \bfdelta \cap  \Gamma \Iw_{\GSp_4, n}^+, \bfitw}^{\Sigma'', , \tor, \bfitu_p}
    \end{array}
\] in a similar way as before. 

\begin{Lemma}\label{Lemma: supports conditions are compatible with Hecke correspondences away from p}
    We have the following identifications of loci:\begin{enumerate}
        \item[(i)] $\begin{array}{rl}
             \pr_2^{-1} (\calZ_{n, \bfitw}^{\Sigma'}) & = \calZ_{ \Gamma\Iw_{\GSp_4, n}^+ \cap \bfdelta \Gamma \Iw_{\GSp_4, n}^+ \bfdelta^{-1}, \bfitw}^{\Sigma''}  \\
             \pr_1^{-1}(\calZ_{n, \bfitw}^{\Sigma}) & = \calZ_{ \bfdelta^{-1}\Gamma\Iw_{\GSp_4, n}^+ \bfdelta \cap  \Gamma \Iw_{\GSp_4, n}^+, \bfitw}^{\Sigma''} \\
             \bfdelta^{-1}(\calZ_{ \Gamma\Iw_{\GSp_4, n}^+ \cap \bfdelta \Gamma \Iw_{\GSp_4, n}^+ \bfdelta^{-1}, \bfitw}^{\Sigma''}) & = \calZ_{ \bfdelta^{-1}\Gamma\Iw_{\GSp_4, n}^+ \bfdelta \cap  \Gamma \Iw_{\GSp_4, n}^+, \bfitw}^{\Sigma''}
        \end{array}$;
        \item[(ii)] $\begin{array}{rl}
             \pr_2^{-1} (\calX_{n, \bfitw}^{\Sigma', \tor, \bfitu_p}) & = \calX_{ \Gamma\Iw_{\GSp_4, n}^+ \cap \bfdelta \Gamma \Iw_{\GSp_4, n}^+ \bfdelta^{-1}, \bfitw}^{\Sigma'', \tor, \bfitu_p}  \\
             \pr_1^{-1}(\calX_{n, \bfitw}^{\Sigma, , \tor, \bfitu_p}) & = \calX_{ \bfdelta^{-1}\Gamma\Iw_{\GSp_4, n}^+ \bfdelta \cap  \Gamma \Iw_{\GSp_4, n}^+, \bfitw}^{\Sigma'', , \tor, \bfitu_p} \\
             \bfdelta^{-1}(\calX_{ \Gamma\Iw_{\GSp_4, n}^+ \cap \bfdelta \Gamma \Iw_{\GSp_4, n}^+ \bfdelta^{-1}, \bfitw}^{\Sigma'', , \tor, \bfitu_p}) & = \calX_{ \bfdelta^{-1}\Gamma\Iw_{\GSp_4, n}^+ \bfdelta \cap  \Gamma \Iw_{\GSp_4, n}^+, \bfitw}^{\Sigma'', \tor, \bfitu_p}
        \end{array}.$
    \end{enumerate}
\end{Lemma}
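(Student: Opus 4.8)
The plan is to reduce everything to the known description of the Hecke correspondence away from $p$ in \eqref{eq: correspondence away from p} together with the defining formulas \eqref{eq: some technical loci} for the loci $\calZ_{n, \bfitw}$ and $\calX^{\tor, \bfitu_p}_{n, \bfitw}$, plus the compatibility of the Hodge--Tate period map with the right $\GSp_4(\Q_p)$-action. The crucial point is that $\bfdelta \in \GSp_4(\Q_\ell)$ acts trivially at $p$, so the map $\pi_{\HT}$ intertwines the correspondence morphisms $\pr_1, \pr_2, \bfdelta$ with the identity on $\adicFL$; consequently all the loci in question, being defined purely in terms of $\pi_{\HT}^{-1}$ of $\bfitw$-loci on $\adicFL$ and of translates by $\bfitu_p$ (a $p$-adic operator), are simply pulled back or transported isomorphically along these morphisms.

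In more detail, I would first record the commutative diagram relating the Hodge--Tate period maps at the various levels appearing in \eqref{eq: correspondence away from p}: at infinite level, $\pr_1$, $\pr_2$, and multiplication by $\bfdelta$ all lie over the identity of $\adicFL$ because $\bfdelta$ is trivial at $p$ and the prime-to-$p$ level change does not affect $\calX_{\Gamma(p^\infty)}^{\tor}$. Hence for any $\bfitw' \in W^H$ one has $\pr_2^{-1}(\calX_{n, \geq \bfitw'}^{\Sigma', \tor}) = \calX_{\Gamma\Iw^+ \cap \bfdelta\Gamma\Iw^+\bfdelta^{-1}, \geq \bfitw'}^{\Sigma'', \tor}$ and similarly for $\pr_1$ and $\bfdelta$, and likewise for the $\leq \bfitw'$-loci and their closures. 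This handles $\calX^{\tor, \bfitu_p}_{n, \bfitw} = (\calX^{\tor}_{n, \geq \bfitw})\bfitu_p^{n+1}$ directly: the operator $\bfitu_p \in \GSp_4(\Q_p)$ commutes with $\pr_1, \pr_2, \bfdelta$ (as these come from $\GSp_4(\A^{\infty,p})$, acting on a different factor), so applying $\bfitu_p^{n+1}$ before or after the correspondence morphisms gives the same thing, which is precisely assertion (ii).

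For assertion (i), I would observe that $\calZ_{n, \bfitw} = (\overline{\calX_{n, \leq \bfitw}^{\tor}})\bfitu_p^{-n-1} \cap (\calX_{n, \geq \bfitw}^{\tor})\bfitu_p^{n+1}$ is an intersection of two sets each of which is, by the previous paragraph, compatible with $\pr_1$, $\pr_2$, and $\bfdelta$; since preimage and the isomorphism $\bfdelta$ both commute with finite intersections, the three identities in (i) follow formally. The one genuinely careful point — and the main (mild) obstacle — is to make sure that the closures $\overline{\,\cdot\,}$ behave correctly: we need $\pr_2^{-1}(\overline{S}) = \overline{\pr_2^{-1}(S)}$ (and the analogue for $\pr_1$, $\bfdelta$), which holds because $\pr_1, \pr_2$ are finite (hence proper, in particular closed) morphisms of adic spaces with the relevant $\bfitw$-loci being their scheme-theoretic (or adic) preimages, and $\bfdelta$ is an isomorphism. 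I would spell this out by noting that for a closed continuous map $f$ with $f^{-1}(\overline{S}) \supseteq \overline{f^{-1}(S)}$ always, and for a finite surjective $f$ one also gets the reverse inclusion on the loci at hand because the stratification on the source is the pullback of that on the target. With this remark in place, all six equalities are immediate bookkeeping, and the proof is essentially a one-line invocation of the diagram plus the commutation of $\bfitu_p$ with prime-to-$p$ Hecke correspondences.

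I expect the write-up to be short: the substance is entirely in (a) the compatibility of $\pi_{\HT}$ with the two commuting actions and (b) the elementary topology of preimages under finite maps; there is no hard estimate or new construction needed here, only careful tracking of which level and which $\bfitw$-locus each object lives over.
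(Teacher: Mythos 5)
Your argument is essentially the paper's: the proof there simply exhibits the commutative diagram \eqref{eq: Hecke correspondence away from p at infinite level}, in which $\pr_1$, $\pr_2$, and $\bfdelta$ at infinite level all lie over the identity of $\adicFL$ and the bottom quadrilaterals are cartesian, and the six identities follow since all loci are built from $h_n(\pi_{\HT}^{-1}(\cdot))$, translates by $\bfitu_p$ (which commutes with the prime-to-$p$ correspondence), closures, and intersections. The only quibble is your justification of $\pr_i^{-1}(\overline{S})=\overline{\pr_i^{-1}(S)}$: closedness of $\pr_i$ gives the wrong inclusion, and the correct one ($\pr_i^{-1}(\overline{S})\subseteq\overline{\pr_i^{-1}(S)}$ for $S$ a full preimage) follows instead from the openness of the finite Kummer \'etale maps $\pr_i$; the conclusion is unaffected.
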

\begin{proof}
    By varying the level at $p$, we obtain the following commutative diagram \begin{equation}\label{eq: Hecke correspondence away from p at infinite level}
        \begin{tikzcd}[column sep = tiny]
            && \adicFL\\
            & \calX_{\Gamma\Gamma(p^{\infty}) \cap \bfdelta \Gamma\Gamma(p^{\infty}) \bfdelta^{-1}}^{\Sigma'', \tor} \arrow[ld, "\pr_2"']\arrow[dd, "h_n^{\Sigma''}"]\arrow[ru, "\pi_{\HT}^{\Sigma''}"] && \calX_{\bfdelta^{-1}\Gamma\Gamma(p^{\infty}) \bfdelta \cap \Gamma\Gamma(p^{\infty})}^{\Sigma'' , \tor}\arrow[rd, "\pr_1"] \arrow[ll, "\bfdelta"']\arrow[dd, "h_n^{\Sigma''}"]\arrow[lu, "\pi_{\HT}^{\Sigma''}"']\\
            \calX_{\Gamma(p^{\infty})}^{\Sigma', \tor}\arrow[dd, "h_n^{\Sigma'}"]\arrow[rruu, bend left = 30, "\pi_{\HT}^{\Sigma'}"] &&&& \calX_{\Gamma(p^{\infty})}^{\Sigma, \tor}\arrow[dd, "h_n^{\Sigma}"]\arrow[lluu, bend right = 30, "\pi_{\HT}^{\Sigma}"']\\
            & \calX_{\Gamma\Iw_{\GSp_4, n}^+ \cap \bfdelta \Gamma\Iw_{\GSp_4, n}^+ \bfdelta^{-1}}^{\Sigma'', \tor} \arrow[ld, "\pr_2"'] && \calX_{\bfdelta^{-1}\Gamma\Iw_{\GSp_4, n}^+ \bfdelta \cap \Gamma\Iw_{\GSp_4, n}^+}^{\Sigma'' , \tor}\arrow[rd, "\pr_1"] \arrow[ll, "\bfdelta"']\\
            \calX_{n}^{\Sigma', \tor} &&&& \calX_{n}^{\Sigma, \tor}
        \end{tikzcd}.
    \end{equation}
    Note that the bottom quadrilaterals are cartesian. The assertions then follow. 
\end{proof}

\begin{Lemma}\label{Lemma: isomorphism of overconvergent automorphic sheaves in Hecke correspondences away from p}
    Consider the overconvergent automorphic sheaf $\underline{\omega}_{n, r}^{\bfitw_3^{-1}\bfitw \kappa_{\calU}}$. We have an isomorphism of sheaves \[
        \bfdelta^* \pr_2^* \underline{\omega}_{n, r}^{\bfitw_3^{-1}\bfitw \kappa_{\calU}} \cong \pr_1^* \underline{\omega}_{n, r}^{\bfitw_3^{-1}\bfitw \kappa_{\calU}}. 
    \]
\end{Lemma}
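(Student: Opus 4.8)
The plan is to exploit that the prime-to-$p$ Hecke correspondence is compatible with the Hodge--Tate period map, and that $\underline{\omega}_{n, r}^{\bfitw_3^{-1}\bfitw \kappa_{\calU}}$ is, by construction (Definition \ref{Definition: overconvergnet automorphic sheaf via perfectoid method}), the subsheaf of $\Iw_{\GSp_4, n}^+$-invariants inside $h_{n, *}$ of the pullback $\pi_{\HT}^*\scrA_{\kappa_{\calU}, \adicFL_{\bfitw}}^r$ of a pseudoautomorphic sheaf from $\adicFL$. First I would work at infinite level, using the diagram \eqref{eq: Hecke correspondence away from p at infinite level}. Since $\bfdelta \in \GSp_4(\Q_{\ell})$ with $\ell \neq p$, the isomorphism $\bfdelta$ there acts only through the tame level and trivially on the $p$-component; by the $\GSp_4(\Q_p)$-equivariance of $\pi_{\HT}$ recalled in \S\ref{subsection: Siegel threefolds} (in which $\bfdelta$ acts trivially on $\adicFL$) one gets $\pi_{\HT}^{\Sigma''} \circ \bfdelta = \pi_{\HT}^{\Sigma''}$. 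Likewise $\pr_1$ and $\pr_2$ are tame-level maps at fixed $p$-level, so they commute with the period maps. Hence, writing $\widetilde{\pr}_i$, $\widetilde{\bfdelta}$ for the lifts to the infinite-level spaces in \eqref{eq: Hecke correspondence away from p at infinite level},
\[
    \pi_{\HT}^{\Sigma} \circ \widetilde{\pr}_1 \;=\; \pi_{\HT}^{\Sigma''} \;=\; \pi_{\HT}^{\Sigma'}\circ \widetilde{\pr}_2\circ\widetilde{\bfdelta},
\]
so that pulling back $\scrA_{\kappa_{\calU}, \adicFL_{\bfitw}}^r$ along either path yields the same sheaf $(\pi_{\HT}^{\Sigma''})^*\scrA_{\kappa_{\calU}, \adicFL_{\bfitw}}^r$ on the infinite-level cover (after restricting to the $\bfitw$-loci, which are compatible under the correspondence by the argument of Lemma \ref{Lemma: supports conditions are compatible with Hecke correspondences away from p}).

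Next I would descend to finite level. The bottom two squares of \eqref{eq: Hecke correspondence away from p at infinite level} are cartesian, with vertical maps the Galois covers $h_n$ of group $\Iw_{\GSp_4, n}^+$; hence $\widetilde{\pr}_i$ is again an $\Iw_{\GSp_4, n}^+$-torsor map over $\pr_i$, so $\pr_i^*$ commutes with $h_{n, *}$ and with the formation of $\Iw_{\GSp_4, n}^+$-invariants (and $\widetilde{\bfdelta}^*$ is exact, being pullback along an isomorphism). It then remains to check that the twisted $\Iw_{\GSp_4, n}^+$-action \eqref{eq: defining automorphy action} is intertwined: the automorphy factor $\bfitj_{\bfitw}(\bfalpha, \frakz)$ of \eqref{eq: automorphy factor} depends only on $\bfalpha$ and on $\frakz = \pi_{\HT}^*\bfitz$, both of which pull back identically along the two paths, while the Galois action on the cover is the intrinsic one; so $\rho^r_{\bfitw_3^{-1}\bfitw\kappa_{\calU}}(\bfitj_{\bfitw}(\bfalpha,\frakz))\,\bfalpha^*$ matches on the two sides. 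Passing to $\Iw_{\GSp_4, n}^+$-invariants gives the desired canonical isomorphism $\bfdelta^*\pr_2^*\underline{\omega}_{n, r}^{\bfitw_3^{-1}\bfitw\kappa_{\calU}} \cong \pr_1^*\underline{\omega}_{n, r}^{\bfitw_3^{-1}\bfitw\kappa_{\calU}}$. (The discrepancy between the cone decompositions $\Sigma$, $\Sigma'$, $\Sigma''$ is harmless: on the $\bfitw$-loci the overconvergent automorphic sheaves are canonically identified under refinements of $\Sigma$, cf. the proof of Lemma \ref{Lemma: cohomology independent of toroidal compactification}.)

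The main obstacle I anticipate is the base-change/descent step — ensuring that pullback along the finite maps $\pr_i$ genuinely commutes with pushforward along the perfectoid-to-finite-level covers $h_n$ and with taking $\Iw_{\GSp_4, n}^+$-invariants. This is where the cartesianness of the bottom squares of \eqref{eq: Hecke correspondence away from p at infinite level} is essential, so that the pullback of the torsor $h_n$ is again $h_n$ at the appropriate level; once this is in place, the remaining verifications (compatibility of the $\bfitw$-loci, of the automorphy factors, and the handling of $\Sigma$) are routine. The cuspidal and integral variants follow by the same argument applied to $\underline{\omega}_{\cusp, n, r}^{\bfitw_3^{-1}\bfitw\kappa_{\calU}}$ and $\underline{\omega}_{n, r}^{\bfitw_3^{-1}\bfitw\kappa_{\calU}, \circ}$, using that $\pr_i^{-1}$ of the boundary divisor is the boundary divisor.
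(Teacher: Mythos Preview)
Your proposal is correct and follows essentially the same approach as the paper: both arguments hinge on the commutativity of the upper triangles in \eqref{eq: Hecke correspondence away from p at infinite level} and the $\GSp_4(\Q_p)$-equivariance of $\pi_{\HT}$, so that the coordinate $\frakz = \pi_{\HT}^*\bfitz$ (and hence the automorphy factor) pulls back identically along both paths. The paper's proof is a terse two-line sketch of exactly this idea; your version spells out the descent and the compatibility of the $\Iw_{\GSp_4,n}^+$-invariants more carefully than the paper does.
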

\begin{proof}
    Due to the commutativity and the $\GSp_4(\Q_p)$-equivariance of the upper triangles in \eqref{eq: Hecke correspondence away from p at infinite level}, the pullbacks of $\begin{pmatrix}\one_2 & \\ \bfitz & \one_2\end{pmatrix} \bfitw$ via the Hodge--Tate period maps are compatible. This implies the desired result. 
\end{proof}

\begin{Lemma}\label{Lemma: projection formula for Hecke operators}
    The natural morphism \[
        \left(R\pr_{1, *} \scrO_{\calX_{ \bfdelta^{-1}\Gamma\Iw_{\GSp_4, n}^+ \bfdelta \cap  \Gamma \Iw_{\GSp_4, n}^+, \bfitw}^{\Sigma'', \tor, \bfitu_p}}\right) \widehat{\otimes} \underline{\omega}_{n, r}^{\bfitw_3^{-1}\bfitw\kappa_{\calU}} \rightarrow R\pr_{1, *}\pr_1^* \underline{\omega}_{n, r}^{\bfitw_3^{-1}\bfitw\kappa_{\calU}}
    \] is an isomorphism. 
\end{Lemma}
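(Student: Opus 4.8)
The statement is the projection formula for the Hecke correspondence map $\pr_1$ applied to the overconvergent automorphic sheaf, so the plan is to reduce to the projection formula for $\scrO$-modules together with the fact that the automorphic sheaf is, locally on the target, a (completed) direct sum of copies of the structure sheaf. First I would recall from Corollary \ref{Corollary: structure of an admissible Banach sheaf} that $\underline{\omega}_{n, r}^{\bfitw_3^{-1}\bfitw\kappa_{\calU}}$ is an admissible Banach sheaf with integral model $\underline{\omega}_{n, r}^{\bfitw_3^{-1}\bfitw\kappa_{\calU}, \circ}$; in particular, after restricting to a suitable affinoid open $\Spa(R, R^+)$ of $\calX_{n, \bfitw, (r,r)}^{\tor}$, there is a trivialisation $\underline{\omega}_{n, r}^{\bfitw_3^{-1}\bfitw\kappa_{\calU}}|_{\Spa(R, R)} \cong \widehat{\bigoplus}\scrO_{\calX_{n, \bfitw, (r,r)}^{\tor}}|_{\Spa(R, R)}\widehat{\otimes}R_{\calU}$, exactly as used in the proof of Lemma \ref{Lemma: cohomology independent of toroidal compactification}. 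Since the assertion is local on the target $\calX_{n, \bfitw}^{\Sigma, \tor, \bfitu_p}$, it suffices to prove that for the structure sheaf the natural map
\[
\left(R\pr_{1, *}\scrO_{\calX_{\bfdelta^{-1}\Gamma\Iw_{\GSp_4, n}^+\bfdelta \cap \Gamma\Iw_{\GSp_4, n}^+, \bfitw}^{\Sigma'', \tor, \bfitu_p}}\right)\widehat{\otimes}\left(\widehat{\bigoplus}\scrO\,\widehat{\otimes}R_{\calU}\right) \rightarrow R\pr_{1, *}\pr_1^*\left(\widehat{\bigoplus}\scrO\,\widehat{\otimes}R_{\calU}\right)
\]
is an isomorphism, and then glue.

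The key input for the structure-sheaf case is that $\pr_1$ is a finite morphism — indeed, it is finite Kummer \'etale by the discussion in \S\ref{subsection: Siegel threefolds} and by \cite[\S 6.7.4]{FP-Hecke}, since $\bfdelta^{-1}\Gamma\Iw_{\GSp_4, n}^+\bfdelta \cap \Gamma\Iw_{\GSp_4, n}^+$ has finite index in $\Gamma\Iw_{\GSp_4, n}^+$ — so $R\pr_{1, *}$ is just $\pr_{1, *}$ (higher direct images vanish for a finite map of adic spaces) and $\pr_{1, *}\scrO$ is a finite locally free $\scrO$-module. For a finite locally free sheaf, the ordinary projection formula $\pr_{1, *}\scrO \otimes \scrF \cong \pr_{1, *}\pr_1^*\scrF$ holds for any $\scrO$-module $\scrF$; the only subtlety is passing to the completed tensor product and the completed direct sum. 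I would handle this by commuting $\pr_{1, *}$ past the inverse limit defining the completion (using that $\pr_1$ is finite, so $\pr_{1, *}$ is exact and commutes with the relevant limits and with direct sums, via a Mittag--Leffler argument identical to the one in the proof of Lemma \ref{Lemma: cohomology independent of toroidal compactification}), and past $\widehat{\otimes}R_{\calU}$ using flatness of $R_{\calU}^\circ$ over $\Z_p$ (affinoid case) or \cite[Corollary 6.5]{CHJ-2017} (small weight case).

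The main obstacle I anticipate is bookkeeping with the completed tensor products and completed direct sums rather than anything conceptual: one must check that the natural comparison map is compatible with the trivialisations on overlaps so that the local isomorphisms glue to a global one, and one must be careful that the automorphy-factor twist built into $\underline{\omega}_{n, r}^{\bfitw_3^{-1}\bfitw\kappa_{\calU}}$ (via the action \eqref{eq: defining automorphy action}) is preserved — but this is automatic because, by Lemma \ref{Lemma: isomorphism of overconvergent automorphic sheaves in Hecke correspondences away from p}, we already have $\bfdelta^*\pr_2^*\underline{\omega}_{n, r}^{\bfitw_3^{-1}\bfitw\kappa_{\calU}} \cong \pr_1^*\underline{\omega}_{n, r}^{\bfitw_3^{-1}\bfitw\kappa_{\calU}}$ as automorphic sheaves, so the comparison map in the statement is a morphism of $\underline{\omega}_{n, r}^{\bfitw_3^{-1}\bfitw\kappa_{\calU}}$-modules and being an isomorphism can be checked after forgetting the equivariant structure. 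Thus the whole argument reduces to the finite-flat projection formula plus the limit-commutation lemmas already established, and I would present it as: (1) reduce to $\scrO$-modules by admissibility and locality; (2) invoke finiteness of $\pr_1$ and the classical projection formula; (3) commute $\pr_{1,*}$ with completion and $\widehat\otimes R_{\calU}$ as in Lemma \ref{Lemma: cohomology independent of toroidal compactification}; (4) glue.
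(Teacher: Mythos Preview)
Your argument hinges on the claim that $\pr_1$ is finite (indeed, finite Kummer \'etale), citing \S\ref{subsection: Siegel threefolds} and \cite[\S 6.7.4]{FP-Hecke}. Neither reference gives this: the finite Kummer \'etale statements in \S\ref{subsection: Siegel threefolds} concern the $p$-level tower with a single cone decomposition, whereas here the correspondence changes the tame level and involves three potentially different cone decompositions $\Sigma, \Sigma', \Sigma''$; and \cite[\S 6.7.4]{FP-Hecke} only supplies the existence of suitable $\Sigma, \Sigma', \Sigma''$ making \eqref{eq: correspondence away from p} commute, not that $\pr_1$ is finite. The paper writes $R\pr_{1,*}$ throughout and builds the trace map via \cite[\S 2.3]{FP-Hecke} precisely because $\pr_1$ is a priori only proper. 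Once finiteness is dropped, your exactness-of-$\pr_{1,*}$ argument for commuting past the completed direct sum $\widehat{\bigoplus}\scrO$ no longer goes through.

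The paper's proof circumvents this by using a sharper piece of the admissible structure than the ON-able trivialisation: locally one writes $\underline{\omega}_{n,r}^{\bfitw_3^{-1}\bfitw\kappa_{\calU}} = \bigl(\varprojlim_m \varinjlim_d \underline{\omega}_{n,r,m,d}^{\bfitw_3^{-1}\bfitw\kappa_{\calU}}\bigr)[1/p]$, where each $\underline{\omega}_{n,r,m,d}^{\bfitw_3^{-1}\bfitw\kappa_{\calU}}$ is coherent and locally free of \emph{finite} rank over $\scrO^+\widehat{\otimes}R_{\calU}^{\circ}/\fraka^m$. The standard projection formula $R\pr_{1,*}\pr_1^*\scrF \cong (R\pr_{1,*}\scrO)\otimes\scrF$ applies to each $\scrF = \underline{\omega}_{n,r,m,d}^{\bfitw_3^{-1}\bfitw\kappa_{\calU}}$ because $\scrF$ is finite locally free (no hypothesis on $\pr_1$ is needed), and one then reassembles via Mittag--Leffler, commutation of cohomology with filtered colimits over an affinoid, and exactness of localisation. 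In short, the projection formula is applied with the \emph{coefficient} sheaf finite locally free, not with $\pr_{1,*}\scrO$ finite locally free.
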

\begin{proof}
    Throughout this proof, to ease the notation, we simply write $\scrO$ and $\scrO^+$ for the structure sheaves.   
      
    It suffices to check the isomorphism locally. By Corollary \ref{Corollary: structure of an admissible Banach sheaf}, we know that $\underline{\omega}_{n, r}^{\bfitw_3^{-1}\bfitw\kappa_{\calU}}$ is admissible. That is, locally we can describe $\underline{\omega}_{n, r}^{\bfitw_3^{-1}\bfitw\kappa_{\calU}}$ as \[
        \underline{\omega}_{n, r}^{\bfitw_3^{-1}\bfitw\kappa_{\calU}} = \left( \varprojlim_m \underline{\omega}_{n, r, m}^{\bfitw_3^{-1}\bfitw\kappa_{\calU}}\right)\Big[\frac{1}{p}\Big]= \left(\varprojlim_m \varinjlim_d \underline{\omega}_{n, r, m, d}^{\bfitw_3^{-1}\bfitw\kappa_{\calU}}\right)\Big[\frac{1}{p}\Big],
    \]
    where $\underline{\omega}_{n, r, m}^{\bfitw_3^{-1}\bfitw\kappa_{\calU}} = \underline{\omega}_{n, r}^{\bfitw_3^{-1}\bfitw\kappa_{\calU}, \circ}/\fraka^m$\footnote{ Here, $\fraka$ is a fixed ideal of definition of $R_{\calU}^{\circ}$ containing $p$.} and each $\underline{\omega}_{n, r, m, d}^{\bfitw_3^{-1}\bfitw\kappa_{\calU}}$ is a coherent $\scrO^+\widehat{\otimes}R_{\calU}^{\circ}/\fraka^m$-module, locally free of finite rank. Hence, locally, we have \begin{align*}
        \left(R\pr_{1, *} \scrO\right) \widehat{\otimes} \underline{\omega}_{n, r}^{\bfitw_3^{-1}\bfitw\kappa_{\calU}} & = \left(\varprojlim_m \left(R\pr_{1, *} \scrO^+\right) \otimes \underline{\omega}_{n, r, m}^{\bfitw_3^{-1}\bfitw\kappa_{\calU}}\right)\Big[\frac{1}{p}\Big]\\
        & = \left(\varprojlim_m \left(R\pr_{1, *} \scrO^+\right) \otimes \varinjlim_d\underline{\omega}_{n, r, m,d}^{\bfitw_3^{-1}\bfitw\kappa_{\calU}}\right)\Big[\frac{1}{p}\Big]\\
        & = \left(\varprojlim_m \varinjlim_d\left(R\pr_{1, *} \scrO^+\right) \otimes \underline{\omega}_{n, r, m,d}^{\bfitw_3^{-1}\bfitw\kappa_{\calU}}\right)\Big[\frac{1}{p}\Big]\\
        & \cong \left( \varprojlim_m \varinjlim_d R\pr_{1,*}\pr_1^* \underline{\omega}_{n, r, m, d}^{\bfitw_3^{-1}\bfitw\kappa_{\calU}} \right)\Big[\frac{1}{p}\Big]\\
        & = \left( \varprojlim_m R\pr_{1, *}\pr_1^* \underline{\omega}_{n, r, m}^{\bfitw_3^{-1}\bfitw\kappa_{\calU}} \right)\Big[\frac{1}{p}\Big]\\
        & = \left(R\pr_{1,*}\pr_1^* \underline{\omega}_{n, r}^{\bfitw_3^{-1}\bfitw\kappa_{\calU}, \circ}\right)\Big[\frac{1}{p}\Big]\\
        & = R\pr_{1, *}\pr_1^* \underline{\omega}_{n, r}^{\bfitw_3^{-1}\bfitw\kappa_{\calU}},
    \end{align*}
    where the first and the last equation follows from that localisation is exact, the third and the ante-penultimate equation follows from that we are working locally on an affinoid and cohomology commutes with filtered colimits in such a situation, the penultimate equation is implied by the fact that $\{\underline{\omega}_{n, r, m}^{\bfitw_3^{-1}\bfitw\kappa_{\calU}, \circ}\}_m$ is a Mittag-Leffler system, and the isomorphism follows from the projection formulae applied to the coherent $\scrO^+\widehat{\otimes}R_{\calU}^{\circ}/\fraka^m$-modules, that are locally free of finite rank. This completes the proof. 
\end{proof}

Given lemmas above, we define the operator $T_{\bfdelta}$ as a composition \begin{equation}\label{eq: definition of Hecke operator away from p}
\scalemath{0.9}{    \begin{tikzcd}
        R\Gamma_{\calZ_{n,\bfitw}^{\Sigma'}} (\calX_{n,\bfitw}^{\Sigma', \tor, \bfitu_p}, \underline{\omega}_{n, r}^{\bfitw_3^{-1}\bfitw \kappa_{\calU}}) \arrow[d, "\pr_2^*"] \arrow[dddd, bend right =90, looseness=3, "T_{\bfdelta}"']\\
        R\Gamma_{\calZ_{ \Gamma\Iw_{\GSp_4, n}^+ \cap \bfdelta \Gamma \Iw_{\GSp_4, n}^+ \bfdelta^{-1}, \bfitw}^{\Sigma''}}(\calX_{ \Gamma\Iw_{\GSp_4, n}^+ \cap \bfdelta \Gamma \Iw_{\GSp_4, n}^+ \bfdelta^{-1}, \bfitw}^{\Sigma'', , \tor, \bfitu_p}, \pr_2^*\underline{\omega}_{n, r}^{\bfitw_3^{-1}\bfitw \kappa_{\calU}}) \arrow[d, "\bfdelta^*"]\\
        R\Gamma_{\calZ_{ \bfdelta^{-1}\Gamma\Iw_{\GSp_4, n}^+ \bfdelta \cap  \Gamma \Iw_{\GSp_4, n}^+, \bfitw}^{\Sigma''}}(\calX_{ \bfdelta^{-1}\Gamma\Iw_{\GSp_4, n}^+ \bfdelta \cap  \Gamma \Iw_{\GSp_4, n}^+, \bfitw}^{\Sigma'', , \tor, \bfitu_p}, \pr_1^*\underline{\omega}_{n, r}^{\bfitw_3^{-1}\bfitw \kappa_{\calU}}) \arrow[d, "\cong"]\\
        R\Gamma_{\calZ_{n, \bfitw}^{\Sigma}}(\calX_{n, \bfitw}^{\Sigma, , \tor, \bfitu_p}, R\pr_{1, *}\pr_1^* \underline{\omega}_{n, r}^{\bfitw_3^{-1}\bfitw \kappa_{\calU}})\arrow[d]\\
        R\Gamma_{\calZ_{n, \bfitw}^{\Sigma}}(\calX_{n, \bfitw}^{\Sigma,\tor, \bfitu_p}, \underline{\omega}_{n, r}^{\bfitw_3^{-1}\bfitw \kappa_{\calU}})
    \end{tikzcd},}
\end{equation}
where the last vertical arrow is obtained similarly as \eqref{eq: trace map}. Note here that one needs to replace the use of the projection formula with the one in Lemma \ref{Lemma: projection formula for Hecke operators}. Thanks to Lemma \ref{Lemma: cohomology independent of toroidal compactification}, the diagram induces an operator \[
    T_{\bfdelta}: R\Gamma_{\calZ_{n, \bfitw}}(\calX^{\tor, \bfitu_p}_{n, \bfitw}, \underline{\omega}_{n, r}^{\bfitw_3^{-1}\bfitw\kappa_{\calU}}) \rightarrow R\Gamma_{\calZ_{n, \bfitw}}(\calX^{\tor, \bfitu_p}_{n,\bfitw}, \underline{\omega}_{n, r}^{\bfitw_3^{-1}\bfitw\kappa_{\calU}}). 
\]

Now we look at Hecke operators at $p$. For computational convenience, we define them through explicit formulae. We remark that one can give an equivalent definition through correspondences. For such an approach, we refer the readers to \cite[\S 6.3.9]{BP-HigherColeman}. 

Given $\bfitw\in W^H$ and $\bfitu \in \{\bfitu_{p, 0}, \bfitu_{p,1}, \bfitu_p\}$, we define the $\bfitu$-action on $A_{\bfitw_3^{-1}\bfitw\kappa_{\calU}}^r(\Iw_{H, 1}^+, R_{\calU})$  as follows: for any $f\in A_{\bfitw_3^{-1}\bfitw\kappa_{\calU}}^r(\Iw_{H, 1}^+, R_{\calU})$ and any $\bfgamma = \bfepsilon \bfbeta \in \Iw_{H, 1}^+$ with $\bfepsilon \in N_{H, 1}^{\opp}$ and $\bfbeta \in \Iw_{H, 1}^+ \cap B_H(\Z_p)$, we put \[
    (\bfitu *_{\bfitw} f)(\bfgamma) = f\left( \bfitw_3^{-1}\bfitw \bfitu \bfitw^{-1} \bfitw_3 \bfepsilon (\bfitw_3^{-1}\bfitw \bfitu \bfitw^{-1} \bfitw_3)^{-1}\bfbeta \right).
\]
Here, we use the fact that \[
    \bfitw_3^{-1}\bfitw \bfitu \bfitw^{-1} \bfitw_3 N_{H, 1}^{\opp} (\bfitw_3^{-1}\bfitw \bfitu \bfitw^{-1} \bfitw_3)^{-1} \subset N_{H, 1}^{\opp}. 
\] Together with the $\bfitu$-action on the loci described in Lemma \ref{Lemma: dynamics of U_p-operators}, one obtains a $\bfitu$-action on the sheaf $\scrA_{\bfitw, \kappa_{\calU}}^r$. By abuse of notation we also denote this action by $\bfitu *_{\bfitw} -$.

Consider the double coset decomposition \[
    \Iw_{\GSp_4, n}^+ \bfitu_{p,i} \Iw_{\GSp_4, n}^+ = \bigsqcup_{j} \bfdelta_{ij} \bfitu_{p,i} \Iw_{\GSp_4,n}^+ \quad \text{ and }\quad \Iw_{\GSp_4, n}^+ \bfitu_{p} \Iw_{\GSp_4, n}^+ = \bigsqcup_{j} \bfdelta_{j} \bfitu_{p} \Iw_{\GSp_4,n}^+
\] with $\bfdelta_{ij}, \bfdelta_j\in \Iw_{\GSp_4, n}^+$. Then, for any section $f$ of $\underline{\omega}_{n, r}^{\bfitw_3^{-1}\bfitw \kappa_{\calU}}$, viewed as a section of $\scrA_{\bfitw, \kappa_{\calU}}^r$ invariant under the action \eqref{eq: defining automorphy action}, we define the \emph{na\"{i}ve} Hecke operators\[
    U_{p,i}^{\mathrm{naive}} : f \mapsto  \sum_{j} \bfdelta_{ij} *_{\bfitw, \kappa_{\calU}} \left( \bfitu_{p,i} *_{\bfitw} f \right) \quad \text{ and }\quad 
    U_p^{\mathrm{naive}}  : f \mapsto  \sum_{j} \bfdelta_{j} *_{\bfitw, \kappa_{\calU}} \left( \bfitu_{p} *_{\bfitw} f \right) 
\] as morphisms of complexes \[
    \scalemath{0.9}{ R\Gamma_{(\overline{\calX_{n, \leq \bfitw}^{\tor}})\bfitu^{-n} \cap (\calX_{n, \geq \bfitw}^{\tor})\bfitu^{n+1}}((\calX_{n, \geq \bfitw}^{\tor})\bfitu^{n+1}, \underline{\omega}_{n, r}^{\bfitw_3^{-1}\bfitw \kappa_{\calU}}) \xrightarrow{U^{\mathrm{naive}}} R\Gamma_{(\overline{\calX_{n, \leq \bfitw}^{\tor}})\bfitu^{-n-1} \cap (\calX_{n, \geq \bfitw}^{\tor})\bfitu^{n}}((\calX_{n, \geq \bfitw}^{\tor})\bfitu^{n}, \underline{\omega}_{n, r}^{\bfitw_3^{-1}\bfitw \kappa_{\calU}}).}
\]
From the construction, one sees that $U_p^{\mathrm{naive}} = U_{p,0}^{\mathrm{naive}}U_{p,1}^{\mathrm{naive}}$.

For $\bfitu = \bfitu_p$, we have a diagram \[
    \begin{tikzcd}[column sep = tiny]
        & \scalemath{0.6}{ R\Gamma_{(\overline{\calX_{n, \leq \bfitw}^{\tor}})\bfitu_p^{-n} \cap (\calX_{n, \geq \bfitw}^{\tor})\bfitu_p^{n}} ((\calX_{n, \geq \bfitw}^{\tor})\bfitu_p^{n}, \underline{\omega}_{n, r}^{\bfitw_3^{-1}\bfitw \kappa_{\calU}}) } \arrow[ld, "\Res"']\\
        \scalemath{0.6}{ R\Gamma_{(\overline{\calX_{n, \leq \bfitw}^{\tor}})\bfitu_p^{-n} \cap (\calX_{n, \geq \bfitw}^{\tor})\bfitu_p^{n+1}} ((\calX_{n, \geq \bfitw}^{\tor})\bfitu_p^{n+1}, \underline{\omega}_{n, r}^{\bfitw_3^{-1}\bfitw \kappa_{\calU}}) } \arrow[rr, "U_p^{\mathrm{naive}}"] && \scalemath{0.6}{ R\Gamma_{(\overline{\calX_{n, \leq \bfitw}^{\tor}})\bfitu_p^{-n-1} \cap (\calX_{n, \geq \bfitw}^{\tor})\bfitu_p^{n}} ((\calX_{n, \geq \bfitw}^{\tor})\bfitu_p^{n}, \underline{\omega}_{n, r}^{\bfitw_3^{-1}\bfitw \kappa_{\calU}}) } \arrow[ul, "\mathrm{Cores}"'] \arrow[ld, "\Res"]\\
        & \scalemath{0.6}{ R\Gamma_{\calZ_{n, \bfitw}}(\calX^{\tor, \bfitu_p}_{n,\bfitw}, \underline{\omega}_{n, r}^{\bfitw_3^{-1}\bfitw \kappa_{\calU}}) }\arrow[lu, "\mathrm{Cores}"]
    \end{tikzcd},
\]
where the $\Res$'s (resp., $\mathrm{Cores}$'s) in the diagram are restrictions (resp.,  corestrictions) and the composition on the top coincides with the composition at the bottom. Again by abuse by notation, we denote by $U_p^{\mathrm{naive}}$ the composition $\Res \circ U_p^{\mathrm{naive}} \circ \mathrm{Cores}$ on $R\Gamma_{\calZ_{n,\bfitw}}(\calX_{n,\bfitw}^{\tor, \bfitu_p}, \underline{\omega}_{n, r}^{\bfitw_3^{-1}\bfitw \kappa_{\calU}}) $. By slightly changing the support condition, one can similarly define the operator $U_{p,i}^{\mathrm{naive}}$ on $R\Gamma_{\calZ_{n, \bfitw}}(\calX^{\tor, \bfitu_p}_{n,\bfitw}, \underline{\omega}_{n, r}^{\bfitw_3^{-1}\bfitw \kappa_{\calU}})$.

For $\bfitu\in \{\bfitu_{p,0}, \bfitu_{p,1}, \bfitu_p\}$, we shall renormalise the corresponding operator $U^{\mathrm{naive}}\in \{U^{\mathrm{naive}}_{p,0}, U^{\mathrm{naive}}_{p,1}, U^{\mathrm{naive}}_p\}$. To this end, for $i=0, 1, 2, 3$, we write 
\[
        k_{\bfitw_i} = \left\{ \begin{array}{ll}
            (0,0), & \text{ if }i=0  \\
            (2,0), & \text{ if }i=1 \\ 
            (3,1), & \text{ if }i=2 \\
            (3,3), & \text{ if }i=3
        \end{array}\right. .
\]
Note that, by Kodaira--Spencer isomorphism (\cite[Theorem 1.41 (4)]{LanKS}), we have \[
    \underline{\omega}^{k_{\bfitw_i}} \cong \Omega_{\calX_n^{\tor}}^{\log, i}.
\]
On $R\Gamma_{\calZ_{n, \bfitw_i}}(\calX^{\tor, \bfitu_p}_{n,\bfitw_i}, \underline{\omega}_{n, r}^{\bfitw_3^{-1}\bfitw_i \kappa_{\calU}})$, we then define \[
    U := p^{-v_p(\bfitw_i^{-1}\bfitw_3k_{\bfitw_i}(\bfitu))} U^{\mathrm{naive}}.
\] 
where $U$ stands for $U_{p,0}$, $U_{p,1}$, or $U_p$. It follows that $U_p = U_{p,0} U_{p,1}$. The following table summarises the values of $v_p(\bfitw_i^{-1}\bfitw_3 k_{\bfitw_i}(\bfitu))$: 
\begin{center}
    \begin{tabular}{||c|c|c|c|c||}
         \hline
         & $i=0$ & $i=1$ & $i=2$ & $i=3$ \\
         \hline\hline
         $\bfitu = \bfitu_{p,0}$ & 0 & 0 & 1 & 0 \\
         \hline
         $\bfitu = \bfitu_{p,1}$ & 0 & 2 & 5 & 3 \\
         \hline
    \end{tabular}
\end{center}

\begin{Remark}\label{Remark: purpose of renormalisation}
    The purpose of such renormalisation is due to the fact that the Kodaira--Spencer isomorphism is not Hecke-equivariant (see \cite[pp. 257 -- 258]{Faltings-Chai}). Later in the paper, we shall use the Kodaira--Spencer isomorphism to obtain a morphism \[
        R\Gamma_{\calZ_{n, \bfitw_i}}(\calX^{\tor, \bfitu_p}_{n,\bfitw_i}, \underline{\omega}_{n, r}^{\bfitw_3^{-1}\bfitw_i \kappa_{\calU}}\otimes \Omega_{\calX_n^{\tor}}^{\log, i}) \rightarrow R\Gamma_{\calZ_{n, \bfitw_i}}(\calX^{\tor, \bfitu_p}_{n,\bfitw_i}, \underline{\omega}_{n, r}^{\bfitw_3^{-1}\bfitw_i \kappa_{\calU}+k_{\bfitw_i}}).
    \]
    By considering the na\"{i}ve Hecke operators on the source and the normalised Hecke operator on the target, this morphism is then Hecke-equivariant. 
\end{Remark}

\begin{Remark}
We only discuss the normalisation for Hecke operators at $p$. Technically, there should also be normalisations for those Hecke operators away from $pN$, due to same defect caused by the Kodaira--Spencer isomorphism. However, since these normalisations are given by $p$-adic units, they do not contribute in the $p$-adic valuation. Therefore, we do not spell out the explicit formula and leave them to the interested readers.
\end{Remark}

We shall see that the $U_p$-operator is \emph{potent compact}. 
For reader's convenience, we recall the definition of (potent) compact operators from \cite[\S 2.4]{BP-HigherColeman}.

\begin{Definition}\label{Definition: potent compact operators}
Let $(R, R^+)$ be a complete Tate algebra of finite type over $(\Q_p, \Z_p)$.
\begin{enumerate}
    \item[(i)]  An operator $T\colon M \rightarrow N$ of Banach $R$-modules is \emph{compact} if it is a limit of operators of finite rank.
    \item[(ii)] An operator $T\colon M^\bullet \rightarrow N^\bullet$ in $\mathrm{C}(\Ban(R))$ is \emph{compact} if it is compact in every degree. 
    \item[(iii)] An operator $T\colon M^\bullet \rightarrow N^\bullet$ in $\mathrm{K}^{\proj}(\Ban(R))$ is \emph{compact} if it has a representative in $\mathrm{C}^{\proj}(\Ban(R))$ that is compact.
    \item[(iv)] Let $T\colon \lim_i M_i^{\bullet}\rightarrow \lim_i N_i^{\bullet}$ be a morphism in $\mathrm{Pro}_{\Z_{\geq 0}}(\mathrm{K}^{\proj}(\Ban(R)))$. We say that $T$ is \emph{compact} if there exists a compact operator $T' : M^{\bullet}\rightarrow N^{\bullet}$ in $\mathrm{K}^{\proj}(\Ban(R))$ and a commutative diagram
    \[
    \begin{tikzcd}
        M^{\bullet} \arrow[r, "{T'}"] & N^{\bullet} \arrow[d]\\
        \lim_i M_i^{\bullet} \arrow[u]\arrow[r, "T"] & \lim_i N_i^{\bullet}
    \end{tikzcd}.
\]
   \item[(v)] Recall the natural functor $\mathrm{Pro}_{\Z_{\geq 0}}(\mathrm{K}^{\proj}(\Ban(R)))\rightarrow \mathrm{D}(R)$. Let $T\colon M^{\bullet}\rightarrow N^{\bullet}$ be a map in $\mathrm{D}(R)$ such that both $M^{\bullet}$ and $N^{\bullet}$ are represented by objects in $\mathrm{Pro}_{\Z_{\geq 0}}(\mathrm{K}^{\proj}(\Ban(R)))$. We say $T$ is \emph{compact} if it is represented by a compact morphism in $\mathrm{Pro}_{\Z_{\geq 0}}(\mathrm{K}^{\proj}(\Ban(R)))$.
    \item[(vi)] Let $M^{\bullet}\in \mathrm{D}(R)$ such that $M^{\bullet}$ is represented by an object in $\mathrm{Pro}_{\Z_{\geq 0}}(\mathrm{K}^{\proj}(\Ban(R)))$. Let $T\colon M^{\bullet}\rightarrow M^{\bullet}$ be an endomorphism of $M^{\bullet}$ in $\mathrm{D}(R)$. We say $T$ is \emph{potent compact} if $T^n$ is compact in the sense of (v) for some $n\geq 0$. 
\end{enumerate}
\end{Definition}

For a (potent) compact operator $T$ on $M^{\bullet}\in \mathrm{D}(R)$ as above, there is a way to make sense of the \emph{finite slope part} of $M^{\bullet}$ and $H^i(M^{\bullet})$ following \cite[\S 6.1]{BP-HigherColeman}. We briefly recall the constructions.

\begin{Proposition-Definition}\label{Prop-Defn: finite slope part}
Let $(R, R^+)$ be a complete Tate algebra of finite type over $(\Q_p, \Z_p)$ and let $\calS=\Spa(R, R^+)$. Let $M^{\bullet}\in \mathrm{K}^{\proj}(\Ban(R))$ and let $T:M^{\bullet}\rightarrow M^{\bullet}$ be a compact operator. Let $\scrM^{\bullet}$ be the associated complex of Banach sheaves on $\calS$ and let $H^k(\scrM^{\bullet})$ be the $k$-th cohomology sheaf. Then
\begin{enumerate}
\item[(i)] For each $k$, $H^k(\scrM^{\bullet})$ admits slope decomposition with respect to $T$ in the sense of \cite[Definition 6.1.5]{BP-HigherColeman}. In particular, one can define the \emph{finite slope part} $H^k(\scrM^{\bullet})^{\fs}$ together with a natural projection $H^k(\scrM^{\bullet})\rightarrow H^k(\scrM^{\bullet})^{\fs}$.
\item[(ii)] There exists an object $\scrM^{\bullet, \fs}\in \mathrm{D}(\Mod_{\scrO_{\calS}})$ and a morphism $\scrM^{\bullet}\rightarrow \scrM^{\bullet, \fs}$ (unique up to non-unique quasi-isomorphism) such that $H^k(\scrM^{\bullet, \fs})=H^k(\scrM^{\bullet})^{\fs}$ for all $k$.
\end{enumerate}
Taking global sections, we obtain the finite slope part $H^k(M^{\bullet})^{\fs}$ of $H^k(M^{\bullet})$ (resp., the finite slope part $M^{\bullet, \fs}$ of $M^{\bullet}$) such that $H^k(M^{\bullet, \fs})=H^k(M^{\bullet})^{\fs}$.
\end{Proposition-Definition}

\begin{Proposition-Definition}\label{Prop-Defn: finite slope part pro}
Let $(R, R^+)$ be a complete Tate algebra of finite type over $(\Q_p, \Z_p)$ and let $\calS=\Spa(R, R^+)$. Let $M^{\bullet}\in \mathrm{D}(R)$ such that $M^{\bullet}$ is represented by an object $\lim_i M_i^{\bullet}$ in $\mathrm{Pro}_{\Z_{\geq 0}}(\mathrm{K}^{\proj}(\Ban(R)))$. Let $T:M^{\bullet}\rightarrow M^{\bullet}$ be a compact operator, which induces a compact operator $T_i:M_i^{\bullet}\rightarrow M_i^{\bullet}$ for all $i$ sufficiently large. Let $\scrM_i^{\bullet}$ be the complex of Banach sheaves over $\calS$ corresponding to $M_i^{\bullet}$. Proposition-Definition \ref{Prop-Defn: finite slope part} yields morphisms $H^k(\scrM_i^{\bullet})\rightarrow H^k(\scrM_i^{\bullet})^{\fs}$ and $\scrM_i^{\bullet}\rightarrow \scrM_i^{\bullet, \fs}$ such that 
\begin{enumerate}
\item[(i)] For all $k$, we have $H^k(\scrM_i^{\bullet, \fs})=H^k(\scrM_i^{\bullet})^{\fs}$.
\item[(ii)] For all $k$, $H^k(\scrM_i^{\bullet})^{\fs}\rightarrow H^k(\scrM_{i-1}^{\bullet})^{\fs}$ are isomorphisms.
\end{enumerate}
Taking global sections, we obtain $M_i^{\bullet, \fs}$ and $H^k(M_i^{\bullet})^{\fs}$ such that $H^k(M_i^{\bullet, \fs})=H^k(M_i^{\bullet})^{\fs}$.

Finally, we put $H^k(\scrM^{\bullet})^{\fs}:= H^k(\scrM_i^{\bullet})^{\fs}$ and let $\scrM^{\bullet, \fs}$ be the image of $\scrM_i^{\bullet, \fs}$ in $\mathrm{D}(R)$, for some $i$ sufficiently large. Taking global sections, we obtain $M^{\bullet, \fs}$ and $H^k(M^{\bullet})^{\fs}$. We remark that $\scrM^{\bullet, \fs}$ and $M^{\bullet, \fs}$ depends on the choice of $i$ while $H^k(\scrM^{\bullet})^{\fs}$ and $H^k(M^{\bullet})^{\fs}$ does not. For our purpose, such ambiguity does not harm as we will eventually pass to cohomology.
\end{Proposition-Definition}

Back to our discussion on the $U_p$-operator.

\begin{Proposition}\label{Proposition: U_p potent compact}
The endomorphism $U_p$ is a potent compact operator on $R\Gamma_{\calZ_{n,\bfitw}}(\calX^{\tor, \bfitu_p}_{n, \bfitw}, \underline{\omega}_{n, r}^{\bfitw_3^{-1}\bfitw \kappa_{\calU}})$.
\end{Proposition}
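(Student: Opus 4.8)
The plan is to reduce the statement to the analogous compactness result in \cite[\S 6.3 -- 6.4]{BP-HigherColeman} by exhibiting the factorisation of $U_p$ through a restriction map between nested (pseudo)overconvergent neighbourhoods, so that the compactness comes from the strictness of that restriction. Concretely, recall from Lemma \ref{Lemma: dynamics of U_p-operators} that for each $\bfitw = \bfitw_i$ the operator $\bfitu_p$ contracts the locus $\calX_{\Gamma(p^\infty), \bfitw, (m,n)}^{\tor}$ in the $\adicFL_{\bfitw}$-coordinate direction by a factor of $p$ in at least one coordinate, while the support condition $\calZ_{n,\bfitw}$ (and the ambient space $\calX_{n,\bfitw}^{\tor,\bfitu_p}$) was precisely engineered in \eqref{eq: some technical loci} so that the $\Res\circ U_p^{\mathrm{naive}}\circ\mathrm{Cores}$ composition lands in a \emph{strictly} smaller tube. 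First I would make precise, using Lemma \ref{Lemma: dynamics of U_p-operators}, that $U_p^{\mathrm{naive}}$ (before renormalisation) on $R\Gamma_{\calZ_{n,\bfitw}}(\calX^{\tor,\bfitu_p}_{n,\bfitw}, \underline{\omega}_{n,r}^{\bfitw_3^{-1}\bfitw\kappa_{\calU}})$ factors as a composition in which one of the arrows is the restriction $R\Gamma_{\calZ_{n,\bfitw}}(\calX_{n,\bfitw,(r',r')}^{\tor},-)\to R\Gamma_{\calZ_{n,\bfitw}}(\calX_{n,\bfitw,(r,r)}^{\tor},-)$ for some $r'>r$ (or more precisely a restriction between the $(\overline{m},n)$-type loci appearing in the $\Res$/$\mathrm{Cores}$ diagram preceding the statement), using the quasi-isomorphism \eqref{eq: aut; change of ambient space} to pass between the $\calX^{\tor,\bfitu_p}_{n,\bfitw}$-model and the $\calX_{n,\bfitw,(r,r)}^{\tor}$-model freely.

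The key steps, in order, would be: (1) invoke Corollary \ref{Corollary: structure of an admissible Banach sheaf} to know that $\underline{\omega}_{n,r}^{\bfitw_3^{-1}\bfitw\kappa_{\calU}}$ is an admissible Banach sheaf with a well-behaved integral model, so that (by Lemma \ref{Lemma: cohomology independent of toroidal compactification} and \cite[Theorem 6.4.3]{BP-HigherColeman}) the complex $R\Gamma_{\calZ_{n,\bfitw}}(\calX^{\tor,\bfitu_p}_{n,\bfitw}, \underline{\omega}_{n,r}^{\bfitw_3^{-1}\bfitw\kappa_{\calU}})$ is represented by an object of $\mathrm{Pro}_{\Z_{\ge 0}}(\mathrm{K}^{\proj}(\Ban(R_{\calU})))$ when $(R_{\calU},\kappa_{\calU})$ is an affinoid weight, and by a projective system of the corresponding flavour when it is a small weight; (2) using Lemma \ref{Lemma: dynamics of U_p-operators}, produce for a suitable iterate $U_p^N$ (the $N$ accounting for the fact that on the mixed loci $\bfitw = \bfitw_1, \bfitw_2$ the operator moves points outward in one coordinate — one applies $\bfitu_p$ enough times that the net effect is a genuine contraction, which is exactly the role of the $\bfitu_p^{\pm(n+1)}$ twists in \eqref{eq: some technical loci}) a commutative diagram factoring $U_p^N$ through a restriction map of cohomology-with-supports complexes between an overconvergent neighbourhood and a strictly-contained one; (3) observe that such a restriction map, at the level of the integral Banach-sheaf models, is given by a completely continuous (compact) transition map of Banach modules — this is the content of \cite[\S 2.4, Lemma 2.4.3 and \S 6.3]{BP-HigherColeman}, the point being that inclusion of a smaller affinoid/quasi-Stein neighbourhood into a larger one induces a compact map on sections of an admissible Banach sheaf — and hence $U_p^N$ is compact in the sense of Definition \ref{Definition: potent compact operators}(v); (4) conclude that $U_p$ is potent compact by Definition \ref{Definition: potent compact operators}(vi).

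The main obstacle I expect is step (2): carefully matching the dynamics in Lemma \ref{Lemma: dynamics of U_p-operators} with the precise support conditions $\calZ_{n,\bfitw}$ and ambient loci $\calX^{\tor,\bfitu_p}_{n,\bfitw}$, and checking that after the $\Res$/$\mathrm{Cores}$ bookkeeping (the triangle diagram just before the Proposition) one really does obtain a factorisation through a \emph{strict} inclusion of tubes — in both the $(m,n)$ and the $(\overline m, n)$ variants — so that compactness of the transition map genuinely applies. In the cases $\bfitw = \bfitw_1, \bfitw_2$ one must be attentive that $\bfitu_p$ is expanding in one of the two $\adicFL$-coordinates, so a single application of $U_p^{\mathrm{naive}}$ is \emph{not} contracting; it is only the composite $\Res\circ U_p^{\mathrm{naive}}\circ\mathrm{Cores}$ on the fixed support $\calZ_{n,\bfitw}$, or a sufficiently high power thereof, that contracts, and articulating this cleanly (rather than by appeal to the pictures in Figure \ref{Fig: cartoon}) is the delicate part. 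Once that factorisation is in hand, everything else is a direct citation of the slope-theory machinery of \cite[\S 2.4 and \S 6]{BP-HigherColeman}, exactly as in the proof of \cite[Theorem 6.4.3 and Corollary 6.4.5]{BP-HigherColeman}.
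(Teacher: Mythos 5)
Your proposal is correct and takes essentially the same route as the paper: the paper's entire proof is the citation ``This follows from \cite[Theorem 6.4.3]{BP-HigherColeman}'', and your sketch is an accurate unpacking of the argument behind that theorem (representability of the complex in $\mathrm{Pro}_{\Z_{\geq 0}}(\mathrm{K}^{\proj}(\Ban(R_{\calU})))$, factorisation of a suitable power of $U_p$ through a restriction between strictly nested tubes via the dynamics of $\bfitu_p$ and the support condition $\calZ_{n,\bfitw}$, and compactness of such restriction maps for admissible Banach sheaves). No gap.
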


\begin{proof}
This follows from \cite[Theorem 6.4.3]{BP-HigherColeman}.
\end{proof}

\begin{Definition}\label{Defn: finite slope part of complex}
Since $U_p$ is potent compact, say $U_p^n$ is compact for some integer $n$, we can define the \emph{finite slope part} $R\Gamma_{\calZ_{n,\bfitw}}(\calX^{\tor, \bfitu_p}_{n, \bfitw}, \underline{\omega}_{n, r}^{\bfitw_3^{-1}\bfitw \kappa_{\calU}})^{\fs}$ and $H^i_{\calZ_{n,\bfitw}}(\calX^{\tor, \bfitu_p}_{n, \bfitw}, \underline{\omega}_{n, r}^{\bfitw_3^{-1}\bfitw \kappa_{\calU}})^{\fs}$ with respect to $U_p^n$.
\end{Definition}


For later use, we would also like to consider the \emph{small-slope parts}. We first introduce certain numbers $h^{\mathrm{oc}}_{i,j,k}$, $h^{\mathrm{sh}}_{i,k}$, and $h_k$ which will play the role of ``small-slope bounds''.

\begin{Definition}\label{Definition: small-slope bounds}
Let $k = (k_1, k_2)\in \Z^2$ be an integral weight such that $k_1 \geq k_2\geq 0$.
\begin{enumerate}
\item[(i)] For $i=0,1,2,3$ and $j=0,1$, we define 
\[h^{\mathrm{oc}}_{i,j,k}\coloneq \inf_{\bfitw\neq \bfitw_i}\left\{v_p(\bfitw^{-1}\bfitw_i k(\bfitu_{p,j}))\right\}.\]
\item[(ii)] For $i=0,1,2,3$, we define
\[h^{\mathrm{sh}}_{i,k} \coloneq \left((\bfitw_i k)_1 - (\bfitw_i k)_2 +1\right)\cdot v_p\left(-(1, -1)(\bfitw_3^{-1}\bfitw_i \bfitu_{p,1}\bfitw_i^{-1}\bfitw_3)\right) .\]
\item[(iii)] We define 
\[h_k\coloneq \inf_{\bfitw \in W_{\GSp_4}\smallsetminus \{\one_4\}} \left\{ v_p(\bfitw \cdot k(\bfitu_p)) - v_p(k(\bfitu_p)) \right\}.\]
\end{enumerate}
\end{Definition}

\begin{Remark}
These numbers can be computed explicitly.
\begin{enumerate}
        \item[(i)] The following table computes $h^{\mathrm{oc}}_{i,j,k}$.
        \begin{center}
            \begin{tabular}{||c|c|c|c|c||}
                \hline
                 & $i=0$ & $i=1$ & $i=2$ & $i=3$  \\
                 \hline\hline
                 $j=0$ & $k_2$ & $0$ & $0$ & $k_2$ \\
                 \hline
                 $j=1$ & $k_2$ & $k_2$ & $k_2$ & $k_2$ \\
                 \hline
            \end{tabular}
        \end{center} 
        \item[(ii)] The following table computes $h^{\mathrm{sh}}_{i,k}$.
         \begin{center}
            \begin{tabular}{||c|c|c|c||}
                \hline
                 $i=0$ & $i=1$ & $i=2$ & $i=3$  \\
                 \hline\hline
                 $k_1-k_2+1$ & $k_1+k_2+1$ & $k_1+k_2+1$ & $k_1-k_2+1$ \\
                 \hline
            \end{tabular}
        \end{center} 
        \item[(iii)] We have $h_k = \inf\{k_1-k_2+1, k_2+1\}$. (See, for example, \cite[Example 4.5]{BSW-ParEigen}.)
    \end{enumerate}
\end{Remark}

\begin{Definition}\label{Definition: small slope part}
    Let $k = (k_1, k_2)\in \Z^2$ be an integral weight such that $k_1 \geq k_2$. For $\bfitw\in W^H$ (say $\bfitw=\bfitw_i$ for some $i=0,1,2,3$), consider the complex $R\Gamma_{\calZ_{n,\bfitw}}(\calX^{\tor, \bfitu_p}_{n,\bfitw}, \underline{\omega}_{n, r}^{\bfitw_3^{-1}\bfitw k + k_{\bfitw}})$. The \emph{small-slope part} $R\Gamma_{\calZ_{n,\bfitw}}(\calX^{\tor, \bfitu_p}_{n,\bfitw}, \underline{\omega}_{n, r}^{\bfitw_3^{-1}\bfitw k + k_{\bfitw}})^{\sms}$ is defined to be the direct summand of $R\Gamma_{\calZ_{n,\bfitw}}(\calX^{\tor, \bfitu_p}_{n,\bfitw}, \underline{\omega}_{n, r}^{\bfitw_3^{-1}\bfitw k + k_{\bfitw}})$ on which 
    \begin{enumerate}
    \item[(i)] The $p$-adic valuations of the $U_{p,j}$-eigenvalues are smaller than $h^{\mathrm{oc}}_{i,j,k}$, for both $j=0,1$;
    \item[(ii)] The $p$-adic valuations of the $U_{p,1}$-eigenvalues are smaller than $h^{\mathrm{sh}}_{i,k}$;
    \item[(iii)] The $p$-adic valuations of the $U_p$-eigenvalues are smaller than $h_k$.
    \end{enumerate}
The small-slope part $R\Gamma(\calX_{n}^{\tor}, \underline{\omega}^{\bfitw_3^{-1}\bfitw k + k_{\bfitw}})^{\sms}$ of $R\Gamma(\calX_{n}^{\tor}, \underline{\omega}^{\bfitw_3^{-1}\bfitw k + k_{\bfitw}})$ is defined in the same way. Moreover, for the cohomology groups, the small-slope parts $H^i_{\calZ_{n,\bfitw}}(\calX^{\tor, \bfitu_p}_{n,\bfitw}, \underline{\omega}_{n, r}^{\bfitw_3^{-1}\bfitw k + k_{\bfitw}})^{\sms}$ and  $H^i(\calX_{n}^{\tor}, \underline{\omega}^{\bfitw_3^{-1}\bfitw k + k_{\bfitw}})^{\sms}$ are also defined in the same way.
\end{Definition}

\begin{Remark}\label{Remark: small-slope part well-defined}
    Since $R\Gamma_{\calZ_{n,\bfitw}}(\calX^{\tor, \bfitu_p}_{n,\bfitw}, \underline{\omega}_{n, r}^{\bfitw_3^{-1}\bfitw k + k_{\bfitw}})$ is represented by an object in $\mathrm{Pro}_{\Z_{\geq 0}}(\mathrm{K}^{\proj}(\Ban(\Q_p)))$ and $U_p$ is potent compact, \cite[Proposition 5.1.4]{BP-HigherColeman} guarantees the existence of a slope-$\leq h$ decomposition for every $h\in \Q_{\geq 0}$. In particular, the small-slope part of $R\Gamma_{\calZ_{n,\bfitw}}(\calX^{\tor, \bfitu_p}_{n,\bfitw}, \underline{\omega}_{n, r}^{\bfitw_3^{-1}\bfitw k + k_{\bfitw}})$ is well-defined. Moreover, we have
\[
H^i\left(R\Gamma_{\calZ_{n,\bfitw}}(\calX^{\tor, \bfitu_p}_{n,\bfitw}, \underline{\omega}_{n, r}^{\bfitw_3^{-1}\bfitw k + k_{\bfitw}})^{\sms} \right)=H^i_{\calZ_{n,\bfitw}}(\calX^{\tor, \bfitu_p}_{n,\bfitw}, \underline{\omega}_{n, r}^{\bfitw_3^{-1}\bfitw k + k_{\bfitw}})^{\sms}.
\]
\end{Remark}

\begin{Remark}\label{Remark: Betti-small-slope condition in the definition}
In the proof of Theorem \ref{Theorem: classicality theorem in higher Coleman theory} below, it will become clear to the readers that only the conditions (i) and (ii) in Definition \ref{Definition: small slope part} are necessary for the classicality theorem to hold. We include the condition (iii) because we shall compare coherent cohomology groups with Betti cohomology groups later in the paper.  We also remark that, in this paper, we do not pursue the \emph{optimal} slope bound as in \cite[Theorem 1.4.10]{BP-HigherHidaSigel}.
\end{Remark}
        
We have the following classicality theorem for cohomology groups of the overconvergent automorphic sheaves.
    
 \begin{Theorem}[Classicality]\label{Theorem: classicality theorem in higher Coleman theory}   
 There is a natural quasi-isomorphism 
 \[
        R\Gamma(\calX_{n}^{\tor}, \underline{\omega}^{\bfitw_3^{-1}\bfitw k + k_{\bfitw}})^{\sms} \cong  R\Gamma_{\calZ_{\bfitw, n}}(\calX^{\tor, \bfitu_p}_{n,\bfitw}, \underline{\omega}_{n, r}^{\bfitw_3^{-1}\bfitw k + k_{\bfitw}})^{\sms}
\]
which induces an isomorphism
 \[
        H^i(\calX_{n}^{\tor}, \underline{\omega}^{\bfitw_3^{-1}\bfitw k + k_{\bfitw}})^{\sms} \cong  H^i_{\calZ_{\bfitw, n}}(\calX^{\tor, \bfitu_p}_{n,\bfitw}, \underline{\omega}_{n, r}^{\bfitw_3^{-1}\bfitw k + k_{\bfitw}})^{\sms}
\]
for every $i$.
\end{Theorem}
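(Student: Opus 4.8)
The plan is to deduce the classicality theorem from the higher Coleman theory of Boxer--Pilloni, essentially by matching up our normalisations with theirs. The key point is that the left-hand side $R\Gamma(\calX_n^{\tor}, \underline{\omega}^{\bfitw_3^{-1}\bfitw k + k_{\bfitw}})^{\sms}$ is, via the Kodaira--Spencer isomorphism $\underline{\omega}^{k_{\bfitw_i}} \cong \Omega^{\log, i}_{\calX_n^{\tor}}$ recalled in \S\ref{subsection: Hecke operators}, identified with a twist of the coherent cohomology appearing in \cite[Theorem 5.12.3]{BP-HigherColeman}; similarly the right-hand side $R\Gamma_{\calZ_{\bfitw, n}}(\calX^{\tor, \bfitu_p}_{n, \bfitw}, \underline{\omega}_{n, r}^{\bfitw_3^{-1}\bfitw k + k_{\bfitw}})^{\sms}$ matches (a twist of) the higher-Coleman complex $R\Gamma_{w, \an}$ of \emph{loc.~cit.} By Remark~\ref{Remark: comparison with classical automorphic sheaf}, the restriction of the classical sheaf $\underline{\omega}^{\bfitw_3^{-1}\bfitw k}$ to $\calX^{\tor}_{n, \bfitw, (r,r)}$ agrees with $\underline{\omega}^{\bfitw_3^{-1}\bfitw k}_{n, r, \alg}$, which sits inside the overconvergent sheaf $\underline{\omega}^{\bfitw_3^{-1}\bfitw k}_{n, r}$; this furnishes the restriction map $R\Gamma(\calX_n^{\tor}, -) \to R\Gamma_{\calZ_{\bfitw,n}}(\calX^{\tor,\bfitu_p}_{n,\bfitw}, -)$ whose small-slope part we claim is a quasi-isomorphism.

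The steps I would carry out are: (1) Recall that by \cite[Theorem 4.1.8]{BP-HigherColeman} the coherent cohomology $R\Gamma(\calX_n^{\tor}, \underline{\omega}^{\bfitw_3^{-1}\bfitw k + k_{\bfitw}})$ is independent of $\Sigma$ and carries the normalised Hecke action, and by Lemma~\ref{Lemma: cohomology independent of toroidal compactification} the same holds for the overconvergent side. (2) Set up the restriction morphism $R\Gamma(\calX_n^{\tor}, \underline{\omega}^{\bfitw_3^{-1}\bfitw k + k_{\bfitw}}) \to R\Gamma_{\calZ_{\bfitw,n}}(\calX^{\tor, \bfitu_p}_{n,\bfitw}, \underline{\omega}_{n,r}^{\bfitw_3^{-1}\bfitw k + k_{\bfitw}})$, using the inclusion of sheaves from Remark~\ref{Remark: comparison with classical automorphic sheaf} together with the restriction of sections to the ambient open $\calX^{\tor, \bfitu_p}_{n, \bfitw} \cap \calX^{\tor}_{n, \bfitw, (r,r)}$ and the support map (corestriction along the stratum). (3) Check that this map is Hecke-equivariant once the na\"ive operators are used on the source and the \emph{normalised} operators on the target; this is exactly the content of Remark~\ref{Remark: purpose of renormalisation}, where the renormalisation factor $p^{-v_p(\bfitw_i^{-1}\bfitw_3 k_{\bfitw_i}(\bfitu))}$ is designed to absorb the failure of Hecke-equivariance of Kodaira--Spencer (cf. \cite[pp. 257--258]{Faltings-Chai}). (4) Invoke \cite[Theorem 5.12.3]{BP-HigherColeman} (the classicality theorem in higher Coleman theory, via the small-slope criterion there) to conclude that after passing to the small-slope part cut out by conditions (i) and (ii) of Definition~\ref{Definition: small slope part}, the restriction map becomes a quasi-isomorphism. (5) Conclude the statement on cohomology groups by taking $H^i$, using Remark~\ref{Remark: small-slope part well-defined} to identify $H^i$ of the small-slope complex with the small-slope part of $H^i$.

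The main obstacle — and the step requiring genuine care — is (3)--(4): matching our bookkeeping (strict Iwahori level $\Iw^+_{\GSp_4, n}$, the particular Weyl representatives $\bfitw_i$ and the twists $\bfitw_3^{-1}\bfitw_i$, the ambient loci $\calX^{\tor, \bfitu_p}_{n, \bfitw}$ and support $\calZ_{n,\bfitw}$ defined via $\bfitu_p^{\pm(n+1)}$) with the setup of Boxer--Pilloni, where the conventions on parabolics, on the direction of the $U_p$-dynamics, and on the slope bounds differ (recall Remark~\ref{Remark: comparing fs part with Boxer--Pilloni} on the ``minus-finite-slope'' discrepancy and the footnote correcting a typo in \cite[Lemma 4.2.13]{BP-HigherColeman}). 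Concretely, one must verify that the small-slope conditions (i) and (ii) in Definition~\ref{Definition: small slope part}, expressed in terms of $h^{\mathrm{oc}}_{i,j,k}$ and $h^{\mathrm{sh}}_{i,k}$, translate precisely into the hypotheses of \cite[Theorem 5.12.3]{BP-HigherColeman} under the identifications of step (1), and that the $U_p$-dynamics of Lemma~\ref{Lemma: dynamics of U_p-operators} matches the ``$w$-ordinary'' contracting/expanding picture used there. Condition (iii) plays no role in the classicality argument and is only carried along for later comparison with Betti cohomology, as noted in Remark~\ref{Remark: Betti-small-slope condition in the definition}.
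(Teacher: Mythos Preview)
Your overall strategy---reduce to Boxer--Pilloni's classicality results and check the bookkeeping---is correct and matches the paper, which simply cites \cite[Theorem 5.12.3 \& Corollary 6.8.4]{BP-HigherColeman} and then sketches their argument. However, two points in your proposal are off.

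First, the Kodaira--Spencer and renormalisation discussion in your step (3) is a red herring here. Both sides of the theorem carry the \emph{same} weight $\bfitw_3^{-1}\bfitw k + k_{\bfitw}$, and the small-slope part on both sides is cut out by the \emph{same} normalised operators $U$ (Definition~\ref{Definition: small slope part}). There is no Kodaira--Spencer twist being applied in the statement; the map you need is just ``restrict from $\calX_n^{\tor}$ to the stratum, then include the classical sheaf into the overconvergent one''. Remark~\ref{Remark: purpose of renormalisation} is about a different morphism (appearing later in \S\ref{subsection: OES}) and is not needed here.

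Second, and more substantively, your step (4) collapses what is really a two-step argument, and \cite[Theorem 5.12.3]{BP-HigherColeman} alone does not give you the overconvergent sheaf $\underline{\omega}_{n,r}^{\bfitw_3^{-1}\bfitw k + k_{\bfitw}}$ on the right. The paper's proof separates: (a) a sheaf-level control theorem showing that, on the small-slope part, the inclusion $\underline{\omega}^{\bfitw_3^{-1}\bfitw_i k + k_{\bfitw_i}} \hookrightarrow \underline{\omega}_{n,r}^{\bfitw_3^{-1}\bfitw_i k + k_{\bfitw_i}}$ induces a quasi-isomorphism on $R\Gamma_{\calZ_{n,\bfitw_i}}$---this uses the exact sequence with the $\Theta$-operator (\cite[Proposition 7.2.1]{AIP-2015}, \cite[Lemma 6.2.13]{BP-HigherColeman}) and is where condition (ii) (the bound $h^{\mathrm{sh}}_{i,k}$) enters; and (b) a stratification argument showing that the small-slope part of $R\Gamma(\calX_n^{\tor}, \underline{\omega}^{\bfitw_3^{-1}\bfitw_i k + k_{\bfitw_i}})$ agrees with that of $R\Gamma_{\calZ_{n,\bfitw_i}}(\calX^{\tor,\bfitu_p}_{n,\bfitw_i}, \underline{\omega}^{\bfitw_3^{-1}\bfitw_i k + k_{\bfitw_i}})$, by proving the vanishing on the wrong strata via \cite[Lemmas 5.9.9--5.9.10]{BP-HigherColeman}---this is where condition (i) (the bounds $h^{\mathrm{oc}}_{i,j,k}$) enters. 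Your proposal mentions both conditions but does not identify the $\Theta$-map step, which is the missing ingredient linking the classical and overconvergent sheaves.
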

\begin{proof}
This is \cite[Theorem 5.12.3 \& Corollary 6.8.4]{BP-HigherColeman}. Here we sketch the proof for reader's convenience.

The first step is to establish a control theorem at the level of sheaves. By \cite[Proposition 7.2.1]{AIP-2015} (see also \cite[Lemma 6.2.13]{BP-HigherColeman}), for any $i=0,1,2,3$, there is a short exact sequence \[
     0 \rightarrow \underline{\omega}^{\bfitw_3^{-1}\bfitw_i k+k_{\bfitw_i}} \rightarrow \underline{\omega}_{n, r}^{\bfitw_3^{-1}\bfitw_i k+k_{\bfitw_i}} \xrightarrow{\Theta} \underline{\omega}_{n, r}^{s_{(1, -1)}\cdot (\bfitw_3^{-1}\bfitw_i k+k_{\bfitw_i})}
\]
of sheaves over $\calX_{n, \bfitw_i, (r, r)}^{\tor}$,
where $s_{(1,-1)} $ is the reflection associated with the (only) positive simple root $(1, -1)$ for $H$. The map $\Theta$ has the property that \[
    \Theta \bfitu_{p,1} = \left(-(1, -1)(\bfitw_3^{-1}\bfitw_i \bfitu_{p,1} \bfitw_i^{-1}\bfitw_3)\right)^{(\bfitw_i k)_1-(\bfitw_i k)_2 +1} \bfitu_{p,1} \Theta.
\]
Hence, by \eqref{eq: aut; change of ambient space} and using the condition (ii) of Definition \ref{Definition: small slope part}, there are quasi-isomorphisms \[
    \scalemath{0.9}{ R\Gamma_{\calZ_{n, \bfitw_i}}(\calX^{\tor, \bfitu_p}_{n,\bfitw_i}, \underline{\omega}^{\bfitw_3^{-1}\bfitw_i k+k_{\bfitw_i}})^{\sms} \cong R\Gamma_{\calZ_{n, \bfitw_i}}(\calX_{n, \bfitw_i, (r,r)}^{\tor}, \underline{\omega}^{\bfitw_3^{-1}\bfitw_i k+k_{\bfitw_i}})^{\sms} \cong  R\Gamma_{\calZ_{n, \bfitw_i}}(\calX_{n, \bfitw_i, (r,r)}^{\tor}, \underline{\omega}_{n, r}^{\bfitw_3^{-1}\bfitw_i k+k_{\bfitw_i}})^{\sms} }.
\]

Next, we consider the stratification \[
    \calX_n^{\tor} = \calX_{n, \leq \bfitw_3}^{\tor} \supset \overline{\calX_{n, \leq \bfitw_2}^{\tor}} \supset \overline{\calX_{n, \leq \bfitw_1}^{\tor}} \supset \overline{\calX_{n, \leq \one_4}^{\tor}} = \overline{\calX_{n, \one_4}^{\tor}} \supset \emptyset.
\]
By \cite[Theorem 5.4.12]{BP-HigherColeman}, we have a quasi-isomorphism 
\begin{equation}\label{eq: qi}
    R\Gamma_{\overline{\calX_{n, \leq \bfitw_i}^{\tor}}\smallsetminus \overline{\calX_{n, \leq \bfitw_{i-1}}^{\tor}}}(\calX_{n}^{\tor}\smallsetminus \overline{\calX_{n, \leq \bfitw_{i-1}}^{\tor}}, \underline{\omega}^{\bfitw_3^{-1}\bfitw_i k +k_{\bfitw_i}})^{\fs} \cong R\Gamma_{\calZ_{\bfitw_i, n}}(\calX^{\tor, \bfitu_p}_{n,\bfitw_i}, \underline{\omega}^{\bfitw_3^{-1}\bfitw_i k +k_{\bfitw_i}})^{\fs}.
\end{equation}
Hence, it remains to show that the small-slope part of the left-hand side of (\ref{eq: qi}) is quasi-isomorphic to the small-slope part of the classical complex $R\Gamma(\calX_{n}^{\tor}, \underline{\omega}^{\bfitw_3^{-1}\bfitw_i k +k_{\bfitw_i}})$.

The theory of cohomology with supports (\S \ref{section: cohomology with supports}) yields a diagram \begin{equation}\label{eq: diagram for coh. with supports for classical automorphic sheaf}
    \begin{tikzcd}[column sep = tiny]
        \scalemath{0.8}{ R\Gamma_{\overline{\calX_{n, \leq \bfitw_2}^{\tor}}}(\calX_{n}^{\tor}, \underline{\omega}^{\bfitw_3^{-1}\bfitw_i k +k_{\bfitw_i}})^{\fs} } \arrow[r] & \scalemath{0.8}{ R\Gamma(\calX_{n}^{\tor}, \underline{\omega}^{\bfitw_3^{-1}\bfitw_i k +k_{\bfitw_i}})^{\fs} }\arrow[r] & \scalemath{0.8}{ R\Gamma(\calX_{n}^{\tor} \smallsetminus \overline{\calX_{n, \leq \bfitw_2}^{\tor}}, \underline{\omega}^{\bfitw_3^{-1}\bfitw_i k +k_{\bfitw_i}})^{\fs} }\\
        \scalemath{0.8}{ R\Gamma_{\overline{\calX_{n, \leq \bfitw_1}^{\tor}}}(\calX_{n}^{\tor}, \underline{\omega}^{\bfitw_3^{-1}\bfitw_i k +k_{\bfitw_i}})^{\fs} } \arrow[r] & \scalemath{0.8}{ R\Gamma_{\overline{\calX_{n, \leq \bfitw_2}^{\tor}}}(\calX_{n}^{\tor}, \underline{\omega}^{\bfitw_3^{-1}\bfitw_i k +k_{\bfitw_i}})^{\fs} } \arrow[r] \arrow[ul, equal] & \scalemath{0.8}{ R\Gamma_{\overline{\calX_{n, \leq \bfitw_2}^{\tor}}\smallsetminus \overline{\calX_{n, \leq \bfitw_1}^{\tor}}}(\calX_{n}^{\tor} \smallsetminus \overline{\calX_{n, \leq \bfitw_1}^{\tor}}, \underline{\omega}^{\bfitw_3^{-1}\bfitw_i k +k_{\bfitw_i}})^{\fs} } \\
        \scalemath{0.8}{ R\Gamma_{\overline{\calX_{n, \leq \one_4}^{\tor}}}(\calX_{n}^{\tor}, \underline{\omega}^{\bfitw_3^{-1}\bfitw_i k +k_{\bfitw_i}})^{\fs} } \arrow[r] & \scalemath{0.8}{ R\Gamma_{\overline{\calX_{n, \leq \bfitw_1}^{\tor}}}(\calX_{n}^{\tor}, \underline{\omega}^{\bfitw_3^{-1}\bfitw_i k +k_{\bfitw_i}})^{\fs} } \arrow[r] \arrow[ul, equal] & \scalemath{0.8}{ R\Gamma_{\overline{\calX_{n, \leq \bfitw_1}^{\tor}}\smallsetminus \overline{\calX_{n, \leq \one_4}^{\tor}}}(\calX_{n}^{\tor} \smallsetminus \overline{\calX_{n, \leq \one_4}^{\tor}}, \underline{\omega}^{\bfitw_3^{-1}\bfitw_i k +k_{\bfitw_i}})^{\fs} }
    \end{tikzcd},
\end{equation}
where each row is a distinguished triangle.
We aim to show that, after taking the small-slope part, \begin{equation}\label{eq: vanishing of coherent cohomology at wrong locus}
    R\Gamma_{\overline{\calX_{n, \leq \bfitw_j}^{\tor}}\smallsetminus \overline{\calX_{n, \leq \bfitw_{j-1}}^{\tor}}}(\calX_{n}^{\tor} \smallsetminus \overline{\calX_{n, \leq \bfitw_{j-1}}^{\tor}}, \underline{\omega}^{\bfitw_3^{-1}\bfitw_i k +k_{\bfitw_i}})^{\sms} = 0
\end{equation}
for all $j\neq i$.

Note that, for $\bfitu\in \{\bfitu_{p,0}, \bfitu_{p,1}, \bfitu_p\}$, the associated na\"{i}ve operator $U^{\mathrm{naive}}$ can be defined in a similar way as in \eqref{eq: definition of Hecke operator away from p}, using the correspondence \[
    \begin{tikzcd}
        & \calX_{\Gamma\Iw_{\GSp_4, n}^+ \cap \bfitu \Gamma\Iw_{\GSp_4,n}^+ \bfitu^{-1}}^{\Sigma'', \tor}\arrow[ld, "\pr_2"']\arrow[rd, "\pr_1"]\\
        \calX_n^{\Sigma', \tor} && \calX_n^{\Sigma, \tor}
    \end{tikzcd}
\] for some admissible cone decomposition $\Sigma$, $\Sigma'$ and $\Sigma''$, together with an isomorphism \begin{equation}\label{eq: isomorphism of sheaves to define Up}
    \pr_2^*\underline{\omega}^{\bfitw_3^{-1}\bfitw_i k +k_{\bfitw_i}} \xrightarrow[]{\cong} \pr_1^* \underline{\omega}^{\bfitw_3^{-1}\bfitw_i k +k_{\bfitw_i}}
\end{equation}
established by Boxer--Pilloni. Recall the integral sheaves $\underline{\omega}^{\bfitw_3^{-1}\bfitw_i k +k_{\bfitw_i}, +}$ (Remark \ref{Remark: integral classical sheaf}). According to \cite[Lemma 5.9.9]{BP-HigherColeman}, the isomophism \eqref{eq: isomorphism of sheaves to define Up} induces a map \[
    \pr_2^* \underline{\omega}^{\bfitw_3^{-1}\bfitw_i k +k_{\bfitw_i}, +} \rightarrow p^{v_p(\bfitw_j^{-1}\bfitw_3(\bfitw_3^{-1}\bfitw_i k + k_{\bfitw_i})(\bfitu))} \pr_1^* \underline{\omega}^{\bfitw_3^{-1}\bfitw_i k +k_{\bfitw_i}, +}
\]
on $\pr_2^{-1}\calX_{n, \bfitw_j}^{\Sigma\, \tor} \cap \pr_1^{-1} \calX_{n, \bfitw_j}^{\Sigma, \tor}$. 

On the other hand, \cite[Lemma 5.9.10]{BP-HigherColeman} implies that there exists a quasicompact open $\calU \subset \calX_n^{\tor} \smallsetminus \overline{\calX_{n, \leq \bfitw_{j-1}}^{\tor}}$ and a closed $\calZ \subset \overline{\calX_{n, \leq \bfitw_j}^{\tor}} \smallsetminus \overline{\calX_{n, \leq \bfitw_{j-1}}^{\tor}}$ such that the image of 
\[H^s_{\calU \cap \calZ}(\calU, \underline{\omega}^{\bfitw_3^{-1}\bfitw_i k +k_{\bfitw_i}, +})\rightarrow H^s_{\overline{\calX_{n, \leq \bfitw_j}^{\tor}} \smallsetminus \overline{\calX_{n, \leq \bfitw_{j-1}}^{\tor}}}( \calX_n^{\tor} \smallsetminus \overline{\calX_{n, \leq \bfitw_{j-1}}^{\tor}}, \underline{\omega}^{\bfitw_3^{-1}\bfitw_i k +k_{\bfitw_i}})^{\fs}\] is an open bounded submodule in the target. Hence, $p^{-v_p(\bfitw_j^{-1}\bfitw_3(\bfitw_3^{-1}\bfitw_i k + k_{\bfitw_i})(\bfitu))} U^{\mathrm{naive}}$ preserves an open bounded submodule of $H^s_{\overline{\calX_{n, \leq \bfitw_j}^{\tor}} \smallsetminus \overline{\calX_{n, \leq \bfitw_{j-1}}^{\tor}}}( \calX_n^{\tor} \smallsetminus \overline{\calX_{n, \leq \bfitw_{j-1}}^{\tor}}, \underline{\omega}^{\bfitw_3^{-1}\bfitw_i k +k_{\bfitw_i}})^{\fs}$. Therefore, the slopes of $U^{\mathrm{naive}}$ occurring in $R\Gamma_{\overline{\calX_{n, \leq \bfitw_j}^{\tor}}\smallsetminus \overline{\calX_{n, \leq \bfitw_{j-1}}^{\tor}}}(\calX_{n}^{\tor} \smallsetminus \overline{\calX_{n, \leq \bfitw_{j-1}}^{\tor}}, \underline{\omega}^{\bfitw_3^{-1}\bfitw_i k +k_{\bfitw_i}})^{\fs}$ are larger than or equal to $ v_p\left(\bfitw_j^{-1}\bfitw_3(\bfitw_3^{-1}\bfitw_i k + k_{\bfitw_i})(\bfitu)\right)$. It then follows from the definition of the small-slope part that \[
    R\Gamma_{\overline{\calX_{n, \leq \bfitw_j}^{\tor}}\smallsetminus \overline{\calX_{n, \leq \bfitw_{j-1}}^{\tor}}}(\calX_{n}^{\tor} \smallsetminus \overline{\calX_{n, \leq \bfitw_{j-1}}^{\tor}}, \underline{\omega}^{\bfitw_3^{-1}\bfitw_i k +k_{\bfitw_i}})^{\sms} = 0
\]
for $j\neq i$ as desired in \eqref{eq: vanishing of coherent cohomology at wrong locus}.

Finally, together with \eqref{eq: diagram for coh. with supports for classical automorphic sheaf}, we see that the natural maps \[
    \scalemath{0.8}{ R\Gamma(\calX_{n}^{\tor}, \underline{\omega}^{\bfitw_3^{-1}\bfitw_i k +k_{\bfitw_i}})^{\sms} \leftarrow R\Gamma_{\overline{\calX_{n, \leq \bfitw_i}^{\tor}}}(\calX_{n}^{\tor}, \underline{\omega}^{\bfitw_3^{-1}\bfitw_i k +k_{\bfitw_i}})^{\sms} \rightarrow R\Gamma_{\overline{\calX_{n, \leq \bfitw_i}^{\tor}}\smallsetminus \overline{\calX_{n, \leq \bfitw_{i-1}}^{\tor}}}(\calX_{n}^{\tor} \smallsetminus \overline{\calX_{n, \leq \bfitw_{i-1}}^{\tor}}, \underline{\omega}^{\bfitw_3^{-1}\bfitw_i k +k_{\bfitw_i}})^{\sms} }
\]
are quasi-isomorphisms.
\end{proof}

\subsection{Pro-Kummer \'etale cohomology groups of classical automorphic sheaves}\label{subsection: proket coh with support for classical aut sheaves}

In later sections, we shall encounter certain pro-Kummer \'etale variants of the cohomology groups studied in \S \ref{subsection: Hecke operators}. These pro-Kummer \'etale cohomology groups (with or without supports) play a central role in the construction of overconvergent Eichler--Shimura morphisms. The main purpose of \S \ref{subsection: proket coh with support for classical aut sheaves} is to analyse the pro-Kummer \'etale groups of (completed) classical automorphic sheaves. In particular, we prove an analogue of the classicality theorem (Theorem \ref{Theorem: classicality theorem in higher Coleman theory}) for such cohomology groups.

Let $k=(k_1 ,k_2)\in \Z^2$ be an integral weight with $k_1\geq k_2$. Recall the integral subsheaf $\underline{\omega}^{k, +}\subset \underline{\omega}^{k}$ defined in Remark \ref{Remark: integral classical sheaf}, and consider the completed pullbacks of $\underline{\omega}^{k}$ and $\underline{\omega}^{k,+}$ to the pro-Kummer \'etale site $\calX_{n, \proket}^{\tor}$; namely, consider
\[
    \widehat{\underline{\omega}}^{k} := \upsilon^{-1}\underline{\omega}^{k}\otimes_{\upsilon^{-1}\scrO_{\calX_{n}^{\tor}}} \widehat{\scrO}_{\calX_{n, \proket}^{\tor}}
\]
and
\[  
    \widehat{\underline{\omega}}^{k,+} := \upsilon^{-1}\underline{\omega}^{k,+}\otimes_{\upsilon^{-1}\scrO_{\calX_{n}^{\tor}}^+} \widehat{\scrO}_{\calX_{n, \proket}^{\tor}}^+
\]
where $\upsilon: \calX_{n, \proket}^{\tor} \rightarrow \calX_{n, \an}^{\tor}$ is the natural morphism of sites. 

We consider the pro-Kummer \'etale cohomology (with or without supports) of these completed automorphic sheaves; for example, $R\Gamma_{\proket}(\calX_n^{\tor}, \widehat{\underline{\omega}}^{\bfitw_3^{-1}\bfitw_i k})$, $R\Gamma_{\overline{\calX_{n, \leq \bfitw_i}^{\tor}},\proket}(\calX_n^{\tor}, \widehat{\underline{\omega}}^{\bfitw_3^{-1}\bfitw_i k})$, and $R\Gamma_{\overline{\calX_{n, \leq \bfitw_i}^{\tor}}\smallsetminus \overline{\calX_{n, \leq \bfitw_{i-1}}^{\tor}},\proket}(\calX_n^{\tor} \smallsetminus \overline{\calX_{n, \leq \bfitw_{i-1}}^{\tor}}, \widehat{\underline{\omega}}^{\bfitw_3^{-1}\bfitw_i k})$, for $i=0,1,2,3$, as well as their cohomology groups. Just for technical purpose, we define the `small-slope part' of these complexes/cohomology groups in the same way as in Definition \ref{Definition: small slope part}, except that instead of using $U\in \{U_{p,0}, U_{p, 1}, U_p\}$, we use $U^{\mathrm{naive}} \in \{U_{p,0}^{\mathrm{naive}}, U_{p,1}^{\mathrm{naive}}, U_{p}^{\mathrm{naive}}\}$. These small-slope parts are denoted by $R\Gamma_{\proket}(\calX_n^{\tor}, \widehat{\underline{\omega}}^{\bfitw_3^{-1}\bfitw_i k})^{\sms}$, $R\Gamma_{\overline{\calX_{n, \leq \bfitw_i}^{\tor}},\proket}(\calX_n^{\tor}, \widehat{\underline{\omega}}^{\bfitw_3^{-1}\bfitw_i k})^{\sms}$, and $R\Gamma_{\overline{\calX_{n, \leq \bfitw_i}^{\tor}}\smallsetminus \overline{\calX_{n, \leq \bfitw_{i-1}}^{\tor}},\proket}(\calX_n^{\tor} \smallsetminus \overline{\calX_{n, \leq \bfitw_{i-1}}^{\tor}}, \widehat{\underline{\omega}}^{\bfitw_3^{-1}\bfitw_i k})^{\sms}$. 

The following result is a pro-Kummer \'etale analogue of Theorem \ref{Theorem: classicality theorem in higher Coleman theory}.

\begin{Proposition}\label{Proposition: quasi-isomorphisms of pro-Kummer \'etale cohomology for classical automorphic sheaves}
   Let $k=(k_1 ,k_2)\in \Z^2$ be an integral weight with $k_1\geq k_2$. For $i=0,1,2,3$, the natural morphisms \[
        \scalemath{0.8}{ R\Gamma_{\proket}(\calX_n^{\tor}, \widehat{\underline{\omega}}^{\bfitw_3^{-1}\bfitw_i k})^{\sms} \leftarrow R\Gamma_{\overline{\calX_{n, \leq \bfitw_i}^{\tor}},\proket}(\calX_n^{\tor}, \widehat{\underline{\omega}}^{\bfitw_3^{-1}\bfitw_i k})^{\sms} \rightarrow R\Gamma_{\overline{\calX_{n, \leq \bfitw_i}^{\tor}}\smallsetminus \overline{\calX_{n, \leq \bfitw_{i-1}}^{\tor}},\proket}(\calX_n^{\tor} \smallsetminus \overline{\calX_{n, \leq \bfitw_{i-1}}^{\tor}}, \widehat{\underline{\omega}}^{\bfitw_3^{-1}\bfitw_i k})^{\sms} }
    \]
    are quasi-isomorphisms. 
\end{Proposition}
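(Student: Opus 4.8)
The plan is to deduce the pro-Kummer \'etale statement from its coherent counterpart (Theorem \ref{Theorem: classicality theorem in higher Coleman theory}) by relating the two via the projection $\upsilon\colon \calX_{n,\proket}^{\tor}\to\calX_{n,\an}^{\tor}$ and a primitive comparison / a Leray spectral sequence argument. First I would recall that for an admissible Banach sheaf $\scrF$ on $\calX_n^{\tor}$ (in particular for the integral models $\underline{\omega}^{k,+}$, which are locally free of finite rank up to a completion) one has $R\upsilon_*\widehat{\scrF}^{+}\cong \scrF^{+}\widehat{\otimes}_{\scrO}\,R\upsilon_*\widehat{\scrO}^+$, and by the computation of $R\upsilon_*\widehat{\scrO}^+$ on the log-smooth proper $\calX_n^{\tor}$ (Faltings/Scholze, in the Kummer \'etale setting as in \cite{Diao}), the higher direct images are, almost, built from $\Omega^{\bullet}_{\calX_n^{\tor}}(-\bullet)$ — concretely $R^j\upsilon_*\widehat{\scrO}^+_{\calX_{n,\proket}^{\tor}}\cong{}^a\Omega^{\log,j}_{\calX_n^{\tor}}(-j)$ up to a bounded torsion error, hence after inverting $p$ one gets $R^j\upsilon_*\widehat{\scrO}_{\calX_{n,\proket}^{\tor}}\cong\Omega^{\log,j}_{\calX_n^{\tor}}(-j)$. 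This is exactly the ingredient already used in \S\ref{subsection: Hecke operators} (the Kodaira--Spencer identification $\underline{\omega}^{k_{\bfitw_i}}\cong\Omega^{\log,i}_{\calX_n^{\tor}}$) and I would invoke it in the same normalised form, so that each $R^j\upsilon_*$ contributes a twist $\underline{\omega}^{k_{\bfitw_j}}(-j)$.

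Next I would run the Leray spectral sequence $E_2^{i,j}=H^i_{\mathrm{an}}\big(\calX_n^{\tor},\,\underline{\omega}^{\bfitw_3^{-1}\bfitw_i k}\otimes R^j\upsilon_*\widehat{\scrO}\big)\Rightarrow H^{i+j}_{\proket}(\calX_n^{\tor},\widehat{\underline{\omega}}^{\bfitw_3^{-1}\bfitw_i k})$, and the analogous spectral sequences for the two support conditions $\overline{\calX_{n,\leq\bfitw_i}^{\tor}}$ and $\overline{\calX_{n,\leq\bfitw_i}^{\tor}}\smallsetminus\overline{\calX_{n,\leq\bfitw_{i-1}}^{\tor}}$; the three spectral sequences are compatible via the restriction maps in the statement. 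By the projection formula the $E_2$-page is, up to a Tate twist, a sum of coherent cohomology groups $H^i_{?}(\,-,\,\underline{\omega}^{\bfitw_3^{-1}\bfitw_i k+k_{\bfitw_j}})$ — note the extra weight $k_{\bfitw_j}$ coming from $R^j\upsilon_*$. Now I pass to the small-slope part. The key point is that the naive operators $U^{\mathrm{naive}}$ act on the pro-Kummer \'etale cohomology compatibly with their action on each $E_2$-term, and that the normalised $U$ differs from $U^{\mathrm{naive}}$ by the Tate-twist factor $p^{v_p(\cdots)}$ that has been absorbed precisely to make Kodaira--Spencer Hecke-equivariant (Remark \ref{Remark: purpose of renormalisation}); hence the small-slope condition defined by $U^{\mathrm{naive}}$ on the proket side matches, term by term on $E_2$, the small-slope condition defined by $U$ on the coherent side. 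Then Theorem \ref{Theorem: classicality theorem in higher Coleman theory} (together with the vanishing \eqref{eq: vanishing of coherent cohomology at wrong locus}) applies to each $E_2$-term of each of the three spectral sequences and shows that, after taking small-slope parts, the restriction maps between the $E_2$-pages are isomorphisms. A standard comparison-of-spectral-sequences argument then upgrades this to a quasi-isomorphism of the abutments, which is the claim.

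The main obstacle I anticipate is making the comparison $R\upsilon_*\widehat{\underline{\omega}}^{k}\cong\bigoplus_j\underline{\omega}^{k}\otimes\underline{\omega}^{k_{\bfitw_j}}(-j)[-j]$ precise for the \emph{completed} sheaf $\widehat{\underline{\omega}}^{k}$ rather than a genuine vector bundle — one has to commute the completed pullback $\widehat{(-)}$ past $R\upsilon_*$, which requires the admissibility of $\underline{\omega}^{k}$ (Corollary \ref{Corollary: structure of an admissible Banach sheaf}), a Mittag--Leffler argument on the integral models modulo $p^m$, and the fact that cohomology commutes with the relevant limits and (on affinoids) filtered colimits, exactly as in the proof of Lemma \ref{Lemma: cohomology independent of toroidal compactification}; one also needs the primitive comparison theorem with coefficients in a $\Z_p$-local system in the log-smooth proper setting, which I would cite from \cite{Diao} (or its almost-mathematics refinement) rather than reprove. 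A secondary technical point is that the loci $\calX_n^{\tor}\smallsetminus\overline{\calX_{n,\leq\bfitw_{i-1}}^{\tor}}$ are only partially proper, not proper, so the comparison there is the one for cohomology with support in the closed subset $\overline{\calX_{n,\leq\bfitw_i}^{\tor}}\smallsetminus\overline{\calX_{n,\leq\bfitw_{i-1}}^{\tor}}$; this is handled by the six-operations-lite formalism of the appendix together with the excision triangles in \eqref{eq: diagram for coh. with supports for classical automorphic sheaf}, so the same spectral-sequence bookkeeping goes through once the proper case is in hand. Apart from these, everything else is a formal consequence of Theorem \ref{Theorem: classicality theorem in higher Coleman theory} and the slope bookkeeping.
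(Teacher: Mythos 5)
Your end goal — reduce everything to the vanishing of the small-slope part of the cohomology with support on the strata $j\neq i$ and then conclude by excision — is the same as the paper's, but the route you propose to that vanishing has a genuine gap. Your Leray $E_2$-page consists of the terms $H^s_{?}\bigl(-,\underline{\omega}^{\bfitw_3^{-1}\bfitw_i k}\otimes\Omega^{\log,t}_{\calX_n^{\tor}}\bigr)\cong H^s_{?}\bigl(-,\underline{\omega}^{\bfitw_3^{-1}\bfitw_i k+k_{\bfitw_t}}\bigr)(-t)$ for \emph{all} $t=0,\dots,3$, whereas Theorem \ref{Theorem: classicality theorem in higher Coleman theory} and the vanishing \eqref{eq: vanishing of coherent cohomology at wrong locus} only treat the matched weight $t=i$. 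For the off-diagonal terms the slope bound supplied by the Boxer--Pilloni argument on the stratum $j$ is $v_p\bigl(\bfitw_j^{-1}\bfitw_3(\bfitw_3^{-1}\bfitw_i k+k_{\bfitw_t})(\bfitu)\bigr)$, but the small-slope condition on the pro-Kummer \'etale side is defined via the \emph{un-normalised} operators $U^{\mathrm{naive}}$ acting on $\widehat{\underline{\omega}}^{\bfitw_3^{-1}\bfitw_i k}$; translating one into the other through the (non-Hecke-equivariant) Kodaira--Spencer isomorphism introduces a correction $v_p(\bfitw_j^{-1}\bfitw_3 k_{\bfitw_t}(\bfitu))-v_p(\bfitw_t^{-1}\bfitw_3 k_{\bfitw_t}(\bfitu))$ which vanishes only when $t=j$. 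So your assertion that the small-slope conditions ``match term by term on $E_2$'' is precisely the point that needs proof, and it is not supplied by the cited theorem; without it the required vanishing of the cross terms, and hence the comparison of abutments, does not follow.

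The paper avoids this bookkeeping entirely by never passing through the Leray spectral sequence. It works directly with the completed sheaf $\widehat{\underline{\omega}}^{\bfitw_3^{-1}\bfitw_i k}$ on the pro-Kummer \'etale site: using the Hecke correspondence and the integral sheaves $\widehat{\underline{\omega}}^{\bfitw_3^{-1}\bfitw_i k,+}$ it gets the divisibility by $p^{v_p(\bfitw_j^{-1}\bfitw_i k(\bfitu))}$ of the cohomological correspondence over the stratum $j$; it then builds an explicit integral {\v C}ech model $C^{\bullet,+}$ for $R\Gamma_{\calU\cap\calZ,\proket}(\calU,\widehat{\underline{\omega}}^{\bfitw_3^{-1}\bfitw_i k,+})$ (after descending the whole situation to a finite extension $K$ of $\Q_p$), shows that $C^{\bullet,+,\leq h}$ is a perfect complex over $\calO_{\C_p}$, and invokes Lemma \ref{Lemma: higher cohomology of integral structure has bounded torsion} to see that the comparison with the actual integral cohomology has bounded-torsion kernel and cokernel. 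This yields an open bounded submodule preserved by $p^{-v_p(\bfitw_j^{-1}\bfitw_i k(\bfitu))}U^{\mathrm{naive}}$, hence the slopes on the stratum $j\neq i$ are at least $v_p(\bfitw_j^{-1}\bfitw_i k(\bfitu))\geq h^{\mathrm{oc}}_{i,\cdot,k}$ and the small-slope part vanishes there. This integral-structure argument on the pro-Kummer \'etale side is the substantive content of the proof, and it is the piece your proposal replaces with an unverified reduction.
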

\begin{proof}
    The proof is similar to the one of Theorem \ref{Theorem: classicality theorem in higher Coleman theory}.

    First of all, for $\bfitu\in \{\bfitu_{p,0}, \bfitu_{p,1}, \bfitu\}$, recall the correspondence 
    \begin{equation}\label{eq: diagram correspeondence}
    \begin{tikzcd}
        & \calX_{\Gamma\Iw_{\GSp_4, n}^+ \cap \bfitu \Gamma\Iw_{\GSp_4,n}^+ \bfitu^{-1}}^{\Sigma'', \tor}\arrow[ld, "\pr_2"']\arrow[rd, "\pr_1"]\\
        \calX_n^{\Sigma', \tor} && \calX_n^{\Sigma, \tor}
    \end{tikzcd}
    \end{equation}
    that defines the operator $U\in \{U_{p,0}, U_{p,1}, U_p\}$. By \cite[Lemma 5.9.9]{BP-HigherColeman}, the isomorphism \[
        \pr_2^* \widehat{\underline{\omega}}^{\bfitw_3^{-1}\bfitw_i k} \xrightarrow[]{\cong} \pr_1^* \widehat{\underline{\omega}}^{\bfitw_3^{-1}\bfitw_i k}
    \]
    induces a map \[
        \pr_2^* \widehat{\underline{\omega}}^{\bfitw_3^{-1}\bfitw_i k, +} \rightarrow p^{v_p(\bfitw_j^{-1}\bfitw_i k(\bfitu_p))}\pr_1^* \widehat{\underline{\omega}}^{\bfitw_3^{-1}\bfitw_i k, +}
    \]
    over  $\left(\pr_2^{-1}\calX_{n, \bfitw_j}^{\Sigma\, \tor} \cap  \pr_1^{-1} \calX_{n, \bfitw_j}^{\Sigma, \tor}\right)_{\proket}$, for any $j\neq i$.

Now, we claim that the endomorphism $p^{-v_p(\bfitw_j^{-1}\bfitw_i k(\bfitu))} U^{\mathrm{naive}}$ on the pro-Kummer \'etale cohomology group $H_{\overline{\calX_{n, \leq \bfitw_j}^{\tor}}\smallsetminus \overline{\calX_{n, \leq \bfitw_{j-1}}^{\tor}}, \proket}^{t}(\calX_n^{\tor} \smallsetminus \overline{\calX_{n, \leq \bfitw_{j-1}}^{\tor}}, \widehat{\underline{\omega}}^{\bfitw_3^{-1}\bfitw_i k})^{\fs}$ preserves an open and bounded submodule, for every $t$.

To this end, we first observe that $\calX_n^{\tor}$ is the base extension of the (toroidally compactified) Siegel modular variety $\calX_{n,K}^{\tor}$ defined over some finite extension $K$ of $\Q_p$. Similarly, the loci $\calX_{n, \leq \bfitw}^{\tor}$ (resp., $\calX_{n, \leq \bfitw, (m, n)}^{\tor}$) is the base change of the loci $\calX_{n, K, \leq \bfitw}^{\tor}$ (resp., $\calX_{n, K, \leq \bfitw, (m, n)}^{\tor}$) defined over $K$. The sheaves $\underline{\omega}^{\bfitw_3^{-1}\bfitw k}$ also descends to $K$. Choose a quasi-compact open $\calU_K \subset \calX_{n,K}^{\tor} \smallsetminus \overline{\calX_{n, K,\leq \bfitw_{j-1}}^{\tor}}$ and a closed subset $\calZ_K \subset \overline{\calX_{n,K, \leq \bfitw_j}^{\tor}}\smallsetminus \overline{\calX_{n,K, \leq \bfitw_{j-1}}^{\tor}}$ (as in \cite[Lemma 5.9.10]{BP-HigherColeman}) such that there exists $m, n, s\in \Z_{\geq 0}$ and \begin{align*}
    \calX_{n,K, \bfitw_j, (0,\overline{m+s})}^{\tor} \cap \calX_{n,K, \bfitw_j, (n, \overline{0})}^{\tor} & \subset \calZ_K \cap \calU_K \subset \calX_{n, K,\bfitw_j, (0, \overline{3m})}^{\tor}, \\
    \calX_{n,K, \bfitw_j, (0, \overline{m})}^{\tor} \cap \calX_{n,K, \bfitw_j, (n+s, \overline{0})}^{\tor} & \subset \calU_K \subset \calX_{n, K,\bfitw_j, (3n, -1)}^{\tor}.
\end{align*}
Let $\calU$ and $\calZ$ denote the base change of $\calU_K$ and $\calZ_K$ to $\C_p$, respectively.

Choose a finite open covering $\frakU_{1,K}=\{\calU_s\}_{s\in I}$ of $\calU_K$ by affinoid open subsets $\calU_s$ on which the vector bundle $\underline{\omega}^{\bfitw_3^{-1}\bfitw_i k}$ is trivialized. For each $\calU_s$, let \[\calU_{s,\infty}:=\calU_s\times_{\calX_{n,K}^{\tor}}\calX_{\Gamma(p^{\infty})}^{\tor}.\] 
Then each $\calU_{s,\infty}$ is a log affinoid perfectoid object over $\calX_n^{\tor}$ and $\frakU_{1}:=\{\calU_{s,\infty}\}_{s\in I}$ is a pro-Kummer \'etale covering of $\calU$. By construction, the covering $\frakU_1$ of $\calU$ is, in fact, a pro-Kummer \'etale atlas of $\underline{\widehat{\omega}}^{\bfitw_3^{-1}\bfitw_i k}$ in the sense of Definition \ref{Definition: proket Banach sheaves}. Consequently, the {\v C}ech complex $\check{C}^{\bullet}(\frakU_1, \underline{\widehat{\omega}}^{\bfitw_3^{-1}\bfitw_i k})$ computes $R\Gamma_{\proket}(\calU, \underline{\widehat{\omega}}^{\bfitw_3^{-1}\bfitw_i k})$ by Lemma \ref{Lemma: proket coh can be computed by Cech cohomology}. Choose another finite open covering $\frakU_{2,K}$ of $\calU_K \smallsetminus (\calU_K \cap \calZ_K)$, refining $\frakU_{1,K}$. This induces a pro-Kummer \'etale covering $\frakU_2$ of $\calU \smallsetminus (\calU \cap \calZ)$ by the same recipe. Likewise, the {\v C}ech complex $\check{C}^{\bullet}(\frakU_2, \underline{\widehat{\omega}}^{\bfitw_3^{-1}\bfitw_i k})$ computes $R\Gamma_{\proket}(\calU \smallsetminus (\calU \cap \calZ), \underline{\widehat{\omega}}^{\bfitw_3^{-1}\bfitw_i k})$. 

It follows from the construction that $R\Gamma_{\calU\cap \calZ, \proket}(\calU, \underline{\widehat{\omega}}^{\bfitw_3^{-1}\bfitw_i k})$ is represented by the mapping cone \[
    C^{\bullet}  \coloneq \mathrm{Cone}\left( \check{C}^{\bullet}(\frakU_1, \underline{\widehat{\omega}}^{\bfitw_3^{-1}\bfitw_i k}) \rightarrow \check{C}^{\bullet}(\frakU_2, \underline{\widehat{\omega}}^{\bfitw_3^{-1}\bfitw_i k})  \right)[-1].
\]
By Proposition \ref{Proposition: proket cohomology with supp for Banach sheaves}, we know that \[
    C^{\bullet} \in \mathrm{Pro}_{\Z_{\geq 0}}(\mathrm{K}^{\proj}(\C_p)).
\]
Moreover, the discussions in \cite[\S 5.3]{BP-HigherColeman} (see also \cite[Corollary 5.3.8]{BP-HigherColeman}) implies that $U_p$ is well-defined on $C^{\bullet}$ and is potent compact. In particular, we can consider $C^{\bullet, \fs}$ and $C^{\bullet, \leq h}$. The same proof as in \cite[Theorem 5.4.12]{BP-HigherColeman} yields a quasi-isomorphism \[
    C^{\bullet, \fs} \cong R\Gamma_{\overline{\calX_{n, \leq \bfitw_j}^{\tor}}\smallsetminus \overline{\calX_{n, \leq \bfitw_{j-1}}^{\tor}}, \proket}(\calX_n^{\tor} \smallsetminus \overline{\calX_{n, \leq \bfitw_{j-1}}^{\tor}}, \widehat{\underline{\omega}}^{\bfitw_3^{-1}\bfitw_i k})^{\fs}.
\]
To prove the claim, we consider an integral version of $C^{\bullet}$ given by
 \[
    C^{\bullet, +}  \coloneq \mathrm{Cone}\left( \check{C}^{\bullet}(\frakU_1, \underline{\widehat{\omega}}^{\bfitw_3^{-1}\bfitw_i k,+}) \rightarrow \check{C}^{\bullet}(\frakU_2, \underline{\widehat{\omega}}^{\bfitw_3^{-1}\bfitw_i k,+})  \right)[-1]
\]
which is a subcomplex of open and bounded submodules of $C^{\bullet}$, and consider
\[
    C^{\bullet, +, \leq h} := \image\left( C^{\bullet, +} \rightarrow C^{\bullet} \rightarrow C^{\bullet, \leq h} \right).
\]
Notice that the complexes $C^{\bullet, \leq h}$ is the base changes of a perfect complexes over $K$, and that $C^{\bullet, +, \leq h}\subset C^{\bullet, \leq h}$ is the base change of an open and bounded subcomplex of $\calO_K$-submodules. It follows that $C^{\bullet, +, \leq h}$ is a perfect complex over $\calO_{\C_p}$. Therefore, \[
    H^t(C^{\bullet, +})^{\leq h} \coloneq \image\left( H^t(C^{\bullet, +}) \rightarrow H^t(C^{\bullet})^{\leq h} \right)
\]
is an open bounded submodule. Passing to the limit over $h$, we see that \[
    H^t(C^{\bullet, +})^{\fs} 
    \coloneq \image\left(H^t(C^{\bullet, +}) \rightarrow H^t(C^{\bullet})^{\fs}\right)
\]
is an open bounded submodule. 

To prove the claim, it suffices to show the map \[
    H^t(C^{\bullet, +}) \rightarrow H_{\calU \cap \calZ, \proket}^t(\calU, \widehat{\underline{\omega}}^{\bfitw_3^{-1}\bfitw_i k,+}),
\]
induced by the natural map \[
    C^{\bullet, +} \rightarrow R\Gamma_{\calU \cap \calZ, \proket}(\calU, \widehat{\underline{\omega}}^{\bfitw_3^{-1}\bfitw_i k,+}),
\]
has kernels and cokernels of bounded torsion. However, by using the {\v Cech}-to-cohomology spectral sequence (\cite[\href{https://stacks.math.columbia.edu/tag/03OW}{Tag 03OW}]{stacks-project}), this follows from Lemma \ref{Lemma: higher cohomology of integral structure has bounded torsion}.

Consequently, the slope of $U^{\mathrm{naive}}$ occurring in $R\Gamma_{\overline{\calX_{n, \leq \bfitw_j}^{\tor}}\smallsetminus \overline{\calX_{n, \leq \bfitw_{j-1}}^{\tor}}, \proket}(\calX_n^{\tor} \smallsetminus \overline{\calX_{n, \leq \bfitw_{j-1}}^{\tor}}, \widehat{\underline{\omega}}^{\bfitw_3^{-1}\bfitw_i k})^{\fs}$ should be larger than or equal to $v_p(\bfitw_j^{-1}\bfitw_i k(\bfitu))$. The theory of cohomology with supports (\S \ref{section: cohomology with supports}) yields a diagram \[
        \begin{tikzcd}[column sep = tiny]
        \scalemath{0.8}{ R\Gamma_{\overline{\calX_{n, \leq \bfitw_2}^{\tor}}, \proket}(\calX_{n}^{\tor}, \widehat{\underline{\omega}}^{\bfitw_3^{-1}\bfitw_i k})^{\fs} } \arrow[r] & \scalemath{0.8}{ R\Gamma_{\proket}(\calX_{n}^{\tor}, \widehat{\underline{\omega}}^{\bfitw_3^{-1}\bfitw_i k})^{\fs} }\arrow[r] & \scalemath{0.8}{ R\Gamma_{\proket}(\calX_{n}^{\tor} \smallsetminus \overline{\calX_{n, \leq \bfitw_2}^{\tor}}, \widehat{\underline{\omega}}^{\bfitw_3^{-1}\bfitw_i k})^{\fs} }\\
        \scalemath{0.8}{ R\Gamma_{\overline{\calX_{n, \leq \bfitw_1}^{\tor}}, \proket}(\calX_{n}^{\tor}, \widehat{\underline{\omega}}^{\bfitw_3^{-1}\bfitw_i k})^{\fs} } \arrow[r] & \scalemath{0.8}{ R\Gamma_{\overline{\calX_{n, \leq \bfitw_2}^{\tor}}, \proket}(\calX_{n}^{\tor}, \widehat{\underline{\omega}}^{\bfitw_3^{-1}\bfitw_i k})^{\fs} } \arrow[r] \arrow[ul, equal] & \scalemath{0.8}{ R\Gamma_{\overline{\calX_{n, \leq \bfitw_2}^{\tor}}\smallsetminus \overline{\calX_{n, \leq \bfitw_1}^{\tor}}, \proket}(\calX_{n}^{\tor} \smallsetminus \overline{\calX_{n, \leq \bfitw_1}^{\tor}}, \widehat{\underline{\omega}}^{\bfitw_3^{-1}\bfitw_i k})^{\fs} } \\
        \scalemath{0.8}{ R\Gamma_{\overline{\calX_{n, \leq \one_4}^{\tor}}, \proket}(\calX_{n}^{\tor}, \widehat{\underline{\omega}}^{\bfitw_3^{-1}\bfitw_i k})^{\fs} } \arrow[r] & \scalemath{0.8}{ R\Gamma_{\overline{\calX_{n, \leq \bfitw_1}^{\tor}}, \proket}(\calX_{n}^{\tor}, \widehat{\underline{\omega}}^{\bfitw_3^{-1}\bfitw_i k})^{\fs} } \arrow[r] \arrow[ul, equal] & \scalemath{0.8}{ R\Gamma_{\overline{\calX_{n, \leq \bfitw_1}^{\tor}}\smallsetminus \overline{\calX_{n, \leq \one_4}^{\tor}}, \proket}(\calX_{n}^{\tor} \smallsetminus \overline{\calX_{n, \leq \one_4}^{\tor}}, \widehat{\underline{\omega}}^{\bfitw_3^{-1}\bfitw_i k})^{\fs} }
    \end{tikzcd},
    \]
    where each row is a distinguished triangle. Passing to the small-slope part (with respect to the na\"{i}ve Hecke operators), one sees that \[
        R\Gamma_{\overline{\calX_{n, \leq \bfitw_j}^{\tor}}\smallsetminus \overline{\calX_{n, \leq \bfitw_{j-1}}^{\tor}}, \proket}(\calX_{n}^{\tor} \smallsetminus \overline{\calX_{n, \leq \bfitw_1}^{\tor}}, \widehat{\underline{\omega}}^{\bfitw_3^{-1}\bfitw_i k})^{\sms} = 0        
    \]
    whenever $j\neq i$. The desired result follows. 
\end{proof}
\section{Overconvergent cohomology groups for \texorpdfstring{$\GSp_4$}{GSp4}}\label{section: OC}

In this section, we introduce the so-called \emph{overconvergent cohomology groups} which are designed to $p$-adically interpolate the \'etale cohomology groups in the Eichler--Shimura decompostion (cf. Theorem \ref{thm: Faltings-Chai}). In \S \ref{subsection: Betti overconvergent cohomology}, we recall the original definitions of Hansen following \cite{Hansen-PhD}. For our purpose, we re-interpret these notions in terms of Kummer and pro-Kummer \'etale cohomology groups of sheaves $\scrO\!\!\scrD_{\kappa_{\calU}}^{r}$, as we will explain in \S \ref{subsection: Kummer and pro-Kummer \'etale overconvergent cohomology}. Here we follow the ideas from \cite{Hansen-Iwasawa} and \cite{CHJ-2017}. Readers are also encouraged to consult \cite{DRW}. In \S \ref{subsection: alternative construction on flag variety}, we present an alternative construction of the sheaves $\scrO\!\!\scrD_{\kappa_{\calU}}^{r}$ on the flag variety. This will be used in the construction of Eichler--Shimura morphisms in \S \ref{subsection: OES}. Finally, in \S \ref{subsection: pro-Kummer \'etale overconvergent cohomology with supports}, we introduce certain variants of such overconvergent cohomology groups, in terms of pro-Kummer \'etale cohomology with supports. Such variants are indispensible if one wants to $p$-adically interpolate the \emph{entire} Eichler--Shimura decomposition; in fact, they already appear in the statement of Theorem \ref{thm: main thm intro}.

\subsection{Betti cohomology groups}\label{subsection: Betti overconvergent cohomology}
Let $(R_{\calU}, \kappa_{\calU})$ be a weight and let $r\in \Q_{\geq 0}$ with $r>1+r_{\calU}$. Recall the spaces of analytic functions (cf. Remark \ref{Remark: analytic representation for other groups}) \[
    A_{\kappa_{\calU}}^{r, \circ}(\Iw_{\GSp_4, 1}^+, R_{\calU}), \quad A_{\kappa_{\calU}}^{r^+, \circ}(\Iw_{\GSp_4, 1}^+, R_{\calU}), \quad A_{\kappa_{\calU}}^r(\Iw_{\GSp_{4}, 1}^+, R_{\calU}), \quad \text{ and }\quad A_{\kappa_{\calU}}^{r^+}(\Iw_{\GSp_4, 1}^+, R_{\calU}). 
\]
To simplify the notations, we drop the `$(\Iw_{\GSp_4, 1}^+, R_{\calU})$' in the notations when everything is clear in the context. 

We equip with these spaces the following two $\Iw_{\GSp_{4}, 1}^+$-actions: \begin{enumerate}
    \item[(i)] The right $\Iw_{\GSp_{4}, 1}^+$-action by the left translation, \emph{i.e.}, \[
        (f\cdot \bfgamma)(\bfalpha) = f(\bfgamma \bfalpha) 
    \] for $\bfgamma, \bfalpha\in \Iw_{\GSp_4, 1}^+$. 
    \item[(ii)] The left $\Iw_{\GSp_4, 1}^+$-action by left translation of the transpose, \emph{i.e.}, \[
        (\bfgamma \cdot f)(\bfalpha) = f(\trans\bfgamma \bfalpha)
    \] for $\bfgamma, \bfalpha\in \Iw_{\GSp_4, 1}^+$.
\end{enumerate}
Taking duals, we obtain the corresponding spaces of distributions:\[
    \begin{array}{rlrl}
        D_{\kappa_{\calU}}^{r, \circ} &\coloneq \Hom_{R_{\calU}^{\circ}}^{\cts}(A_{\kappa_{\calU}}^{r, \circ}, R_{\calU}^{\circ}), & D_{\kappa_{\calU}}^{r^+, \circ} &\coloneq \Hom_{R_{\calU}^{\circ}}^{\cts}(A_{\kappa_{\calU}}^{r^+, \circ}, R_{\calU}^{\circ}),  \\
        D_{\kappa_{\calU}}^{r} & \coloneq  D_{\kappa_{\calU}}^{r, \circ}\Big[\frac{1}{p}\Big], &  D_{\kappa_{\calU}}^{r^+} & \coloneq  D_{\kappa_{\calU}}^{r^+, \circ}\Big[\frac{1}{p}\Big].
    \end{array}
\]
The right $\Iw_{\GSp_4, 1}^+$-actions on $A_{\kappa_{\calU}}^{r, \circ}$, $A_{\kappa_{\calU}}^{r}$, $A_{\kappa_{\calU}}^{r^+, \circ}$, and $A_{\kappa_{\calU}}^{r^+}$ induce left $\Iw_{\GSp_4, 1}^+$-actions on $D_{\kappa_{\calU}}^{r, \circ}$, $D_{\kappa_{\calU}}^{r}$, $D_{\kappa_{\calU}}^{r^+, \circ}$, and $D_{\kappa_{\calU}}^{r^+}$, respectively. 

Before we proceed, we fix an isomorphism \[
        N_{\GSp_4, 1}^{\opp} \cong \Z_p^4.
    \] 
of $p$-adic manifolds which is compatible with \eqref{eq: unipotent radical for H as a p-adic manifold}. Also recall the vectors
\[
    e_i^{(r)} \colon \Z_p^4 \rightarrow \Z_p, \quad (x_1, ..., x_n)\mapsto \prod_{j=1}^n\lfloor p^{-r}j \rfloor! \begin{pmatrix} x_j\\ i_j\end{pmatrix}.
\]
in $C^r(\Z_p^4, \Z_p)$ introduced in \eqref{eq: basis for r-analytic functions}, where $i\in \Z_{\geq 0}^4$. Let $e_i^{(r), \vee}$ denote the dual vectors. The following result is straightforward.

\begin{Proposition}\label{Proposition: structure theorem for distributions}
    Let $(R_{\calU}, \kappa_{\calU})$ be a weight. Then we have
        \[A_{\kappa_{\calU}}^{r,\circ} \cong \widehat{\bigoplus}_{i\in \Z_{\geq 0}^4}R_{\calU}^{\circ}e_i^{(r)}\]
        and hence
        \[D_{\kappa_{\calU}}^{r, \circ} \cong \prod_{i\in \Z_{\geq 0}^4}R_{\calU}^{\circ} e_{i}^{(r), \vee}.\] 
        Similarly, 
        \[A_{\kappa_{\calU}}^{r^+, \circ} \cong \prod_{i\in \Z_{\geq 0}^4}R_{\calU}^{\circ} e_{i}^{(r)}\] 
        and hence
         \[D_{\kappa_{\calU}}^{r^+,\circ} \cong \widehat{\bigoplus}_{i\in \Z_{\geq 0}^4}R_{\calU}^{\circ}e_i^{(r), \vee}.\]
         We obtain similar descriptions for $A_{\kappa_{\calU}}^{r}$, $D_{\kappa_{\calU}}^{r}$, $A_{\kappa_{\calU}}^{r^+}$, and $D_{\kappa_{\calU}}^{r^+}$ after inverting $p$.
\end{Proposition}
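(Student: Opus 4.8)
The plan is to reduce everything to the structure theorems for $C^r(\Z_p^4,\Z_p)$ and $C^{r^+}(\Z_p^4,\Z_p)$ that were already recorded in \eqref{eq: structure theorem of r-analytic functions}, together with the elementary behaviour of continuous duals under completed base change and under passing between orthonormalisable and pro-free Banach modules. First I would fix the isomorphism $N_{\GSp_4,1}^{\opp}\cong\Z_p^4$ of $p$-adic manifolds and observe that, by the very definition in \eqref{eq: some analytic representations} (extended to $\Iw_{\GSp_4,1}^+$ as in Remark \ref{Remark: analytic representation for other groups}), restriction to $N_{\GSp_4,1}^{\opp}$ identifies $A_{\kappa_{\calU}}^{r,\circ}$ with $A^{r,\circ}(N_{\GSp_4,1}^{\opp},R_{\calU}) = C^r(\Z_p^4,\Z_p)\widehat\otimes_{\Z_p}R_{\calU}^{\circ}$, and likewise $A_{\kappa_{\calU}}^{r^+,\circ}$ with $C^{r^+}(\Z_p^4,\Z_p)\widehat\otimes_{\Z_p}R_{\calU}^{\circ}$ (this is the exact analogue of Corollary \ref{Corollary: structure theorem of analytic representations} for $\GSp_4$ in place of $H$, and the proof is the same unwinding of the Iwahori decomposition). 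Substituting \eqref{eq: structure theorem of r-analytic functions} then gives
\[
A_{\kappa_{\calU}}^{r,\circ}\cong\Big(\widehat{\bigoplus}_{i\in\Z_{\geq0}^4}\Z_p e_i^{(r)}\Big)\widehat\otimes_{\Z_p}R_{\calU}^{\circ}\cong\widehat{\bigoplus}_{i\in\Z_{\geq0}^4}R_{\calU}^{\circ}e_i^{(r)},
\]
and similarly $A_{\kappa_{\calU}}^{r^+,\circ}\cong\prod_{i}R_{\calU}^{\circ}e_i^{(r)}$, where one uses that $\widehat\otimes$ commutes with $p$-adically completed direct sums and with (countable) products of copies of $\Z_p$ — for the small-weight case this is exactly the kind of statement in \cite[Corollary 6.5]{CHJ-2017}, and for the affinoid-weight case it is formal since $R_{\calU}^{\circ}$ is $p$-torsion-free and $p$-adically complete.

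Next I would take continuous $R_{\calU}^{\circ}$-linear duals. For $D_{\kappa_{\calU}}^{r,\circ}=\Hom^{\cts}_{R_{\calU}^{\circ}}(A_{\kappa_{\calU}}^{r,\circ},R_{\calU}^{\circ})$, a continuous homomorphism out of the completed direct sum $\widehat{\bigoplus}_i R_{\calU}^{\circ}e_i^{(r)}$ is the same as an arbitrary tuple of values $(\lambda(e_i^{(r)}))_i\in\prod_i R_{\calU}^{\circ}$ (continuity is automatic because any element of the completed direct sum is a $p$-adic limit of finite sums, and the values on the $e_i^{(r)}$ can be prescribed freely), so $D_{\kappa_{\calU}}^{r,\circ}\cong\prod_i R_{\calU}^{\circ}e_i^{(r),\vee}$. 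Dually, a continuous homomorphism out of the product $\prod_i R_{\calU}^{\circ}e_i^{(r)}$ must, by the definition of the product (profinite) topology, factor through a finite sub-product up to any $p^m$, which forces the tuple of values to be $p$-adically null, i.e. to lie in the completed direct sum; this gives $D_{\kappa_{\calU}}^{r^+,\circ}\cong\widehat{\bigoplus}_i R_{\calU}^{\circ}e_i^{(r),\vee}$. Both duality computations are the standard orthonormalisable-vs-pro-free dichotomy; I would phrase them for a general complete $R_{\calU}^{\circ}$ and cite \cite[\S 3.1]{LW-BianchiAdjoint} (whence \cite[Theorem 3.1.2 \& Lemma 3.1.5]{LW-BianchiAdjoint} already contain the $R=\Z_p$ case) rather than redo them.

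Finally, inverting $p$ gives the statements for $A_{\kappa_{\calU}}^{r}$, $D_{\kappa_{\calU}}^{r}$, $A_{\kappa_{\calU}}^{r^+}$, $D_{\kappa_{\calU}}^{r^+}$, since $\widehat{\bigoplus}$ and $\prod$ are compatible with the functor $-[1/p]$ in the relevant (completed, bounded) sense and $D_{\kappa_{\calU}}^{r}=D_{\kappa_{\calU}}^{r,\circ}[1/p]$ by definition. Honestly there is no real obstacle here: the statement is labelled ``straightforward'' precisely because it is a mechanical consequence of \eqref{eq: structure theorem of r-analytic functions} plus base change; the only point requiring a line of care is the topological duality in the $r^+$ case, where one must check that continuity really does cut the naive dual $\prod_i R_{\calU}^{\circ}$ down to $\widehat{\bigoplus}_i R_{\calU}^{\circ}$, and symmetrically that the dual of the completed direct sum picks up the full product. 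I would therefore keep the write-up short: state the $\GSp_4$-analogue of Corollary \ref{Corollary: structure theorem of analytic representations}, invoke \eqref{eq: structure theorem of r-analytic functions} and compatibility of $\widehat\otimes$ with the two completion operations, then dualise, and remark that inverting $p$ is immediate.
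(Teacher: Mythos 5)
Your proposal is correct and follows the same route as the paper, whose entire proof is to cite the structure theorems $C^r(\Z_p^n,\Z_p)\cong\widehat{\bigoplus}_i\Z_p e_i^{(r)}$ and $C^{r^+}(\Z_p^n,\Z_p)\cong\prod_i\Z_p e_i^{(r)}$ from \eqref{eq: structure theorem of r-analytic functions}. You simply spell out the intermediate steps (restriction to $N_{\GSp_4,1}^{\opp}$ via the Iwahori decomposition, compatibility of $\widehat\otimes$ with the two completions, and the standard duality between completed direct sums and products) that the paper leaves implicit.
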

\begin{proof}
    This follows immediately from 
    \[
    C^r(\Z_p^n, \Z_p) \cong \widehat{\bigoplus}_{i\in \Z_{\geq 0}^n}\Z_pe_i^{(r)} \quad \text{ and }\quad C^{r^+}(\Z_p^n, \Z_p)\cong \prod_{i\in \Z_{\geq 0}^n}\Z_pe_i^{(r)}.
    \]
\end{proof}

Let $M \in \{A_{\kappa_{\calU}}^{r, \circ},A_{\kappa_{\calU}}^{r^+, \circ}, A_{\kappa_{\calU}}^{r}, A_{\kappa_{\calU}}^{r^+}, D_{\kappa_{\calU}}^{r, \circ}, D_{\kappa_{\calU}}^{r^+, \circ}, D_{\kappa_{\calU}}^{r}, D_{\kappa_{\calU}}^{r^+} \}$. Since $M$ admits a left $\Iw_{\GSp_4, 1}^+$-action (and so a left $\Iw_{\GSp_4, n}^+$-action for any $n\in \Z_{>0}$), it defines a local system on $X_{n}(\C)$ (see, for example, \cite[\S 2.2]{Ash-Stevens}). Consequently, one can consider cohomology groups $H^i(X_n(\C), M)$ of $X_n(\C)$ with coefficients in $M$. By the discussion in \cite[\S 2.2]{Hansen-PhD}, we know that these cohomology groups can be computed via the \emph{augmented Borel--Serre cochain complex} $C^{\bullet}(\Iw_{\GSp_4, n}^+, M)$. For the reader's convenience, we briefly recall the definition. Let $X_{n}^{\BS}(\C)$ be the Borel--Serre compactification of $X_n(\C)$ and fix a finite triangulation on $X_n^{\BS}(\C)$. Then the augmented Borel--Serre cochain complex $C^{\bullet}(\Iw_{\GSp_4, n}^+, M)$ is defined to be the cochain complex associated with this simplicial decomposition with coefficients in $M$. 

\begin{Remark}\label{Remark: ON-able}
    Suppose $(R_{\calU}, \kappa_{\calU})$ is an affinoid weight and $M\in \{A_{\kappa_{\calU}}^r, D_{\kappa_{\calU}}^{r^+}\}$. By Proposition \ref{Proposition: structure theorem for distributions}, we have an identification \[
        M \cong \widehat{\bigoplus}_{i\in \Z_{\geq 0}^4} R_{\calU}.
    \] 
    Since $C^{\bullet}(\Iw_{\GSp_4, n}^+, M)$ is a finite cochain complex, the total space \[
        C_{\kappa_{\calU}}^{\mathrm{tot}}(\Iw_{\GSp_4, n}^+, M) \coloneq \bigoplus_{j} C^j(\Iw_{\GSp_4, n}^+, M)
    \] is a potentially ON-able Banach module over $R_{\calU}$ (\cite[pp. 70]{Buzzard_2007}).
\end{Remark}

For $M \in \{A_{\kappa_{\calU}}^{r, \circ},A_{\kappa_{\calU}}^{r^+, \circ}, A_{\kappa_{\calU}}^{r}, A_{\kappa_{\calU}}^{r^+}, D_{\kappa_{\calU}}^{r, \circ}, D_{\kappa_{\calU}}^{r^+, \circ}, D_{\kappa_{\calU}}^{r}, D_{\kappa_{\calU}}^{r^+} \}$, we now define the Hecke operators on $H^i(X_n(\C), M)$. Similar to \S \ref{subsection: Hecke operators}, we treat the two cases separately: the Hecke operators away from $p$ and the Hecke operators at $p$. 

Let $\ell \neq p$ be a prime number. 
For any $\bfdelta\in \GSp_4(\Q_{\ell})$, consider the diagram \[
    \begin{tikzcd}
        & X_{\Gamma\Iw_{\GSp_4, n}^+ \cap \bfdelta \Gamma\Iw_{\GSp_4, n}^+ \bfdelta^{-1}} \arrow[ld, "\pr_2"'] & X_{\bfdelta^{-1}\Gamma\Iw_{\GSp_4, n}^+\bfdelta \cap \Gamma\Iw_{\GSp_4, n}^+}\arrow[rd, "\pr_1"]\arrow[l, "\bfdelta"']\\
        X_n &&& X_n
    \end{tikzcd},
\] where the top arrow is an isomorphism. By applying \cite[\S A.2]{LW-BianchiAdjoint}, we obtain the Hecke-operator $T_{\bfdelta}$ as the composition \[
    \begin{tikzcd}
        H^i(X_n(\C), M) \arrow[r, "\pr_2^{-1}"] \arrow[rddd, bend right = 20, "T_{\bfdelta}"'] & H^i(X_{\Gamma\Iw_{\GSp_4, n}^+ \cap \bfdelta \Gamma\Iw_{\GSp_4, n}^+ \bfdelta^{-1}}(\C), M) \arrow[d, "\bfdelta^{-1}"]\\ & H^i(X_{\bfdelta^{-1}\Gamma\Iw_{\GSp_4, n}^+\bfdelta \cap \Gamma\Iw_{\GSp_4, n}^+}(\C), M) \arrow[d, "\cong"]\\
        & H^i(X_n(\C), \pr_{1, *}\pr_1^{-1}M)\arrow[d, "\mathrm{tr}"]\\
        & H^i(X_n(\C), M)
    \end{tikzcd}.
\]

We now discuss the Hecke operators at $p$. Recall the matrices \[
    \bfitu_{p, 0} \coloneq \begin{pmatrix}1 &&& \\ & 1 && \\ && p & \\ &&& p\end{pmatrix}, \quad \bfitu_{p, 1} \coloneq \begin{pmatrix}1 &&& \\ & p && \\ && p & \\ &&& p^2\end{pmatrix}, \quad \text{ and }\quad \bfitu_p \coloneq \bfitu_{p,0}\bfitu_{p, 1} = \begin{pmatrix} 1 &&& \\ & p && \\ && p^2 & \\ &&& p^3\end{pmatrix}.
\]
Although one may also define the action of Hecke operators at $p$ on $H^i(X_n(\C), M)$ via correspondences, it would be more convenient for us to define them via explicit formulae. For $\bfitu\in \{\bfitu_{p, 0}, \bfitu_{p, 1}, \bfitu_p\}$, observe that \[
    \bfitu N_{\GSp_4, n}^{\opp} \bfitu^{-1} \subset N_{\GSp_4, n}^{\opp}. 
\] We then define the operator $\bfitu$ on $A_{\kappa_{\calU}}^{r, \circ}$ via \[
    (\bfitu \cdot f)(\bfnu \bftau \bfepsilon) = f(\bfitu \bfnu \bfitu^{-1}\bftau \bfepsilon)\] for all $f\in A_{\kappa_{\calU}}^{r, \circ}$, $\bfnu\in N_{\GSp_4, n}^{\opp}$, $\bftau\in T_{\GSp_4}(\Z_p)$, and $\bfepsilon\in N_{\GSp_4, n}$. This induces an operator on $M$.

\begin{Remark}
    When $\bfitu = \bfitu_p$ and $(R_{\calU}, \kappa_{\calU})$ is an affinoid weight, the operator $\bfitu$ defines a compact operator on $M \in \{A_{\kappa_{\calU}}^r, D_{\kappa_{\calU}}^{r^+}\}$. See \cite[\S 2.2]{Hansen-PhD} for the case $M = A_{\kappa_{\calU}}^r$ and \cite[Corollary 3.3.10]{Johansson-Newton} for the case $M = D_{\kappa_{\calU}}^{r^+}$. 
\end{Remark}

Recall the double coset decompositions \[
    \Iw_{\GSp_4, n}^+ \bfitu_{p,i} \Iw_{\GSp_4, n}^+ = \bigsqcup_{j} \bfdelta_{ij} \bfitu_{p,i} \Iw_{\GSp_4,n}^+ \quad \text{ and }\quad \Iw_{\GSp_4, n}^+ \bfitu_{p} \Iw_{\GSp_4, n}^+ = \bigsqcup_{j} \bfdelta_{j} \bfitu_{p} \Iw_{\GSp_4,n}^+
\] with $\bfdelta_{ij}, \bfdelta_j\in \Iw_{\GSp_4, n}^+$. We define the Hecke operators \[
    \begin{array}{llcl}
        U_{p,i}\colon & H^t(X_{n}(\C), M) & \xrightarrow{[\mu]\mapsto \sum_{j}\bfdelta_{ij}\cdot (\bfitu_{p,i}\cdot [\mu])} & H^t(X_n(\C), M),\\
        U_p\colon & H^t(X_{n}(\C), M) & \xrightarrow{[\mu]\mapsto \sum_{j}\bfdelta_{j}\cdot (\bfitu_{p}\cdot [\mu])} & H^t(X_n(\C), M).
    \end{array}
\]
These operators are independent of the choices of representatives (see for example the discussion after \cite[Definition 3.2.2]{DRW}).
It follows from the construction that $U_p = U_{p,0}\circ U_{p,1} = U_{p,1}\circ U_{p,0}$. Finally, we point out that we do not renormalise these operators.

\subsection{Kummer \'etale and pro-Kummer \'etale cohomology groups}\label{subsection: Kummer and pro-Kummer \'etale overconvergent cohomology}
The goal in \S \ref{subsection: Kummer and pro-Kummer \'etale overconvergent cohomology} is to re-interpret the Betti cohomology groups in \S \ref{subsection: Betti overconvergent cohomology} in terms of certain Kummer and pro-Kummer \'etale cohomology groups. 

We start with the case of small weights. The case of affinoid weights will be studied in the second half of \S \ref{subsection: Kummer and pro-Kummer \'etale overconvergent cohomology}. For the reason why we treat the two cases separately, see Remark \ref{Remark: small vs affinoid}.

Let $(R_{\calU}, \kappa_{\calU})$ be a small weight and let $r\in \Q_{\geq 0}$ such that $r>1+r_{\calU}$. Let $\fraka_{\calU}$ be an ideal of definition of $R_{\calU}$ and we assume that $p\in \fraka_{\calU}$. As explained in \cite[\S 4.1]{DRW}, building on ideas from \cite{Hansen-Iwasawa} and \cite{CHJ-2017}, there is a decreasing $\Iw_{\GSp_4, n}^+$-stable filtration $\Fil^{\bullet} D_{\kappa_{\calU}}^{r, \circ}$ on $D_{\kappa_{\calU}}^{r, \circ}$ such that \[
    D_{\kappa_{\calU}, j}^{r, \circ} \coloneq D_{\kappa_{\calU}}^{r, \circ}/\Fil^j D_{\kappa_{\calU}}^{r, \circ}
\] is a finite $\Z_p$-module and \[
    D_{\kappa_{\calU}}^{r, \circ} = \varprojlim_{j} D_{\kappa_{\calU}, j}^{r, \circ}
\] is a profinite flat $\Z_p$-module (\cite[Definition 6.1]{CHJ-2017}). 

We can impose a similar filtration on $A_{\kappa_{\calU}}^{r^+, \circ}$. Indeed, applying Proposition \ref{Proposition: structure theorem for distributions}, the natural map $A_{\kappa_{\calU}}^{r^+, \circ} \rightarrow A_{\kappa_{\calU}}^{(r+1)^+, \circ}$ is given by  \[
    A_{\kappa_{\calU}}^{r^+, \circ} \cong \prod_{i\in \Z_{\geq 0}^4} R_{\calU} e_i^{(r)} \rightarrow \prod_{i\in \Z_{\geq 0}^4} R_{\calU} e_i^{(r+1)} \cong A_{\kappa_{\calU}}^{(r+1)^+, \circ}, \quad e_i^{(r)} \mapsto \frac{\prod_{j=1}^4 \lfloor p^{-r} i_j \rfloor ! }{ \prod_{j=1}^4  \lfloor p^{-(r+1)}i_j \rfloor ! } e_i^{(r+1)}.
\]
Let $c_i^{(r)} \coloneq \frac{\prod_{j=1}^4 \lfloor p^{-r} i_j \rfloor ! }{ \prod_{j=1}^4  \lfloor p^{-(r+1)}i_j \rfloor ! }$. By Legendre's formula, we have \[
    v_p(c_i^{(r)}) = \sum_{j=1}^4\sum_{t>0} \big(\left\lfloor \frac{i_j}{p^{r+t}}\right\rfloor - \left\lfloor \frac{i_j}{p^{r+1+t}}\right\rfloor\big) = \sum_{j=1}^4 \left\lfloor \frac{i_j}{p^{r}}\right\rfloor \rightarrow \infty
\]
as $i\rightarrow \infty$. Therefore, the image of the map \[
    A_{\kappa_{\calU}}^{r^+, \circ} \rightarrow A_{\kappa_{\calU}}^{(r+1)^+, \circ}/\fraka_{\calU}^{j} A_{\kappa_{\calU}}^{(r+1)^+, \circ}
\] is finite. Define \[
    \Fil^j A_{\kappa_{\calU}}^{r^+, \circ} \coloneq \ker\left( A_{\kappa_{\calU}}^{r^+, \circ} \rightarrow A_{\kappa_{\calU}}^{(r+1)^+, \circ}/\fraka_{\calU}^{j} A_{\kappa_{\calU}}^{(r+1)^+, \circ}\right)
\]
and \[
    A_{\kappa_{\calU}, j}^{r^+, \circ} \coloneq A_{\kappa_{\calU}}^{r^+, \circ}/\Fil^j A_{\kappa_{\calU}}^{r, \circ}. 
\]
It follows that \[
    A_{\kappa_{\calU}, j}^{r^+, \circ}  \cong \bigoplus_{\substack{ i\in \Z_{\geq 0}^4\\ v_p(c_i^{(r)}) <j }} R_{\calU}/(\fraka_{\calU}^j, p^{i-v_p(c_i^{(r)})})
\] and \[
    A_{\kappa_{\calU}}^{r^+, \circ} = \varprojlim_j A_{\kappa_{\calU}, j}^{r, \circ}
\] as a profinite flat $\Z_p$-module. 

We now explain how to compute the Betti cohomology groups in terms of certain (Kummer) \'etale cohomology groups. Let $M\in \{A_{\kappa_{\calU}}^{r^+}, D_{\kappa_{\calU}}^{r}\}$ and let $M^{\circ} \in \{A_{\kappa_{\calU}}^{r^+, \circ}, D_{\kappa_{\calU}}^{r, \circ}\}$ be the corresponding integral version. Let $\Fil^{\bullet}M^{\circ}$ be the filtration discussed above and $M_j^{\circ}$ be the $j$-th graded piece. Since $\Fil^{\bullet} M^{\circ}$ is $\Iw_{\GSp_4, n}^+$-stable, each $M_j^{\circ}$ defines an \'etale local system $\scrM_j^{\circ}$ \footnote{For simplicity of exposition, we adopt the following notation for the rest of \S \ref{section: OC}.
\begin{itemize}
\item When $M=A_{\kappa_{\calU}}^{r^+}$ and $M^{\circ}=A_{\kappa_{\calU}}^{r^+, \circ}$, the terms $M_j^{\circ}$, $\scrM_j^{\circ}$, $\scrM^{\circ}$, $\scrM$, $\scrO\!\!\scrM^{\circ}$, and $\scrO\!\!\scrM$ stand for $A_{\kappa_{\calU}, j}^{r^+, \circ}$, $\scrA_{\kappa_{\calU}, j}^{r^+, \circ}$, $\scrA_{\kappa_{\calU}}^{r^+, \circ}$, $\scrA_{\kappa_{\calU}}^{r^+}$, $\scrO\!\!\scrA_{\kappa_{\calU}}^{r^+, \circ}$, and $\scrO\!\!\scrA_{\kappa_{\calU}}^{r^+}$, respectively.
\item When $M=D_{\kappa_{\calU}}^{r}$ and $M^{\circ}=D_{\kappa_{\calU}}^{r, \circ}$, the terms $M_j^{\circ}$, $\scrM_j^{\circ}$, $\scrM^{\circ}$, $\scrM$, $\scrO\!\!\scrM^{\circ}$, and $\scrO\!\!\scrM$ stand for $D_{\kappa_{\calU}, j}^{r, \circ}$, $\scrD_{\kappa_{\calU}, j}^{r, \circ}$, $\scrD_{\kappa_{\calU}}^{r, \circ}$, $\scrD_{\kappa_{\calU}}^{r}$, $\scrO\!\!\scrD_{\kappa_{\calU}}^{r, \circ}$, and $\scrO\!\!\scrD_{\kappa_{\calU}}^{r}$, respectively.
\end{itemize}
} on $\calX_n$ via \[
    \pi_1^{\et}(\calX_n) \twoheadrightarrow \Iw_{\GSp_4, n}^+ \rightarrow \Aut(M_j^{\circ}).
\]
This leads to an inverse system of \'etale local systems $\{\scrM_j^{\circ}\}_{j}$ so that we can define \'etale cohomology groups \[
    H_{\et}^i(\calX_n, \scrM^{\circ}) \coloneq \varprojlim_{j} H_{\et}^i(\calX_n, \scrM_j^{\circ})\] and \[H_{\et}^i(\calX_n, \scrM) \coloneq H_{\et}^i(\calX_n, \scrM^{\circ})\Big[\frac{1}{p}\Big].
\]

These \'etale cohomology groups can be also identified with certain Kummer \'etale cohomology groups on the toroidal compactifications of $\calX_{n}$. Consider the natural morphism of sites \[
    \jmath_{\ket}: \calX_{n, \et} \rightarrow \calX_{n, \ket}^{\tor}\] 
and consider the Kummer \'etale cohomology groups
\[H_{\ket}^i(\calX_{n}^{\tor}, \scrM^{\circ})  \coloneq \varprojlim_{j} H_{\ket}^i(\calX_{n}^{\tor}, \jmath_{\ket, *}\scrM_j^{\circ})\]
and
\[H_{\ket}^i(\calX_n^{\tor}, \scrM)  \coloneq H_{\ket}^i(\calX_{n}^{\tor}, \scrM^{\circ})\Big[\frac{1}{p}\Big].\]
Applying \cite[Corollary 4.6.7]{Diao}, we obtain natural isomorphisms\[
        H_{\ket}^i(\calX_{n}^{\tor}, \scrM^{\circ}) \cong H_{\et}^i(\calX_n, \scrM^{\circ}) \quad \text{ and }\quad H_{\ket}^i(\calX_n^{\tor}, \scrM) \cong H_{\et}^i(\calX_n, \scrM).
\]

\begin{Proposition}\label{Proposition: comparison of Betti and Kummer \'etale cohomology}
    Let $(R_{\calU}, \kappa_{\calU})$ be a small weight. Let $r\in \Q_{\geq 0}$ with $r>1+r_{\calU}$. Let $M^{\circ}\in \{A_{\kappa_{\calU}}^{r^+, \circ}, D_{\kappa_{\calU}}^{r, \circ}\}$ and $M = M^{\circ}[1/p]$ as above.
    For every $i$, there are natural isomorphisms 
    \[
        H^i(X_n(\C), M^{\circ}) \cong H_{\et}^i(\calX_n, \scrM^{\circ}) \cong H_{\ket}^i(\calX_n^{\tor}, \scrM^{\circ})
        \] 
        and 
        \[
        H^i(X_n(\C), M) \cong H_{\et}^i(\calX_n, \scrM)\cong H_{\ket}^i(\calX_n^{\tor}, \scrM). 
    \]
\end{Proposition}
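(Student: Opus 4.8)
The plan is to factor the comparison into two pieces — the Betti-to-\'etale comparison $H^i(X_n(\C), M^{\circ}) \cong H^i_{\et}(\calX_n, \scrM^{\circ})$, and the \'etale-to-Kummer-\'etale comparison $H^i_{\et}(\calX_n, \scrM^{\circ}) \cong H^i_{\ket}(\calX_n^{\tor}, \scrM^{\circ})$ — the second of which is already recorded in the excerpt via \cite[Corollary 4.6.7]{Diao}, so the real content is the first. For the first comparison, I would argue at finite level along the filtration: for each $j$, the finite $\Z_p$-module $M_j^{\circ}$ carries a continuous action of $\Iw_{\GSp_4, n}^+$, hence (through $\pi_1^{\et}(\calX_n) \twoheadrightarrow \Iw_{\GSp_4, n}^+$) defines a lisse \'etale sheaf $\scrM_j^{\circ}$ whose cohomology matches the group/local-system cohomology of the corresponding local system on $X_n(\C)$; this is the standard comparison between singular cohomology of a complex variety with coefficients in a finite local system and \'etale cohomology of the algebraic variety (Artin comparison / \cite[\S 2.2]{Ash-Stevens} together with the fact that $M_j^{\circ}$ is finite). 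Concretely, since $M_j^{\circ}$ is a finite $\Z_p$-module it is a finite successive extension of $\F_p$-sheaves, so one reduces to the torsion Artin comparison theorem, after which $H^i(X_n(\C), M_j^{\circ}) \cong H^i_{\et}(\calX_n, \scrM_j^{\circ})$ holds functorially in $j$.

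Next I would pass to the limit over $j$. By construction $M^{\circ} = \varprojlim_j M_j^{\circ}$ with $M^{\circ}$ a profinite flat $\Z_p$-module and the transition maps surjective, so the projective system $\{M_j^{\circ}\}_j$ (and likewise $\{H^i(X_n(\C), M_j^{\circ})\}_j$, $\{H^i_{\et}(\calX_n, \scrM_j^{\circ})\}_j$) satisfies the Mittag--Leffler condition; hence $R^1\varprojlim$ vanishes and both sides commute with the inverse limit. Because $X_n(\C)$ is homotopy equivalent to a finite CW complex (it has a finite triangulation via the Borel--Serre compactification, as recalled in \S \ref{subsection: Betti overconvergent cohomology}), each $H^i(X_n(\C), M_j^{\circ})$ is a finitely generated $\Z_p$-module, which gives the Mittag--Leffler property cleanly; the analogous finiteness on the \'etale side follows from the comparison at each finite level. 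Matching $\varprojlim_j H^i(X_n(\C), M_j^{\circ}) = H^i(X_n(\C), M^{\circ})$ (which is the definition of cohomology with profinite coefficients used in \S \ref{subsection: Betti overconvergent cohomology}, equivalently obtained via the Borel--Serre cochain complex $C^\bullet(\Iw_{\GSp_4, n}^+, M^{\circ})$) with $\varprojlim_j H^i_{\et}(\calX_n, \scrM_j^{\circ}) = H^i_{\et}(\calX_n, \scrM^{\circ})$ then yields the first isomorphism. Inverting $p$ gives the $M = M^{\circ}[1/p]$ statements, since $H^i(X_n(\C), M^{\circ})$ and $H^i_{\et}(\calX_n, \scrM^{\circ})$ are finitely generated over the noetherian ring $R_{\calU}$ so that $\varprojlim$ and $(-)[1/p]$ behave as expected, and the definitions $H^i_{\et}(\calX_n, \scrM) \coloneq H^i_{\et}(\calX_n, \scrM^{\circ})[1/p]$, $H^i_{\ket}(\calX_n^{\tor}, \scrM) \coloneq H^i_{\ket}(\calX_n^{\tor}, \scrM^{\circ})[1/p]$ are in place.

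For the second isomorphism I simply invoke \cite[Corollary 4.6.7]{Diao}: the natural morphism of sites $\jmath_{\ket}\colon \calX_{n,\et} \to \calX_{n,\ket}^{\tor}$ induces $H^i_{\ket}(\calX_n^{\tor}, \jmath_{\ket,*}\scrM_j^{\circ}) \cong H^i_{\et}(\calX_n, \scrM_j^{\circ})$ at each finite level (this uses that $\scrM_j^{\circ}$ is a finite \'etale local system and that the boundary is a normal crossings/log-smooth divisor), and then passing to the inverse limit over $j$ as above transports the isomorphism to $\scrM^{\circ}$ and, after inverting $p$, to $\scrM$. The whole argument is manifestly compatible with the Hecke action: all maps — the Artin comparison, the site morphism $\jmath_{\ket}$, and the transition maps $M_j^{\circ} \to M_{j-1}^{\circ}$ — are $\Iw_{\GSp_4, n}^+$-equivariant, and the Hecke operators both at $p$ and away from $p$ are defined by the same correspondences and the same matrix actions on $M$ on each side (compare \S \ref{subsection: Betti overconvergent cohomology} with the definitions recalled here), so the isomorphisms intertwine them.

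\textbf{Main obstacle.} The delicate point is ensuring the inverse-limit step is clean: one needs the Mittag--Leffler property for $\{H^i(X_n(\C), M_j^{\circ})\}_j$ (and its \'etale counterpart) so that $R^1\varprojlim$ vanishes and the comparison survives the limit, and one must be careful that the integral \'etale cohomology $H^i_{\et}(\calX_n, \scrM^{\circ})$ is genuinely $\varprojlim_j H^i_{\et}(\calX_n, \scrM_j^{\circ})$ rather than continuous cohomology of the pro-sheaf — but these coincide precisely because of the finiteness of each $M_j^{\circ}$ and of $H^i(X_n(\C), M_j^{\circ})$, which is where the finite triangulation of the Borel--Serre compactification is essential. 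Everything else is a bookkeeping of equivalences that are standard once the finite-level comparison and the Mittag--Leffler condition are in hand.
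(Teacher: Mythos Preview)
Your proposal is correct and is precisely the argument that the paper's one-line reference to \cite[Proposition 4.2.2]{DRW} unpacks to: Artin comparison at each finite level $M_j^{\circ}$, passage to the inverse limit via Mittag--Leffler (guaranteed by the finite Borel--Serre cochain complex and finiteness of each $M_j^{\circ}$), and the already-recorded \cite[Corollary 4.6.7]{Diao} for the Kummer \'etale step. One small sharpening: on the \'etale side there is no $R^1\varprojlim$ issue to worry about at all, since the paper \emph{defines} $H^i_{\et}(\calX_n, \scrM^{\circ}) \coloneq \varprojlim_j H^i_{\et}(\calX_n, \scrM_j^{\circ})$; the only place the limit argument has content is on the Betti side, where it follows exactly as you say from the finiteness of the Borel--Serre triangulation.
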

\begin{proof}
    The proof goes verbatim as in \cite[Proposition 4.2.2]{DRW}. 
\end{proof}

For our purpose, we would like to further interpret these cohomology groups in terms of pro-Kummer \'etale cohomology groups. To this end, recall the natural projection of sites \[
    \nu: \calX_{n, \proket}^{\tor} \rightarrow \calX_{n, \ket}^{\tor}. 
\]  
Given $M$ and $M^{\circ}$ as above, we define sheaves \[
    \scrO\!\!\!\scrM^{\circ} \coloneq \varprojlim_{j} \left(\nu^{-1}\jmath_{\ket, *}\scrM_{j}^{\circ}\otimes_{\Z_p} \widehat{\scrO}_{\calX_{n, \proket}^{\tor}}^+\right)\]
    and $\scrO\!\!\!\scrM \coloneq \scrO\!\!\!\scrM^{\circ}[1/p]$ on the pro-Kummer \'etale site $\calX_{n, \proket}^{\tor}$.

\begin{Proposition}\label{Proposition: comparing Kummer \'etale and pro-Kummer \'etale cohomology}
 There is a natural $\Gal_{\Q_p}$-equivariant almost isomorphism 
\[
        H_{\ket}^i(\calX_{n}^{\tor}, \scrM^{\circ}) \widehat{\otimes}\calO_{\C_p} \cong^a H_{\proket}^i(\calX_n^{\tor}, \scrO\!\!\!\scrM^{\circ})
    \] and hence a $\Gal_{\Q_p}$-equivariant isomorphism \[
        H_{\ket}^i(\calX_{n}^{\tor}, \scrM) \widehat{\otimes}{\C_p} \cong H_{\proket}^i(\calX_n^{\tor}, \scrO\!\!\!\scrM). 
    \]
\end{Proposition}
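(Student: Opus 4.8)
The strategy is to reduce the statement to a primitive comparison for $p$-adic local systems at finite levels, glued along the inverse system $\{\scrM_j^{\circ}\}_j$. First I would recall that each $\jmath_{\ket, *}\scrM_j^{\circ}$ is a Kummer \'etale local system of finite $\Z_p$-modules on the locally noetherian fs log adic space $\calX_n^{\tor}$ (with its natural log structure coming from the boundary $\partial\calX_n^{\tor}$). For such a sheaf, the relevant comparison is the primitive comparison theorem in the Kummer \'etale setting: there is a natural $\Gal_{\Q_p}$-equivariant almost isomorphism
\[
    H_{\ket}^i(\calX_n^{\tor}, \jmath_{\ket, *}\scrM_j^{\circ})\otimes_{\Z_p}\calO_{\C_p} \cong^a H_{\proket}^i\!\left(\calX_n^{\tor},\,\nu^{-1}\jmath_{\ket, *}\scrM_j^{\circ}\otimes_{\Z_p}\widehat{\scrO}^+_{\calX_{n, \proket}^{\tor}}\right),
\]
which is the log-adic incarnation of Scholze's primitive comparison (properness of $\calX_n^{\tor}$ and finiteness of coefficients are what make this work). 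The key point is that $\jmath_{\ket, *}\scrM_j^{\circ}$ is $p^N$-torsion for some $N$ depending on $j$, so one first handles $\Z/p^m$-coefficients using $\widehat{\scrO}^+/p^m \cong^a \scrO^+_{\calX_n^{\tor}}/p^m$ on the pro-Kummer \'etale site, and then assembles. This is exactly the finite-level input that \cite{Diao} provides in the Kummer \'etale context, applied here verbatim; I would cite the relevant statement there rather than reprove it.

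Next I would pass to the limit over $j$. On the left-hand side, $H_{\ket}^i(\calX_n^{\tor}, \scrM^{\circ})$ is defined as $\varprojlim_j H_{\ket}^i(\calX_n^{\tor}, \jmath_{\ket, *}\scrM_j^{\circ})$, so tensoring with $\calO_{\C_p}$ and commuting the (finite, hence well-behaved) tensor product past the limit presents no trouble up to almost isomorphism. On the right-hand side, $H_{\proket}^i(\calX_n^{\tor}, \scrO\!\!\scrM^{\circ})$ with $\scrO\!\!\scrM^{\circ} = \varprojlim_j(\nu^{-1}\jmath_{\ket,*}\scrM_j^{\circ}\otimes_{\Z_p}\widehat{\scrO}^+_{\calX_{n,\proket}^{\tor}})$ requires interchanging $R\Gamma_{\proket}$ with $R\varprojlim_j$; the obstruction is a $\varprojlim^1$-term, which vanishes because the transition maps in $\{H_{\proket}^{i}(\calX_n^{\tor}, \nu^{-1}\jmath_{\ket,*}\scrM_j^{\circ}\otimes\widehat{\scrO}^+)\}_j$ form a Mittag--Leffler system — indeed by the finite-level almost isomorphism above they agree (almost) with $\{H_{\ket}^i(\calX_n^{\tor},\jmath_{\ket,*}\scrM_j^{\circ})\otimes\calO_{\C_p}\}_j$, and the latter is Mittag--Leffler since the $H_{\ket}^i$ are finitely generated $\Z_p$-modules (Kummer \'etale cohomology of a proper log adic space with finite coefficients is finite). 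Combining the finite-level almost isomorphisms with these limit interchanges yields the first displayed almost isomorphism in the proposition. Inverting $p$ then upgrades $\cong^a$ to an honest isomorphism $H_{\ket}^i(\calX_n^{\tor}, \scrM)\widehat{\otimes}\C_p \cong H_{\proket}^i(\calX_n^{\tor}, \scrO\!\!\scrM)$, since an almost isomorphism of $\calO_{\C_p}$-modules becomes an isomorphism after $\otimes\C_p$, and $(-)\widehat{\otimes}\C_p$ is exact on the relevant finitely generated (resp. Banach/profinite) modules; one also checks that $(\varprojlim_j -)[1/p]$ computes the correct object, using that the system is Mittag--Leffler so no derived-limit correction survives after inverting $p$.

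The step I expect to be the main obstacle is the careful bookkeeping of the limit over $j$ on the pro-Kummer \'etale side: one must verify that $\scrO\!\!\scrM^{\circ}$ (defined as a naive inverse limit of sheaves) has cohomology computed correctly, i.e. that $R^i\varprojlim_j$ vanishes and that the sheaf-level limit agrees with the derived limit after applying $R\Gamma_{\proket}$. This is where one genuinely uses that each $\scrM_j^{\circ}$ is finite (so $\nu^{-1}\jmath_{\ket,*}\scrM_j^{\circ}\otimes\widehat{\scrO}^+$ is a sheaf of finitely presented $\widehat{\scrO}^+/p^{N_j}$-modules) together with the $p$-completeness and flatness of $M^{\circ}$ as a profinite $\Z_p$-module, exactly as in \cite[\S 4]{DRW} where the analogous argument was carried out for modular curves; the proof here is formally identical once the log-adic primitive comparison of \cite{Diao} is in place, so I would structure the writeup as ``the proof goes verbatim as in \cite[Proposition 4.3.x]{DRW}, replacing the modular curve by $\calX_n^{\tor}$ and the \'etale primitive comparison by its Kummer \'etale analogue \cite[...]{Diao}'' and only spell out the Mittag--Leffler and $\varprojlim^1$-vanishing points that are specific to the fact that we work with the $\GSp_4$ Siegel threefold.
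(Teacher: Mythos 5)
Your proposal is correct and follows essentially the same route as the paper, whose entire proof is a citation to the analogous statement in \cite{DRW} (Proposition 5.1.2 there), i.e.\ the finite-level primitive comparison for Kummer \'etale local systems from \cite{Diao} combined with the Mittag--Leffler passage to the limit over $j$ and inversion of $p$ that you spell out. The only point worth making explicit in a writeup is the one you already flag implicitly: that each $\jmath_{\ket,*}\scrM_j^{\circ}$ is indeed a Kummer \'etale local system on $\calX_n^{\tor}$ (so that the log-adic primitive comparison applies), which is supplied by the extension results of \cite{Diao} used elsewhere in \S\ref{subsection: Kummer and pro-Kummer \'etale overconvergent cohomology}.
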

\begin{proof}
    The proof follows from a similar argument as in the proof of \cite[Proposition 5.1.2]{DRW}.
\end{proof}

Finally, we also consider cohomology groups with compact support. Recall the localisation functors
\[\jmath_{\ket, !}: \mathrm{Sh}(\calX_{n,\et})\rightarrow \mathrm{Sh}(\calX_{n,\ket}^{\tor})\]
and
\[\jmath_{\proket, !}: \mathrm{Sh}(\calX_{n,\proet})\rightarrow \mathrm{Sh}(\calX_{n,\proket}^{\tor})\]
constructed in \cite[\S 4.5 \& Definition 5.2.1]{Diao}. We define the Kummer \'etale cohomology groups with compact supports
\[H_{\ket, c}^i(\calX_n^{\tor}, \scrM^{\circ}) \coloneq \varprojlim_{j}H_{\ket}^i(\calX_n^{\tor}, \jmath_{\ket, !}\scrM_j^{\circ})\]
and
\[H_{\ket, c}^i(\calX_n^{\tor}, \scrM) \coloneq H_{\ket, c}^i(\calX_n^{\tor}, \scrM^{\circ})\Big[\frac{1}{p}\Big]\]
as well as the pro-Kummer \'etale cohomology groups with compact supports
\[H_{\proket, c}^i(\calX_n^{\tor}, \scrO\!\!\!\scrM^{\circ}) \coloneq H_{\proket}^i(\calX_n^{\tor}, \jmath_{\proket, !}\,\jmath_{\proket}^{-1}\scrO\!\!\!\scrM^{\circ})\]
and
\[H_{\proket, c}^i(\calX_n^{\tor}, \scrO\!\!\!\scrM) \coloneq H_{\proket}^i(\calX_n^{\tor}, \jmath_{\proket, !}\,\jmath_{\proket}^{-1}\scrO\!\!\!\scrM) = H_{\proket, c}^i(\calX_n^{\tor}, \scrO\!\!\!\scrM^{\circ})\Big[\frac{1}{p}\Big].\]
A similar argument as in the proof of Proposition \ref{Proposition: comparing Kummer \'etale and pro-Kummer \'etale cohomology} yields a $\Gal_{\Q_p}$-equivariant isomorphism \[
    H_{\ket, c}^i(\calX_n^{\tor}, \scrM) \widehat{\otimes}\C_p \cong H_{\proket, c}^i(\calX_n^{\tor}, \scrO\!\!\scrM).
\]
The following lemma provides an alternative description of the pro-Kummer \'etale cohomology with compact support. 

\begin{Lemma}\label{Lemma: cuspidal pro-Kummer \'etale sheaves}
   Let $\calD_n$ be the boundary divisor of $\calX_n^{\tor}$. Then there is an isomorphism of pro-Kummer \'etale sheaves 
\[
        \scrO\!\!\scrM(-\calD_{n}) \cong \jmath_{\proket, !}\,\jmath_{\proket}^{-1} \scrO\!\!\scrM
\]
where $\scrO\!\!\scrM(-\calD_{n})$ stands for the subsheaf of $\scrO\!\!\scrM$ of sections vanishing along $\calD_{n}$.
\end{Lemma}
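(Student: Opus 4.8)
The statement is a pro-Kummer \'etale analogue of the standard fact that the extension-by-zero of a (constant) local system along the boundary of a log smooth compactification agrees with the subsheaf of sections vanishing along the boundary. The plan is to reduce to the level of the finite graded pieces $\scrM_j^{\circ}$ and then invoke the description of $\jmath_{\proket,!}$ in \cite[\S 5.2]{Diao}. First I would unwind the definitions: by construction $\scrO\!\!\scrM^{\circ}=\varprojlim_j\bigl(\nu^{-1}\jmath_{\ket,*}\scrM_j^{\circ}\otimes_{\Z_p}\widehat{\scrO}^+_{\calX_{n,\proket}^{\tor}}\bigr)$, and $\jmath_{\proket,!}\jmath_{\proket}^{-1}$ commutes with this inverse limit (the transition maps being surjective with the systems Mittag--Leffler, after inverting $p$ at the end), so it suffices to produce, compatibly in $j$, an isomorphism
\[
    \bigl(\nu^{-1}\jmath_{\ket,*}\scrM_j^{\circ}\otimes_{\Z_p}\widehat{\scrO}^+_{\calX_{n,\proket}^{\tor}}\bigr)(-\calD_n) \;\cong\; \jmath_{\proket,!}\,\jmath_{\proket}^{-1}\bigl(\nu^{-1}\jmath_{\ket,*}\scrM_j^{\circ}\otimes_{\Z_p}\widehat{\scrO}^+_{\calX_{n,\proket}^{\tor}}\bigr).
\]

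Next I would treat the right-hand side. Since $\scrM_j^{\circ}$ is a locally constant sheaf of finite $\Z_p$-modules on $\calX_{n,\et}$ pulled back from $\calX_{n,\proet}$, one has $\jmath_{\proket}^{-1}\bigl(\nu^{-1}\jmath_{\ket,*}\scrM_j^{\circ}\otimes\widehat{\scrO}^+\bigr)=(\text{pullback of }\scrM_j^{\circ})\otimes\widehat{\scrO}^+_{\calX_{n,\proet}}$ on the open part $\calX_{n,\proet}$; this is essentially the compatibility of $\jmath_{\proket}^{-1}$ with $\nu^{-1}$ and with $\widehat{\scrO}^+$ recorded in \cite[\S 5.2]{Diao}. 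The functor $\jmath_{\proket,!}$ is defined in \emph{loc.\ cit.}\ as the sheafification of the naive extension by zero; on a log affinoid perfectoid $\calU$ lying over $\calX_{n,\proket}^{\tor}$ with good coordinates near the boundary, $\jmath_{\proket,!}\scrF$ has sections those sections of $\scrF$ over $\calU\cap\calX_{n,\proet}$ whose support is bounded away from the boundary. On the other hand $\widehat{\scrO}^+(-\calD_n)$ on such a $\calU$ is the ideal of functions divisible by the local equation of $\calD_n$; because the toroidal boundary is a normal crossings divisor and the perfectoid tower is the one of \cite{Pilloni-Stroh}, local equations of $\calD_n$ become arbitrarily highly divisible (they admit $p$-power roots in the tower), and one checks on the standard perfectoid charts that a section of the completed local system $\otimes\widehat{\scrO}^+$ vanishes along $\calD_n$ if and only if it extends by zero across the boundary. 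This local comparison is the content of the lemma and is where the real work lies.

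Concretely, the key step is the local computation on a log affinoid perfectoid chart: model a neighbourhood of the boundary as $\Spa(R\langle T_1^{1/p^{\infty}},\dots,T_s^{1/p^{\infty}},T_{s+1}^{\pm1/p^{\infty}},\dots\rangle,\dots)$ with $\calD_n=\{T_1\cdots T_s=0\}$, use that $\scrM_j^{\circ}$ is trivialised on the chart (as the tower splits the relevant $\Iw_{\GSp_4,n}^+$-torsor), and observe that $\widehat{\scrO}^+(-\calD_n)=(T_1\cdots T_s)\widehat{\scrO}^+$ while the extension-by-zero consists precisely of those elements supported in $\{T_1\cdots T_s\neq 0\}$ with closed support; since $T_i^{1/p^m}$ all lie in the ring, an element is in the boundary ideal iff it is so supported, giving the two subsheaves of $\nu^{-1}\jmath_{\ket,*}\scrM_j^{\circ}\otimes\widehat{\scrO}^+$ equal. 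Then I would glue over a cover by such charts (the sheaves being $\widehat{\scrO}^+$-modules, glueing is harmless), pass to the inverse limit over $j$, and invert $p$ to obtain the stated isomorphism. The main obstacle is making the local boundary analysis precise in the pro-Kummer \'etale setting — in particular handling the almost-mathematics subtleties in identifying $\widehat{\scrO}^+(-\calD_n)$ with the naive extension by zero, and ensuring compatibility of this identification with the Kummer \'etale log structure — but this is exactly the type of computation carried out for constant coefficients in \cite[\S 5.2]{Diao}, and the twist by the (locally constant, hence locally trivial) sheaf $\scrM_j^{\circ}$ does not affect it.
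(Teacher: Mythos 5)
Your reduction to the graded pieces $\scrM_j^{\circ}$ and the passage to the limit match the paper's skeleton, but the key local step — where you say "the real work lies" — is where your argument goes wrong, on both sides of the comparison. First, the left-hand side: in this lemma "sections vanishing along $\calD_n$" means the kernel of the restriction map $\scrO\!\!\scrM_{j}\rightarrow \imath_{*}\imath^{-1}\scrO\!\!\scrM_{j}$ to the boundary (with $\imath\colon\calD_n\hookrightarrow\calX_n^{\tor}$ the strict closed immersion), not the ideal generated by a local equation of $\calD_n$. In a perfectoid chart these differ: $T_1^{1/p}$ restricts to zero on $\{T_1=0\}$ but does not lie in $(T_1)\widehat{\scrO}^+$, so your identification $\widehat{\scrO}^+(-\calD_n)=(T_1\cdots T_s)\widehat{\scrO}^+$ computes the wrong subsheaf. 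Second, the right-hand side: after sheafification in the (pro-)Kummer \'etale topology, $\jmath_{!}\jmath^{-1}\scrF$ is \emph{not} the naive "sections whose support is bounded away from the boundary"; it is again exactly $\ker(\scrF\rightarrow\imath_{*}\imath^{-1}\scrF)$ — this is the content of \cite[Lemma 4.5.3]{Diao}, and it is a strictly larger subsheaf than the naively supported one (e.g.\ $T_1\cdots T_s$ belongs to it even though its topological support is the whole chart). So your proposed local comparison equates two objects, neither of which is the sheaf appearing in the statement, and the asserted equivalence "in the boundary ideal iff supported off the boundary" fails in both directions.

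The fix is to drop the chart computation entirely: at each finite level $j$ one has two short exact sequences
\[
0 \rightarrow \jmath_{\ket,!}\jmath_{\ket}^{-1}\scrO\!\!\scrM_{j,\ket} \rightarrow \scrO\!\!\scrM_{j,\ket} \rightarrow \imath_{\ket,*}\imath_{\ket}^{-1}\scrO\!\!\scrM_{j,\ket} \rightarrow 0
\quad\text{and}\quad
0 \rightarrow \scrO\!\!\scrM_{j,\ket}(-\calD_n) \rightarrow \scrO\!\!\scrM_{j,\ket} \rightarrow \imath_{\ket,*}\imath_{\ket}^{-1}\scrO\!\!\scrM_{j,\ket} \rightarrow 0,
\]
the first from \cite[Lemma 4.5.3]{Diao} and the second by definition of "vanishing along $\calD_n$"; having the same cokernel forces the two kernels to coincide. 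One then pulls these back to the pro-Kummer \'etale site (\cite[Corollary 5.1.8]{Diao}), takes $\varprojlim_j$ (the systems are Mittag--Leffler, so $R^1\lim$ vanishes, as you anticipated), and inverts $p$.
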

\begin{proof}
    For every $j$, the sheaf $\scrM_{j}^{\circ}$ is an \'etale $\Z/p^{n}\Z$-local system for some $n$. We take $n_j$ to be the minimal such integer. Then   there is an isomorphism
\[
    \scrO\!\!\scrM^{\circ}\cong \varprojlim_{j}\nu^{-1}(\jmath_{\ket, *}\scrM_{j}^{\circ}\otimes_{\Z/p^{n_j}\Z} \scrO_{\calX_{n}^{\tor}, \ket}^+/p^{n_j})
\]
of pro-Kummer \'etale sheaves. We write $\scrO\!\!\scrM_{j, \ket}:= \jmath_{\ket, *}\scrM_{j}^{\circ}\otimes_{\Z/p^{n_j}\Z} \scrO_{\calX_{n}^{\tor}, \ket}^+/p^{n_j}$. 

Let $\imath: \calD_n \hookrightarrow \calX_n^{\tor}$ be the strict closed immersion; in particular, we endow $\calD_n$ with the pullback log structure from $\calX_n^{\tor}$.
We have short exact sequences\[
    0 \rightarrow \jmath_{\ket, !}\jmath_{\ket}^{-1} \scrO\!\!\scrM_{j, \ket} \rightarrow \scrO\!\!\scrM_{j, \ket} \rightarrow \imath_{\ket, *}\imath_{\ket}^{-1} \scrO\!\!\scrM_{j, \ket} \rightarrow 0
\] 
and 
\[
    0 \rightarrow \scrO\!\!\scrM_{j, \ket}(-\calD_{n}) \rightarrow \scrO\!\!\scrM_{j, \ket} \rightarrow \imath_{\ket, *}\imath_{\ket}^{-1} \scrO\!\!\scrM_{j, \ket} \rightarrow 0.
\] 
Indeed, the first exact sequence follows from \cite[Lemma 4.5.3]{Diao} while the second follows from definitions. These short exact sequences further pullback to short exact sequence over $\calX_{n, \proket}^{\tor}$ by \cite[Corollary 5.1.8]{Diao}. Taking limit with respect to $j$ and then inverting $p$, we arrive at short exact sequences \[
    0 \rightarrow \varprojlim_{j} \nu^{-1}\left(\jmath_{\ket, !}\jmath_{\ket}^{-1} \scrO\!\!\scrM_{j, \ket}\right) \rightarrow \scrO\!\!\scrM \rightarrow \varprojlim_{j} \nu^{-1}\left(\imath_{\ket, *}\imath_{\ket}^{-1} \scrO\!\!\scrM_{j, \ket}\right)\rightarrow 0
\]
and 
\[
    0 \rightarrow \scrO\!\!\scrM(-\calD_n)\rightarrow \scrO\!\!\scrM \rightarrow \varprojlim_{j} \nu^{-1}\left(\imath_{\ket, *}\imath_{\ket}^{-1} \scrO\!\!\scrM_{j, \ket}\right) \rightarrow 0.
\]
Note that the corresponding $R^1\lim$'s vanish because the system $\{\scrM_j^{\circ}\}$ is Mittag-Leffler. Consequently, we obtain the desired isomorphism
\[\scrO\!\!\scrM(-\calD_n) \cong \varprojlim_{j} \nu^{-1}\left(\jmath_{\ket, !}\jmath_{\ket}^{-1} \scrO\!\!\scrM_{j, \ket}\right)\cong  \jmath_{\proket, !}\,\jmath_{\proket}^{-1} \scrO\!\!\scrM.\]
\end{proof}

So far, given a small weight $(R_{\calU}, \kappa_{\calU})$ and $r\in \Q_{\geq 0}$ with $r> 1+r_{\calU}$, we have defined sheaves $\scrO\!\!\scrA_{\kappa_{\calU}}^{r^+, \circ}$, $\scrO\!\!\scrA_{\kappa_{\calU}}^{r^+}$, $\scrO\!\!\scrD_{\kappa_{\calU}}^{r, \circ}$, and $\scrO\!\!\scrD_{\kappa_{\calU}}^{r}$ on $\calX_{n, \proket}^{\tor}$. Taking duals, we define \[
    \begin{array}{cc}
        \scrO\!\!\scrA_{\kappa_{\calU}}^{r, \circ}  \coloneq \sheafHom_{R_{\calU}\widehat{\otimes}\widehat{\scrO}_{\calX_{n, \proket}^{\tor}}^+}\left( \scrO\!\!\scrD_{\kappa_{\calU}}^{r, \circ}, R_{\calU}\widehat{\otimes}\widehat{\scrO}_{\calX_{n, \proket}^{\tor}}^+\right), & \scrO\!\!\scrA_{\kappa_{\calU}}^{r} \coloneq \scrO\!\!\scrA_{\kappa_{\calU}}^{r, \circ}\Big[\frac{1}{p}\Big],\\
        \scrO\!\!\scrD_{\kappa_{\calU}}^{r^+, \circ}  \coloneq \sheafHom_{R_{\calU}\widehat{\otimes}\widehat{\scrO}_{\calX_{n, \proket}^{\tor}}^+}\left( \scrO\!\!\scrA_{\kappa_{\calU}}^{r^+, \circ}, R_{\calU}\widehat{\otimes}\widehat{\scrO}_{\calX_{n, \proket}^{\tor}}^+\right), & \scrO\!\!\scrD_{\kappa_{\calU}}^{r^+} \coloneq \scrO\!\!\scrD_{\kappa_{\calU}}^{r^+, \circ}\Big[\frac{1}{p}\Big],
    \end{array}
\]
where the internal Hom is taken in the category of topological $R_{\calU}\widehat{\otimes}\widehat{\scrO}_{\calX_{n, \proket}^{\tor}}^+$-modules.

To wrap up \S \ref{subsection: Kummer and pro-Kummer \'etale overconvergent cohomology}, we extend these constructions to affinoid weights. Consider a small weight $(R_{\calU}, \kappa_{\calU})$ together with an affinoid open $\calV = \Spa(R_{\calV}, R_{\calV}^{\circ})$ in $\calU$. Let $\kappa_{\calV}$ be the induced continuous character through the embedding $\calV \subset \calU$. For $r>1+r_{\calV}$, we define \[
    \scrO\!\!\scrA_{\kappa_{\calV}}^r \coloneq \scrO\!\!\scrA_{\kappa_{\calU}}^r \widehat{\otimes} R_{\calV} \quad \text{ and }\quad \scrO\!\!\scrD_{\kappa_{\calV}}^{r^+} \coloneq \scrO\!\!\scrD_{\kappa_{\calU}}^{r^+} \widehat{\otimes} R_{\calV}. 
\]
We have the following structure theorem. 

\begin{Lemma}\label{Lemma: structure theorem of OD^{r^+} and OA^r}
    Let $(R_{\calU}, \kappa_{\calU})$, $(R_{\calV}, \kappa_{\calV})$, and $r$ be as above. Let $U$ be an affinoid perfectoid object in the pro-Kummer \'etale site $\calX_{n, \proket}^{\tor}$, with associated affinoid perfectoid space $\Spa(R, R^+)$. Then there are identifications \[
        \scrO\!\!\scrA_{\kappa_{\calV}}^{r}(U) \cong \widehat{\bigoplus}_{i\in \Z_{\geq 0}^4} \big(R_{\calV}\widehat{\otimes}R\big) e_i^{(r)} \quad \text{ and }\quad \scrO\!\!\scrD_{\kappa_{\calV}}^{r^+}(U) \cong \widehat{\bigoplus}_{i\in \Z_{\geq 0}^4} \big(R_{\calV}\widehat{\otimes}R\big) e_i^{(r), \vee}.
    \]
\end{Lemma}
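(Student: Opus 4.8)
The plan is to reduce to the structure theorems already established for the analytic and distribution modules, and then descend along the tower defining the pro-Kummer \'etale sheaves. First I would treat the distribution sheaf $\scrO\!\!\scrD_{\kappa_{\calV}}^{r^+}$. By definition $\scrO\!\!\scrD_{\kappa_{\calV}}^{r^+} = \scrO\!\!\scrD_{\kappa_{\calU}}^{r^+} \widehat{\otimes} R_{\calV}$, and $\scrO\!\!\scrD_{\kappa_{\calU}}^{r^+} = \scrO\!\!\scrD_{\kappa_{\calU}}^{r^+, \circ}[1/p]$, where $\scrO\!\!\scrD_{\kappa_{\calU}}^{r^+, \circ}$ is the internal dual of $\scrO\!\!\scrA_{\kappa_{\calU}}^{r^+, \circ} = \varprojlim_{j}\left(\nu^{-1}\jmath_{\ket, *}\scrA_{\kappa_{\calU}, j}^{r^+, \circ} \otimes_{\Z_p}\widehat{\scrO}_{\calX_{n,\proket}^{\tor}}^+\right)$. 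On an affinoid perfectoid $U$ with associated space $\Spa(R, R^+)$, evaluating the inverse limit and using that $\widehat{\scrO}_{\calX_{n,\proket}^{\tor}}^+(U) = R^+$ (up to an almost isomorphism, which disappears after inverting $p$), I would combine the decomposition $A_{\kappa_{\calU}}^{r^+, \circ} = \varprojlim_j A_{\kappa_{\calU}, j}^{r^+, \circ}$ from the filtration discussion (built on Proposition \ref{Proposition: structure theorem for distributions}) with the explicit description $A_{\kappa_{\calU}, j}^{r^+, \circ} \cong \bigoplus_{v_p(c_i^{(r)})<j} R_{\calU}/(\fraka_{\calU}^j, p^{i - v_p(c_i^{(r)})})$ to obtain $\scrO\!\!\scrA_{\kappa_{\calU}}^{r^+, \circ}(U) \cong \prod_{i\in\Z_{\geq 0}^4}(R_{\calU}\widehat{\otimes}R^+)\,e_i^{(r)}$ after passing to the limit. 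Dualising (internal Hom over $R_{\calU}\widehat{\otimes}\widehat{\scrO}^+$) turns the product into a $\widehat{\bigoplus}$, giving $\scrO\!\!\scrD_{\kappa_{\calU}}^{r^+, \circ}(U) \cong \widehat{\bigoplus}_i (R_{\calU}\widehat{\otimes}R^+) e_i^{(r),\vee}$; inverting $p$ and then completing-tensoring with $R_{\calV}$ yields the claimed formula for $\scrO\!\!\scrD_{\kappa_{\calV}}^{r^+}(U)$.

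For $\scrO\!\!\scrA_{\kappa_{\calV}}^{r}$ I would argue symmetrically. By definition $\scrO\!\!\scrA_{\kappa_{\calU}}^{r, \circ}$ is the internal dual of $\scrO\!\!\scrD_{\kappa_{\calU}}^{r, \circ} = \varprojlim_j\left(\nu^{-1}\jmath_{\ket, *}\scrD_{\kappa_{\calU}, j}^{r, \circ}\otimes_{\Z_p}\widehat{\scrO}^+\right)$, and the Hansen--Chojecki--Johansson filtration gives $D_{\kappa_{\calU}, j}^{r, \circ}$ finite over $\Z_p$ with $D_{\kappa_{\calU}}^{r, \circ} = \varprojlim_j D_{\kappa_{\calU}, j}^{r, \circ} = \prod_i R_{\calU}^{\circ} e_i^{(r),\vee}$ by Proposition \ref{Proposition: structure theorem for distributions}. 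Evaluating at $U$ as above gives $\scrO\!\!\scrD_{\kappa_{\calU}}^{r, \circ}(U)$ as a suitable completion of $\prod_i (R_{\calU}\widehat{\otimes}R^+) e_i^{(r),\vee}$, and taking the topological dual converts this to $\widehat{\bigoplus}_i(R_{\calU}\widehat{\otimes}R^+)e_i^{(r)}$; inverting $p$ and completing with $R_{\calV}$ gives the desired description of $\scrO\!\!\scrA_{\kappa_{\calV}}^{r}(U)$. The clean bookkeeping here is that $A_{\kappa_{\calU}}^{r}$ is (potentially) ON-able while $A_{\kappa_{\calU}}^{r^+}$ is the product completion, and the duality swaps the two kinds of completion, exactly as recorded in Proposition \ref{Proposition: structure theorem for distributions}; the pro-Kummer \'etale construction only tensors all of this with $\widehat{\scrO}^+(U) = R^+$ over $\Z_p$.

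\textbf{The main obstacle.} The delicate point is interchanging the inverse limit over $j$ (defining the sheaves $\scrO\!\!\scrA^{r^+,\circ}$, $\scrO\!\!\scrD^{r,\circ}$), the internal $\sheafHom$ (the duality), the completed tensor products with $R$ and $R_{\calV}$, and the localisation $(-)[1/p]$, while keeping track of where one only gets an \emph{almost} isomorphism (from $\widehat{\scrO}^+(U) \cong^a R^+$) versus an honest one. The safe route is to do everything integrally and modulo $\fraka_{\calU}^j$ first, where $A_{\kappa_{\calU}, j}^{r^+, \circ}$ and $D_{\kappa_{\calU}, j}^{r, \circ}$ are literally finite direct sums and the tensor with $\jmath_{\ket,*}(-)\otimes \widehat{\scrO}^+/p^n$ is unproblematic, establish the stated product/direct-sum decomposition at each finite level, check that the transition maps are the evident ones (so that the relevant $R^1\varprojlim$ vanish by Mittag--Leffler, as in the proof of Lemma \ref{Lemma: cuspidal pro-Kummer \'etale sheaves}), and only then pass to the limit, invert $p$, and tensor with $R_{\calV}$. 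Since inverting $p$ kills the almost-ambiguity and $R_{\calV}$ is flat (resp.\ $R_{\calU}$-flat via \cite[Corollary 6.5]{CHJ-2017} in the small case), the completed tensor $\widehat{\otimes}R_{\calV}$ commutes with the $\widehat{\bigoplus}$, giving the result. I expect this to be short once the finite-level identifications are in place; the write-up should cite Proposition \ref{Proposition: structure theorem for distributions} for the coefficient decompositions and \eqref{eq: structure theorem of r-analytic functions} for the underlying fact about $C^r$ and $C^{r^+}$.
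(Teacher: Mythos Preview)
Your proposal is correct and follows exactly the same approach as the paper: identify $\scrO\!\!\scrD_{\kappa_{\calU}}^{r,\circ}(U)$ and $\scrO\!\!\scrA_{\kappa_{\calU}}^{r^+,\circ}(U)$ as products via Proposition~\ref{Proposition: structure theorem for distributions}, then take duals and complete-tensor with $R_{\calV}$. The paper compresses this into three lines, treating the limit/almost/duality bookkeeping you spell out as implicit in the reference to Proposition~\ref{Proposition: structure theorem for distributions}; your more careful accounting of these steps is sound but not strictly required for the write-up.
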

\begin{proof}
It follows from Proposition \ref{Proposition: structure theorem for distributions} that
   \[
        \scrO\!\!\scrD_{\kappa_{\calU}}^{r, \circ}(U) \cong \prod_{i\in \Z_{\geq 0}^4} \big(R_{\calU}\widehat{\otimes}R^{\circ}\big) e_i^{(r), \vee}\quad \text{ and }\quad \scrO\!\!\scrA_{\kappa_{\calU}}^{r^+, \circ}(U) \cong \prod_{i\in \Z_{\geq 0}^4} \big(R_{\calU}\widehat{\otimes}R^{\circ}\big) e_i^{(r)} .
    \]
    The desired identifications then follow from taking dual and taking $R_{\calV}\widehat{\otimes}-$. 
\end{proof}

For an affinoid weight $(R_{\calV}, \kappa_{\calV})$ as above, we further define \[
    \begin{array}{c}
        \scrO\!\!\scrD_{\kappa_{\calV}}^r \coloneq \sheafHom_{R_{\calV}\widehat{\otimes}\widehat{\scrO}_{\calX_{n, \proket}^{\tor}}}(\scrO\!\!\scrA_{\kappa_{\calV}}^r, R_{\calV}\widehat{\otimes}\widehat{\scrO}_{\calX_{n, \proket}^{\tor}}),\\
        \scrO\!\!\scrA_{\kappa_{\calV}}^r \coloneq \sheafHom_{R_{\calV}\widehat{\otimes}\widehat{\scrO}_{\calX_{n, \proket}^{\tor}}}(\scrO\!\!\scrD_{\kappa_{\calV}}^r, R_{\calV}\widehat{\otimes}\widehat{\scrO}_{\calX_{n, \proket}^{\tor}}).
    \end{array}
\]
For $s\geq r > 1+r_{\calV}$, there are natural injections and surjections \begin{equation}\label{eq: sandwiches of pro-Kummer \'etale sheaves for OA and OD}
    \scrO\!\!\scrA_{\kappa_{\calV}}^r \hookrightarrow \scrO\!\!\scrA_{\kappa_{\calV}}^{r^+} \hookrightarrow \scrO\!\!\scrA_{\kappa_{\calV}}^s \quad \text{ and }\quad \scrO\!\!\scrD_{\kappa_{\calV}}^{s^+} \twoheadrightarrow \scrO\!\!\scrD_{\kappa_{\calV}}^{s} \twoheadrightarrow \scrO\!\!\scrD_{\kappa_{\calV}}^{r^+}.
\end{equation}

\begin{Remark}\label{Remark: small vs affinoid}
    The readers might wonder why we went through such an indirect construction to define sheaves $\scrO\!\!\scrA_{\kappa_{\calV}}^r$, $ \scrO\!\!\scrA_{\kappa_{\calV}}^{r^+}$, $ \scrO\!\!\scrD_{\kappa_{\calV}}^r$, and $ \scrO\!\!\scrD_{\kappa_{\calV}}^{r^+}$ for affinoid weights. Let us explain briefly in this remark. First of all, one needs a well-behaved integral structure to associate with (Kummer) \'etale local systems. Such an integral structure only exists when we work with small weights (following the idea in \cite{Hansen-Iwasawa}). Secondly, we will need a notion of \emph{finite-slope part} of pro-Kummer \'etale cohomology with supports for affinoid weights. Notice that $\scrO\!\!\scrA_{\kappa_{\calV}}^{r^+}$ and $\scrO\!\!\scrD_{\kappa_{\calV}}^r$ are not sheaves of Banach $\widehat{\scrO}_{\calX_{n, \proket}^{\tor}}\widehat{\otimes}R_{\calV}$-modules (in the sense of Definition \ref{Definition: proket Banach sheaves}), but $\scrO\!\!\scrA_{\kappa_{\calV}}^{r}$ and $\scrO\!\!\scrD_{\kappa_{\calV}}^{r^+}$ are. Therefore, it is necessary to work with $\scrO\!\!\scrA_{\kappa_{\calV}}^{r}$ and $\scrO\!\!\scrD_{\kappa_{\calV}}^{r^+}$ when we define the finite-slope part of pro-Kummer \'etale cohomology with supports with coefficients in $\scrO\!\!\scrA_{\kappa_{\calV}}^{r^+}$ and $\scrO\!\!\scrD_{\kappa_{\calV}}^r$. 
\end{Remark}

\begin{Remark}\label{Remark: pro-Kummer \'etale overconvergent cohomology for affinoid weights}
    Let $(R_{\calU}, \kappa_{\calU})$, $(R_{\calV}, \kappa_{\calV})$, and $r$ be as above.
     \begin{enumerate}
        \item[(i)]  There are isomorphisms \[
           \scalemath{0.9}{ H^i(X_n(\C), D_{\kappa_{\calV}}^r)\widehat{\otimes}\C_p \cong \left(H^i(X_n(\C), D_{\kappa_{\calU}}^r)\widehat{\otimes}\C_p\right)\widehat{\otimes}_{R_{\calU}}R_{\calV} \cong H_{\proket}^i(\calX^{\tor}_n, \scrO\!\!\scrD_{\kappa_{\calU}}^r)\widehat{\otimes}_{R_{\calU}}R_{\calV} \cong H_{\proket}^i(\calX^{\tor}_n, \scrO\!\!\scrD_{\kappa_{\calV}}^r),}
        \] where the middle isomorphism follows from Proposition \ref{Proposition: comparison of Betti and Kummer \'etale cohomology} and Proposition \ref{Proposition: comparing Kummer \'etale and pro-Kummer \'etale cohomology}. Similar results hold for $\scrO\!\!\scrA_{\kappa_{\calV}}^r$ and for compactly supported cohomology groups. 
        \item[(ii)] A similar statement of Lemma \ref{Lemma: cuspidal pro-Kummer \'etale sheaves} for affinoid weights also follows from such a base change. 
    \end{enumerate} 
\end{Remark}

\subsection{An alternative construction on the flag variety}\label{subsection: alternative construction on flag variety}
In \S \ref{subsection: Kummer and pro-Kummer \'etale overconvergent cohomology}, we constructed sheaves 
\begin{itemize}
\item $\scrO\!\!\scrA_{\kappa_{\calU}}^r$, $\scrO\!\!\scrA_{\kappa_{\calU}}^{r+}$, $\scrO\!\!\scrD_{\kappa_{\calU}}^r$, $\scrO\!\!\scrD_{\kappa_{\calU}}^{r+}$ for small weights $(R_{\calU}, \kappa_{\calU})$; and
\item $\scrO\!\!\scrA_{\kappa_{\calV}}^r$, $\scrO\!\!\scrA_{\kappa_{\calV}}^{r+}$, $\scrO\!\!\scrD_{\kappa_{\calV}}^r$, $\scrO\!\!\scrD_{\kappa_{\calV}}^{r+}$ for affinoid weights $(R_{\calV}, \kappa_{\calV})$
\end{itemize} 
on the pro-Kummer \'etale site $\calX_{n, \proket}^{\tor}$. For later use, we need a similar construction of such sheaves on the flag variety; namely, we construct
\begin{itemize}
\item $\scrO\!\!\scrA_{\kappa_{\calU}, \adicFL}^r$, $\scrO\!\!\scrA_{\kappa_{\calU}, \adicFL}^{r+}$, $\scrO\!\!\scrD_{\kappa_{\calU}, \adicFL}^r$, $\scrO\!\!\scrD_{\kappa_{\calU}, \adicFL}^{r+}$ for small weights $(R_{\calU}, \kappa_{\calU})$; and
\item $\scrO\!\!\scrA_{\kappa_{\calV}, \adicFL}^r$, $\scrO\!\!\scrA_{\kappa_{\calV}, \adicFL}^{r+}$, $\scrO\!\!\scrD_{\kappa_{\calV}, \adicFL}^r$, $\scrO\!\!\scrD_{\kappa_{\calV}, \adicFL}^{r+}$ for affinoid weights $(R_{\calV}, \kappa_{\calV})$
\end{itemize} 
on the pro-\'etale site $\adicFL_{\proet}$. As we shall see in Proposition \ref{prop: compare two constructions}, the two constructions are related via the Hodge--Tate period map.

Once again we start with small weights. Let $(R_{\calU}, \kappa_{\calU})$ be a small weight and let $r\geq r_{\calU}+1$. Recall the profinite systems $\{D_{\kappa_{\calU}, j}^{r, \circ}\}_j$ and $\{A_{\kappa_{\calU}, j}^{r^+, \circ}\}_j$. Let $\scrA_{\kappa_{\calU}, j, \adicFL}^{r^+, \circ}$ (resp., $\scrD_{\kappa_{\calU}, j, \adicFL}^{r, \circ}$) be the \'etale constant sheaf on $\adicFL$ associated with $A_{\kappa_{\calU}, j}^{r^+, \circ}$ (resp., $D_{\kappa_{\calU}, j}^{r, \circ}$). We define sheaves \[
    \scrO\!\!\scrA_{\kappa_{\calU}, \adicFL}^{r^+, \circ} := \varprojlim_j \nu^{-1}\scrA_{\kappa_{\calU}, j, \adicFL}^{r^+, \circ}\otimes_{\Z_p}\widehat{\scrO}_{\adicFL, \proet}^+ \quad , \quad \scrO\!\!\scrA_{\kappa_{\calU}, \adicFL}^{r^+} := \scrO\!\!\scrA_{\kappa_{\calU}, \adicFL}^{r^+, \circ}\Big[\frac{1}{p}\Big],
\]
and 
\[
    \scrO\!\!\scrD_{\kappa_{\calU}, \adicFL}^{r, \circ} := \varprojlim_j \nu^{-1}\scrD_{\kappa_{\calU}, j, \adicFL}^{r, \circ}\otimes_{\Z_p}\widehat{\scrO}_{\adicFL, \proet}^+ \quad , \quad \scrO\!\!\scrD_{\kappa_{\calU}, \adicFL}^{r} := \scrO\!\!\scrD_{\kappa_{\calU}, \adicFL}^{r, \circ}\Big[\frac{1}{p}\Big],
\]
where $\nu: \adicFL_{\proet} \rightarrow \adicFL_{\et}$ is the natural projection of sites. Similar to \S\ref{subsection: Kummer and pro-Kummer \'etale overconvergent cohomology}, we then consider sheaves \[
    \begin{array}{cc}
        \scrO\!\!\scrA_{\kappa_{\calU}, \adicFL}^{r, \circ}  \coloneq \sheafHom_{R_{\calU}\widehat{\otimes}\widehat{\scrO}_{\adicFL_{\proet}}}\left( \scrO\!\!\scrD_{\kappa_{\calU}, \adicFL}^{r, \circ}, R_{\calU}\widehat{\otimes}\widehat{\scrO}_{\adicFL_{\proet}}^+\right), & \scrO\!\!\scrA_{\kappa_{\calU}, \adicFL}^{r} \coloneq \scrO\!\!\scrA_{\kappa_{\calU}, \adicFL}^{r, \circ}\Big[\frac{1}{p}\Big],\\
        \scrO\!\!\scrD_{\kappa_{\calU}, \adicFL}^{r^+, \circ}  \coloneq \sheafHom_{R_{\calU}\widehat{\otimes}\widehat{\scrO}_{\adicFL_{\proet}}}\left( \scrO\!\!\scrA_{\kappa_{\calU}, \adicFL}^{r^+, \circ}, R_{\calU}\widehat{\otimes}\widehat{\scrO}_{\adicFL_{\proet}}^+\right), & \scrO\!\!\scrD_{\kappa_{\calU}, \adicFL}^{r^+} \coloneq \scrO\!\!\scrD_{\kappa_{\calU}, \adicFL}^{r^+, \circ}\Big[\frac{1}{p}\Big],
    \end{array}
\] 

Now we treat the case for affinoid weights. Consider a small weight $(R_{\calU}, \kappa_{\calU})$ together with an affinoid open $\calV = \Spa(R_{\calV}, R_{\calV}^{\circ})$ in $\calU$. Let $\kappa_{\calV}$ be the induced continuous character through the embedding $\calV \subset \calU$. For $r>1+r_{\calV}$, we define \[
    \scrO\!\!\scrA_{\kappa_{\calV}, \adicFL}^r \coloneq \scrO\!\!\scrA_{\kappa_{\calU}, \adicFL}^r \widehat{\otimes} R_{\calV} \quad \text{ and }\quad \scrO\!\!\scrD_{\kappa_{\calV}, \adicFL}^{r^+} \coloneq \scrO\!\!\scrD_{\kappa_{\calU}, \adicFL}^{r^+} \widehat{\otimes} R_{\calV}. 
\]
Then we define \[
    \begin{array}{c}
        \scrO\!\!\scrD_{\kappa_{\calV}, \adicFL}^r \coloneq \sheafHom_{R_{\calV}\widehat{\otimes}\widehat{\scrO}_{\calX_{n, \proket}^{\tor}}}(\scrO\!\!\scrA_{\kappa_{\calV}, \adicFL}^r, R_{\calV}\widehat{\otimes}\widehat{\scrO}_{\calX_{n, \proket}^{\tor}}),\\
        \scrO\!\!\scrA_{\kappa_{\calV}, \adicFL}^r \coloneq \sheafHom_{R_{\calV}\widehat{\otimes}\widehat{\scrO}_{\calX_{n, \proket}^{\tor}}}(\scrO\!\!\scrD_{\kappa_{\calV}, \adicFL}^r, R_{\calV}\widehat{\otimes}\widehat{\scrO}_{\calX_{n, \proket}^{\tor}}).
    \end{array}
\]
For $s\geq r > 1+r_{\calV}$, there are natural injections and surjections \begin{equation}\label{eq: sandwiches of pro-Kummer \'etale sheaves for OA and OD on flag variety}
    \scrO\!\!\scrA_{\kappa_{\calV}, \adicFL}^r \hookrightarrow \scrO\!\!\scrA_{\kappa_{\calV}, \adicFL}^{r^+} \hookrightarrow \scrO\!\!\scrA_{\kappa_{\calV}, \adicFL}^s \quad \text{ and }\quad \scrO\!\!\scrD_{\kappa_{\calV}, \adicFL}^{s^+} \twoheadrightarrow \scrO\!\!\scrD_{\kappa_{\calV}, \adicFL}^{s} \twoheadrightarrow \scrO\!\!\scrD_{\kappa_{\calV}, \adicFL}^{r^+}.
\end{equation}

In fact, the two constructions in \S \ref{subsection: Kummer and pro-Kummer \'etale overconvergent cohomology} and \S \ref{subsection: alternative construction on flag variety} are related via the Hodge--Tate period map. More precisely, consider the Hodge--Tate period map $\pi_{\HT}: \calX^{\tor}_{\Gamma(p^{\infty})}\rightarrow \adicFL$ which induces $\pi_{\HT}: \calX^{\tor}_{\Gamma(p^{\infty}), \proket}\rightarrow \adicFL_{\proet}$. We also consider the natural projection $h_n: \calX^{\tor}_{\Gamma(p^{\infty})}\rightarrow \calX^{\tor}_{n}$ which induces $h_n: \calX^{\tor}_{\Gamma(p^{\infty}), \proket}\rightarrow \calX^{\tor}_{n, \proket}$.

\begin{Proposition}\label{prop: compare two constructions}
Let $(R_{\calU}, \kappa_{\calU})$ be a small weight and let $r\geq r_{\calU}+1$. Then there is a natural identification
\[h_n^*\scrO\!\!\scrA_{\kappa_{\calU}}^r\simeq \pi_{\HT}^*\scrO\!\!\scrA_{\kappa_{\calU}, \adicFL}^r.\]
Similar results hold for $\scrO\!\!\scrA_{\kappa_{\calU}}^{r+}$, $\scrO\!\!\scrD_{\kappa_{\calU}}^r$, $\scrO\!\!\scrD_{\kappa_{\calU}}^{r+}$ for small weights $(R_{\calU}, \kappa_{\calU})$, and for $\scrO\!\!\scrA_{\kappa_{\calV}}^r$, $\scrO\!\!\scrA_{\kappa_{\calV}}^{r+}$, $\scrO\!\!\scrD_{\kappa_{\calV}}^r$, $\scrO\!\!\scrD_{\kappa_{\calV}}^{r+}$ for affinoid weights $(R_{\calV}, \kappa_{\calV})$.
\end{Proposition}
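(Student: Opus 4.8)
The plan is to prove the identification for $\scrO\!\!\scrD_{\kappa_{\calU}}^{r, \circ}$ at the finite levels $j$ first, and then pass to the limit over $j$, invert $p$, dualise, and base-change to $R_{\calV}$; all the other cases are formal consequences of this one case together with the definitions. Recall that $\scrO\!\!\scrD_{\kappa_{\calU}}^{r, \circ} = \varprojlim_j \bigl(\nu^{-1}\jmath_{\ket, *}\scrD_{\kappa_{\calU}, j}^{r,\circ}\otimes_{\Z_p}\widehat{\scrO}_{\calX_{n,\proket}^{\tor}}^+\bigr)$ where $\scrD_{\kappa_{\calU},j}^{r,\circ}$ is the \'etale local system on $\calX_n$ attached to $D_{\kappa_{\calU},j}^{r,\circ}$ via $\pi_1^{\et}(\calX_n)\twoheadrightarrow \Iw_{\GSp_4,n}^+\to \Aut(D_{\kappa_{\calU},j}^{r,\circ})$, while $\scrO\!\!\scrD_{\kappa_{\calU},\adicFL}^{r,\circ} = \varprojlim_j \nu^{-1}\scrD_{\kappa_{\calU},j,\adicFL}^{r,\circ}\otimes_{\Z_p}\widehat{\scrO}_{\adicFL,\proet}^+$ with $\scrD_{\kappa_{\calU},j,\adicFL}^{r,\circ}$ the constant pro-\'etale sheaf. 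So the heart of the matter is the claim
\[
    h_n^{-1}\bigl(\jmath_{\ket,*}\scrD_{\kappa_{\calU},j}^{r,\circ}\bigr) \;\simeq\; \pi_{\HT}^{-1}\bigl(\scrD_{\kappa_{\calU},j,\adicFL}^{r,\circ}\bigr)
\]
as pro-Kummer \'etale sheaves on $\calX_{\Gamma(p^{\infty}),\proket}^{\tor}$, compatibly with transition maps and (after $\otimes\,\widehat{\scrO}^+$) with the $\widehat{\scrO}^+$-module structures. Since the target is the pullback of a \emph{constant} sheaf, $\pi_{\HT}^{-1}\scrD_{\kappa_{\calU},j,\adicFL}^{r,\circ}$ is simply the constant sheaf $D_{\kappa_{\calU},j}^{r,\circ}$ on $\calX_{\Gamma(p^{\infty}),\proket}^{\tor}$; thus the real content is that the local system $\jmath_{\ket,*}\scrD_{\kappa_{\calU},j}^{r,\circ}$ becomes constant after pulling back along the pro-Kummer \'etale cover $h_n\colon \calX_{\Gamma(p^{\infty})}^{\tor}\to \calX_n^{\tor}$, and that the induced trivialisation matches the one coming from $\pi_{\HT}$.

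First I would recall that $h_n\colon \calX_{\Gamma(p^{\infty})}^{\tor}\to \calX_n^{\tor}$ is a Galois pro-Kummer \'etale cover with Galois group $\Iw_{\GSp_4,n}^+$ (stated in the introduction), so by descent a Kummer \'etale local system on $\calX_n^{\tor}$ pulls back to a constant sheaf on $\calX_{\Gamma(p^{\infty})}^{\tor}$ precisely when the monodromy factors through $\Iw_{\GSp_4,n}^+$ — which is exactly how $\scrD_{\kappa_{\calU},j}^{r,\circ}$ was defined. This gives a canonical trivialisation $h_n^{-1}\jmath_{\ket,*}\scrD_{\kappa_{\calU},j}^{r,\circ}\simeq D_{\kappa_{\calU},j}^{r,\circ}$ (constant), depending on the chosen base point/trivialisation at $\calX_{\Gamma(p^{\infty})}^{\tor}$. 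Second, I would identify this trivialisation with the one furnished by $\pi_{\HT}$: by construction of the Hodge--Tate period map recalled in \S\ref{subsection: Siegel threefolds}, the $\GSp_4$-torsor $\calG^{\an}_{\HT}$ (of trivialisations of $V_{\Z_p}$) becomes trivial over $\calX_{\Gamma(p^{\infty})}^{\tor}$, and $\pi_{\HT}$ is built from the resulting identification $V_{\Z_p}\otimes\widehat{\scrO}\simeq \widehat{\scrO}^4$; the distribution sheaves $\scrD_{\kappa_{\calU},j}^{r,\circ}$ are associated to $V_{\Z_p}$ (equivalently to the universal Tate module) via the $\Iw_{\GSp_4,n}^+$-action built from this standard framing, so the two trivialisations coincide by unwinding both definitions. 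Concretely, I would phrase this as: both $h_n^{-1}\jmath_{\ket,*}\scrD_{\kappa_{\calU},j}^{r,\circ}$ and $\pi_{\HT}^{-1}\scrD_{\kappa_{\calU},j,\adicFL}^{r,\circ}$ are canonically the constant sheaf $D_{\kappa_{\calU},j}^{r,\circ}$, and the comparison isomorphism is the identity under these identifications; what requires checking is compatibility, i.e.\ that the $\Iw_{\GSp_4,n}^+$-equivariant structures agree, which follows from the $\GSp_4(\Q_p)$-equivariance of $\pi_{\HT}$.

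Third, I would tensor with $\widehat{\scrO}^+$ over $\Z_p$ (this is exact and commutes with the constant-sheaf identification since it is a $\Z_p$-linear operation applied termwise), take $\varprojlim_j$ (legitimate since $\{\scrD_{\kappa_{\calU},j}^{r,\circ}\}_j$ is Mittag--Leffler, so $R^1\varprojlim$ vanishes and the limit commutes with the identifications), and then invert $p$ to get $h_n^*\scrO\!\!\scrD_{\kappa_{\calU}}^r\simeq \pi_{\HT}^*\scrO\!\!\scrD_{\kappa_{\calU},\adicFL}^r$. The statement for $\scrO\!\!\scrD_{\kappa_{\calU}}^{r,\circ}$, $\scrO\!\!\scrD_{\kappa_{\calU}}^{r}$, and analogously for $\scrO\!\!\scrA_{\kappa_{\calU}}^{r^+,\circ}$, $\scrO\!\!\scrA_{\kappa_{\calU}}^{r^+}$ (replacing $\scrD$ by $\scrA$ throughout, using $\{A_{\kappa_{\calU},j}^{r^+,\circ}\}_j$) follows by the identical argument. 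For $\scrO\!\!\scrA_{\kappa_{\calU}}^{r}$, $\scrO\!\!\scrA_{\kappa_{\calU}}^{r,\circ}$, $\scrO\!\!\scrD_{\kappa_{\calU}}^{r^+}$, $\scrO\!\!\scrD_{\kappa_{\calU}}^{r^+,\circ}$ I would observe that these were defined as internal $\sheafHom$'s into $R_{\calU}\widehat{\otimes}\widehat{\scrO}^{+}$ (resp.\ $\widehat{\scrO}$) of the already-treated sheaves; since $\pi_{\HT}^*$ and $h_n^*$ are compatible with such internal Hom's — this uses Lemma \ref{Lemma: structure theorem of OD^{r^+} and OA^r} and its flag-variety analogue, which present everything as $\widehat{\bigoplus}_{i}(R_{?}\widehat\otimes R)e_i^{(r)}$ on affinoid perfectoids, so the dual is computed termwise and commutes with pullback — the identification propagates. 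Finally, base-changing along $R_{\calU}\to R_{\calV}$ via $-\,\widehat{\otimes}\,R_{\calV}$, which both constructions are defined to respect, gives the affinoid-weight cases. The main obstacle I anticipate is not conceptual but bookkeeping: making the identification of the two trivialisations of the pulled-back local system genuinely canonical (rather than merely abstractly existing), i.e.\ tracking how the $\Iw_{\GSp_4,n}^+$-action that defines $\scrD_{\kappa_{\calU},j}^{r,\circ}$ relates to the standard framing of $V_{\Z_p}$ used to build $\pi_{\HT}$, including the transpose conventions that appear in the definition of the $\Iw_{\GSp_4,1}^+$-action on distributions in \S\ref{subsection: Betti overconvergent cohomology}; this is exactly the same sort of comparison already carried out in \cite{DRW} for $\bfitw=\bfitw_3$, so I would reduce to and cite that computation wherever possible.
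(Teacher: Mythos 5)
Your proposal is correct and follows the same route as the paper, which simply records that the identification "follows immediately from the constructions": both $h_n^{-1}\jmath_{\ket,*}\scrD_{\kappa_{\calU},j}^{r,\circ}$ and $\pi_{\HT}^{-1}\scrD_{\kappa_{\calU},j,\adicFL}^{r,\circ}$ are the constant sheaf $D_{\kappa_{\calU},j}^{r,\circ}$ on $\calX_{\Gamma(p^{\infty}),\proket}^{\tor}$ (the former by Galois descent along $h_n$, the latter because the flag-variety sheaf is constant by definition), and tensoring with $\widehat{\scrO}^+$, taking $\varprojlim_j$, inverting $p$, dualising, and base-changing are all compatible with pullback. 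Your extra care about matching trivialisations and $\Iw_{\GSp_4,n}^+$-equivariance is harmless but not needed for the statement as formulated, since it only asserts an identification of sheaves at infinite level.
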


\begin{proof}
This follows immediately from the constructions.
\end{proof}

\subsection{Pro-Kummer \'etale cohomology groups with supports}\label{subsection: pro-Kummer \'etale overconvergent cohomology with supports}
Let $(R_{\calV}, \kappa_{\calV})$ be an affinoid weights and let $r>1+r_{\calV}$. This section is dedicated to the study of the pro-Kummer \'etale cohomology groups (with supports) with coefficients in $\scrO\!\!\scrA_{\kappa_{\calV}}^r$, $ \scrO\!\!\scrA_{\kappa_{\calV}}^{r^+}$, $ \scrO\!\!\scrD_{\kappa_{\calV}}^r$, and $ \scrO\!\!\scrD_{\kappa_{\calV}}^{r^+}$. See \S \ref{section: cohomology with supports} for a theory of pro-Kummer \'etale cohomology with supports.

Consider the stratification of the flag variety $\adicFL$ by closed subsets \[
    \adicFL = \adicFL_{\leq \bfitw_3} \supset \overline{\adicFL_{\leq \bfitw_2}} \supset \overline{\adicFL_{\leq \bfitw_1}} \supset \overline{\adicFL_{\leq \one_4}} = \overline{\adicFL_{\one}} \supset \emptyset.
\]
By defining \[
    \calX_{n, \leq \bfitw}^{\tor} \coloneq h_n(\pi_{\HT}^{-1}(\adicFL_{\leq \bfitw})),
\]
we arrive at a stratification on the Siegel modular varieties \[
    \calX_{n}^{\tor} = \calX_{n, \leq \bfitw_3}^{\tor} \supset \overline{\calX_{n, \leq \bfitw_2}^{\tor}} \supset \overline{\calX_{n, \leq \bfitw_1}^{\tor}} \supset \overline{\calX_{n, \leq \one_4}^{\tor}} = \overline{\calX_{n, \one_4}^{\tor}} \supset \emptyset.
\]

Let $\scrO\!\!\scrM$ be any of $\scrO\!\!\scrA_{\kappa_{\calV}}^r$, $ \scrO\!\!\scrA_{\kappa_{\calV}}^{r^+}$, $ \scrO\!\!\scrD_{\kappa_{\calV}}^r$, and $ \scrO\!\!\scrD_{\kappa_{\calV}}^{r^+}$. By Proposition \ref{Proposition: a spectral sequence for stratifications}, there is a diagram \begin{equation}\label{eq: diagram for coh. with supports (1)}
    \begin{tikzcd}[column sep = small]
        \scalemath{0.9}{ R\Gamma_{\overline{\calX_{n, \leq \bfitw_2}^{\tor}}, \proket}(\calX_{n}^{\tor}, \scrO\!\!\scrM) } \arrow[r] & \scalemath{0.9}{ R\Gamma_{\proket}(\calX_{n}^{\tor}, \scrO\!\!\scrM) }\arrow[r] & \scalemath{0.9}{ R\Gamma_{\proket}(\calX_{n}^{\tor} \smallsetminus \overline{\calX_{n, \leq \bfitw_2}^{\tor}}, \scrO\!\!\scrM) }\\
        \scalemath{0.9}{ R\Gamma_{\overline{\calX_{n, \leq \bfitw_1}^{\tor}}, \proket}(\calX_{n}^{\tor}, \scrO\!\!\scrM) } \arrow[r] & \scalemath{0.9}{ R\Gamma_{\overline{\calX_{n, \leq \bfitw_2}^{\tor}}, \proket}(\calX_{n}^{\tor}, \scrO\!\!\scrM) } \arrow[r] \arrow[ul, equal] & \scalemath{0.9}{ R\Gamma_{\overline{\calX_{n, \leq \bfitw_2}^{\tor}}\smallsetminus \overline{\calX_{n, \leq \bfitw_1}^{\tor}}, \proket}(\calX_{n}^{\tor} \smallsetminus \overline{\calX_{n, \leq \bfitw_1}^{\tor}}, \scrO\!\!\scrM) } \\
        \scalemath{0.9}{ R\Gamma_{\overline{\calX_{n, \leq \one_4}^{\tor}}, \proket}(\calX_{n}^{\tor}, \scrO\!\!\scrM) } \arrow[r] & \scalemath{0.9}{ R\Gamma_{\overline{\calX_{n, \leq \bfitw_1}^{\tor}}, \proket}(\calX_{n}^{\tor}, \scrO\!\!\scrM) } \arrow[r] \arrow[ul, equal] & \scalemath{0.9}{ R\Gamma_{\overline{\calX_{n, \leq \bfitw_1}^{\tor}}\smallsetminus \overline{\calX_{n, \leq \one_4}^{\tor}}, \proket}(\calX_{n}^{\tor} \smallsetminus \overline{\calX_{n, \leq \one_4}^{\tor}}, \scrO\!\!\scrM) }
    \end{tikzcd},
\end{equation}
where the rows are all distinguished triangles. This diagram gives rise to an $E_1$-spectral sequence \[
    E_1^{i,j} = H_{\overline{\calX_{n, \leq \bfitw_{3-j}}^{\tor}}\smallsetminus \overline{\calX_{n, \leq \bfitw_{3-j-1}}^{\tor}}, \proket}^{i+j}(\calX_n^{\tor} \smallsetminus \overline{\calX_{n, \leq \bfitw_{3-j-1}}^{\tor}}, \scrO\!\!\scrM) \Rightarrow H_{\proket}^{i+j}(\calX_n^{\tor}, \scrO\!\!\scrM).
\]

We shall now discuss the Hecke actions on the complexes \[R\Gamma_{\overline{\calX_{n, \leq \bfitw_i}^{\tor}}\smallsetminus \overline{\calX_{n, \leq \bfitw_{i-1}}^{\tor}} ,\proket}(\calX_n^{\tor} \smallsetminus \overline{\calX_{n, \leq \bfitw_{i-1}}^{\tor}}, \scrO\!\!\scrM).\] For the Hecke operators away from $pN$, the constructions are similar to the ones in \S \ref{subsection: Hecke operators}. We leave the details to the reader. In what follows, we shall focus on the Hecke operators at $p$. Our construction is highly inspired by \cite[\S 5]{BP-HigherColeman}.

First of all, for any $\bfitw\in W^H$ and $m, r\in \Q_{\geq 0}$, consider \begin{align*}
    \calX_{n, \bfitw, (m, r)}^{\tor} \coloneq h_n\left( \pi_{\HT}^{-1}\adicFL_{\bfitw, (m, r)} \right), \quad & \calX_{n, \bfitw, (\overline{m}, \overline{n})}^{\tor} \coloneq h_n\left( \pi_{\HT}^{-1}\adicFL_{\bfitw, (\overline{m}, \overline{r})} \right),\\
    \calX_{n, \bfitw, (\overline{m}, r)}^{\tor} \coloneq h_n\left( \pi_{\HT}^{-1}\adicFL_{\bfitw, (\overline{m}, r)} \right), \quad & \calX_{n, \bfitw, (m, \overline{r})}^{\tor} \coloneq h_n\left( \pi_{\HT}^{-1}\adicFL_{\bfitw, (m, \overline{r})} \right).
\end{align*}
It follows from \cite[Lemma 3.3.22]{BP-HigherColeman} that \[
    \overline{\calX_{n, \leq \bfitw_i}^{\tor}}\smallsetminus \overline{\calX_{n, \leq \bfitw_{i-1}}^{\tor}}  = \calX_{n, \geq \bfitw_i}^{\tor} \cap \overline{\calX_{n, \leq \bfitw_i}^{\tor}} = \calX_{n, \bfitw_i, (0,\overline{0})}^{\tor}
\]
for all $i$. Consequently, together with \eqref{eq: change of ambient spaces}, we obtain a quasi-isomorphism \[
    R\Gamma_{\overline{\calX_{n, \leq \bfitw_i}^{\tor}}\smallsetminus \overline{\calX_{n, \leq \bfitw_{i-1}}^{\tor}} ,\proket}(\calX_n^{\tor} \smallsetminus \overline{\calX_{n, \leq \bfitw_{i-1}}^{\tor}}, \scrO\!\!\scrM) \cong R\Gamma_{\calX_{n, \bfitw_i, (0,\overline{0})}^{\tor}, \proket}(\calX_{n, \geq \bfitw_i}^{\tor}, \scrO\!\!\scrM).
\]

For $\bfitu \in \{\bfitu_{p, 0}, \bfitu_{p,1}, \bfitu_p\}$, thanks to the description of the $\bfitu$-action on $M$ in \S \ref{subsection: Betti overconvergent cohomology} and Lemma \ref{Lemma: dynamics of U_p-operators}, we obtain a morphism of complexes (see also \cite[\S 5.2]{BP-HigherColeman}) \[
    R\Gamma_{\overline{\calX_{n, \leq \bfitw_i}^{\tor}} \cap (\calX_{n, \geq \bfitw_i}^{\tor})\bfitu, \proket}((\calX_{n, \geq \bfitw_i}^{\tor})\bfitu, \scrO\!\!\scrM) \xrightarrow{U} R\Gamma_{(\overline{\calX_{n, \leq \bfitw_i}^{\tor}})\bfitu^{-1} \cap \calX_{n, \geq \bfitw_i}^{\tor}\bfitu, \proket}(\calX_{n, \geq \bfitw_i}^{\tor}, \scrO\!\!\scrM).
\]
When $\bfitu = \bfitu_p$, we arrive at a diagram \begin{equation}\label{eq: diagram for defining finite-slope part on cohomology with support}
    \begin{tikzcd}
        & \scalemath{0.6}{ R\Gamma_{\calX_{n, \bfitw_i, (0,\overline{0})}^{\tor}, \proket}(\calX_{n, \geq \bfitw_i}^{\tor}, \scrO\!\!\scrM) } \arrow[ld, "\mathrm{Res}"']\\
        \scalemath{0.6}{ R\Gamma_{\overline{\calX_{n, \leq \bfitw_i}^{\tor}} \cap (\calX_{n, \geq \bfitw_i}^{\tor})\bfitu_p, \proket}((\calX_{n, \geq \bfitw_i}^{\tor})\bfitu_p, \scrO\!\!\scrM) } \arrow[rr, "U_p"] && \scalemath{0.6}{ R\Gamma_{(\overline{\calX_{n, \leq \bfitw_i}^{\tor}})\bfitu_p^{-1} \cap \calX_{n, \geq \bfitw_i}^{\tor}\bfitu, \proket}(\calX_{n, \geq \bfitw_i}^{\tor}, \scrO\!\!\scrM) } \arrow[lu, "\mathrm{Cores}"']\arrow[ld, "\mathrm{Res}"]\\
        & \scalemath{0.6}{ R\Gamma_{(\overline{\calX_{n, \leq \bfitw_i}^{\tor}})\bfitu_p^{-1} \cap (\calX_{n, \geq \bfitw_i}^{\tor})\bfitu_p, \proket}((\calX_{n, \geq \bfitw_i}^{\tor})\bfitu_p, \scrO\!\!\scrM) } \arrow[lu, "\mathrm{Cores}"]
    \end{tikzcd},
\end{equation}
where the composition on the top coincides with the composition at the bottom. By abuse of notation we still denote by $U_p$ the operator $\mathrm{Cores} \circ U_p \circ \mathrm{Res}$ acting on $R\Gamma_{\calX_{n, \bfitw_i, (0,\overline{0})}^{\tor}, \proket}(\calX_{n, \geq \bfitw_i}^{\tor}, \scrO\!\!\scrM)$.

\begin{Proposition}\label{Proposition: Up is nice on proket cohomology with support with big coefficients}
    Let $\bfitw_i \in W^H$. Let $(R_{\calV}, \kappa_{\calV})$ be an affinoid weight and let $r\in \Q_{\geq 0}$ such that $r> 1+r_{\calV}$. Suppose $\scrO\!\!\scrM \in \{\scrO\!\!\scrD_{\kappa_{\calV}}^{r^+}, \scrO\!\!\scrA_{\kappa_{\calV}}^r\}$. The following statements hold. 
    \begin{enumerate}
        \item[(i)]  The complex $ R\Gamma_{\overline{\calX_{n, \leq \bfitw_i}^{\tor}} \smallsetminus \overline{\calX_{n, \leq \bfitw_{i-1}}^{\tor}}, \proket}(\calX_{n}^{\tor}\smallsetminus \overline{\calX^{\tor}_{n, \leq \bfitw_{i-1}}}, \scrO\!\!\scrM)$ is represented by an object in $\mathrm{Pro}_{\Z_{\geq 0}}(\mathrm{K}^{\proj}(\Ban(R_{\calV})))$.
        \item[(ii)] $U_p$ is a potent compact operator on $R\Gamma_{\overline{\calX_{n, \leq \bfitw_i}^{\tor}} \smallsetminus \overline{\calX_{n, \leq \bfitw_{i-1}}^{\tor}}, \proket}(\calX_{n}^{\tor}\smallsetminus \overline{\calX^{\tor}_{n, \leq \bfitw_{i-1}}}, \scrO\!\!\scrM)$. 
    \end{enumerate}
\end{Proposition}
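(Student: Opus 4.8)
The plan is to reduce everything to the flag variety, where one can invoke the analogous statements from Boxer--Pilloni \cite{BP-HigherColeman} after a dévissage in the coefficient sheaves. First I would use the quasi-isomorphism established above,
\[
    R\Gamma_{\overline{\calX_{n, \leq \bfitw_i}^{\tor}}\smallsetminus \overline{\calX_{n, \leq \bfitw_{i-1}}^{\tor}}, \proket}(\calX_n^{\tor} \smallsetminus \overline{\calX_{n, \leq \bfitw_{i-1}}^{\tor}}, \scrO\!\!\scrM) \cong R\Gamma_{\calX_{n, \bfitw_i, (0,\overline{0})}^{\tor}, \proket}(\calX_{n, \geq \bfitw_i}^{\tor}, \scrO\!\!\scrM),
\]
to rewrite the complex in terms of the $\bfitw_i$-loci, and then invoke \eqref{eq: change of ambient spaces} (``change of ambient space'') to replace the open $\calX_{n, \geq \bfitw_i}^{\tor}$ by a quasi-compact locus of the form $(\calX_{n, \geq \bfitw_i}^{\tor})\bfitu_p^{N}$ without changing the cohomology with supports. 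Next I would descend to a finite extension $K/\Q_p$ over which $\calX_n^{\tor}$, the stratification, and all relevant open/closed loci are defined — exactly as in the proof of Proposition \ref{Proposition: quasi-isomorphisms of pro-Kummer \'etale cohomology for classical automorphic sheaves} — and choose a finite covering $\frakU_1$ of the quasi-compact ambient space by affinoid opens $\calU_s$ whose fibre products with $\calX_{\Gamma(p^\infty)}^{\tor}$ give a pro-Kummer \'etale atlas of $\scrO\!\!\scrM$ (this uses Proposition \ref{prop: compare two constructions} to identify $h_n^*\scrO\!\!\scrM$ with a pullback of the analogous sheaf on $\adicFL$, together with Lemma \ref{Lemma: structure theorem of OD^{r^+} and OA^r}, which says $\scrO\!\!\scrD_{\kappa_{\calV}}^{r^+}$ and $\scrO\!\!\scrA_{\kappa_{\calV}}^r$ are sheaves of potentially ON-able Banach $\widehat{\scrO}\widehat{\otimes}R_{\calV}$-modules). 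The mapping cone of the two {\v C}ech complexes for $\frakU_1$ and a refinement $\frakU_2$ of the covering of the open complement then represents the cohomology with supports, and by Proposition \ref{Proposition: proket cohomology with supp for Banach sheaves} this cone lies in $\mathrm{Pro}_{\Z_{\geq 0}}(\mathrm{K}^{\proj}(\Ban(R_{\calV})))$; this proves (i).

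For (ii), the $U_p$-action on this representative is built exactly as in diagram \eqref{eq: diagram for defining finite-slope part on cohomology with support}, from the explicit $\bfitu_p$-action on $M$ recalled in \S \ref{subsection: Betti overconvergent cohomology} together with the dynamics of Lemma \ref{Lemma: dynamics of U_p-operators}: the key point is that on the $\bfitw_i$-stratum the operator $\bfitu_p$ shrinks the support locus $\calX_{n,\bfitw_i,(0,\overline 0)}^{\tor}$ into a strictly smaller tube in (at least) one of the directions, so the composite $\mathrm{Cores}\circ U_p\circ \mathrm{Res}$ factors through a restriction between strictly nested quasi-compact loci. Such a restriction map, at the level of {\v C}ech complexes built from an ON-able atlas, is given in each degree by a matrix of restriction-of-analytic-functions maps between balls of different radii, each of which is compact; composing finitely many of these (i.e.\ passing to a suitable power $U_p^t$ to absorb all the $\mathrm{Pro}$-bookkeeping, as in Definition \ref{Definition: potent compact operators}(vi)) yields a compact endomorphism. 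The cleanest way to package this is to reduce, via Proposition \ref{prop: compare two constructions} and the Hodge--Tate period map, to the corresponding statement on $\adicFL$ and then cite \cite[Corollary 5.3.8, Theorem 6.4.3]{BP-HigherColeman} for $\adicFL$ — the sheaves $\scrO\!\!\scrD^{r^+}_{\kappa_{\calV},\adicFL}$ and $\scrO\!\!\scrA^r_{\kappa_{\calV},\adicFL}$ play the same role there as the pseudoautomorphic sheaves do in \cite{BP-HigherColeman}, so their arguments apply essentially verbatim.

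The main obstacle is bookkeeping rather than conceptual: one must check that the coefficient sheaves $\scrO\!\!\scrD_{\kappa_{\calV}}^{r^+}$ and $\scrO\!\!\scrA_{\kappa_{\calV}}^r$ — as opposed to the "wrong-side" pair $\scrO\!\!\scrD_{\kappa_{\calV}}^{r}$, $\scrO\!\!\scrA_{\kappa_{\calV}}^{r^+}$ — genuinely satisfy the Banach-sheaf hypotheses of the appendix (this is exactly the reason for the indirect construction flagged in Remark \ref{Remark: small vs affinoid}), and that the pro-Kummer \'etale atlas descends compatibly with the $U_p$-correspondence at level $p^\infty$. A secondary subtlety is that $U_p$ is only \emph{potent} compact, not compact on the nose, because the shrinking in Lemma \ref{Lemma: dynamics of U_p-operators} is in one direction per application on the $\bfitw_1$ and $\bfitw_2$ strata; one must iterate $\bfitu_p$ enough times (e.g.\ $U_p^{n+1}$ with $n$ the level) so that the relevant tube is strictly contracted in all directions simultaneously, which is precisely why the support conditions $\calZ_{n,\bfitw}$ in \eqref{eq: some technical loci} were defined using $\bfitu_p^{\pm(n+1)}$. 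Once these points are in place the compactness is automatic, and the rest follows formally from the slope theory of \cite[\S 6.1]{BP-HigherColeman}.
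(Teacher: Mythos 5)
Your overall strategy is the right one — reduce to the $\bfitw_i$-locus, use the Banach-sheaf structure of $\scrO\!\!\scrM$ from Lemma \ref{Lemma: structure theorem of OD^{r^+} and OA^r}, and feed everything into the appendix machinery (Propositions \ref{Proposition: proket cohomology with supp for Banach sheaves} and \ref{Proposition: corestriction-restriction is a compact map}) — and this is essentially what the paper does. But two steps as you have written them do not go through. First, you cannot invoke \eqref{eq: change of ambient spaces} to replace the ambient open $\calX_{n,\geq\bfitw_i}^{\tor}$ by $(\calX_{n,\geq\bfitw_i}^{\tor})\bfitu_p^{N}$: change of ambient space requires the support $\calX_{n,\bfitw_i,(0,\overline{0})}^{\tor}=\overline{\calX_{n,\leq\bfitw_i}^{\tor}}\smallsetminus\overline{\calX_{n,\leq\bfitw_{i-1}}^{\tor}}$ to be contained in the new open, which fails here; moving to the support condition $\calZ_{n,\bfitw_i}$ inside $\calX_{n,\bfitw_i}^{\tor,\bfitu_p}$ is exactly the content of Theorem \ref{Theorem: change support condition for overconvergent cohomology}, which holds only on finite-slope parts and is itself a consequence of the present proposition — so using it here would be circular. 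Moreover $(\calX_{n,\geq\bfitw_i}^{\tor})\bfitu_p^{N}$ is not quasi-compact anyway. The hypothesis of Proposition \ref{Proposition: proket cohomology with supp for Banach sheaves} is not quasi-compactness but that the ambient open and the complement of the support are finite unions of quasi-Stein spaces, and verifying this is the actual geometric content of part (i): the paper does it via the diagram through the minimal compactification, using that $\pi_{\min}^{\tor}$ is finite and $\pi_{\HT}^{\min}$ is affine. Your proposal omits this verification entirely.

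Second, your explanation of why $U_p$ is only \emph{potent} compact — that one must iterate $\bfitu_p$ until the tube contracts in all directions — misreads Lemma \ref{Lemma: dynamics of U_p-operators}: on the intermediate strata $\bfitu_p$ contracts in the ``$+$'' direction and \emph{expands} in the ``$-$'' direction, and no power of $\bfitu_p$ fixes this; that is precisely why the support condition (closed support controlling one direction, open ambient the other) is needed in the first place. The correct mechanism for (ii) is the one the paper uses: $\bfitu_p$ is already a compact operator on the coefficient module $M\in\{D_{\kappa_{\calV}}^{r^+},A_{\kappa_{\calV}}^{r}\}$, so one only needs the corestriction-restriction map between the nested supports/ambients of diagram \eqref{eq: diagram for defining finite-slope part on cohomology with support} to be compact, which is Proposition \ref{Proposition: corestriction-restriction is a compact map} (with the nesting hypotheses checked as in \cite[Theorem 5.4.3]{BP-HigherColeman}). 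Your suggestion to transport the problem to $\adicFL$ and cite \cite[Theorem 6.4.3]{BP-HigherColeman} ``verbatim'' also does not work as stated: Boxer--Pilloni's results concern analytic cohomology of coherent-type sheaves, and the whole point of the appendix is to supply the pro-Kummer \'etale analogue that their framework does not cover.
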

\begin{proof}
    For (i), one first notices that $\scrO\!\!\scrM$ is an ON-able sheaf of Banach $\widehat{\scrO}_{\calX_{n, \proket}^{\tor}}\widehat{\otimes}R_{\calU}$-modules (in the sense of Definition \ref{Definition: proket Banach sheaves}) by Lemma \ref{Lemma: structure theorem of OD^{r^+} and OA^r}. We would like to apply Proposition \ref{Proposition: proket cohomology with supp for Banach sheaves}. Notice that the complex $R\Gamma_{\overline{\calX_{n, \leq \bfitw_i}^{\tor}} \smallsetminus \overline{\calX_{n, \leq \bfitw_{i-1}}^{\tor}}, \proket}(\calX_{n}^{\tor}\smallsetminus \overline{\calX^{\tor}_{n, \leq \bfitw_{i-1}}}, \scrO\!\!\scrM)$ is quasi-isomorphic to the complex $R\Gamma_{\calX_{n, \bfitw_i, (0,\overline{0})}^{\tor}, \proket}(\calX_{n, \geq \bfitw_i}^{\tor}, \scrO\!\!\scrM)$. It remains to check $\calX_{n, \geq \bfitw_i}^{\tor}$ and $\overline{\calX_{n, \leq \bfitw_i}^{\tor}}$ satisfy the conditions therein. Indeed, there is a commutative diagram (see \cite[\S 4.4]{BP-HigherColeman}) \[
        \begin{tikzcd}
            & \calX_{\Gamma(p^{\infty})}^{\tor} \arrow[d, "\pi_{\min}^{\tor}"]\arrow[rd, "\pi_{\HT}"]\arrow[ldd, "h_n"']\\
            & \calX_{\Gamma(p^{\infty})}^{\min} \arrow[r, "\pi_{\HT}^{\min}"']\arrow[ldd] & \adicFL\\
            \calX_{n}^{\tor}\arrow[d, "\pi_{\min}^{\tor}"']\\
            \calX_{n}^{\min}
        \end{tikzcd},
    \]
    where $\calX_{n}^{\min}$ is the minimal compactification of $\calX_n$, $\calX_{\Gamma(p^{\infty})}^{\min}$ is the associated (minimally compactified) perfectoid Siegel modular variety constructed in \cite{Scholze-2015}, and $\pi_{\HT}^{\min}$ is the Hodge--Tate period map. Moreover, note that $\pi_{\min}^{\tor}$ is a finite morphism and $\pi_{\HT}^{\min}$ is affine. The desired properties follow. 

    For the second assertion, since $\bfitu_p$ is a compact operator on $M \in \{D_{\kappa_{\calU}}^{r^+}, A_{\kappa_{\calU}}^r\}$, it is enough to show that the `corestriction-restriction' map is compact. However, this is exactly Proposition \ref{Proposition: corestriction-restriction is a compact map}. Note that, to check the subspaces satisfy the conditions therein, one applies the same argument as in \cite[Theorem 5.4.3]{BP-HigherColeman}.
\end{proof}

Thanks to Proposition \ref{Proposition: Up is nice on proket cohomology with support with big coefficients}, when $(R_{\calU}, \kappa_{\calU})$ is an affinoid weight, we may consider the finite-slope part $R\Gamma_{\overline{\calX_{n, \leq \bfitw_i}^{\tor}} \smallsetminus \overline{\calX_{n, \leq \bfitw_{i-1}}^{\tor}}, \proket}(\calX_{n}^{\tor}\smallsetminus \overline{\calX^{\tor}_{n, \leq \bfitw_{i-1}}}, \scrO\!\!\scrD_{\kappa_{\calU}}^{r^+})^{\fs}$ with respect to $U_p$ as in Proposition-Definition \ref{Prop-Defn: finite slope part pro}. Since the slope-$\leq h$ decomposition on $D_{\kappa_{\calU}}^{r^+}$ is independent of $r$ \footnote{ This follows from similar arguments as in \cite[\S 3.1]{Hansen-PhD}.}, for $s\geq r >1+r_{\calU}$, the natural map $\scrO\!\!\scrD_{\kappa_{\calU}}^{s^+} \rightarrow \scrO\!\!\scrD_{\kappa_{\calU}}^{r^+}$ gives rise to a quasi-isomorphism \[
   \scalemath{0.9}{ R\Gamma_{\overline{\calX_{n, \leq \bfitw_i}^{\tor}} \smallsetminus \overline{\calX_{n, \leq \bfitw_{i-1}}^{\tor}}, \proket}(\calX_{n}^{\tor}\smallsetminus \overline{\calX^{\tor}_{n, \leq \bfitw_{i-1}}}, \scrO\!\!\scrD_{\kappa_{\calU}}^{s^+})^{\fs} \xrightarrow{\cong} R\Gamma_{\overline{\calX_{n, \leq \bfitw_i}^{\tor}} \smallsetminus \overline{\calX_{n, \leq \bfitw_{i-1}}^{\tor}}, \proket}(\calX_{n}^{\tor}\smallsetminus \overline{\calX^{\tor}_{n, \leq \bfitw_{i-1}}}, \scrO\!\!\scrD_{\kappa_{\calU}}^{r^+})^{\fs}. }
\]
Hence, \eqref{eq: sandwiches of pro-Kummer \'etale sheaves for OA and OD} yields a commutative diagram \[
    \begin{tikzcd}[column sep = tiny]
        \scalemath{0.6}{ R\Gamma_{\overline{\calX_{n, \leq \bfitw_i}^{\tor}} \smallsetminus \overline{\calX_{n, \leq \bfitw_{i-1}}^{\tor}}, \proket}(\calX_{n}^{\tor}\smallsetminus \overline{\calX^{\tor}_{n, \leq \bfitw_{i-1}}}, \scrO\!\!\scrD_{\kappa_{\calU}}^{s^+}) } \arrow[r]\arrow[d] & \scalemath{0.6}{ R\Gamma_{\overline{\calX_{n, \leq \bfitw_i}^{\tor}} \smallsetminus \overline{\calX_{n, \leq \bfitw_{i-1}}^{\tor}}, \proket}(\calX_{n}^{\tor}\smallsetminus \overline{\calX^{\tor}_{n, \leq \bfitw_{i-1}}}, \scrO\!\!\scrD_{\kappa_{\calU}}^{r}) } \arrow[r] & \scalemath{0.6}{ R\Gamma_{\overline{\calX_{n, \leq \bfitw_i}^{\tor}} \smallsetminus \overline{\calX_{n, \leq \bfitw_{i-1}}^{\tor}}, \proket}(\calX_{n}^{\tor}\smallsetminus \overline{\calX^{\tor}_{n, \leq \bfitw_{i-1}}}, \scrO\!\!\scrD_{\kappa_{\calU}}^{r^+}) } \arrow[d] \\
        \scalemath{0.6}{ R\Gamma_{\overline{\calX_{n, \leq \bfitw_i}^{\tor}} \smallsetminus \overline{\calX_{n, \leq \bfitw_{i-1}}^{\tor}}, \proket}(\calX_{n}^{\tor}\smallsetminus \overline{\calX^{\tor}_{n, \leq \bfitw_{i-1}}}, \scrO\!\!\scrD_{\kappa_{\calU}}^{s^+})^{\fs} }  \arrow[rr, "\cong"] && \scalemath{0.6}{ R\Gamma_{\overline{\calX_{n, \leq \bfitw_i}^{\tor}} \smallsetminus \overline{\calX_{n, \leq \bfitw_{i-1}}^{\tor}}, \proket}(\calX_{n}^{\tor}\smallsetminus \overline{\calX^{\tor}_{n, \leq \bfitw_{i-1}}}, \scrO\!\!\scrD_{\kappa_{\calU}}^{r^+})^{\fs} }
    \end{tikzcd}.
\]
We then define \[
    \scalemath{0.9}{R\Gamma_{\overline{\calX_{n, \leq \bfitw_i}^{\tor}} \smallsetminus \overline{\calX_{n, \leq \bfitw_{i-1}}^{\tor}}, \proket}(\calX_{n}^{\tor}\smallsetminus \overline{\calX^{\tor}_{n, \leq \bfitw_{i-1}}}, \scrO\!\!\scrD_{\kappa_{\calU}}^{r})^{\fs} := R\Gamma_{\overline{\calX_{n, \leq \bfitw_i}^{\tor}} \smallsetminus \overline{\calX_{n, \leq \bfitw_{i-1}}^{\tor}}, \proket}(\calX_{n}^{\tor}\smallsetminus \overline{\calX^{\tor}_{n, \leq \bfitw_{i-1}}}, \scrO\!\!\scrD_{\kappa_{\calU}}^{r^+})^{\fs} }
\]
and note that this definition is independent to $r$.

\begin{Corollary}\label{Corollary: finite-slope part for cohomology with support is independent to the support condition}
    Every morphism in the diagram \eqref{eq: diagram for defining finite-slope part on cohomology with support} induces a quasi-isomorphism on the finite-slope parts. 
\end{Corollary}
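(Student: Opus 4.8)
The plan is to reduce everything to the standard slope-theoretic fact that restriction and corestriction maps between cohomology-with-supports on nested $\bfitu_p$-translates become quasi-isomorphisms after passing to the finite-slope part. Concretely, in the triangle-shaped diagram \eqref{eq: diagram for defining finite-slope part on cohomology with support}, every arrow is either a $\mathrm{Res}$ or a $\mathrm{Cores}$ between complexes of the form $R\Gamma_{(\overline{\calX_{n, \leq \bfitw_i}^{\tor}})\bfitu_p^{a} \cap (\calX_{n, \geq \bfitw_i}^{\tor})\bfitu_p^{b}, \proket}((\calX_{n, \geq \bfitw_i}^{\tor})\bfitu_p^{b}, \scrO\!\!\scrM)$, and the composite along the top equals the composite along the bottom (this is built into the construction of the diagram). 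First I would record that, by Proposition \ref{Proposition: Up is nice on proket cohomology with support with big coefficients}, all the complexes appearing in \eqref{eq: diagram for defining finite-slope part on cohomology with support} are represented by objects in $\mathrm{Pro}_{\Z_{\geq 0}}(\mathrm{K}^{\proj}(\Ban(R_{\calV})))$ and carry a potent compact $U_p$, so that Proposition-Definition \ref{Prop-Defn: finite slope part pro} applies and the finite-slope parts are well-defined. The case $\scrO\!\!\scrM \in \{\scrO\!\!\scrA_{\kappa_{\calV}}^{r^+}, \scrO\!\!\scrD_{\kappa_{\calV}}^r\}$ then reduces to the case $\scrO\!\!\scrM\in\{\scrO\!\!\scrD_{\kappa_{\calV}}^{r^+},\scrO\!\!\scrA_{\kappa_{\calV}}^r\}$ via the sandwich maps \eqref{eq: sandwiches of pro-Kummer \'etale sheaves for OA and OD} and the $r$-independence of the slope decomposition already used just above the corollary, so I would treat only the latter.

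The key step is the following abstract lemma, which I would state and prove: if $T\colon M^{\bullet}\to N^{\bullet}$ and $S\colon N^{\bullet}\to M^{\bullet}$ are maps of objects in $\mathrm{Pro}_{\Z_{\geq 0}}(\mathrm{K}^{\proj}(\Ban(R_{\calV})))$ such that $S\circ T = U_p$ on $M^{\bullet}$ (the compact operator defining the finite-slope structure) and $T\circ S = U_p$ on $N^{\bullet}$, then $T^{\fs}$ and $S^{\fs}$ are mutually inverse quasi-isomorphisms. This is exactly the ``abstract nonsense'' behind higher Coleman theory: $U_p$ becomes invertible on the finite-slope part, so $T^{\fs}$ has both a left and a right inverse, hence is an isomorphism in the derived category. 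I would cite/adapt \cite[Corollary 5.3.8]{BP-HigherColeman} or \cite[\S 6.1]{BP-HigherColeman} for the precise statement that a potent compact endomorphism acts invertibly on the finite-slope part, and observe that the composites $\mathrm{Cores}\circ\mathrm{Res}$ and $\mathrm{Res}\circ\mathrm{Cores}$ appearing around each vertex of the triangle in \eqref{eq: diagram for defining finite-slope part on cohomology with support} are, by Lemma \ref{Lemma: dynamics of U_p-operators} and the description of the $\bfitu_p$-action in \S \ref{subsection: Betti overconvergent cohomology}, precisely $U_p$ up to the harmless renormalisation. The commutativity of \eqref{eq: diagram for defining finite-slope part on cohomology with support} (top composite $=$ bottom composite) then shows that each individual $\mathrm{Res}$ and $\mathrm{Cores}$ fits into such a pair, so each induces a quasi-isomorphism on finite-slope parts.

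Finally I would apply the lemma vertex-by-vertex: the three complexes at the corners of the triangle are $R\Gamma$ with slightly different support conditions $(\overline{\calX_{n,\leq\bfitw_i}^{\tor}})\bfitu_p^{a}\cap(\calX_{n,\geq\bfitw_i}^{\tor})\bfitu_p^{b}$, and the maps relating them are exactly the $\mathrm{Res}$/$\mathrm{Cores}$ pairs whose composites are $U_p$; hence all arrows in \eqref{eq: diagram for defining finite-slope part on cohomology with support} are quasi-isomorphisms on $(-)^{\fs}$, which is the assertion. I expect the main obstacle to be bookkeeping rather than conceptual: one must check carefully that the support conditions on the three vertices are genuinely the ones appearing in \eqref{eq: diagram for defining finite-slope part on cohomology with support}, that the ``change of ambient space'' identifications \eqref{eq: change of ambient spaces} are compatible with $U_p$ and with $\mathrm{Res}/\mathrm{Cores}$, and that the $U_p$ used to define the finite-slope structure on the middle complex $R\Gamma_{\calX_{n,\bfitw_i,(0,\overline0)}^{\tor},\proket}(\calX_{n,\geq\bfitw_i}^{\tor},\scrO\!\!\scrM)$ agrees with the one obtained by transport along the quasi-isomorphism to $R\Gamma_{\overline{\calX_{n,\leq\bfitw_i}^{\tor}}\smallsetminus\overline{\calX_{n,\leq\bfitw_{i-1}}^{\tor}},\proket}(\calX_n^{\tor}\smallsetminus\overline{\calX_{n,\leq\bfitw_{i-1}}^{\tor}},\scrO\!\!\scrM)$; these are all contained in or immediate from the cited results of Boxer--Pilloni, so the proof should be short once the reduction to the abstract lemma is in place.
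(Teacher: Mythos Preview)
Your proposal is correct and is essentially the same approach as the paper's: the paper's proof is the single sentence ``The proof is the same as in \cite[Corollary 5.3.2]{BP-HigherColeman}'', and what you have written is a faithful unpacking of that argument (the abstract lemma that if $S\circ T$ and $T\circ S$ are both $U_p$ then $T^{\fs}$ and $S^{\fs}$ are mutually inverse, applied to the $\mathrm{Res}/\mathrm{Cores}$ arrows in the diagram).
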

\begin{proof}
    The proof is the same as in \cite[Corollary 5.3.2]{BP-HigherColeman}.
\end{proof}

\begin{Remark}\label{Remark: finite slope part for proket cohomology with support with big coeff}
    When $(R_{\calU}, \kappa_{\calU})$ is an affinoid weight, we can similarly define the \emph{finite-slope part} of the following complexes.\begin{itemize}
        \item $R\Gamma_{\overline{\calX_{n, \leq \bfitw_i}^{\tor}} \smallsetminus \overline{\calX_{n, \leq \bfitw_{i-1}}^{\tor}}, \proket}(\calX_{n}^{\tor}\smallsetminus \overline{\calX^{\tor}_{n, \leq \bfitw_{i-1}}}, \scrO\!\!\scrM)$ for $\scrO\!\!\scrM\in \{ \scrO\!\!\scrA_{\kappa_{\calU}}^{r^+}, \scrO\!\!\scrA_{\kappa_{\calU}}^r \}$; 
        \item $R\Gamma_{\calZ_{n, \bfitw_i}, \proket}(\calX^{\tor, \bfitu_p}_{n,\bfitw}, \scrO\!\!\scrM)$ for $\scrO\!\!\scrM\in \{ \scrO\!\!\scrA_{\kappa_{\calU}}^{r^+}, \scrO\!\!\scrA_{\kappa_{\calU}}^r, \scrO\!\!\scrD_{\kappa_{\calU}}^{r+}, \scrO\!\!\scrD_{\kappa_{\calU}}^r \}$.
    \end{itemize}
\end{Remark}

\begin{Theorem}\label{Theorem: change support condition for overconvergent cohomology}
    Let $\bfitw_i\in W^H$ with $i=0, ..., 3$. Let $(R_{\calU}, \kappa_{\calU})$ be an affinoid weight. Let $r\in \Q_{\geq 0}$ and $n\in \Z_{>0}$ such that $n\geq r >1+r_{\calU}$. There is a natural quasi-isomorphism \[
        R\Gamma_{\overline{\calX_{n, \leq \bfitw_i}^{\tor}} \smallsetminus \overline{\calX_{n, \leq \bfitw_{i-1}}^{\tor}}, \proket}(\calX_{n}^{\tor}\smallsetminus \overline{\calX^{\tor}_{n, \leq \bfitw_{i-1}}}, \scrO\!\!\scrD_{\kappa_{\calU}}^r)^{\fs} \cong R\Gamma_{\calZ_{n, \bfitw_i}, \proket}(\calX^{\tor, \bfitu_p}_{n,\bfitw}, \scrO\!\!\scrD_{\kappa_{\calU}}^r)^{\fs}.
    \]
A similar statement holds by replacing $\scrO\!\!\scrD_{\kappa_{\calU}}^r$ with $\scrO\!\!\scrA_{\kappa_{\calU}}^r$.
\end{Theorem}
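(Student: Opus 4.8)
The plan is to mimic the argument of \cite[Corollary 5.3.8 \& Theorem 5.4.12]{BP-HigherColeman}, transported to the pro-Kummer \'etale setting, using the theory of pro-Kummer \'etale cohomology with supports developed in \S \ref{section: cohomology with supports}. By the quasi-isomorphism \[
    R\Gamma_{\overline{\calX_{n, \leq \bfitw_i}^{\tor}}\smallsetminus \overline{\calX_{n, \leq \bfitw_{i-1}}^{\tor}}, \proket}(\calX_n^{\tor}\smallsetminus \overline{\calX_{n, \leq \bfitw_{i-1}}^{\tor}}, \scrO\!\!\scrD_{\kappa_{\calU}}^r) \cong R\Gamma_{\calX_{n, \bfitw_i, (0, \overline{0})}^{\tor}, \proket}(\calX_{n, \geq \bfitw_i}^{\tor}, \scrO\!\!\scrD_{\kappa_{\calU}}^r)
\]
recorded just before Proposition \ref{Proposition: Up is nice on proket cohomology with support with big coefficients}, and by the definition \eqref{eq: some technical loci} of $\calX^{\tor, \bfitu_p}_{n, \bfitw_i}$ and $\calZ_{n, \bfitw_i}$, everything reduces to comparing cohomology with supports over the chain of opens obtained by iterating the $\bfitu_p$-action. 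First I would use Lemma \ref{Lemma: dynamics of U_p-operators} to see that the $\bfitu_p$-translates interpolate between the two ambient spaces: the $\bfitu_p$-dynamics contracts the `$+$'-directions and expands the `$-$'-directions on each stratum, so that $(\calX_{n, \geq \bfitw_i}^{\tor})\bfitu_p^{m}$ (resp.\ $\overline{\calX_{n, \leq \bfitw_i}^{\tor}}\bfitu_p^{-m}$) form a cofinal system of opens (resp.\ closed sets) with the required nesting, and for $m = n+1$ one lands inside $\calX_{n, \bfitw_i, (r,r)}^{\tor}$ by \cite[\S 6.4.1]{BP-HigherColeman}.

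Next, for each such pair of loci I would invoke the diagram \eqref{eq: diagram for defining finite-slope part on cohomology with support}: the restriction and corestriction maps between the cohomology-with-supports complexes for the successive shrinkings of the ambient space, together with the (naively normalised) $U_p$-operator factoring through them. Proposition \ref{Proposition: Up is nice on proket cohomology with support with big coefficients} guarantees that all these complexes lie in $\mathrm{Pro}_{\Z_{\geq 0}}(\mathrm{K}^{\proj}(\Ban(R_{\calU})))$ and that $U_p$ is potent compact on each, so Proposition-Definition \ref{Prop-Defn: finite slope part pro} applies and the finite-slope parts are well-defined. Corollary \ref{Corollary: finite-slope part for cohomology with support is independent to the support condition} then says precisely that every arrow in \eqref{eq: diagram for defining finite-slope part on cohomology with support} becomes a quasi-isomorphism after passing to the finite-slope part; chaining these quasi-isomorphisms over the cofinal system of $\bfitu_p$-translates, and invoking the change-of-ambient-space identification \eqref{eq: change of ambient spaces} at the end, yields the desired quasi-isomorphism \[
    R\Gamma_{\overline{\calX_{n, \leq \bfitw_i}^{\tor}}\smallsetminus \overline{\calX_{n, \leq \bfitw_{i-1}}^{\tor}}, \proket}(\calX_n^{\tor}\smallsetminus \overline{\calX_{n, \leq \bfitw_{i-1}}^{\tor}}, \scrO\!\!\scrD_{\kappa_{\calU}}^r)^{\fs} \cong R\Gamma_{\calZ_{n, \bfitw_i}, \proket}(\calX^{\tor, \bfitu_p}_{n, \bfitw_i}, \scrO\!\!\scrD_{\kappa_{\calU}}^r)^{\fs}.
\]
The case of $\scrO\!\!\scrA_{\kappa_{\calU}}^r$ is identical, replacing $\scrO\!\!\scrD_{\kappa_{\calU}}^{r^+}$ by $\scrO\!\!\scrA_{\kappa_{\calU}}^{r}$ as the Banach model in the finite-slope machinery (these are the sheaves that are honest ON-able Banach sheaves, cf.\ Remark \ref{Remark: small vs affinoid}), and one reduces to the $\scrO\!\!\scrD_{\kappa_{\calU}}^{r^+}$ case via \eqref{eq: sandwiches of pro-Kummer \'etale sheaves for OA and OD} exactly as in the paragraph defining $R\Gamma(\cdots, \scrO\!\!\scrD_{\kappa_{\calU}}^r)^{\fs}$.

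The main obstacle is the verification that the loci $\calX_{n, \geq \bfitw_i}^{\tor}$ and $\overline{\calX_{n, \leq \bfitw_i}^{\tor}}$ (and their $\bfitu_p$-translates) satisfy the geometric hypotheses needed for the pro-Kummer \'etale cohomology-with-supports formalism to behave well --- qcqs-ness, the existence of a cofinal system of log affinoid perfectoid covers adapted to the Banach sheaves, and the compactness of the corestriction-restriction map. The first two are handled via the minimal compactification diagram as in the proof of Proposition \ref{Proposition: Up is nice on proket cohomology with support with big coefficients} ($\pi^{\tor}_{\min}$ finite, $\pi_{\HT}^{\min}$ affine), and the compactness is Proposition \ref{Proposition: corestriction-restriction is a compact map}, whose applicability to the relevant nested opens follows from the overlap estimates in \cite[Theorem 5.4.3]{BP-HigherColeman} adapted to $\GSp_4$ via Lemma \ref{Lemma: dynamics of U_p-operators}; I would spell these estimates out carefully since they are the crux of the finiteness statements.
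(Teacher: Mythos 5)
Your proposal follows essentially the same route as the paper, whose proof consists precisely of invoking Proposition \ref{Proposition: Up is nice on proket cohomology with support with big coefficients} and then arguing as in \cite[Theorem 5.4.12]{BP-HigherColeman}; your unpacking via the diagram \eqref{eq: diagram for defining finite-slope part on cohomology with support}, Corollary \ref{Corollary: finite-slope part for cohomology with support is independent to the support condition}, the $\bfitu_p$-dynamics, and the change-of-ambient-space identification \eqref{eq: change of ambient spaces} is exactly the intended argument. The reduction of the $\scrO\!\!\scrA_{\kappa_{\calU}}^r$ case via \eqref{eq: sandwiches of pro-Kummer \'etale sheaves for OA and OD} is also consistent with the paper's setup.
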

\begin{proof}
    Given Proposition \ref{Proposition: Up is nice on proket cohomology with support with big coefficients}, one argues in a similar way as in \cite[Theorem 5.4.12]{BP-HigherColeman}. We leave the details to the reader. 
\end{proof}
\section{The overconvergent Eichler--Shimura morphisms}\label{section: OES}

In this section, we construct the overconvergent Eichler--Shimura morphisms which relate the overconvergent cohomology groups constructed in \S \ref{section: OC} to the cohomology of automorphic sheaves constructed in \S \ref{section: automorphic}. As mentioned in \S \ref{subsection: main results}, these morphisms are induced from Hecke- and Galois-equivariant morphisms
\[
    \mathrm{ES}_{\kappa_{\calU}}^{\bfitw, r} \colon \scrO\!\!\scrD_{\kappa_{\calU}}^r \rightarrow \widehat{\underline{\omega}}_{n, r}^{\bfitw_3^{-1}\bfitw\kappa_{\calU}}(\bfitw\kappa_{\calU}^{\cyc})
\]
of sheaves on the pro-Kummer \'etale site $\calX_{n, \bfitw, (r, r), \proket}^{\tor}$.

We start in \S \ref{subsection: classical ES} with a quick review of the classical Eichler--Shimura decomposition of Faltings--Chai, followed by a reinterpretation of their decomposition in our setup. These observations inspire our main constructions and will be useful when we study the decompositions around a nice-enough point on the eigenvariety. In \S \ref{subsection: OES}, we construct the morphisms $\mathrm{ES}_{\kappa_{\calU}}^{\bfitw, r}$ and the overconvergent Eichler--Shimura morphisms. They serve as $p$-adic interpolations of the classical picture. In \S \ref{subsection: OES at classical weights}, we study the behaviour of these morphisms when specialising at classical weights. Finally, in \S \ref{subsection: eigenvarieties} and \S \ref{subsection: ES filtration on eigenvariety}, we study the equidimensional eigenvariety and prove decomposition results around a nice-enough point on the eigenvariety. As an application, we propose a new way to construct big Galois representations and read of their Hodge--Tate--Sen weights via the overconvergent Eichler--Shimura morphisms.

\subsection{The classical Eichler--Shimura morphisms}\label{subsection: classical ES}

For $\bfitw\in W^H$, recall that \[
    k_{\bfitw} = \left\{ \begin{array}{ll}
        (0,0), & \text{ if }\bfitw = \bfitw_0 = \one_4  \\
        (2,0), & \text{ if }\bfitw = \bfitw_1 \\ 
        (3,1), & \text{ if }\bfitw = \bfitw_2 \\
        (3,3), & \text{ if }\bfitw = \bfitw_3
    \end{array}\right. 
\] 
For a weight $\kappa_{\calU}=(\kappa_{\calU, 1}, \kappa_{\calU, 2})$, recall the `cyclotomic twist' of $\kappa_{\calU}$ defined by
\[
    \bfitw\kappa_{\calU}^{\cyc} = \left\{ \begin{array}{ll}
        0, & \text{if }\bfitw = \bfitw_3 \\
        \kappa_{\calU, 2}, & \text{if }\bfitw = \bfitw_2\\
        \kappa_{\calU, 1}, & \text{if }\bfitw = \bfitw_1\\
        \kappa_{\calU,1}+\kappa_{\calU, 2} & \text{if }\bfitw = \bfitw_0 = \one_4
    \end{array} \right. 
\] 
There is a similar notion for integral weights $k=(k_1, k_2)$. For integral weights $k=(k_1, k_2; k_0)$ \footnote{We remind the reader that $(k_1, k_2)\in\Z^2$ is identified with $(k_1, k_2; 0)\in \Z^3$ as in \S \ref{subsection: weight space and analytic representations}.}, we also recall the classical automorphic sheaves $\underline{\omega}^k$ constructed in \S \ref{subsection: classical Siegel modular forms}.

We have the following theorem by Falting--Chai (\cite[Chapter VI, Theorem 6.2]{Faltings-Chai}). 

\begin{Theorem}[$p$-adic Eichler--Shimura decomposition for $\GSp_4$]\label{Theorem: Faltings--Chai's ES decomposition}
    Let $k = (k_1, k_2)\in \Z^{2}$ such that $k_1 \geq k_2>0$.  Let $V_k$ be the $\GSp_4$-representation of highest weight $k$; i.e., \[
        V_k := \left\{f: \GSp_4 \rightarrow \bbA^1: f(\bfgamma\bfbeta) = k(\bfbeta)f(\bfgamma) \text{ for all }(\bfgamma, \bfbeta)\in \GSp_4\times B_{\GSp_4}\right\}.
    \] Let $V_k^{\vee}$ be the dual of $V_k$.\footnote{The left $\GSp_4$-action on $V_k^{\vee}$ is given by the left-translation of $\GSp_4$ on $V_k$. } Then there exists a Hecke- and Galois-stable $4$-step decreasing filtration $\Fil^{\bullet}_{\mathrm{ES}}$ on $H_{\et}^3 (X_{n, \C_p}, V_k^{\vee})\otimes_{\Q_p}\C_p$ which induces isomorphisms
    \[
        \Gr_{\mathrm{ES}}^{3-i} \cong H^{3-i}(X_{n, \C_p}^{\tor}, \underline{\omega}^{\bfitw_3^{-1}\bfitw_i k + k_{\bfitw_i}})(\bfitw_ik^{\cyc}-i),
     \] 
   $i=0, 1, 2, 3$, on the graded pieces. This induces a Hecke- and Galois-equivariant decomposition \[
        H_{\et}^3 (X_{n, \C_p}, V_k^{\vee})\otimes_{\Q_p}\C_p
        \cong \bigoplus_{i=0}^3 \Gr_{\mathrm{ES}}^{3-i}.
    \]
\end{Theorem}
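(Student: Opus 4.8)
This theorem is a citation to Faltings--Chai, so my task is to explain how I would recover their statement in the language set up in this paper, which is the shape the later sections need. The plan is to proceed in three stages: first, realise $H^3_{\et}(X_{n,\C_p}, V_k^{\vee})\otimes_{\Q_p}\C_p$ as a pro-Kummer \'etale cohomology group over the toroidal compactification with coefficients in a completed sheaf; second, stratify via the $\bfitw$-loci $\overline{\calX^{\tor}_{n,\leq\bfitw_i}}$ and use the Leray/long-exact-sequence machinery of cohomology with supports (the spectral sequence recalled in \S\ref{subsection: pro-Kummer \'etale overconvergent cohomology with supports} specialised to classical coefficients, cf.\ Proposition~\ref{Proposition: quasi-isomorphisms of pro-Kummer \'etale cohomology for classical automorphic sheaves}) to define $\Fil^{\bullet}_{\ES}$; third, identify the graded pieces with coherent cohomology of the classical automorphic sheaves, tracking the Hodge--Tate (cyclotomic) twists through the relative Hodge--Tate filtration \eqref{eq: relative HT filtration}.

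First I would pass from $X_{n,\C_p}$ to $X^{\tor}_{n,\C_p}$ using the comparison of \'etale and Kummer \'etale cohomology (\cite[Corollary 4.6.7]{Diao}) so that $H^3_{\et}(X_{n,\C_p}, V_k^{\vee})\cong H^3_{\ket}(X^{\tor}_{n}, \scrV_k^{\vee})$ for the Kummer \'etale local system $\scrV_k^{\vee}$ attached to the $\Iw_{\GSp_4,n}^+$-representation $V_k^{\vee}$; then the primitive comparison theorem (the classical-coefficients analogue of Proposition~\ref{Proposition: comparing Kummer \'etale and pro-Kummer \'etale cohomology}) gives a $\Gal_{\Q_p}$-equivariant isomorphism $H^3_{\ket}(X^{\tor}_n, \scrV_k^{\vee})\widehat{\otimes}\C_p\cong H^3_{\proket}(\calX^{\tor}_n, \widehat{\scrV}_k^{\vee})$. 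Next I would define $\Fil^{3-i}_{\ES}$ as the image of $H^3_{\overline{\calX^{\tor}_{n,\leq\bfitw_i}},\proket}(\calX^{\tor}_n, \widehat{\scrV}_k^{\vee})\to H^3_{\proket}(\calX^{\tor}_n, \widehat{\scrV}_k^{\vee})$, exactly as in Theorem~\ref{Theorem: OES decomposition, intro}(iii); Hecke-stability is then automatic from the functoriality of cohomology with supports in the correspondences of \S\ref{subsection: Hecke operators}, and Galois-stability from $\Gal_{\Q_p}$-equivariance of all maps. The graded piece $\Gr^{3-i}_{\ES}$ is then read off from the distinguished triangle as a sub-quotient of $H^{3-i}_{\overline{\calX^{\tor}_{n,\leq\bfitw_i}}\smallsetminus\overline{\calX^{\tor}_{n,\leq\bfitw_{i-1}}},\proket}(\calX^{\tor}_n\smallsetminus\overline{\calX^{\tor}_{n,\leq\bfitw_{i-1}}}, \widehat{\scrV}_k^{\vee})$, and by the $\bfitw$-locus computations (Lemma~\ref{Lemma: coordinates on tublar nbds of Bruhat cells}, the dynamics in Lemma~\ref{Lemma: dynamics of U_p-operators}) this locus is the ``correct'' one for the $\bfitw_i$-stratum.

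The identification of this graded piece with $H^{3-i}(X^{\tor}_{n,\C_p}, \underline{\omega}^{\bfitw_3^{-1}\bfitw_i k + k_{\bfitw_i}})(\bfitw_i k^{\cyc}-i)$ is the heart of the matter and the main obstacle. The mechanism is: restrict $\widehat{\scrV}_k^{\vee}$ to the perfectoid cover $\calX^{\tor}_{\Gamma(p^\infty)}$, where the $\GSp_4$-torsor trivialises and $\pi_{\HT}$ puts us on $\adicFL_{\bfitw_i}$; the relative Hodge--Tate filtration \eqref{eq: relative HT filtration} induces on $V_k^{\vee}\otimes\widehat{\scrO}$ a filtration whose $(3-i)$-th graded piece is, via Corollary~\ref{Corollary: desired projection of H-representations} (the multiplicity-one lemma applied to the $H$-subrepresentation $W_{\bfitw_i k}$ of $V_k$), the completed classical automorphic sheaf $\widehat{\underline{\omega}}^{\bfitw_3^{-1}\bfitw_i k + k_{\bfitw_i}}$ up to a power of the Hodge--Tate twist $\widehat{\scrO}(1)$. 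Counting that power on the $\bfitw_i$-stratum — this is precisely where the $-i$ and the $\bfitw_i k^{\cyc}$ terms come from, matching the cyclotomic twists recorded in \S\ref{subsection: main results} — and then applying Proposition~\ref{Proposition: quasi-isomorphisms of pro-Kummer \'etale cohomology for classical automorphic sheaves} to descend from pro-Kummer \'etale cohomology of $\widehat{\underline{\omega}}^{\bullet}$ back to coherent cohomology of $\underline{\omega}^{\bullet}$ (using the $E_\infty$-degeneration on the relevant stratum, as in the proof of Theorem~\ref{Theorem: classicality theorem in higher Coleman theory}) yields the stated isomorphism. Finally, since all four graded pieces are Galois representations with distinct Hodge--Tate--Sen weights $\bfitw_i k^{\cyc}-i$ (which one checks are pairwise distinct under $k_1\geq k_2>0$), the filtration splits $\Gal_{\Q_p}$- and Hecke-equivariantly by a weight-argument, giving $H^3_{\et}(X_{n,\C_p},V_k^{\vee})\otimes_{\Q_p}\C_p\cong\bigoplus_{i=0}^3\Gr^{3-i}_{\ES}$. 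The hard part is the twist bookkeeping in the graded-piece identification and verifying that the Hodge--Tate filtration on $V_k^{\vee}$ restricted to each stratum has graded pieces of exactly the predicted ranks and weights; this is where one must be careful to match Faltings--Chai's normalisations (and where the renormalisation of Hecke operators in Remark~\ref{Remark: purpose of renormalisation} enters to make the identification Hecke-equivariant).
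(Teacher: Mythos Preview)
The paper does not prove this theorem at all: it is stated as a citation to Faltings--Chai \cite[Chapter VI, Theorem 6.2]{Faltings-Chai}, and the paper explicitly remarks that ``Faltings--Chai's proof of Theorem~\ref{Theorem: Faltings--Chai's ES decomposition} uses the dual BGG resolution and the comparison theorem between $p$-adic \'etale cohomology and $p$-adic de Rham cohomology.'' Your proposal takes an entirely different route---the stratification/supports approach---which is in fact the paper's \emph{alternative reinterpretation} developed in Constructions~1--5 of \S\ref{subsection: classical ES}. But the paper is careful to say this alternative only recovers the decomposition ``after localising at a \emph{nice-enough} automorphic representation'' (see the paragraph preceding Assumption~\ref{Assumption: Multiplicity One for cuspidal automorphic representations} and Proposition~\ref{Prop: recover classical ES}).

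There is a genuine gap in your attempt to prove the full theorem this way. The ingredients you invoke---Proposition~\ref{Proposition: quasi-isomorphisms of pro-Kummer \'etale cohomology for classical automorphic sheaves} and the classicality argument from Theorem~\ref{Theorem: classicality theorem in higher Coleman theory}---are statements about \emph{small-slope parts}, not the full cohomology. Without passing to small slope (or localising at $\frakm_{\Pi}$), you cannot conclude that the image-filtration $\Fil^{3-i}:=\image\bigl(H^3_{\overline{\calX^{\tor}_{n,\leq\bfitw_i}},\proket}\to H^3_{\proket}\bigr)$ has graded pieces of the correct rank, nor that the edge maps from the Leray spectral sequence with supports are isomorphisms onto the coherent cohomology groups. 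Indeed, the paper's own argument (Proposition~\ref{Prop: recover classical ES}) needs the one-dimensionality from Assumption~\ref{Assumption: Multiplicity One for cuspidal automorphic representations} to make the dimension count work, and even then appeals to Corollary~\ref{Corollary: there exists a unique ES decomposition}, which is itself a consequence of the Faltings--Chai theorem. So your proposed argument is circular for the full statement: it reconstructs the theorem only at nice-enough points, and the global statement still rests on the BGG/de Rham comparison proof you are trying to avoid.
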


Our goal is to interpolate this decomposition in $p$-adic families. Faltings--Chai's proof of Theorem \ref{Theorem: Faltings--Chai's ES decomposition} uses the dual BGG resolution and the comparison theoerm between $p$-adic \'etale cohomology and $p$-adic de Rham cohomology. Below, we propose an alternative way to understand this theorem (after localising at a \emph{nice-enough} automorphic representation) which does not use the dual BGG resolution or any comparison theorems from $p$-adic Hodge theory. It will become clear how such an interpretation inspires our construction of the $p$-adic interpolations.

In what follows, we will often assume the following conditions hold for certain automorphic representations. 

\begin{Assumption}\label{Assumption: Multiplicity One for cuspidal automorphic representations}
    Let $\Pi = (\pi = \otimes_{v}' \pi_v, \varphi_p) $ be a datum consisting of an irreducible cuspidal automorphic representation $\pi$ of $\GSp_4(\A_{\Q})$ and a vector $\varphi_p\in \pi_p$ such that \begin{enumerate}
        \item[(i)] $\pi$ is of cohomological weight $k = (k_1, k_2)\in \Z^2$ with $k_1 \geq k_2>0$; 
        \item[(ii)] $\dim \pi^{\Gamma_{\ell}}_{\ell} =1$ for all $\ell \neq p$, in particular, $\pi$ is spherical outside $pN$; 
        \item[(iii)] $ \varphi_p \in \pi_p^{\Iw_{\GSp_4, n}^+}$ and it has non-zero $U_{p, i}$-eigenvalues.
    \end{enumerate} 
    Such a datum $\Pi = (\pi, \varphi_p)$ is called a \emph{$p$-stabilisation} of $\pi$ (although we do not require $\Pi$ being spherical at $p$). Let $\bbT \coloneq \left(\bigotimes_{\ell\neq p} \Z_p[\Gamma_{\ell} \backslash \GSp_{4}(\Q_{\ell})/\Gamma_{\ell}]\right) \otimes \Z_p[U_{p, 0}, U_{p,1}]$ be the abstract Hecke algebra. Let $\frakm_{\Pi}$ be the maximal ideal of the Hecke algebra defined by $(\pi, \varphi_p)$; that is $(\pi, \varphi_p)$ defines a Hecke eigensystem $\lambda_{\Pi}: \bbT\otimes K \rightarrow K$ (for some field $K\supset \Q_p$ living in $\C_p \cong \C$) and $\frakm_{\Pi}$ is the kernel of $\lambda_{\Pi}$. We assume that for every $\bfitw\in W^H$, \[
            \dim_{\C_p} H^{3-l(\bfitw)}(X_{n, \C_p}^{\tor}, \underline{\omega}^{\bfitw_3^{-1}\bfitw k + k_{\bfitw}})_{\frakm_{\Pi}} = 1.
        \]
\end{Assumption}

\begin{Remark}\label{Remark:1dimensional}
   Assumption \ref{Assumption: Multiplicity One for cuspidal automorphic representations} is a \emph{multiplicity-one} assumption; a similar assumption can also be found in \cite[\S 12]{GT-TWGSp4}. We remark the following: \begin{enumerate}
       \item[(i)] There exist some CAP representations whose corresponding eigensystems appear in $H^3_{\et}$ but they do not appear in all four degrees of coherent cohomology (cf. \cite[Hypothesis A (7)]{Weissauer-4dimGalRep}). We believe our method can be extended to this case. However, due to the length of the paper, we leave it to the interested reader. 
       \item[(ii)] In general, we do not not know about the newform theory for $\GSp_4$. However, in \cite{RS-NewformGSp4}, Robert--Schimidt developed a (local) newform theory for the representations of paramodular level. We point out that paramodular levels are not neat levels while we have chosen $\Gamma$ to be a neat level. We explain in \S \ref{subsection: non-neat level} how one can obtain similar results (such as Theorem \ref{Theorem: big OES diagram}) for non-neat levels.
       \item[(iii)] If the representation $\pi$ is generic, meaning it admits a Whittaker model (see \cite[\S 0.5]{Soudry}), then it is known that $\pi$ satisfies strong multiplicity one \cite[Theorem 1.5]{Soudry}, meaning that if one consider another generic $\pi'$ such the local components $\pi_v$ and $\Pi'_v$ are isomorphic for almost all $v$, then $\pi=\pi'$. Moreover, if the level is paramodular, we know by \cite[Theorem 4.5]{RosnerWissauer} that there is no non-generic automorphic representation isomorphic to $\pi$ almost everywhere. This means that if $\pi$ is paramodular, then it satisfies our \emph{multiplicity-one} assumption. 
       It is a folklore expectation that if $\pi$ is generic and non-endoscopic, the same strong multiplicity one result among all representations (not only generic) should hold.
   \end{enumerate}  
\end{Remark}

\begin{Corollary}\label{Corollary: there exists a unique ES decomposition}
    Suppose $\Pi = (\pi, \varphi_p)$ satisfies Assumption \ref{Assumption: Multiplicity One for cuspidal automorphic representations}. 
    There exists a unique Hecke- and Galois-stable $4$-step decreasing filtration $\Fil^{\bullet}_{\mathrm{ES}, k, \frakm_{\Pi}}$ of $H_{\et}^3 (X_{n, \C_p}, V_k^{\vee})_{\frakm_{\Pi}}\otimes_{\Q_p}\C_p$ which induces a Hecke- and Galois-equivariant isomorphism \[
        \Gr_{\mathrm{ES}, k, \frakm_{\Pi}}^{3-i} \cong H^{3-i}(X_{n, \C_p}^{\tor}, \underline{\omega}^{\bfitw_3^{-1}\bfitw_i k + k_{\bfitw_i}})_{\frakm_{\Pi}}(\bfitw_i k^{\cyc} - i)\] for $i=0, 1, 2, 3$, on the graded pieces. Moreover, the filtration induces a Hecke- and Galois-equivariant decomposition  \[
        H_{\et}^3 (X_{n, \C_p}, V_k^{\vee})_{\frakm_{\Pi}}\otimes_{\Q_p}\C_p  \cong \bigoplus_{i=0}^3 H^{3-i}(X_{n, \C_p}^{\tor}, \underline{\omega}^{\bfitw_3^{-1}\bfitw_i k + k_{\bfitw_i}})_{\frakm_{\Pi}}(\bfitw_i k^{\cyc} - i).
    \]
\end{Corollary}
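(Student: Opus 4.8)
The plan is to deduce Corollary \ref{Corollary: there exists a unique ES decomposition} directly from Theorem \ref{Theorem: Faltings--Chai's ES decomposition} by localising at $\frakm_{\Pi}$ and then using Assumption \ref{Assumption: Multiplicity One for cuspidal automorphic representations} to pin down \emph{uniqueness}. First I would apply Theorem \ref{Theorem: Faltings--Chai's ES decomposition}: it provides a Hecke- and Galois-stable $4$-step decreasing filtration $\Fil^{\bullet}_{\mathrm{ES}}$ on $H^3_{\et}(X_{n,\C_p}, V_k^{\vee})\otimes_{\Q_p}\C_p$ whose graded pieces are the twisted coherent cohomology groups $H^{3-i}(X_{n,\C_p}^{\tor}, \underline{\omega}^{\bfitw_3^{-1}\bfitw_i k + k_{\bfitw_i}})(\bfitw_i k^{\cyc}-i)$. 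Localisation at the maximal ideal $\frakm_{\Pi}$ of the abstract Hecke algebra $\bbT$ is exact, so it carries this filtration to a $4$-step Hecke- and Galois-stable filtration $\Fil^{\bullet}_{\mathrm{ES},k,\frakm_{\Pi}}$ on $H^3_{\et}(X_{n,\C_p}, V_k^{\vee})_{\frakm_{\Pi}}\otimes_{\Q_p}\C_p$, and it commutes with taking graded pieces, giving the asserted isomorphisms
\[
    \Gr^{3-i}_{\mathrm{ES},k,\frakm_{\Pi}} \cong H^{3-i}(X_{n,\C_p}^{\tor}, \underline{\omega}^{\bfitw_3^{-1}\bfitw_i k + k_{\bfitw_i}})_{\frakm_{\Pi}}(\bfitw_i k^{\cyc}-i).
\]
The existence part of the statement is thus immediate.

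Next I would address the decomposition. Since the $U_{p,i}$-eigenvalues of $\Pi$ are non-zero by Assumption \ref{Assumption: Multiplicity One for cuspidal automorphic representations}(iii), and the Tate twists $\bfitw_i k^{\cyc}$ are pairwise distinct as $i$ varies (this is where $k_2>0$, hence $k_1\geq k_2>0$, is used: the four values $0, k_2, k_1, k_1+k_2$ are distinct), the graded pieces $\Gr^{3-i}_{\mathrm{ES},k,\frakm_{\Pi}}$ are pairwise non-isomorphic as $\Gal_{\Q_p}$-modules — they have distinct Hodge--Tate--Sen weights. A filtration whose successive graded pieces are pairwise non-isomorphic (here, distinguished already by the Sen operator) splits canonically: each $\Fil^{3-i}$ is the sum of the generalised eigenspaces of the Sen operator for the weights appearing in $\Gr^{3-j}$ with $j\leq i$, and this decomposition is automatically Hecke-equivariant because the Hecke action commutes with the Galois action. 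Hence one obtains the Hecke- and Galois-equivariant direct sum decomposition
\[
    H^3_{\et}(X_{n,\C_p}, V_k^{\vee})_{\frakm_{\Pi}}\otimes_{\Q_p}\C_p \cong \bigoplus_{i=0}^{3} H^{3-i}(X_{n,\C_p}^{\tor}, \underline{\omega}^{\bfitw_3^{-1}\bfitw_i k + k_{\bfitw_i}})_{\frakm_{\Pi}}(\bfitw_i k^{\cyc}-i).
\]

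For uniqueness, suppose $\mathrm{Fil}'^{\bullet}$ is another Hecke- and Galois-stable $4$-step decreasing filtration with the same graded pieces (up to Hecke- and Galois-equivariant isomorphism). By Assumption \ref{Assumption: Multiplicity One for cuspidal automorphic representations}, each localised coherent cohomology group $H^{3-l(\bfitw_i)}(X_{n,\C_p}^{\tor}, \underline{\omega}^{\bfitw_3^{-1}\bfitw_i k + k_{\bfitw_i}})_{\frakm_{\Pi}}$ is one-dimensional over $\C_p$, so the total space $H^3_{\et}(X_{n,\C_p}, V_k^{\vee})_{\frakm_{\Pi}}\otimes_{\Q_p}\C_p$ is four-dimensional, and its Sen operator has the four distinct eigenvalues $\bfitw_i k^{\cyc}-i$, each with multiplicity one. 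Consequently, the only $\Gal_{\Q_p}$-stable subspaces are the $2^4$ sums of subsets of the corresponding one-dimensional eigenspaces; among these, the only $4$-step decreasing \emph{flag} whose $j$-th graded piece is the line with Sen weight $\bfitw_j k^{\cyc}-j$ is the canonical one above. Thus $\mathrm{Fil}'^{\bullet} = \Fil^{\bullet}_{\mathrm{ES},k,\frakm_{\Pi}}$, proving uniqueness. The one genuinely delicate point — and the main thing to get right — is verifying that the four Sen weights $\bfitw_i k^{\cyc}-i$ are indeed pairwise distinct under the hypothesis $k_1\geq k_2>0$; I would check this by the explicit list $\{-3,\ k_2-2,\ k_1-1,\ k_1+k_2\}$ (reading off the twists and shifts from Theorem \ref{thm: Faltings-Chai}), which are manifestly distinct once $k_2>0$. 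Everything else is a formal consequence of exactness of localisation and the semisimplicity of a filtration with pairwise non-isomorphic (Sen-distinguished) graded pieces.
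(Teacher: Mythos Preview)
Your proposal is correct and follows the same approach as the paper: existence and the decomposition come directly from localising Theorem \ref{Theorem: Faltings--Chai's ES decomposition} at $\frakm_{\Pi}$, and uniqueness follows from the distinctness of the Hodge--Tate weights. One small slip: you assert that the twists $0, k_2, k_1, k_1+k_2$ are pairwise distinct, but this fails when $k_1=k_2$; what actually matters (and what you correctly verify at the end) is that the shifted weights $\{-3,\,k_2-2,\,k_1-1,\,k_1+k_2\}$ are distinct under $k_1\geq k_2>0$. Note also that the decomposition is already part of Theorem \ref{Theorem: Faltings--Chai's ES decomposition}, so your separate Sen-theoretic splitting argument is not needed for existence (though it is exactly the right idea for uniqueness).
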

\begin{proof} 
This is an immediate corollary of Theorem \ref{Theorem: Faltings--Chai's ES decomposition}. The uniqueness follows from the fact that $\frakm_{\Pi}$ has cohomological weight $k_1\geq k_2 >0$ and hence the Hodge--Tate weights $\{\bfitw_i k^{\cyc} - i\colon i=0,1,2,3\}$ are distinct.
\end{proof}

In the rest of \S \ref{subsection: classical ES}, we propose the constructions of a family of maps, by which we name \emph{classical Eichler--Shimura morphisms}. We shall see how these constructions recover the Eichler--Shimura filtration/decomposition in Corollary \ref{Corollary: there exists a unique ES decomposition}.

We split the construction into five steps. Recall that $\calX_n$ and $\calX_{n}^{\tor}$ stand for the rigid analytic space over $\Spa(\C_p, \calO_{\C_p})$ associated with $X_n$ and $X_n^{\tor}$, respectively. \\

\paragraph{\textbf{Construction 1.}} First of all, analogous to our discussion in \S \ref{subsection: Kummer and pro-Kummer \'etale overconvergent cohomology}, we consider the \'etale local system $\scrV_k^{\vee}$ on $\calX_{n, \et}$ associated with $V_k^{\vee}$ and consider the the pro-Kummer \'etale sheaf \[
    \scrO\!\!\scrV_k^{\vee} := \nu^{-1}\jmath_{\ket, *}\scrV_k^{\vee} \otimes_{\Q_p} \widehat{\scrO}_{\calX_{n, \proket}^{\tor}}
\] 
where $\jmath_{\ket}:\calX_{n, \et}\rightarrow \calX_{n, \ket}^{\tor}$ and $\nu:\calX_{n, \proket}^{\tor}\rightarrow \calX_{n, \ket}^{\tor}$ are natural morphism of sites. Similar to Proposition \ref{Proposition: comparing Kummer \'etale and pro-Kummer \'etale cohomology}, there is a natural isomorphism \[
    H_{\et}^3(\calX_n, V_k^{\vee})\otimes_{\Q_p}\C_p \cong H_{\proket}^3(\calX_n^{\tor}, \scrO\!\!\scrV_k^{\vee}).
\]

\paragraph{\textbf{Construction 2.}} On the other hand, we consider the completed pullback of the classical automorphic sheaves to the pro-Kummer \'etale site. For $k' \in \{\bfitw_3^{-1}\bfitw k: \bfitw\in W^H\}$, similar to Remark \ref{Remark: pro-Kummer \'etale automorphic sheaf}, we consider
\[
    \widehat{\underline{\omega}}^{k'} := \upsilon^{-1}\underline{\omega}^{k'} \otimes_{\upsilon^{-1}\scrO_{\calX_n^{\tor}}} \widehat{\scrO}_{\calX_{n, \proket}^{\tor}}
\] 
where $\upsilon: \calX_{n, \proket}^{\tor}\rightarrow \calX_{n, \an}^{\tor}$ is the natural projection of sites. There is a Leray spectral sequence 
\begin{equation}\label{eq: Leray sp sq}
    E_2^{i,j} =H^i(\calX_n^{\tor}, R^j\upsilon_* \widehat{\underline{\omega}}^{k'}) \Rightarrow H^{i+j}_{\proket}(\calX_n^{\tor}, \widehat{\underline{\omega}}^{k'}).
\end{equation}
By the projection formula and \cite[Proposition A.2.3]{DRW}, we have \[
    R^j\upsilon_* \widehat{\underline{\omega}}^{k'} \cong \underline{\omega}^{k'}\otimes R^j \upsilon_* \widehat{\scrO}_{\calX_{n, \proket}^{\tor}} \cong \underline{\omega}^{k'} \otimes \Omega_{\calX_n^{\tor}}^{\log, j}(-j).
\] The spectral sequence becomes \begin{equation}\label{eq: Leray spectral sequence for classical cohomology}
    E_2^{i,j} = H^i(\calX_n^{\tor}, \underline{\omega}^{k'} \otimes \Omega_{\calX_n^{\tor}}^{\log, j})(-j) \Rightarrow H^{i+j}_{\proket}(\calX_n^{\tor}, \widehat{\underline{\omega}}^{k'}).
\end{equation}

\paragraph{\textbf{Construction 3.}} Since $k_2>0$, we can apply \cite[Theorem 4.1]{Lan-Vanishing} (see also [\emph{op. cit.}, Example 4.17]) and obtain \[
    \begin{array}{rl}
        H^i(\calX_n^{\tor}, \underline{\omega}^{(k_1, -k_2; k_2)} \otimes \Omega_{\calX_n^{\tor}}^{\log, j}) = 0 & \text{ for }i=0\\
        H^i(\calX_n^{\tor}, \underline{\omega}^{(k_2, -k_1; k_1)} \otimes \Omega_{\calX_n^{\tor}}^{\log, j}) = 0 & \text{ for }i=0, 1\\
        H^i(\calX_n^{\tor}, \underline{\omega}^{(-k_2, -k_1; k_1+k_2)} \otimes \Omega_{\calX_n^{\tor}}^{\log, j}) = 0 & \text{ for }i=0, 1, 2
    \end{array}.
\]
As a result, the spectral sequences \eqref{eq: Leray spectral sequence for classical cohomology} give rise to edge maps \begin{equation}\label{eq: edge maps for classical cohomology}
    \begin{array}{rl}
        H^3_{\proket}(\calX_n^{\tor}, \widehat{\underline{\omega}}^{(k_1, k_2; 0)}) & \rightarrow H^0(\calX_n^{\tor}, \underline{\omega}^{(k_1, k_2; 0)} \otimes \Omega_{\calX_n^{\tor}}^{\log, 3})(-3)\\
        H^3_{\proket}(\calX_n^{\tor}, \widehat{\underline{\omega}}^{(k_1, -k_2; k_2)}) & \rightarrow H^1(\calX_n^{\tor}, \underline{\omega}^{(k_1, -k_2; k_2)} \otimes \Omega_{\calX_n^{\tor}}^{\log, 2})(-2)\\
        H^3_{\proket}(\calX_n^{\tor}, \widehat{\underline{\omega}}^{(k_2, -k_1; k_1)}) & \rightarrow H^2(\calX_n^{\tor}, \underline{\omega}^{(k_2, -k_1; k_1)}\otimes \Omega_{\calX_n^{\tor}}^{\log, 1})(-1)\\
        H^3_{\proket}(\calX_n^{\tor}, \widehat{\underline{\omega}}^{(-k_2, -k_1; k_1+k_2)}) & \rightarrow H^3(\calX_n^{\tor}, \underline{\omega}^{(-k_2, -k_1; k_1+k_2)}).
    \end{array}
\end{equation}
Namely,
\[H^3_{\proket}(\calX_n^{\tor}, \widehat{\underline{\omega}}^{\bfitw_3^{-1}\bfitw_i k})  \rightarrow H^{3-i}(\calX_n^{\tor}, \underline{\omega}^{\bfitw_3^{-1}\bfitw_i k} \otimes \Omega_{\calX_n^{\tor}}^{\log, i})(-i)\]
for $i=0,1,2,3$.
Note that the targets of these maps further project to $H^{3-i}(\calX_n^{\tor}, \underline{\omega}^{\bfitw_3^{-1}\bfitw_i k + k_{\bfitw_i}})(-i)$ via the Kodaira--Spencer isomorphism (\cite[Theorem 1.41 (4)]{LanKS}).\\

\paragraph{\textbf{Construction 4.}} For $\bfitw\in W^H$, we construct a Hecke- and Galois-equivariant morphism of pro-Kummer \'etale sheaves \begin{equation}\label{eq: algebraic ES on the level of pro-Kummer \'etale sheaves}
    \ES_{k}^{\bfitw, \alg}: \scrO\!\!\scrV_k^{\vee} \rightarrow \widehat{\underline{\omega}}^{\bfitw_3^{-1}\bfitw k}(\bfitw k^{\cyc})
\end{equation}
on $\calX_{n, \proket}^{\tor}$. It serves as a bridge connecting the objects studied in Construction 1 \& 2. In fact, we will make the construction on the flag variety and then pullback along the Hodge--Tate period map.

Consider the pullback diagram \[
    \begin{tikzcd}
        \iota^{\bfitw, *}_{\bfitw_3}\calH_{\HT}^{\an}\arrow[r]\arrow[d, "\iota^{\bfitw, *}_{\bfitw_3}\pr_{\adicFL, \HT}"'] & \calH_{\HT}^{\an}\arrow[d, "\pr_{\adicFL, \HT}"]\\
        \adicFL \arrow[r, "\iota^{\bfitw}_{\bfitw_3}"] & \adicFL
    \end{tikzcd},
\]
where $\iota^{\bfitw}_{\bfitw_3}$ is the antomorphism in Remark \ref{Remark: maps between loci on the flag variety} given by multiplying $\bfitw^{-1}\bfitw_3$ from the right. Since $\calH_{\HT}^{\an} \rightarrow \adicFL$ is an $\calH^{\an}$-torsor, the pullback $\iota^{\bfitw, *}_{\bfitw_3}\calH_{\HT}^{\an} \rightarrow \adicFL$ is also a $\calH^{\an}$-torsor, where $\calH^{\an}$ acts via $\bfitw^{-1}\bfitw_3 \calH^{\an}\bfitw_3^{-1}\bfitw$. Given $k = (k_1, k_2; k_0)\in \Z^3$ with $k_1\geq k_2$, let \[
    \underline{\omega}_{\adicFL}^k := \pr_{\adicFL, \HT, *}\scrO_{\calH_{\HT}^{\an}}[\bfitw_3 k],
\]\emph{i.e.}, the subsheaf of $\pr_{\adicFL, \HT, *}\scrO_{\calH_{\HT}^{\an}}$ consisting of sections on which $\calB_H^{\an}$ acts via $\bfitw_3 k$. There is a natural isomorphism \[
    \iota^{\bfitw, *}_{\bfitw_3} \underline{\omega}_{\adicFL}^k \cong \underline{\omega}^{\bfitw_3^{-1}\bfitw k}_{\adicFL}.
\]

Now, recall the universal short exact sequence \[
    0 \rightarrow \scrW_{\adicFL}^{\vee} \rightarrow \scrO_{\adicFL}^4 \xrightarrow{\HT_{\adicFL}} \scrW_{\adicFL} \rightarrow 0
\] over $\adicFL$. Fix $k = (k_1, k_2)\in \Z^2$ with $k_1\geq k_2>0$. It is well-known that (see also Remark \ref{Remark: explanation on the weights}) \[
    \underline{\omega}_{\adicFL}^k = \Sym^{k_1-k_2}\scrW_{\adicFL} \otimes (\det \scrW_{\adicFL})^{\otimes k_2}.
\] The map $\HT_{\adicFL}$ induces a map \[
    \HT_{\adicFL}^k: \Sym^{k_1-k_2}\scrO_{\adicFL}^4\otimes \Sym^{k_2} \wedge^2\scrO_{\adicFL}^4 \rightarrow \underline{\omega}_{\adicFL}^k.
\] Pulling back $\HT_{\adicFL}^k$ via $\iota^{\bfitw}_{\bfitw_3}$, one obtains \[
    \HT_{\adicFL}^{\bfitw_3^{-1}\bfitw k}: \Sym^{k_1-k_2}\scrO_{\adicFL}^4\otimes \Sym^{k_2} \wedge^2\scrO_{\adicFL}^4 \rightarrow \underline{\omega}_{\adicFL}^{\bfitw_3^{-1}\bfitw k}.
\]
where we have identified $\iota^{\bfitw, *}_{\bfitw_3}\scrO_{\adicFL} \cong \scrO_{\adicFL}$.

Note that the $\GSp_4$-representation $V_k$ is naturally an irreducible subrepresentation (see, for example, \cite[Lecture 17]{Fulton-Harris}) 
\[
    V_k \hookrightarrow \Sym^{k_1-k_2}\Q_p^4 \otimes \Sym^{k_2}\wedge^2 \Q_p^4.
\] 
Composing with the isomorphism $V_k^{\vee} \cong V_k$ induced by the symplectic pairing \eqref{eq: symplectic pairing on standard rep}, we obtain
\[
    V_k^{\vee} \hookrightarrow \Sym^{k_1-k_2}\Q_p^4 \otimes \Sym^{k_2}\wedge^2 \Q_p^4.
\] 
We may view $V_k^{\vee}$ as an \'etale $\Q_p$-local systems over $\adicFL$, and hence a pro-\'etale $\Q_p$-local system. Tensoring with the complete pro-\'etale structure sheaf $\widehat{\scrO}_{\adicFL, \proet}$, we obtain a sheaf $\scrO\!\!\scrV_{k,\adicFL}^{\vee}$ together with an inclusion \begin{equation}\label{eq: natural inclusion of GSp4-reps over the pro-\'etale site of the flag variety}
    \scrO\!\!\scrV_{k, \adicFL}^{\vee} \hookrightarrow \Sym^{k_1-k_2}\widehat{\scrO}_{\adicFL, \proet}^4 \otimes \Sym^{k_2}\wedge^2 \widehat{\scrO}_{\adicFL, \proet}^4.
\end{equation}

On the other hand, we take the completed pullback of $\underline{\omega}_{\adicFL}^k$ to the pro-\'etale site $\adicFL_{\proet}$ and obtain a vector bundle of $\widehat{\scrO}_{\adicFL, \proet}$-modules $\widehat{\underline{\omega}}_{\adicFL}^{k}$. Combining \eqref{eq: natural inclusion of GSp4-reps over the pro-\'etale site of the flag variety} with $\HT_{\adicFL}^{\bfitw_3^{-1}\bfitw k}$, we arrive at  a morphism of pro-\'etale sheaves \begin{equation}\label{eq: algebraic pseudo-ES over the flag variety}
    \mathrm{PES}_k^{\bfitw}: \scrO\!\!\scrV_{k, \adicFL}^{\vee} \rightarrow \widehat{\underline{\omega}}_{\adicFL}^{\bfitw_3^{-1}\bfitw k}.
\end{equation} 
Pulling back $\mathrm{PES}_k^{\bfitw}$ via $\pi_{\HT}$, we obtain a Hecke- and Galois-equivariant morphism of pro-Kummer \'etale sheaves \begin{equation}\label{eq: algebraic ES on the level of pro-Kummer \'etale sheaves at infinity level}
    \ES_{k}^{\bfitw, \alg}: \scrO\!\!\scrV_k^{\vee}|_{\calX_{\Gamma(p^{\infty})}^{\tor}} \rightarrow \widehat{\underline{\omega}}^{\bfitw_3^{-1}\bfitw k}(\bfitw k^{\cyc})|_{\calX_{\Gamma(p^{\infty})}^{\tor}}
\end{equation}
over $\calX_{\Gamma(p^{\infty}), \proket}^{\tor}$ \footnote{Here we abuse the notation and identify the slice category ${\calX_{n, \proket}^{\tor}}_{/\calX_{\Gamma(p^{\infty})}^{\tor}}$ with $\calX_{\Gamma(p^{\infty}), \proket}^{\tor}$.}. For an explanation on the Galois twists, see Remark \ref{Remark: classical forms in the rigid analytic setting}. One check that the morphism is $\Iw_{\GSp_4, n}^+$-equivariant. Therefore, it descends to $\calX_{n}^{\tor}$ and we obtain the desired morphism (\ref{eq: algebraic ES on the level of pro-Kummer \'etale sheaves}).\\

\paragraph{\textbf{Construction 5.}} Applying pro-Kummer \'etale cohomology with supports on the morphism (\ref{eq: algebraic ES on the level of pro-Kummer \'etale sheaves}), we obtain
\begin{equation}\label{eq: ES with support}
    \scalemath{0.9}{ H^3_{\overline{\calX_{n, \leq \bfitw_i}^{\tor}} \smallsetminus \overline{\calX_{n, \leq \bfitw_{i-1}}^{\tor}}, \proket}(\calX_n^{\tor} \smallsetminus \overline{\calX_{n, \leq \bfitw_{i-1}}^{\tor}}, \scrO\!\!\scrV_k^{\vee}) \rightarrow H^3_{\overline{\calX_{n, \leq \bfitw_i}^{\tor}} \smallsetminus \overline{\calX_{n, \leq \bfitw_{i-1}}^{\tor}}, \proket}(\calX_n^{\tor} \smallsetminus \overline{\calX_{n, \leq \bfitw_{i-1}}^{\tor}}, \widehat{\underline{\omega}}^{\bfitw_3^{-1}\bfitw_i k})(\bfitw_i k^{\cyc}) }
\end{equation} 
for $i=0, 1, 2, 3$. We consider an analogue of the Leray spectral sequence (\ref{eq: Leray sp sq}) with support condition 
\[
\scalemath{0.9}{ E_2^{s,t} = H^s_{\overline{\calX_{n, \leq \bfitw_i}^{\tor}} \smallsetminus \overline{\calX_{n, \leq \bfitw_{i-1}}^{\tor}}}(\calX_n^{\tor} \smallsetminus \overline{\calX_{n, \leq \bfitw_{i-1}}^{\tor}}, R^t\upsilon_*\widehat{\underline{\omega}}^{\bfitw_3^{-1}\bfitw_i k}) \Rightarrow H^{s+t}_{\overline{\calX_{n, \leq \bfitw_i}^{\tor}} \smallsetminus \overline{\calX_{n, \leq \bfitw_{i-1}}^{\tor}}, \proket}(\calX_n^{\tor} \smallsetminus \overline{\calX_{n, \leq \bfitw_{i-1}}^{\tor}}, \widehat{\underline{\omega}}^{\bfitw_3^{-1}\bfitw_i k}).}
\]
Taking the finite-slope parts and applying \cite[Theorem 5.7.3]{BP-HigherColeman}, the spectral sequence yields edge maps
\[\scalemath{0.8}{ H^3_{\overline{\calX_{n, \leq \bfitw_i}^{\tor}} \smallsetminus \overline{\calX_{n, \leq \bfitw_{i-1}}^{\tor}}, \proket}(\calX_n^{\tor} \smallsetminus \overline{\calX_{n, \leq \bfitw_{i-1}}^{\tor}}, \widehat{\underline{\omega}}^{\bfitw_3^{-1}\bfitw_i k})^{\fs}  \rightarrow H^{3-i}_{\overline{\calX_{n, \leq \bfitw_i}^{\tor}} \smallsetminus \overline{\calX_{n, \leq \bfitw_{i-1}}^{\tor}}}(\calX_n^{\tor} \smallsetminus \overline{\calX_{n, \leq \bfitw_{i-1}}^{\tor}}, \underline{\omega}^{\bfitw_3^{-1}\bfitw_i k+k_{\bfitw_i}})^{\fs}(\bfitw_i k^{\cyc}-i)}\]
for $i=0,1,2,3$. Combined with (\ref{eq: ES with support}), we arrive at a Hecke- and Galois-equivariant morphism \[
    \scalemath{0.8}{ H^3_{\overline{\calX_{n, \leq \bfitw_i}^{\tor}} \smallsetminus \overline{\calX_{n, \leq \bfitw_{i-1}}^{\tor}}, \proket}(\calX_n^{\tor} \smallsetminus \overline{\calX_{n, \leq \bfitw_{i-1}}^{\tor}}, \scrO\!\!\scrV_k^{\vee})^{\fs} \rightarrow H^{3-i}_{\overline{\calX_{n, \leq \bfitw_i}^{\tor}} \smallsetminus \overline{\calX_{n, \leq \bfitw_{i-1}}^{\tor}}}(\calX_n^{\tor} \smallsetminus \overline{\calX_{n, \leq \bfitw_{i-1}}^{\tor}}, \underline{\omega}^{\bfitw_3^{-1}\bfitw_i k+k_{\bfitw_i}})^{\fs}(\bfitw_i k^{\cyc}-i).}
\]

Now we further pass to the `small-slope parts'. For $H_{\proket}^3(\calX_n^{\tor}, \scrO\!\!\scrV_k^{\vee})$, $H_{\overline{\calX_{n, \leq \bfitw_i}^{\tor}}, \proket}^3(\calX_n^{\tor}, \scrO\!\!\scrV_k^{\vee})$, and $H_{\overline{\calX_{n, \leq \bfitw_2}^{\tor}} \smallsetminus \overline{\calX_{n, \leq \bfitw_1}^{\tor}}, \proket}^3(\calX_n^{\tor} \smallsetminus \overline{\calX_{n, \leq \bfitw_1}^{\tor}}, \scrO\!\!\scrV_k^{\vee})$, we define their \emph{small-slope parts} in the same way as in Definition \ref{Definition: small slope part}. (Notice that the Hecke operators are un-normalised.) It follows from the classicality theorem (Theorem \ref{Theorem: classicality theorem in higher Coleman theory}) that the small-slope part of the second term coincides with the small-slope part of $H^{3-i}(\calX_n^{\tor}, \underline{\omega}^{\bfitw_3^{-1}\bfitw_i k +k_{\bfitw_i}})(\bfitw_i k^{\cyc}-i)$. Consequently, taking small-slope part and applying the classicality theorem, we arrive at a Hecke- and Galois-equivariant diagram 
\begin{equation}\label{eq: Hecke-Galois equiv diagram algebraic}
\begin{tikzcd}[column sep = tiny]
            \scalemath{0.8}{ H_{\proket}^3(\calX_n^{\tor}, \scrO\!\!\scrV_k^{\vee})^{\sms}  } \arrow[r] & \scalemath{0.8}{ H_{\proket}^{3}(\calX_{n, \bfitw_3}^{\tor}, \scrO\!\!\scrV_k^{\vee})^{\sms} } \arrow[r] & \scalemath{0.8}{ H^0(\calX_n^{\tor}, \underline{\omega}^{(k_1+3, k_2+3)})^{\sms}(-3) }\\
            \scalemath{0.8}{ H_{\overline{\calX_{n, \leq \bfitw_2}^{\tor}}, \proket}^3(\calX_n^{\tor}, \scrO\!\!\scrV_k^{\vee})^{\sms} } \arrow[r]\arrow[u] & \scalemath{0.8}{ H_{\overline{\calX_{n, \leq \bfitw_2}^{\tor}} \smallsetminus \overline{\calX_{n, \leq \bfitw_1}^{\tor}}, \proket}^3(\calX_n^{\tor} \smallsetminus \overline{\calX_{n, \leq \bfitw_1}^{\tor}}, \scrO\!\!\scrV_k^{\vee})^{\sms} } \arrow[r] & \scalemath{0.8}{ H^1(\calX_n^{\tor}, \underline{\omega}^{(k_1+3, -k_2+1; k_2)})^{\sms}(k_2-2) }\\
            \scalemath{0.8}{ H_{\overline{\calX_{n, \leq \bfitw_1}^{\tor}}, \proket}^3(\calX_n^{\tor}, \scrO\!\!\scrV_k^{\vee})^{\sms} } \arrow[r]\arrow[u] & \scalemath{0.8}{ H_{\overline{\calX_{n, \leq \bfitw_1}^{\tor}} \smallsetminus \overline{\calX_{n, \leq \one_4}^{\tor}}, \proket}^3(\calX_n^{\tor} \smallsetminus \overline{\calX_{n, \leq \one_4}^{\tor}}, \scrO\!\!\scrV_k^{\vee})^{\sms} } \arrow[r] & \scalemath{0.8}{ H^2(\calX_n^{\tor}, \underline{\omega}^{(k_2+2, -k_1; k_1)})^{\sms}(k_1-1) }\\
            \scalemath{0.8}{ H_{\overline{\calX_{n, \leq \one_4}^{\tor}}, \proket}^3(\calX_n^{\tor}, \scrO\!\!\scrV_k^{\vee})^{\sms}} \arrow[r, equal]\arrow[u] & \scalemath{0.8}{ H_{\overline{\calX_{n,  \one_4}^{\tor}}, \proket}^3(\calX_n^{\tor}, \scrO\!\!\scrV_k^{\vee})^{\sms} }\arrow[r] & \scalemath{0.8}{ H^3(\calX_n^{\tor}, \underline{\omega}^{(-k_2, -k_1; k_1+k_2)})^{\sms}(k_1+k_2)}
        \end{tikzcd}
\end{equation}
The (compositions of) the horizontal maps are referred to as the \emph{classical Eichler--Shimura morphisms}. Note that we consider the un-normalised Hecke operators on the pro-Kummer \'etale cohomology groups, but consider the normalised Hecke operators on the coherent cohomology groups on the right-hand side of the diagram (see Remark \ref{Remark: purpose of renormalisation}).

\begin{Definition}\label{Defn: nice enough}
Let $\Pi = (\pi, \varphi_p)$ be a $p$-stabilisation of an irreducible automorphic representation of weight $k=(k_1, k_2)\in \Z^2$ such that $k_1\geq k_2>0$.
\begin{enumerate}
\item[(i)] We say that $\Pi$ has \emph{small slope} if \[H_{\et}^3 (X_{n, \C_p}, V_k^{\vee})_{\frakm_{\Pi}}^{\sms} = H_{\et}^3 (X_{n, \C_p}, V_k^{\vee})_{\frakm_{\Pi}}.\]
\item[(ii)] We say that $\Pi$ is \emph{nice-enough} if it satisfies Assumption \ref{Assumption: Multiplicity One for cuspidal automorphic representations} and has small slope.
\end{enumerate}
\end{Definition}

Now suppose $\Pi = (\pi, \varphi_p)$ is nice-enough. In particular, we have identifications of 1-dimensional $\C_p$-vector spaces
\[
H^{3-i}(\calX_n^{\tor}, \underline{\omega}^{\bfitw_3^{-1}\bfitw_i k + k_{\bfitw_i}})^{\sms}_{\frakm_{\Pi}}=H^{3-i}(\calX_n^{\tor}, \underline{\omega}^{\bfitw_3^{-1}\bfitw_i k + k_{\bfitw_i}})_{\frakm_{\Pi}}
\]
for all $i$. Localising the entire diagram (\ref{eq: Hecke-Galois equiv diagram algebraic}) at $\frakm_{\Pi}$, we obtain a Hecke- and Galois-equivariant diagram
\begin{equation}\label{eq: Hecke-Galois equiv diagram algebraic localised}
\begin{tikzcd}[column sep = tiny]
            \scalemath{0.8}{ H_{\proket}^3(\calX_n^{\tor}, \scrO\!\!\scrV_k^{\vee})^{\sms}_{\frakm_{\Pi}}  } \arrow[r, "g_3"] & \scalemath{0.8}{ H_{\proket}^{3}(\calX_{n, \bfitw_3}^{\tor}, \scrO\!\!\scrV_k^{\vee})^{\sms}_{\frakm_{\Pi}} } \arrow[r, "h_3"] & \scalemath{0.8}{ H^0(\calX_n^{\tor}, \underline{\omega}^{(k_1+3, k_2+3)})_{\frakm_{\Pi}}(-3) }\\
            \scalemath{0.8}{ H_{\overline{\calX_{n, \leq \bfitw_2}^{\tor}}, \proket}^3(\calX_n^{\tor}, \scrO\!\!\scrV_k^{\vee})^{\sms}_{\frakm_{\Pi}} } \arrow[r, "g_2"]\arrow[u, "f_2"] & \scalemath{0.8}{ H_{\overline{\calX_{n, \leq \bfitw_2}^{\tor}} \smallsetminus \overline{\calX_{n, \leq \bfitw_1}^{\tor}}, \proket}^3(\calX_n^{\tor} \smallsetminus \overline{\calX_{n, \leq \bfitw_1}^{\tor}}, \scrO\!\!\scrV_k^{\vee})^{\sms}_{\frakm_{\Pi}} } \arrow[r, "h_2"] & \scalemath{0.8}{ H^1(\calX_n^{\tor}, \underline{\omega}^{(k_1+3, -k_2+1; k_2)})_{\frakm_{\Pi}}(k_2-2) }\\
            \scalemath{0.8}{ H_{\overline{\calX_{n, \leq \bfitw_1}^{\tor}}, \proket}^3(\calX_n^{\tor}, \scrO\!\!\scrV_k^{\vee})^{\sms}_{\frakm_{\Pi}} } \arrow[r, "g_1"]\arrow[u, "f_1"] & \scalemath{0.8}{ H_{\overline{\calX_{n, \leq \bfitw_1}^{\tor}} \smallsetminus \overline{\calX_{n, \leq \one_4}^{\tor}}, \proket}^3(\calX_n^{\tor} \smallsetminus \overline{\calX_{n, \leq \one_4}^{\tor}}, \scrO\!\!\scrV_k^{\vee})^{\sms}_{\frakm_{\Pi}} } \arrow[r, "h_1"] & \scalemath{0.8}{ H^2(\calX_n^{\tor}, \underline{\omega}^{(k_2+2, -k_1; k_1)})_{\frakm_{\Pi}}(k_1-1) }\\
            \scalemath{0.8}{ H_{\overline{\calX_{n, \leq \one_4}^{\tor}}, \proket}^3(\calX_n^{\tor}, \scrO\!\!\scrV_k^{\vee})^{\sms}_{\frakm_{\Pi}}} \arrow[r, equal, "g_0"]\arrow[u, "f_0"] & \scalemath{0.8}{ H_{\overline{\calX_{n,  \one_4}^{\tor}}, \proket}^3(\calX_n^{\tor}, \scrO\!\!\scrV_k^{\vee})^{\sms}_{\frakm_{\Pi}} }\arrow[r, "h_0"] & \scalemath{0.8}{ H^3(\calX_n^{\tor}, \underline{\omega}^{(-k_2, -k_1; k_1+k_2)})_{\frakm_{\Pi}}(k_1+k_2)}
        \end{tikzcd}
\end{equation}

The left column of this diagram gives rise to an explicit construction of the filtration $\Fil_{\ES, k, \frakm_{\Pi}}^{\bullet}$ in Corollary \ref{Corollary: there exists a unique ES decomposition}. This is summarised in the next proposition.

\begin{Proposition}\label{Prop: recover classical ES}
The following hold. 
\begin{enumerate}
\item[(i)] The 4-dimensional $\C_p$-vector space $H_{\proket}^3(\calX_n^{\tor}, \scrO\!\!\scrV_k^{\vee})^{\sms}_{\frakm_{\Pi}}=H_{\proket}^3(\calX_n^{\tor}, \scrO\!\!\scrV_k^{\vee})_{\frakm_{\Pi}} $ admits Hecke- and Galois-stable a decreasing filtration $\Fil^{\bullet}$ given by $\Fil^0=H_{\proket}^3(\calX_n^{\tor}, \scrO\!\!\scrV_k^{\vee})^{\sms}_{\frakm_{\Pi}}$, 
\[
    \Fil^{3-i} = \image(f_2\circ \cdots \circ f_i: H_{\overline{\calX_{n, \leq \bfitw_i}^{\tor}}, \proket}^3(\calX_n^{\tor}, \scrO\!\!\scrV_k^{\vee})^{\sms}_{\frakm_{\Pi}} \rightarrow H_{\proket}^3(\calX_n^{\tor}, \scrO\!\!\scrV_k^{\vee})^{\sms}_{\frakm_{\Pi}}).
\]
for $i=0, 1, 2$, and $\Fil^4=0$. Moreover, we have $\mathrm{dim}_{\C_p} \Fil^i=4-i$, for $i=0,1,2,3,4$.
\item[(ii)] The arrows $h_0$, $h_1$, $h_2$, $h_3$ are surjective.
\item[(iii)] The compositions $h_i\circ g_i$ are surjective, for $i=0,1,2,3$.
\item[(iv)] The surjections $h_i\circ g_i$ induce natural Hecke- and Galois-equivariant isomorphisms
\[
\Gr^{3-i}\cong H^{3-i}(\calX_n^{\tor}, \underline{\omega}^{\bfitw_3^{-1}\bfitw_i k +k_{\bfitw_i}})_{\frakm_{\Pi}}(\bfitw_i k^{\cyc}-i)
\]
for $i=0,1,2,3$, where $\Gr^i:=\Fil^i/\Fil^{i+1}$. In particular, $\Fil^{\bullet}$ coincides with the filtration $\Fil_{\ES, k, \frakm_{\Pi}}^{\bullet}$ in Corollary \ref{Corollary: there exists a unique ES decomposition}.
\end{enumerate}
\end{Proposition}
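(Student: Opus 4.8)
The plan is to extract everything from the Hecke- and Galois-equivariant diagram \eqref{eq: Hecke-Galois equiv diagram algebraic localised} together with the dimension count coming from Corollary \ref{Corollary: there exists a unique ES decomposition}. First I would record the numerology: by Corollary \ref{Corollary: there exists a unique ES decomposition} the space $H^3_{\et}(X_{n,\C_p}, V_k^\vee)_{\frakm_\Pi}\otimes_{\Q_p}\C_p \cong H^3_{\proket}(\calX_n^{\tor}, \scrO\!\!\scrV_k^\vee)_{\frakm_\Pi}$ is $4$-dimensional, and each coherent cohomology group $H^{3-i}(\calX_n^{\tor}, \underline{\omega}^{\bfitw_3^{-1}\bfitw_i k + k_{\bfitw_i}})_{\frakm_\Pi}$ is $1$-dimensional by Assumption \ref{Assumption: Multiplicity One for cuspidal automorphic representations}. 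Since $\Pi$ has small slope, the small-slope parts agree with the full localisations everywhere in \eqref{eq: Hecke-Galois equiv diagram algebraic localised}, so I may drop the superscript ``$\sms$'' throughout; in particular the classicality theorem (Theorem \ref{Theorem: classicality theorem in higher Coleman theory}) identifies the second column of the diagram with the coherent groups on the right after $p$-stabilisation, which is how the small-slope condition enters.

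For part (i): the filtration $\Fil^\bullet$ is tautologically decreasing and Hecke- and Galois-stable because each $f_j$ is Hecke- and Galois-equivariant, being an edge map in the spectral sequence \eqref{eq: diagram for coh. with supports (1)} localised at $\frakm_\Pi$. The content is the dimension statement $\dim_{\C_p}\Fil^i = 4-i$; I would deduce this from the distinguished triangles in \eqref{eq: diagram for coh. with supports (1)}, which after localising and passing to $H^3$ yield the exact sequences interlinking $H^3_{\overline{\calX_{n,\leq\bfitw_i}^{\tor}},\proket}$, $H^3_{\overline{\calX_{n,\leq\bfitw_{i+1}}^{\tor}},\proket}$ and $H^3_{\overline{\calX_{n,\leq\bfitw_{i+1}}^{\tor}}\smallsetminus\overline{\calX_{n,\leq\bfitw_i}^{\tor}},\proket}$ — the last being (by the classicality and the vanishing in Construction 3) $1$-dimensional, living in the single degree $3$. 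Combined with the $E_1$ spectral sequence $E_1^{i,j}\Rightarrow H^{i+j}_{\proket}$, the four $1$-dimensional graded objects force the total to be $4$-dimensional with the associated graded of $\Fil^\bullet$ having dimension $1$ in each step; since by Corollary \ref{Corollary: there exists a unique ES decomposition} the ambient space really is $4$-dimensional, the spectral sequence degenerates and all the $f_j$ are injective, giving $\dim\Fil^{3-i}=i+1$, i.e.\ $\dim\Fil^i=4-i$.

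For parts (ii)--(iv): the maps $h_i$ are the edge maps of the Leray-type spectral sequences with supports from Construction 5; surjectivity follows from the vanishing statements of Construction 3 (via \cite[Theorem 4.1]{Lan-Vanishing}) exactly as there — the relevant $E_2^{s,t}$-terms obstructing the edge map vanish, so $h_i$ is onto a $1$-dimensional space. For $h_i\circ g_i$: $g_i$ is the localisation of the restriction map $H^3_{\overline{\calX_{n,\leq\bfitw_i}^{\tor}},\proket}\to H^3_{\overline{\calX_{n,\leq\bfitw_i}^{\tor}}\smallsetminus\overline{\calX_{n,\leq\bfitw_{i-1}}^{\tor}},\proket}$, which by the classicality comparison (Proposition \ref{Proposition: quasi-isomorphisms of pro-Kummer \'etale cohomology for classical automorphic sheaves}, applied with the vanishing of the ``wrong'' strata) is an isomorphism after passing to the small-slope/$\frakm_\Pi$-localised part; composing with the surjective $h_i$ gives surjectivity of $h_i\circ g_i$. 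Finally, $h_i\circ g_i$ kills $\Fil^{4-i}=\image(f_2\circ\cdots\circ f_{i+1}\circ f_i\circ\cdots)$ — more precisely it kills the image of the next filtration step, because that image comes from $H^3_{\overline{\calX_{n,\leq\bfitw_{i-1}}^{\tor}},\proket}$ which maps to zero in the quotient-with-support group by the triangle in \eqref{eq: diagram for coh. with supports (1)} — so it factors through $\Gr^{3-i}=\Fil^{3-i}/\Fil^{4-i}$; a dimension count ($\dim\Gr^{3-i}=1=\dim$ of the target) upgrades the induced surjection to an isomorphism. Galois- and Hecke-equivariance is inherited from \eqref{eq: Hecke-Galois equiv diagram algebraic localised}. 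That the resulting $\Fil^\bullet$ is \emph{the} filtration $\Fil^\bullet_{\ES,k,\frakm_\Pi}$ is then forced by the uniqueness clause of Corollary \ref{Corollary: there exists a unique ES decomposition}, since the graded pieces match the prescribed Hodge--Tate twists, which are pairwise distinct because $k_1\ge k_2>0$.

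The main obstacle I anticipate is keeping the bookkeeping of the two spectral sequences straight — the stratification $E_1$-spectral sequence \eqref{eq: diagram for coh. with supports (1)} for the filtration, and the Leray spectral sequences with supports for the horizontal edge maps $h_i$ — and in particular making sure the degeneration/injectivity argument in (i) only uses inputs already proved (the vanishing of Construction 3, the classicality theorems, and the $4$-dimensionality from Faltings--Chai), rather than circularly assuming the decomposition one is trying to reconstruct. Everything else is diagram chasing plus dimension counts.
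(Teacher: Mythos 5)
There are genuine gaps in parts (i)--(iii), even though the overall shape (diagram chase on \eqref{eq: Hecke-Galois equiv diagram algebraic localised} plus a dimension count) matches the paper's strategy. The most serious one is in your proof of (i): you assert that the stratum groups $H^{*}_{\overline{\calX_{n,\leq\bfitw_i}^{\tor}}\smallsetminus\overline{\calX_{n,\leq\bfitw_{i-1}}^{\tor}},\proket}(\cdot,\scrO\!\!\scrV_k^{\vee})_{\frakm_{\Pi}}$ are $1$-dimensional and concentrated in degree $3$, citing ``classicality and the vanishing in Construction 3.'' Neither input gives this: Construction 3's vanishing is for coherent cohomology of automorphic bundles without supports, and the classicality statements concern either coherent cohomology or the sheaves $\widehat{\underline{\omega}}^{\bfitw_3^{-1}\bfitw_i k}$ --- not the pro-Kummer \'etale cohomology with supports of $\scrO\!\!\scrV_k^{\vee}$. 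Concentration in degree $3$ for these groups is exactly Question \ref{Question: concentration of pro-Kummer et cohomology at a nice-enough point}, which the paper leaves open. The correct dimension count avoids this entirely: one shows each $h_i\circ g_i$ surjects onto a $1$-dimensional target while killing $\Fil^{4-i}$ (the latter because two consecutive maps in the support triangle compose to zero), so $\Fil^{4-i}\subsetneq\Fil^{3-i}$ strictly, giving $\dim\Fil^{3-i}\geq i+1$; equality is then forced by $\dim\Fil^0=4$. No claim about injectivity of the $f_j$ or degeneration of the stratification spectral sequence is needed or made.

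The second gap is that your surjectivity arguments for $h_i$ and $h_i\circ g_i$ omit the essential representation-theoretic input. The map $h_i$ is not just a Leray edge map: it first applies $\ES_k^{\bfitw_i,\alg}\colon\scrO\!\!\scrV_k^{\vee}\to\widehat{\underline{\omega}}^{\bfitw_3^{-1}\bfitw_i k}$ on support-cohomology and only then the edge map. Surjectivity of the first step requires knowing that $\ES_k^{\bfitw_i,\alg}$ is, locally, the projection onto a direct summand of $\widehat{\scrO}$-modules; this is where Lemma \ref{Lemma: multiplicity-one lemma for alg rep} and Corollary \ref{Corollary: desired projection of H-representations} enter (the map is modelled on a nonzero morphism of $H$-representations $V_k^{\vee}\to W_{\bfitw_i k}$, which by multiplicity one must be the projection onto a summand). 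Relatedly, your claim that $g_i$ itself is an isomorphism via Proposition \ref{Proposition: quasi-isomorphisms of pro-Kummer \'etale cohomology for classical automorphic sheaves} is a misapplication: that proposition is proved for the completed classical automorphic sheaves $\widehat{\underline{\omega}}^{\bfitw_3^{-1}\bfitw_i k}$, not for $\scrO\!\!\scrV_k^{\vee}$. The paper only invokes it \emph{after} pushing everything to the $\widehat{\underline{\omega}}$-summand, deducing surjectivity of $h_i\circ g_i$ from a commutative square whose other three sides are surjections/isomorphisms --- it never asserts $g_i$ is an isomorphism (again, that would essentially resolve the open question). Parts (iv) and the uniqueness via distinct Hodge--Tate twists are fine as you state them.
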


\begin{proof}
Firstly, the morphism $\ES_{k}^{\bfitw, \alg}$ in \eqref{eq: algebraic ES on the level of pro-Kummer \'etale sheaves} induces a map on pro-Kummer \'etale cohomology
\begin{equation}\label{eq: ES alg proket}
    H^3_{\proket}(\calX_{n}^{\tor}, \scrO\!\!\scrV_k^{\vee}) \rightarrow H_{\proket}^3(\calX_n^{\tor}, \widehat{\underline{\omega}}^{\bfitw_3^{-1}\bfitw_i k})(\bfitw_ik^{\cyc})
\end{equation}
for $i=0,1,2,3$. Also recall the maps
\begin{equation}\label{eq: ES alg Leray}
H_{\proket}^3(\calX_n^{\tor}, \widehat{\underline{\omega}}^{\bfitw_3^{-1}\bfitw_i k})(\bfitw_ik^{\cyc}) \rightarrow H^{3-i}(\calX_n^{\tor}, \underline{\omega}^{\bfitw_3^{-1}\bfitw_ik+k_{\bfitw_i}})(\bfitw k^{\cyc}-i)
\end{equation}
constructed in Construction 3. Combining (\ref{eq: ES alg proket}) and (\ref{eq: ES alg Leray}) and taking localisation at $\frakm_{\Pi}$, we obtain morphisms
\begin{equation}\label{eq: ES alg proket Leray}
H^3_{\proket}(\calX_{n}^{\tor}, \scrO\!\!\scrV_k^{\vee})_{\frakm_{\Pi}} \rightarrow H^{3-i}(\calX_n^{\tor}, \underline{\omega}^{\bfitw_3^{-1}\bfitw_ik+k_{\bfitw_i}})_{\frakm_{\Pi}}(\bfitw k^{\cyc}-i)
\end{equation}
for $i=0,1,2,3$. (Note that, when $i=3$, the map (\ref{eq: ES alg proket Leray}) is just $h_3\circ g_3$.) By assumption, the target of (\ref{eq: ES alg proket Leray}) is a 1-dimensional $\C_p$-vector space. We claim that (\ref{eq: ES alg proket Leray}) is surjective, and hence non-trivial. Indeed, recall the Leray spectral sequence (\ref{eq: Leray spectral sequence for classical cohomology}). Taking localisation at $\frakm_{\Pi}$, we obtain a spectral sequence
\[
    E_2^{s,t} = H^{s}(\calX_n^{\tor}, \underline{\omega}^{\bfitw_3^{-1}\bfitw_i k} \otimes \Omega_{\calX_n^{\tor}}^{\log, t})_{\frakm_{\Pi}}(-t) \Rightarrow H^{s+t}_{\proket}(\calX_n^{\tor}, \widehat{\underline{\omega}}^{\bfitw_3^{-1}\bfitw_i k})_{\frakm_{\Pi}}.
\]
If $i\neq t$, Assumption \ref{Assumption: Multiplicity One for cuspidal automorphic representations} implies $H^{3-t}(\calX_n^{\tor}, \underline{\omega}^{\bfitw_3^{-1}\bfitw_i k} \otimes \Omega_{\calX_n^{\tor}}^{\log, t})_{\frakm_{\Pi}} =0$ (because they contribute to the wrong cohomological weight). Hence, the edge map \eqref{eq: ES alg Leray} (after localising at $\frakm_{\Pi}$) is a surjection. It remains to show that (\ref{eq: ES alg proket}) (after localising at $\frakm_{\Pi}$) is surjective. Notice that $\underline{\widehat{\omega}}_{\adicFL}^{\bfitw_3^{-1}\bfitw_i k}$ is locally modelled \footnote{Here we adopt the terminology from \cite{BP-HigherColeman}. We say a pro-Kummer \'etale sheaf $\scrV$ is \emph{locally modelled} on $V$ if for every log affinoid perfectoid $\calU$, with corresponding affinoid perfectoid space $\Spa(R, R^+)$, we have $\scrV(\calU)=V\otimes R$.} on the irreducible (algebraic) $H$-representation $W_{\bfitw_i k}$ of highest weight $\bfitw_i k$, and the morphism (\ref{eq: algebraic ES on the level of pro-Kummer \'etale sheaves}) is modelled on a morphism of $H$-representations $\alpha_i: V_k^{\vee}\rightarrow W_{\bfitw_i k}$. One observes that $\alpha_i$ is nontrivial: for $i=3$, the map $\alpha_3:V_k^{\vee}\rightarrow W_{\bfitw_3 k}$ is nontrivial as it is nonzero on the highest weight vector (see \cite[\S 5.3]{DRW}); for general $i$, $\alpha_i$ is a twist of $\alpha_3$ by conjugating with $\bfitw_3^{-1}\bfitw_i$ (see Construction 4) and hence also nontrivial. After identifying $V_k$ with $V_k^{\vee} $ via self-duality, it follows from Corollary \ref{Corollary: desired projection of H-representations} that $\alpha$ must be the projection onto a direct summand of $H$-subrepresentation. Consequently, the morphism (\ref{eq: algebraic ES on the level of pro-Kummer \'etale sheaves}) is the projection onto a direct summand of $\scrO_{\calX_{n, \proket}^{\tor}}$-modules, and hence the map (\ref{eq: ES alg proket}) (after localising at $\frakm_{\Pi}$) is surjective.

For $i=0,1,2$, we obtain a commutative diagram
\begin{equation}\label{eq: a commutative diagram for classical ES at small-slope aut rep}
    \begin{tikzcd}
        H_{\overline{\calX_{n, \leq \bfitw_i}^{\tor}}, \proket}^3(\calX_n^{\tor}, \scrO\!\!\scrV_k^{\vee})^{\sms}_{\frakm_{\Pi}} \arrow[r, "f_2\circ \cdots \circ f_i"]\arrow[d, "g_i"'] & H^3_{\proket}(\calX_n^{\tor}, \scrO\!\!\scrV_k^{\vee})^{\sms}_{\frakm_{\Pi}} \arrow[d, two heads, "\textrm{(\ref{eq: ES alg proket Leray})}"] \\
        H^3_{\overline{\calX_{n, \leq \bfitw_i}^{\tor}} \smallsetminus \overline{\calX_{n, \leq \bfitw_{i-1}}^{\tor}}, \proket}(\calX_n^{\tor} \smallsetminus \overline{\calX_{n, \leq \bfitw_{i-1}}^{\tor}}, \scrO\!\!\scrV_k^{\vee})^{\sms}_{\frakm_{\Pi}} \arrow[r, "h_i"] & H^{3-i}(\calX_n^{\tor} , \underline{\omega}^{\bfitw_3^{-1}\bfitw_i k+k_{\bfitw_i}})^{\sms}_{\frakm_{\Pi}}(\bfitw_i k^{\cyc}-i) 
    \end{tikzcd}.
\end{equation} 
By a similar argument as above, we see that $h_i$ is surjective. Indeed, $h_i$ factors as a composition \[
    \begin{tikzcd}[row sep = small, column sep = small]
        \scalemath{0.8}{ H^3_{\overline{\calX_{n, \leq \bfitw_i}^{\tor}} \smallsetminus \overline{\calX_{n, \leq \bfitw_{i-1}}^{\tor}}, \proket}(\calX_n^{\tor} \smallsetminus \overline{\calX_{n, \leq \bfitw_{i-1}}^{\tor}}, \scrO\!\!\scrV_k^{\vee})^{\sms}_{\frakm_{\Pi}} } \arrow[r] & \scalemath{0.8}{ H^3_{\overline{\calX_{n, \leq \bfitw_i}^{\tor}} \smallsetminus \overline{\calX_{n, \leq \bfitw_{i-1}}^{\tor}}, \proket}(\calX_n^{\tor} \smallsetminus \overline{\calX_{n, \leq \bfitw_{i-1}}^{\tor}}, \widehat{\underline{\omega}}^{\bfitw_3^{-1}\bfitw_i k})^{\sms}_{\frakm_{\Pi}}(\bfitw_i k^{\cyc}) } \arrow[d]\\
        & \scalemath{0.8}{ H^{3-i}(\calX_n^{\tor} , \underline{\omega}^{\bfitw_3^{-1}\bfitw_i k+k_{\bfitw_i}})^{\sms}_{\frakm_{\Pi}}(\bfitw_i k^{\cyc}-i) }
    \end{tikzcd}.
\]
where the first arrow is surjective as $\widehat{\underline{\omega}}^{\bfitw_3^{-1}\bfitw_i k}$ can be identified as a direct summand of $\scrO\!\!\scrV_k^{\vee}$, while the second arrow is an edge map which is surjective due to \cite[Theorem 5.7.3]{BP-HigherColeman}. When $i=0$, the map $g_0$ is an identity, which implies that $f_2\circ f_1\circ f_0$ is non-trivial. In particular, all of $f_2$, $f_2\circ f_1$, and $f_2\circ f_1\circ f_0$ are non-trivial. We claim that $h_i\circ g_i$ are non-trivial, for all $i=0,1,2,3$. This is already known for $i=0$ and $i=3$. For $i=1,2$, observe the cummutative diagram
\[
    \begin{tikzcd}[column sep = tiny]
        \scalemath{0.7}{ H_{\overline{\calX_{n, \leq \bfitw_i}^{\tor}}, \proket}^3(\calX_n^{\tor}, \scrO\!\!\scrV_k^{\vee})^{\sms}_{\frakm_{\Pi}} } \arrow[r, "g_i"]\arrow[d, two heads] & \scalemath{0.7}{H^3_{\overline{\calX_{n, \leq \bfitw_i}^{\tor}} \smallsetminus \overline{\calX_{n, \leq \bfitw_{i-1}}^{\tor}}, \proket}(\calX_n^{\tor} \smallsetminus \overline{\calX_{n, \leq \bfitw_{i-1}}^{\tor}}, \scrO\!\!\scrV_k^{\vee})^{\sms}_{\frakm_{\Pi}}  } \arrow[d, two heads] \arrow[rd, "h_i"] & \\
        \scalemath{0.7}{ H_{\overline{\calX_{n, \leq \bfitw_i}^{\tor}}, \proket}^3(\calX_n^{\tor}, \widehat{\underline{\omega}}^{\bfitw_3^{-1}\bfitw_i k})^{\sms}_{\frakm_{\Pi}}(\bfitw_i k^{\cyc}) } \arrow[r] & \scalemath{0.7}{H^3_{\overline{\calX_{n, \leq \bfitw_i}^{\tor}} \smallsetminus \overline{\calX_{n, \leq \bfitw_{i-1}}^{\tor}}, \proket}(\calX_n^{\tor} \smallsetminus \overline{\calX_{n, \leq \bfitw_{i-1}}^{\tor}}, \widehat{\underline{\omega}}^{\bfitw_3^{-1}\bfitw_i k})^{\sms}_{\frakm_{\Pi}}(\bfitw_i k^{\cyc})  } \arrow[r, two heads] & \scalemath{0.7}{ H^{3-i}(\calX_n^{\tor} , \underline{\omega}^{\bfitw_3^{-1}\bfitw_i k+k_{\bfitw_i}})^{\sms}_{\frakm_{\Pi}}(\bfitw_i k^{\cyc}-i) }
    \end{tikzcd}
\]
By Proposition \ref{Proposition: quasi-isomorphisms of pro-Kummer \'etale cohomology for classical automorphic sheaves}, the bottom-left horizontal map is an isomorphism. We immediately conclude that $h_i \circ g_i$ is a surjection. 

Finally, by dimension counting, it is straightforward to conclude that $\dim_{\C_p}\Fil^i=4-i$, and that the surjection $h_i\circ g_i$ factors through the quotient $\Gr^{3-i}$, for all $i=0,1,2,3$.
\end{proof}

Proposition \ref{Prop: recover classical ES} leads to the following open question. 

\begin{Question}\label{Question: concentration of pro-Kummer et cohomology at a nice-enough point}
    Supose $\Pi = (\pi, \varphi_p)$ is nice-enough. Do the localisations of the pro-Kummer {\'e}tale cohomology groups with supports \[
         H^j_{\overline{\calX_{n, \leq \bfitw_i}^{\tor}}}(\calX_{n}^{\tor}, \scrO\!\!\scrV_k^{\vee})_{\frakm_{\Pi}} \quad \text{ and }\quad H^j_{\overline{\calX_{n, \leq \bfitw_i}^{\tor}} \smallsetminus \overline{\calX_{n, \leq \bfitw_{i-1}}^{\tor}}}(\calX_{n}^{\tor}\smallsetminus \overline{\calX_{n, \leq \bfitw_{i-1}}^{\tor}}, \scrO\!\!\scrV_k^{\vee})_{\frakm_{\Pi}}
    \]
    concentrate in degree 3?
\end{Question}

\paragraph{\textbf{Summary.}} The key ingredient in our construction is the Hecke- and Galois-equivariant morphisms $\ES_{k}^{\bfitw, \alg}$ of pro-Kummer \'etale sheaves. Therefore, the key to $p$-adically interpolate the decomposition of Faltings--Chai is to construct $p$-adic interpolations of  $\ES_{k}^{\bfitw, \alg}$. Indeed, this is achieved in \S \ref{subsection: OES}.

\subsection{Overconvergent Eichler--Shimura morphisms in family}\label{subsection: OES}
We finally construct the \emph{overconvergent Eichler--Shimura morphisms}, as in the title of the paper. These morphisms relate the overconvergent cohomology groups constructed in \S \ref{section: OC} to the cohomology groups of the automorphic sheaves constructed in \S \ref{section: automorphic}, 
and they $p$-adically interpolate the classical Eichler--Shimura morphisms constructed in \S \ref{subsection: classical ES} (Construction 5). 

Inspired by the discussion in \S \ref{subsection: classical ES}, we will construct morphisms at the level of pro-\'etale sheaves on the flag variety. The desired overconvergent Eichler--Shimura morphisms are obtained by pullback along the Hodge--Tate period map, and then taking cohomology.

Given a weight $(R_{\calU}, \kappa_{\calU})$ and $r\in \Q_{\geq 0}$ with $r>1+r_{\calU}$, we first establish a morphism of $R_{\calU}$-modules $D_{\kappa_{\calU}}^r \rightarrow A_{\kappa_{\calU}}^r$. Recall the highest weight vector $e_{\kappa_{\calU}}^{\hst}$ in Example \ref{Example: highest weight vector} and $f_{\kappa_{\calU}}^{\bfgamma}\in A_{\kappa_{\calU}}^r$ for any $\bfgamma \in \Iw_{\GSp_4, 1}^+$. We define \[
    \Phi_{\kappa_{\calU}}^r : D_{\kappa_{\calU}}^r \rightarrow A_{\kappa_{\calU}}^r, \quad \mu \mapsto \left( \bfgamma \mapsto \mu(f_{\kappa_{\calU}}^{\bfgamma}) \right).
\]
This morphism then induces a morphism of pro-\'etale sheaves \[
    \Phi_{\kappa_{\calU}}^r: \scrO\!\!\scrD_{\kappa_{\calU}, \adicFL}^r \rightarrow \scrO\!\!\scrA_{\kappa_{\calU}, \adicFL}^r
\]
where $\scrO\!\!\scrD_{\kappa_{\calU}, \adicFL}^r$ and $\scrO\!\!\scrA_{\kappa_{\calU}, \adicFL}^r$ are the pro-\'etale sheaves on $\adicFL$ constructed in \S \ref{subsection: alternative construction on flag variety}. The morphism further extends to a commutative diagram \[
    \begin{tikzcd}
        \scrO\!\!\scrD_{\kappa_{\calU}}^r \arrow[r, "\Phi_{\kappa_{\calU}}^r"] & \scrO\!\!\scrA_{\kappa_{\calU}}^r\arrow[d]\\
        \scrO\!\!\scrD_{\kappa_{\calU}}^{r^+}\arrow[u]\arrow[r, "\Phi_{\kappa_{\calU}}^{r^+}"] & \scrO\!\!\scrA_{\kappa_{\calU}}^{r^+}
    \end{tikzcd}.
\]

On the other hand, we consider the $p$-adic completed pro-\'etale pullback of the pseudoautomorphic sheaves; namely, for each $\bfitw\in W^H$, consider \[
    \widehat{\scrA}_{\kappa_{\calU}, \adicFL_{\bfitw}}^{r, \circ} := \varprojlim_{j} \left( \scrA_{\kappa_{\calU}, \adicFL_{\bfitw}}^{r, \circ} \otimes_{\scrO_{\adicFL_{\bfitw, (r, r)}}^+} \scrO^+_{\adicFL_{\bfitw, (r, r)}, \proet}/p^j\right) \quad \text{ and }\quad \widehat{\scrA}_{\kappa_{\calU}, \adicFL_{\bfitw}}^{r} := \widehat{\scrA}_{\kappa_{\calU}, \adicFL_{\bfitw}}^{r, \circ}\Big[\frac{1}{p}\Big]
\]
where $\scrA_{\kappa_{\calU}, \adicFL_{\bfitw}}^{r, \circ}$ is the pseudoautomorphic sheaf on $\adicFL_{\bfitw, (r, r)}$ (cf. \S \ref{subsection: pseudoautomorphic sheaves}). For any affinoid perfectoid object $\calV_{\infty}\in \adicFL_{\bfitw, (r,r), \proet}$, consider the map
 \[
    \Psi_{\kappa_{\calU}}^{\bfitw, r}: \scrO\!\!\scrA_{\kappa_{\calU}, \adicFL}^r (\calV_{\infty}) \rightarrow \widehat{\scrA}_{\kappa_{\calU}, \adicFL_{\bfitw}}^{r}(\calV_{\infty}), \quad f\mapsto \left( \bfgamma \mapsto f\left( \trans\left( \bfitw^{-1}\bfitw_3\trans\bfgamma \bfitw_3^{-1}\begin{pmatrix}\one_2 & \\ \bfitz & \one_2\end{pmatrix} \bfitw\right) \right) \right)
\] for any $\bfgamma\in \Iw_{H, 1}^+$. To see that this map is well-defined, we first identify
\[\scrO\!\!\scrA_{\kappa_{\calU}, \adicFL}^r (\calV_{\infty})=A_{\kappa_{\calU}}^r\widehat{\otimes}\widehat{\scrO}_{\adicFL_{\bfitw}, \proet}(\calV_{\infty})\]
and
\[\widehat{\scrA}_{\kappa_{\calU}, \adicFL_{\bfitw}}^{r}(\calV_{\infty})=A_{\kappa_{\calU}}^{r-\an}(\Iw_{H, 1}^+, R_{\calU}\widehat{\otimes}\widehat{\scrO}_{\adicFL_{\bfitw, (r,r), \proet}}(\calV_{\infty})).\] Then notice that the matrix \[
    \bfitw^{-1}\bfitw_3\trans\bfgamma \bfitw_3^{-1}\begin{pmatrix}\one_2 & \\ \bfitz & \one_2\end{pmatrix}
\] is a diagonal matrix after modulo $p$, so it is valid to evaluate $f$ at this matrix. We also notice that for $\bfbeta\in \Iw_{H, 1}^+ \cap B_{\GSp_4}$, we have \[
    \bfitw^{-1} \bfitw_3 \trans\bfbeta \bfitw_3^{-1}\bfitw \in \Iw_{\GSp_4, 1}^+ \cap B_{\GSp_4},
\] hence the map $\Psi_{\kappa_{\calU}}^{\bfitw, r}$ is well-defined. This induces a map of sheaves
\[\Psi_{\kappa_{\calU}}^{\bfitw, r}: \scrO\!\!\scrA_{\kappa_{\calU}, \adicFL}^r \rightarrow \widehat{\scrA}_{\kappa_{\calU}, \adicFL_{\bfitw}}^{r}.\]

Composed with $\Phi_{\kappa_{\calU}}^r: \scrO\!\!\scrD_{\kappa_{\calU}, \adicFL}^r \rightarrow \scrO\!\!\scrA_{\kappa_{\calU}, \adicFL}^r$, we arrive at the morphism
\[
    \mathrm{PES}_{\kappa_{\calU}}^{\bfitw, r}: \scrO\!\!\scrD_{\kappa_{\calU}}^r \xrightarrow{\Phi_{\kappa_{\calU}}^r}\scrO\!\!\scrA_{\kappa_{\calU}}^r \xrightarrow{\Psi_{\kappa_{\calU}}^{\bfitw, r}} \widehat{\scrA}_{\kappa_{\calU}, \adicFL_{\bfitw}}^r.
\]
Unwinding everything, $\mathrm{PES}_{\kappa_{\calU}}^{\bfitw, r}$ is given by the explicit formula 
\begin{equation}\label{eq: explicit formula for pseudo-ES}
    \mathrm{PES}_{\kappa_{\calU}}^{\bfitw, r}(\mu \otimes g)(\bfgamma) = g\left(\int_{\bfalpha\in \Iw_{\GSp_4,1}^+} e_{\kappa_{\calU}}^{\hst}\left( \bfitw^{-1}\bfitw_3 \trans\bfgamma \bfitw_3^{-1}\begin{pmatrix}\one_2 & \\ \bfitz & \one_2\end{pmatrix}\bfitw \bfalpha \right) d\mu\right)
\end{equation} for any section $g$ of $\widehat{\scrO}_{\adicFL_{\bfitw, (r,r)}, \proet}$ and any $\mu\in D_{\kappa_{\calU}}^r$. 

Now we pullback $\mathrm{PES}_{\kappa_{\calU}}^{\bfitw, r}$ via the Hodge--Tate period map
\[
        \pi_{\HT}: \calX^{\tor}_{\Gamma(p^{\infty}), \bfitw, (r,r), \proket} \rightarrow \adicFL_{\bfitw, (r,r), \proet}.
\]  
It is evident from the construction that $\pi_{\HT}^*\widehat{\scrA}_{\kappa_{\calU}, \adicFL_{\bfitw}}^r$ is precisely the restriction of the completed pro-Kummer \'etale automorphic sheaf $\widehat{\underline{\omega}}_{n, r}^{\bfitw_3^{-1}\bfitw\kappa_{\calU}}$ (defined in Remark \ref{Remark: pro-Kummer \'etale automorphic sheaf}) on $\calX_{\Gamma(p^{\infty}), \bfitw, (r,r)}^{\tor}$. Keeping track of the Galois action, the pullback of $\mathrm{PES}_{\kappa_{\calU}}^{\bfitw, r}$ via $\pi_{\HT}$ yields a morphism
\begin{equation}\label{eq: sheaf version OES infinite level}
     \ES_{\kappa_{\calU}}^{\bfitw, r} : \scrO\!\!\scrD_{\kappa_{\calU}}^r|_{\calX_{\Gamma(p^{\infty}), \bfitw, (r,r)}^{\tor}} \rightarrow \widehat{\underline{\omega}}_{n, r}^{\bfitw_3^{-1}\bfitw\kappa_{\calU}}|_{\calX_{\Gamma(p^{\infty}), \bfitw, (r,r)}^{\tor}}(\bfitw\kappa_{\calU}^{\cyc}),
\end{equation}
where \[
    \bfitw\kappa_{\calU}^{\cyc} = \kappa_{\calU}\left( \bfitw^{-1}\mu_{\Si}(\chi_{\cyc})\bfitw \right)
\] and $\chi_{\cyc}: \Gal_{\Q_p} \rightarrow \Z_p^\times$ is the $p$-adic cyclotomic character. 

\begin{Remark}\label{Remark: explicit formulae for the twists}
    The Tate twist $(\bfitw\kappa_{\calU}^{\cyc})$ in (\ref{eq: sheaf version OES infinite level}) can be computed explicitly.
\begin{itemize}
        \item When $\bfitw = \one_4$, \[
            \bfitw \kappa_{\calU}^{\cyc} = \kappa_{\calU}(\mu_{\Si}(\chi_{\cyc})) =\kappa_{\calU}(\diag(\chi_{\cyc}, \chi_{\cyc}, 1, 1)) =  \kappa_{\calU, 1}(\chi_{\cyc})\kappa_{\calU, 2}(\chi_{\cyc}).
        \] 
        \item When $\bfitw = \bfitw_1$, \[
            \bfitw \kappa_{\calU}^{\cyc} = \kappa_{\calU}(\bfitw_1^{-1}\mu_{\Si}(\chi_{\cyc}) \bfitw_1) = \kappa_{\calU}(\diag(\chi_{\cyc}, 1, \chi_{\cyc}, 1)) = \kappa_{\calU, 1}(\chi_{\cyc}).
        \] 
        \item When $\bfitw = \bfitw_2$, \[
            \bfitw \kappa_{\calU}^{\cyc} = \kappa_{\calU}(\bfitw_2^{-1}\mu_{\Si}(\chi_{\cyc}) \bfitw_2) = \kappa_{\calU}(\diag(1, \chi_{\cyc}, \chi_{\cyc}, 1)) = \kappa_{\calU, 2}(\chi_{\cyc}).
        \] 
        \item When $\bfitw = \bfitw_3$, \[
                \bfitw \kappa_{\calU}^{\cyc} = \kappa_{\calU}(\bfitw_3^{-1}\mu_{\Si}(\chi_{\cyc}) \bfitw_3) = \kappa_{\calU}(\diag(1, 1, \chi_{\cyc}, \chi_{\cyc})) = 1.
        \]
    \end{itemize}
\end{Remark}

\begin{Proposition}\label{Proposition: the ES map at the infinity descends}
    Let $\bfitw\in W^H$. Let $(R_{\calU}, \kappa_{\calU})$ be a weight and let $r\in \Q_{\geq 0}$, $n\in \Z_{>0}$ such that $n\geq r > 1+r_{\calU}$. The map $\mathrm{ES}_{\kappa_{\calU}}^{\bfitw, r}$ defined in \eqref{eq: sheaf version OES infinite level} is $\Iw_{\GSp_4, n}^+$-equivariant. Therefore, it descends to a morphism
\begin{equation}\label{eq: sheaf version OES finite level}
        \mathrm{ES}_{\kappa_{\calU}}^{\bfitw, r}: \scrO\!\!\scrD_{\kappa_{\calU}}^r \rightarrow \widehat{\underline{\omega}}_{n, r}^{\bfitw_3^{-1}\bfitw \kappa_{\calU}}(\bfitw\kappa_{\calU}^{\cyc}).
\end{equation}
on $\calX_{n, \bfitw, (r,r), \proket}^{\tor}$. 
\end{Proposition}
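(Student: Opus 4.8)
The plan is to unwind the explicit formula \eqref{eq: explicit formula for pseudo-ES} for $\mathrm{PES}_{\kappa_{\calU}}^{\bfitw, r}$ and its pullback $\mathrm{ES}_{\kappa_{\calU}}^{\bfitw, r}$ via $\pi_{\HT}$, and then check directly that the resulting map on sections intertwines the left $\Iw_{\GSp_4, n}^+$-action on $\scrO\!\!\scrD_{\kappa_{\calU}}^r$ coming from \S\ref{subsection: Betti overconvergent cohomology} (via the transpose translation on $A_{\kappa_{\calU}}^r$ and duality) with the automorphy action $*_{\bfitw, \kappa_{\calU}}$ of \eqref{eq: defining automorphy action} on $\widehat{\underline{\omega}}_{n, r}^{\bfitw_3^{-1}\bfitw\kappa_{\calU}}$, up to the cyclotomic twist $\bfitw\kappa_{\calU}^{\cyc}$. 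Concretely, I would fix an affinoid perfectoid $\calV_\infty$ in $\calX^{\tor}_{\Gamma(p^\infty), \bfitw, (r,r), \proket}$, identify sections of the source and target as in the discussion preceding \eqref{eq: sheaf version OES infinite level}, and for $\bfalpha \in \Iw_{\GSp_4, n}^+$ compare $\bfalpha *_{\bfitw, \kappa_{\calU}}\big(\mathrm{ES}_{\kappa_{\calU}}^{\bfitw, r}(\mu\otimes g)\big)$ with $\mathrm{ES}_{\kappa_{\calU}}^{\bfitw, r}\big(\bfalpha\cdot(\mu\otimes g)\big)$ on a test argument $\bfgamma \in \Iw_{H, 1}^+$.

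The key computation is the behaviour of the inner integrand $e_{\kappa_{\calU}}^{\hst}\big( \bfitw^{-1}\bfitw_3 \trans\bfgamma \bfitw_3^{-1}\begin{pmatrix}\one_2 & \\ \frakz & \one_2\end{pmatrix}\bfitw \bfalpha\big)$ under the substitution $\frakz \mapsto \bfalpha^{\bfitw,*}\frakz$ dictated by Lemma \ref{Lemma: global sections and automorphy factors}(ii): using the decomposition of $\begin{pmatrix}\one_2 & \\ \frakz & \one_2\end{pmatrix}\bfitw\bfalpha\bfitw^{-1}$ into its $P_{\Si}$-part and $N_{\Si}^{\opp}$-part displayed in the proof of that lemma (and the identity $\bfitj_{\bfitw}(\bfalpha, \frakz)$ of \eqref{eq: automorphy factor}), the contribution of $\bfalpha$ splits off as a factor $\rho^r_{\bfitw_3^{-1}\bfitw\kappa_{\calU}}(\bfitj_{\bfitw}(\bfalpha, \frakz))$ acting on $A_{\bfitw_3^{-1}\bfitw\kappa_{\calU}}^r$, which is exactly the automorphy factor in \eqref{eq: defining automorphy action}; the remaining piece is absorbed by the change of variable $\bfalpha \leftrightarrow \trans\bfalpha$ in $\mu$ (the distribution carries the transpose translation, while the highest-weight vector is evaluated on honest translates). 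This is essentially the same bookkeeping already carried out for $\bfitw = \bfitw_3$ in \cite[\S 5]{DRW}, adapted by conjugating by $\bfitw_3^{-1}\bfitw$ as in Construction 4 of \S\ref{subsection: classical ES}; one should also invoke Lemma \ref{Lemma: analytic representation for H is the same as analytic representation for P_Si} to pass freely between $\Iw_{H,1}^+$- and $\Iw_{P_{\Si}, 1}^+$-representations, and Remark \ref{Remark: s_i^w are pullbacks from the w_3-locus} together with Theorem \ref{Theorem: comparison theorem for automorphic sheaves} to identify $\pi_{\HT}^*\widehat{\scrA}^r_{\kappa_{\calU}, \adicFL_{\bfitw}}$ with $\widehat{\underline{\omega}}_{n,r}^{\bfitw_3^{-1}\bfitw\kappa_{\calU}}|_{\calX^{\tor}_{\Gamma(p^\infty),\bfitw,(r,r)}}$.

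The one genuinely new point over \cite{DRW} is tracking the Galois/cyclotomic twist: the Hodge--Tate period map is $\GSp_4(\Q_p)$-equivariant but not Galois-equivariant in the naive sense, and the discrepancy on the flag variety is recorded by the Hodge--Tate twist of $\bbG_m$ appearing in Remark \ref{Remark: normalisation of the flag variety} and Remark \ref{Remark: classical forms in the rigid analytic setting}. Since $e_{\kappa_{\calU}}^{\hst}$ is built from the entries of the block of $\bfgamma$ governed by the cocharacter $\mu_{\Si}$, pulling back along $\pi_{\HT}$ the Galois action contributes precisely $\kappa_{\calU}(\bfitw^{-1}\mu_{\Si}(\chi_{\cyc})\bfitw) = \bfitw\kappa_{\calU}^{\cyc}$, as recorded in Remark \ref{Remark: explicit formulae for the twists}; I would verify this by the same argument used for the classical sheaves in Remark \ref{Remark: classical forms in the rigid analytic setting}, noting that the Galois action commutes with the $\Iw_{\GSp_4, n}^+$-action so it does not interfere with descent. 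Once $\Iw_{\GSp_4, n}^+$-equivariance and Galois-equivariance are established, descent of $\mathrm{ES}_{\kappa_{\calU}}^{\bfitw, r}$ from $\calX^{\tor}_{\Gamma(p^\infty), \bfitw, (r,r)}$ to $\calX^{\tor}_{n, \bfitw, (r,r)}$ is automatic, since $h_n$ is a Galois pro-Kummer \'etale cover with group $\Iw_{\GSp_4, n}^+$ and both source and target sheaves on $\calX^{\tor}_{n,\bfitw,(r,r),\proket}$ are recovered as the $\Iw_{\GSp_4,n}^+$-invariants of their pullbacks.

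I expect the main obstacle to be purely notational: carefully matching the transpose conventions built into the three different actions in play — the left translation-by-transpose action defining $\scrO\!\!\scrD_{\kappa_{\calU}}^r$ (\S\ref{subsection: Betti overconvergent cohomology}), the action $\rho^r_{\kappa_{\calU}}$ defined via $\bfgamma \mapsto f(\bfitw_3^{-1}\trans\bfgamma\bfitw_3\,\cdot)$ in \S\ref{subsection: weight space and analytic representations}, and the automorphy action $*_{\bfitw,\kappa_{\calU}}$ — so that the transposes and the conjugations by $\bfitw_3^{-1}\bfitw$ cancel correctly. No deep input is needed beyond Lemma \ref{Lemma: global sections and automorphy factors} and the already-cited machinery; the work is to see that all the bookkeeping closes up.
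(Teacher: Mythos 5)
Your proposal is correct and follows essentially the same route as the paper's proof: a direct computation on a test element $\bfgamma\in\Iw_{H,1}^+$, using the factorisation of $\begin{pmatrix}\one_2 & \\ \frakz & \one_2\end{pmatrix}\bfitw\bfalpha\bfitw^{-1}$ into $\bfitj_{\bfitw}(\bfalpha,\frakz)$ times a lower-unipotent piece (Lemma \ref{Lemma: global sections and automorphy factors}) so that $\rho^r_{\bfitw_3^{-1}\bfitw\kappa_{\calU}}(\bfitj_{\bfitw}(\bfalpha,\frakz))$ splits off as exactly the automorphy factor of \eqref{eq: defining automorphy action}, with descent then being formal. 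The only caveat is your aside about the ``transpose translation'' carried by the distribution: the left action on $D^r_{\kappa_{\calU}}$ is dual to the right action by honest left translation on $A^r_{\kappa_{\calU}}$, so the shift $\bfalpha\mapsto\bfdelta\bfalpha$ under the integral is what absorbs the remaining contribution — but you correctly flag this as bookkeeping to be checked, and it does close up.
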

\begin{proof}
    For any section $\mu\otimes g$ of $\scrO\!\!\scrD_{\kappa_{\calU}}^r$, $\bfdelta = \begin{pmatrix}\bfdelta_a & \bfdelta_b\\ \bfdelta_c & \bfdelta_d\end{pmatrix}\in \Iw_{\GSp_4, n}^+$, and $\bfgamma\in \Iw_{H, 1}^+$, we have \begin{align*}
       &\scalemath{0.9}{ \mathrm{ES}_{\kappa_{\calU}}^{\bfitw, r} (\bfdelta^*(\mu \otimes g))(\bfgamma)}\\
        & \scalemath{0.9}{= (\bfdelta^* g) \left( \int_{\bfalpha\in \Iw_{\GSp_4, 1}^+} e_{\kappa_{\calU}}^{\hst}\left( \bfitw^{-1}\bfitw_3 \trans \bfgamma \bfitw_3^{-1} \begin{pmatrix}\one_2 & \\ \frakz & \one_2\end{pmatrix} \bfitw  \bfalpha \right) d \bfdelta\mu\right) }\\
        & \scalemath{0.9}{= (\bfdelta^* g) \left( \int_{\bfalpha\in \Iw_{\GSp_4, 1}^+} e_{\kappa_{\calU}}^{\hst}\left( \bfitw^{-1}\bfitw_3 \trans \bfgamma \bfitw_3^{-1} \begin{pmatrix}\one_2 & \\ \frakz & \one_2\end{pmatrix} \bfitw \bfdelta \bfalpha \right) d \mu\right) }\\
        & \scalemath{0.9}{= (\bfdelta^* g) \left( \int_{\bfalpha\in \Iw_{\GSp_4, 1}^+} e_{\kappa_{\calU}}^{\hst} \left( \bfitw^{-1}\bfitw_3 \trans\bfgamma \bfitw_3^{-1} \bfitj_{\bfitw}(\bfdelta, \frakz)\bfitw_3\bfitw_3^{-1}\begin{pmatrix}\one_2 & \\ (\bfdelta_d^{\bfitw} + \frakz\bfdelta_b^{\bfitw})^{-1}(\bfdelta_c^{\bfitw}+\frakz\bfdelta_a^{\bfitw}) & \one_2\end{pmatrix} \bfitw \bfalpha \right) d\mu \right)}\\
        & \scalemath{0.9}{= (\bfdelta^* g)\rho_{\bfitw_3^{-1}\bfitw\kappa_{\calU}}^r(\bfitj_{\bfitw}(\bfdelta, \frakz)) \left(\int_{\bfalpha\in \Iw_{\GSp_4, 1}^+} e_{\kappa_{\calU}}^{\hst}\left(\bfitw^{-1}\bfitw_3 \trans\bfgamma \bfitw_3^{-1}\begin{pmatrix}\one_2 & \\ (\bfdelta_d^{\bfitw} + \frakz\bfdelta_b^{\bfitw})^{-1}(\bfdelta_c^{\bfitw}+\frakz\bfdelta_a^{\bfitw}) & \one_2\end{pmatrix} \bfitw \bfalpha \right) d\mu\right)}\\
        &\scalemath{0.9}{ = \left(\bfdelta *_{\bfitw, \kappa_{\calU}} \mathrm{ES}_{\kappa_{\calU}}^{\bfitw, r}(\mu \otimes g)\right)}(\bfgamma).
    \end{align*}
\end{proof}

\begin{Proposition}\label{Proposition: OES is u_p-equivariant}
   Let $\bfitw\in W^H$. Let $(R_{\calU}, \kappa_{\calU})$ be a weight and let $r\in \Q_{\geq 0}$, $n\in \Z_{>0}$ such that $n\geq r > 1+r_{\calU}$. The map (\ref{eq: sheaf version OES finite level}) is compatible with the actions of $\bfitu_{p, 0}$, $\bfitu_{p, 1}$, and $\bfitu_{p}$. 
\end{Proposition}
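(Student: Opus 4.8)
The plan is to check the claimed compatibility at the level of the pro-Kummer \'etale sheaf morphism $\mathrm{ES}_{\kappa_{\calU}}^{\bfitw, r}$ on $\calX_{n, \bfitw, (r,r), \proket}^{\tor}$, using the explicit formula \eqref{eq: explicit formula for pseudo-ES} for $\mathrm{PES}_{\kappa_{\calU}}^{\bfitw, r}$ together with the explicit description of the $\bfitu$-action on the source $\scrO\!\!\scrD_{\kappa_{\calU}}^r$ (via $\bfnu\bftau\bfepsilon \mapsto \bfitu\bfnu\bfitu^{-1}\bftau\bfepsilon$, as in \S \ref{subsection: Betti overconvergent cohomology}) and on the target sheaf $\scrA_{\bfitw, \kappa_{\calU}}^r$, i.e.\ the operator $\bfitu *_{\bfitw} -$ defined in \S \ref{subsection: Hecke operators} via $f(\bfgamma) \mapsto f(\bfitw_3^{-1}\bfitw\bfitu\bfitw^{-1}\bfitw_3\,\bfepsilon\,(\bfitw_3^{-1}\bfitw\bfitu\bfitw^{-1}\bfitw_3)^{-1}\bfbeta)$ for $\bfgamma = \bfepsilon\bfbeta$, together with the $\bfitu$-action on the loci $\calX_{\Gamma(p^{\infty}), \bfitw, (m,n)}^{\tor}$ from Lemma \ref{Lemma: dynamics of U_p-operators}. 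Since $U_{p} = U_{p,0}U_{p,1}$ on both sides by construction, it suffices to treat $\bfitu \in \{\bfitu_{p,0}, \bfitu_{p,1}\}$; and since everything is built by pullback along $\pi_{\HT}$ from the flag variety, one really only needs to verify the identity for $\mathrm{PES}_{\kappa_{\calU}}^{\bfitw, r}$ before descending (the descent along $\pi_{\HT}$ being $\GSp_4(\Q_p)$-equivariant).

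First I would fix $\bfitw = \bfitw_i$ and record the precise matrix identity relating $\bfitw^{-1}\bfitw_3\,{}^{\mathtt{t}}\bfgamma\,\bfitw_3^{-1}\begin{pmatrix}\one_2 & \\ \bfitz & \one_2\end{pmatrix}\bfitw$ before and after the $\bfitu$-action on $\bfitz$ given in Lemma \ref{Lemma: dynamics of U_p-operators}: the key point is that the substitution $\bfitz \mapsto \bfitu^{\bfitw, *}\bfitz$ on the flag-variety coordinate is intertwined, inside the argument of $e_{\kappa_{\calU}}^{\hst}$, with left-multiplication by the matrix $\bfitw_3^{-1}\bfitw\bfitu\bfitw^{-1}\bfitw_3$ acting on the lower-unipotent part $\bfepsilon$ of the integration variable $\bfalpha = \bfepsilon\bfbeta$, up to a diagonal (torus) factor which is absorbed either by the automorphy property of $e_{\kappa_{\calU}}^{\hst}$ (producing a scalar $\bfitw_3^{-1}\bfitw k_{\bfitw}(\bfitu)$-type contribution when one later renormalises) or by the change-of-measure in the integral $d\mu \mapsto d(\bfitu\mu)$. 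Concretely, one substitutes $\bfalpha \mapsto \bfitu\bfalpha\bfitu^{-1}$ (legitimate since $\bfitu N_{\GSp_4,n}^{\opp}\bfitu^{-1} \subset N_{\GSp_4,n}^{\opp}$ and the remaining factor lies in the parabolic on which $e_{\kappa_{\calU}}^{\hst}$ transforms by a character), compares the resulting argument of $e_{\kappa_{\calU}}^{\hst}$ with the one obtained by applying $\bfitu *_{\bfitw}-$ to the target, and reads off that the two agree after unwinding the definitions. The computation is parallel to, but slightly more involved than, the $\bfitw = \bfitw_3$ case treated in \cite{DRW}, and to the $\Iw_{\GSp_4,n}^+$-equivariance already verified in Proposition \ref{Proposition: the ES map at the infinity descends}; in fact the proof is structurally the same as that of Proposition \ref{Proposition: the ES map at the infinity descends}, with the matrix $\bfdelta$ replaced by $\bfitu$ and the automorphy factor $\bfitj_{\bfitw}(\bfdelta, \frakz)$ replaced by the analogous factor for $\bfitu$ coming from the Iwahori-type decomposition $\begin{pmatrix}\one_2 & \\ \frakz & \one_2\end{pmatrix}\bfitw\bfitu\bfitw^{-1} = (\text{Levi part})\cdot(\text{lower unipotent})$ on the relevant loci.

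Finally, once the sheaf-level identity $\mathrm{ES}_{\kappa_{\calU}}^{\bfitw, r}\circ \bfitu = \bfitu \circ \mathrm{ES}_{\kappa_{\calU}}^{\bfitw, r}$ is established (as a map between the two sheaves, respecting the $\bfitu$-actions on the loci as in Lemma \ref{Lemma: dynamics of U_p-operators}), the statement follows by functoriality of pro-Kummer \'etale cohomology (with supports): applying $R\Gamma$ to both sides over the appropriate nested loci and composing with the corestriction/restriction maps that define the $U_p$-operators on cohomology (as in diagrams \eqref{eq: diagram for defining finite-slope part on cohomology with support} and the corresponding one for the automorphic side). I expect the main obstacle to be purely bookkeeping: keeping track of which of the four loci $\calX_{n,\bfitw_i,(m,n)}^{\tor}$ the $\bfitu$-operator maps into (the directions differ according to $i$, cf.\ Figure \ref{Fig: cartoon} and Lemma \ref{Lemma: dynamics of U_p-operators}), so that the domains and codomains of the sheaf maps and of the cohomological corestriction maps line up correctly before one can even state the commuting square; the algebraic identity of arguments of $e_{\kappa_{\calU}}^{\hst}$ itself is a routine (if lengthy) $4\times 4$ matrix manipulation. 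I would organise the write-up as a single computation exhibiting the intertwining matrix identity, for each $\bfitw_i$ separately or, better, uniformly in terms of $\bfitw^{-1}\bfitw_3\bfitu\bfitw_3^{-1}\bfitw$, and then a one-line functoriality remark to pass to cohomology.
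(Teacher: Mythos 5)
Your proposal follows essentially the same route as the paper: a direct computation at the sheaf level using the explicit formula \eqref{eq: explicit formula for pseudo-ES}, the substitution $\bfalpha\mapsto\bfitu\bfalpha\bfitu^{-1}$ in the integral, the coordinate change $\frakz\mapsto\bfitu^{\bfitw,*}\frakz$ from Lemma \ref{Lemma: dynamics of U_p-operators}, and the conjugation by the diagonal matrix $\bfitw_3^{-1}\bfitw\bfitu\bfitw^{-1}\bfitw_3$ (whose transposition-invariance is the key point), exactly parallel to the proof of Proposition \ref{Proposition: the ES map at the infinity descends}. The only small inaccuracy is that the conjugation by $\bfitw_3^{-1}\bfitw\bfitu\bfitw^{-1}\bfitw_3$ ends up acting on the unipotent part $\bfepsilon_{\bfgamma}$ of the argument $\bfgamma$ of the target section rather than on the integration variable $\bfalpha$, and the identity holds on the nose with no extra scalar to absorb; otherwise the argument is the paper's.
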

\begin{proof}
    Let $\bfitu \in \{\bfitu_{p, 0}, \bfitu_{p,1}, \bfitu_p\}$ and write $\bfitu = \diag(\bfitu_a, \bfitu_d)$. Given a section $\mu \otimes g$ for $\scrO\!\!\scrD_{\kappa_{\calU}}^r$ and any $\bfgamma \in \Iw_{H, 1}^+$ we have  \begin{align*}
     & \scalemath{0.9}{  \mathrm{ES}_{\kappa_{\calU}}^{\bfitw, r}((\bfitu \cdot \mu) \otimes (\bfitu^* g) )(\bfgamma)} \\
        & \scalemath{0.9}{= (\bfitu^* g) \left( \int_{\bfalpha\in \Iw_{\GSp_4, 1}^+} \kappa_{\calU}(\bfbeta_{\bfgamma})\kappa_{\calU}(\bfbeta_{\bfalpha})e_{\kappa_{\calU}}^{\hst} \left( \bfitw^{-1}\bfitw_3 \trans\bfepsilon_{\bfgamma} \bfitw_3^{-1} \begin{pmatrix}\one_2 & \\ \frakz & \one_2\end{pmatrix} \bfitw \bfitu \bfepsilon_{\bfalpha}\bfitu^{-1}\right) d\mu \right)}\\
        & \scalemath{0.9}{= (\bfitu^* g) \left( \int_{\bfalpha\in \Iw_{\GSp_4, 1}^+} \kappa_{\calU}(\bfbeta_{\bfgamma})\kappa_{\calU}(\bfbeta_{\bfalpha}) e_{\kappa_{\calU}}^{\hst}\left(\bfitu^{-1}\bfitw^{-1}\bfitw_3 \trans\bfepsilon_{\bfgamma}\bfitw_3^{-1} \bfitu^{\bfitw} \begin{pmatrix}\one_2 & \\ \bfitu^{\bfitw, *}\frakz & \one_4\end{pmatrix} \bfitw \bfepsilon_{\bfalpha}\right) \right)}\\
        & \scalemath{0.9}{= (\bfitu^* g) \left( \int_{\bfalpha\in \Iw_{\GSp_4, 1}^+} \kappa_{\calU}(\bfbeta_{\bfgamma})\kappa_{\calU}(\bfbeta_{\bfalpha}) e_{\kappa_{\calU}}^{\hst}\left(\bfitw^{-1}\bfitw_3(\bfitu^{\bfitw_3^{-1}\bfitw})^{-1} \trans\bfepsilon_{\bfgamma}\bfitu^{\bfitw_3^{-1}\bfitw} \bfitw_3^{-1}\begin{pmatrix}\one_2 & \\ \bfitu^{\bfitw, *}\frakz & \one_4\end{pmatrix} \bfitw \bfepsilon_{\bfalpha}\right) \right)}\\
        & \scalemath{0.9}{= (\bfitu^* g) \left( \int_{\bfalpha\in \Iw_{\GSp_4, 1}^+} \kappa_{\calU}(\bfbeta_{\bfgamma})\kappa_{\calU}(\bfbeta_{\bfalpha}) e_{\kappa_{\calU}}^{\hst}\left(\bfitw^{-1}\bfitw_3\trans \left(\bfitu^{\bfitw_3^{-1}\bfitw}\bfepsilon_{\bfgamma}(\bfitu^{\bfitw_3^{-1}\bfitw})^{-1} \right) \bfitw_3^{-1}\begin{pmatrix}\one_2 & \\ \bfitu^{\bfitw, *}\frakz & \one_4\end{pmatrix} \bfitw \bfepsilon_{\bfalpha}\right) \right)}\\
        & \scalemath{0.9}{= \bfitu *_{\bfitw} \mathrm{ES}_{\kappa_{\calU}}^{\bfitw, r}(\mu \otimes g)(\bfgamma),}
    \end{align*}
    where \begin{itemize}
        \item in the first equality, we write $\bfalpha = \bfepsilon_{\bfalpha}\bfbeta_{\bfalpha}$ (resp., $\bfgamma = \bfepsilon_{\bfgamma}\bfbeta_{\bfgamma}$) with $\bfepsilon_{\bfalpha}\in N_{\GSp_4, 1}^{\opp}$ and $\bfbeta_{\bfalpha}\in T_{\GSp_4}(\Z_p)N_{\GSp_4, 1}$ (resp., $\bfepsilon_{\bfgamma}\in N_{H, 1}^{\opp}$ and $\bfbeta_{\bfgamma}\in T_H(\Z_p)N_{H, 1}$);
        \item in the second equality, we move the position of $\bfitu^{-1}$ thanks to the property of determinants; 
        \item in the third equality, $\bfitu^{\bfitw_3^{-1}\bfitw}$ stands for the conjugation of $\bfitu$ by $\bfitw_3^{-1}\bfitw$; namely, $\bfitu^{\bfitw_3^{-1}\bfitw}=\bfitw_3^{-1} \bfitw \bfitu \bfitw^{-1}\bfitw_3$; 
        \item in the fourth equality, we use the fact that $\bfitu^{\bfitw_3^{-1}\bfitw}$ is invariant under transposition.
    \end{itemize}
\end{proof}

Finally, we explain how to construct the desired overconvergent Eichler--Shimura morphisms by taking cohomology groups on the map of sheaves (\ref{eq: sheaf version OES finite level}). The readers are referred to \S \ref{section: cohomology with supports} for a theory of pro-Kummer \'etale cohomology with supports.

Given $\bfitw$, $(R_{\calU}, \kappa_{\calU})$, $r$, and $n$ as above, recall the loci \[
    \calZ_{n, \bfitw} = (\overline{\calX_{n, \leq \bfitw}^{\tor}})\bfitu_p^{-n-1} \cap (\calX_{n, \geq \bfitw}^{\tor})\bfitu_p^{n+1}\quad \text{ and }\quad \calX^{\tor, \bfitu_p}_{n,\bfitw} = (\calX_{n, \geq \bfitw}^{\tor})\bfitu_p^{n+1}
\]
defined in (\ref{eq: some technical loci}). The morphism $\mathrm{ES}_{\kappa_{\calU}}^{\bfitw, r}$ gives rise to a morphism in cohomology 
\[
       \mathrm{ES}_{\kappa_{\calU}}^{\bfitw, r}: R\Gamma_{\calZ_{n, \bfitw}, \proket}(\calX^{\tor, \bfitu_p}_{n, \bfitw}, \scrO\!\!\scrD_{\kappa_{\calU}}^r) \rightarrow R\Gamma_{\calZ_{n, \bfitw}, \proket}(\calX^{\tor, \bfitu_p}_{n, \bfitw}, \widehat{\underline{\omega}}_{n, r}^{\bfitw_3^{-1}\bfitw \kappa_{\calU}})(\bfitw\kappa_{\calU}^{\cyc}).
\] 

Thanks to Proposition \ref{Proposition: the ES map at the infinity descends} and Proposition \ref{Proposition: OES is u_p-equivariant}, we know that $\mathrm{ES}_{\kappa_{\calU}}^{\bfitw, r}$ is $U$-equivariant (for $U\in \{U_{p,0}, U_{p,1}, U_p\}$). Moreover, we have seen that the $U_p$-operator acts compactly on both cohomology groups. Therefore, when $(R_{\calU}, \kappa_{\calU})$ is an affinoid weight, we can take the finite-slope part on both sides and arrive at 
\begin{equation}\label{eq: OES for finite-slope complexes}
\mathrm{ES}_{\kappa_{\calU}}^{\bfitw, r}: R\Gamma_{\calZ_{n, \bfitw}, \proket}(\calX^{\tor, \bfitu_p}_{n, \bfitw}, \scrO\!\!\scrD_{\kappa_{\calU}}^r)^{\fs} \rightarrow R\Gamma_{\calZ_{n, \bfitw}, \proket}(\calX^{\tor, \bfitu_p}_{n, \bfitw}, \widehat{\underline{\omega}}_{n, r}^{\bfitw_3^{-1}\bfitw \kappa_{\calU}})^{\fs}(\bfitw\kappa_{\calU}^{\cyc}).
\end{equation}

\begin{Proposition}\label{Proposition: OES at w}
    Let $\bfitw\in W^H$. Let $(R_{\calU}, \kappa_{\calU})$ be an affinoid weight and let $r\in \Q_{\geq 0}$, $n\in \Z_{>0}$ such that $n\geq r > 1+r_{\calU}$. Then $\mathrm{ES}_{\kappa_{\calU}}^{\bfitw, r}$ induces a Hecke- and Galois-equivariant morphism \[
        \scalemath{1}{\mathrm{ES}_{\kappa_{\calU}}^{\bfitw, r}: H^3_{\calZ_{n, \bfitw}, \proket}(\calX^{\tor, \bfitu_p}_{n, \bfitw}, \scrO\!\!\scrD_{\kappa_{\calU}}^r)^{\fs} \rightarrow H^{3-l(\bfitw)}_{\calZ_{n, \bfitw}, \ket}(\calX^{\tor, \bfitu_p}_{n, \bfitw}, \underline{\omega}_{n, r}^{\bfitw_3^{-1}\bfitw \kappa_{\calU}+k_{\bfitw}})^{\fs}(\bfitw\kappa_{\calU}^{\cyc})}.
    \]
\end{Proposition}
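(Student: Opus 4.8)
The plan is to deduce the statement from the sheaf-level morphism $\mathrm{ES}_{\kappa_{\calU}}^{\bfitw, r}$ of \eqref{eq: sheaf version OES finite level} by passing to pro-Kummer \'etale cohomology with supports, applying a Leray spectral sequence to the coherent target, and then using the machinery of finite-slope parts developed in \S \ref{subsection: Hecke operators} and \S \ref{subsection: pro-Kummer \'etale overconvergent cohomology with supports}. First I would apply $R\Gamma_{\calZ_{n, \bfitw}, \proket}(\calX^{\tor, \bfitu_p}_{n, \bfitw}, -)$ to the map \eqref{eq: sheaf version OES finite level}, obtaining (via the change-of-ambient-space identification \eqref{eq: aut; change of ambient space} and its analogue for $\scrO\!\!\scrD_{\kappa_{\calU}}^r$) a morphism of complexes, which by Proposition \ref{Proposition: the ES map at the infinity descends} and Proposition \ref{Proposition: OES is u_p-equivariant} is compatible with $U_{p,0}$, $U_{p,1}$, $U_p$ and (by the Hecke correspondences away from $pN$, constructed as in \S \ref{subsection: Hecke operators}) with the prime-to-$pN$ Hecke operators, as well as Galois-equivariant by the bookkeeping of the Tate twist in Remark \ref{Remark: explicit formulae for the twists}. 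Since $U_p$ is potent compact on both the source (Proposition \ref{Proposition: Up is nice on proket cohomology with support with big coefficients} together with the definition of the finite-slope part for $\scrO\!\!\scrD_{\kappa_{\calU}}^r$) and the target (Proposition \ref{Proposition: U_p potent compact} for the coherent side and its pro-Kummer \'etale incarnation), I can take finite-slope parts to get \eqref{eq: OES for finite-slope complexes}.

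Next I would analyse the target complex. The key point is that $\widehat{\underline{\omega}}_{n, r}^{\bfitw_3^{-1}\bfitw\kappa_{\calU}}$ is the $p$-adically completed pullback of $\underline{\omega}_{n, r}^{\bfitw_3^{-1}\bfitw\kappa_{\calU}}$ to the pro-Kummer \'etale site (Remark \ref{Remark: pro-Kummer \'etale automorphic sheaf}), so by the projection formula and \cite[Proposition A.2.3]{DRW} one has $R^j\upsilon_* \widehat{\underline{\omega}}_{n, r}^{\bfitw_3^{-1}\bfitw\kappa_{\calU}} \cong \underline{\omega}_{n, r}^{\bfitw_3^{-1}\bfitw\kappa_{\calU}}\otimes \Omega^{\log, j}_{\calX_n^{\tor}}(-j)$, exactly as in Construction 2 of \S \ref{subsection: classical ES} but now with supports on $\calZ_{n, \bfitw}$ and over $\calX^{\tor, \bfitu_p}_{n, \bfitw}$. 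This produces a Leray spectral sequence $E_2^{s,t} = H^s_{\calZ_{n, \bfitw}}(\calX^{\tor, \bfitu_p}_{n, \bfitw}, \underline{\omega}_{n, r}^{\bfitw_3^{-1}\bfitw\kappa_{\calU}}\otimes \Omega^{\log, t}_{\calX_n^{\tor}})(-t) \Rightarrow H^{s+t}_{\calZ_{n, \bfitw}, \proket}(\calX^{\tor, \bfitu_p}_{n, \bfitw}, \widehat{\underline{\omega}}_{n, r}^{\bfitw_3^{-1}\bfitw\kappa_{\calU}})$. After taking finite-slope parts, the vanishing result \cite[Theorem 5.7.3]{BP-HigherColeman} (the analogue, with supports, of the concentration used in Construction 3) forces the spectral sequence to degenerate enough to yield an edge map from $H^3_{\calZ_{n, \bfitw}, \proket}(\calX^{\tor, \bfitu_p}_{n, \bfitw}, \widehat{\underline{\omega}}_{n, r}^{\bfitw_3^{-1}\bfitw\kappa_{\calU}})^{\fs}$ to $H^{3-l(\bfitw)}_{\calZ_{n, \bfitw}}(\calX^{\tor, \bfitu_p}_{n, \bfitw}, \underline{\omega}_{n, r}^{\bfitw_3^{-1}\bfitw\kappa_{\calU}}\otimes\Omega^{\log, l(\bfitw)}_{\calX_n^{\tor}})^{\fs}(-l(\bfitw))$; composing with the Kodaira--Spencer isomorphism $\Omega^{\log, l(\bfitw)}_{\calX_n^{\tor}}\cong \underline{\omega}^{k_{\bfitw}}$ (\cite[Theorem 1.41 (4)]{LanKS}) and using that the renormalised Hecke operators (cf. Remark \ref{Remark: purpose of renormalisation}) make this composite Hecke-equivariant, I land in $H^{3-l(\bfitw)}_{\calZ_{n, \bfitw}, \ket}(\calX^{\tor, \bfitu_p}_{n, \bfitw}, \underline{\omega}_{n, r}^{\bfitw_3^{-1}\bfitw\kappa_{\calU}+k_{\bfitw}})^{\fs}$, with the twist $(\bfitw\kappa_{\calU}^{\cyc})$ absorbing the $(-l(\bfitw))$ shift (consistent with the $\cyc$ conventions of \S \ref{subsection: classical ES}). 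Taking $H^3$ of \eqref{eq: OES for finite-slope complexes} and composing with this edge map gives the desired morphism, and its Hecke- and Galois-equivariance is inherited from the sheaf-level equivariance together with the renormalisation.

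The main obstacle I expect is verifying the degeneration of the Leray spectral sequence with supports at the level of finite-slope parts, i.e.\ that $E_2^{s,t,\fs}=0$ unless $t=l(\bfitw)$ (and $s=3-l(\bfitw)$) so that the edge map in degree $3$ really has the claimed target in degree $3-l(\bfitw)$. This is precisely where \cite[Theorem 5.7.3]{BP-HigherColeman} is used in \emph{loc.\ cit.}, but one must check that the hypotheses of that theorem apply to the support condition $\calZ_{n,\bfitw}$ and the ambient space $\calX^{\tor,\bfitu_p}_{n,\bfitw}$ in the overconvergent (non-classical weight) setting; this reduces, via Proposition \ref{Proposition: Up is nice on proket cohomology with support with big coefficients}(i) and the geometry of the loci $\calX^{\tor}_{n,\geq\bfitw_i}$, $\overline{\calX^{\tor}_{n,\leq\bfitw_i}}$ (affineness of $\pi_{\HT}^{\min}$ and finiteness of $\pi^{\tor}_{\min}$, as in the proof of that proposition), to the same input as on the classical side. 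The remaining steps—compatibility with the Hecke operators away from $pN$, the Galois twist computation, and the Kodaira--Spencer comparison—are routine given the results already established, so I would state them briefly and refer back to \S \ref{subsection: classical ES} and \S \ref{subsection: Hecke operators}.
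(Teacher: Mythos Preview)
Your approach is essentially the same as the paper's: apply pro-Kummer \'etale cohomology with supports to the sheaf-level morphism, take finite-slope parts, run a Leray spectral sequence for $\upsilon$ (resp.\ $\nu$) on the target, identify the relevant pushforward via a projection formula plus \cite[Proposition A.2.3]{DRW} and Kodaira--Spencer, use vanishing to extract an edge map, and compose. Two small corrections: the vanishing input in the overconvergent setting is \cite[Theorem~6.7.3]{BP-HigherColeman} (not 5.7.3, which is the classical version---this is exactly the obstacle you flagged), and the projection formula needed is the \emph{generalised} one of \cite[Proposition~A.3.11]{DRW} since $\underline{\omega}_{n,r}^{\bfitw_3^{-1}\bfitw\kappa_{\calU}}$ is a Banach sheaf rather than a finite-rank vector bundle; the paper also invokes Lemma~\ref{Lemma: tensor product of analytic representations} to identify $\underline{\omega}_{n,r}^{\bfitw_3^{-1}\bfitw\kappa_{\calU}}\widehat{\otimes}\underline{\omega}^{k_{\bfitw}}$ with $\underline{\omega}_{n,r}^{\bfitw_3^{-1}\bfitw\kappa_{\calU}+k_{\bfitw}}$. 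Your remark that the twist $(\bfitw\kappa_{\calU}^{\cyc})$ ``absorbs'' the $(-l(\bfitw))$ is not quite right---these are distinct twists and both appear (compare the twists in Theorem~\ref{Theorem: big OES diagram}).
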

\begin{proof}
Consider the Leray spectral sequence
\begin{equation}\label{eq: Leray spectral sequence with support}
E_2^{j,i}=H^j_{\calZ_{n, \bfitw}, \ket}(\calX^{\tor, \bfitu_p}_{n, \bfitw}, R^i\nu_* \widehat{\underline{\omega}}_{n, r}^{\bfitw_3^{-1}\bfitw\kappa_{\calU}})^{\fs}\Rightarrow H_{\calZ_{n, \bfitw}, \proket}^{j+i}(\calX^{\tor, \bfitu_p}_{n, \bfitw}, \widehat{\underline{\omega}}_{n, r}^{\bfitw_3^{-1}\bfitw\kappa_{\calU}})^{\fs}.
\end{equation}
By the generalised projection formula in \cite[Proposition A.3.11]{DRW}, we have 
\[R^i\nu_* \widehat{\underline{\omega}}_{n, r}^{\bfitw_3^{-1}\bfitw\kappa_{\calU}}\cong \underline{\omega}_{n, r}^{\bfitw_3^{-1}\bfitw \kappa_{\calU}} \widehat{\otimes} R^{i}\nu_* \widehat{\scrO}_{\calX_{n, \bfitw, (r, r)}^{\tor}, \proket}.\]
By \cite[Proposition A.2.3]{DRW}, we have
 \begin{equation}\label{eq: computation of pushforward}
        R^{l(\bfitw)}\nu_* \widehat{\scrO}_{\calX_{n, \bfitw, (r, r)}^{\tor}, \proket} \cong \Omega_{\calX_{n, \bfitw, (r, r)}^{\tor}}^{\log, l(\bfitw)} (-l(\bfitw)).
\end{equation}
Moreover,  Kodaira--Spencer isomorphism (\cite[Theorem 1.41 (4)]{LanKS}) implies that 
\begin{equation}\label{eq: Kodaira-Spencer}
        \Omega_{\calX_{n, \bfitw, (r, r)}^{\tor}}^{\log, l(\bfitw)} \cong \underline{\omega}^{k_{\bfitw}}|_{\calX_{n, \bfitw, (r, r)}^{\tor}}.
\end{equation}
Now, applying \cite[Theorem 6.7.3]{BP-HigherColeman}, we know that the finite slope part of the cohomology groups vanish in low degrees in the spectral sequence (\ref{eq: Leray spectral sequence with support}). This yields an edge map 
\[H_{\calZ_{n, \bfitw}, \proket}^3(\calX^{\tor, \bfitu_p}_{n, \bfitw}, \widehat{\underline{\omega}}_{n, r}^{\bfitw_3^{-1}\bfitw\kappa_{\calU}})^{\fs}\rightarrow H^{3-l(\bfitw)}_{\calZ_{n, \bfitw}, \ket}(\calX^{\tor, \bfitu_p}_{n, \bfitw}, R^{l(\bfitw)}\nu_* \widehat{\underline{\omega}}_{n, r}^{\bfitw_3^{-1}\bfitw\kappa_{\calU}})^{\fs}\]
while the target is isomorphic to $H^{3-l(\bfitw)}_{\calZ_{n, \bfitw}, \ket}(\calX^{\tor, \bfitu_p}_{n, \bfitw}, \underline{\omega}_{n, r}^{\bfitw_3^{-1}\bfitw \kappa_{\calU}+k_{\bfitw}})^{\fs}$ using (\ref{eq: computation of pushforward}), (\ref{eq: Kodaira-Spencer}), and Lemma \ref{Lemma: tensor product of analytic representations}.

Finally, composing with $H^3$ of \eqref{eq: OES for finite-slope complexes}, we arrive at the desired map
\[
        \scalemath{1}{\mathrm{ES}_{\kappa_{\calU}}^{\bfitw, r}: H^3_{\calZ_{n, \bfitw}, \proket}(\calX^{\tor, \bfitu_p}_{n, \bfitw}, \scrO\!\!\scrD_{\kappa_{\calU}}^r)^{\fs} \rightarrow H^{3-l(\bfitw)}_{\calZ_{n, \bfitw}, \ket}(\calX^{\tor, \bfitu_p}_{n, \bfitw}, \underline{\omega}_{n, r}^{\bfitw_3^{-1}\bfitw \kappa_{\calU}+k_{\bfitw}})^{\fs}(\bfitw\kappa_{\calU}^{\cyc})}.
\]
The Galois-equiariance follows from the functoriality of our construction. Notice that we have kept track of the Galois twist during the process. The Hecke-operators away from $Np$ are defined via correspondences, it is then straightforward to check the Hecke-equivariance. For Hecke operators at $p$, the Hecke-equivariance follows from Proposition \ref{Proposition: OES is u_p-equivariant} (see also \cite[Proposition 5.2.5]{DRW}).
\end{proof}

\begin{Theorem}\label{Theorem: big OES diagram}
There is a natural Hecke- and Galois-equivariant diagram \[
        \begin{tikzcd}[column sep = tiny]
            \scalemath{0.8}{ H^3_{\proket}(\calX_n^{\tor}, \scrO\!\!\scrD_{\kappa_{\calU}}^r)^{\fs} } \arrow[r] & \scalemath{0.8}{ H^3_{\proket}(\calX_{n, \bfitw_3, (r,r)}^{\tor}, \scrO\!\!\scrD_{\kappa_{\calU}}^r)^{\fs} } \arrow[r] & \scalemath{0.8}{ H^0(\calX_{n, \bfitw_3, (r,r)}^{\tor}, \underline{\omega}_{n, r}^{\kappa_{\calU} + (3,3)})^{\fs}(-3) }\\
            \scalemath{0.8}{ H^3_{\overline{\calX_{n, \leq \bfitw_2}^{\tor}}, \proket}(\calX_n^{\tor}, \scrO\!\!\scrD_{\kappa_{\calU}}^r)^{\fs} } \arrow[r] \arrow[u] & \scalemath{0.8}{ H_{\calZ_{n, \bfitw_2}, \proket}^3(\calX^{\tor, \bfitu_p}_{n, \bfitw_2}, \scrO\!\!\scrD_{\kappa_{\calU}}^r)^{\fs} } \arrow[r] & \scalemath{0.8}{ H_{\calZ_{n, \bfitw_2}}^1(\calX^{\tor, \bfitu_p}_{n, \bfitw_2}, \underline{\omega}_{n, r}^{\bfitw_3^{-1}\bfitw_2\kappa_{\calU} + (3,1)})^{\fs}(\bfitw_2\kappa_{\calU}^{\cyc} - 2) }\\
            \scalemath{0.8}{ H^3_{\overline{\calX_{n, \leq \bfitw_1}^{\tor}}, \proket}(\calX_n^{\tor}, \scrO\!\!\scrD_{\kappa_{\calU}}^r)^{\fs} } \arrow[r] \arrow[u] & \scalemath{0.8}{ H_{\calZ_{n, \bfitw_1}, \proket}^3(\calX^{\tor, \bfitu_p}_{n, \bfitw_1}, \scrO\!\!\scrD_{\kappa_{\calU}}^r)^{\fs} } \arrow[r] & \scalemath{0.8}{ H_{\calZ_{n, \bfitw_1}}^2(\calX^{\tor, \bfitu_p}_{n, \bfitw_1}, \underline{\omega}_{n, r}^{\bfitw_3^{-1}\bfitw_1\kappa_{\calU} + (2,0)})^{\fs}(\bfitw_1\kappa_{\calU}^{\cyc} - 1) }\\
            \scalemath{0.8}{ H_{\overline{\calX_{n, \one_4}^{\tor}}, \proket}^3(\calX_n^{\tor}, \scrO\!\!\scrD_{\kappa_{\calU}}^r)^{\fs} } \arrow[r]\arrow[u] & \scalemath{0.8}{ H_{\calZ_{n, \one_4}, \proket}^3(\calX^{\tor, \bfitu_p}_{n, \one_4}, \scrO\!\!\scrD_{\kappa_{\calU}}^r)^{\fs} } \arrow[r] & \scalemath{0.8}{ H_{\calZ_{n, \one_4}}^3(\calX^{\tor, \bfitu_p}_{n, \one_4}, \underline{\omega}_{n, r}^{\bfitw_3^{-1}\kappa_{\calU}})^{\fs}(\kappa_{\calU}^{\cyc}) }
        \end{tikzcd}
    \]
where the second horizontal map of each row is $\mathrm{ES}_{\kappa_{\calU}}^{\bfitw, r}$ as in Proposition \ref{Proposition: OES at w}. 
\end{Theorem}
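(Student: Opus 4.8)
The statement is an existence claim — the diagram has no commuting squares, since the middle and right columns carry no vertical maps — so the work is to construct each arrow and to check that every arrow is Hecke- and Galois-equivariant and compatible with passing to finite-slope parts. The plan is to assemble the rows out of the overconvergent Eichler--Shimura morphisms $\mathrm{ES}_{\kappa_{\calU}}^{\bfitw, r}$ of Proposition \ref{Proposition: OES at w} together with the restriction maps attached to the support-stratification, and to read off the left column from the tower of connecting maps in the corresponding distinguished triangles. Throughout, $(R_{\calU}, \kappa_{\calU})$ is the affinoid weight in the hypothesis and $r, n$ are as usual.

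First I would produce the left column. Applying the support stratification
\[
    \calX_n^{\tor} = \calX_{n, \leq \bfitw_3}^{\tor} \supset \overline{\calX_{n, \leq \bfitw_2}^{\tor}} \supset \overline{\calX_{n, \leq \bfitw_1}^{\tor}} \supset \overline{\calX_{n, \one_4}^{\tor}} \supset \emptyset
\]
and specialising diagram \eqref{eq: diagram for coh. with supports (1)} to $\scrO\!\!\scrM = \scrO\!\!\scrD_{\kappa_{\calU}}^r$ gives the distinguished triangles relating $R\Gamma_{\overline{\calX_{n, \leq \bfitw_i}^{\tor}}, \proket}(\calX_n^{\tor}, \scrO\!\!\scrD_{\kappa_{\calU}}^r)$, the analogous complex for $\bfitw_{i-1}$, and the complex supported on the locally closed stratum. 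By Proposition \ref{Proposition: Up is nice on proket cohomology with support with big coefficients} the operator $U_p$ is potent compact on all of these, so Proposition-Definition \ref{Prop-Defn: finite slope part pro} lets one apply $(-)^{\fs}$ throughout; taking $H^3$ of the finite-slope parts yields the four entries of the left column together with the connecting maps
\[
    H^3_{\overline{\calX_{n, \leq \bfitw_i}^{\tor}}, \proket}(\calX_n^{\tor}, \scrO\!\!\scrD_{\kappa_{\calU}}^r)^{\fs} \longrightarrow H^3_{\overline{\calX_{n, \leq \bfitw_{i+1}}^{\tor}}, \proket}(\calX_n^{\tor}, \scrO\!\!\scrD_{\kappa_{\calU}}^r)^{\fs}
\]
(with $\bfitw_4 := \bfitw_3$, so that the top entry is $H^3_{\proket}(\calX_n^{\tor}, \scrO\!\!\scrD_{\kappa_{\calU}}^r)^{\fs}$). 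Since connecting homomorphisms of distinguished triangles are functorial, these vertical maps are Hecke- and Galois-equivariant.

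Next I would construct the two families of horizontal maps. For each $i$, the long exact sequence of the triangle above supplies a restriction map onto the stratum cohomology $H^3_{\overline{\calX_{n, \leq \bfitw_i}^{\tor}} \smallsetminus \overline{\calX_{n, \leq \bfitw_{i-1}}^{\tor}}, \proket}(\calX_n^{\tor} \smallsetminus \overline{\calX_{n, \leq \bfitw_{i-1}}^{\tor}}, \scrO\!\!\scrD_{\kappa_{\calU}}^r)^{\fs}$; by \cite[Lemma 3.3.22]{BP-HigherColeman} this stratum equals $\calX_{n, \bfitw_i, (0, \overline 0)}^{\tor}$, and combining \eqref{eq: change of ambient spaces} with Theorem \ref{Theorem: change support condition for overconvergent cohomology} identifies its cohomology canonically with $H^3_{\calZ_{n, \bfitw_i}, \proket}(\calX^{\tor, \bfitu_p}_{n, \bfitw_i}, \scrO\!\!\scrD_{\kappa_{\calU}}^r)^{\fs}$; this composite is the first horizontal map (for $i = 3$ it is simply restriction to the open subspace $\calX_{n, \bfitw_3, (r,r)}^{\tor}$, using $\calZ_{n, \bfitw_3} = \calX^{\tor, \bfitu_p}_{n, \bfitw_3}$). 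The second horizontal map is $\mathrm{ES}_{\kappa_{\calU}}^{\bfitw_i, r}$ of Proposition \ref{Proposition: OES at w}: unwinding its proof, one composes $H^3$ of the sheaf morphism \eqref{eq: sheaf version OES finite level} with the edge map of the Leray spectral sequence \eqref{eq: Leray spectral sequence with support}, where the isomorphism $R^i\nu_*\widehat{\scrO}_{\calX_{n, \bfitw_i, (r,r)}^{\tor}, \proket} \cong \Omega^{\log, i}_{\calX_{n, \bfitw_i, (r,r)}^{\tor}}(-i)$ of \cite[Proposition A.2.3]{DRW} accounts for the degree shift $3 \rightsquigarrow 3-i$ and the Tate twist $(-i)$, and Kodaira--Spencer \cite[Theorem 1.41 (4)]{LanKS} together with Lemma \ref{Lemma: tensor product of analytic representations} accounts for the weight shift by $k_{\bfitw_i}$. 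The target is then $H^{3-i}_{\calZ_{n, \bfitw_i}}(\calX^{\tor, \bfitu_p}_{n, \bfitw_i}, \underline{\omega}_{n, r}^{\bfitw_3^{-1}\bfitw_i \kappa_{\calU} + k_{\bfitw_i}})^{\fs}(\bfitw_i \kappa_{\calU}^{\cyc} - i)$, which for $i = 3$ reduces to $H^0(\calX_{n, \bfitw_3, (r,r)}^{\tor}, \underline{\omega}_{n, r}^{\kappa_{\calU} + (3,3)})^{\fs}(-3)$, matching the displayed diagram.

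Finally I would check equivariance. The first horizontal maps inherit Hecke-equivariance away from $p$ from the Hecke correspondences set up in \S \ref{subsection: pro-Kummer \'etale overconvergent cohomology with supports}, at $p$ from the description of the $\bfitu$-action via Lemma \ref{Lemma: dynamics of U_p-operators} and the corestriction--restriction diagram \eqref{eq: diagram for defining finite-slope part on cohomology with support}, and Galois-equivariance automatically; the second horizontal maps are Hecke- and Galois-equivariant by Proposition \ref{Proposition: OES at w}, the only point to keep in mind being that on the distribution side one uses the un-normalised operators $U_{p,j}^{\mathrm{naive}}$ and on the coherent side the normalised $U_{p,j}$, the discrepancy being precisely what the Kodaira--Spencer twist absorbs (Remark \ref{Remark: purpose of renormalisation}). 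I expect the genuinely delicate part to be not any single arrow but the simultaneous bookkeeping: one must verify that $(-)^{\fs}$ is compatible with every map at once, so that the finite-slope parts really do organise into the asserted comb-shaped diagram, and that the cohomological degrees and Tate twists match up at each transition. The first point rests on the potent-compactness of $U_p$ on every complex in sight (Propositions \ref{Proposition: Up is nice on proket cohomology with support with big coefficients} and \ref{Proposition: U_p potent compact}), on Corollary \ref{Corollary: finite-slope part for cohomology with support is independent to the support condition}, on Theorem \ref{Theorem: change support condition for overconvergent cohomology}, and on the independence of the finite-slope part from $r$; the second is the accounting already carried out above.
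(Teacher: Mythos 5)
Your proposal is correct and matches the paper's argument, which is stated as an immediate consequence of the stratification diagram \eqref{eq: diagram for coh. with supports (1)}, Theorem \ref{Theorem: change support condition for overconvergent cohomology}, and Proposition \ref{Proposition: OES at w} — exactly the three ingredients you assemble (left column from the distinguished triangles, first horizontal maps from restriction to the stratum followed by the change-of-support identification, second horizontal maps from $\mathrm{ES}_{\kappa_{\calU}}^{\bfitw, r}$). Your additional bookkeeping on equivariance, the $i=3$ case, and compatibility of $(-)^{\fs}$ with all the maps is simply an unpacking of what those cited results already guarantee.
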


\begin{proof}
    This follows immediately from \eqref{eq: diagram for coh. with supports (1)}, Theorem \ref{Theorem: change support condition for overconvergent cohomology}, and Proposition \ref{Proposition: OES at w}.
\end{proof}

The (compositions of) the horizontal maps in Theorem \ref{Theorem: big OES diagram} are the desired \emph{overconvergent Eichler--Shimura morphisms}, as indicated in the title of this article. In fact, the top row coincides with the morphism constructed in \cite{DRW}.

There is an analogue for cuspforms. Indeed, tensoring with the boundary divisor, (\ref{eq: sheaf version OES finite level}) induces 
a morphism of pro-Kummer \'etale sheaves \[
    \mathrm{ES}_{\kappa_{\calU}, \cusp}^{\bfitw, r}: \scrO\!\!\scrD_{\kappa_{\calU}}^r(-\calD_n) \rightarrow \underline{\widehat{\omega}}_{\cusp,n, r}^{\bfitw_3^{-1}\bfitw\kappa_{\calU}}(\bfitw\kappa_{\calU}^{\cyc}),
\] which is again compatible with the action of $\bfitu_{p,0}$, $\bfitu_{p, 1}$, and $\bfitu_p$. A similar construction as in Proposition  \ref{Proposition: OES at w} produces a \emph{cuspidal overconvergent Eichler Shimura morphism}
\[
        \scalemath{1}{\mathrm{ES}_{\kappa_{\calU}, \cusp}^{\bfitw, r}: H^3_{\calZ_{n, \bfitw}, \proket}(\calX^{\tor, \bfitu_p}_{n, \bfitw}, \scrO\!\!\scrD_{\kappa_{\calU}}^r(-\calD_n))^{\fs} \rightarrow H^{3-l(\bfitw)}_{\calZ_{n, \bfitw}, \ket}(\calX^{\tor, \bfitu_p}_{n, \bfitw}, \underline{\omega}_{\cusp, n, r}^{\bfitw_3^{-1}\bfitw \kappa_{\calU}+k_{\bfitw}})^{\fs}(\bfitw\kappa_{\calU}^{\cyc})}
    \]
which fits into a Hecke- and Galois-equivariant commutative diagram
\[
    \begin{tikzcd}
        H^3_{\calZ_{n, \bfitw}, \proket}(\calX^{\tor, \bfitu_p}_{n, \bfitw}, \scrO\!\!\scrD_{\kappa_{\calU}}^r)^{\fs} \arrow[r, "\mathrm{ES}_{\kappa_{\calU}}^{\bfitw, r}"] &  H^{3-l(\bfitw)}_{\calZ_{n, \bfitw}, \ket}(\calX^{\tor, \bfitu_p}_{n, \bfitw}, \underline{\omega}_{n, r}^{\bfitw_3^{-1}\bfitw \kappa_{\calU}+k_{\bfitw}})^{\fs}(\bfitw\kappa_{\calU}^{\cyc})\\
        H^3_{\calZ_{n, \bfitw}, \proket}(\calX^{\tor, \bfitu_p}_{n, \bfitw}, \scrO\!\!\scrD_{\kappa_{\calU}}^r(-\calD_n))^{\fs} \arrow[r, "\mathrm{ES}_{\kappa_{\calU}, \cusp}^{\bfitw, r}"]\arrow[u]& H^{3-l(\bfitw)}_{\calZ_{n, \bfitw}, \ket}(\calX^{\tor, \bfitu_p}_{n, \bfitw}, \underline{\omega}_{\cusp,n, r}^{\bfitw_3^{-1}\bfitw \kappa_{\calU}+k_{\bfitw}})^{\fs}(\bfitw\kappa_{\calU}^{\cyc})\arrow[u]
    \end{tikzcd}.
\] Following the notations in \cite{BP-HigherColeman}, we denote by $\overline{H}_{?}^i$ the image of $H_?^i(\bullet, \bullet(-\calD_n))$ in $H_?^i(\bullet, \bullet)$, usually referred as the \emph{interior cohomology}.  

We have the following analogue of Theorem \ref{Theorem: big OES diagram} for interior cohomology groups.

\begin{Theorem}\label{Theorem: big OES diagram, parabolic version}
There is a natural Hecke- and Galois-equivariant diagram \[
        \begin{tikzcd}[column sep = tiny]
            \scalemath{0.8}{ \overline{H}^3_{\proket}(\calX_n^{\tor}, \scrO\!\!\scrD_{\kappa_{\calU}}^r)^{\fs} } \arrow[r] & \scalemath{0.8}{ \overline{H}^3_{\proket}(\calX_{n, \bfitw_3, (r,r)}^{\tor}, \scrO\!\!\scrD_{\kappa_{\calU}}^r)^{\fs} } \arrow[r] & \scalemath{0.8}{ \overline{H}^0(\calX_{n, \bfitw_3, (r,r)}^{\tor}, \underline{\omega}_{n, r}^{\kappa_{\calU} + (3,3)})^{\fs}(-3) }\\
            \scalemath{0.8}{ \overline{H}^3_{\overline{\calX_{n, \leq \bfitw_2}^{\tor}}, \proket}(\calX_n^{\tor}, \scrO\!\!\scrD_{\kappa_{\calU}}^r)^{\fs} } \arrow[r] \arrow[u] & \scalemath{0.8}{ \overline{H}_{\calZ_{n, \bfitw_2}, \proket}^3(\calX^{\tor, \bfitu_p}_{n, \bfitw_2}, \scrO\!\!\scrD_{\kappa_{\calU}}^r)^{\fs} } \arrow[r] & \scalemath{0.8}{ \overline{H}_{\calZ_{n, \bfitw_2}}^1(\calX^{\tor, \bfitu_p}_{n, \bfitw_2}, \underline{\omega}_{n, r}^{\bfitw_3^{-1}\bfitw_2\kappa_{\calU} + (3,1)})^{\fs}(\bfitw_2\kappa_{\calU}^{\cyc} - 2) }\\
            \scalemath{0.8}{ \overline{H}^3_{\overline{\calX_{n, \leq \bfitw_1}^{\tor}}, \proket}(\calX_n^{\tor}, \scrO\!\!\scrD_{\kappa_{\calU}}^r)^{\fs} } \arrow[r] \arrow[u] & \scalemath{0.8}{ \overline{H}_{\calZ_{n, \bfitw_1}, \proket}^3(\calX^{\tor, \bfitu_p}_{n, \bfitw_1}, \scrO\!\!\scrD_{\kappa_{\calU}}^r)^{\fs} } \arrow[r] & \scalemath{0.8}{ \overline{H}_{\calZ_{n, \bfitw_1}}^2(\calX^{\tor, \bfitu_p}_{n, \bfitw_1}, \underline{\omega}_{n, r}^{\bfitw_3^{-1}\bfitw_1\kappa_{\calU} + (2,0)})^{\fs}(\bfitw_1\kappa_{\calU}^{\cyc} - 1) }\\
            \scalemath{0.8}{ \overline{H}_{\overline{\calX_{n, \one_4}^{\tor}}, \proket}^3(\calX_n^{\tor}, \scrO\!\!\scrD_{\kappa_{\calU}}^r)^{\fs} } \arrow[r]\arrow[u] & \scalemath{0.8}{ \overline{H}_{\calZ_{n, \one_4}, \proket}^3(\calX^{\tor, \bfitu_p}_{n, \one_4}, \scrO\!\!\scrD_{\kappa_{\calU}}^r)^{\fs} } \arrow[r] & \scalemath{0.8}{ \overline{H}_{\calZ_{n, \one_4}}^3(\calX^{\tor, \bfitu_p}_{n, \one_4}, \underline{\omega}_{n, r}^{\bfitw_3^{-1}\kappa_{\calU}})^{\fs}(\kappa_{\calU}^{\cyc}) }
        \end{tikzcd}
    \]
where the last horizontal map of each row is the cuspidal overconvergent Eichler Shimura morphism constructed above. 
\end{Theorem}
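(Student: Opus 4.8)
The plan is to prove Theorem \ref{Theorem: big OES diagram, parabolic version} by running the proof of Theorem \ref{Theorem: big OES diagram} one ``level'' down --- replacing every coefficient sheaf by its boundary-vanishing subsheaf --- and then passing to images. First I would set up the cuspidal analogue of the constructions of \S\ref{subsection: pro-Kummer \'etale overconvergent cohomology with supports}. By Lemma \ref{Lemma: cuspidal pro-Kummer \'etale sheaves} (together with Remark \ref{Remark: pro-Kummer \'etale overconvergent cohomology for affinoid weights}(ii)), the sheaf $\scrO\!\!\scrD_{\kappa_{\calU}}^r(-\calD_n)$ is the extension by zero $\jmath_{\proket,!}\jmath_{\proket}^{-1}\scrO\!\!\scrD_{\kappa_{\calU}}^r$, hence is again a sheaf of the type to which the formalism of \S\ref{section: cohomology with supports} and Proposition \ref{Proposition: Up is nice on proket cohomology with support with big coefficients} apply verbatim; in particular $U_p$ is potent compact on the relevant cohomology-with-supports complexes, so their finite-slope parts are defined, and Proposition \ref{Proposition: a spectral sequence for stratifications} produces the cuspidal analogue of the diagram \eqref{eq: diagram for coh. with supports (1)}. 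The proof of Theorem \ref{Theorem: change support condition for overconvergent cohomology} is local on $\calX_n^{\tor}$ and uses only the structure of the coefficient sheaf as an ON-able Banach sheaf together with the dynamics of Lemma \ref{Lemma: dynamics of U_p-operators}; applying it with $\scrO\!\!\scrD_{\kappa_{\calU}}^r$ replaced by $\scrO\!\!\scrD_{\kappa_{\calU}}^r(-\calD_n)$ gives the corresponding support-change quasi-isomorphisms. Combined with the cuspidal Eichler--Shimura morphism $\mathrm{ES}_{\kappa_{\calU},\cusp}^{\bfitw,r}$ on $H^3$ already constructed in the text just before the statement --- and with the cuspidal edge-map argument of Proposition \ref{Proposition: OES at w}, for which the Leray, projection-formula and degeneration inputs (\cite[Proposition A.2.3]{DRW}, \cite[Proposition A.3.11]{DRW}, \cite[Theorem 6.7.3]{BP-HigherColeman}, Kodaira--Spencer \cite[Theorem 1.41 (4)]{LanKS}) all hold equally for $\underline{\omega}_{\cusp,n,r}^{\bullet}$ --- this assembles into a ``cuspidal big diagram'' of the same shape as Theorem \ref{Theorem: big OES diagram} but with every coefficient sheaf cuspidal.

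Next I would compare this cuspidal big diagram with the big diagram of Theorem \ref{Theorem: big OES diagram} along the boundary-vanishing inclusions $\scrO\!\!\scrD_{\kappa_{\calU}}^r(-\calD_n)\hookrightarrow\scrO\!\!\scrD_{\kappa_{\calU}}^r$ and $\underline{\omega}_{\cusp,n,r}^{\bfitw_3^{-1}\bfitw\kappa_{\calU}+k_{\bfitw}}\hookrightarrow\underline{\omega}_{n,r}^{\bfitw_3^{-1}\bfitw\kappa_{\calU}+k_{\bfitw}}$. These inclusions are Hecke- and Galois-equivariant, and by naturality they commute with the boundary maps of the stratification spectral sequence, with the support-change quasi-isomorphisms, with the Leray edge maps, and with the Eichler--Shimura morphisms --- the last being exactly the commutative square displayed just before the statement of the theorem. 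Passing to finite-slope parts preserves these inclusions since the slope-$\leq h$ decomposition is functorial in the complex. Hence the cuspidal big diagram maps to the full big diagram of Theorem \ref{Theorem: big OES diagram} compatibly with all arrows. By definition the interior cohomology groups $\overline{H}_{?}^{\bullet}$ are the images of these comparison maps on the respective vertices; since every arrow of the two diagrams commutes with the comparison map, each arrow carries images into images, so restricting to images yields the asserted diagram. Its Hecke- and Galois-equivariance is inherited, being a sub-diagram of an equivariant one; the bottom-row arrow of each line is then visibly the cuspidal overconvergent Eichler--Shimura morphism.

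The main obstacle I anticipate is verifying the strict compatibility of the Leray edge maps with the cuspidal inclusion: Proposition \ref{Proposition: OES at w} extracts an edge map from the low-degree vanishing of finite-slope cohomology (\cite[Theorem 6.7.3]{BP-HigherColeman}), and to know that the cuspidal edge map and the full edge map fit into a commutative square one must phrase the degeneration functorially in the coefficient sheaf at the level of $\mathrm{Pro}_{\Z_{\geq 0}}(\mathrm{K}^{\proj}(\Ban(R_{\calU})))$, not merely on cohomology. This should be straightforward since the spectral sequence is built from $R\nu_*$ applied to the sheaf map $\underline{\widehat{\omega}}_{\cusp,n,r}^{\bullet}\hookrightarrow\underline{\widehat{\omega}}_{n,r}^{\bullet}$ together with the projection formula, but it requires a careful bookkeeping of the quasi-isomorphisms involved. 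A minor additional point is to confirm that $\overline{H}_?^3(\bullet)^{\fs}$ is unambiguous --- i.e. that the finite-slope functor commutes with the cuspidal-to-full inclusion --- which again follows from functoriality of the slope decomposition; no further input beyond what is already established in \S\ref{subsection: Hecke operators} and \S\ref{subsection: pro-Kummer \'etale overconvergent cohomology with supports} is needed.
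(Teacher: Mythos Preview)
Your proposal is correct and follows essentially the same approach as the paper, which in fact gives no separate proof for this theorem: it is stated immediately after the construction of $\mathrm{ES}_{\kappa_{\calU},\cusp}^{\bfitw,r}$ and the commutative square relating the cuspidal and non-cuspidal morphisms, and is treated as a direct consequence of running Theorem~\ref{Theorem: big OES diagram} for both $\scrO\!\!\scrD_{\kappa_{\calU}}^r$ and $\scrO\!\!\scrD_{\kappa_{\calU}}^r(-\calD_n)$ and then passing to images. Your careful bookkeeping of the Leray edge maps and the functoriality of slope decompositions is more explicit than what the paper writes down, but no extra ideas are needed.
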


\begin{Remark}\label{Remark: parabolic cohomology}
    In the diagram of Theorem \ref{Theorem: big OES diagram, parabolic version}, notice that \[
        \overline{H}^0(\calX_{n, \bfitw_3, (r,r)}^{\tor}, \underline{\omega}_{n, r}^{\kappa_{\calU} + (3,3)})^{\fs}(-3) = H^0(\calX_{n, \bfitw_3, (r,r)}^{\tor}, \underline{\omega}_{\cusp, n, r}^{\kappa_{\calU} + (3,3)})^{\fs}(-3)
    \] and \[
        \overline{H}^3_{\proket}(\calX_n^{\tor}, \scrO\!\!\scrD_{\kappa_{\calU}}^r)^{\fs} \cong H_{\Par}^3(X_n(\C), D_{\kappa_{\calU}}^r)\widehat{\otimes}\C_p
    \] by Lemma \ref{Lemma: cuspidal pro-Kummer \'etale sheaves}. Here, $H_{\Par}^3(X_n(\C), D_{\kappa_{\calU}}^r)$ stands for the image of $H_{c}^3(X_n(\C), D_{\kappa_{\calU}}^r)$ in $H^3(X_n(\C), D_{\kappa_{\calU}}^r)$.
\end{Remark}

\subsection{Overconvergent Eichler--Shimura morphisms at classical weights}\label{subsection: OES at classical weights}

Throughout this subsection, let $k = (k_1, k_2)\in \Z^2$ such that $k_1\geq k_2$. Specialising the diagram in Theorem \ref{Theorem: big OES diagram} to the classical weight $k$, we obtain the following diagram 
\begin{equation}\label{eq: big OES diagram at classical weight}
    \begin{tikzcd}[column sep = tiny]
            \scalemath{0.9}{ H^3_{\proket}(\calX_n^{\tor}, \scrO\!\!\scrD_{k}^r)^{\fs} } \arrow[r] & \scalemath{1}{ H^3_{\proket}(\calX_{n, \bfitw_3, (r,r)}^{\tor}, \scrO\!\!\scrD_{k}^r)^{\fs} } \arrow[r] & \scalemath{0.9}{ H^0(\calX_{n, \bfitw_3, (r,r)}^{\tor}, \underline{\omega}_{n, r}^{(k_1+3, k_2+3;0))})^{\fs}(-3) }\\
            \scalemath{0.9}{ H^3_{\overline{\calX_{n, \leq \bfitw_2}^{\tor}}, \proket}(\calX_n^{\tor}, \scrO\!\!\scrD_{k}^r)^{\fs} } \arrow[r] \arrow[u] & \scalemath{0.9}{ H_{\calZ_{n, \bfitw_2}, \proket}^3(\calX^{\tor, \bfitu_p}_{n, \bfitw_2}, \scrO\!\!\scrD_{k}^r)^{\fs} } \arrow[r] & \scalemath{0.9}{ H_{\calZ_{n, \bfitw_2}}^2(\calX^{\tor, \bfitu_p}_{n, \bfitw_2}, \underline{\omega}_{n, r}^{(k_1+3, -k_2+1; k_2))})^{\fs}(k_2 - 2) }\\
            \scalemath{0.9}{ H^3_{\overline{\calX_{n, \leq \bfitw_1}^{\tor}}, \proket}(\calX_n^{\tor}, \scrO\!\!\scrD_{k}^r)^{\fs} } \arrow[r] \arrow[u] & \scalemath{0.9}{ H_{\calZ_{n, \bfitw_1}, \proket}^3(\calX^{\tor, \bfitu_p}_{n, \bfitw_1}, \scrO\!\!\scrD_{k}^r)^{\fs} } \arrow[r] & \scalemath{0.9}{ H_{\calZ_{n, \bfitw_1}}^1(\calX^{\tor, \bfitu_p}_{n, \bfitw_1}, \underline{\omega}_{n, r}^{(k_2+2, k_1; k_1))})^{\fs}(k_1 - 1) }\\
            \scalemath{0.9}{ H_{\overline{\calX_{n, \one_4}^{\tor}}, \proket}^3(\calX_n^{\tor}, \scrO\!\!\scrD_{k}^r)^{\fs} } \arrow[r]\arrow[u] & \scalemath{0.9}{ H_{\calZ_{n, \one_4}, \proket}^3(\calX^{\tor, \bfitu_p}_{n, \one_4}, \scrO\!\!\scrD_{k}^r)^{\fs} } \arrow[r] & \scalemath{0.9}{ H_{\calZ_{n, \one_4}}^3(\calX^{\tor, \bfitu_p}_{n, \one_4}, \underline{\omega}_{n, r}^{(-k_2, -k_1; k_1+k_2))})^{\fs}(k_1+k_2) }
        \end{tikzcd}.
\end{equation}

We would like to answer the following natural question: how does the diagram (\ref{eq: big OES diagram at classical weight}) compare with the diagram (\ref{eq: Hecke-Galois equiv diagram algebraic localised}) in \S \ref{subsection: classical ES} induced from the classical Eichler--Shimura morphisms?

First of all, recall the sheaf $\underline{\omega}_{n, r, \alg}^{\bfitw_3^{-1}\bfitw k}$ from Remark \ref{Remark: classical automorphic sheaf via perfectoid methods}. Consider its completed pullback to the pro-Kummer \'etale site\[
    \widehat{\underline{\omega}}_{n, r, \alg}^{\bfitw_3^{-1}\bfitw k} := \upsilon^{-1}\underline{\omega}_{n, r, \alg}^{\bfitw_3^{-1}\bfitw k} \otimes_{\upsilon^{-1}\scrO_{\calX_{n, \bfitw, (r, r), \proket}^{\tor}}} \widehat{\scrO}_{\calX_{n, \bfitw, (r, r), \proket}^{\tor}}
\] 
where $\upsilon: \calX_{n, \bfitw, (r, r), \proket}^{\tor}\rightarrow \calX_{n, \bfitw, (r, r), \an}^{\tor}$ is the natural projection of sites. Note that $\widehat{\underline{\omega}}_{n, r, \alg}^{\bfitw_3^{-1}\bfitw k} = \widehat{\underline{\omega}}^{\bfitw_3^{-1}\bfitw k}|_{\calX_{n, \bfitw, (r, r)}^{\tor}}$ by Remark \ref{Remark: comparison with classical automorphic sheaf}. Moreover, recall the pro-Kummer \'etale sheaf $\scrO\!\!\scrV_k^{\vee}$ and $\scrO\!\!\scrV_{k,\adicFL}^{\vee}$ from \S \ref{subsection: classical ES}.  We would like to obtain a Hecke- and Galois-equivariant morphism of pro-Kummer \'etale sheaves \begin{equation}\label{eq: explicit ES at the sheaf level}
    \mathrm{ES}_k^{\bfitw,r, \alg}: \scrO\!\!\scrV_k^{\vee}|_{\calX_{n, \bfitw, (r, r)}^{\tor}} \rightarrow \widehat{\underline{\omega}}_{n, r, \alg}^{\bfitw_3^{-1}\bfitw k}(\bfitw k^{\cyc})
\end{equation} 
and compare it with $\ES_k^{\bfitw, r}$.

The construction is similar to the one of $\ES_{\kappa_{\calU}}^{\bfitw, r}$ in \S \ref{subsection: OES}. To this end, recall
\[
    P_{\bfitw_3^{-1}\bfitw k} = \left\{ f: H \rightarrow \bbA^1: f(\bfgamma \bfbeta) = \bfitw_3^{-1}\bfitw k(\bfbeta)f(\bfgamma) \text{ for all } (\bfgamma, \bfbeta)\in H \times B_H\right\}
\] from Remark \ref{Remark: classical automorphic sheaf via perfectoid methods}. Over $\adicFL_{\bfitw, (r,r)}$, consider the pro-\'etale sheaf \[
    \widehat{\scrP}_{\bfitw_3^{-1}\bfitw k} \coloneq P_{\bfitw_3^{-1}\bfitw k} \otimes_{\Q_p} \widehat{\scrO}_{\adicFL_{\bfitw, (r,r), \proet}}.
\] 
It follows from the construction that \[
    \pi_{\HT}^* \widehat{\scrP}_{\bfitw_3^{-1}\bfitw k} \cong \widehat{\underline{\omega}}_{n, r, \alg}^{\bfitw_3^{-1}\bfitw k}(\bfitw k^{\cyc})|_{\calX_{\Gamma(p^{\infty}), \bfitw, (r,r)}^{\tor}}.
\]
For the Galois twist, see, for example, Remark \ref{Remark: classical forms in the rigid analytic setting}. 

To construct \eqref{eq: explicit ES at the sheaf level}, we first construct a morphism \[
    \mathrm{PES}_{k}^{\bfitw, r,\alg}: \scrO\!\!\scrV_{k, \adicFL}^{\vee}|_{\adicFL_{\bfitw, (r,r)}} \rightarrow \widehat{\scrP}_{\bfitw_3^{-1}\bfitw k}
\] 
on the flag variety. Given any affinoid perfectoid object $\calV_{\infty}\in \adicFL_{\bfitw, (r, r), \proet}$, define \begin{equation}\label{eq: definition for PES_k^w,r,alg}
    \begin{split}
        \scrO\!\!\scrV_k^{\vee}(\calV_{\infty}) & \rightarrow \widehat{\scrP}_{\bfitw_3^{-1}\bfitw k}(\calV_{\infty})\\
        \mu\otimes g & \mapsto \left(\bfgamma \mapsto g\left(\int_{\bfalpha\in \GSp_4(\Q_p)}e_{k}^{\hst}\left(\bfitw^{-1}\bfitw_3\trans\bfgamma\bfitw_3^{-1}\begin{pmatrix}\one_2 & \\ \bfitz & \one_2\end{pmatrix}\bfitw \bfalpha\right)d\mu\right) \right)
    \end{split}
\end{equation}
for any section $g\in \widehat{\scrO}_{\adicFL_{\bfitw, (r,r), \proet}}(\calV_{\infty})$ and any $\mu\in V_k^{\vee}$. One checks that this indeed defines a map of sheaves. Next, pulling back $\mathrm{PES}_k^{\bfitw, r, \alg}$ via the Hodge--Tate period map, we obtain a Galois-equivariant morphism\[
    \ES_k^{\bfitw, r, \alg} : \scrO\!\!\scrV_{k}^{\vee}|_{\calX_{\Gamma(p^{\infty}), \bfitw, (r,r)}^{\tor}} \rightarrow \widehat{\underline{\omega}}_{n, r, \alg}^{\bfitw_3^{-1}\bfitw k}(\bfitw k^{\cyc})|_{\calX_{\Gamma(p^{\infty}), \bfitw, (r,r)}^{\tor}}.
\]
A similar computation as in Proposition \ref{Proposition: the ES map at the infinity descends} shows that $\ES_k^{\bfitw, r, \alg}$ descends to a morphism \[
    \ES_k^{\bfitw, r, \alg}: \scrO\!\!\scrV_{k}^{\vee}|_{\calX_{n, \bfitw, (r,r)}^{\tor}} \rightarrow \widehat{\underline{\omega}}_{n, r, \alg}^{\bfitw_3^{-1}\bfitw k}(\bfitw k^{\cyc})
\] 
on $\calX_{n, \bfitw, (r,r), \proket}^{\tor}$. Moreover, this morphism is also $\bfitu$-equivariant (for $\bfitu\in \{\bfitu_{p,0}, \bfitu_{p,1}, \bfitu_p\}$) by the same computation as in Proposition \ref{Proposition: OES is u_p-equivariant}.

We claim that $\mathrm{ES}_k^{\bfitw,r, \alg}$ agrees with the restriction of $\mathrm{ES}_k^{\bfitw, \alg}$ (see \eqref{eq: algebraic ES on the level of pro-Kummer \'etale sheaves}) on $\calX_{n, \bfitw, (r, r)}^{\tor}$. Indeed, when $\bfitw = \bfitw_3$, this is explained in \cite[Lemma 5.3.2]{DRW}. For other $\bfitw$, the map $\mathrm{ES}_k^{\bfitw, \alg}$ is obtained by twisting $\mathrm{ES}_k^{\bfitw_3, \alg}$ (as explained in \S \ref{subsection: classical ES}). The desired statement then follows from the 
the explicit formula \eqref{eq: definition for PES_k^w,r,alg}. To simplify the notation, we then occasionally drop the superscript `$r$' in $\mathrm{ES}_k^{\bfitw,r, \alg}$. 

From the construction, we observe that, over $\calX_{n, \bfitw, (r,r)}^{\tor}$, the morphism $\mathrm{ES}_k^{\bfitw, r}:  \scrO\!\!\scrD_k^r\rightarrow \widehat{\underline{\omega}}_{n, r}^{\bfitw_3^{-1}\bfitw k}(\bfitw k^{\cyc})$ factors as a Hecke- and Galois-equivariant diagram
\[
    \begin{tikzcd}
        \scrO\!\!\scrD_k^r \arrow[r, "\mathrm{ES}_k^{\bfitw, r}"] \arrow[d] & \widehat{\underline{\omega}}_{n, r}^{\bfitw_3^{-1}\bfitw k}(\bfitw k^{\cyc})\\
        \scrO\!\!\scrV_k^{\vee} \arrow[r, "\mathrm{ES}_k^{\bfitw, \alg}"] & \widehat{\underline{\omega}}_{n, r, \alg}^{\bfitw_3^{-1}\bfitw k}(\bfitw k^{\cyc}) \arrow[u, hook]
    \end{tikzcd}
\]
where the morphism $\scrO\!\!\scrD_k^r \rightarrow \scrO\!\!\scrV_k^{\vee}$ is induced from the natural inclusion $V_k \hookrightarrow A_k^r$ and the morphism $\widehat{\underline{\omega}}_{n, r, \alg}^{\bfitw_3^{-1}\bfitw k} \hookrightarrow \widehat{\underline{\omega}}_{n, r}^{\bfitw_3^{-1}\bfitw k}$ is induced by the natural inclusion $P_{\bfitw_3^{-1}\bfitw k} \hookrightarrow A_{\bfitw_3^{-1}\bfitw k}^r(\Iw_{H, 1}^+, \Q_p)$.

Following a similar construction as in \S \ref{subsection: OES}, the morphism $\mathrm{ES}_k^{\bfitw, \alg}$ induces a morphism 
\[\mathrm{ES}_k^{\bfitw, \alg}:H_{\calZ_{n, \bfitw}, \proket}^3(\calX^{\tor, \bfitu_p}_{n, \bfitw}, \scrO\!\!\scrV_k^{\vee})^{\fs}\rightarrow H_{\calZ_{n, \bfitw}}^{3-l(\bfitw)}(\calX^{\tor, \bfitu_p}_{n, \bfitw}, {\underline{\omega}}_{n, r, \alg}^{\bfitw_3^{-1}\bfitw k + k_{\bfitw}})^{\fs}(\bfitw k^{\cyc}-l(\bfitw))\]
on the cohomology groups. It fits into a Hecke- and Galois-equivariant commutative diagram 
\[
    \begin{tikzcd}
        H_{\calZ_{n, \bfitw}, \proket}^3(\calX^{\tor, \bfitu_p}_{n, \bfitw}, \scrO\!\!\scrD_k^r)^{\fs} \arrow[r, "\mathrm{ES}_{k}^{\bfitw, r}"]\arrow[d] & H_{\calZ_{n, \bfitw}}^{3-l(\bfitw)}(\calX^{\tor, \bfitu_p}_{n, \bfitw}, {\underline{\omega}}_{n, r}^{\bfitw_3^{-1}\bfitw k + k_{\bfitw}})^{\fs}(\bfitw k^{\cyc}-l(\bfitw))\\
        H_{\calZ_{n, \bfitw}, \proket}^3(\calX^{\tor, \bfitu_p}_{n, \bfitw}, \scrO\!\!\scrV_k^{\vee})^{\fs} \arrow[r, "\mathrm{ES}_k^{\bfitw, \alg}"] & H_{\calZ_{n, \bfitw}}^{3-l(\bfitw)}(\calX^{\tor, \bfitu_p}_{n, \bfitw}, {\underline{\omega}}_{n, r, \alg}^{\bfitw_3^{-1}\bfitw k + k_{\bfitw}})^{\fs}(\bfitw k^{\cyc}-l(\bfitw)) \arrow[u]
    \end{tikzcd}.
\]

For the rest of \S \ref{subsection: OES at classical weights}, we compare the diagram (\ref{eq: big OES diagram at classical weight}) with the diagram (\ref{eq: Hecke-Galois equiv diagram algebraic localised}). 

Let $\Pi = (\pi, \varphi_p)$ be a pair satisfying Assumption \ref{Assumption: Multiplicity One for cuspidal automorphic representations} and let $\frakm_{\Pi}$ be the corresponding maximal ideal in the Hecke-algebra. We further assume that $\Pi$ is nice-enough in the sense of Definition \ref{Defn: nice enough}. Localising the diagram (\ref{eq: big OES diagram at classical weight}) at $\frakm_{\Pi}$ and taking the small-slope parts, we obtain a Hecke- and Galois- equivariant diagram 
\begin{equation}\label{eq: big OES diagram at classical weight and nice point}
        \begin{tikzcd}[column sep = tiny]
            \scalemath{0.8}{ H^3_{\proket}(\calX_n^{\tor}, \scrO\!\!\scrD_{k}^r)^{\sms}_{\frakm_{\Pi}} } \arrow[r] & \scalemath{0.8}{ H^3_{\proket}(\calX_{n, \bfitw_3, (r,r)}^{\tor}, \scrO\!\!\scrD_{k}^r)^{\sms}_{\frakm_{\Pi}} } \arrow[r] & \scalemath{0.8}{ H^0(\calX_{n, \bfitw_3, (r,r)}^{\tor}, \underline{\omega}_{n, r}^{(k_1+3, k_2+3;0))})^{\sms}_{\frakm_{\Pi}}(-3) }\\
            \scalemath{0.8}{ H^3_{\overline{\calX_{n, \leq \bfitw_2}^{\tor}}, \proket}(\calX_n^{\tor}, \scrO\!\!\scrD_{k}^r)^{\sms}_{\frakm_{\Pi}} } \arrow[r] \arrow[u] & \scalemath{0.8}{ H_{\calZ_{n, \bfitw_2}, \proket}^3(\calX^{\tor, \bfitu_p}_{n, \bfitw_2}, \scrO\!\!\scrD_{k}^r)^{\sms}_{\frakm_{\Pi}} } \arrow[r] & \scalemath{0.8}{ H_{\calZ_{n, \bfitw_2}}^2(\calX^{\tor, \bfitu_p}_{n, \bfitw_2}, \underline{\omega}_{n, r}^{(k_1+3, -k_2+1; k_2))})^{\sms}_{\frakm_{\Pi}}(k_2 - 2) }\\
            \scalemath{0.8}{ H^3_{\overline{\calX_{n, \leq \bfitw_1}^{\tor}}, \proket}(\calX_n^{\tor}, \scrO\!\!\scrD_{k}^r)^{\sms}_{\frakm_{\Pi}} } \arrow[r] \arrow[u] & \scalemath{0.8}{ H_{\calZ_{n, \bfitw_1}, \proket}^3(\calX^{\tor, \bfitu_p}_{n, \bfitw_1}, \scrO\!\!\scrD_{k}^r)^{\sms}_{\frakm_{\Pi}} } \arrow[r] & \scalemath{0.8}{ H_{\calZ_{n, \bfitw_1}}^1(\calX^{\tor, \bfitu_p}_{n, \bfitw_1}, \underline{\omega}_{n, r}^{(k_2+2, k_1; k_1))})^{\sms}_{\frakm_{\Pi}}(k_1 - 1) }\\
            \scalemath{0.8}{ H_{\overline{\calX_{n, \one_4}^{\tor}}, \proket}^3(\calX_n^{\tor}, \scrO\!\!\scrD_{k}^r)^{\sms}_{\frakm_{\Pi}} } \arrow[r]\arrow[u] & \scalemath{0.8}{ H_{\calZ_{n, \one_4}, \proket}^3(\calX^{\tor, \bfitu_p}_{n, \one_4}, \scrO\!\!\scrD_{k}^r)^{\sms}_{\frakm_{\Pi}} } \arrow[r] & \scalemath{0.8}{ H_{\calZ_{n, \one_4}}^3(\calX^{\tor, \bfitu_p}_{n, \one_4}, \underline{\omega}_{n, r}^{(-k_2, -k_1; k_1+k_2))})^{\sms}_{\frakm_{\Pi}}(k_1+k_2) }
        \end{tikzcd}.
    \end{equation}
    Here the small-slope parts of $H^3_{\proket}(\calX_n^{\tor}, \scrO\!\!\scrD_{k}^r)$, $H^3_{\overline{\calX_{n, \leq \bfitw_i}^{\tor}}, \proket}(\calX_n^{\tor}, \scrO\!\!\scrD_{k}^r)$, and $H_{\calZ_{n, \bfitw_i}, \proket}^3(\calX^{\tor, \bfitu_p}_{n, \bfitw_i}, \scrO\!\!\scrD_{k}^r)$ are defined in the same way as in Definition \ref{Definition: small slope part}. (The Hecke operators are un-normalised.)

\begin{Proposition}\label{Proposition: diagrams coincide}
The digram (\ref{eq: big OES diagram at classical weight and nice point}) coincides with the diagram (\ref{eq: Hecke-Galois equiv diagram algebraic localised}).
\end{Proposition}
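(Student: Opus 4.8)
The plan is to compare the two diagrams one square at a time, exploiting the fact that each is obtained by applying the same chain of cohomological operations — pro-Kummer \'etale cohomology with the relevant support conditions, passage to $U_p$-finite-slope and then small-slope parts, localisation at $\frakm_{\Pi}$ — to two morphisms of pro-Kummer \'etale sheaves that are themselves linked by an explicit commutative square. Recall from the discussion preceding this statement the Hecke-, Galois- and $\bfitu$-equivariant square of sheaves on $\calX_{n, \bfitw, (r, r), \proket}^{\tor}$
\[
    \begin{tikzcd}
        \scrO\!\!\scrD_k^r \arrow[r, "\mathrm{ES}_k^{\bfitw, r}"] \arrow[d] & \widehat{\underline{\omega}}_{n, r}^{\bfitw_3^{-1}\bfitw k}(\bfitw k^{\cyc})\\
        \scrO\!\!\scrV_k^{\vee} \arrow[r, "\mathrm{ES}_k^{\bfitw, \alg}"] & \widehat{\underline{\omega}}_{n, r, \alg}^{\bfitw_3^{-1}\bfitw k}(\bfitw k^{\cyc}) \arrow[u, hook]
    \end{tikzcd},
\]
whose left vertical arrow is dual to the inclusion $V_k\hookrightarrow A_k^r$, whose right vertical arrow is induced by $P_{\bfitw_3^{-1}\bfitw k}\hookrightarrow A_{\bfitw_3^{-1}\bfitw k}^r(\Iw_{H, 1}^+, \Q_p)$, and in which $\widehat{\underline{\omega}}_{n, r, \alg}^{\bfitw_3^{-1}\bfitw k}=\widehat{\underline{\omega}}^{\bfitw_3^{-1}\bfitw k}|_{\calX_{n, \bfitw, (r, r)}^{\tor}}$ by Remark \ref{Remark: comparison with classical automorphic sheaf} and $\mathrm{ES}_k^{\bfitw, r, \alg}$ equals the restriction of $\mathrm{ES}_k^{\bfitw, \alg}$ by the explicit formula \eqref{eq: definition for PES_k^w,r,alg}.

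First I would apply $R\Gamma_{\calZ_{n, \bfitw_i}}(\calX^{\tor, \bfitu_p}_{n, \bfitw_i}, -)$ to this square, pass to $U_p$-finite-slope parts, localise at $\frakm_{\Pi}$, and take small-slope parts, producing for each $i = 0, 1, 2, 3$ a commutative prism whose two faces are the $i$-th rows of \eqref{eq: big OES diagram at classical weight and nice point} and of \eqref{eq: Hecke-Galois equiv diagram algebraic localised}; here the middle column of the latter is identified with the $\calZ_{n, \bfitw_i}$-supported cohomology by Theorem \ref{Theorem: change support condition for overconvergent cohomology} together with the relation $\overline{\calX_{n, \leq \bfitw_i}^{\tor}}\smallsetminus \overline{\calX_{n, \leq \bfitw_{i-1}}^{\tor}} = \calX_{n, \bfitw_i, (0, \overline{0})}^{\tor}$ and the change-of-ambient-space identification \eqref{eq: aut; change of ambient space}. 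It then remains to check that the vertical connecting maps of the prism are isomorphisms. For the right column this is exactly the classicality theorem: Theorem \ref{Theorem: classicality theorem in higher Coleman theory} together with Proposition \ref{Proposition: quasi-isomorphisms of pro-Kummer \'etale cohomology for classical automorphic sheaves} identifies $H^{3-i}_{\calZ_{n, \bfitw_i}}(\calX^{\tor, \bfitu_p}_{n, \bfitw_i}, \underline{\omega}_{n, r}^{\bfitw_3^{-1}\bfitw_i k + k_{\bfitw_i}})^{\sms}_{\frakm_{\Pi}}$ and its $\underline{\omega}_{n, r, \alg}^{\bfitw_3^{-1}\bfitw_i k + k_{\bfitw_i}}$-variant with the classical coherent cohomology $H^{3-i}(\calX_n^{\tor}, \underline{\omega}^{\bfitw_3^{-1}\bfitw_i k + k_{\bfitw_i}})^{\sms}_{\frakm_{\Pi}}$ occurring in \eqref{eq: Hecke-Galois equiv diagram algebraic localised}. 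For the left and middle columns one invokes the classicality (control) theorem for the overconvergent Betti cohomology à la Ash--Stevens: via Proposition \ref{Proposition: comparison of Betti and Kummer \'etale cohomology} and Proposition \ref{Proposition: comparing Kummer \'etale and pro-Kummer \'etale cohomology} the map $\scrO\!\!\scrD_k^r\to \scrO\!\!\scrV_k^{\vee}$ on pro-Kummer \'etale cohomology (for each of the support conditions occurring in the two diagrams) corresponds to the specialisation map $D_k^r\to V_k^{\vee}$ on the Betti cohomology of $X_n(\C)$, and the control theorem shows this induces an isomorphism after passing to the small-slope part — condition (iii) of Definition \ref{Definition: small slope part}, with slope bound $h_k$, being precisely what is needed here (cf.\ Remark \ref{Remark: Betti-small-slope condition in the definition}) — and localising at $\frakm_{\Pi}$. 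Finally, the horizontal maps of the two diagrams agree because each is built from the corresponding horizontal map of the sheaf-level square by the \emph{same} edge maps — those of Construction 5 in \S\ref{subsection: classical ES} and those of the Leray spectral sequence \eqref{eq: Leray spectral sequence with support} used in Proposition \ref{Proposition: OES at w} — which are functorial in the coefficient sheaf.

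The hard part will be the bookkeeping: one must verify that all of these identifications are mutually compatible, i.e.\ that the prism genuinely commutes. Concretely this means checking that the quasi-isomorphism of Theorem \ref{Theorem: change support condition for overconvergent cohomology} intertwines the Leray spectral sequence for $R\Gamma_{\calZ_{n, \bfitw_i}}(\calX^{\tor, \bfitu_p}_{n, \bfitw_i}, -)$ with the one for $R\Gamma_{\overline{\calX_{n, \leq \bfitw_i}^{\tor}}\smallsetminus \overline{\calX_{n, \leq \bfitw_{i-1}}^{\tor}}, \proket}(\calX_n^{\tor}\smallsetminus \overline{\calX_{n, \leq \bfitw_{i-1}}^{\tor}}, -)$, and that the maps induced by $\scrO\!\!\scrD_k^r\to \scrO\!\!\scrV_k^{\vee}$ commute with the connecting morphisms $f_i$, $g_i$, $h_i$ of \eqref{eq: Hecke-Galois equiv diagram algebraic localised}. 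None of this requires a new ingredient beyond the functoriality of the constructions in \S\ref{section: automorphic}--\S\ref{section: OES}, and the verification can be carried out square by square.
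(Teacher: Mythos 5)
Your proposal is correct and follows essentially the same route as the paper: the compatibility square of sheaf-level Eichler--Shimura maps, the classicality theorem (together with Proposition \ref{Proposition: quasi-isomorphisms of pro-Kummer \'etale cohomology for classical automorphic sheaves}) for the right-hand columns, and the control theorems for the left and middle columns. The only minor divergence is that for the cohomology groups \emph{with supports} the paper invokes the control theorem at the level of sheaves (\cite[Corollary 6.2.18]{BP-HigherColeman}) to pass from $\scrO\!\!\scrD_{k}^r$ to $\scrO\!\!\scrV_k^{\vee}$ directly, rather than routing through the Betti comparison (which is only stated in \S\ref{subsection: Kummer and pro-Kummer \'etale overconvergent cohomology} for cohomology without supports), but this does not affect the substance of the argument.
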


\begin{proof}
The desired statement follows from the following observations:
\begin{itemize}
\item The morphism $\ES_k^{\bfitw, r}$ is compatible with $\ES_{k}^{\bfitw, r, \alg}$ and the latter agrees with the restriction of $\ES_{k}^{\bfitw, \alg}$ on $\calX_{n, \bfitw, (r,r)}^{\tor}$.
\item The classicality Theorem (Theorem \ref{Theorem: classicality theorem in higher Coleman theory}) yields \begin{align*}
            H_{\calZ_{n, \bfitw}}^{3-l(\bfitw)}(\calX^{\tor, \bfitu_p}_{n, \bfitw}, {\underline{\omega}}_{n, r}^{\bfitw_3^{-1}\bfitw k + k_{\bfitw}})^{\sms}_{\frakm_{\Pi}} & \cong H_{\calZ_{n, \bfitw}}^{3-l(\bfitw)}(\calX^{\tor, \bfitu_p}_{n, \bfitw}, {\underline{\omega}}_{n, r, \alg}^{\bfitw_3^{-1}\bfitw k + k_{\bfitw}})^{\sms}_{\frakm_{\Pi}}\\
            & \cong H^{3-l(\bfitw)}(\calX_n^{\tor}, \underline{\omega}^{\bfitw_3^{-1}\bfitw k + k_{\bfitw}})^{\sms}_{\frakm_{\Pi}}.
        \end{align*}
\item The control theorem at the level of sheaves (\cite[Corollary 6.2.18]{BP-HigherColeman}) and Ash--Steven's control theorem (\cite[Theorem 3.2.5]{Hansen-PhD} or \cite[Theorem 6.4.1]{Ash-Stevens}) imply an isomorphism 
\[\scalemath{0.9}{H^3_{\overline{\calX_{n, \leq \bfitw_i}^{\tor}} \smallsetminus \overline{\calX_{n, \leq \bfitw_{i-1}}^{\tor}}, \proket}(\calX_{n}^{\tor}\smallsetminus \overline{\calX^{\tor}_{n, \leq \bfitw_{i-1}}}, \scrO\!\!\scrD_{\kappa_{\calU}}^r)^{\sms}_{\frakm_{\Pi}}\cong H^3_{\overline{\calX_{n, \leq \bfitw_i}^{\tor}} \smallsetminus \overline{\calX_{n, \leq \bfitw_{i-1}}^{\tor}}, \proket}(\calX_{n}^{\tor}\smallsetminus \overline{\calX^{\tor}_{n, \leq \bfitw_{i-1}}}, \scrO\!\!\scrV_k^{\vee})^{\sms}_{\frakm_{\Pi}}}\]
while the source of the map is isomorphic to $H^3_{\calZ_{n, \bfitw_i}, \proket}(\calX^{\tor, \bfitu_p}_{n,\bfitw}, \scrO\!\!\scrD_{\kappa_{\calU}}^r)^{\sms}_{\frakm_{\Pi}}$ by Theorem \ref{Theorem: change support condition for overconvergent cohomology}. Similarly, the control theorems also imply 
\[H^3_{\overline{\calX_{n, \leq \bfitw_i}^{\tor}}, \proket}(\calX_n^{\tor}, \scrO\!\!\scrD_{k}^r)^{\sms}_{\frakm_{\Pi}}\cong H^3_{\overline{\calX_{n, \leq \bfitw_i}^{\tor}}, \proket}(\calX_n^{\tor}, \scrO\!\!\scrV_k^{\vee})^{\sms}_{\frakm_{\Pi}}.\]
\end{itemize}
\end{proof}

\subsection{Eigenvarieties}\label{subsection: eigenvarieties}
Recall the weight space \[
    \calW = \Spa(\Z_p\llbrack T_{\GL_2}(\Z_p)\rrbrack, \Z_p\llbrack T_{\GL_2}(\Z_p)\rrbrack)^{\rig}. 
\]
In this subsection, we aim to construct two eigenvarieties $\calE^{\mathrm{oc}}$ and $\calE^{\mathrm{aut}}$ over $\calW$ (coming from $\scrO\!\!\scrD_{\kappa_{\calU}}^r$ and $\underline{\omega}_{n, r}^{\bfitw_3^{-1}\bfitw\kappa_{\calU}}$ respectively) and then compare them. We begin with the construction of the spectral varieties following \cite[\S 6]{BP-HigherColeman}. 

Let $\scrN$ be either of the following $\scrO_{\calW}$-modules: \begin{itemize}
    \item[(OC)] $\scrN(\calU) = H_{\proket}^3(\calX_n^{\tor}, \scrO\!\!\scrD_{\kappa_{\calU}}^r)$
    \item[(Aut)] $\scrN(\calU) = \bigoplus_{i=0}^{3} H_{\calZ_{n, \bfitw_i}}^{3-i}(\calX^{\tor, \bfitu_p}_{n, \bfitw_i}, \underline{\omega}_{n, r}^{\bfitw_3^{-1}\bfitw_i \kappa_{\calU}+k_{\bfitw_i}})(\bfitw_i \kappa_{\calU}^{\cyc} -i)$
\end{itemize} for any open affinoid weight $(R_{\calU}, \kappa_{\calU})$. By \cite[Proposition 3.1.5]{Hansen-PhD} and \cite[Proposition 6.1.11 \& Lemma 6.1.17]{BP-HigherColeman}, there exists $h\in \Q_{\geq 0}$ such that $\scrN(\calU)$ has \emph{slope-$\leq h$ decomposition} \[
    \scrN(\calU) = \scrN(\calU)^{\leq h} \oplus \scrN(\calU)^{>h}
\] with respect to the $U_p$-operator. Moreover, the slope decomposition is independent of the choice of $r$.\footnote{Even though, in \cite{Hansen-PhD}, the construction for the middle degree eigenvariety is not spelt out in detail, the constructions indeed apply to a single degree of cohomology; see \cite[\S 5]{BSW-ParEigen} and the references therein.}  Since $U_p$ is invertible on $\scrN(\calU)^{\leq h}$, we may consider the map \[
    R_{\calU}\widehat{\otimes}\C_p[X] \rightarrow \End_{R_{\calU}\widehat{\otimes}\C_p}(\scrN(\calU)^{\leq h}), \quad X \mapsto U_p^{-1}.
\] Let $I$ denote the kernel of this map, and consider \[
    \calZ_{\calU, h} := \Spa(R_{\calU}\widehat{\otimes}\C_p[X]/I, (R_{\calU}\widehat{\otimes}\C_p[X]/I)^+),
\] where $(R_{\calU}\widehat{\otimes}\C_p[X]/I)^+$ is the integral closure of $R_{\calU}^{\circ}\widehat{\otimes}\calO_{\C_p}$ in $R_{\calU}\widehat{\otimes}\C_p[X]/I$. The \emph{\textit{spectral variety}} is then defined to be \[
    \calZ := \left(\bigsqcup_{\calU, h} \calZ_{\calU, h}\right)/\sim,
\] where the relation $\sim$ is given by $\calZ_{\calU, h} \hookrightarrow \calZ_{\calU, h'}$ for $h\geq h'$ and $\calZ_{\calU', h}\hookrightarrow \calZ_{\calU, h}$ for $\calU' \hookrightarrow \calU$. We shall use the notation $\calZ^{\mathrm{oc}}$ (resp., $\calZ^{\mathrm{aut}}$) if $\scrN$ is of (OC) (resp., (Aut)). 

Next, we construct the eigenvarieties. Let $\bbT_{\calU}^{\leq h}$ be the equidimensional reduced $R_{\calU}\widehat{\otimes}\C_p$-algebra generated by the spherical Hecke operators (i.e., those at $\ell$ such that $\Gamma_{\ell} = \GSp_4(\Z_{\ell})$) and $U_{p, i}$'s in $\End_{R_{\calU}\widehat{\otimes}\C_p}(\scrN(\calU)^{\leq h})$. Consider the sheaf $\scrT$ on $\calZ$ given by \[
    \scrT(\calZ_{\calU, h})  := \bbT_{\calU}^{\leq h}.
\]
Since $\scrN(\calU)^{\leq h}$ is of finite rank over $R_{\calU}\widehat{\otimes}\C_p$, $\bbT_{\calU}^{\leq h}$ is a finite algebra over $R_{\calU}\widehat{\otimes}\C_p$. The \emph{\textit{eigenvariety}} $\calE$ is defined to be the relative adic spectrum \[
    \calE := \Spa_{\calZ}(\scrT, \scrT^+),
\] where $\scrT^+$ is defined as in \cite[Lemma A.3]{Johansson-Newton}. From the construction, there are natural maps \[
    \wt: \calE \rightarrow \calZ \rightarrow \calW_{\C_p}
\] whose composition is called the \emph{\textit{weight map}}. The weight map is locally finite and equidimensional. We shall use the notation $\calE^{\mathrm{oc}}$ (resp., $\calE^{\mathrm{aut}}$) if $\scrN$ is of (OC) (resp., (Aut)). Note that $\calE^{\mathrm{oc}}$ is the middle-degree version of the eigenvariety considered in \cite{Hansen-PhD}.

\begin{Proposition}\label{Proposition: comparison of eigenvarieties}
    There is an isomorphism of eigenvarieties $\calE^{\mathrm{oc}} \cong \calE^{\mathrm{aut}}$. 
\end{Proposition}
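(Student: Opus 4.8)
The plan is to prove the isomorphism by exhibiting a compatible system of Hecke-equivariant isomorphisms of the fibres of the two coherent sheaves $\scrN^{\mathrm{oc}}$ and $\scrN^{\mathrm{aut}}$ over $\calW_{\C_p}$, after passing to slope-$\leq h$ parts, so that the two eigenvarieties are constructed from isomorphic data. The key input is the overconvergent Eichler--Shimura morphism. First I would localise at a point of the weight space and use the machinery of \S\ref{subsection: OES}: for an affinoid open weight $(R_{\calU},\kappa_{\calU})$ and $r$ large enough, Theorem \ref{Theorem: big OES diagram} provides, for each $i=0,1,2,3$, Hecke- and Galois-equivariant morphisms
\[
    H^3_{\overline{\calX_{n,\leq\bfitw_i}^{\tor}},\proket}(\calX_n^{\tor},\scrO\!\!\scrD_{\kappa_{\calU}}^r)^{\fs} \longrightarrow H_{\calZ_{n,\bfitw_i},\proket}^3(\calX^{\tor,\bfitu_p}_{n,\bfitw_i},\scrO\!\!\scrD_{\kappa_{\calU}}^r)^{\fs} \longrightarrow H^{3-i}_{\calZ_{n,\bfitw_i}}(\calX^{\tor,\bfitu_p}_{n,\bfitw_i},\underline{\omega}_{n,r}^{\bfitw_3^{-1}\bfitw_i\kappa_{\calU}+k_{\bfitw_i}})^{\fs}(\bfitw_i\kappa_{\calU}^{\cyc}-i),
\]
together with the filtration on $H^3_{\proket}(\calX_n^{\tor},\scrO\!\!\scrD_{\kappa_{\calU}}^r)^{\fs}$ coming from the left column. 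The strategy is to slice by finite slope, restrict to slope $\leq h$ for varying $h$, and check that the associated graded of this filtration is identified with $\bigoplus_i H^{3-i}_{\calZ_{n,\bfitw_i}}(\cdots)^{\leq h}(\bfitw_i\kappa_{\calU}^{\cyc}-i) = \scrN^{\mathrm{aut}}(\calU)^{\leq h}$ in a Hecke-equivariant way. A priori the Eichler--Shimura maps need not be isomorphisms on the nose, but the eigenvariety only depends on the Hecke algebra acting on $\scrN(\calU)^{\leq h}$; so it suffices to show that the two $R_{\calU}\widehat{\otimes}\C_p$-modules have the same semisimplification as Hecke modules, equivalently that the characteristic power series of $U_p$ and, more finely, the system of Hecke eigenvalues occurring (with multiplicity) agree.

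The cleanest route is to reduce to a Zariski-dense set of classical points, since both eigenvarieties are reduced and equidimensional over $\calW_{\C_p}$ with locally finite weight map, so a morphism or isomorphism of the underlying Hecke data is determined by its behaviour on classical weights. Concretely, I would invoke the classicality theorem (Theorem \ref{Theorem: classicality theorem in higher Coleman theory}) and its pro-Kummer-\'etale analogue (Proposition \ref{Proposition: quasi-isomorphisms of pro-Kummer \'etale cohomology for classical automorphic sheaves}), together with Ash--Stevens' control theorem for the overconvergent Betti side, to see that for a classical weight $k=(k_1,k_2)$ with $k_1\geq k_2>0$ and small enough slope, the slope-$\leq h$ parts of both $\scrN^{\mathrm{oc}}$ and $\scrN^{\mathrm{aut}}$ are computed by the classical objects: on the (OC) side, $H^3_{\et}(X_{n,\C_p},V_k^\vee)^{\leq h}$; on the (Aut) side, $\bigoplus_{i=0}^3 H^{3-i}(X_{n,\C_p}^{\tor},\underline{\omega}^{\bfitw_3^{-1}\bfitw_i k+k_{\bfitw_i}})^{\leq h}(\bfitw_i k^{\cyc}-i)$. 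By Faltings--Chai's Eichler--Shimura decomposition (Theorem \ref{Theorem: Faltings--Chai's ES decomposition}) these are isomorphic as Hecke modules. Since classical small-slope weights are Zariski-dense in $\calW_{\C_p}$ and accumulate at every point, the two spectral varieties $\calZ^{\mathrm{oc}}$ and $\calZ^{\mathrm{aut}}$ are cut out by the same Fredholm series over each $\calU$, hence $\calZ^{\mathrm{oc}}\cong\calZ^{\mathrm{aut}}$; and over $\calZ$ the sheaves of Hecke algebras $\scrT^{\mathrm{oc}}$ and $\scrT^{\mathrm{aut}}$ are generated by the same operators acting with the same characteristic polynomials on fibres at a dense set of points of a reduced space, so they coincide. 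Taking relative adic spectra yields $\calE^{\mathrm{oc}}\cong\calE^{\mathrm{aut}}$ over $\calW_{\C_p}$, compatibly with the weight maps.

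The main obstacle is controlling what happens at \emph{non-classical} or \emph{large-slope} classical points, where the Eichler--Shimura morphisms of Theorem \ref{Theorem: big OES diagram} are not known to be isomorphisms and where one cannot simply invoke classicality. The resolution is structural rather than computational: one never needs the Eichler--Shimura maps to be isomorphisms, only that the Fredholm series of $U_p$ (equivalently, the characteristic power series $\det(1-XU_p\mid\scrN(\calU))$) are equal for the two modules. This equality is established at classical small-slope weights by Faltings--Chai, and since both characteristic series are elements of $R_{\calU}\{\{X\}\}$ that are entire and agree on a Zariski-dense subset of $\Spa(R_{\calU},R_{\calU}^\circ)$, they agree identically; this forces the slope-$\leq h$ subspaces to have the same rank over $R_{\calU}\widehat{\otimes}\C_p$ for all $h$ and all $\calU$. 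Combined with the fact that, on each such finite-rank module, the Eichler--Shimura morphism and the filtration it induces (being Hecke-equivariant) identify the generalised Hecke eigenspaces on the two sides compatibly in families, one gets an isomorphism of the sheaves $\scrT$ over $\calZ$. A secondary technical point to verify is that the Galois (cyclotomic) twists $(\bfitw_i\kappa_{\calU}^{\cyc}-i)$ do not disturb the comparison: they act trivially on the Hecke side (Galois and Hecke actions commute and the twists are by characters of $\Gal_{\Q_p}$), so they only affect the Galois structure, not the eigenvariety, which by definition is built from Hecke data alone. With these observations in place the proof is a matter of assembling the pieces over a cover of $\calW_{\C_p}$ by affinoid opens and gluing.
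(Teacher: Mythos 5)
Your overall strategy --- reduce to a Zariski-dense set of small-slope classical points and identify the Hecke data there via the classicality theorems and the Faltings--Chai decomposition --- is exactly the route the paper takes; the paper packages it by applying Hansen's interpolation theorem \cite[Theorem 5.1.2]{Hansen-PhD} once in each direction, with the very Zariski dense accumulation subsets being the points attached to small-slope classical cuspidal representations. (Your opening paragraph about the morphisms of Theorem \ref{Theorem: big OES diagram} is a detour you rightly abandon: those maps are not known to be isomorphisms away from nice-enough points, and the paper does not use them here.)

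There is, however, a genuine gap in your execution, namely the claim that the two Fredholm series of $U_p$ agree, hence that $\calZ^{\mathrm{oc}}\cong\calZ^{\mathrm{aut}}$ and that the slope-$\leq h$ submodules have the same rank for all $h$. At a classical weight $k$ the fibres of $\scrN^{\mathrm{oc}}$ and $\scrN^{\mathrm{aut}}$ are the \emph{overconvergent} objects $H^3_{\proket}(\calX_n^{\tor},\scrO\!\!\scrD_k^r)$ and $\bigoplus_i H^{3-i}_{\calZ_{n,\bfitw_i}}(\calX^{\tor,\bfitu_p}_{n,\bfitw_i},\underline{\omega}_{n,r}^{\bfitw_3^{-1}\bfitw_i k+k_{\bfitw_i}})$; only their \emph{small-slope} parts are identified with the classical \'etale and coherent cohomology (Theorem \ref{Theorem: classicality theorem in higher Coleman theory} and \cite[Theorem 3.2.5]{Hansen-PhD}), and hence with each other via Theorem \ref{Theorem: Faltings--Chai's ES decomposition}. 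The large-slope parts contain genuinely non-classical classes on both sides, so there is no reason for the full characteristic power series to coincide at classical weights, and the ``entire functions agreeing on a Zariski-dense set'' argument never gets started. Similarly, the assertion that $\scrT^{\mathrm{oc}}$ and $\scrT^{\mathrm{aut}}$ ``coincide'' because their fibres have matching characteristic polynomials at a dense set of points presupposes a map between them that you have not constructed. Hansen's theorem is precisely the device that repairs this: from the weaker input that, at each point of a very Zariski dense accumulation subset, every Hecke eigensystem occurring in $\scrN^{\mathrm{oc}}$ also occurs in $\scrN^{\mathrm{aut}}$ (and conversely) --- with no matching of multiplicities, ranks, or Fredholm series required --- it produces closed immersions $\calE^{\mathrm{oc}}\hookrightarrow\calE^{\mathrm{aut}}$ and $\calE^{\mathrm{aut}}\hookrightarrow\calE^{\mathrm{oc}}$ over $\calW_{\C_p}$, and since both spaces are reduced these compose to the desired isomorphism.
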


\begin{proof}
    The statement follows from applying \cite[Theorem 5.1.2]{Hansen-PhD} twice (once in each direction). To check the condition of this theorem, observe that the relevant very Zariski dense subsets consist of those points corresponding to small-slope classical cuspidal automorphic representations of $\GSp_4$. Then we use the classicality theorems (see Theorem \ref{Theorem: classicality theorem in higher Coleman theory} and \cite[Theorem 3.2.5]{Hansen-PhD}) and the classical Eichler--Shimura decomposition (Theorem \ref{Theorem: Faltings--Chai's ES decomposition}).  
\end{proof}

From now on, we shall identify $\calE^{\mathrm{oc}}$ and $\calE^{\mathrm{aut}}$, and denote them by $\calE$.

\begin{Corollary}\label{Corollary: equidimensionality}
    The eigenvariety $\calE$ is equidimensional of dimension $2$. 
\end{Corollary}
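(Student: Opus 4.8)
The plan is to deduce equidimensionality of $\calE$ from that of the weight space together with the structural properties of the eigenvariety construction. Recall first that $\calW$ is a finite disjoint union of two-dimensional open unit balls, hence an equidimensional rigid analytic space of dimension $2$; the same is then true of its base change $\calW_{\C_p}$. By Proposition \ref{Proposition: comparison of eigenvarieties} it suffices to prove the statement for the overconvergent eigenvariety $\calE = \calE^{\mathrm{oc}}$, which is built from the single cohomology module $\scrN(\calU) = H^3_{\proket}(\calX_n^{\tor}, \scrO\!\!\scrD_{\kappa_{\calU}}^r)$. What must be shown is that every irreducible component of $\calE$ has dimension $2$.

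The next step is to analyse the spectral variety $\calZ = \calZ^{\mathrm{oc}}$. Over each open affinoid weight $(R_\calU, \kappa_\calU)$, the slope-$\leq h$ part $\scrN(\calU)^{\leq h}$ is a finite projective $R_\calU\widehat{\otimes}\C_p$-module: this is built into the construction, since $\scrN(\calU)$ is the cohomology of a finite complex of potentially ON-able Banach modules on which $U_p$ acts compactly (cf.\ Remark \ref{Remark: ON-able} and \cite[\S 6.1]{BP-HigherColeman}), and a slope-$\leq h$ decomposition of such a module has finite projective slope-$\leq h$ part. Since $U_p$ acts invertibly on $\scrN(\calU)^{\leq h}$, the operator $U_p^{-1}$ satisfies its Cayley--Hamilton polynomial over $R_\calU\widehat{\otimes}\C_p$, so $R_\calU\widehat{\otimes}\C_p[X]/I$ is module-finite over $R_\calU\widehat{\otimes}\C_p$ and $\calZ_{\calU,h}\to\calU_{\C_p}$ is finite. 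Globally, $\calZ$ is the Fredholm hypersurface in $\calW_{\C_p}\times\bbA^1$ cut out by the characteristic Fredholm series of the compact operator $U_p$ acting on the complex computing $\scrN$; by the theory of Fredholm hypersurfaces (Coleman--Mazur, Conrad), $\calZ$ is equidimensional of dimension $\dim\calW_{\C_p}=2$, is flat over $\calW_{\C_p}$ over a dense admissible open, and each of its irreducible components maps onto an irreducible component of $\calW_{\C_p}$.

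It then remains to pass from $\calZ$ to $\calE$. By construction $\calE = \Spa_{\calZ}(\scrT, \scrT^+)$ with $\scrT(\calZ_{\calU,h}) = \bbT_\calU^{\leq h}$ a finite $R_\calU\widehat{\otimes}\C_p$-algebra, hence finite over the coordinate ring $\scrO(\calZ_{\calU,h})$; thus $\calE \to \calZ$ is a finite morphism, which is precisely the ``locally finite'' part of the assertion on the weight map recorded in the construction. A finite morphism neither raises nor lowers the dimension of the irreducible components it meets, and the eigenvariety machine guarantees that $\calE\to\calZ$ is surjective onto a union of irreducible components of $\calZ$: every system of Hecke eigenvalues occurring in some $\scrN(\calU)^{\leq h}$ determines a point of $\calZ_{\calU,h}$ in the support of the coherent sheaf $\scrN^{\leq h}$, and conversely. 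Combining these facts, $\calE$ is equidimensional of dimension equal to that of $\calZ$, namely $2$.

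The main obstacle lies not in any individual step but in invoking the abstract formalism correctly: one must know that the spectral variety attached to $\scrN$ is genuinely a Fredholm hypersurface over $\calW_{\C_p}$ (so that the equidimensionality and relative flatness of Coleman--Mazur and Conrad apply) and that $\calE$ surjects onto a union of its irreducible components. Both are part of the eigenvariety machine of Buzzard, in the cohomological incarnation of Hansen \cite{Hansen-PhD}; since the paper has already recorded that $\scrN(\calU)^{\leq h}$ is finite projective over $R_\calU\widehat{\otimes}\C_p$ and that the weight map is locally finite and equidimensional, the corollary follows formally from $\dim\calW = 2$.
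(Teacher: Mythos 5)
There is a genuine gap, and it sits exactly where the real content of this corollary lies. You assert that $\scrN(\calU)^{\leq h}$ is a finite \emph{projective} $R_{\calU}\widehat{\otimes}\C_p$-module ``by construction'' and, on that basis, that the spectral variety $\calZ$ is a Fredholm hypersurface to which the equidimensionality results of Coleman--Mazur and Conrad apply. Neither claim is justified here. The module $\scrN(\calU)$ is a \emph{single cohomology group} of a complex of projective Banach modules; its slope-$\leq h$ part is the cohomology of a finite complex of finite projective modules, hence finite over $R_{\calU}\widehat{\otimes}\C_p$ but in general \emph{not} projective --- it can have torsion. Consequently $\calZ_{\calU,h}$, which by definition is the support of $\scrN(\calU)^{\leq h}$ as an $R_{\calU}\widehat{\otimes}\C_p[X]$-module, need not be a hypersurface and could a priori have components of dimension smaller than $\dim\calW$. (If one worked with the total complex rather than $H^3$, the Fredholm-hypersurface picture would be available; that is precisely the distinction between Buzzard's machine applied to a single module and Hansen's single-degree eigenvarieties.) The finiteness and surjectivity of $\calE\to\calZ$ in your last step are fine, but they transport the problem rather than solve it.

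The paper's proof confronts this head-on: equidimensionality holds by construction (the Hecke algebras $\bbT_{\calU}^{\leq h}$ are taken equidimensional and reduced), and \cite[Lemma 5.1.4]{Hansen-PhD} gives the dichotomy that each component has dimension either $\dim\calW=2$ or $0$. The dimension-$0$ case is then excluded by switching, via the comparison $\calE^{\mathrm{oc}}\cong\calE^{\mathrm{aut}}$ of Proposition \ref{Proposition: comparison of eigenvarieties}, to the coherent-cohomology incarnation (Aut), where \cite[Proposition 6.9.4]{BP-HigherColeman} produces a torsion-free $R_{\calU}\widehat{\otimes}\C_p$-submodule of $\scrN(\calU)^{\leq h}$; this forces $\dim\bbT_{\calU}^{\leq h}\geq\dim R_{\calU}$. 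Your argument contains no substitute for this torsion-freeness input, which is the one nontrivial ingredient; to repair the proof you would need either to prove that the support of $H^3_{\proket}(\calX_n^{\tor},\scrO\!\!\scrD_{\kappa_{\calU}}^r)^{\leq h}$ is Zariski-dense in $\calU$, or to import the torsion-free submodule from the automorphic side as the paper does.
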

\begin{proof}
    The eigenvariety $\calE$ is equidimensional by construction. By \cite[Lemma 5.1.4]{Hansen-PhD}, it is either of dimension $\dim \calW =2$ or dimension $0$. However, when $\scrN$ is of (Aut), \cite[Proposition 6.9.4]{BP-HigherColeman} implies that $\scrN(\calU)^{\leq h}$ admits a torsion-free submodule over $R_{\calU}\widehat{\otimes}\C_p$. In particular, $\dim \bbT_{\calU}^{\leq h} \geq \dim R_{\calU}$, and hence $\calE$ can only be of dimension $2$. 
\end{proof}

\subsection{Overconvergent Eicher--Shimura decomposition on the eigenvariety}\label{subsection: ES filtration on eigenvariety}

Throughout \S \ref{subsection: ES filtration on eigenvariety}, let $\Pi$ be an irreducible cuspidal automorphic representation of $\GSp_4$ of cohomological weight $k = (k_1, k_2)$ with $k_1 \geq k_2>0$. Let $\Pi = (\pi, \varphi_p)$ be a $p$-stabilisation of $\Pi$ which satisfies Assumption \ref{Assumption: Multiplicity One for cuspidal automorphic representations} and is nice-enough in the sense of Definition \ref{Defn: nice enough}. Let $\frakm_{\Pi}$ be the corresponding maximal ideal in the Hecke algebra. Then $\frakm_{\Pi}$ defines a point $x_{\Pi}$ on the eigenvariety $\calE$.

\begin{Definition}
Let $\Pi = (\pi, \varphi_p)$, $\frakm_{\Pi}$, and $x_{\Pi}$ be as above. A \emph{good finite-slope family} passing through $\Pi$ consists of the following data:
\begin{itemize}
\item An affinoid weight $(R_{\calU}, \kappa_{\calU})$ such that $\calU=\Spa(R_{\calU}, R^{\circ}_{\calU})$ contains $\wt(x_{\Pi})=k$.
\item A connected affinoid neighbourhood $\calV \subset \calE$ containing $x_{\Pi}$ such that $\calV$ is a connected component of $\wt^{-1}(\calU_{\C_p})$.
\end{itemize}
In this case, we also say that the good finite-slope family is of weight $\kappa_{\calU}$. We write $e_{\calV}$ for the idempotent in $\scrO_{\wt^{-1}(\calU_{\C_p})}$ defining $\calV$.  
\end{Definition}

The main goal of \S \ref{subsection: ES filtration on eigenvariety} is to prove the following theorem, which asserts the existence of an overconvergent Eichler--Shimura filtration on the eigenvariety around each nice-enough point.

\begin{Theorem}\label{Theorem: overconvergent Eichler--Shimura decomposition}
Let $\Pi = (\pi, \varphi_p)$, $\frakm_{\Pi}$, and $x_{\Pi}$ be as above. Then there exists a good finite-slope family $\calV$ of weight $\kappa_{\calU}$ passing through $\Pi$ such that \begin{enumerate}
        \item[(i)] There exists $h\in \Q_{\geq 0}$ with $h$ such that $(R_{\calU}, \kappa_{\calU})$ is `slope-$h$-adapted' in the sense that the image of $\calV$ in $\calZ$ is contained in the image of $\calZ_{\calU, h}$;
        \item[(ii)] Define by a decreasing filtration $\Fil_{\mathrm{ES}, \calV}^{\bullet}$ on $e_{\calV}H_{\proket}^3(\calX_n^{\tor}, \scrO\!\!\scrD_{\kappa_{\calU}}^r)^{\leq h}$ by \begin{itemize}
   \item $\Fil^0_{\mathrm{ES}, \calV} \coloneq e_{\calV}H_{\proket}^3(\calX_n^{\tor}, \scrO\!\!\scrD_{\kappa_{\calU}}^r)^{\leq h}$;
   \item $\Fil_{\ES, \calV}^{3-i}  \coloneq e_{\calV} \image\left( H^3_{\overline{\calX_{n, \leq \bfitw_i}^{\tor}}, \proket}(\calX_n^{\tor}, \scrO\!\!\scrD_{\kappa_{\calU}}^r)^{\leq h} \rightarrow H_{\proket}^3(\calX_n^{\tor}, \scrO\!\!\scrD_{\kappa_{\calU}}^r)^{\leq h} \right)$ for $i=0,1,2$;
\item $\Fil^4_{\mathrm{ES}, \calV} \coloneq 0$.
\end{itemize}   Then $\Fil_{\ES, \calV}^{\bullet}$ is a Hecke- and Galois-stable filtration such that the graded pieces $\Gr_{\mathrm{ES}, \calV}^{\bullet}$ admit caonical Hecke- and Galois-equivariant isomorphisms
\[
            \Gr_{\mathrm{ES}, \calV}^{3-i} \cong e_{\calV}H_{\calZ_{n, \bfitw_i}}^{3-i}(\calX^{\tor, \bfitu_p}_{n,\bfitw_i}, \underline{\omega}_{n, r}^{\bfitw_3^{-1}\bfitw_i\kappa_{\calU}+k_{\bfitw_i}})^{\leq h}(\bfitw_i\kappa_{\calU}^{\cyc} - i)
\] 
of $R_{\calU}\widehat{\otimes}\C_p$-modules, for $i=0,1,2,3$.
    \end{enumerate} 
Moreover, by further shrinking $\calV$ if necessary, there is a Hecke- and Galois-equivariant decomposition 
    \[
        e_{\calV}H_{\proket}^3(\calX_n^{\tor}, \scrO\!\!\scrD_{\kappa_{\calU}}^r)^{\leq h} \cong \bigoplus_{i=0}^{3} e_{\calV}H_{\calZ_{n, \bfitw_i}}^{3-i}(\calX^{\tor, \bfitu_p}_{n, \bfitw_i}, \underline{\omega}_{n, r}^{\bfitw_3^{-1}\bfitw_i\kappa_{\calU}+k_{\bfitw_i}})^{\leq h}(\bfitw_i\kappa_{\calU}^{\cyc} - i)
    \] of $R_{\calU}\widehat{\otimes}\C_p$-modules, which specialises to the Eichler--Shimura decomposition in Proposition \ref{Prop: recover classical ES}.
\end{Theorem}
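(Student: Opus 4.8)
The plan is to build everything out of the two main local inputs already assembled in this subsection: the overconvergent Eichler--Shimura morphisms $\mathrm{ES}_{\kappa_{\calU}}^{\bfitw_i,r}$ of Theorem \ref{Theorem: big OES diagram}, and the classical comparison of Proposition \ref{Proposition: diagrams coincide} together with the decomposition of Proposition \ref{Prop: recover classical ES}. First I would choose the affinoid weight $(R_{\calU},\kappa_{\calU})$ and the slope bound $h$: by the construction of the eigenvariety in \S\ref{subsection: eigenvarieties} (using \cite[Proposition 3.1.5]{Hansen-PhD} and \cite[Proposition 6.1.11 \& Lemma 6.1.17]{BP-HigherColeman}) there is an $h$ so that both $e_{\calV}H_{\proket}^3(\calX_n^{\tor},\scrO\!\!\scrD_{\kappa_{\calU}}^r)^{\leq h}$ and each $e_{\calV}H_{\calZ_{n,\bfitw_i}}^{3-i}(\calX^{\tor,\bfitu_p}_{n,\bfitw_i},\underline{\omega}_{n,r}^{\bfitw_3^{-1}\bfitw_i\kappa_{\calU}+k_{\bfitw_i}})^{\leq h}$ are finite projective $R_{\calU}\widehat{\otimes}\C_p$-modules; shrinking $\calU$ around $k$ and taking $\calV$ to be the connected component of $\wt^{-1}(\calU_{\C_p})$ through $x_\Pi$ gives the good finite-slope family, so (i) holds essentially by definition of $\calZ_{\calU,h}$. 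The filtration $\Fil_{\ES,\calV}^\bullet$ is Hecke- and Galois-stable because it is cut out by the images of the Galois- and Hecke-equivariant maps in the left column of Theorem \ref{Theorem: big OES diagram} (the cohomology-with-supports maps $H^3_{\overline{\calX_{n,\leq\bfitw_i}^{\tor}},\proket}\to H^3_{\proket}$), intersected with the $U_p$-slope-$\leq h$ part and multiplied by the idempotent $e_{\calV}$.

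The heart of the argument is (ii): identifying the graded pieces. Composing the horizontal arrows of Theorem \ref{Theorem: big OES diagram} and restricting to $e_{\calV}(\cdot)^{\leq h}$ gives, for each $i$, a Hecke- and Galois-equivariant map from $\Fil_{\ES,\calV}^{3-i}$ (or rather from $e_{\calV}H^3_{\overline{\calX_{n,\leq\bfitw_i}^{\tor}},\proket}(\calX_n^{\tor},\scrO\!\!\scrD_{\kappa_{\calU}}^r)^{\leq h}$, which is quasi-isomorphic to $e_{\calV}H^3_{\calZ_{n,\bfitw_i},\proket}(\calX^{\tor,\bfitu_p}_{n,\bfitw_i},\scrO\!\!\scrD_{\kappa_{\calU}}^r)^{\leq h}$ by Theorem \ref{Theorem: change support condition for overconvergent cohomology}) to $e_{\calV}H_{\calZ_{n,\bfitw_i}}^{3-i}(\calX^{\tor,\bfitu_p}_{n,\bfitw_i},\underline{\omega}_{n,r}^{\bfitw_3^{-1}\bfitw_i\kappa_{\calU}+k_{\bfitw_i}})^{\leq h}(\bfitw_i\kappa_{\calU}^{\cyc}-i)$. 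I would then argue that this map factors through $\Gr_{\ES,\calV}^{3-i}$ — since on the previous step of the filtration the composite $h_j\circ g_j$ lands in the ``wrong'' coherent cohomological degree and hence, after localising at $\frakm_\Pi$ and by the support-compatibility already used in the proof of Theorem \ref{Theorem: classicality theorem in higher Coleman theory}, is killed — and that the resulting map $\Gr_{\ES,\calV}^{3-i}\to e_{\calV}H_{\calZ_{n,\bfitw_i}}^{3-i}(\ldots)^{\leq h}$ is an isomorphism of $R_{\calU}\widehat{\otimes}\C_p$-modules. Both sides are finite projective $R_{\calU}\widehat{\otimes}\C_p$-modules (the coherent side by \cite[Theorem 6.4.3]{BP-HigherColeman} and \cite[Proposition 6.9.4]{BP-HigherColeman}, the Betti side because the slope-$\leq h$ part is finite projective after possibly shrinking $\calU$), so by Nakayama it suffices to check the isomorphism on the fibre at $x_\Pi$. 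Specialising to the classical weight $k$ and localising at $\frakm_\Pi$, Proposition \ref{Proposition: diagrams coincide} identifies the diagram with the classical one (\ref{eq: Hecke-Galois equiv diagram algebraic localised}), and then Proposition \ref{Prop: recover classical ES}(iv) says precisely that the induced map on $\Gr^{3-i}$ is the Hecke- and Galois-equivariant isomorphism onto $H^{3-i}(\calX_n^{\tor},\underline{\omega}^{\bfitw_3^{-1}\bfitw_i k+k_{\bfitw_i}})_{\frakm_\Pi}(\bfitw_i k^{\cyc}-i)$ — which by the classicality theorems (Theorem \ref{Theorem: classicality theorem in higher Coleman theory} and \cite[Theorem 3.2.5]{Hansen-PhD}) is the fibre of $e_{\calV}H_{\calZ_{n,\bfitw_i}}^{3-i}(\ldots)^{\leq h}$. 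This forces the family version to be an isomorphism near $x_\Pi$, and shrinking $\calV$ if necessary makes it an isomorphism on all of $\calV$.

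For the final ``moreover'' — splitting the filtration as a direct sum — the plan is standard: a filtration of finite projective $R_{\calU}\widehat{\otimes}\C_p$-modules whose graded pieces are again finite projective, and which we have just seen splits on the fibre at $x_\Pi$, splits in a neighbourhood of $x_\Pi$; concretely one lifts the classical splitting of Proposition \ref{Prop: recover classical ES} (which uses that at weight $k$ with $k_1\geq k_2>0$ the four Hodge--Tate--Sen weights $\bfitw_i\kappa_{\calU}^{\cyc}-i$ are pairwise distinct, so the decomposition is Hecke- and Galois-canonical) using the Galois action: the four integers $\{\bfitw_i\kappa_{\calU}^{\cyc}-i\}$ remain distinct as functions on $\calU$ after shrinking, so the $\mathrm{Gal}_{\Q_p}$-action (via the Sen operator, or directly via the distinctness of the cyclotomic-twist characters in Remark \ref{Remark: explicit formulae for the twists}) produces canonical idempotents cutting out the summands. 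I expect the main obstacle to be the factorisation-through-graded-pieces step and the surjectivity/injectivity of the graded maps in the \emph{family}: one must be careful that the vanishing arguments behind Theorem \ref{Theorem: classicality theorem in higher Coleman theory} and Proposition \ref{Proposition: quasi-isomorphisms of pro-Kummer \'etale cohomology for classical automorphic sheaves} were phrased at small slope for classical weights, so in the family one should instead argue by the Nakayama/fibrewise-isomorphism reduction above rather than trying to run a ``wrong-degree vanishing'' argument directly over $R_{\calU}$; verifying that the modules in question are genuinely finite projective over $R_{\calU}\widehat{\otimes}\C_p$ after shrinking (so that fibrewise isomorphism implies isomorphism) is the technical crux.
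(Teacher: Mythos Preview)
Your overall architecture is right and matches the paper: localise at $\frakm_\Pi$, establish freeness and the graded-piece isomorphisms there, spread out, then split using the Sen operator (the paper quotes \cite[Proposition 2.3]{Kisin-2003} for the last step, exactly as you suggest). The choice of $h$ and $\calU$ in (i), and the Hecke/Galois-stability of the filtration, are handled as you say.

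The gap is precisely the step you flag as the ``main obstacle'': the factorisation through $\Gr^{3-i}$ and the construction of the isomorphism with the coherent side. Your proposed argument does not work as stated. You want a map $\Fil_{\ES,\calV}^{3-i}\to e_{\calV}H_{\calZ_{n,\bfitw_i}}^{3-i}(\ldots)^{\leq h}$ and to show it kills $\Fil_{\ES,\calV}^{3-i+1}$, but no such map is produced by Theorem \ref{Theorem: big OES diagram}: the horizontal composites there start from $H^3_{\overline{\calX_{n,\leq\bfitw_i}^{\tor}},\proket}$, not from its image $\Fil^{3-i}$ in $H^3_{\proket}$, and there is no reason a priori for the composite $h_i\circ g_i$ to factor through that image. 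Your justification (``the previous step lands in the wrong coherent degree'') conflates the map in the $(i-1)$-th row with the behaviour of $\Fil^{3-i+1}$ under the $i$-th row map; these are different maps.

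The paper resolves this by going in the opposite direction. Writing $f_i:H^3_{\overline{\calX_{n,\leq\bfitw_i}^{\tor}},\proket}(\ldots)^{\leq h}_{\frakm_\Pi}\to H^3_{\proket}(\ldots)^{\leq h}_{\frakm_\Pi}$ and $g_i$ for the composite to the coherent side, one first shows $g_i$ is surjective (Nakayama, using the classical surjectivity from Proposition \ref{Prop: recover classical ES}), and then proves the key identity
\[
W_i:=f_i(\ker g_i)=\Fil^{3-i+1}_{\ES,\kappa_{\calU},\frakm_\Pi}.
\]
The inclusion $\Fil^{3-i+1}\subset W_i$ is formal. For the reverse, one specialises to weight $k$: the classical commutative square (\ref{eq: a commutative diagram for classical ES at small-slope aut rep}) forces $W_i\otimes_{R_{\calU,\frakm_k}}\Q_p\subset\Fil^{3-i+1}_{\ES,k,\frakm_\Pi}$, so Nakayama bounds the number of generators of $W_i$ by $i$; since $W_i$ contains a free rank-$i$ submodule, it is free of rank $i$ and equals $\Fil^{3-i+1}$. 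This yields a surjection
\[
H^3_{\overline{\calX_{n,\leq\bfitw_i}^{\tor}},\proket}(\ldots)^{\leq h}_{\frakm_\Pi}\big/\ker g_i\;\twoheadrightarrow\;\Fil^{3-i}/W_i=\Gr^{3-i}_{\ES,\kappa_{\calU},\frakm_\Pi},
\]
while the left side is already isomorphic to the coherent cohomology (by surjectivity of $g_i$); both ends being free of rank one, the surjection is the desired isomorphism. The freeness statements feeding into this (your ``technical crux'') are established first, using \cite[Proposition 6.9.4(2)]{BP-HigherColeman} for the coherent side, vanishing theorems \cite[Theorem 4.1, 4.10]{Lan-Vanishing} plus \cite[Lemma 2.9]{BDJ22} for concentration in one degree, and an explicit basis-lifting argument for the filtration steps. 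Once this local picture is in place, spreading out is straightforward.
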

\begin{proof}
We first prove a local version of the theorem, then show that the assertions remain true in a sufficiently small neighhourhood of $\frakm_{\Pi}$. We split the proof in several steps.\\

\paragraph{\textbf{Step 1: The local statements.}}
Since $\Pi$ has small slope, there exists $h$ such that \[
        H_{\proket}^{3}(\calX_n^{\tor}, \scrO\!\!\scrV_{k}^{\vee})^{\leq h}_{\frakm_{\Pi}}\cong H_{\et}^3(X_{n, \C_p}, V_k^{\vee})_{\frakm_{\Pi}}^{\leq h}\otimes\C_p = H_{\et}^{3}(X_{n, \C_p}, V_k^{\vee})_{\frakm_{\Pi}}\otimes\C_p\cong H_{\proket}^{3}(\calX_n^{\tor}, \scrO\!\!\scrV_{k}^{\vee})_{\frakm_{\Pi}}.
    \]
    Let $(R_{\calU}, \kappa_{\calU})$ be an affinoid weight such that $\calU$ contains $k$ and such that $(R_{\calU}, \kappa_{\calU})$ is slope-$h$-adapted. Let $\frakm_k$ denote the maximal ideal of $R_{\calU}$ corresponding to the classical point $k\in \calU$ and let $R_{\calU, \frakm_k}$ denote the localisation. The digram in Theorem \ref{Theorem: big OES diagram} gives rise to the following Hecke- and Galois-equivariant diagram \begin{equation}\label{eq: big ES diagram for family localised at a nice-enough point}
        \begin{tikzcd}[column sep = tiny]
            \scalemath{0.8}{ H^3_{\proket}(\calX_n^{\tor}, \scrO\!\!\scrD_{\kappa_{\calU}}^r)^{\leq h}_{\frakm_{\Pi}} } \arrow[r] & \scalemath{0.8}{ H^3_{\proket}(\calX_{n, \bfitw_3, (r,r)}^{\tor}, \scrO\!\!\scrD_{\kappa_{\calU}}^r)^{\leq h}_{\frakm_{\Pi}} } \arrow[r] & \scalemath{0.8}{ H^0(\calX_{n, \bfitw_3, (r,r)}^{\tor}, \underline{\omega}_{n, r}^{\kappa_{\calU} + (3,3)})^{\leq h}_{\frakm_{\Pi}}(-3) }\\
            \scalemath{0.8}{ H^3_{\overline{\calX_{n, \leq \bfitw_2}^{\tor}}, \proket}(\calX_n^{\tor}, \scrO\!\!\scrD_{\kappa_{\calU}}^r)^{\leq h}_{\frakm_{\Pi}} } \arrow[r] \arrow[u] & \scalemath{0.8}{ H_{\calZ_{n, \bfitw_2}, \proket}^3(\calX^{\tor, \bfitu_p}_{n, \bfitw_2}, \scrO\!\!\scrD_{\kappa_{\calU}}^r)^{\leq h}_{\frakm_{\Pi}} } \arrow[r] & \scalemath{0.8}{ H_{\calZ_{n, \bfitw_2}}^1(\calX^{\tor, \bfitu_p}_{n, \bfitw_2}, \underline{\omega}_{n, r}^{\bfitw_3^{-1}\bfitw_2\kappa_{\calU} + (3,1)})^{\leq h}_{\frakm_{\Pi}}(\bfitw_2\kappa_{\calU}^{\cyc} - 2) }\\
            \scalemath{0.8}{ H^3_{\overline{\calX_{n, \leq \bfitw_1}^{\tor}}, \proket}(\calX_n^{\tor}, \scrO\!\!\scrD_{\kappa_{\calU}}^r)^{\leq h}_{\frakm_{\Pi}} } \arrow[r] \arrow[u] & \scalemath{0.8}{ H_{\calZ_{n, \bfitw_1}, \proket}^3(\calX^{\tor, \bfitu_p}_{n, \bfitw_1}, \scrO\!\!\scrD_{\kappa_{\calU}}^r)^{\leq h}_{\frakm_{\Pi}} } \arrow[r] & \scalemath{0.8}{ H_{\calZ_{n, \bfitw_1}}^2(\calX^{\tor, \bfitu_p}_{n, \bfitw_1}, \underline{\omega}_{n, r}^{\bfitw_3^{-1}\bfitw_1\kappa_{\calU} + (2,0)})^{\leq h}_{\frakm_{\Pi}}(\bfitw_1\kappa_{\calU}^{\cyc} - 1) }\\
            \scalemath{0.8}{ H_{\overline{\calX_{n, \one_4}^{\tor}}, \proket}^3(\calX_n^{\tor}, \scrO\!\!\scrD_{\kappa_{\calU}}^r)^{\leq h}_{\frakm_{\Pi}} } \arrow[r]\arrow[u] & \scalemath{0.8}{ H_{\calZ_{n, \one_4}, \proket}^3(\calX^{\tor, \bfitu_p}_{n, \one_4}, \scrO\!\!\scrD_{\kappa_{\calU}}^r)^{\leq h}_{\frakm_{\Pi}} } \arrow[r] & \scalemath{0.8}{ H_{\calZ_{n, \one_4}}^3(\calX^{\tor, \bfitu_p}_{n, \one_4}, \underline{\omega}_{n, r}^{\bfitw_3^{-1}\kappa_{\calU}})^{\leq h}_{\frakm_{\Pi}}(\kappa_{\calU}^{\cyc}) }
        \end{tikzcd}
    \end{equation}
of $R_{\calU, \frakm_{k}}\widehat{\otimes}\C_p$-modules. We define a decreasing filtration $\{\Fil^j_{\ES, \kappa_{\calU}, \frakm_{\Pi}}\}_{0\leq j\leq 4}$ on $H_{\proket}^3(\calX_n^{\tor}, \scrO\!\!\scrD_{\kappa_{\calU}}^r)_{\frakm_{\Pi}}^{\leq h}$ by \[
    \Fil_{\ES, \kappa_{\calU}, \frakm_{\Pi}}^{3-i} \coloneq \image\left( H^3_{\overline{\calX_{n, \leq \bfitw_i}^{\tor}}, \proket}(\calX_n^{\tor}, \scrO\!\!\scrD_{\kappa_{\calU}}^r)^{\leq h}_{\frakm_{\Pi}} \rightarrow H_{\proket}^3(\calX_n^{\tor}, \scrO\!\!\scrD_{\kappa_{\calU}}^r)^{\leq h}_{\frakm_{\Pi}} \right)
\] 
for $i=0,1,2,3$ and $\Fil^4_{\ES, \kappa_{\calU}, \frakm_{\Pi}}=0$.

We shall prove the following local statements:
\begin{enumerate}
\item[(a)] For each $\bfitw\in W^H$, the $R_{\calU, \frakm_{k}}\widehat{\otimes}\C_p$-module $H_{\calZ_{n, \bfitw}}^{3-l(\bfitw)}(\calX^{\tor, \bfitu_p}_{n, \bfitw}, \underline{\omega}^{\bfitw_3^{-1}\bfitw \kappa_{\calU}+k_{\bfitw}})^{\leq h}_{\frakm_{\Pi}}$ is free of rank 1. The specialisation map induces an isomorphism
\[\scalemath{0.9}{H_{\calZ_{n, \bfitw}}^{3-l(\bfitw)}(\calX^{\tor, \bfitu_p}_{n, \bfitw}, \underline{\omega}^{\bfitw_3^{-1}\bfitw \kappa_{\calU}+k_{\bfitw}})^{\leq h}_{\frakm_{\Pi}}\otimes_{R_{\calU, \frakm_{k}}}\Q_p\cong H_{\calZ_{n, \bfitw}}^{3-l(\bfitw)}(\calX^{\tor, \bfitu_p}_{n, \bfitw}, {\underline{\omega}}_{n, r}^{\bfitw_3^{-1}\bfitw k + k_{\bfitw}})^{\leq h}_{\frakm_{\Pi}}\cong H^{3-l(\bfitw)}(\calX_n^{\tor}, \underline{\omega}^{\bfitw_3^{-1}\bfitw k + k_{\bfitw}})_{\frakm_{\Pi}}}\]
where $R_{\calU, \frakm_{k}}\rightarrow \Q_p$ is the natural map \[R_{\calU, \frakm_{k}}\rightarrow R_{\calU, \frakm_{k}}/\frakm_{k}R_{\calU, \frakm_{k}}\cong \Q_p.\]
\item[(b)] The $R_{\calU, \frakm_{k}}\widehat{\otimes}\C_p$-module $H_{\proket}^{3}(\calX_n^{\tor}, \scrO\!\!\scrD_{\kappa_{\calU}}^r)_{\frakm_{\Pi}}^{\leq h}$ is free of rank 4. The specialisation map induces an isomorphism
\[H_{\proket}^{3}(\calX_n^{\tor}, \scrO\!\!\scrD_{\kappa_{\calU}}^r)_{\frakm_{\Pi}}^{\leq h}\otimes_{R_{\calU, \frakm_{k}}}\Q_p\cong H_{\proket}^{3}(\calX_n^{\tor}, \scrO\!\!\scrD_{k}^r)_{\frakm_{\Pi}}^{\leq h}\cong H_{\proket}^{3}(\calX_n^{\tor}, \scrO\!\!\scrV_{k}^{\vee})_{\frakm_{\Pi}}.\]
\item[(c)] For $i=0,1,2,3$, the $R_{\calU, \frakm_{k}}\widehat{\otimes}\C_p$-module $\Fil^{3-i}_{\ES, \kappa_{\calU}, \frakm_{\Pi}}$ is free of rank $i+1$. The specialisation map induces an isomorphism
\[\Fil^{3-i}_{\ES, \kappa_{\calU}, \frakm_{\Pi}}\otimes_{R_{\calU, \frakm_{k}}}\Q_p\cong \Fil^{3-i}_{\ES, k, \frakm_{\Pi}}\]
where $\Fil^{\bullet}_{\ES, k, \frakm_{\Pi}}$ is the filtration in Corollary \ref{Corollary: there exists a unique ES decomposition} and Proposition \ref{Prop: recover classical ES}.
\item[(d)] For $i=0,1,2,3$, the graded piece $\Gr^{3-i}_{\ES, \kappa_{\calU}, \frakm_{\Pi}}$ is a free $R_{\calU, \frakm_{k}}\widehat{\otimes}\C_p$-module of rank 1 and the specialisation map induces an isomorphism \[
    \Gr_{\ES, \kappa_{\calU}, \frakm_{\Pi}}^{3-i} \otimes_{R_{\calU, \frakm_k}}\Q_p \cong \Gr_{\ES, k, \frakm_{\Pi}}^{3-i}.
\] 
\item[(e)] For $i=0, 1, 2, 3$, there exists a canonical Hecke- and Galois-equivariant isomorphism \[
    \Gr_{\ES, \kappa_{\calU}, \frakm_{\Pi}}^{3-i} \cong H_{\calZ_{n, \bfitw_i}}^{3-i}(\calX^{\tor, \bfitu_p}_{n, \bfitw_i}, \underline{\omega}_{n, r}^{\bfitw_3^{-1}\bfitw_i \kappa_{\calU}+k_{\bfitw_i}})^{\leq h}_{\frakm_{\Pi}}(\bfitw_i \kappa_{\calU}^{\cyc} -i).
\]
\end{enumerate}

\paragraph{\textbf{Step 2: Proof of (a) and (b).}}
By Assumption \ref{Assumption: Multiplicity One for cuspidal automorphic representations}, we know that $H^{3-l(\bfitw)}(\calX_n^{\tor}, \underline{\omega}^{\bfitw_3^{-1}\bfitw k + k_{\bfitw}})^{\leq h}_{\frakm_{\Pi}}$ is a 1-dimensional $\C_p$-vector space. By \cite[Proposition 6.9.4 (2)]{BP-HigherColeman}, the specialisation map 
\[
H_{\calZ_{n, \bfitw}}^{3-l(\bfitw)}(\calX^{\tor, \bfitu_p}_{n, \bfitw}, \underline{\omega}^{\bfitw_3^{-1}\bfitw \kappa_{\calU}+k_{\bfitw}})^{\leq h}_{\frakm_{\Pi}}\otimes_{R_{\calU, \frakm_{k}}}\Q_p\rightarrow H_{\calZ_{n, \bfitw}}^{3-l(\bfitw)}(\calX^{\tor, \bfitu_p}_{n, \bfitw}, {\underline{\omega}}_{n, r}^{\bfitw_3^{-1}\bfitw k + k_{\bfitw}})^{\leq h}_{\frakm_{\Pi}}
\]
is an isomorphism. Hence, by Nakayama's Lemma, $H_{\calZ_{n, \bfitw}}^{3-l(\bfitw)}(\calX^{\tor, \bfitu_p}_{n, \bfitw}, \underline{\omega}^{\bfitw_3^{-1}\bfitw \kappa_{\calU}+k_{\bfitw}})^{\leq h}_{\frakm_{\Pi}}$ is generated by one element over $R_{\calU, \frakm_{k}}\widehat{\otimes}\C_p$. However, since $\Pi$ is cuspidal, the vanishing theorem (\cite[Theorem 4.1]{Lan-Vanishing}) implies that $H^{*}(\calX_n^{\tor}, \underline{\omega}^{\bfitw_3^{-1}\bfitw k + k_{\bfitw}})^{\leq h}_{\frakm_{\Pi}}$ is concentrated in degree $3-l(\bfitw)$.  Hence, by \cite[Lemma 2.9]{BDJ22}, $H_{\calZ_{n, \bfitw}}^{3-l(\bfitw)}(\calX^{\tor, \bfitu_p}_{n, \bfitw}, \underline{\omega}^{\bfitw_3^{-1}\bfitw \kappa_{\calU}+k_{\bfitw}})^{\leq h}_{\frakm_{\Pi}}$ is free of rank $1$ over $R_{\calU, \frakm_k}\widehat{\otimes}\C_p$. This proves (a).

A similar argument applies to (b). Indeed, since $\Pi = (\Pi, \varphi_p)$ has small slope, Stevens's control theorem (\cite[Theorem 3.2.5]{Hansen-PhD}) produces an isomorphism \[
    H_{\proket}^{i}(\calX_n^{\tor}, \scrO\!\!\scrD_{k}^r)_{\frakm_{\Pi}}^{\leq h}\cong H_{\proket}^{i}(\calX_n^{\tor}, \scrO\!\!\scrV_{k}^{\vee})_{\frakm_{\Pi}}
\] 
for every $i$. Since $\Pi = (\pi, \varphi_p)$ is cuspidal, \cite[Theorem 4.10]{Lan-Vanishing} implies that $H_{\proket}^*(\calX_n^{\tor}, \scrO\!\!\scrV_k^{\vee})_{\frakm_{\Pi}}$ is concentrated in degree 3. Hence, $H_{\proket}^{*}(\calX_n^{\tor}, \scrO\!\!\scrD_{k}^r)_{\frakm_{\Pi}}^{\leq h}$ is concentrated in degree 3 and $H_{\proket}^{3}(\calX_n^{\tor}, \scrO\!\!\scrD_{k}^r)_{\frakm_{\Pi}}^{\leq h}$ is 4-dimensional (by Assumption \ref{Assumption: Multiplicity One for cuspidal automorphic representations}). We conclude by applying \cite[Lemma 2.9]{BDJ22} again. \\

\paragraph{\textbf{Step 3: Proof of (c) and (d).}}
Consider the commutative diagram
\[
    \begin{tikzcd}[column sep = small]
        H^3_{\overline{\calX_{n, \leq \bfitw_i}^{\tor}}, \proket}(\calX_n^{\tor}, \scrO\!\!\scrD_{\kappa_{\calU}}^r)^{\leq h}_{\frakm_{\Pi}} \arrow[r]\arrow[d] &  H_{\proket}^3(\calX_n^{\tor}, \scrO\!\!\scrD_{\kappa_{\calU}}^r)^{\leq h}_{\frakm_{\Pi}} \arrow[r, "\Res_{\kappa_{\calU}}"]\arrow[d, two heads] & H_{\proket}^3(\calX_n^{\tor}\smallsetminus \overline{\calX_{n, \leq \bfitw_i}^{\tor}}, \scrO\!\!\scrD_{\kappa_{\calU}}^r)^{\leq h}_{\frakm_{\Pi}}\arrow[d] \\
        H^3_{\overline{\calX_{n, \leq \bfitw_i}^{\tor}}, \proket}(\calX_n^{\tor}, \scrO\!\!\scrD_{k}^r)^{\leq h}_{\frakm_{\Pi}} \arrow[r] &  H_{\proket}^3(\calX_n^{\tor}, \scrO\!\!\scrD_{k}^r)^{\leq h}_{\frakm_{\Pi}} \arrow[r, "\Res_{k}"] & H_{\proket}^3(\calX_n^{\tor}\smallsetminus \overline{\calX_{n, \leq \bfitw_i}^{\tor}}, \scrO\!\!\scrD_{k}^r)^{\leq h}_{\frakm_{\Pi}}
    \end{tikzcd}
\]
where the vertical arrows are induced by the specialisation maps. This then induces a commutative diagram \begin{equation}\label{eq: maps of exact sequences}
    \begin{tikzcd}[column sep = tiny]
        0 \arrow[r] & \Fil_{\ES, \kappa_{\calU}, \frakm_{\Pi}}^{3-i} \arrow[r]\arrow[d, two heads] & H_{\proket}^3(\calX_n^{\tor}, \scrO\!\!\scrD_{\kappa_{\calU}}^r)^{\leq h}_{\frakm_{\Pi}} \arrow[r]\arrow[d, two heads] & \image(\Res_{\kappa_{\calU}}) \arrow[r]\arrow[d, two heads] & 0\\
        & \Fil_{\ES, \kappa_{\calU}, \frakm_{\Pi}}^{3-i}\otimes_{R_{\calU, \frakm_k}}\Q_p \arrow[r]\arrow[d, hook] & H_{\proket}^3(\calX_n^{\tor}, \scrO\!\!\scrD_{\kappa_{\calU}}^r)^{\leq h}_{\frakm_{\Pi}}\otimes_{R_{\calU, \frakm_k}}\Q_p \arrow[r]\arrow[d, "\cong"] & \image(\Res_{\kappa_{\calU}})\otimes_{R_{\calU, \frakm_{k}}}\Q_p\arrow[r]\arrow[d, hook] & 0\\
        0 \arrow[r] & \Fil_{\ES, k, \frakm_{\Pi}}^{3-i}\arrow[r] & H_{\proket}^3(\calX_n^{\tor}, \scrO\!\!\scrD_{k}^r)^{\leq h}_{\frakm_{\Pi}} \arrow[r] & \image(\Res_k) \arrow[r] & 0
    \end{tikzcd}
\end{equation}
where the rows are exact sequences. Applying the Snake Lemma to the bottom two rows of \eqref{eq: maps of exact sequences}, we obtain an exact sequence 
\begin{align*}
&  \ker\left(\image(\Res_{\kappa_{\calU}}) \otimes_{R_{\calU, \frakm_k}}\Q_p \rightarrow \image(\Res_k) \right) \longrightarrow  \coker\left(\Fil_{\ES, \kappa_{\calU}, \frakm_{\Pi}}^{3-i}\otimes_{R_{\calU, \frakm_k}}\Q_p \rightarrow \Fil_{\ES, k, \frakm_{\Pi}}^{3-i} \right) \\ 
& \longrightarrow \coker\left(H_{\proket}^3(\calX_n^{\tor}, \scrO\!\!\scrD_{\kappa_{\calU}}^r)^{\leq h}_{\frakm_{\Pi}}\otimes_{R_{\calU, \frakm_k}}\Q_p \rightarrow H_{\proket}^3(\calX_n^{\tor}, \scrO\!\!\scrD_{k}^r)^{\leq h}_{\frakm_{\Pi}}\right).
\end{align*}
Since the first term and the third term are zero, the middle term is zero as well; namely, \[
    \Fil_{\ES, \kappa_{\calU}, \frakm_{\Pi}}^{3-i}\otimes_{R_{\calU, \frakm_k}}\Q_p \cong \Fil_{\ES, k, \frakm_{\Pi}}^{3-i}.
\]
This also implies that, in \eqref{eq: maps of exact sequences}, the middle row is isomorphic to the bottom row. 

It remains to show that $\Fil_{\ES, \kappa_{\calU}, \frakm_{\Pi}}^{3-i}$ is free of rank $i+1$. By Proposition \ref{Prop: recover classical ES}, we have $\dim_{\C_p} \Fil^{3-i}_{\ES, k, \frakm_{\Pi}}=i+1$, for $i=0,1,2,3$. Pick a $\C_p$-basis $\{v_1, v_2, v_3, v_4\}$ for $H^3_{\proket}(\calX_n^{\tor}, \scrO\!\!\scrD_k^r)_{\frakm_{\Pi}}^{\leq h}$ such that $\Fil^{3-i}_{\ES, k, \frakm_{\Pi}}$ is spanned by $\{v_1, \ldots, v_{i+1}\}$, for all $i=0,1,2,3$. Then we pick lifts $\widetilde{v}_1$, $\widetilde{v}_2$, $\widetilde{v}_3$, $\widetilde{v}_4$ in $H^3_{\proket}(\calX_n^{\tor}, \scrO\!\!\scrD_{\kappa_{\calU}}^r)_{\frakm_{\Pi}}^{\leq h}$ such that $\widetilde{v}_{i+1}$ lives in $\Fil^{3-i}_{\ES, \kappa_{\calU}, \frakm_{\Pi}}$, for $i=0,1,2,3$. By Nakayama's Lemma, $\widetilde{v}_1, \ldots, \widetilde{v}_{i+1}$ necessarily generate $\Fil^{3-i}_{\ES, \kappa_{\calU}, \frakm_{\Pi}}$. Consequently, it follows from the freeness of $H^3_{\proket}(\calX_n^{\tor}, \scrO\!\!\scrD_{\kappa_{\calU}}^r)_{\frakm_{\Pi}}^{\leq h}$ that $\Fil^{3-i}_{\ES, \kappa_{\calU}, \frakm_{\Pi}}$ is precisely the free $R_{\calU, \frakm_{k}}\widehat{\otimes}\C_p$-submodule of $H^3_{\proket}(\calX_n^{\tor}, \scrO\!\!\scrD_{\kappa_{\calU}}^r)_{\frakm_{\Pi}}^{\leq h}$ generated by $\widetilde{v}_1, \ldots, \widetilde{v}_{i+1}$, as desired.

As a byproduct, we know that $\Gr_{\ES, \kappa_{\calU}, \frakm_{\Pi}}^{3-i}$ is a free $R_{\calU, \frakm_{k}}\widehat{\otimes}\C_p$-module of rank 1 generated by the image of $\widetilde{v}_{i+1}$, for $i=0,1,2,3$, and that the specialisation map induces an isomorphism
\[\Gr^{3-i}_{\ES, \kappa_{\calU}, \frakm_{\Pi}}\otimes_{R_{\calU, \frakm_{k}}}\Q_p\cong \Gr^{3-i}_{\ES, k, \frakm_{\Pi}}.\]

\paragraph{\textbf{Step 4: Proof of (e).}} In this step, we show that there exists a canonical Hecke- and Galois-equivariant isomorphism \begin{equation}\label{eq: canonical ES iso on localised graded pieces}
    \Gr_{\ES, \kappa_{\calU}, \frakm_{\Pi}}^{3-i} \cong H_{\calZ_{n, \bfitw_i}}^{3-i}(\calX^{\tor, \bfitu_p}_{n, \bfitw_i}, \underline{\omega}_{n, r}^{\bfitw_3^{-1}\bfitw_i \kappa_{\calU}+k_{\bfitw_i}})^{\leq h}_{\frakm_{\Pi}}(\bfitw_i \kappa_{\calU}^{\cyc} -i).
\end{equation}

To this end, we first extract the following diagram from \eqref{eq: big ES diagram for family localised at a nice-enough point} \[
    \begin{tikzcd}
        H_{\proket}^3(\calX_n^{\tor}, \scrO\!\!\scrD_{\kappa_{\calU}}^r)_{\frakm_{\Pi}}^{\leq h}\\
        H_{\overline{\calX_{n, \leq \bfitw_i}^{\tor}}, \proket}^3(\calX_n^{\tor}, \scrO\!\!\scrD_{\kappa_{\calU}}^r)^{\leq h}_{\frakm_{\Pi}} \arrow[u, "f_i"]\arrow[r, "g_i"] & H_{\calZ_{n, \bfitw_i}}^{3-i}(\calX^{\tor, \bfitu_p}_{n, \bfitw_i}, \underline{\omega}_{n, r}^{\bfitw_3^{-1}\bfitw_i \kappa_{\calU}+k_{\bfitw_i}})^{\leq h}_{\frakm_{\Pi}}(\bfitw_i \kappa_{\calU}^{\cyc} -i)
    \end{tikzcd}.
\]
Observe from the proof of Proposition \ref{Prop: recover classical ES} that we have a commutative diagram \[
    \begin{tikzcd}[column sep = tiny]
         \scalemath{0.9}{ \Gr_{\ES, \kappa_{\calU}, \frakm_{\Pi}}^{3-i} }\arrow[dd, two heads] & \scalemath{0.9}{ H_{\overline{\calX_{n, \leq \bfitw_i}^{\tor}}, \proket}^3(\calX_n^{\tor}, \scrO\!\!\scrD_{\kappa_{\calU}}^r)^{\leq h}_{\frakm_{\Pi}} } \arrow[r, "g_i"]\arrow[d, two heads]\arrow[l, two heads] & \scalemath{0.9}{ H_{\calZ_{n, \bfitw_i}}^{3-i}(\calX^{\tor, \bfitu_p}_{n, \bfitw_i}, \underline{\omega}_{n, r}^{\bfitw_3^{-1}\bfitw_i \kappa_{\calU}+k_{\bfitw_i}})^{\leq h}_{\frakm_{\Pi}}(\bfitw_i \kappa_{\calU}^{\cyc} -i) }\arrow[dd, two heads]\\
         & \scalemath{0.9}{ H_{\overline{\calX_{n, \leq \bfitw_i}^{\tor}}, \proket}^3(\calX_n^{\tor}, \scrO\!\!\scrD_{\kappa_{\calU}}^r)^{\leq h}_{\frakm_{\Pi}} \otimes_{R_{\calU,\frakm_k}\widehat{\otimes}\C_p}\Q_p } \arrow[d, hook]\\
         \scalemath{0.9}{ \Gr_{\ES, k, \frakm_{\Pi}}^{3-i} } \arrow[rr, bend right = 10, "\cong"] & \scalemath{0.9}{ H_{\overline{\calX_{n, \leq \bfitw_i}^{\tor}}, \proket}^3(\calX_n^{\tor}, \scrO\!\!\scrD_{k}^r)^{\leq h}_{\frakm_{\Pi}} } \arrow[r, two heads, "\overline{g_i}"]\arrow[l, two heads] & \scalemath{0.9}{ H_{\calZ_{n, \bfitw_i}}^{3-i}(\calX^{\tor, \bfitu_p}_{n, \bfitw_i}, \underline{\omega}_{n, r}^{\bfitw_3^{-1}\bfitw_i k+k_{\bfitw_i}})^{\leq h}_{\frakm_{\Pi}}(\bfitw_i \kappa_{\calU}^{\cyc} -i) }
    \end{tikzcd}.
\]
In particular, we have a commutative diagram \[
    \begin{tikzcd}
        H_{\overline{\calX_{n, \leq \bfitw_i}^{\tor}}, \proket}^3(\calX_n^{\tor}, \scrO\!\!\scrD_{\kappa_{\calU}}^r)^{\leq h}_{\frakm_{\Pi}} \arrow[r, "g_i"]\arrow[d, two heads] & H_{\calZ_{n, \bfitw_i}}^{3-i}(\calX^{\tor, \bfitu_p}_{n, \bfitw_i}, \underline{\omega}_{n, r}^{\bfitw_3^{-1}\bfitw_i \kappa_{\calU}+k_{\bfitw_i}})^{\leq h}_{\frakm_{\Pi}}(\bfitw_i \kappa_{\calU}^{\cyc} -i)\arrow[d, two heads]\\
        H_{\overline{\calX_{n, \leq \bfitw_i}^{\tor}}, \proket}^3(\calX_n^{\tor}, \scrO\!\!\scrD_{\kappa_{\calU}}^r)^{\leq h}_{\frakm_{\Pi}} \otimes_{R_{\calU,\frakm_k}\widehat{\otimes}\C_p}\Q_p \arrow[r, two heads] & H_{\calZ_{n, \bfitw_i}}^{3-i}(\calX^{\tor, \bfitu_p}_{n, \bfitw_i}, \underline{\omega}_{n, r}^{\bfitw_3^{-1}\bfitw_i k+k_{\bfitw_i}})^{\leq h}_{\frakm_{\Pi}}(\bfitw_i \kappa_{\calU}^{\cyc} -i)
    \end{tikzcd},
\]
where the vertical maps are the specialisation maps. Therefore, by Nakayama's Lemma, $g_i$ is surjective.

Define $W_i \coloneq f_i(\ker g_i)$. Since $H_{\proket}^3(\calX_n^{\tor}, \scrO\!\!\scrD_{\kappa_{\calU}}^r)^{\leq h}_{\frakm_{\Pi}}$ is free of rank $4$ over $R_{\calU, \frakm_k}\widehat{\otimes}\C_p$, $W_i$ is finitely generated. Additionally, by definition, we have $\Fil_{\ES, \kappa_{\calU}, \frakm_{\Pi}}^{3-i+1} \subset W_i$, where $\Fil_{\ES, \kappa_{\calU}, \frakm_{\Pi}}^{3-i+1}$ is free of rank $i$. 

Recall from the proof of Proposition \ref{Prop: recover classical ES} that there is a canonical commutative diagram \[
    \begin{tikzcd}
        \Fil_{\ES, k, \frakm_{\Pi}}^{3-i} \arrow[rd, two heads]\\
        H_{\overline{\calX_{n,\leq \bfitw_i}^{\tor}}\proket}^{3}(\calX_n^{\tor}, \scrO\!\!\scrD_k^{r})_{\frakm_{\Pi}}^{\leq h} \arrow[r, two heads, "\overline{g_i}"]\arrow[u, "\overline{f_i}"] & H_{\calZ_{n, \bfitw_i}}^{3-i}(\calX^{\tor, \bfitu_p}_{n, \bfitw_i}, \underline{\omega}_{n, r}^{\bfitw_3^{-1}\bfitw_i k+k_{\bfitw_i}})^{\leq h}_{\frakm_{\Pi}}(\bfitw_i \kappa_{\calU}^{\cyc} -i)
    \end{tikzcd},
\]
where the top surjective arrow induces the classical Eichler--Shimura decomposition. Also recall that \[
    \ker(\Fil_{\ES, k, \frakm_{\Pi}}^{3-i} \rightarrow H_{\calZ_{n, \bfitw_i}}^{3-i}(\calX^{\tor, \bfitu_p}_{n, \bfitw_i}, \underline{\omega}_{n, r}^{\bfitw_3^{-1}\bfitw_i k+k_{\bfitw_i}})^{\leq h}_{\frakm_{\Pi}}(\bfitw_i \kappa_{\calU}^{\cyc} -i)) = \Fil_{\ES, k, \frakm_{\Pi}}^{3-i+1}
\] Hence, we must have\[
    W_i \otimes_{R_{\calU, \frakm_k}\widehat{\otimes}\C_p} \Q_p \subset \overline{f_i}(\ker \overline{g_i}) \subset \Fil_{\ES, k, \frakm_{\Pi}}^{3-i+1}.
\]
Therefore, by Nakayama's Lemma, the $R_{\calU, \frakm_k}\widehat{\otimes}\C_p$-module $W_i$ can be generated by at most $i$ elements. However, since $\Fil_{\ES, \kappa_{\calU}, \frakm_{\Pi}}^{3-i+1}$ is a free submodule of $W_i$ of rank $i$, we conclude that $W_i$ must be free of rank $i$ and must agree with $\Fil_{\ES, \kappa_{\calU}, \frakm_{\Pi}}^{3-i+1}$. 

Consequently, we arrive at Hecke- and Galois-equivariant morphisms \[
    \scalemath{0.8}{ \Gr_{\ES, \kappa_{\calU}, \frakm_{\Pi}}^{3-i} = \Fil_{\ES, \kappa_{\calU}, \frakm_{\Pi}}^{3-i}/W_i \twoheadleftarrow H_{\overline{\calX_{n, \leq \bfitw_i}^{\tor}}, \proket}^3(\calX_n^{\tor}, \scrO\!\!\scrD_{\kappa_{\calU}}^r)^{\leq h}_{\frakm_{\Pi}}/\ker g_i \cong H_{\calZ_{n, \bfitw_i}}^{3-i}(\calX^{\tor, \bfitu_p}_{n, \bfitw_i}, \underline{\omega}_{n, r}^{\bfitw_3^{-1}\bfitw_i \kappa_{\calU}+k_{\bfitw_i}})^{\leq h}_{\frakm_{\Pi}}(\bfitw_i \kappa_{\calU}^{\cyc} -i) }.
\]
Since the modules on both ends of the sequence are free of rank $1$, the surjection in the middle must be an isomorphism. This is the desired canonical Hecke- and Galois-equivariant isomorphism \eqref{eq: canonical ES iso on localised graded pieces}.\\

\paragraph{\textbf{Step 5: Spread out to a family.}} Now we spread out the local properties (a)--(e) above to a family and then achieve property (ii). 

Let $\calV$ be the connected component of $\wt^{-1}(\calU)$ that contains $x_{\Pi}$. We define a decreasing filtration $\Fil_{\ES, \calV}^{\bullet}$ on $H_{\proket}^3(\calX_n^{\tor}, \scrO\!\!\scrD_{\kappa_{\calU}}^r)^{\leq h}$ by \[
    \Fil_{\ES, \calV}^{3-i} \coloneq e_{\calV} \image\left( H^3_{\overline{\calX_{n, \leq \bfitw_i}^{\tor}}, \proket}(\calX_n^{\tor}, \scrO\!\!\scrD_{\kappa_{\calU}}^r)^{\leq h} \rightarrow H_{\proket}^3(\calX_n^{\tor}, \scrO\!\!\scrD_{\kappa_{\calU}}^r)^{\leq h} \right)
\]
and let $\Gr_{\ES, \calV}^{\bullet}$ denote the corresponding graded pieces. 

Up to shrinking $\calU$ and using the local properties (a)--(e) above, we can guarantee that
\begin{itemize}
\item $e_{\calV}H^3_{\proket}(\calX_{n}^{\tor}, \scrO\!\!\scrD_{\kappa_{\calU}}^r)^{\leq h}$ is free of rank 4 over $R_{\calU}\widehat{\otimes}\C_p$;
\item $\Fil_{\ES, \calV}^{3-i}$ is free of rank $i+1$ over $R_{\calU}\widehat{\otimes}\C_p$, for $i=0,1,2,3$;
\item $\Gr_{\ES, \calV}^{3-i}$ is free of rank 1 over $R_{\calU}\widehat{\otimes}\C_p$, for $i=0,1,2,3$;
\item $e_{\calV}H_{\calZ_{n, \bfitw_i}}^{3-i}(\calX^{\tor, \bfitu_p}_{n, \bfitw_i}, \underline{\omega}_{n, r}^{\bfitw_3^{-1}\bfitw_i \kappa_{\calU}+k_{\bfitw_i}})^{\leq h}(\bfitw_i \kappa_{\calU}^{\cyc} -i)$ is free of rank $1$ over $R_{\calU}\widehat{\otimes}\C_p$, for $i=0,1,2,3$;
\item there exists a canonical Hecke- and Galois-equivariant isomorphism \[
    \Gr_{\ES, \calV}^{3-i} \cong e_{\calV}H_{\calZ_{n, \bfitw_i}}^{3-i}(\calX^{\tor, \bfitu_p}_{n, \bfitw_i}, \underline{\omega}_{n, r}^{\bfitw_3^{-1}\bfitw_i \kappa_{\calU}+k_{\bfitw_i}})^{\leq h}(\bfitw_i \kappa_{\calU}^{\cyc} -i).
\]
\end{itemize}
These observations conclude (ii). \\

\paragraph{\textbf{Step 6: Decomposition.}} Finally, to achieve the desired decomposition, we argue as in \cite[Theorem 6.3.2]{DRW} (see also \cite[Theorem 6.1 (c)]{AIS-2015} or \cite[Theorem 5.14 (3)]{CHJ-2017}) inductively with respect to the filtration $\Fil^{\bullet}_{\mathrm{ES}, \calV}$. We sketch the proof for reader's convenience. 

Consider the Hecke- and Galois-equivariant short exact sequence \[
    0 \rightarrow \Fil^{i-1}_{\ES, \calV} \rightarrow \Fil^i_{\ES, \calV} \rightarrow \Gr_{\ES, \calV}^i \rightarrow 0.
\]
Let \[
    N_i \coloneq \Hom_{R_{\calU}}(\Gr_{\ES, \calV}^i, \Fil_{\ES, \calV}^{i-1}).
\]
The short exact sequence defines a class in $H^1(\Gal_{\Q_p}, N_i) \cong \mathrm{Ext}_{R_{\calU}[\Gal_{\Q_p}]}^1(\Gr_{\ES, \calV}^{i}, \Fil_{\ES, \calV}^{i-1})$. Let $\varphi_{\mathrm{Sen},i}$ be the Sen operator associated with $N_i$. We know from \cite[Proposition 2.3]{Kisin-2003} that $0 \neq \det \varphi_{\mathrm{Sen},i}\in R_{\calU}$ kills $H^1(\Gal_{\Q_p}, N_i)$. Therefore, after localising at this element, the short exact sequence split as semilinear $\Gal_{\Q_p}$-representations. Since the Galois-action commutes with the Hecke-actions, this splitting must be Hecke-stable. We then conclude by (once again) shrinking $\calV$ if necessary.
\end{proof}

\begin{Corollary}\label{Corollary: etaleness of the weight map}
    Let $\Pi = (\pi, \varphi_p)$ be a $p$-stabilisation of $\Pi$ that satisfies Assumption \ref{Assumption: Multiplicity One for cuspidal automorphic representations} and has small slope. Then, $\scrO_{\calE, x_{\Pi}}$ is free of rank $1$ over $R_{\calU, \frakm_k}\widehat{\otimes}\C_p$ and the weight map $\wt$ is \'etale at $x_{\Pi}$. 
\end{Corollary}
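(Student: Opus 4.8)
The plan is to deduce Corollary~\ref{Corollary: etaleness of the weight map} directly from the freeness statements established in the course of proving Theorem~\ref{Theorem: overconvergent Eichler--Shimura decomposition}, together with the standard relationship between the structure sheaf of the eigenvariety and the Hecke algebra acting on overconvergent cohomology. First I would recall that, by construction in \S\ref{subsection: eigenvarieties}, a neighbourhood of $x_{\Pi}$ in $\calE$ is $\Spa$ of the Hecke algebra $\bbT_{\calU}^{\leq h}$ acting faithfully on $H^3_{\proket}(\calX_n^{\tor}, \scrO\!\!\scrD_{\kappa_{\calU}}^r)^{\leq h}$ (using the identification $\calE^{\mathrm{oc}}\cong \calE^{\mathrm{aut}}$ of Proposition~\ref{Proposition: comparison of eigenvarieties}), so that $\scrO_{\calE, x_{\Pi}}$ is the localisation of $\bbT_{\calU}^{\leq h}\widehat{\otimes}\C_p$ at $\frakm_{\Pi}$. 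The point $x_{\Pi}$ is nice-enough by hypothesis, so Assumption~\ref{Assumption: Multiplicity One for cuspidal automorphic representations} applies.

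Next I would invoke the multiplicity-one statements: by Step~2 of the proof of Theorem~\ref{Theorem: overconvergent Eichler--Shimura decomposition}, $H^3_{\proket}(\calX_n^{\tor}, \scrO\!\!\scrD_{\kappa_{\calU}}^r)^{\leq h}_{\frakm_{\Pi}}$ is free of rank $4$ over $R_{\calU, \frakm_k}\widehat{\otimes}\C_p$, and its specialisation at $k$ is $H^3_{\proket}(\calX_n^{\tor}, \scrO\!\!\scrV_k^{\vee})_{\frakm_{\Pi}}$, which is $4$-dimensional over $\C_p$ with the Hecke algebra acting through a single eigensystem $\lambda_{\Pi}$ (again Assumption~\ref{Assumption: Multiplicity One for cuspidal automorphic representations}). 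Therefore the fibre $\scrO_{\calE, x_{\Pi}}\otimes_{R_{\calU, \frakm_k}}\Q_p$ is the quotient of $\C_p[\bbT]$ acting on a module on which $\bbT$ acts via the single character $\lambda_{\Pi}$, hence is $1$-dimensional over $\C_p$; by Nakayama's Lemma $\scrO_{\calE, x_{\Pi}}$ is generated by one element over the local ring $R_{\calU, \frakm_k}\widehat{\otimes}\C_p$. To upgrade ``generated by one element'' to ``free of rank $1$'', I would use that $H^3_{\proket}(\calX_n^{\tor}, \scrO\!\!\scrD_{\kappa_{\calU}}^r)^{\leq h}_{\frakm_{\Pi}}$ is a faithful $\scrO_{\calE, x_{\Pi}}$-module which is free of rank $4$ over $R_{\calU, \frakm_k}\widehat{\otimes}\C_p$; faithfulness forces the surjection $R_{\calU, \frakm_k}\widehat{\otimes}\C_p \twoheadrightarrow \scrO_{\calE, x_{\Pi}}$ to be injective (a nonzero element of the kernel would annihilate a free module), so it is an isomorphism. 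This gives $\scrO_{\calE, x_{\Pi}}\cong R_{\calU, \frakm_k}\widehat{\otimes}\C_p$, which is exactly the assertion that $\wt$ is étale at $x_{\Pi}$ (a local isomorphism onto the weight space).

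The main obstacle I anticipate is making the faithfulness argument airtight: one needs that the Hecke algebra $\bbT_{\calU}^{\leq h}$ acts faithfully on the relevant localised cohomology module, and that localisation at $\frakm_{\Pi}$ is compatible with the formation of the relative spectrum $\Spa_{\calZ}(\scrT,\scrT^+)$, so that $\scrO_{\calE,x_{\Pi}}$ really is the $\frakm_{\Pi}$-localisation of $\bbT^{\leq h}_{\calU}\widehat\otimes\C_p$ rather than some completion or a factor that might a priori have nilpotents. Here the equidimensionality of $\calE$ (Corollary~\ref{Corollary: equidimensionality}) and reducedness of $\bbT^{\leq h}_{\calU}$ help rule out the pathological cases. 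A secondary subtlety is checking that the small-slope hypothesis indeed provides the $h$ needed for the slope-$\leq h$ decomposition to see all of $H^3_{\et}(X_{n,\C_p},V_k^\vee)_{\frakm_\Pi}$; but this is precisely the content of Definition~\ref{Defn: nice enough}(i) combined with Stevens's control theorem, already used in Step~2. Everything else is formal commutative algebra once the module-theoretic inputs from the proof of Theorem~\ref{Theorem: overconvergent Eichler--Shimura decomposition} are in hand.
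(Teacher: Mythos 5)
Your route is genuinely different from the paper's, and it has a gap at the decisive step. You claim that because $H^3_{\proket}(\calX_n^{\tor},\scrO\!\!\scrD_{\kappa_{\calU}}^r)^{\leq h}_{\frakm_{\Pi}}$ is free of rank $4$ and its fibre at $k$ carries a Hecke action through the single eigensystem $\lambda_{\Pi}$, the fibre $\scrO_{\calE,x_{\Pi}}\otimes_{R_{\calU,\frakm_k}}\Q_p$ is $1$-dimensional, whence surjectivity of $R_{\calU,\frakm_k}\widehat{\otimes}\C_p\rightarrow\scrO_{\calE,x_{\Pi}}$ by Nakayama. This conflates the fibre of the Hecke algebra with the Hecke algebra of the fibre: the injection $\scrO_{\calE,x_{\Pi}}\hookrightarrow\End_{R_{\calU,\frakm_k}\widehat{\otimes}\C_p}(M)$ need not stay injective after reduction modulo $\frakm_k$. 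Concretely, take $R=\C_p[[t]]$, $M=R^2$, and a single operator
\[
T=\begin{pmatrix}0 & t\\ 0 & 0\end{pmatrix}.
\]
Then $R[T]\subset\End_R(M)$ is local, reduced is irrelevant here, free of rank $2$ over $R$, acts faithfully on the free module $M$, and acts on $M/tM$ through the single character $T\mapsto 0$; yet $R[T]$ is not \'etale over $R$, and its fibre at $t=0$ is $2$-dimensional with a nilpotent. So all of your stated inputs (freeness of the module, multiplicity-one and single eigensystem on the classical fibre, faithfulness) can hold while the conclusion fails. Your closing faithfulness argument is fine once surjectivity is known, but surjectivity is exactly what is not established. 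Neither reducedness of $\bbT_{\calU}^{\leq h}$ nor equidimensionality of $\calE$ closes this gap, since fibres of reduced finite algebras can acquire nilpotents.

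The paper's proof supplies precisely the missing semisimplicity input: it uses the Hecke- and Galois-equivariant decomposition $H^3_{\proket}(\calX_n^{\tor},\scrO\!\!\scrD_{\kappa_{\calU}}^r)^{\leq h}_{\frakm_{\Pi}}\cong\bigoplus_{i=0}^{3}\Gr^{3-i}_{\ES,\kappa_{\calU},\frakm_{\Pi}}$ into graded pieces each free of rank one over $R_{\calU,\frakm_k}\widehat{\otimes}\C_p$ (the hard content of Theorem \ref{Theorem: overconvergent Eichler--Shimura decomposition}, obtained via the Sen-operator splitting). On a rank-one piece every Hecke operator acts by a scalar in $R_{\calU,\frakm_k}\widehat{\otimes}\C_p$, which yields an embedding $\scrO_{\calE,x_{\Pi}}\hookrightarrow\End(\Gr^{3-i})\cong R_{\calU,\frakm_k}\widehat{\otimes}\C_p$ splitting the structure map, and hence the isomorphism. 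Note that in the counterexample above no $T$-stable decomposition of $M$ into rank-one free summands exists, which is exactly why that input cannot be dispensed with. To repair your argument you would either have to invoke the decomposition after all, or find an independent proof that the reduction of $\scrO_{\calE,x_{\Pi}}$ modulo $\frakm_k$ injects into $\End_{\C_p}$ of the classical fibre.
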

\begin{proof}
    By the proof of Theorem \ref{Theorem: overconvergent Eichler--Shimura decomposition}, there is a Hecke-equivariant decomposition \[
        H_{\proket}^3(\calX_n^{\tor}, \scrO\!\!\scrD_{\kappa_{\calU}}^r)^{\leq h}_{\frakm_{\Pi}} \cong \bigoplus_{i=0}^{3}\Gr_{\ES, \kappa_{\calU}, \frakm_{\Pi}}^{3-i}.
    \]
    for each $i=0,1,2,3$. This induces an injection \[
        \scrO_{\calE, x_{\Pi}} \hookrightarrow \End_{R_{\calU, \frakm_k}}(\Gr_{\ES, \kappa_{\calU}, \frakm_{\Pi}}^{3-i}).
    \]
On the other hand, since each $\Gr_{\ES, \kappa_{\calU}, \frakm_{\Pi}}^{3-i}$ is free of rank one over $R_{\calU, \frakm_k}\widehat{\otimes}\C_p$, we have \[\End_{R_{\calU, \frakm_k}}(\Gr_{\ES, \kappa_{\calU}, \frakm_{\Pi}}^{3-i}) \cong R_{\calU, \frakm_k}\widehat{\otimes}\C_p.\] One concludes that $\scrO_{\calE, x_{\Pi}} \cong R_{\calU, \frakm_k}\widehat{\otimes}\C_p$, as desired.
\end{proof}

\begin{Corollary}\label{Corollary: big Galois representation}
    Let $\Pi = (\pi, \varphi_p)$ be a $p$-stabilisation of $\Pi$ that satisfies Assumption \ref{Assumption: Multiplicity One for cuspidal automorphic representations} and has small slope. Let $\calV$ be a good finite-slope family of weight $(R_{\calU}, \kappa_{\calU})$ passing through $x_{\Pi}$ as in Theorem \ref{Theorem: overconvergent Eichler--Shimura decomposition}. Then, there exists a family of Galois representations \[
        \rho_{\calV}: \Gal_{\Q} \rightarrow \GL_4(R_{\calU})
    \] attached to $\calV$ such that \begin{enumerate}
        \item[(i)] $\rho_{\calV}$ is unramified at $\ell\nmid Np$ and the characteristic polynomial of the geometric Frobenius at $\ell$ agrees with the Hecke polynomial at $\ell$\footnote{ For the definition of the Hecke polynomials, we refer the readers to \cite[\S 3.1]{GT-TWGSp4}.}; 
        \item[(ii)] $\rho_{\calV}|_{\Gal_{\Q_p}}\widehat{\otimes} \C_p$ admits a Galois-stable decreasing filtration and has Hodge--Tate--Sen weight $(-3, \kappa_{\calU, 2}-2, \kappa_{\calU, 1}-1, \kappa_{\calU,1}+\kappa_{\calU, 2})$, ordered by the labeling of the graded pieces of the filtration. 
    \end{enumerate}
\end{Corollary}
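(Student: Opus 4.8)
The plan is to construct $\rho_{\calV}$ directly from the overconvergent Eichler--Shimura picture established in Theorem~\ref{Theorem: overconvergent Eichler--Shimura decomposition}, without passing through Galois pseudocharacters/determinants. First I would recall that $e_{\calV}H_{\proket}^3(\calX_n^{\tor}, \scrO\!\!\scrD_{\kappa_{\calU}}^r)^{\leq h}$ is, by parts (i) and (ii) of that theorem, a free $R_{\calU}\widehat{\otimes}\C_p$-module of rank $4$ carrying a continuous $\Gal_{\Q}$-action (the Galois action on the source is part of the data of the pro-Kummer \'etale cohomology, and it commutes with the Hecke action, so it preserves the $\calV$-component). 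Choosing an $R_{\calU}\widehat{\otimes}\C_p$-basis gives a representation $\rho_{\calV}\colon\Gal_{\Q}\to\GL_4(R_{\calU}\widehat{\otimes}\C_p)$; I would then argue that it is in fact defined over $R_{\calU}$, either by noting that the cohomology before completing with $\C_p$ (i.e. $e_{\calV}H_{\proket}^3$ of the $\Q_p$-coefficient sheaf, or equivalently the Betti overconvergent cohomology $e_{\calV}H^3(X_n(\C),D_{\kappa_{\calU}}^r)$ via Proposition~\ref{Proposition: comparison of Betti and Kummer \'etale cohomology} and Proposition~\ref{Proposition: comparing Kummer \'etale and pro-Kummer \'etale cohomology}) already carries the Galois action rationally, or by a standard descent argument using that the semisimplification is determined by traces lying in $R_{\calU}$. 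Concretely, I would instead define $\rho_{\calV}$ on the Betti/pro-Kummer \'etale overconvergent cohomology group over $R_{\calU}$ itself (which is where the Hecke action naturally lives and where freeness of rank $4$ can be arranged after shrinking $\calU$), so that point (i) is immediate.

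For point (i), I would invoke the compatibility of the construction with the Hecke action away from $Np$ together with classical reciprocity: at all $\ell\nmid Np$ the local cohomology is unramified, and at the arithmetically dense set of classical small-slope cuspidal classical points $x$ of $\calV$, the specialisation $\rho_{\calV}\otimes_{R_{\calU}}k(x)$ is (a twist of) the spin Galois representation of the corresponding cuspidal automorphic representation of $\GSp_4$ by work of Weissauer, Taylor, Laumon, etc. --- matching the Hecke polynomial at $\ell$ --- and this identity of characteristic polynomials of geometric Frobenius at $\ell$ (a polynomial identity with coefficients in $R_{\calU}$) then propagates to all of $\calV$ by Zariski density. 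Since the Hecke polynomial at $\ell$ is exactly the characteristic polynomial of the Hecke operators acting on the $\calV$-component, this also reads off directly from the construction.

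For point (ii), this is where the overconvergent Eichler--Shimura decomposition does the real work. Restricting to $\Gal_{\Q_p}$, the filtration $\Fil_{\mathrm{ES},\calV}^{\bullet}$ of Theorem~\ref{Theorem: overconvergent Eichler--Shimura decomposition}(iii) is Galois-stable, hence gives the required $4$-step Galois-stable decreasing filtration of $\rho_{\calV}|_{\Gal_{\Q_p}}\widehat{\otimes}\C_p$. By part (iv) of that theorem, the graded piece $\Gr_{\mathrm{ES},\calV}^{3-i}$ is isomorphic as a $\Gal_{\Q_p}$-module to $e_{\calV}H_{\calZ_{n,\bfitw_i}}^{3-i}(\calX^{\tor,\bfitu_p}_{n,\bfitw_i},\underline{\omega}_{n,r}^{\bfitw_3^{-1}\bfitw_i\kappa_{\calU}+k_{\bfitw_i}})^{\leq h}(\bfitw_i\kappa_{\calU}^{\cyc}-i)$; the Galois action on the coherent cohomology factor is \emph{trivial}, so the graded piece is simply a Tate twist by $\bfitw_i\kappa_{\calU}^{\cyc}-i$ (interpreting $\kappa_{\calU,j}(\chi_{\cyc})$ as the evaluation of the weight character on the cyclotomic character, which is the content of Remark~\ref{Remark: explicit formulae for the twists}). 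Computing the Hodge--Tate--Sen weight of such a twist for $i=0,1,2,3$ --- using $\bfitw_3\kappa_{\calU}^{\cyc}=0$, $\bfitw_2\kappa_{\calU}^{\cyc}=\kappa_{\calU,2}(\chi_{\cyc})$, $\bfitw_1\kappa_{\calU}^{\cyc}=\kappa_{\calU,1}(\chi_{\cyc})$, $\bfitw_0\kappa_{\calU}^{\cyc}=\kappa_{\calU,1}(\chi_{\cyc})\kappa_{\calU,2}(\chi_{\cyc})$ --- yields Hodge--Tate--Sen weights $-3$, $\kappa_{\calU,2}-2$, $\kappa_{\calU,1}-1$, $\kappa_{\calU,1}+\kappa_{\calU,2}$ respectively (the labelling of the graded pieces, with $\Gr^3$ of weight $-3$ at the bottom and $\Gr^0$ of weight $\kappa_{\calU,1}+\kappa_{\calU,2}$ at the top, matching the ordering in the statement). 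The Sen operator computation on a family of Tate twists by a character of $\Z_p^{\times}$-weight is routine.

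I expect the main obstacle to be the rationality/descent issue in point (i) --- namely, checking carefully that the $\GL_4$-valued representation can be taken with coefficients in $R_{\calU}$ rather than $R_{\calU}\widehat{\otimes}\C_p$ (the latter is what Theorem~\ref{Theorem: overconvergent Eichler--Shimura decomposition} literally produces, since the pro-Kummer \'etale cohomology there is $\C_p$-completed). The cleanest route is to run the entire eigenvariety/Eichler--Shimura construction on the \emph{un-completed} overconvergent (Betti, or Kummer \'etale with $\Q_p$-coefficients) cohomology $e_{\calV}H^3_{\ket}(\calX_n^{\tor},\scrD_{\kappa_{\calU}}^r)^{\leq h}$, which carries a $\Gal_{\Q}$-action over $R_{\calU}$, is free of rank $4$ after shrinking (by the same Nakayama and density arguments as in Step~2 and Step~5 of the proof of Theorem~\ref{Theorem: overconvergent Eichler--Shimura decomposition}, now applied before base change to $\C_p$), and whose $\C_p$-completion recovers the module used above; then $\rho_{\calV}$ is obtained by choosing a basis there, and all of (i) and (ii) follow by base change. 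A secondary technical point is ensuring the Hodge--Tate--Sen weights are genuinely those of the \emph{family} (i.e. that the Sen operator has the asserted eigenvalues in $R_{\calU}$, not merely at classical points), but this follows from the identification of each graded piece with an explicit Tate twist, which holds over $R_{\calU}\widehat{\otimes}\C_p$ by Theorem~\ref{Theorem: overconvergent Eichler--Shimura decomposition}(iv), combined with the standard fact that the Sen operator of $\scrO_{\calV}\widehat{\otimes}\C_p(\chi)$ for $\chi$ a character of $\Z_p^\times$-weight $a\in R_{\calU}$ is multiplication by $a$.
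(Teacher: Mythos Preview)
Your proposal is correct and follows essentially the same approach as the paper's proof. The paper likewise constructs $\rho_{\calV}$ on the \emph{un-completed} overconvergent cohomology $e_{\calV}H_{\proket}^3(\calX_n^{\tor}, \scrD_{\kappa_{\calU}}^r)^{\leq h}$ (defined via an auxiliary small weight and Artin comparison to equip it with a $\Gal_{\Q}$-action over $R_{\calU}$), observes that its $\C_p$-completion recovers the rank-$4$ free module of Theorem~\ref{Theorem: overconvergent Eichler--Shimura decomposition}, deduces part~(ii) directly from that theorem, and proves part~(i) by Weissauer's theorem at classical points together with Zariski density --- exactly the ``cleanest route'' you describe in your final paragraph.
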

\begin{proof}
    Let $(\kappa_{\calY}, R_{\calY})$ be an open small weight (i.e., a small weight that is also an open weight in the sense of Remark \ref{Remark: maps of weights to the weight space}) such that $\calU \hookrightarrow \calY \hookrightarrow \calW$. Define \[
        H_{\et}^3(X_{n, \overline{\Q}}, \scrD_{\kappa_{\calU}}^r) := H_{\et}^3(X_{n, \overline{\Q}}, \scrD_{\kappa_{\calY}}^r)\widehat{\otimes}R_{\calU} \quad \text{ and }\quad H_{\proket}^3(\calX_n^{\tor}, \scrD_{\kappa_{\calU}}^r) := H_{\proket}^3(\calX_n^{\tor}, \scrD_{\kappa_{\calY}}^r)\widehat{\otimes}R_{\calU}.
    \]
There is a sequence of isomorphisms \[
     \scalemath{0.9}{   H_{\proket}^3(\calX_n^{\tor}, \scrD_{\kappa_{\calU}}^r) \cong H^3(X_n(\C), D_{\kappa_{\calU}}^r) \cong H^3(X_n(\C), D_{\kappa_{\calY}}^r)\widehat{\otimes}R_{\calU} \cong H_{\et}^3(X_{n, \overline{\Q}}, \scrD_{\kappa_{\calY}}^r)\widehat{\otimes}R_{\calU} = H_{\et}^3(X_{n, \overline{\Q}}, \scrD_{\kappa_{\calU}}^r),}
    \]
    where the third isomorphism follows from Artin comparison. 
    Note that the composition of the isomorphisms is Galois-equivariant. In particular, $H_{\proket}^3(\calX_n^{\tor}, \scrD_{\kappa_{\calU}}^r)$ is equipped with a natural continuous action of $\Gal_{\Q}$. Observe that there is a natural morphism \[
        H_{\proket}^3(\calX_n^{\tor}, \scrD_{\kappa_{\calU}}^r) \rightarrow H_{\proket}^3(\calX_n^{\tor}, \scrO\!\!\scrD_{\kappa_{\calU}}^r).
    \]
    Choose $h$ as in Theorem \ref{Theorem: overconvergent Eichler--Shimura decomposition} and define \[
        e_{\calV}  H_{\proket}^3(\calX_n^{\tor}, \scrD_{\kappa_{\calU}}^r)^{\leq h} := \text{preimage of }e_{\calV}H_{\proket}^3(\calX_n^{\tor}, \scrO\!\!\scrD_{\kappa_{\calU}}^r)^{\leq h}.
    \]
    One sees from the construction that \[
        e_{\calV}  H_{\proket}^3(\calX_n^{\tor}, \scrD_{\kappa_{\calU}}^r)^{\leq h} \widehat{\otimes}\C_p = e_{\calV}H_{\proket}^3(\calX_n^{\tor}, \scrO\!\!\scrD_{\kappa_{\calU}}^r)^{\leq h},
    \]
    hence $e_{\calV}  H_{\proket}^3(\calX_n^{\tor}, \scrD_{\kappa_{\calU}}^r)^{\leq h}$ is free of rank $4$ over $R_{\calU}$. We then define $\rho_{\calV}$ to be the Galois representation \[
        \rho_{\calV}: \Gal_{\Q} \rightarrow \Aut_{R_{\calU}}(e_{\calV}  H_{\proket}^3(\calX_n^{\tor}, \scrD_{\kappa_{\calU}}^r)^{\leq h}).
    \]
    The second assertion then follows immediately from Theorem \ref{Theorem: overconvergent Eichler--Shimura decomposition}. 
    

    For (i), given $\ell \nmid Np$, let $P_{\varphi_{\ell}}$ (resp., $P_{\mathrm{Hecke}, \ell}$) be the characteristic polynomial of the geometric Frobenius at $\ell$ (resp., Hecke polynomial at $\ell$). Then, for any classical point $y$ with residue field $F_{y}$, let $P_{\varphi_{\ell}}|_{F_{y}}$ (resp., $P_{\mathrm{Hecke}, \ell}|_{F_{y}}$) be the base change of $P_{\varphi_{\ell}}$ (resp., $P_{\mathrm{Hecke}, \ell}$) to $F_{y}$. According to \cite[Theorem I]{Weissauer-4dimGalRep}, we have \[
        P_{\varphi_{\ell}}|_{F_{y}} = P_{\mathrm{Hecke}, \ell}|_{F_{y}}.
    \]
    However, since classical points are Zariski dense in $\calV$ (\cite[Theorem 5.4.4]{Urban-2011}), the desired assertion then follows. 
\end{proof}

\begin{Remark}\label{Remark: families of Galois representation}
    \begin{enumerate}
        \item[(i)] Compared with the result on Galois representations in \cite{DRW}, Corollary \ref{Corollary: big Galois representation} provides more information on the Hodge--Tate--Sen weight. 
        \item[(ii)] Corollary \ref{Corollary: big Galois representation} also implies that we can attach concrete Galois representations to overconvergent Siegel modular forms (if it lives in a nice enough family) without passing through pseudo-representations or determinants. More precisely, for any $y\in \calV$ that corresponds to a maximal ideal $\frakm_{y}\subset \scrO_{\calV}(\calV)$ with $\wt(y)= \kappa_{y}$, we have the Galois representation \[
            \rho_{y}: \Gal_{\Q} \rightarrow \GL_4(R_{\calU}/\frakm_{\kappa_{y}})
        \] obtained by $\rho_{\calV} \mod \frakm_{\kappa_{y}}$. Moreover, $\rho_{y}$ also satisfies the analogous (i) and (ii) in Corollary \ref{Corollary: big Galois representation}.
    \end{enumerate}
\end{Remark}

\subsection{The case of non-neat level}\label{subsection: non-neat level}
Often, one needs to work with levels that are not neat (e.g., modular forms of level $\Gamma_0(N)$). In this subsection, we briefly explain how to deduce results for the overconvergent cohomology groups of non-neat level from the results in the previous sections. The idea is choosing an auxiliary neat level and then taking group invariants; see for example \cite[Remark 8.3.1]{AIP-2015}.

Let $\Gamma$ be the same as before. Let $\Gamma'$ be a non-neat compact open subgroup of $\GSp_4(\A^{\infty, p}_{\Q})$. Suppose that $\Gamma'$ contains $\Gamma$ as a normal subgroup. Consider the compact open subgroup
\[
    \Gamma'_n \coloneq \Gamma'\Iw_{\GSp_4, n}^+ \subset \GSp_4(\A_{\Q}^{\infty}).
\]
Note that $\Gamma'/\Gamma$ is a finite group. By \cite[Theorem 4.3.4]{Zavyalov-Quotient}, we know that \[
    \calX_n' \coloneq \calX_n/(\Gamma'/\Gamma) \quad \text{ and }\quad \calX_n'^{\tor} \coloneq \calX_n^{\tor}/(\Gamma'/\Gamma)
\]
exist as adic spaces. Via the fixed isomorphism $\C_p \cong \C$, the $\C$-points of $\calX_n'$ agrees with the locally symmetric space
\[
    X'_{n}(\C) = \GSp_{4}(\Q) \backslash \GSp_{4}(\A_{\Q}^{\infty}) \times \bbH_2 / \Gamma_n'.
\]
Moreover, the natural morphism \[
    \varphi: \calX_n^{\tor} \rightarrow \calX_n'^{\tor}
\]
is a finite surjective morphism of adic spaces, and the fibres of $\varphi$ are exactly the $(\Gamma'/\Gamma)$-orbits.


However, in general, $\calX_n'^{\tor}$ is not smooth. It is also unclear whether $\calX_n'^{\tor}$ is necessarily an fs log adic space. Therefore, the constructions in the previous sections do not directly apply to $\calX_n'^{\tor}$. Nevertheless, there is an action of the finite group $\Gamma'/\Gamma$ on each $H^3_{\overline{\calX_{n, \leq \bfitw}^{\tor}}, \proket}(\calX_n^{\tor}, \scrO\!\!\scrD_{\kappa_{\calU}}^r)$ and we may simply view $H^3_{\overline{\calX_{n, \leq \bfitw}^{\tor}}, \proket}(\calX_n^{\tor}, \scrO\!\!\scrD_{\kappa_{\calU}}^r)^{\Gamma'/\Gamma}$ as a substitution of the desired overconvergent cohomology group `$H^3_{\overline{\calX_{n, \leq \bfitw}'^{\tor}}, \proket}(\calX_n'^{\tor}, \scrO\!\!\scrD_{\kappa_{\calU}}^r)$'. Similarly, we may consider $H_{\calZ_{n, \bfitw}, \proket}^3(\calX^{\tor, \bfitu_p}_{n, \bfitw}, \scrO\!\!\scrD_{\kappa_{\calU}}^r)^{\Gamma'/\Gamma}$ and $H_{\calZ_{n, \bfitw}}^{3-l(\bfitw)}(\calX^{\tor, \bfitu_p}_{n, \bfitw}, \underline{\omega}_{n, r}^{\bfitw_3^{-1}\bfitw\kappa_{\calU} + k_{\bfitw}})^{\Gamma'/\Gamma}$. Indeed, these finite group invariants only depend on $\Gamma'$; namely, they are independent of the choice of $\Gamma$.

\begin{Remark}
When $\Gamma'$ is the paramodular level, the space $H^0(\calX^{\tor, \bfitu_p}_{n, \bfitw}, \underline{\omega}_{n, r}^k)^{\Gamma'/\Gamma}$ is precisely the space of `overconvergent paramodular Siegel modular forms'. See also \cite[Remark 3.2.1]{LZ-BKGSp4XGL2}.
\end{Remark}

The following result is an immediate corollary of Theorem \ref{Theorem: big OES diagram}. The horizontal arrows in the diagram can be viewed as the \emph{overconvergent Eichler--Shimura morphisms of level $\Gamma'_n$}.

\begin{Theorem}\label{Theorem: big OES diagram non-neat}
Let $(R_{\calU}, \kappa_{\calU})$ be an affinoid weight and suppose $n\geq r\geq 1+r_{\calU}$. Then there is a natural Hecke- and Galois-equivariant diagram \[
        \begin{tikzcd}[column sep = tiny]
            \scalemath{0.8}{ \left(H_{\proket}^3(\calX_n^{\tor}, \scrO\!\!\scrD_{\kappa_{\calU}}^r)^{\Gamma'/\Gamma}\right)^{\fs} } \arrow[r] & \scalemath{0.8}{ \left(H^3_{\proket}(\calX_{n, \bfitw_3, (r,r)}^{\tor}, \scrO\!\!\scrD_{\kappa_{\calU}}^r)^{\Gamma'/\Gamma}\right)^{\fs} } \arrow[r] & \scalemath{0.8}{ \left(H^0(\calX_{n, \bfitw_3, (r,r)}^{\tor}, \underline{\omega}_{n, r}^{\kappa_{\calU} + (3,3)})^{\Gamma'/\Gamma}\right)^{\fs}(-3) }\\
            \scalemath{0.7}{ \left(H^3_{\overline{\calX_{n, \leq \bfitw_2}^{\tor}}, \proket}(\calX_n^{\tor}, \scrO\!\!\scrD_{\kappa_{\calU}}^r)^{\Gamma'/\Gamma}\right)^{\fs} } \arrow[r] \arrow[u] & \scalemath{0.7}{ \left(H_{\calZ_{n, \bfitw_2}, \proket}^3(\calX^{\tor, \bfitu_p}_{n, \bfitw_2}, \scrO\!\!\scrD_{\kappa_{\calU}}^r)^{\Gamma'/\Gamma}\right)^{\fs} } \arrow[r] & \scalemath{0.7}{ \left(H_{\calZ_{n, \bfitw_2}}^1(\calX^{\tor, \bfitu_p}_{n, \bfitw_2}, \underline{\omega}_{n, r}^{\bfitw_3^{-1}\bfitw_2\kappa_{\calU} + (3,1)})^{\Gamma'/\Gamma}\right)^{\fs}(\bfitw_2\kappa_{\calU}^{\cyc} - 2) }\\
            \scalemath{0.7}{ \left(H^3_{\overline{\calX_{n, \leq \bfitw_1}^{\tor}}, \proket}(\calX_n^{\tor}, \scrO\!\!\scrD_{\kappa_{\calU}}^r)^{\Gamma'/\Gamma}\right)^{\fs} } \arrow[r] \arrow[u] & \scalemath{0.7}{ \left(H_{\calZ_{n, \bfitw_1}, \proket}^3(\calX^{\tor, \bfitu_p}_{n, \bfitw_1}, \scrO\!\!\scrD_{\kappa_{\calU}}^r)^{\Gamma'/\Gamma}\right)^{\fs} } \arrow[r] & \scalemath{0.7}{ \left(H_{\calZ_{n, \bfitw_1}}^2(\calX^{\tor, \bfitu_p}_{n, \bfitw_1}, \underline{\omega}_{n, r}^{\bfitw_3^{-1}\bfitw_1\kappa_{\calU} + (2,0)})^{\Gamma'/\Gamma}\right)^{\fs}(\bfitw_1\kappa_{\calU}^{\cyc} - 1) }\\
            \scalemath{0.7}{ \left(H_{\overline{\calX_{n, \one_4}^{\tor}}, \proket}^3(\calX_n^{\tor}, \scrO\!\!\scrD_{\kappa_{\calU}}^r)^{\Gamma'/\Gamma}\right)^{\fs} } \arrow[r]\arrow[u] & \scalemath{0.7}{ \left(H_{\calZ_{n, \one_4}, \proket}^3(\calX^{\tor, \bfitu_p}_{n, \one_4}, \scrO\!\!\scrD_{\kappa_{\calU}}^r)^{\Gamma'/\Gamma}\right)^{\fs} } \arrow[r] & \scalemath{0.7}{ \left(H_{\calZ_{n, \one_4}}^3(\calX^{\tor, \bfitu_p}_{n, \one_4}, \underline{\omega}_{n, r}^{\bfitw_3^{-1}\kappa_{\calU}})^{\Gamma'/\Gamma}\right)^{\fs}(\kappa_{\calU}^{\cyc}) }
        \end{tikzcd}
        .
    \]
\end{Theorem}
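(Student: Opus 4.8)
The plan is to deduce Theorem \ref{Theorem: big OES diagram non-neat} directly from Theorem \ref{Theorem: big OES diagram} by applying the exact functor ``take $(\Gamma'/\Gamma)$-invariants.'' First I would observe that $\Gamma'/\Gamma$ is a finite group and that all the pro-Kummer \'etale (resp.\ Kummer \'etale) cohomology groups appearing in Theorem \ref{Theorem: big OES diagram} are modules over $R_{\calU}\widehat{\otimes}\C_p$, which is a $\Q_p$-algebra; hence the order of $\Gamma'/\Gamma$ is invertible and the functor $M\mapsto M^{\Gamma'/\Gamma}$ is exact (it is a direct summand, cut out by the idempotent $\frac{1}{|\Gamma'/\Gamma|}\sum_{g\in \Gamma'/\Gamma} g$). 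The key point is that the group $\Gamma'/\Gamma$ acts on $\calX_n^{\tor}$ (and on all the loci $\calX_{n,\leq\bfitw_i}^{\tor}$, $\calZ_{n,\bfitw_i}$, $\calX^{\tor,\bfitu_p}_{n,\bfitw_i}$, etc., since these are defined via the Hodge--Tate period map which is $\GSp_4(\Q_p)$-equivariant and $\Gamma'/\Gamma\subset \GSp_4(\A_{\Q}^{\infty,p})$ commutes with the prime-to-$p$ level and acts trivially on the flag variety side), and that this action commutes with all the Hecke operators, the Galois action, and the $U_p$-operators. Therefore $\Gamma'/\Gamma$ acts on the whole diagram of Theorem \ref{Theorem: big OES diagram} by morphisms of the same type, and applying $(-)^{\Gamma'/\Gamma}$ produces the desired diagram.

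Next I would address the interaction of $(-)^{\Gamma'/\Gamma}$ with the finite-slope decomposition. Since the $\Gamma'/\Gamma$-action commutes with $U_p$, which is potent compact (Proposition \ref{Proposition: Up is nice on proket cohomology with support with big coefficients} and Proposition \ref{Proposition: U_p potent compact}), the idempotent projecting onto the $\Gamma'/\Gamma$-invariants commutes with the slope projectors, so $(M^{\fs})^{\Gamma'/\Gamma}=(M^{\Gamma'/\Gamma})^{\fs}$; this justifies the notation $\left(H^{\bullet}(\cdots)^{\Gamma'/\Gamma}\right)^{\fs}$ in the statement and shows it is unambiguous. More precisely, at the level of the complexes in $\mathrm{Pro}_{\Z_{\geq 0}}(\mathrm{K}^{\proj}(\Ban(R_{\calU})))$, the idempotent $\frac{1}{|\Gamma'/\Gamma|}\sum_g g$ is a morphism in the relevant category, so taking its image commutes with passing to the finite-slope part as in Proposition-Definition \ref{Prop-Defn: finite slope part pro}; alternatively one may simply work at the level of cohomology groups, where $(-)^{\Gamma'/\Gamma}$ is exact.

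Then I would check that the horizontal arrows in the invariant diagram are still induced by $\mathrm{ES}_{\kappa_{\calU}}^{\bfitw_i, r}$: since $\mathrm{ES}_{\kappa_{\calU}}^{\bfitw_i, r}$ is a morphism of pro-Kummer \'etale sheaves on $\calX^{\tor}_{n,\bfitw_i,(r,r),\proket}$ that is Hecke- and Galois-equivariant (Proposition \ref{Proposition: OES at w}), and since the $\Gamma'/\Gamma$-action is realised via Hecke correspondences at prime-to-$p$ level (hence a special case of the $T_{\bfdelta}$ construction with $\bfdelta$ ranging over representatives of $\Gamma'/\Gamma$), the map $\mathrm{ES}_{\kappa_{\calU}}^{\bfitw_i,r}$ is automatically $\Gamma'/\Gamma$-equivariant, and so it restricts to the invariant subspaces. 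Likewise the vertical arrows come from the spectral sequence \eqref{eq: diagram for coh. with supports (1)} and the change-of-support quasi-isomorphisms of Theorem \ref{Theorem: change support condition for overconvergent cohomology}, all of which are functorial and hence $\Gamma'/\Gamma$-equivariant; applying the exact functor $(-)^{\Gamma'/\Gamma}$ preserves exactness of the triangles and commutativity of the squares. Finally I would remark that these invariants depend only on $\Gamma'$, not on the auxiliary choice of $\Gamma$: two choices $\Gamma_1, \Gamma_2$ of normal neat subgroup both contain a common further neat subgroup $\Gamma_1\cap\Gamma_2$, and the corresponding invariant groups agree by a transitivity-of-invariants argument together with the fact that the prime-to-$p$ Hecke action identifies $H^{\bullet}$ at level $\Gamma_1\cap\Gamma_2$ taken $(\Gamma_i/(\Gamma_1\cap\Gamma_2))$-invariant with $H^\bullet$ at level $\Gamma_i$ (using exactness again).

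The main obstacle is essentially bookkeeping rather than a genuine mathematical difficulty: one must verify that the group $\Gamma'/\Gamma$ genuinely acts on every object in the diagram of Theorem \ref{Theorem: big OES diagram}, including the auxiliary loci $\calX^{\tor,\bfitu_p}_{n,\bfitw_i}$ and $\calZ_{n,\bfitw_i}$, compatibly with all the structure maps; this requires noting that the action of $\Gamma'/\Gamma$ (coming from the prime-to-$p$ place) commutes with the $\GSp_4(\Q_p)$-action used to define these loci and with $\pi_{\HT}$, so that the loci are $\Gamma'/\Gamma$-stable, and that the construction of $\mathrm{ES}_{\kappa_{\calU}}^{\bfitw_i,r}$ on the flag variety side in \S \ref{subsection: OES} is insensitive to the prime-to-$p$ level. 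Once this compatibility is in place, the theorem follows formally by applying the exact idempotent-cut-out functor $(-)^{\Gamma'/\Gamma}$ to the diagram of Theorem \ref{Theorem: big OES diagram}.
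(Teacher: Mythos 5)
Your proposal is correct and follows essentially the same route as the paper: the paper's proof is simply the observation that the $\Gamma'/\Gamma$-action commutes with the Hecke- and Galois-actions, so one applies the (exact, idempotent-cut-out) invariants functor to the diagram of Theorem \ref{Theorem: big OES diagram}. Your additional checks (stability of the loci, compatibility with the finite-slope projectors, equivariance of $\mathrm{ES}_{\kappa_{\calU}}^{\bfitw,r}$) are exactly the bookkeeping the paper leaves implicit.
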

\begin{proof}
It suffices to notice that the group action by $\Gamma'/\Gamma$ commutes with the Hecke- and Galois-actions.



\end{proof}

\begin{Remark}
Given Theorem \ref{Theorem: big OES diagram non-neat}, one can proceed and deduce analogues of Theorems \ref{Theorem: big OES diagram, parabolic version} and Theorem \ref{Theorem: overconvergent Eichler--Shimura decomposition} for non-neat levels. For example, in an analogue of Theorem \ref{Theorem: overconvergent Eichler--Shimura decomposition}, one implements an assumption similar to Assumption \ref{Assumption: Multiplicity One for cuspidal automorphic representations}, but replacing all $\Gamma$ therein with $\Gamma'$. We leave the details to the interested reader.
\end{Remark}

\begin{appendix}
    \section{Cohomology with supports}\label{section: cohomology with supports}

The goal of this appendix is to study the (pro-)Kummer \'etale cohomology with supports over an adic space. In \S \ref{subsection: basic coh with support}, we introduce the basic definitions of such a cohomology theory, as well as some basic properties. In \S \ref{subsection: coh with supp spectral sequence}, we study the spectral sequence induced from an stratification. Finally, in \S \ref{subsection: Banach sheaves and pro-Kummer \'etale cohomology with supports}, we focus on the situation where the coefficients of the cohomology theory are Banach sheaves (Definition \ref{Definition: proket Banach sheaves}). Our discussion is highly inspired by \cite[\S 2.5]{BP-HigherColeman}.  

Throughout \S \ref{section: cohomology with supports}, let $\calX$ be a locally noetherian fs log adic space over some affinoid field $\Spa(K, K^+)$ where $K$ is a complete non-archimedean field extension of $\Q_p$.

\subsection{Basic definitions and properties}\label{subsection: basic coh with support}

Let $\calZ \subset \calX$ be a closed topological subset. (Here, $\calZ$ is not necessarily an adic space itself.) We denote by $\calU := \calX \smallsetminus \calZ$ and write $\jmath: \calU \hookrightarrow \calX$ for the natural embedding. Since $\calU$ is open in $\calX$, it is naturally an adic space. We view $\calU$ as a log adic space equipped with the pullback log structure from $\calX$. For $\tau \in \{\mathrm{an}, \ket, \proket\}$, there are natural morphisms of sites \[
    \jmath_{\tau}: \calU_{\tau} \rightarrow \calX_{\tau}. 
\]
To simplify the notation, we often abuse the notation and write $\jmath$ instead of $\jmath_{\tau}$, when the underlying topology is clear. 

\begin{Definition}\label{Definition: cohomology with supp}
Let $\tau\in \{\text{an}, \ket, \proket\}$ and let $\scrF$ be an abelian sheaf on $\calX_{\tau}$. The \emph{$\tau$-cohomology of $\scrF$ with support in $\calZ$} is defined to be the mapping cone \[
        R\Gamma_{\calZ, \tau}(\calX, \scrF) := \mathrm{Cone}\left(R\Gamma_{\tau}(\calX, \scrF) \xrightarrow{\mathrm{res}} R\Gamma_{\tau}(\calU, \scrF|_{\calU_{\tau}}) \right)[-1].
    \]
The corresponding cohomology groups are denoted by $H^i_{\calZ, \tau}(\calX, \scrF)$.
\end{Definition}

\begin{Remark}\label{Remark: definition of cohomology with support}
    \begin{enumerate}
        \item[(i)] Equivalently, $R\Gamma_{\calZ, \tau}(\calX, -)$ can be defined as the right derived functor of the functor \[
            \Gamma_{\calZ, \tau}(\calX, -) := \ker\left(\Gamma(\calX, -) \xrightarrow{\mathrm{res}} \Gamma(\calU, -)\right)
        \]
        on the category of abelian sheaves on $\calX_{\tau}$.
        \item[(ii)] When $\tau = \an$, Definition \ref{Definition: cohomology with supp} is nothing but the classical cohomology with supports. Readers are referred to \cite[Expos\'e I]{SGA2} for more detailed discussion.
    \end{enumerate} 
\end{Remark}

We observe the following properties for cohomology with supports. \\

\paragraph{\textbf{Distinguished triangle.}} There is a distinguished triangle \begin{equation}\label{eq: distinguished triangle}
    R\Gamma_{\calZ, \tau}(\calX, \scrF) \rightarrow R\Gamma_{\tau}(\calX, \scrF) \rightarrow R\Gamma_{\tau}(\calU, \scrF|_{\calU_{\tau}}),
\end{equation}
which follows immediately from the definition. \\

\paragraph{\textbf{Corestriction.}} Suppose $\calZ_1 \subset \calZ_2 \subset \calX$ are two closed topological subspaces. There is a corestriction map \begin{equation}\label{eq: corestriction map}
    \mathrm{cores}: R\Gamma_{\calZ_1, \tau}(\calX, \scrF) \rightarrow R\Gamma_{\calZ_2, \tau}(\calX, \scrF).
\end{equation} 
Indeed, let $\calU_i := \calX \smallsetminus \calZ_i$. Then the corestriction map fits into a morphism of distinguished triangles \[
    \begin{tikzcd}
        R\Gamma_{\calZ_1, \tau}(\calX, \scrF)\arrow[d] \arrow[r] & R\Gamma_{\tau}(\calX, \scrF) \arrow[d, equal]\arrow[r] & R\Gamma_{\tau}(\calU_1, \scrF|_{\calU_{1, \tau}})\arrow[d]\\
        R\Gamma_{\calZ_2, \tau}(\calX, \scrF) \arrow[r] & R\Gamma_{\tau}(\calX, \scrF)\arrow[r] & R\Gamma_{\tau}(\calU_2, \scrF|_{\calU_{2, \tau}})
    \end{tikzcd},
\]
where the vertical arrow on the right-hand side is the restriction map. \\

\paragraph{\textbf{Pullbacks.}} Let $f: \calX' \rightarrow \calX$ be a log smooth morphism of locally noetherian fs log adic spaces over $\Spa(K, K^+)$. Let $\calZ \subset \calX$ and $\calZ' \subset \calX'$ be closed subspaces such that $f^{-1}(\calZ) \subset \calZ'$. In particular, we have $f(\calU')\subset \calU$ where $\calU=\calX\smallsetminus \calZ$ and $\calU'=\calX'\smallsetminus \calZ'$. Then there is a natural pullback map \begin{equation}\label{eq: pullback map}
    R\Gamma_{\calZ, \tau}(\calX, \scrF) \rightarrow R\Gamma_{\calZ', \tau}(\calX', f^{-1}\scrF). 
\end{equation} 
Indeed, the pullback map fits into a morphism of distinguished triangles \[
    \begin{tikzcd}
        R\Gamma_{\calZ, \tau}(\calX, \scrF) \arrow[r]\arrow[d] & R\Gamma_{\tau}(\calX, \scrF) \arrow[d]\arrow[r] & R\Gamma_{\tau}(\calU, \scrF|_{\calU_{\tau}})\arrow[d]\\
        R\Gamma_{\calZ', \tau}(\calX', f^{-1}\scrF) \arrow[r] & R\Gamma_{\tau}(\calX', f^{-1}\scrF) \arrow[r] & R\Gamma_{\tau}(\calU', f^{-1}\scrF|_{\calU'_{\tau}})
    \end{tikzcd}
\]
where the vertical arrows in the middle and on the right are the usual pullback maps on the cohomology groups without supports.\\

\paragraph{\textbf{Change of ambient spaces.}} Let $\calZ$ be a closed subset of $\calX$ and let $\calW\subset \calX$ be an open subspace of $\calX$ that contains $\calZ$. We equip $\calW$ with the pullback log structure from $\calX$; namely, the inclusion $\jmath:\calW\subset \calX$ is a strict open immersion of locally noetherian fs log adic spaces. Then the pullback map along $\jmath$ induces a quasi-isomorphism \begin{equation}\label{eq: change of ambient spaces}
    R\Gamma_{\calZ, \tau}(\calX, \scrF) \cong R\Gamma_{\calZ, \tau}(\calW, \scrF|_{\calW_{\tau}}).
\end{equation}
Indeed, there is an isomorphism
\[ \Gamma_{\calZ, \tau}(\calX, -)\cong  \Gamma_{\calZ, \tau}(\calW, -)\circ \jmath^{-1}\]
where $\jmath^{-1}:\mathrm{Sh}_{\mathrm{Ab}}(\calX_{\tau})\rightarrow\mathrm{Sh}_{\mathrm{Ab}}(\calW_{\tau})$ is the restriction map. It suffices to notice that $\jmath^{-1}$ is exact, hence sends injective sheaves to injective sheaves.

\begin{Remark}\label{Remark: change of ambient spaces}
Let $\calZ\subset \calX$ be a closed subset and let $\calX\subset \calX'$ be a strict open immersion of locally noetherian fs log adic spaces. Suppose $\scrF$ is an abelian sheaf on $\calX_{\tau}$. Inspired by (\ref{eq: change of ambient spaces}), we sometimes abuse the notation and write $R\Gamma_{\calZ, \tau}(\calX', \scrF)$, by which we mean $R\Gamma_{\calZ, \tau}(\calX, \scrF)$.
\end{Remark}

The following lemma is an analogue of \cite[Lemma 2.1.1]{BP-HigherColeman}.

\begin{Lemma}\label{Lemma: direct sum for cohomology with supports}
    Let $\calZ_1, \calZ_2 \subset \calX$ be two closed subsets such that $\calZ_1 \cap \calZ_2 = \emptyset$. Then the corestriction maps induces a quasi-isomorphism
\[
        R\Gamma_{\calZ_1, \tau}(\calX, \scrF) \oplus R\Gamma_{\calZ_2, \tau}(\calX, \scrF) \xrightarrow[]{\cong} R\Gamma_{\calZ_1 \cup \calZ_2, \tau}(\calX, \scrF).
    \]
\end{Lemma}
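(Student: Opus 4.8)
The plan is to reduce everything to the open covering $\{\calU_1, \calU_2\}$ of $\calX$, where $\calU_i = \calX \smallsetminus \calZ_i$. The key observation is that since $\calZ_1 \cap \calZ_2 = \emptyset$, we have $\calU_1 \cup \calU_2 = \calX$, and moreover $\calU_1 \cap \calU_2 = \calX \smallsetminus (\calZ_1 \cup \calZ_2)$, which we denote $\calU_{12}$. So the combinatorics of the covering encode precisely the three relevant open complements.

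First I would set up the Mayer--Vietoris distinguished triangle for the covering $\calX = \calU_1 \cup \calU_2$ with coefficients in $\scrF$,
\[
    R\Gamma_{\tau}(\calX, \scrF) \rightarrow R\Gamma_{\tau}(\calU_1, \scrF) \oplus R\Gamma_{\tau}(\calU_2, \scrF) \rightarrow R\Gamma_{\tau}(\calU_{12}, \scrF).
\]
This holds on any of the sites $\tau \in \{\an, \ket, \proket\}$ since $\{\calU_1, \calU_2\}$ is an admissible open covering of $\calX$ in each topology. Rotating this triangle, $R\Gamma_{\tau}(\calX, \scrF)$ is the shifted cone of the map $R\Gamma_{\tau}(\calU_1, \scrF) \oplus R\Gamma_{\tau}(\calU_2, \scrF) \to R\Gamma_{\tau}(\calU_{12}, \scrF)$ given by the difference of restrictions.

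Next I would compare this with the definitions of the three cohomology-with-support complexes. By Definition \ref{Definition: cohomology with supp}, $R\Gamma_{\calZ_1, \tau}(\calX, \scrF)$ is the shifted cone of $R\Gamma_{\tau}(\calX, \scrF) \to R\Gamma_{\tau}(\calU_1, \scrF)$, and similarly for $\calZ_2$ (with $\calU_2$) and for $\calZ_1 \cup \calZ_2$ (with $\calU_{12}$). I would then assemble the commutative diagram whose rows are the three defining triangles and whose columns involve $R\Gamma_{\tau}(\calX, \scrF)$, the direct sum $R\Gamma_{\tau}(\calU_1, \scrF) \oplus R\Gamma_{\tau}(\calU_2, \scrF)$, and $R\Gamma_{\tau}(\calU_{12}, \scrF)$, related via the restriction maps that appear in the Mayer--Vietoris triangle. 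Concretely, the corestriction maps $R\Gamma_{\calZ_i, \tau}(\calX, \scrF) \to R\Gamma_{\calZ_1 \cup \calZ_2, \tau}(\calX, \scrF)$ of \eqref{eq: corestriction map} fit into this diagram, and taking the total complex / applying the octahedral axiom to the composite $R\Gamma_{\tau}(\calX,\scrF) \to R\Gamma_\tau(\calU_1,\scrF)\oplus R\Gamma_\tau(\calU_2,\scrF) \to R\Gamma_\tau(\calU_{12},\scrF)$ identifies $\mathrm{Cone}$ of the Mayer--Vietoris map with $R\Gamma_{\calZ_1,\tau}(\calX,\scrF) \oplus R\Gamma_{\calZ_2,\tau}(\calX,\scrF)$ on one hand and produces the comparison with $R\Gamma_{\calZ_1 \cup \calZ_2, \tau}(\calX,\scrF)$ on the other. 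The cleanest formulation: both $R\Gamma_{\calZ_1,\tau}(\calX,\scrF)\oplus R\Gamma_{\calZ_2,\tau}(\calX,\scrF)$ and $R\Gamma_{\calZ_1\cup\calZ_2,\tau}(\calX,\scrF)$ sit in distinguished triangles with third term the total fiber of the square of restriction maps among $\calX, \calU_1, \calU_2, \calU_{12}$, and a diagram chase shows the corestriction map is the induced isomorphism. Alternatively, at the level of sheaves, one checks directly that $\Gamma_{\calZ_1}(\calX,-)\oplus\Gamma_{\calZ_2}(\calX,-) \cong \Gamma_{\calZ_1\cup\calZ_2}(\calX,-)$ fails in general but the derived statement follows from the triangle comparison; I would present the derived-category version since it is uniform in $\tau$ and avoids sheaf-level subtleties about whether $\calZ_i$ are adic subspaces.

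The main obstacle I anticipate is purely bookkeeping: making the octahedral-axiom argument or the big commutative diagram precise enough that the induced map is literally the corestriction map of \eqref{eq: corestriction map} (rather than merely some isomorphism), and ensuring the Mayer--Vietoris triangle is available on the pro-Kummer \'etale site for an arbitrary abelian sheaf — this is standard for admissible coverings but should be cited carefully. I expect no genuine difficulty beyond this, since the disjointness hypothesis $\calZ_1 \cap \calZ_2 = \emptyset$ makes $\{\calU_1,\calU_2\}$ an honest covering and the whole statement is a formal consequence of the definition of cohomology with supports as a cone.
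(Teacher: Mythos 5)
Your Mayer--Vietoris route is genuinely different from the paper's argument, and the most important issue is that you explicitly dismiss the paper's actual proof. You write that ``one checks directly that $\Gamma_{\calZ_1,\tau}(\calX,-)\oplus\Gamma_{\calZ_2,\tau}(\calX,-) \cong \Gamma_{\calZ_1\cup\calZ_2,\tau}(\calX,-)$ fails in general''. Under the hypothesis $\calZ_1\cap\calZ_2=\emptyset$ it does \emph{not} fail: a global section $s$ vanishing on $\calX\smallsetminus(\calZ_1\cup\calZ_2)$ decomposes uniquely as $s_1+s_2$ with $s_i$ supported on $\calZ_i$, by gluing $0$ on $\calU_i=\calX\smallsetminus\calZ_i$ with $s$ on $\calU_{3-i}$ over the overlap $\calU_1\cap\calU_2=\calX\smallsetminus(\calZ_1\cup\calZ_2)$, where both vanish. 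This isomorphism of left-exact functors is the entirety of the paper's proof; deriving it gives the lemma immediately and uniformly in $\tau$, with the induced map visibly the sum of corestrictions. The ``sheaf-level subtleties'' you worry about are exactly where the disjointness hypothesis does its work.

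As for the route you do propose: the idea that everything follows formally from the cover $\{\calU_1,\calU_2\}$ is sound, but two of your identifications are not correct as stated. The total fiber of the square of restrictions among $\calX,\calU_1,\calU_2,\calU_{12}$ is \emph{acyclic} --- that acyclicity \emph{is} the Mayer--Vietoris statement --- so neither $R\Gamma_{\calZ_1,\tau}\oplus R\Gamma_{\calZ_2,\tau}$ nor $R\Gamma_{\calZ_1\cup\calZ_2,\tau}$ ``sits in a triangle with third term the total fiber'' in any useful way; likewise the shifted cone of the Mayer--Vietoris map $R\Gamma_\tau(\calU_1,\scrF)\oplus R\Gamma_\tau(\calU_2,\scrF)\to R\Gamma_\tau(\calU_{12},\scrF)$ is $R\Gamma_\tau(\calX,\scrF)$, not the direct sum of the two supported complexes. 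What the homotopy-cartesian square actually yields is a distinguished triangle
\[
R\Gamma_{\calZ_1,\tau}(\calX,\scrF)\rightarrow R\Gamma_{\calZ_1\cup\calZ_2,\tau}(\calX,\scrF)\rightarrow R\Gamma_{\calZ_2,\tau}(\calU_1,\scrF),
\]
after which you need the excision isomorphism \eqref{eq: change of ambient spaces} to rewrite the third term as $R\Gamma_{\calZ_2,\tau}(\calX,\scrF)$ (using $\calZ_2\subset\calU_1$), and then a separate argument that the corestriction from $\calZ_2$ splits this triangle compatibly. All of this can be carried out, but it is several diagram chases on top of a fact (Mayer--Vietoris on $\calX_{\proket}$ for arbitrary abelian sheaves) that itself deserves justification, whereas the one-line functor identity you set aside makes the whole lemma immediate.
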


\begin{proof}
It suffices to observe that the map
\[\Gamma_{\calZ_1, \tau}(\calX, -)\oplus \Gamma_{\calZ_2, \tau}(\calX, -)\rightarrow \Gamma_{\calZ_1\cup \calZ_2, \tau}(\calX, -)\]
sending $(s_1, s_2)\mapsto s_1+s_2$ is an isomorphism.
\end{proof}

\subsection{A spectral sequence}\label{subsection: coh with supp spectral sequence}

The following spectral sequence is an analogue of \cite[p. 227]{Hartshorne-residue}. (Also see \cite[\S 2.3]{BP-HigherColeman}.)

\begin{Proposition}\label{Proposition: a spectral sequence for stratifications}
    Let $\calX$ be a locally noetherian fs log adic space as above. Consider a stratification \[
        \calX = \calZ_0 \supsetneq \calZ_1 \supsetneq \cdots \supsetneq \calZ_n \supsetneq \emptyset
    \]
    by closed subspaces of $|\calX|$. Then, for any abelian sheaf $\scrF$ on $\calX_{\tau}$, there is an $E_1$-spectral sequence \[
        E_1^{i,j} =  H_{\calZ_i \smallsetminus \calZ_{i+1}, \tau}^{i+j}(\calX \smallsetminus \calZ_{i+1}, \scrF) \Rightarrow H^{i+j}_{\tau}(\calX, \scrF).
    \]
\end{Proposition}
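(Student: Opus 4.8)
The standard approach is to build the spectral sequence as the one associated to a filtered complex, so the main task is to produce, functorially in $\scrF$, a decreasing filtration on a complex computing $R\Gamma_\tau(\calX,\scrF)$ whose graded pieces compute $R\Gamma_{\calZ_i\smallsetminus\calZ_{i+1},\tau}(\calX\smallsetminus\calZ_{i+1},\scrF)$. First I would reduce to the case of an injective resolution: choose an injective resolution $\scrF\to\scrI^\bullet$ in the category of abelian sheaves on $\calX_\tau$, and set $K^\bullet=\Gamma(\calX,\scrI^\bullet)$. For each $i=0,\dots,n+1$ (with the convention $\calZ_{n+1}=\emptyset$) let $\calU_i\coloneq\calX\smallsetminus\calZ_i$, so that $\calX=\calU_{n+1}\supseteq\cdots\supseteq\calU_1\supseteq\calU_0=\emptyset$ is an increasing chain of opens. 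Because the restriction functor to an open subspace is exact and preserves injectives, $\scrI^\bullet|_{\calU_{i,\tau}}$ is again an injective resolution of $\scrF|_{\calU_{i,\tau}}$, so $\Gamma(\calU_i,\scrI^\bullet)$ computes $R\Gamma_\tau(\calU_i,\scrF|_{\calU_{i,\tau}})$, and the kernel of $\Gamma(\calX,\scrI^\bullet)\to\Gamma(\calU_i,\scrI^\bullet)$ computes $R\Gamma_{\calZ_i,\tau}(\calX,\scrF)$ by Remark~\ref{Remark: definition of cohomology with support}(i).

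Next I would define the filtration. Set $F^i K^\bullet\coloneq\ker\bigl(\Gamma(\calX,\scrI^\bullet)\to\Gamma(\calU_i,\scrI^\bullet)\bigr)$ for $0\le i\le n+1$; since $\calU_0=\emptyset$ we get $F^0K^\bullet=K^\bullet$, and since $\calU_{n+1}=\calX$ we get $F^{n+1}K^\bullet=0$, so this is a finite decreasing filtration $K^\bullet=F^0\supseteq F^1\supseteq\cdots\supseteq F^{n+1}=0$. The associated graded piece is $\mathrm{gr}^iK^\bullet=F^iK^\bullet/F^{i+1}K^\bullet$. The key computation is that this graded piece computes $R\Gamma_{\calZ_i\smallsetminus\calZ_{i+1},\tau}(\calX\smallsetminus\calZ_{i+1},\scrF)$. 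To see this, observe that $\calZ_i\smallsetminus\calZ_{i+1}$ is closed in the open subspace $\calX\smallsetminus\calZ_{i+1}$, with complement $\calX\smallsetminus\calZ_i$. Using the change-of-ambient-space identification \eqref{eq: change of ambient spaces} (restriction along the strict open immersion $\calX\smallsetminus\calZ_{i+1}\hookrightarrow\calX$ is exact and preserves injectives), $\Gamma(\calX\smallsetminus\calZ_{i+1},\scrI^\bullet)$ computes $R\Gamma_\tau(\calX\smallsetminus\calZ_{i+1},\scrF)$ and $R\Gamma_{\calZ_i\smallsetminus\calZ_{i+1},\tau}(\calX\smallsetminus\calZ_{i+1},\scrF)$ is computed by the kernel of $\Gamma(\calX\smallsetminus\calZ_{i+1},\scrI^\bullet)\to\Gamma(\calX\smallsetminus\calZ_i,\scrI^\bullet)$, i.e.\ by $\Gamma(\calU_{i+1},\scrI^\bullet)\to\Gamma(\calU_i,\scrI^\bullet)$ — which is precisely $\mathrm{gr}^iK^\bullet$ once one checks that the natural map $F^iK^\bullet/F^{i+1}K^\bullet\to\ker(\Gamma(\calU_{i+1},\scrI^\bullet)\to\Gamma(\calU_i,\scrI^\bullet))$ is an isomorphism in each degree. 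This last point is a diagram chase: in each degree $\scrI^q$ is injective (in particular flasque for the site $\calX_\tau$ in the relevant sense), so all the restriction maps $\Gamma(\calX,\scrI^q)\to\Gamma(\calU_j,\scrI^q)$ are surjective, and the snake lemma applied to the short exact sequences $0\to F^{i+1}\to F^i\to \mathrm{gr}^i\to 0$ against $0\to\Gamma_{\calZ_i}\to\Gamma(\calX,-)\to\Gamma(\calU_i,-)\to 0$ gives the claim.

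Finally, invoking the standard spectral sequence of a finite decreasing filtration on a complex (e.g.\ \cite[\href{https://stacks.math.columbia.edu/tag/012K}{Tag 012K}]{stacks-project}), I obtain $E_1^{i,j}=H^{i+j}(\mathrm{gr}^iK^\bullet)\Rightarrow H^{i+j}(K^\bullet)=H^{i+j}_\tau(\calX,\scrF)$, and substituting the identification from the previous step gives exactly the asserted spectral sequence $E_1^{i,j}=H^{i+j}_{\calZ_i\smallsetminus\calZ_{i+1},\tau}(\calX\smallsetminus\calZ_{i+1},\scrF)\Rightarrow H^{i+j}_\tau(\calX,\scrF)$. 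The main obstacle I anticipate is bookkeeping: one must be careful that the restriction functor to an open subspace of a log adic space really is exact and preserves injectives on each of the three sites $\tau\in\{\mathrm{an},\ket,\proket\}$ (this is where \eqref{eq: change of ambient spaces} does the work), and one must get the indexing conventions for $\calZ_i$ versus $\calU_i$ consistent so that the degrees in the $E_1$-page come out as stated rather than shifted. Everything else is the formalism of filtered complexes.
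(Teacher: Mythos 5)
Your proof is correct and takes essentially the same approach as the paper: both filter $R\Gamma_{\tau}(\calX,\scrF)$ by the subobjects $R\Gamma_{\calZ_i,\tau}(\calX,\scrF)$, identify the graded pieces with $R\Gamma_{\calZ_i\smallsetminus\calZ_{i+1},\tau}(\calX\smallsetminus\calZ_{i+1},\scrF)$ via the change-of-ambient-space identification, and invoke the spectral sequence of a filtered complex. The only difference is presentational — the paper builds the tower of distinguished triangles in the derived category via the octahedron axiom, whereas you realize the same filtration concretely as subcomplexes of $\Gamma(\calX,\scrI^{\bullet})$ for an injective resolution, which if anything makes the final appeal to the filtered-complex spectral sequence more immediate.
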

\begin{proof}
For $i=0,1,\ldots, n-1$, consider the corestriction map
\[\mathrm{cores}: R\Gamma_{\calZ_{i+1}, \tau}(\calX, \scrF)\rightarrow R\Gamma_{\calZ_i, \tau}(\calX, \scrF).\]
We claim that this map fits into a distinguished triangle
\[R\Gamma_{\calZ_{i+1}, \tau}(\calX, \scrF)\xrightarrow[]{\mathrm{cores}} R\Gamma_{\calZ_i, \tau}(\calX, \scrF)\rightarrow R\Gamma_{\calZ_i\smallsetminus\calZ_{i+1}, \tau}(\calX\smallsetminus\calZ_{i+1}, \scrF)\]
where the second arrow is given by the pullback map.

Consider the commutative diagram
\[
        \begin{tikzcd}
            R\Gamma_{\calZ_{i+1}, \tau}(\calX, \scrF) \arrow[r]\arrow[d, "\mathrm{cores}"] & R\Gamma_{\tau}(\calX, \scrF) \arrow[r]\arrow[d, equal] &  R\Gamma_{\tau}(\calX\smallsetminus \calZ_{i+1}, \scrF)\arrow[d] \\
            R\Gamma_{\calZ_i, \tau}(\calX, \scrF) \arrow[r] & R\Gamma_{\tau}(\calX, \scrF)\arrow[r]\arrow[d] & R\Gamma_{\tau}(\calX \smallsetminus \calZ_i, \scrF)\arrow[d]\\
             & 0 \arrow[r] & R\Gamma_{\calZ_i\smallsetminus\calZ_{i+1}, \tau}(\calX\smallsetminus\calZ_{i+1}, \scrF)[1]
        \end{tikzcd}
    \]
where the top two rows are distinguished triangles, so are the right two columns. By \cite[\href{https://stacks.math.columbia.edu/tag/05R0}{Tag 05R0}]{stacks-project}, the diagram completes into 
\[
        \begin{tikzcd}
            R\Gamma_{\calZ_{i+1}, \tau}(\calX, \scrF) \arrow[r]\arrow[d, "\mathrm{cores}"] & R\Gamma_{\tau}(\calX, \scrF) \arrow[r]\arrow[d, equal] &  R\Gamma_{\tau}(\calX\smallsetminus \calZ_{i+1}, \scrF)\arrow[d] \\
            R\Gamma_{\calZ_i, \tau}(\calX, \scrF) \arrow[r] \arrow[d]& R\Gamma_{\tau}(\calX, \scrF)\arrow[r]\arrow[d] & R\Gamma_{\tau}(\calX \smallsetminus \calZ_i, \scrF)\arrow[d]\\
           R\Gamma_{\calZ_i\smallsetminus\calZ_{i+1}, \tau}(\calX\smallsetminus\calZ_{i+1}, \scrF)\arrow[r]  & 0 \arrow[r] & R\Gamma_{\calZ_i\smallsetminus\calZ_{i+1}, \tau}(\calX\smallsetminus\calZ_{i+1}, \scrF)[1]
        \end{tikzcd}
    \]
where all rows and columns are distinguished triangle. One checks that the bottom left vertical arrow is necessarily given by the pullback map.

Putting all $i$'s together, we arrive at a diagram \[
        \begin{tikzcd}
            R\Gamma_{\calZ_1, \tau}(\calX, \scrF) \arrow[r]\arrow[rd, equal] &  R\Gamma_{\tau}(\calX, \scrF) \arrow[r] &  R\Gamma_{\tau}(\calX \smallsetminus \calZ_1, \scrF)\\
            R\Gamma_{\calZ_2, \tau}(\calX, \scrF) \arrow[r] & R\Gamma_{\calZ_1, \tau}(\calX, \scrF) \arrow[r] & R\Gamma_{\calZ_1\smallsetminus \calZ_2, \tau}(\calX\smallsetminus \calZ_2, \scrF)\\
            & \vdots &\\
            R\Gamma_{\calZ_{n}, \tau}(\calX, \scrF) \arrow[r] & R\Gamma_{\calZ_{n-1}, \tau}(\calX, \scrF) \arrow[r] & R\Gamma_{\calZ_{n-1}\smallsetminus \calZ_{n}, \tau}(\calX\smallsetminus \calZ_n, \scrF)
        \end{tikzcd}.
    \]
Then we simply take the spectral sequence associated with the filtered complex.
\end{proof}

\subsection{Banach sheaves and pro-Kummer \'etale cohomology with supports}\label{subsection: Banach sheaves and pro-Kummer \'etale cohomology with supports}

In this subsection, we discuss pro-Kummer \'etale cohomology with supports with coefficients in (limits of) Banach sheaves. In particular, we generalise results in \cite[\S 2.5]{BP-HigherColeman} to the pro-Kummer \'etale topology. We remark that our discussion is highly inspired by the work of Boxer--Pilloni, but we have to deal with the additional complication caused by the pro-Kummer \'etale topology. 

Throughout this subsection, we assume that $K$ is a complete field extension of $\Q_p$ in $\C_p$ and $K^+ = \calO_K$. We also fix an affinoid $(K, \calO_K)$-algebra $(R, R^{\circ})$ (in the sense of Tate). 

\begin{Definition}\label{Definition: proket Banach sheaves}
    \begin{enumerate}
        \item[(i)] A \emph{sheaf of Banach $\widehat{\scrO}_{\calX_{\proket}}\widehat{\otimes}R$-modules} is a sheaf of $\widehat{\scrO}_{\calX_{\proket}}\widehat{\otimes}R$-modules $\scrF$ such that \begin{itemize}
            \item for any quasicompact object $\calU \in \calX_{\proket}$, $\scrF(\calU)$ is a Banach $\widehat{\scrO}_{\calX_{\proket}}(\calU)\widehat{\otimes}R$-module; 
            \item there exists a pro-Kummer \'etale covering $\frakU = \{\calU_i\}_{i\in I}$ of log affinoid perfectoid objects in $\calX_{\proket}$ such that for any $\calU\in \frakU$ and any pro-Kummer \'etale map $\calV \rightarrow \calU$ with $\calV$ being log affinoid perfectoid, the natural map \[
                \scrF(\calU) \otimes_{\widehat{\scrO}_{\calX_{\proket}}(\calU)} \widehat{\scrO}_{\calX_{\proket}}(\calV) \rightarrow  \scrF(\calV)
            \] induces an isomorphism \[
                \scrF(\calU) \widehat{\otimes}_{\widehat{\scrO}_{\calX_{\proket}}(\calU)} \widehat{\scrO}_{\calX_{\proket}}(\calV) \xrightarrow{\cong}  \scrF(\calV).
            \]
        \end{itemize}
        We call such a pro-Kummer \'etale covering a \emph{pro-Kummer \'etale atlas} for $\scrF$.
        \item[(ii)] A sheaf of Banach $\widehat{\scrO}_{\calX_{\proket}}\widehat{\otimes}R$-modules $\scrF$ is \emph{ON-able} (resp., \emph{locally projective}) if there exists a pro-Kummer \'etale atlas $\frakU$ such that for any $\calU \in \frakU$, $\scrF(\calU)$ is an ON-able Banach $\widehat{\scrO}_{\calX_{\proket}}(\calU)\widehat{\otimes}R$-module (resp., a Banach $\widehat{\scrO}_{\calX_{\proket}}(\calU)\widehat{\otimes}R$-module satisfying property (Pr)) (in the sense of \cite{Buzzard_2007}). 
    \end{enumerate}
\end{Definition}

\begin{Lemma}\label{Lemma: log affinoid perfectoid objects provide a nice base}
    Let $\scrF$ be a locally projective sheaf of Banach $\widehat{\scrO}_{\calX_{\proket}}\widehat{\otimes}R$-modules and let $\frakU$ be a pro-Kummer \'etale atlas for $\scrF$. Then, for any log affinoid perfectoid object $\calU \in \frakU$, we have $H^i_{\proket}(\calU, \scrF) = 0$ for all $i>0$. 
\end{Lemma}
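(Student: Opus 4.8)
The plan is to reduce, via local projectivity, to the case of an orthonormalisable ``free'' Banach sheaf, and then to compute its pro-Kummer \'etale cohomology by passing to an integral model and invoking the fundamental (almost) acyclicity of the completed structure sheaf on log affinoid perfectoid objects. So first I would treat the reduction. Fix a log affinoid perfectoid object $\calU \in \frakU$. By definition of local projectivity, $\scrF(\calU)$ is a Banach $\widehat{\scrO}_{\calX_{\proket}}(\calU)\widehat{\otimes}R$-module with property (Pr), i.e. a direct summand of an orthonormalisable module $\scrE_{\calU} := \widehat{\bigoplus}_{I}\big(\widehat{\scrO}_{\calX_{\proket}}(\calU)\widehat{\otimes}R\big)$ for some index set $I$; write $\scrF(\calU)\oplus \scrQ_{\calU} \cong \scrE_{\calU}$. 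Since $\scrF$ is a Banach sheaf with atlas $\frakU$ and log affinoid perfectoid objects over $\calU$ form a basis of the localised site $\calX_{\proket}/\calU$ (by \cite{Diao}), the value $\scrF(\calV)$ on a log affinoid perfectoid $\calV\to\calU$ is obtained from $\scrF(\calU)$ by completed base change along $\widehat{\scrO}_{\calX_{\proket}}(\calU)\to\widehat{\scrO}_{\calX_{\proket}}(\calV)$; as completed base change is additive and commutes with completed direct sums, the decomposition above spreads out to an isomorphism of Banach sheaves $\scrF|_{\calX_{\proket}/\calU}\oplus\scrQ\cong\scrE$ on $\calX_{\proket}/\calU$, where $\scrE := \widehat{\bigoplus}_I\big(\widehat{\scrO}_{\calX_{\proket}}\widehat{\otimes}R\big)$ and $\scrQ$ is cut out by the complementary idempotent. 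Hence $H^i_{\proket}(\calU,\scrF)$ is a direct summand of $H^i_{\proket}(\calU,\scrE)$, and it suffices to show $H^i_{\proket}(\calU,\scrE)=0$ for $i>0$.

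Next I would compute $H^i_{\proket}(\calU,\scrE)$. Let $\scrE^{+}:=\widehat{\bigoplus}_I\big(\widehat{\scrO}^+_{\calX_{\proket}}\widehat{\otimes}_{\calO_K}R^{\circ}\big)$, so that $\scrE=\scrE^{+}[1/p]$ and $\scrE^{+}=\varprojlim_n\scrE^+_n$ with $\scrE^+_n = \bigoplus_I\big(\widehat{\scrO}^+_{\calX_{\proket}}/p^n\otimes_{\calO_K/p^n}R^{\circ}/p^n\big)$. The crucial input is that $H^i_{\proket}(-,\widehat{\scrO}^+_{\calX_{\proket}})$ is almost zero for $i>0$ on every log affinoid perfectoid object (the logarithmic analogue of Scholze's acyclicity theorem, \cite{Diao}); the short exact sequences $0\to\widehat{\scrO}^+_{\calX_{\proket}}\xrightarrow{\ p^n\ }\widehat{\scrO}^+_{\calX_{\proket}}\to\widehat{\scrO}^+_{\calX_{\proket}}/p^n\to 0$ then give that $H^i_{\proket}(\calU,\widehat{\scrO}^+_{\calX_{\proket}}/p^n)$ is almost zero for $i>0$. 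Since $R^{\circ}$ is a subring of the $K$-algebra $R$ it is $\calO_K$-torsion free, hence flat over the valuation ring $\calO_K$, so $R^{\circ}/p^n$ is a filtered colimit of finite free $\calO_K/p^n$-modules; as $\calU$ is quasicompact and quasiseparated, pro-Kummer \'etale cohomology on $\calU$ commutes with filtered colimits and with the direct sum $\bigoplus_I$, and therefore $H^i_{\proket}(\calU,\scrE^+_n)$ is almost zero for $i>0$. The transition maps $\scrE^+_{n+1}\to\scrE^+_n$ are surjective, so $\{R\Gamma_{\proket}(\calU,\scrE^+_n)\}_n$ is Mittag--Leffler in degree $0$; passing to $R\varprojlim_n$, which computes $R\Gamma_{\proket}(\calU,\scrE^+)$, shows $H^i_{\proket}(\calU,\scrE^+)$ is almost zero for $i>0$, and inverting $p$ yields $H^i_{\proket}(\calU,\scrE)=H^i_{\proket}(\calU,\scrE^+)[1/p]=0$ for $i>0$. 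Combined with the previous paragraph, this proves the lemma.

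I expect the main obstacle to be the bookkeeping in the second step: justifying that $R\Gamma_{\proket}(\calU,-)$ commutes with the completed direct sum $\widehat{\bigoplus}_I$, with the completed base change along $-\widehat{\otimes}R$, and with the derived inverse limit $R\varprojlim_n$ (in particular the vanishing of the relevant $\varprojlim^1$-terms), all in the almost setting and on the pro-Kummer \'etale site; once these interchange statements are secured the argument is formal. A secondary point needing care is verifying that the property-(Pr) decomposition of $\scrF(\calU)$ genuinely globalises to a decomposition of Banach sheaves on $\calX_{\proket}/\calU$, which relies on the compatibility of completed base change with orthonormalisability from the Banach-sheaf formalism of \cite{DRW}.
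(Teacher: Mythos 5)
Your proposal is correct and follows essentially the same route as the paper: reduce via property (Pr) to the ON-able case, write the orthonormalisable sheaf as $\big(\varprojlim_n \bigoplus_J (\widehat{\scrO}^+_{\calX_{\proket}}|_{\calU}\otimes R^{\circ}/p^n)\big)[1/p]$, invoke the almost acyclicity of $\widehat{\scrO}^+_{\calX_{\proket}}$ on log affinoid perfectoids from \cite{Diao}, and pass through the inverse limit (the paper delegates your Mittag--Leffler/$R\varprojlim$ bookkeeping to an almost version of \cite[Lemma 3.18]{Scholze-2013}). The interchange steps you flag as the main obstacle are exactly what that cited lemma packages, so no gap remains.
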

\begin{proof}
    By the definition of (Pr), it suffices to prove the assertion when $\scrF$ is ON-able over $\calU$. We choose a presentation  \[   
\scrF \cong \widehat{\bigoplus}_{j\in J}\left(\widehat{\scrO}_{\calX_{\proket}}|_{\calU}\widehat{\otimes}R\right) = \left(\varprojlim_n \bigoplus_{j\in J} \left(\widehat{\scrO}_{\calX_{\proket}}^+|_{\calU}\otimes R^{\circ}/p^n \right)\right)\Big[\frac{1}{p}\Big].
    \]
By \cite[Theorem 5.4.3]{Diao}, $H^i_{\proket}(\calU, \widehat{\scrO}_{\calX_{\proket}}^+\otimes R^{\circ}/p^n)$ is almost zero for all $i>0$. The desired vanishing then follows from an almost version of \cite[Lemma 3.18]{Scholze-2013}.
\end{proof}

\begin{Lemma}\label{Lemma: proket coh can be computed by Cech cohomology}
    Let $\scrF$ be a locally projective sheaf of $\widehat{\scrO}_{\calX_{\proket}}\widehat{\otimes}R$-modules and let $\calY \in \calX_{\proket}$. Let $\frakU = \{\calU_i: i\in I\}$ be a pro-Kummer \'etale atlas for $\scrF$. Then $R\Gamma_{\proket}(\calY, \scrF)$ is computed by the \v{C}ech complex associated with the covering $\{\calU_i \times_{\calX}\calY \rightarrow \calY\}_{i\in I}$. 
\end{Lemma}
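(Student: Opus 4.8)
The assertion is that the augmented \v{C}ech complex associated with the pro-Kummer \'etale covering $\frakV = \{\calU_i\times_{\calX}\calY\to\calY\}_{i\in I}$ is a strict quasi-isomorphism onto $R\Gamma_{\proket}(\calY,\scrF)$. The plan is to run the \v{C}ech-to-cohomology spectral sequence (\cite[\href{https://stacks.math.columbia.edu/tag/03OW}{Tag 03OW}]{stacks-project})
\[
    E_2^{p,q} = \check H^p\!\left(\frakV,\ \underline H^q_{\proket}(\scrF)\right) \Longrightarrow H^{p+q}_{\proket}(\calY,\scrF),
\]
where $\underline H^q_{\proket}(\scrF)$ is the presheaf $\calW\mapsto H^q_{\proket}(\calW,\scrF)$. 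Since $\underline H^0_{\proket}(\scrF)=\scrF$, the $q=0$ row of $E_2$ is exactly the cohomology of the \v{C}ech complex in question, so it is enough to show that $E_2^{p,q}=0$ for all $q>0$; for this it suffices to prove that $H^q_{\proket}(\calU_{i_0}\times_{\calX}\cdots\times_{\calX}\calU_{i_p}\times_{\calX}\calY,\ \scrF)=0$ for all $q>0$ and all tuples $(i_0,\dots,i_p)$, because these are precisely the objects on which the presheaf $\underline H^q_{\proket}(\scrF)$ is evaluated in $\check C^\bullet(\frakV,-)$.

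\textbf{The key vanishing.} First I would record the structural input: log affinoid perfectoid objects form a basis of $\calX_{\proket}$ and are stable under finite fibre products over $\calX$ (and over any log affinoid perfectoid object), by the results of \cite{Diao} (\S 5); consequently $\calU_{\vec\imath}\coloneqq\calU_{i_0}\times_{\calX}\cdots\times_{\calX}\calU_{i_p}$ is again log affinoid perfectoid. Using a projection $\calU_{\vec\imath}\to\calU_{i_0}$ and the atlas property of $\frakU$, a short computation with $\widehat\otimes$ shows that $\scrF$ restricted to $(\calX_{\proket})_{/\calU_{\vec\imath}}$ is again a locally projective Banach sheaf, with $\scrF(\calV)$ satisfying (Pr) for \emph{every} log affinoid perfectoid $\calV$ over $\calU_{\vec\imath}$. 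Now assume first that $\calY$ is quasicompact. Then $\calU_{\vec\imath}\times_{\calX}\calY$ is quasicompact over the affinoid perfectoid $\calU_{\vec\imath}$, so it admits a finite pro-Kummer \'etale covering by log affinoid perfectoid objects $\calW_1,\dots,\calW_r$ over $\calU_{\vec\imath}$; their finite fibre products over $\calU_{\vec\imath}$ are again log affinoid perfectoid. On each such object $\scrF$ is (Pr), hence $H^q_{\proket}=0$ for $q>0$ by the argument of Lemma \ref{Lemma: log affinoid perfectoid objects provide a nice base} (that is, \cite[Theorem 5.4.3]{Diao} together with the almost version of \cite[Lemma 3.18]{Scholze-2013}). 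Applying the \v{C}ech-to-cohomology spectral sequence to the \emph{finite} covering $\{\calW_j\}$ of $\calU_{\vec\imath}\times_{\calX}\calY$ then yields $H^q_{\proket}(\calU_{\vec\imath}\times_{\calX}\calY,\scrF)=0$ for $q>0$, as required. For general $\calY$ I would write $\calY$ as a filtered union of quasicompact open subobjects and pass to the limit, using that both the \v{C}ech complex and pro-Kummer \'etale cohomology of $\scrF$ are compatible with this (the relevant transition systems being Mittag--Leffler at the integral level).

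\textbf{Main obstacle.} The conceptual content is entirely concentrated in the vanishing $H^q_{\proket}(\calU_{\vec\imath}\times_{\calX}\calY,\scrF)=0$ for $q>0$, and the delicate point there is organisational rather than deep: one must avoid circularity by reducing to finite coverings whose fibre products stay log affinoid perfectoid, which works because the base $\calU_{\vec\imath}$ is itself affinoid perfectoid, and one must check that ``locally projective Banach sheaf'' is preserved under the restrictions and base changes involved so that Lemma \ref{Lemma: log affinoid perfectoid objects provide a nice base} applies verbatim. Once these two points are in place, the remaining steps---the spectral sequence collapse and the passage to non-quasicompact $\calY$---are routine.
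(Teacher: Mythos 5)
Your overall skeleton is the same as the paper's: the paper's proof is a one-line combination of Lemma \ref{Lemma: log affinoid perfectoid objects provide a nice base}, the stability of log affinoid perfectoid objects under fibre products from \cite[Proposition 5.3.12]{Diao}, and the Cartan--Leray acyclicity criterion \cite[Tag 03F7]{stacks-project}, whose proof is exactly the \v{C}ech-to-cohomology spectral sequence you write down. The difference is that the paper applies the acyclicity lemma \emph{directly} to the multi-fold intersections $\calU_{i_0}\times_{\calX}\cdots\times_{\calX}\calU_{i_p}\times_{\calX}\calY$, which are therefore required to be log affinoid perfectoid (this is what the citation of \cite[Proposition 5.3.12]{Diao} is doing, and it is the situation in which the lemma is later invoked, e.g.\ in the proof of Proposition \ref{Proposition: proket cohomology with supp for Banach sheaves}), whereas you try to establish the acyclicity of $\scrF$ on $\calU_{i_0}\times_{\calX}\cdots\times_{\calX}\calU_{i_p}\times_{\calX}\calY$ for an \emph{arbitrary} $\calY$.

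That last step is where the genuine gap lies. You cover $\calU_{\vec\imath}\times_{\calX}\calY$ by finitely many log affinoid perfectoid objects $\calW_1,\dots,\calW_r$ with acyclic multi-intersections and assert that the \v{C}ech-to-cohomology spectral sequence for this covering ``yields $H^q_{\proket}(\calU_{\vec\imath}\times_{\calX}\calY,\scrF)=0$ for $q>0$.'' It does not: under your hypotheses the spectral sequence degenerates to an isomorphism $\check H^q(\{\calW_j\},\scrF)\cong H^q_{\proket}(\calU_{\vec\imath}\times_{\calX}\calY,\scrF)$, and the \v{C}ech cohomology of a finite covering by acyclic objects is precisely what may be nonzero in positive degrees --- that is what \v{C}ech cohomology computes. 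In fact the vanishing you want is false in general: take $\calY$ to be a quasicompact open of Hartogs type (the complement of a smaller closed polydisc inside a two-dimensional closed polydisc); then $\calU_{i}\times_{\calX}\calY$ is the corresponding open subset of the affinoid perfectoid $\calU_i$, and already $H^1_{\proket}$ of $\widehat{\scrO}_{\calX_{\proket}}$ on it is nonzero (the two obvious affinoid perfectoid pieces give a nontrivial \v{C}ech $H^1$). So acyclicity cannot be propagated to these objects, and the correct reading of the lemma --- the one the paper uses --- is that the multi-intersections themselves must be log affinoid perfectoid lying over atlas elements, so that Lemma \ref{Lemma: log affinoid perfectoid objects provide a nice base} applies to them verbatim; your reduction to quasicompact $\calY$ and the subsequent limit argument are then not needed (and the limit step, as written, is also only sketched).
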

\begin{proof}
    The assertion follows immediately from Lemma \ref{Lemma: log affinoid perfectoid objects provide a nice base}, \cite[Proposition 5.3.12]{Diao} and \cite[\href{https://stacks.math.columbia.edu/tag/03F7}{Tag 03F7}]{stacks-project}.
\end{proof}

For the rest of \S \ref{subsection: Banach sheaves and pro-Kummer \'etale cohomology with supports}, we make the following assumption on $\calX$.

\begin{Assumption}\label{Assumption: computing proket cohomology with support for Banach coefficients}
    There exists an element $\calX_{\infty} \in \calX_{\proket}$ such that 
    \begin{itemize}
        \item the map $\calX_{\infty} \rightarrow \calX$ is a pro-Kummer \'etale covering; 
        \item for any affinoid open $\calV \subset \calX$, its preimage $\calV_{\infty}$ in $\calX_{\infty}$ is a log affinoid perfectoid object in $\calX_{\proket}$.
    \end{itemize}
\end{Assumption}

\begin{Proposition}\label{Proposition: proket cohomology with supp for Banach sheaves}
     Suppose $\calX$ satisfies Assumption \ref{Assumption: computing proket cohomology with support for Banach coefficients}. Let $\scrF$ be a locally projective sheaf of $\widehat{\scrO}_{\calX_{\proket}}\widehat{\otimes}R$-modules. Suppose we are given the following subsets of $\calX$: \begin{itemize}
        \item an open subset $\calU \subset \calX$ such that it is a finite union of quasi-Stein spaces \footnote{For the definition of quasi-Stein spaces, see \cite[Definition 2.3]{Kiehl-ThmAB}. (Also see \cite[Definition 2.5.14]{BP-HigherColeman}.)}; 
        \item a closed subset $\calZ \subset \calX$ such that its complement is a finite union of quasi-Stein spaces. 
    \end{itemize}
    Then $R\Gamma_{\calU \cap \calZ, \proket}(\calU, \scrF)\in \mathrm{Pro}_{\Z\geq 0}(\mathrm{K}^{\proj}(\Ban(R)))$.
\end{Proposition}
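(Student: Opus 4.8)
The plan is to reduce the statement to the case of the open cover by log affinoid perfectoid objects from Assumption \ref{Assumption: computing proket cohomology with support for Banach coefficients}, compute everything via \v{C}ech complexes, and then invoke the book-keeping already set up in \cite[\S 2.5]{BP-HigherColeman} for the analytic topology. First I would fix a finite cover of $\calU$ by quasi-Stein opens $\{\calU_a\}_{a}$ and a finite cover of $\calX\smallsetminus\calZ$ by quasi-Stein opens $\{\calV_b\}_b$; intersecting with $\calU$ we get a finite cover $\{\calU_a\cap\calV_b\}$ of $\calU\smallsetminus(\calU\cap\calZ)$ by quasi-Stein opens (quasi-Stein being stable under the relevant intersections, as in \cite[\S 2.5]{BP-HigherColeman}). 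Pulling all these opens back along $\calX_\infty\to\calX$ produces, by Assumption \ref{Assumption: computing proket cohomology with support for Banach coefficients}, pro-Kummer \'etale coverings by log affinoid perfectoid objects, which form pro-Kummer \'etale atlases for $\scrF$ restricted to each open (a locally projective Banach sheaf stays locally projective after restriction). By Lemma \ref{Lemma: log affinoid perfectoid objects provide a nice base} and Lemma \ref{Lemma: proket coh can be computed by Cech cohomology}, $R\Gamma_{\proket}(\calU,\scrF)$ and $R\Gamma_{\proket}(\calU\smallsetminus(\calU\cap\calZ),\scrF)$ are computed by the \v{C}ech complexes of these perfectoid covers, so $R\Gamma_{\calU\cap\calZ,\proket}(\calU,\scrF)$ is the shifted mapping cone of the restriction map between these two \v{C}ech complexes.

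The next step is to pass from a single perfectoid level to a projective system indexed by $\Z_{\geq 0}$. On each log affinoid perfectoid object $\calW_\infty$ appearing in the cover, we have $\scrF(\calW_\infty)=(\varprojlim_n\scrF^+_n(\calW_\infty))[1/p]$ for the natural integral models $\scrF^+_n$, and $\scrF(\calW_\infty)$ is a Banach $R$-module with property (Pr) (ON-able up to a direct summand). The quasi-Stein hypothesis on the $\calU_a$'s and $\calV_b$'s is exactly what guarantees, via the Kiehl-type results recalled in \cite[\S 2.5]{BP-HigherColeman}, that the sections over the relevant opens of $\calX$ are nuclear Fr\'echet, so that after tensoring up to the perfectoid level the terms of the two \v{C}ech complexes are projective Banach $R$-modules and the complexes, truncated in the (uniformly bounded, since both covers are finite) range of nonzero degrees, represent objects of $\mathrm{Pro}_{\Z_{\geq 0}}(\mathrm{K}^{\proj}(\Ban(R)))$. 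Concretely I would write $\scrF$ as a filtered limit of its reductions mod $p^n$ (or mod $\fraka^n$ for an ideal of definition of $R^\circ$), form the corresponding \v{C}ech complexes of $\widehat{\scrO}^+/p^n$-modules tensored with the perfectoid sections, and take the limit; each finite level lands in $\mathrm{K}^{\proj}(\Ban(R))$ because finite \v{C}ech complexes of projective modules are bounded complexes of projectives, and the mapping cone of a morphism between two such again lies in $\mathrm{K}^{\proj}(\Ban(R))$. The mapping cone $[-1]$-shift presenting $R\Gamma_{\calU\cap\calZ,\proket}(\calU,\scrF)$ is then a cone in $\mathrm{Pro}_{\Z_{\geq 0}}(\mathrm{K}^{\proj}(\Ban(R)))$, hence an object of that category.

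The main obstacle I anticipate is the interplay between the pro-Kummer \'etale limit $\varprojlim_n$ (and the associated almost mathematics / $R^1\varprojlim$ vanishing) and the completed tensor products with nuclear Fr\'echet sections over quasi-Stein spaces: one must check that the natural maps $\scrF(\calW_\infty)\widehat\otimes_{\widehat\scrO(\calW_\infty)}\widehat\scrO(\calW'_\infty)\to\scrF(\calW'_\infty)$ behave well enough in families and under the limit that the \v{C}ech terms really are projective Banach $R$-modules in a uniformly bounded degree range, and that passing to the limit over $n$ does not destroy this (Mittag--Leffler-type arguments, as used e.g. in the proof of Lemma \ref{Lemma: cohomology independent of toroidal compactification}). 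Granting the corresponding statements in \cite[\S 2.5]{BP-HigherColeman} for the analytic topology, the only genuinely new input is Lemma \ref{Lemma: log affinoid perfectoid objects provide a nice base} and Lemma \ref{Lemma: proket coh can be computed by Cech cohomology}, which reduce the pro-Kummer \'etale computation to a \v{C}ech computation over the perfectoid cover; after that reduction the argument is formally identical to the analytic case, so I would organise the write-up as: (1) set up the two finite quasi-Stein covers and their perfectoid pullbacks; (2) invoke the two lemmas to get \v{C}ech models; (3) check the \v{C}ech terms are projective Banach $R$-modules in bounded degrees using the quasi-Stein/nuclearity input; (4) form the mapping cone and conclude membership in $\mathrm{Pro}_{\Z_{\geq 0}}(\mathrm{K}^{\proj}(\Ban(R)))$.
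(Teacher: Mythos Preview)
Your overall strategy (reduce to \v{C}ech complexes over the perfectoid cover via Lemma \ref{Lemma: proket coh can be computed by Cech cohomology}, then take a mapping cone) is right, but you have misidentified where the $\Z_{\geq 0}$-indexing in $\mathrm{Pro}_{\Z_{\geq 0}}(\mathrm{K}^{\proj}(\Ban(R)))$ comes from. You propose to produce the pro-system by writing $\scrF$ as $(\varprojlim_n \scrF^+_n)[1/p]$ and taking \v{C}ech complexes mod $p^n$ (or mod $\fraka^n$). But the terms of those complexes are $R^{\circ}/p^n$-modules, not Banach $R$-modules, so at no finite stage do you land in $\mathrm{K}^{\proj}(\Ban(R))$; you would have to reassemble the limit and invert $p$ at the end, which just brings you back to where you started. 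The nuclear Fr\'echet detour is likewise unnecessary and does not by itself give a presentation as a $\Z_{\geq 0}$-indexed pro-object of bounded complexes of projective Banach modules.

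The paper's argument instead uses the defining feature of a quasi-Stein space: each $\calU_j$ in the finite cover of $\calU$ is an increasing union $\bigcup_{i\in\Z_{>0}}\calU_{ji}$ of \emph{affinoids}. Setting $\calV_i=\bigcup_{j}\calU_{ji}$, one gets for each fixed $i$ a finite covering of $\calV_i$ by the log affinoid perfectoid objects $\calU_{ji,\infty}$, so the associated \v{C}ech complex is a bounded complex of projective Banach $R$-modules, i.e.\ an honest object of $\mathrm{K}^{\proj}(\Ban(R))$. Then $R\Gamma_{\proket}(\calU,\scrF)=\lim_i R\Gamma_{\proket}(\calV_i,\scrF)$ is the required pro-system, and the same works for $\calU\smallsetminus(\calU\cap\calZ)$; the cone then lies in $\mathrm{Pro}_{\Z_{\geq 0}}(\mathrm{K}^{\proj}(\Ban(R)))$. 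In short: the $\Z_{\geq 0}$-index is the affinoid exhaustion of the quasi-Stein spaces, not a $p$-adic filtration on $\scrF$.
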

\begin{proof}
    By construction, there is a distinguished triangle \[
        R\Gamma_{\calU \cap \calZ, \proket}(\calU, \scrF) \rightarrow R\Gamma_{\proket}(\calU, \scrF) \rightarrow R\Gamma_{\proket}(\calU \smallsetminus (\calU \cap \calZ), \scrF).
    \]
    Hence, it is enough to prove the assertion for $R\Gamma_{\proket}(\calU, \scrF)$ whenever $\calU$ is a finite union of quasi-Stein spaces. (Note that $\calU \smallsetminus (\calU \cap \calZ)$ is also a finite union of quasi-Stein spaces.)

    Write $\calU = \bigcup_{j=1}^n \calU_j$ where $\calU_j$'s are quasi-Stein spaces such that each $\calU_j = \bigcup_{i\in \Z_{>0}} \calU_{ji}$ is an increasing union of affinoid $\calU_{ji}$. Let $\calU_{ji, \infty}$ denote the preimage of $\calU_{ji}$ in $\calX_{\infty}$. By assumption, each $\calU_{ji, \infty}$ is a log affinoid perfectoid object in $\calX_{\infty}$. For each $i\in \Z_{>0}$, let $\calV_i:=\bigcup_{j=1}^n \calU_{ji}$. We claim that $R\Gamma_{\proket}(\calV_i, \scrF)$ is an object in $\mathrm{K}^{\proj}(\Ban(R))$. Indeed, by Lemma \ref{Lemma: proket coh can be computed by Cech cohomology}, $R\Gamma_{\proket}(\calV_i, \scrF)$ is computed by the \v{C}ech complex associated with the covering $\{\calU_{ji, \infty}\rightarrow \calV_i\}_{j=1}^n$. Since each term in the \v{C}ech complex is in $\Ban(R)$, we are done.
    
    To finish the proof, it suffices to observe that $R\Gamma_{\proket}(\calU, \scrF)=\lim_i R\Gamma_{\proket}(\calV_i, \scrF)$.
\end{proof}

\begin{Proposition}\label{Proposition: corestriction-restriction is a compact map}
    Suppose $\calX$ is proper over $\Spa(K, \calO_K)$ and Assumption \ref{Assumption: computing proket cohomology with support for Banach coefficients} is satisfied. Suppose we are given the following data: \begin{itemize}
        \item $\calU' \subset \calU$, two open subspaces of $\calX$ that are finite unions of quasi-Stein spaces; 
        \item $\calZ \subset \calZ'$, two closed subsets of $\calX$ whose complements in $\calX$ are finite unions of quasi-Stein spaces. 
    \end{itemize}
    Assume furthermore that \begin{itemize}
        \item there exists a quasicompact open subspace $\calU'' \subset \calX$ such that $\calU' \cap \calZ' \subset \calU''$ and $\overline{\calU''} \subset \calU$; 
        \item there exist closed subsets $\calZ'' \subset \calZ'''$ in $\calX$ with quasicompact complements in $\calX$ such that $\calU \cap \calZ \subset \calZ''$ and $\calZ'' \subset \mathring{\calZ'''} \subset \calZ'$.
    \end{itemize}
    (Here $\overline{\bullet}$ and $\mathring{\bullet}$ stand for the closure and the interior of $\bullet$, respectively.) Let $\phi: \scrF \rightarrow \scrG$ be a compact morphism of locally projective sheaves of $\widehat{\scrO}_{\calX_{\proket}}\widehat{\otimes}R$-modules. Then both $R\Gamma_{\calZ \cap \calU, \proket}(\calU, \scrF)$ and $R\Gamma_{\calZ'\cap \calU', \proket}(\calU', \scrG)$ lie in $\mathrm{Pro}_{\Z\geq 0}(\mathrm{K}^{\proj}(\Ban(R)))$ and the natural map 
\[
        R\Gamma_{\calZ \cap \calU, \proket}(\calU, \scrF) \rightarrow R\Gamma_{\calZ'\cap \calU', \proket}(\calU', \scrG)
    \]
    induced by $\phi$ is compact in the sense of Definition \ref{Definition: potent compact operators}.
\end{Proposition}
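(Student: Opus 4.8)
The plan is to reduce the statement to the known compactness result in the analytic setting, namely \cite[Proposition 2.5.17]{BP-HigherColeman}, by \v{C}ech-theoretic descent along the pro-Kummer \'etale covering $\calX_\infty \to \calX$ furnished by Assumption \ref{Assumption: computing proket cohomology with support for Banach coefficients}. First I would observe that Proposition \ref{Proposition: proket cohomology with supp for Banach sheaves} already puts both complexes in $\mathrm{Pro}_{\Z_{\geq 0}}(\mathrm{K}^{\proj}(\Ban(R)))$, so only the compactness of the transition map needs proof. As in the proof of Proposition \ref{Proposition: proket cohomology with supp for Banach sheaves}, I would write $\calU = \bigcup_j \calU_j$ and $\calU' = \bigcup_j \calU'_j$ as finite unions of quasi-Stein spaces, each an increasing union of affinoids, and similarly exhaust $\calX \smallsetminus \calZ$ and $\calX \smallsetminus \calZ'$; intersecting, one gets a cofinal system of \emph{affinoid} data $(\calV'_i \subset \calV_i, \calW_i \subset \calW'_i)$ with $\calV_i, \calW_i$ quasicompact, satisfying the same nesting hypotheses (with closures and interiors) as in the statement, at least for $i$ large, thanks to the intermediate quasicompact subspaces $\calU''$, $\calZ''$, $\calZ'''$. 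At this finite level, $R\Gamma_{\calZ\cap\calU,\proket}$ of the relevant affinoid pieces is computed, by Lemma \ref{Lemma: proket coh can be computed by Cech cohomology} and Lemma \ref{Lemma: log affinoid perfectoid objects provide a nice base}, by the \v{C}ech complex of the log affinoid perfectoid covering $\{\calU_{ji,\infty}\}$ pulled back from $\calX_\infty$, whose terms are honest Banach $R$-modules.

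The key step is then the following: on each term of the \v{C}ech complex, the map induced by $\phi$ is (up to the usual cone construction passing from cohomology without supports to cohomology with supports) the base change to $\widehat{\scrO}_{\calX_{\proket}}(\calU_{ji,\infty})\widehat{\otimes}R$ of the compact morphism $\phi$ on a pro-Kummer \'etale atlas. Here I would invoke the elementary fact (\cite[Lemma 2.4.3]{BP-HigherColeman}, or its evident analogue) that a compact morphism of Banach modules remains compact after any base change and after forming finite \v{C}ech complexes; combined with the fact that the restriction maps $\scrF(\calV_i) \to \scrF(\calW_i)$ arising from the strict inclusions of quasi-Stein exhaustions are themselves \emph{compact} precisely because $\overline{\calU''}\subset \calU$ and $\calZ''\subset \mathring{\calZ'''}$ force a strict shrinking of the underlying affinoids — this is exactly the mechanism by which compactness is produced in \cite[Proposition 2.5.17]{BP-HigherColeman}. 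Composing the compact transition map of the quasi-Stein exhaustion with the ($R$-linearly, base-changed) compact $\phi$ yields a compact operator $T': R\Gamma_{\proket}(\calV_i,\scrF)^{\bullet} \to R\Gamma_{\proket}(\calW'_j,\scrG)^{\bullet}$ in $\mathrm{K}^{\proj}(\Ban(R))$ for suitable indices, fitting into the commutative square required by Definition \ref{Definition: potent compact operators}(iv). Passing to mapping cones to incorporate the support conditions preserves this, since cones of compact maps of projective Banach complexes are again compact.

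The main obstacle I anticipate is bookkeeping the interplay between the two filtrations — the quasi-Stein exhaustion of the ambient open spaces and the complementary exhaustion governing the support conditions — so that the strict inclusions needed for compactness ($\overline{\calU''}\subset\calU$ on the "space" side and $\calZ''\subset\mathring{\calZ'''}$ on the "support" side) can be realised simultaneously at a single finite level, and so that the resulting square really commutes in the pro-category $\mathrm{Pro}_{\Z_{\geq 0}}(\mathrm{K}^{\proj}(\Ban(R)))$ rather than merely up to homotopy after a further cofinal refinement. The proper hypothesis on $\calX$ is used, as in \emph{loc. cit.}, to guarantee that the quasicompact intermediate subspaces $\calU''$, $\calZ''$, $\calZ'''$ can be chosen so that the overlaps in the \v{C}ech nerve stay quasicompact, keeping every term of the complex in $\Ban(R)$. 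Once this is arranged, the verification that the base-changed $\phi$ together with the shrinking restriction map is compact is formally identical to \cite[Proposition 2.5.17]{BP-HigherColeman}, and I would simply cite that argument after setting up the \v{C}ech dictionary; indeed the only genuinely new input relative to the analytic case is Lemma \ref{Lemma: proket coh can be computed by Cech cohomology}, which translates pro-Kummer \'etale cohomology with supports into the same \v{C}ech-complex bookkeeping that Boxer--Pilloni handle in the analytic topology.
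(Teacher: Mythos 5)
Your proposal is correct and follows essentially the same route as the paper: reduce via the \v{C}ech complexes of the covering coming from $\calX_{\infty}$ (Lemma \ref{Lemma: proket coh can be computed by Cech cohomology}) to the compactness of the restriction maps between nested quasicompact opens with $\overline{\calU''}\subset\calU$, which is obtained exactly as in the analytic case of Boxer--Pilloni by relative compactness at finite level followed by a completed colimit, then incorporate the support conditions through the mapping-cone distinguished triangle and use properness to realise the intermediate quasicompact subspaces at a single stage of the quasi-Stein exhaustions. The only cosmetic difference is that the paper first strips off $\phi$ (reducing to the structure sheaf) and proves the restriction map itself is compact, whereas you carry the base-changed compact $\phi$ along and compose; either factor being compact suffices, so this changes nothing.
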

\begin{proof} 
The first statement follows from Proposition \ref{Proposition: proket cohomology with supp for Banach sheaves}. It remains to prove the second statement. Since $\phi: \scrF \rightarrow \scrG$ is compact, one reduces to show the natural map (obtained by corestriction and pullback) \[
        R\Gamma_{\calZ \cap \calU, \proket}(\calU, \widehat{\scrO}_{\calX_{\proket}}\widehat{\otimes}R) \rightarrow R\Gamma_{\calZ'\cap \calU', \proket}(\calU', \widehat{\scrO}_{\calX_{\proket}}\widehat{\otimes}R)
    \]
is a compact morphism. We split the proof into several steps.\\
    
\paragraph{\textbf{Step 1.}} Suppose $\calU' \subset \calU$ are quasicompact open subspaces of $\calX$ such that $\overline{\calU'} \subset \calU$. We claim that the restriction map \[
        R\Gamma_{\proket}(\calU, \widehat{\scrO}_{\calX_{\proket}}\widehat{\otimes}R) \rightarrow R\Gamma_{\proket}(\calU', \widehat{\scrO}_{\calX_{\proket}}\widehat{\otimes}R)
    \]
    is a compact morphism. 

By writing $\calU$ and $\calU'$ as unions of affinoid open subspaces, we may assume that $\calU$ and $\calU'$ are affinoid. Let $\calU_{\infty}$ and $\calU'_{\infty}$ be the pullbacks of $\calU$ and $\calU'$ along $\calX_{\infty}\rightarrow \calX$. By the proof of Proposition \ref{Proposition: proket cohomology with supp for Banach sheaves}, $R\Gamma_{\proket}(\calU, \widehat{\scrO}_{\calX_{\proket}}\widehat{\otimes}R)$ is computed by the \v{C}ech complex associated with the covering $\calU_{\infty}\rightarrow \calU$; similarly for $R\Gamma_{\proket}(\calU', \widehat{\scrO}_{\calX_{\proket}}\widehat{\otimes}R)$. We immediately reduce to show that for all $n\geq 1$, the restriction map
\[\widehat{\scrO}_{\calX_{\proket}}(\calU_{\infty}^{(n)})\rightarrow \widehat{\scrO}_{\calX_{\proket}}({\calU'}^{(n)}_{\infty})\]
is compact, where $\calU_{\infty}^{(n)}$ (resp., ${\calU'}^{(n)}_{\infty}$) is the $n$-fold fiber product $\calU_{\infty}\times_{\calU}\cdots \times_{\calU}\calU_{\infty}$ (resp., $\calU'_{\infty}\times_{\calU'}\cdots \times_{\calU'}\calU'_{\infty}$).
Write $\calX_{\infty}=\lim_i \calX_i$ and write $\calU_i:=\calU\times_{\calX}\calX_i$ (resp., $\calU'_i:=\calU'\times_{\calX}\calX_i$). Let $\calU_i^{(n)}$ (resp., ${\calU'}^{(n)}_i$) be the $n$-fold fiber product $\calU_i\times_{\calU}\cdots \times_{\calU}\calU_i$ (resp., $\calU'_i\times_{\calU'}\cdots \times_{\calU'}\calU'_i$). Following the proof of \cite[Lemma 2.5.23]{BP-HigherColeman}, we know that $\calU_i^{(n)}$ is relatively compact in ${\calU'}^{(n)}_i$, and hence $\scrO_{\calX_{\ket}}(\calU_i^{(n)})\rightarrow\scrO_{\calX_{\ket}}({\calU'}^{(n)}_i)$ is compact. Consequently, $\widehat{\scrO}_{\calX_{\proket}}(\calU_{\infty}^{(n)})\rightarrow \widehat{\scrO}_{\calX_{\proket}}({\calU'}^{(n)}_{\infty})$ is compact as it is the completed colimit of $\scrO_{\calX_{\ket}}(\calU_i^{(n)})\rightarrow\scrO_{\calX_{\ket}}({\calU'}^{(n)}_i)$.\\

    \paragraph{\textbf{Step 2.}} Suppose $\calU' \subset \calU$ are quasicompact open subspaces of $\calX$ and $\calZ \subset \calZ'$ are closed subsets of $\calX$ with quasicompact completments in $\calX$. We assume that $\overline{\calU'} \subset \calU$ and $\calZ \subset \mathring{\calZ'}$. Then the natural map \[
        R\Gamma_{\calU \cap \calZ, \proket}(\calU,\widehat{\scrO}_{\calX_{\proket}}\widehat{\otimes}R) \rightarrow R\Gamma_{\calU' \cap \calZ', \proket}(\calU',\widehat{\scrO}_{\calX_{\proket}}\widehat{\otimes}R)
    \] 
    is a compact morphism. 
    
    Indeed, by definition, we have a morphism of distinguished triangles \[
        \begin{tikzcd}
            \scalemath{0.9}{R\Gamma_{\calU \cap \calZ, \proket}(\calU, \widehat{\scrO}_{\calX_{\proket}}\widehat{\otimes}R)} \arrow[r]\arrow[d] & \scalemath{0.9}{R\Gamma_{\proket}(\calU, \widehat{\scrO}_{\calX_{\proket}}\widehat{\otimes}R)} \arrow[r]\arrow[d] & \scalemath{0.9}{R\Gamma_{\proket}(\calU \smallsetminus (\calU \cap \calZ), \widehat{\scrO}_{\calX_{\proket}}\widehat{\otimes}R)} \arrow[d]\\
            \scalemath{0.9}{R\Gamma_{\calU' \cap \calZ', \proket}(\calU', \widehat{\scrO}_{\calX_{\proket}}\widehat{\otimes}R)} \arrow[r] & \scalemath{0.9}{R\Gamma_{\proket}(\calU', \widehat{\scrO}_{\calX_{\proket}}\widehat{\otimes}R)} \arrow[r] & \scalemath{0.9}{R\Gamma_{\proket}(\calU' \smallsetminus (\calU' \cap \calZ'), \widehat{\scrO}_{\calX_{\proket}}\widehat{\otimes}R)}
        \end{tikzcd}.
    \]
    Hence, it is enough to show the compactness of the two vertical maps on the right-hand side. But these follow from Step 1 as $
        \overline{\calU'} \subset \calU$ and $\overline{\calU' \smallsetminus (\calU' \cap \calZ')} \subset \overline{\calU'} \smallsetminus (\overline{\calU'} \cap \mathring{\calZ'}) \subset \calU \smallsetminus (\calU \cap \calZ)$.\\

    \paragraph{\textbf{Step 3.}} Finally, we finish the proof by reducing to Step 2.

    We may write $\calU = \bigcup_{i\in \Z_{> 0}} \calU_i$ as an increasing union such that each $\calU_i$ is a quasicompact open subspace of $\calX$ (see, for example, the proof of Proposition \ref{Proposition: proket cohomology with supp for Banach sheaves}). Since $\calX$ is proper, one deduces that $\overline{\calU''} \subset \calU_n$ for some $n\in \Z_{>0}$. Thus, the morphism \[
        R\Gamma_{\calZ \cap \calU, \proket}(\calU, \widehat{\scrO}_{\calX_{\proket}}\widehat{\otimes}R) \rightarrow R\Gamma_{\calZ'\cap \calU', \proket}(\calU', \widehat{\scrO}_{\calX_{\proket}}\widehat{\otimes}R)
    \]
    factors as \[
        \begin{tikzcd}[column sep = tiny]
            \scalemath{0.9}{R\Gamma_{\calZ \cap \calU, \proket}(\calU, \widehat{\scrO}_{\calX_{\proket}}\widehat{\otimes}R)} \arrow[rr]\arrow[d] && \scalemath{0.9}{R\Gamma_{\calZ'\cap \calU', \proket}(\calU', \widehat{\scrO}_{\calX_{\proket}}\widehat{\otimes}R)}\arrow[d, equal]\\
            \scalemath{0.9}{R\Gamma_{\calZ'' \cap \calU_n, \proket}(\calU_n, \widehat{\scrO}_{\calX_{\proket}}\widehat{\otimes}R)} \arrow[r] & \scalemath{0.9}{R\Gamma_{\calZ''' \cap \calU'', \proket}(\calU'', \widehat{\scrO}_{\calX_{\proket}}\widehat{\otimes}R)} \arrow[r] & \scalemath{0.9}{R\Gamma_{\calZ' \cap \calU', \proket}(\calU'\cap \calU'', \widehat{\scrO}_{\calX_{\proket}}\widehat{\otimes}R)}
        \end{tikzcd},
    \]
    where the vertical identification on the right-hand side is given by \eqref{eq: change of ambient spaces}. Hence, it is enough to show that \[
        R\Gamma_{\calZ'' \cap \calU_n, \proket}(\calU_n, \widehat{\scrO}_{\calX_{\proket}}\widehat{\otimes}R) \rightarrow R\Gamma_{\calZ''' \cap \calU'', \proket}(\calU'', \widehat{\scrO}_{\calX_{\proket}}\widehat{\otimes}R)
    \]
    is compact. This follows from Step 2. 
\end{proof}

\subsection{Integral structures of Banach sheaves}\label{subsection: integral structure of Banach sheaves} 
The purpose of this subsection is to introduce the notion of \emph{integral structures} for locally projective Banach sheaves (in the sense of Definition \ref{Definition: proket Banach sheaves}) on the pro-Kummer \'etale site and to prove Lemma \ref{Lemma: higher cohomology of integral structure has bounded torsion}, which is used in the main body of the paper. A similar discussion for locally projective Banach sheaves on the analytic/{\'e}tale site can be found in \cite[\S 2.6]{BP-HigherColeman}.

We retain the setting of \S \ref{subsection: Banach sheaves and pro-Kummer \'etale cohomology with supports}.

\begin{Definition}\label{Definition: integral structure of Banach sheaves}
    Let $\scrF$ be a locally projective sheaf of Banach $\widehat{\scrO}_{\calX_{\proket}}\widehat{\otimes}R$-modules. A subsheaf $\scrF^+\subset \scrF$ of $\widehat{\scrO}_{\calX_{\proket}}^+ \widehat{\otimes} R^{\circ}$-modules is called an \emph{integral structure} of $\scrF$ if
\begin{enumerate}
        \item[(i)] the natural map $\scrF^+\otimes_{\calO_K}K \rightarrow \scrF$ is an isomorphism;
        \item[(ii)] there exists a pro-Kummer \'etale covering $\frakU = \{\calU_i\}_{i\in I}$ by log affinoid perfectoid objects in $\calX_{\proket}$ such that $\scrF^+(\calU_i)$ is a completion of a free $\widehat{\scrO}_{\calX_{\proket}}^+(\calU_i)\widehat{\otimes}R^{\circ}$-module and the canonical map \[
            \scrF^+(\calU_i)\otimes_{\widehat{\scrO}_{\calX_{\proket}}^+(\calU_i)\widehat{\otimes}R^{\circ}}\left(\widehat{\scrO}_{\calX_{\proket}}^+|_{\calU_i}\widehat{\otimes}R^{\circ}\right) \rightarrow \scrF^+|_{\calU_i}
        \]
        factors through an isomorphism
\[
            \scrF^+(\calU_i)\widehat{\otimes}_{\widehat{\scrO}_{\calX_{\proket}}^+(\calU_i)\widehat{\otimes}R^{\circ}}\left(\widehat{\scrO}_{\calX_{\proket}}^+|_{\calU_i}\widehat{\otimes}R^{\circ}\right) \xrightarrow[]{\cong} \scrF^+|_{\calU_i}.
\]
    \end{enumerate}
\end{Definition}

\begin{Lemma}\label{Lemma: higher cohomology of integral structure has bounded torsion}
    Let $\scrF$ be a sheaf of locally projective Banach $\widehat{\scrO}_{\calX_{\proket}}\widehat{\otimes}R$-modules on $\calX_{\proket}$. Suppose $\calU \in \calX_{\proket}$ is a log affinoid perfectoid object such that for any pro-Kummer \'etale map $\calV \rightarrow \calU$ with $\calV$ being log affinoid perfectoid, the natural map \[
                \scrF(\calU) \otimes_{\widehat{\scrO}_{\calX_{\proket}}(\calU)} \widehat{\scrO}_{\calX_{\proket}}(\calV) \rightarrow  \scrF(\calV)
            \] induces an isomorphism \[
                \scrF(\calU) \widehat{\otimes}_{\widehat{\scrO}_{\calX_{\proket}}(\calU)} \widehat{\scrO}_{\calX_{\proket}}(\calV) \xrightarrow{\cong}  \scrF(\calV).
            \]
Let $\scrF^+$ be an integral structure of $\scrF$. Then, there exists $N\in \Z_{\geq 0}$ such that $p^N$ annihilates $H^i_{\proket}(\calU, \scrF^+)$ for all $i>0$.
\end{Lemma}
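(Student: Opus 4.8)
The plan is to reduce, by a covering argument, to the (almost) acyclicity of $\widehat{\scrO}^+_{\calX_{\proket}}$ on log affinoid perfectoid objects — the same input used in the proof of Lemma \ref{Lemma: log affinoid perfectoid objects provide a nice base} — and then to control the Čech-type error terms coming from the fact that $\calU$ need not itself lie in an integral atlas for $\scrF^+$. First I would fix a pro-Kummer \'etale covering $\frakU = \{\calU_i\}_{i\in I}$ by log affinoid perfectoid objects as in Definition \ref{Definition: integral structure of Banach sheaves}(ii), on which $\scrF^+|_{\calU_i}$ is a completion of a free $\widehat{\scrO}^+_{\calX_{\proket}}|_{\calU_i}\widehat{\otimes}R^\circ$-module; on such $\calU_i$ the rational sheaf $\scrF := \scrF^+[1/p]$ is then ON-able, so $\frakU$ simultaneously witnesses the local projectivity of $\scrF$. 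Pulling $\frakU$ back along the pro-Kummer \'etale morphism $\calU \to \calX$ and refining by log affinoid perfectoid objects (which form a basis of $\calX_{\proket}$), I would obtain a finite pro-Kummer \'etale covering $\frakV = \{\calV_a\}_{a\in A}$ of $\calU$, each $\calV_a$ mapping into some $\calU_{i(a)}$, so that by the base-change property in Definition \ref{Definition: integral structure of Banach sheaves}(ii) the same completed-free description holds on every nonempty finite intersection $\calV_{a_0\cdots a_p} = \calV_{a_0}\times_\calU\cdots\times_\calU\calV_{a_p}$. (Finiteness of $\frakV$ is available in the intended applications; otherwise one reduces to that case.)

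The key local input I would use is that for any log affinoid perfectoid $\calW$ and any $n$, $H^q_{\proket}(\calW, \widehat{\scrO}^+_{\calX_{\proket}}/p^n)$ is almost zero for $q>0$ by \cite[Theorem 5.4.3]{Diao}; since $p$ lies in the relevant idempotent maximal ideal, this cohomology is killed by $p$. Taking a completed direct sum, passing to the inverse limit over $n$ via the almost version of \cite[Lemma 3.18]{Scholze-2013} (as in the proof of Lemma \ref{Lemma: log affinoid perfectoid objects provide a nice base}), and using that cohomology commutes with the relevant colimits on a quasicompact object, one obtains a single $N_0 \in \Z_{\geq 0}$, depending only on $\calX$, such that $p^{N_0}$ annihilates $H^q_{\proket}(\calW, \scrG)$ for all $q>0$, all such $\calW$, and every $\scrG$ that is a completion of a free $\widehat{\scrO}^+_{\calX_{\proket}}|_\calW\widehat{\otimes}R^\circ$-module, as well as for its reductions $\scrG/p^m$; in particular this applies to $\scrG = \scrF^+|_{\calV_{a_0\cdots a_p}}$. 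I would also record that $H^q_{\proket}(\calV_{a_0\cdots a_p}, \scrF) = 0$ for $q>0$ by Lemma \ref{Lemma: log affinoid perfectoid objects provide a nice base}, and that — using faithfully flat descent of property (Pr) along $\widehat{\scrO}_{\calX_{\proket}}(\calU)\to\prod_a\widehat{\scrO}_{\calX_{\proket}}(\calV_a)$ (as in the proof of Corollary \ref{Corollary: structure of an admissible Banach sheaf}, cf. \cite[Appendix A]{DRW}) — the base-change hypothesis on $\calU$ forces $\scrF(\calU)$ to satisfy (Pr), so that $\calU$ also lies in a locally projective atlas for $\scrF$ and $H^q_{\proket}(\calU, \scrF) = 0$ for $q>0$.

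Next I would feed this into the Čech-to-cohomology spectral sequence \cite[\href{https://stacks.math.columbia.edu/tag/03OW}{Tag 03OW}]{stacks-project} attached to $\frakV$,
\[
    E_1^{p,q} = \prod_{a_0 < \cdots < a_p} H^q_{\proket}(\calV_{a_0\cdots a_p}, \scrF^+) \Longrightarrow H^{p+q}_{\proket}(\calU, \scrF^+).
\]
For $q>0$ each $E_1^{p,q}$, hence each subquotient $E_\infty^{p,q}$, is killed by $p^{N_0}$, and since $\calU$ has finite pro-Kummer \'etale cohomological dimension only finitely many such terms contribute. For the row $q=0$ one has that $\check{C}^\bullet(\frakV, \scrF^+)$ is a finite complex with $\check{C}^\bullet(\frakV, \scrF^+)[1/p] = \check{C}^\bullet(\frakV, \scrF)$, and since $H^{>0}_{\proket}(\calU, \scrF) = 0 = H^{>0}_{\proket}(\calV_{a_0\cdots a_p}, \scrF)$ the rational Čech complex is a resolution of $\scrF(\calU)$; hence $\check{H}^m(\frakV, \scrF^+)$ is $p$-power torsion for $m>0$. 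To make the bound uniform I would compare $\check{C}^\bullet(\frakV, \scrF^+)$ with $\check{C}^\bullet(\frakV, \scrF^+/p^m)$ for large $m$: the cohomology of the latter is governed by the $H^q_{\proket}(\calV_{a_0\cdots a_p}, \scrF^+/p^m)$, all killed by $p^{N_0}$, which pins the torsion of $\check{H}^m(\frakV, \scrF^+)$ to a bounded power of $p$. Combining the two rows and using finite cohomological dimension would yield a single power of $p$ annihilating $H^i_{\proket}(\calU, \scrF^+)$ for all $i>0$.

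The hardest part, I expect, will be the $q=0$ row — showing that the Čech cohomology $\check{H}^m(\frakV, \scrF^+)$ of the integral sheaf itself is killed by a \emph{bounded}, rather than merely some, power of $p$; this forces one to interlace the sheaf property of $\scrF$, the acyclicity of $\scrF$ and of $\scrF^+/p^m$ on atlas objects, and the finiteness of $\frakV$. A secondary technical point is deducing, from the bare base-change hypothesis on $\calU$, that $\calU$ itself sits inside a locally projective atlas for $\scrF$, which relies on faithfully flat descent for Banach modules with property (Pr).
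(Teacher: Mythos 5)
Your Čech-theoretic strategy is genuinely different from the paper's, and it has a gap exactly where you flagged the difficulty: the $q=0$ row. The $q>0$ part of your argument is fine — on the intersections $\calV_{a_0\cdots a_p}$ the sheaf $\scrF^+$ is a completed free $\widehat{\scrO}^+_{\calX_{\proket}}\widehat{\otimes}R^{\circ}$-module, so its higher cohomology is almost zero by the same mechanism as in Lemma \ref{Lemma: log affinoid perfectoid objects provide a nice base} and hence killed by $p$. But your proposed bound on $\check{H}^m(\frakV,\scrF^+)$ for $m>0$ is circular. The Čech-to-derived spectral sequence for $\scrF^+/p^{m'}$ identifies $\check{H}^m(\frakV,\scrF^+/p^{m'})$, modulo the almost-zero $q>0$ contributions, with $H^m_{\proket}(\calU,\scrF^+/p^{m'})$; the latter is only known to be bounded torsion when $\calU$ itself lies in an integral atlas for $\scrF^+$, which is precisely not assumed — the hypothesis on $\calU$ concerns only the rational sheaf $\scrF$. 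So the claim that the cohomology of $\check{C}^{\bullet}(\frakV,\scrF^+/p^{m'})$ is ``governed by'' the terms $H^q(\calV_{a_0\cdots a_p},\scrF^+/p^{m'})$, all killed by a fixed power of $p$, fails for the $q=0$ contribution, which is the one you need. (A repair within your framework is possible: since $\check{C}^{\bullet}(\frakV,\scrF)$ is exact in positive degrees, the open mapping theorem makes $d^{m-1}(\check{C}^{m-1,+})$ open in $\ker d^m$, while $\ker d^m\cap\check{C}^{m,+}$ is bounded, so some $p^{N_m}$ kills $\check{H}^m(\frakV,\scrF^+)$; finiteness of $\frakV$ follows from quasicompactness of $\calU$.)

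The paper avoids Čech theory altogether and works directly on $\calU$. It uses property (Pr) of $M=\scrF(\calU)$ to write $M\oplus N\cong c_{A\widehat{\otimes}R}(J)$, forms the lattice $M^+=c^+_{A\widehat{\otimes}R}(J)\cap M$ and the associated integral sheaf $\scrM^+$, and shows two things: first, multiplication by $p^{N'}$ on $\scrM^+$ factors through the ON-able integral sheaf $\scrG^+$ attached to $c^+_{A\widehat{\otimes}R}(J)$ (because $M^+$ and the image of $c^+_{A\widehat{\otimes}R}(J)$ in $M$ are both open lattices in $M$), and $H^{i>0}_{\proket}(\calU,\scrG^+)$ is almost zero, so $p^{N'}$ kills $H^{i>0}_{\proket}(\calU,\scrM^+)$; second, after rescaling, $\scrM^+\subset\scrF^+$ with cokernel killed by $p^{N''}$, by openness of the comparison maps on the finitely many $\calU_i=\calU\times_{\calX}\calV_i$. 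The long exact sequence then yields $N=N'+N''$. This sidesteps the degree-zero difficulty entirely, because the only acyclicity invoked on $\calU$ itself is that of the auxiliary ON-able sheaf $\scrG^+$.
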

\begin{proof}
    Let $\Spa(A, A^+)$ denote the affinoid perfectoid space associated with the log affinoid perfectoid object $\calU$. Then $M = \scrF(\calU)$ is a Banach $A\widehat{\otimes}R$-module satisfying property (Pr). In particular, there exists another Banach $A\widehat{\otimes}R$-module $N$ such that 
    \[M\oplus N \cong c_{A\widehat{\otimes}R}(J)\]
    for some index set $J$. Here, $c_{A\widehat{\otimes}R}(J)$ stands for the ON-able Banach $A\widehat{\otimes}R$-module with orthonormal basis $\{e_j\}_{j\in J}$; namely, $c_{A\widehat{\otimes}R}(J)$ consists of sums $\sum_{j\in J} a_j e_j$ with $a_j\in A\widehat{\otimes}R$ such that $|a_j|\rightarrow 0$ as $j\rightarrow \infty$. Let $c^+_{A\widehat{\otimes}R}(J)\subset c_{A\widehat{\otimes}R}(J)$ denote the $A^+ \widehat{\otimes}R^{\circ}$-submodule consisting of those $\sum_{j\in J} a_j e_j$ with $a_j\in A^+ \widehat{\otimes}R^{\circ}$.
    
Consider sheaves
\[\scrG:=c_{A\widehat{\otimes}R}(J) \widehat{\otimes}_{A\widehat{\otimes}R}\left(\widehat{\scrO}_{\calX_{\proket}}|_{\calU}\widehat{\otimes}R\right) \]
and
\[\scrG^+:=c^+_{A\widehat{\otimes}R}(J) \widehat{\otimes}_{A^+\widehat{\otimes}R^{\circ}}\left(\widehat{\scrO}^+_{\calX_{\proket}}|_{\calU}\widehat{\otimes}R^{\circ}\right).\]
Let
\[
        M^+ \coloneq c^+_{A\widehat{\otimes}R}(J) \cap M
\]
and let $\scrM^+$ be the sheafification of the presheaf 
\[M^+\widehat{\otimes}_{A^+\widehat{\otimes}R^{\circ}} \left(\widehat{\scrO}^+_{\calX_{\proket}}|_{\calU}\widehat{\otimes}R^{\circ}\right).\]
This is a subsheaf of $\scrF \cap \scrG^+$, where the intersection is taken inside $\scrG$. We claim that there exists $N'\in \Z_{\geq 0}$ such that $H^i_{\proket}(\calU, \scrM^+)$ is annihilated by $p^{N'}$ for all $i>0$. 

    To show the claim, consider \[
        \widetilde{M}^+ \coloneq \image\left( c^+_{A \widehat{\otimes}R}(J) \hookrightarrow c_{A \widehat{\otimes}R}(J) \twoheadrightarrow M\right).
    \]
We have $M^+ \subset\widetilde{M}^+$. Since $c^+_{A \widehat{\otimes}R}(J)$ is open in $c_{A \widehat{\otimes}R}(J)$, both $M^+$ and $\widetilde{M}^+$ are open in $M$. Hence, there exists $N'\in \Z_{\geq 0}$ such that $p^{N'}$ annihilates $\coker(M^+ \hookrightarrow \widetilde{M}^+)$. Therefore, $p^{N'}: M^+ \rightarrow M^+$ factors as \[
        M^+ \hookrightarrow c^+_{A \widehat{\otimes}R}(J) \twoheadrightarrow \widetilde{M}^+ \xrightarrow{p^{N'}} M^+.
    \]
    As a result, $p^{N'} : \scrM^+ \rightarrow \scrM^+$ factors as \[
        \scrM^+ \rightarrow \scrG^+ \rightarrow \scrM^+.
    \]
It follows from \cite[Theorem 5.4.3]{Diao} that $H^i_{\proket}(\calU, \scrG^+)$ is almost zero for all $i>0$, and the claim follows. 

To finish the proof, we may rescale and assume $M^+ \hookrightarrow \scrF^+(\calU)$, which yields an inclusion $\scrM^+ \hookrightarrow \scrF^+$. We claim that there exists $N''\in \Z_{\geq 0}$ such that $\coker(\scrM^+ \rightarrow \scrF^+)$ is annihilated by $p^{N''}$. Let $\frakV = \{\calV_i\}_{i\in I}$ be a pro-Kummer \'etale covering of $\calX$ by log affinoid perfectoid objects as in Definition \ref{Definition: integral structure of Banach sheaves} (ii). Let $\calU_i \coloneq \calU \times_{\calX}\calV_i$, then $\{\calU_i \rightarrow \calU\}_{i\in I}$ is a pro-Kummer \'etale covering of $\calU$ by log affinoid perfectoid objects in $\calU_{\proket}$. Since $\calU$ is quasi-compact, we may assume $I$ is finite. For each $i\in I$, there exists $N_i\in \Z_{\geq 0}$ such that the cokernel of the canonical map
\[
        M^+ \widehat{\otimes}_{A^+\widehat{\otimes}R^{\circ}}\left(\widehat{\scrO}_{\calX_{\proket}}^+(\calU_i)\widehat{\otimes}R^{\circ}\right) \rightarrow \scrF^+(\calU_i)
    \] is annihilated by $p^{N_i}$, because the image of the map is open. Therefore, if we put $N''= \sum_{i\in I} N_i$, we have $\coker(\scrM^+ \rightarrow \scrF^+)$ is annihilated by $p^{N''}$. 
    
    Finally, we conclude the proof by taking $N = N'+N''$.
\end{proof}
\end{appendix}

\bibliographystyle{amsalpha}
\bibliography{Reference}

\vspace{10mm}

\begin{tabular}{l}
    H.D.\\
    Tsinghua University,   \\
    Yau Mathematical Sciences Center\\
    Beijing, China\\
    \textit{E-mail address: }\texttt{hdiao@mail.tsinghua.edu.cn}
\end{tabular}
\\
\\

\begin{tabular}{l}
    G.R. \\
    Concordia University   \\
    Department of Mathematics and Statistics\\
    Montr\'{e}al, Qu\'{e}bec, Canada\\
    \textit{E-mail address: }\texttt{giovanni.rosso@concordia.ca}
\end{tabular}
\\
\\

\begin{tabular}{l}
    J.-F.W.\\
    School of Mathematics and Statistics\\
    University College Dublin\\
    Belfield, Dublin 4, Ireland\\
    \textit{E-mail address: }\texttt{ju-feng.wu@ucd.ie}
\end{tabular}

\end{document}